\documentclass[12pt, a4paper]{amsart}

\usepackage[utf8]{inputenc}
\usepackage[T1]{fontenc}
\usepackage{lmodern} 
\usepackage[left=2.5cm,right=2.5cm,top=2.5cm,bottom=2.5cm]{geometry}
\usepackage{amsmath,amsthm, amssymb,amsfonts,nicefrac}
\usepackage{mathabx} 
\usepackage{graphicx}
\usepackage{textcomp}
\usepackage[usenames, dvipsnames]{xcolor}
\usepackage{enumerate}
\usepackage[shortlabels]{enumitem}
\usepackage[colorlinks, linktocpage, citecolor = blue, linkcolor = blue,backref]{hyperref}
\usepackage{framed}
\usepackage{caption}
\usepackage{subcaption}
\usepackage{setspace}
\usepackage{xspace}
\usepackage[normalem]{ulem}
\usepackage{array}
\usepackage{verbatim}
\usepackage{aliascnt}

\usepackage[cmtip]{xy}
\xyoption{pdf}
\xyoption{color}
\xyoption{all}

\usepackage{tikz}
\usetikzlibrary{cd,arrows,positioning, decorations.pathreplacing}
\tikzset{>=stealth}
\tikzcdset{arrow style=tikz}
\tikzset{link/.style={column sep=1.8cm,row sep=0.16cm}}
\tikzset{link2/.style={column sep=0.4cm,row sep=0.1cm}} 
\tikzset{map/.style={row sep=0em, column sep=0em}}
\tikzset{c/.style={every coordinate/.try}}

\renewcommand{\to}{\longrightarrow}
\newcommand{\rat}{\dashrightarrow}

\newcommand{\Z}{\ensuremath{\mathbb{Z}}}

\newcommand{\F}{\ensuremath{\mathbb{F}}}

\newcommand{\C}{\ensuremath{\mathbb{C}}}
\newcommand{\p}{\ensuremath{\mathbb{P}}}
\renewcommand{\P}{\ensuremath{\mathbb{P}}}
\newcommand{\A}{\ensuremath{\mathbb{A}}}
\newcommand{\G}{\ensuremath{\mathbb{G}}}

\newcommand{\Al}{\mathcal{A}}
\newcommand{\El}{\mathcal{E}}
\newcommand{\Fl}{\mathcal{F}}
\newcommand{\Gl}{\mathcal{G}}
\newcommand{\Ll}{\mathcal{L}}
\newcommand{\Ml}{\mathcal{M}}
\newcommand{\Nl}{\mathcal{N}}

\newcommand{\Ol}{\mathcal{O}}

\DeclareMathOperator{\PGL}{PGL}

\DeclareMathOperator{\Aut}{Aut}
\DeclareMathOperator{\Bir}{Bir}
\newcommand{\Pic}{{\mathrm{Pic}}}
\newcommand{\NS}{{\mathrm{NS}}}
\newcommand{\rk}{{\mathrm{rk}}}
\DeclareMathOperator{\GL}{GL}
\DeclareMathOperator{\Cr}{Cr}
\DeclareMathOperator{\Gal}{Gal}
\DeclareMathOperator{\Slpec}{Spec}

\DeclareMathOperator{\seg}{\mathfrak{S}}
\newcommand{\Autz}{\ensuremath{\mathrm{Aut}^{\circ}}}

\renewcommand{\k}{\mathbf{k}}
\newcommand{\kk}{\mathbf{K}}

\makeatletter
\@ifdefinable\equationname{\let\equationname\equationautorefname}
\def\equationautorefname~#1\@empty\@empty\null{(#1\@empty\@empty\null)}%
\@ifdefinable\AMSname{\let\AMSname\AMSautorefname}
\def\AMSautorefname~#1\@empty\@empty\null{(#1\@empty\@empty\null)}%
\@ifdefinable\itemname{\let\itemname\itemautorefname}
\def\itemautorefname~#1\@empty\@empty\null{(#1\@empty\@empty\null)%
}%
\makeatother

\RequirePackage{amsthm}
\RequirePackage{aliascnt}
\newcommand{\basetheorem}[3]{%
	\newtheorem{#1}{#2}[#3]
	\newtheorem*{#1*}{#2}
	\expandafter\def\csname #1autorefname\endcsname{#2}
}%
\newcommand{\maketheorem}[3]{%
	\newaliascnt{#1}{#3}
	\newtheorem{#1}[#1]{#2}
	\aliascntresetthe{#1}
	\expandafter\def\csname #1autorefname\endcsname{#2}
	\newtheorem{#1*}{#2}
}%
\theoremstyle{plain}   
\basetheorem{theorem}{Theorem}{section}
\newtheorem*{thm*}{Theorem}
\maketheorem{corollary}{Corollary}{theorem}
\maketheorem{lemma}{Lemma}{theorem}
\maketheorem{proposition}{Proposition}{theorem}
\theoremstyle{definition}
\maketheorem{definition}{Definition}{theorem}
\maketheorem{defrmk}{Definition and Remark}{theorem}
\maketheorem{remark}{Remark}{theorem}
\maketheorem{example}{Example}{theorem}

\newtheorem{theoremA}{Theorem}

\newaliascnt{propositionA}{theoremA}
\newtheorem{propositionA}[propositionA]{Proposition}
\aliascntresetthe{propositionA}

\newaliascnt{corollaryA}{theoremA}

\aliascntresetthe{corollaryA}

\address{Department of Mathematics and Computer Science, University of Basel, Spiegelgasse 1, 4051 Basel, Switzerland}
\email{susanna.zimmermann@unibas.ch}

\address{Leibniz Universität Hannover, Institut für Algebraische Geometrie, Welfengarten 1, 30167 Hannover, Germany}
\email{fong@math.uni-hannover.de}

\subjclass[2020]{14E07, 14E30, 14H60, 14J30, 14J50, 14L30}

\thanks{
During this project, P.F. was supported by the project ANR ERCS-0013-002 and ERC StG Saphidir. 
S.Z. was supported by the ANR Project FIBALGA ANR-18-CE40-0003-01, the project ANR ERCS-0013-002, the Institut universitaire de France, the ERC StG Saphidir and the project SERI REF-1131-552105.
}

\begin{document}
	\author{Pascal Fong}
	\author{Susanna Zimmermann}
	\title[Automorphisms of Mori Del Pezzo fibrations]{Automorphism groups of Mori Del Pezzo fibrations over an irrational curve}

    \begin{abstract}
    We study the automorphism groups of Mori Del Pezzo fibrations over a smooth projective curve $C$ of positive genus. From that, we obtain a classification of maximal connected algebraic subgroups of $\Bir(C\times \p^2)$. Our results hold over any algebraically closed field of characteristic zero.
    \end{abstract}
    
	\maketitle

\setcounter{tocdepth}{1}
\tableofcontents

\section{Introduction}

The \emph{Cremona group} $\Cr_n(\k)$ is the group of birational transformations of the projective space $\p^n$ over the field $\k$. This group is known to be very large, see e.g. \cite{Blanc_Schneider_Yasinsky}. In fact, if $n>1$, then $\Cr_n(\k)$ is not an ind-algebraic group, see \cite{BlancFurter}.

This motivates the study of \emph{algebraic subgroups} of $\Cr_n(\k)$. The connected algebraic subgroups of $\Cr_2(\C)$ were classified by Enriques in \cite{enriques1893sui}: any such subgroup is contained in a maximal connected algebraic subgroup, and every maximal connected algebraic subgroup is conjugate to either $\Aut(\p^2) \simeq \PGL_3(\C)$ or $\Autz(\F_n)$, where $n \neq 1$ and $\F_n$ denotes the $n$-th Hirzebruch surface. The classification of maximal connected algebraic subgroups of $\Cr_3(\C)$ was later stated by Enriques and Fano in \cite{Enriques_Fano}, and a first classification was given by Umemura in \cite{Umemura80,Umemura82a,Umemura82b,Umemura85}. 
In this case, it again follows from the classification that every connected algebraic subgroup of $\Cr_3(\C)$ is contained in a maximal one.

Recently, in \cite{Fanelli_Floris_Zimmermann}, Fanelli-Floris-Zimmermann proved that the analogous property does not hold for $\Cr_n(\C)$ when $n\geq 5$: there exist connected algebraic subgroups that are not contained in any maximal one. Their result was extended to the case $n=4$ by Kollár in \cite{kollar_bundle}.

\medskip

Umemura's proof has been revisited and generalised to any algebraically closed field of characteristic zero by Blanc-Fanelli-Terpereau in \cite{BFT22,BFT23}. In these two articles, they use only techniques from algebraic geometry, such as the theory of algebraic groups and the \emph{Minimal Model Program} (MMP), whereas Umemura used a result of Lie that classifies analytic actions on complex threefolds (\cite[Theorem 1.12]{Umemura80}). 

\medskip

Throughout this article, we work over an algebraically closed field $\k$ of characteristic zero. Following the strategy of Blanc-Fanelli-Terpereau, we pursue an alternative direction, namely the classification of \emph{maximal connected algebraic subgroups of} $\Bir(C\times \p^2)$, where $C$ denotes a smooth projective curve of genus $g(C)\geq 1$. Running an MMP from a variety birationally equivalent to $C\times \p^2$, we obtain two types of Mori fibre spaces $\pi\colon X\to B$, according to the dimension of $B$:

\begin{enumerate}
\item Mori conic bundles over a projective surface $B$ birationally equivalent to $C\times \p^1$, see \autoref{lem:irrat generic fibre square}\autoref{square:2},
\item \emph{Mori Del Pezzo fibrations} over $B=C$, see \autoref{lem:preserves fibration}.
\end{enumerate}

Since $C$ is not rational, the case where $B=\Slpec(\k)$ (or equivalently, $X$ is a Fano variety), does not occur. The first case was studied mainly in \cite{Fong23}, where the first author classified connected algebraic subgroups that are maximal in $\Bir(X/B)$, with $B$ being a ruled surface. Such a subgroup is called \emph{relatively maximal}. 

\medskip

In this article, we focus on the automorphism groups of Mori Del Pezzo fibrations $\pi \colon X\to C$ and determine whether $\Autz(X)$ is a maximal connected algebraic subgroup of $\Bir(X)$. This leads to our \autoref{thm:mainDP} and \autoref{thm:main_superrigidity}. From these results, we deduce the classification of maximal connected algebraic subgroups of $\Bir(C\times \p^2)$, which gives \autoref{thm:main}.

\subsection{Main results}

The classification of vector bundles over an elliptic curve, due to Atiyah in \cite{Ati57}, states that there exist exactly $r$ indecomposable\footnote{This means that the projective bundle is the projectivisation of an indecomposable vector bundle $\El$, i.e., $\El$ is not isomorphic to the direct sum of two proper subbundles.} $\p^{r-1}$-bundles over an elliptic curve $C$ (see \autoref{thm:Atiyah} for $r=2,3$). These bundles are denoted $\Al_{r,d}$, and they are the projectivizations of indecomposable vector bundles $\El_{r,d}$ of rank $r$ and degree $d\in \{0,\ldots, r-1\}$.

\begin{theoremA}\label{thm:mainDP}
Let $\pi\colon X\to C$ be a Mori Del Pezzo fibration above a smooth projective curve $C$ of genus $g(C)\geq1$ and let $d=K_{X_{\k(C)}}^2$. 
If $\Autz(X)$ is a maximal connected algebraic subgroup of $\Bir(X)$, then exactly one of the following holds:
    \begin{enumerate}
    \item\label{mainDP:1} $g(C)=1$ and
        \begin{enumerate}[ref=\theenumi{}.\alph*]
        \item\label{mainDP:1.1} $d\in\{1,2,3,4,5,6\}$ and $\Autz(X)$ is an elliptic curve acting non-trivially on $C$.
        \item\label{mainDP:1.2} $d=8$ and $\Autz(X)\simeq\PGL_2(\k)$ acts diagonally on a general fibre of $\pi$.
        \item\label{mainDP:1.3} $d=9$ and $\Autz(X)$ acts non-trivially on $C$ and
            \begin{enumerate}[ref=\theenumii{}.\roman*]
            \item\label{mainDP:1.3i} $X$ is isomorphic to one of the indecomposable $\p^2$-bundles $\Al_{3,1}$ and $\Al_{3,2}$. In both cases, there exists a short exact sequence
            \[
            1 \to (\Z/3\Z)^2 \to \Autz(X) \to \Autz(C) \to 1.
            \]
            \item\label{mainDP:1.3ii} $X\simeq \p(\El_{2,0}\oplus \Ml)$, where $\Ml\in \Pic^0(C)$ has infinite order. In this case, there exists a short exact sequence
            \[
            1\to \G_m \rtimes \G_a \to \Autz(X) \to \Autz(C) \to 1. 
            \]
             \item\label{mainDP:1.3iii} $X\simeq \p(\Ol_C \oplus \Ll \oplus \Ml)$, where $\Ll\in \Pic^0(C)$ or $\Ml \in \Pic^0(C)$ is trivial, or both non-trivial and non-isomorphic such that $\Ll^{\otimes n} \otimes \Ml^{\otimes m}$ is not trivial for every $n,m \in \mathbb{Z}$ coprime. Then there exists a short exact sequence
             \[
             1 \to \Autz(X)_C \to \Autz(X) \to \Autz(C) \to 1,
             \]
             where 
             \[
             \Autz(X)_C \simeq
             \begin{cases}
                \PGL_3(\k) \text{ if $\Ll\simeq \Ml\simeq \Ol_C$ (equivalently, $X\simeq C\times \p^2$)},\\
                \GL_2(\k) \text{ if exactly one of $\Ll$ and $\Ml$ is trivial}, \\
                \G_m^2 \text{ if both $\Ll$ and $\Ml$ are non-trivial and non-isomorphic.}
             \end{cases}
             \]
            \end{enumerate}
        \end{enumerate}
    \item\label{mainDP:2} $g(C)\geq2$ and
        \begin{enumerate}[ref=\theenumi{}.\alph*]
        \item\label{mainDP:2.1} $d=8$ and $\Autz(X)\simeq\PGL_2(\k)$ acts diagonally on a general fibre of $\pi$.
        
        \item\label{mainDP:II.2} $d=9$ and 
        \begin{enumerate}[ref=\theenumii{}.\roman*]
            \item\label{mainDP:II.2i} $X\simeq C\times\p^2$ and $\Autz(X)\simeq \PGL_3(\k)$.
            \item\label{mainDP:II.2ii} $X\simeq \p(\El\oplus\Ml)$, where $\El$ is a rank-$2$ stable vector bundle fitting into a short exact sequence
        \[
        0\to\Ol_C\to\El\to\Ll\to0
        \]
        where $\Ll,\Ml$ are line bundles with $\deg(\Ll)=2\deg(\Ml)>0$.
        \end{enumerate}
        \end{enumerate}
    \end{enumerate}
\end{theoremA}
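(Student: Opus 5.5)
The plan is to split according to the degree $d=K_{X_{\k(C)}}^2\in\{1,\dots,9\}$ of the generic fibre. In each case I study the exact sequence
\[
1\to\Autz(X)_C\to\Autz(X)\to\Autz(C)
\]
coming from the fact that $\Autz(X)$ preserves $\pi$ (\autoref{lem:preserves fibration}). When $g(C)\geq2$, $\Autz(C)$ is trivial, so $\Autz(X)=\Autz(X)_C$ embeds into $\Autz(X_{\k(C)})$. When $g(C)=1$, $\Autz(C)\simeq C$ acts by translations. The kernel $\Autz(X)_C$ is a connected algebraic subgroup of the automorphism group of the generic fibre, which is a Del Pezzo surface of Picard rank one over $\k(C)$.

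\textbf{Small degrees.} For $d\le5$, the group $\Aut(X_{\k(C)})$ is finite, so $\Autz(X)_C$ is trivial and $\Autz(X)$ is either trivial or a quotient of an abelian variety surjecting isogenously onto $\Autz(C)$. In genus $\ge2$ it is trivial, hence not maximal: it is contained, for instance, in the automorphism group of some conic bundle model. For $g=1$, a nontrivial connected group acting on $C$ gives an elliptic curve, which is case \ref{mainDP:1.1}. For $d=6$, the connected automorphism group of the generic fibre is a twisted torus of rank $2$. Its $\k(C)$-points form an anisotropic form, since Picard rank one forces the Galois action to be transitive on the hexagon. I expect this to force $\Autz(X)_C$ to be finite, hence trivial, leading again to \ref{mainDP:1.1}. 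For $d=7$ there are no Mori fibrations (the generic fibre is never minimal).

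For $d=8$, the generic fibre is a form of $\p^1\times\p^1$ with Picard rank one, so $X$ is obtained from a conic-bundle-like structure via a double cover of $C$ or a Weil restriction. A connected group acting fibrewise is contained in a form of $\PGL_2$ acting diagonally. I would show that $\Autz(X)_C$ maximal forces $\PGL_2(\k)$ acting diagonally. In all other cases the group is solvable, or is $\G_m$ or $\G_a$ twisted, and is then conjugate via elementary links (Sarkisov links to conic bundles over $\p^1$-bundles over $C$) into a strictly larger group classified in \cite{Fong23}. For $g=1$, the action on $C$ must in addition be checked to be trivial in this maximal case.

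\textbf{Degree $9$: the main obstacle.} Here $X$ is a Severi–Brauer surface over $\k(C)$. By Tsen's theorem it is trivial, so $X$ is a $\p^2$-bundle $\p(\El)$. The task is to list the $\p^2$-bundles whose $\Autz$ is not strictly contained, after conjugation, in $\Autz$ of another $\p^2$-bundle or of a conic bundle. I would use elementary transformations along sub-line-bundles and quotients of $\El$ to build the comparison maps.

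For $g=1$, I would use Atiyah's classification (\autoref{thm:Atiyah}). The indecomposable bundles $\El_{3,1},\El_{3,2}$ give the Heisenberg extension by $(\Z/3\Z)^2$. The bundles $\El_{3,0}$, the decomposable bundles of nonzero degree, and the unstable bundles all admit an equivariant elementary transformation to a bundle with a larger group, so they are not maximal. This leaves $\El_{2,0}\oplus\Ml$ and $\Ol\oplus\Ll\oplus\Ml$ with the stated torsion conditions. The torsion cases are excluded because a cyclic cover, or a twist, yields a larger group.

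For $g\ge2$, the group $\Autz(X)=\Autz(X)_C$ is the projectivised group of automorphisms of $\El$ modulo scalars. Maximality forces $\El$ to split off enough trivial pieces. The Harder–Narasimhan/maximal-destabilising analysis should show that, up to twist, the only possibilities are $\Ol^3$ and the given $\El\oplus\Ml$, where $\Autz(X)$ is a unipotent-by-torus group of sections of $\Ll\otimes\Ml^{-1}$-type bundles.

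The hardest step is this exclusion. It requires checking, for each remaining bundle, that no equivariant birational map to another Mori fibre space increases the group. I would approach it by running the equivariant Sarkisov program (\autoref{lem:irrat generic fibre square}): every link starts by blowing up an $\Autz(X)$-invariant curve, necessarily a section. So I would list the invariant sections and compare dimensions of the resulting automorphism groups.
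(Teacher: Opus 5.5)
Your split by degree is the paper's, but the steps that carry the theorem (degrees $8$ and $9$) are only announced, and several of the mechanisms you name would not work.

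In degree $9$, Tsen's theorem gives $X_{\k(C)}\simeq\p^2_{\k(C)}$. It does not give that $X$ is a $\p^2$-bundle. To transport maximality you need an $\Autz(X)$-equivariant birational map over $C$ to a $\p^2$-bundle, which is \autoref{prop:Mdp to bundle}.

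Equivariant links also need not begin by blowing up an invariant section. The enlarging maps the paper uses (\autoref{lem:dec_withdeg0term}, \autoref{lem:decomposable_E21}, \autoref{lem:unstable C high genus}) blow up an invariant line or point inside a single fibre. These are elementary transformations $\p(\El'\oplus\Ml)\rat\p(\El'\oplus\Ml(-p))$, and they exist precisely because the group fixes $p\in C$. A cyclic cover is not a birational map, so it cannot dispose of the torsion cases.

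The ideas you are missing are these.
\begin{enumerate}
\item For $g(C)=1$, $\Autz(X)\to\Autz(C)$ is surjective iff $\El$ is semi-homogeneous (\autoref{lem:pi-star surjective}); otherwise $\Autz(X)$ is not maximal (\autoref{prop:trivial_action_on_C}).
\item By \autoref{lem:F_b_trivial_action_on_C}, once the group fixes a point of $C$, blowing up an invariant section gives an $\F_1$-bundle whose automorphism group is not maximal. For $g(C)\ge2$ the invariant section or $\p^1$-subbundle comes from the maximal destabilizing subbundle or the socle. The leftover case $\Fl\simeq\Ol_C^{\oplus2}$ needs iterated elementary transformations until a section of $\Ml''^{-1}$ moves the base point.
\item For $\El_{3,0}$, $\El_{2,0}\oplus\Ol_C$, $\El_{2,0}\oplus\Ml$ with $\Ml$ of finite order, and $\Ol_C\oplus\Ll\oplus\Ml$ with $\Ll^{\otimes n}\otimes\Ml^{\otimes m}\simeq\Ol_C$ for some coprime $(n,m)$, one uses equivariant links to $\p^1$-bundles over ruled surfaces and then the classification in \cite{Fong23}.
\end{enumerate}
Finally, the statement is only ``maximal $\Rightarrow$ listed''. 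You must produce an enlarging map for every bundle \emph{off} the list. The check you call the hardest step (no enlarging map for the listed bundles) proves the converse and is not needed.

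In degree $8$, the conic-bundle route fails. In the non-$\PGL_2(\k)$ branch of \autoref{lem:Mfs8 aut}, as soon as the unipotent part is non-trivial, the group fixes exactly one closed point of $X_{\k(C)}$, a rational one. Blowing up a rational point of a Picard-rank-one quadric leads to $\p^2_{\k(C)}$, not to a conic bundle. The paper uses exactly this link to degree $9$ (\autoref{lem:Mfs8 restriction}).

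The step you flag, triviality of the action on $C$, is deduced in \autoref{lem:dP8 Aut on C} from singular fibres over the zeros of $\mu$. That argument needs the double cover $B\to C$ to be ramified, and the claim is false when $B\to C$ is \'etale. Take $X=(B\times\p^1\times\p^1)/(\Z/2\Z)$, with the involution translating $B$ by a $2$-torsion point and swapping the factors. This is a smooth degree-$8$ Mori Del Pezzo fibration over the elliptic curve $C$, and $\Autz(X)\supseteq B\times\PGL_2(\k)$. That group has no orbits of dimension $\le1$, so every equivariant link is an isomorphism. Hence $\Autz(X)$ is maximal, acts transitively on $C$, and is not $\PGL_2(\k)$.

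In low degree, $\Autz(X)_C$ is finite but not trivial in general; it is $\Z/5\Z$ for $d=5$. Your reason why the trivial group is not maximal for $g(C)\ge2$ is also unfounded. For $d=1$ one has $\Bir(X_{\k(C)})=\Aut(X_{\k(C)})$ finite, so $\Bir(X)$ contains no non-trivial connected algebraic subgroup at all. The paper tacitly assumes $\Autz(X)\neq1$ here.

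Your $d=6$ idea, by contrast, works and is cleaner than \autoref{lem:MdP 6 1} and \autoref{lem:MdP 6 2}. Base change turns the action of the neutral component $G$ of $\Autz(X)_C$ into a homomorphism of $\k(C)$-groups $G_{\k(C)}\to\Autz(X_{\k(C)})$. The target torus is anisotropic because $\rho(X_{\k(C)})=1$, while every torus quotient of $G_{\k(C)}$ is split. So the homomorphism is trivial, and by density of the generic fibre $G$ acts trivially on $X$.
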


If $g(C)\geq 2$, notice that there are much less maximal connected algebraic subgroups of $\Bir(C\times \p^2)$. This is expected, since in dimension two, $\Autz(C\times \p^1)\simeq \PGL_2(\k)$ is the unique maximal connected algebraic subgroup of $\Bir(C\times \p^1)$ (see \cite[Theorem B]{Fong20}). It is also worth mentioning that we do not know whether  \autoref{thm:mainDP}\autoref{mainDP:II.2ii} actually occurs (see \autoref{rem:stable+line_bundle}).

\medskip

In \autoref{thm:main_superrigidity}, we investigate whether the pairs $(X,\pi)$ in \autoref{thm:mainDP} are \emph{superrigid} when $X$ is a $\p^2$-bundle (see \autoref{def:superrigid}), i.e., $\Autz(X)$ is a maximal connected algebraic subgroup of $\Bir(X)$ and there exists no $\Autz(X)$-equivariant birational map to another Mori fiber space. In the cases where $(X,\pi)$ is not superrigid, we provide a complete list of all such $\Autz(X)$-equivariant birational maps, except for \autoref{thm:mainDP}\autoref{mainDP:II.2ii}.

\begin{theoremA}\label{thm:main_superrigidity}
Let $\pi\colon X\to C$ be a $\p^2$-bundle above a smooth projective curve $C$ of genus $g(C)\geq1$. 
    Using the numbering from \autoref{thm:mainDP}\autoref{mainDP:1.3} (Case $g(C)=1$) and \autoref{mainDP:II.2} (Case $g(C)\geq2$), the following hold:
    \begin{enumerate}
    \item[\autoref{mainDP:1.3}] The Case $g(C)=1$: 
    \begin{enumerate} 
        \item[\autoref{mainDP:1.3i}] $(X,\pi)$ is superrigid.
        \item[\autoref{mainDP:1.3ii}] $(X,\pi)$ is not superrigid. The $\Autz(X)$ is conjugate to $\Autz(X')$, where \newline $\pi' \colon X' \to B$ is a Mori fibre space, if and only if one of the following holds:
        \begin{enumerate}[label=(\alph*)]
        \item There exist $b\in \mathbb{Z}$ and a $\p^1$-bundle structure $\pi'\colon X'\to \Al_{2,0}$, so that $B=\Al_{2,0}$, and
        such that 
        \begin{align*}
        X'&\simeq \P(\Ol_{\Al_{2,0}} \oplus \Ol_{\Al_{2,0}}(b\sigma)\otimes \tau^*(\Ml)),
        \end{align*}
        where $\sigma$ denotes the class of the unique minimal section of the ruled surface $\Al_{2,0}$.
        \item There exist $b\geq 0$, $n\in \{0,\ldots,b\}$, and a $\p^1$-bundle structure $\pi'\colon X'\to \P(\Ol_C \oplus \Ml)$, so that $B = \P(\Ol_C \oplus \Ml)$, and such that
        \[
           X'\simeq\Al_{(\P(\Ol_C \oplus \Ml),b,\Ml^{\otimes n})} 
        \]
         constructed in \autoref{ex:A(S,b,M)}.
        \item There exists a structure of $\p^2$-bundle $\pi'\colon X'\to C$, so that $B=C$, and such that $X'\simeq X$ or $$X'\simeq \P(\El_{2,0}\oplus \Ml^\vee) = :X^\vee.$$ 
    \end{enumerate}   
    \item[\autoref{mainDP:1.3iii}] $(X,\pi)$ is superrigid if and only if $\Ll\simeq \Ml \simeq \Ol_C$. 
        \begin{enumerate}[label=(\alph*)]
        \item If $\Ll$ is trivial but $\Ml$ is not, then $\Autz(X)$ is conjugate to $\Aut(X')$, where $\pi' \colon X' \to B$ is a Mori fibre space, if and only if one of the following holds:
             \begin{enumerate}[label=(\roman*)]
             \item $X=X'$ is a $\p^2$-bundle over $C$.
    
             \item There exists a $\p^1$-bundle structure $\pi'\colon X' \to C\times \p^1$, so that $B=C\times \p^1$, and such that
            $$X' \simeq \P(\Ol_{C\times \P^1} \oplus \Ol_{C\times \P^1}(\sigma) \otimes \tau^*(\Ml)),$$ 
             where $\sigma$ denotes the class of a constant section of $\tau\colon C\times \p^1\to C$.
            \end{enumerate}
     
    \item  If $\Ll$ and $\Ml$ are not trivial, then $\Autz(X)$ is conjugate to $\Aut(X')$, where $\pi' \colon X' \to B$ is a Mori fibre space, if and only if there exists 
    \[
    \begin{pmatrix}
    \alpha & \beta \\ \gamma & \delta
    \end{pmatrix}\in \mathrm{SL}_2(\Z) 
    \]
    such that $X'$ is isomorphic to one of the following Mori fibre space:
        \begin{enumerate}[label=(\roman*)]
         \item There exist $b\in \mathbb{Z}$ and a $\p^1$-bundle structure $\pi'\colon X'\to S$ such that 
        \[
        X' \simeq \P\left(\Ol_S \oplus (\Ol_S(b \sigma) \otimes \tau^*(\Ll^{\otimes \alpha} \otimes \Ml^{\otimes \beta}))\right),
        \]
        where $S:=\P(\Ol_C \oplus (\Ll^{\otimes \gamma} \otimes \Ml^{\otimes \delta})) = B$, and $\tau\colon S\to C$ denotes the $\p^1$-bundle structure morphism, and $\sigma$ is the section of $\tau$ corresponding to the line subbundle $\Ll^{\otimes\gamma}\otimes\Ml^{\otimes\delta}$.
        
        \item There exists a $\p^2$-bundle structure $\pi'\colon X'\to C$, so that $B=C$, and such that 
        \[X'\simeq \P(\Ol_C \oplus (\Ll^{\otimes \alpha} \otimes \Ml^{\otimes \beta}) \oplus (\Ll^{\otimes \gamma} \otimes \Ml^{\otimes \delta})).\]
         \end{enumerate}
    \end{enumerate}
    \end{enumerate}
    \item[\autoref{mainDP:II.2}] The case $g(C)\geq 2$: \autoref{mainDP:II.2i} $(X,\pi)$ is superrigid.
    \end{enumerate}
\end{theoremA}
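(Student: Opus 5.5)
The plan is to use the equivariant Sarkisov program. By the $G$-equivariant version of the Sarkisov program (Floris), any $\Autz(X)$-equivariant birational map from $X$ to another Mori fibre space $X'\to B$ factors into $\Autz(X)$-equivariant Sarkisov links. Each link starts from a $G$-equivariant extremal contraction, which by Blanchard's lemma and connectedness must blow up (or contract) a $G$-invariant irreducible curve or surface. So the first step, for each case of \autoref{thm:mainDP}\autoref{mainDP:1.3} and \autoref{mainDP:II.2i}, is to list the $\Autz(X)$-invariant curves in $X$. Since $\Autz(X)$ surjects onto $\Autz(C)$ when $g(C)=1$, an invariant curve is either a section stable under translations or lies in a fibre; the latter is impossible because $\Autz(C)$ acts transitively on $C$. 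The invariant sections and subbundles come from subbundles of $\El$ that are stable under the $\Autz(X)$-action.

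\textbf{Superrigid cases.} For $\Al_{3,1},\Al_{3,2}$ we use the exact sequence in \autoref{mainDP:1.3i}. Because $\El_{3,d}$ is indecomposable of coprime rank and degree, it is stable and has no invariant line or rank-$2$ subbundle. The finite group $(\Z/3\Z)^2$ acts on each fibre $\p^2$ via the Heisenberg representation, so it fixes no point or line there. Hence there are no invariant curves or proper invariant surfaces, no link can start, and the pair is superrigid. For $C\times\p^2$ (either genus), the group $\PGL_3$ acts transitively on the fibres, so the only invariant subvarieties are pullbacks of points of $C$. When $g\ge2$ these are fibres, and blowing up a fibre does not give a link, since links of type I/II over $C$ would need a centre inside a fibre that is invariant under $\PGL_3$. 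This yields superrigidity.

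\textbf{Non-superrigid cases.} For $\p(\El_{2,0}\oplus\Ml)$ and $\p(\Ol\oplus\Ll\oplus\Ml)$ there are invariant sections, namely $\p(\Ml)$ and the coordinate sections. There are also invariant surfaces, namely $\p(\El_{2,0})\simeq\Al_{2,0}$ and the $\p(\text{rank-}2)$ surfaces. We then compute the links explicitly.
\begin{enumerate}
\item Blowing up an invariant section gives a type I link to a $\p^1$-bundle over the ruled surface $\p(\El_{2,0})$ or $\p(\Ol\oplus\Ml)$. We identify it as $\p(\Ol_S\oplus\Ol_S(b\sigma)\otimes\tau^*(\cdot))$.
\item From these conic bundles, further links of type II are elementary transformations along invariant sections of $S$ in invariant surfaces. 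These change $b$, and the twists yield the families $\Al_{(S,b,\Ml^{\otimes n})}$.
\item Type III links contract back to $\p^2$-bundles such as $X^\vee$ or, in the split case, $\p(\Ol\oplus\Ll^\alpha\Ml^\beta\oplus\Ll^\gamma\Ml^\delta)$.
\end{enumerate}
Tracking the line bundles through compositions of these moves shows that they transform by elementary matrices, which generate exactly the $\mathrm{SL}_2(\Z)$-orbit. Maximality of the conjugate groups (the "if" direction) follows because equivariant birational maps preserve maximality.

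\textbf{Main obstacle.} The hardest step is the completeness of the list in the non-superrigid cases. This requires classifying all $\Autz(X')$-invariant curves on each intermediate conic bundle, which depend on $b$ and on the torsion of line bundles. It also requires checking that every link lands back in one of the listed families. I would first carry this out via a torus-weight and invariant-subbundle analysis on $\p^1$-bundles over ruled surfaces, using the results of \cite{Fong23}.
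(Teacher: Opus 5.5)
Your treatment of $C\times\p^2$ matches the paper. For the decomposable bundles your plan (list the invariant sections, compute the type I links to $\p^1$-bundles over ruled surfaces, then use \cite{Fong23}) is also the paper's route. You do still need to rule out type III/IV links from $X$ (\autoref{lem: links III and IV}) and equivariant maps to Mori Del Pezzo fibrations of degree $\le 8$ (\autoref{lem:no_map_to_lower_degree}).

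The genuine gap is the case $\Al_{3,1},\Al_{3,2}$. Your step ``an invariant curve is either a section or lies in a fibre'' only works when $\Autz(X)_C$ is connected. Indeed, an orbit $\Autz(X)\cdot x$ with $x\in\pi^{-1}(c)$ meets $\pi^{-1}(c)$ exactly in $\Autz(X)_C\cdot x$. For $\Al_{3,i}$ we have $\Autz(X)_C\simeq(\Z/3\Z)^2$, and $\Autz(X)$ is an elliptic curve acting without fixed points. So \emph{every} orbit is a closed invariant curve, a multisection of $\pi$. The Heisenberg action only excludes invariant sections and invariant $\p^1$-subbundles, so ``no link can start'' does not follow. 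The paper argues differently: it claims every orbit is $9:1$ over $C$, so extracting an orbit would blow up a point of degree $9$ on $\p^2_{\k(C)}$, which is not Del Pezzo (\autoref{lem:4:1_maximal}).

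That count is itself wrong, and the superrigidity claim for $\Al_{3,1},\Al_{3,2}$ fails, so no argument can prove it. Write $X\simeq G\times^H\p^2$ with $G=\Autz(X)$ and $H=(\Z/3\Z)^2$ acting as in \autoref{ex:indec_V9}. The orbit of $[g,p]$ is $G/\mathrm{Stab}_H(p)$, which has degree $9/|\mathrm{Stab}_H(p)|$ over $C$. Every element of $\PGL_3(\k)$ has fixed points: for example, $[1:0:0]$ is fixed by $\mathrm{diag}(1,j,j^2)$, and its $H$-orbit is the three coordinate points. The corresponding orbit $\Gamma\subset X$ is therefore an elliptic curve, étale $3:1$ over $C$. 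Blowing up $\Gamma$ gives a fibration in Del Pezzo surfaces of degree $6$. Contracting the strict transforms of the lines through pairs of points of $\Gamma$ in each fibre then gives an $\Autz(X)$-equivariant type II link. Explicitly, $\sigma=[yz:xz:xy]$ satisfies $\sigma\,\mathrm{diag}(1,j,j^2)\,\sigma^{-1}=\mathrm{diag}(1,j^2,j)$ and commutes with the cyclic permutation matrix. Hence $\mathrm{id}\times\sigma$ descends to a $G$-equivariant birational map from $G\times^H\p^2$ to the quotient by the dual representation, i.e.\ a link $\Al_{3,1}\rat\Al_{3,2}$ that is not an isomorphism. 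Maximality survives, since both groups are elliptic curves. But the pair is not superrigid, and $\Autz(\Al_{3,1})$ and $\Autz(\Al_{3,2})$ are conjugate.
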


For the case \autoref{thm:mainDP}\autoref{mainDP:II.2ii}, we do not know whether the pair $(X,\pi)$ is superrigid. Moreover, in \autoref{thm:main_superrigidity}, when $g(C)=1$ and the pair $(X,\pi)$ is not superrigid, then there exists an $\Autz(X)$-equivariant birational map $X\dashrightarrow X'$, that conjugates $\Autz(X)$ and $\Autz(X')$, where $X'$ is a $\p^1$-bundle over a ruled surface birationally equivalent to $C\times \p^1$. This connects the automorphism groups of $\p^2$-bundles studied in this article, with the automorphism groups of $\p^1$-bundles classified in \cite{Fong23}.

\medskip

If $\pi\colon X\to C$ is a Mori Del Pezzo fibration of degree $d\leq 4$, then $X$ is not birational to $ C\times \p^2$ (see \autoref{prop:MdP5}). Combining \autoref{thm:mainDP} and the classification of relatively maximal automorphism groups of $\P^1$-bundles in \cite[Theorem A and C]{Fong23}, we obtain the maximal connected algebraic subgroups in $\Bir( C\times \p^2)$ when $g(C)\geq1$.

\begin{theoremA}\label{thm:main}
Let $G$ be a maximal connected algebraic subgroup of $\Bir( C\times \p^2)$, where $C$ is a smooth projective curve of genus $g(C)\geq1$. Then there exists a $G$-equivariant birational map $\varphi\colon  C \times \p^2\rat X$ to a Mori fibre space $\pi\colon X\to B$, such that $\varphi G\varphi^{-1}=\Autz(X)$, and exactly one of the following holds:
    \begin{enumerate}
    \item[(I)] $g(C)=1$ and  
        \begin{enumerate}
        \item[(1)] $B=C$ and $X$ is as in \autoref{thm:mainDP}\autoref{mainDP:1} with $d\in \{6,8,9\}$;
        \item[(2)] $X$ is isomorphic to one of the following $\p^1$-bundles: 
        \begin{enumerate}[(i)]
        \item $S'\times \p^1$, where $\tau'\colon S'\to C$ is one of the following ruled surfaces:
        \begin{enumerate}[(a)]
            \item $C\times \p^1$,
            \item $\Al_{2,0}$,
            \item $\Al_{2,1}$,
            \item $\p(\Ol_C\oplus \Ll)$, where $\Ll\in \Pic^0(C)$ is non-trivial.
        \end{enumerate}
        For each $S'\times \p^1$ above, there exist two $\p^1$-bundle structures, namely the trivial one over $S'$ and the morphism $\tau' \times id_{\p^1}$. We have $B= S'$ and $B=C\times \p^1$, respectively.

        \item $\Al_{2,1} \times_C \Al_{2,1}$, equipped with the projection onto either factor, so that $B=\Al_{2,1}$.

        \item $\p(\Ol_C\oplus \Ll) \times_C \Al_{2,1}$, 
        where $\Ll\in \Pic^0(C)$ is not two-torsion. There exist two $\p^1$-bundle structures, which are the projections onto either factor, so that $B=\p(\Ol_C\oplus \Ll)$ and $B=\Al_{2,1}$, respectively.

        \item $\p(\Ol_{C\times \p^1}\oplus \Ol_{C\times \p^1}(b\sigma +\tau^*(D)))$ is a decomposable $\p^1$-bundle over $C\times \p^1$, where $\sigma$ a constant section in $C\times \p^1$, $b\geq2$, and $D\in \Pic^0(C)$. In this case, $B=C\times \p^1$.
        
        \item $\p(\Ol_{\Al_{2,1}} \oplus \Ol_{\Al_{2,1}}(2\sigma +\tau^*(D)))$ is a decomposable $\p^1$-bundle over $\Al_{2,1}$, 
        where $\sigma\subset \Al_{2,1}$ is a minimal section and $D$ is a non-trivial $2$-divisor on $C$ (see \autoref{lem:2-divisor}). In this case, $B=\Al_{2,1}$.
    \end{enumerate}
        \end{enumerate}
    \item[(II)] $g(C)\geq2$ and 
        \begin{enumerate}
        \item[(1)] $X$ is as in \autoref{thm:mainDP}\autoref{mainDP:2}.
        \item[(2)] $X=C\times\p^1\times\p^1$ and $\pi$ is the trivial $\p^1$-bundle, so that $B=C\times \p^1$.
        \end{enumerate}
    \end{enumerate}
\end{theoremA}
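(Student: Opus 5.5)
The strategy is a reduction to maximal automorphism groups of Mori fibre spaces, followed by a sorting of the candidates by the type of fibration. Since $G$ is a connected algebraic subgroup of $\Bir(C\times\p^2)$, I will first regularize it: by Weil's regularization theorem there is a smooth projective variety $Y$ birational to $C\times\p^2$ on which $G$ acts biregularly. Next I run a $G$-equivariant MMP on $Y$. Because $G$ is connected, it acts trivially on $N^1(Y)$, so each step is $G$-equivariant. The output is a Mori fibre space $\pi\colon X\to B$ with $G\subseteq\Autz(X)$, and maximality of $G$ forces $G=\Autz(X)$. Since $C$ is irrational, $B$ is not a point. Hence either
\begin{itemize}
\item $B=C$ and $\pi$ is a Mori Del Pezzo fibration (\autoref{lem:preserves fibration}), or
\item $B$ is a surface birational to $C\times\p^1$ and $\pi$ is a Mori conic bundle (\autoref{lem:irrat generic fibre square}).
\end{itemize}

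\textbf{Case $B=C$.} I apply \autoref{thm:mainDP} to $\Autz(X)$, which is maximal. This gives the list there, except that $X$ must also be birational to $C\times\p^2$. By \autoref{prop:MdP5}, degrees $d\le 4$ are excluded. For $g(C)=1$ this leaves $d\in\{5,6\}$ in case \autoref{mainDP:1.1}, together with $d=8,9$. I will exclude $d=5$ through the generic fibre: a Del Pezzo surface of degree $5$ over $\k(C)$ always has a rational point and is rational. So a degree $5$ Mori fibration of relative Picard rank one with an elliptic curve action should not be maximal. I expect it is either excluded inside the paper's proof of \autoref{thm:mainDP}, or it becomes equivariantly linked to a conic bundle with a larger group. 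I need to check which earlier statement covers this; possibly \autoref{prop:MdP5} itself treats $d\le 5$. This yields (I)(1) with $d\in\{6,8,9\}$ and (II)(1).

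\textbf{Case $B$ a surface.} Here $\Autz(X)$ is maximal in $\Bir(X)$, hence relatively maximal in $\Bir(X/B)$. I then invoke the classification of relatively maximal subgroups of $\p^1$-bundles over ruled surfaces from \cite[Theorem A and C]{Fong23}. A preliminary step is to reduce to $\pi$ being a $\p^1$-bundle over a ruled surface $B\to C$. This uses the equivariant minimal-model arguments of \cite{Fong23}: singular conic fibrations are not maximal, or can be equivariantly modified. From that list I remove the cases that are not maximal in $\Bir(X)$. These are precisely the ones conjugate into the automorphism group of a $\p^2$-bundle, and \autoref{thm:main_superrigidity} lists them (for example the decomposable bundles over $\P(\Ol_C\oplus\Ml)$ and over $\Al_{2,0}$ arising from \autoref{mainDP:1.3ii} and \autoref{mainDP:1.3iii}). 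For $g\ge2$ I use that $\Bir(C\times\p^1)$ has the unique maximal group $\PGL_2$ \cite[Theorem B]{Fong20}, so $B=C\times\p^1$ and the surviving case is $C\times\p^1\times\p^1$.

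\textbf{Exactly one, and the main obstacle.} The cases are mutually exclusive because they are distinguished by the isomorphism type of the group (for instance elliptic versus containing $\PGL_3$) or by the equivariant links found in \autoref{thm:main_superrigidity}. The main obstacle is this bookkeeping. The difficult case is $g(C)=1$: I must match each non-superrigid $\p^2$-bundle with the $\p^1$-bundles it links to, so that every conjugacy class of $G$ is listed exactly once. For this I would work through the explicit $\mathrm{SL}_2(\Z)$-links of \autoref{thm:main_superrigidity} case by case.
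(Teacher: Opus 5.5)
Your skeleton matches the paper (\autoref{lem:Weil} and the proof of \autoref{thm:main}): regularize, run a $G$-equivariant MMP, split by $\dim B$, then apply \autoref{thm:mainDP} when $B=C$ and \cite{Fong23} when $\dim B=2$. However, two exclusions have no working argument, and the mechanisms you suggest for them are wrong.

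\textbf{Degree $5$.} Nothing earlier removes $d=5$ for free. \autoref{prop:MdP5} keeps exactly the degrees $d\geq5$, and item (1)(a) of \autoref{thm:mainDP} explicitly allows $d=5$.

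Your heuristic cannot work: ``the generic fibre is a $\k(C)$-rational Del Pezzo surface, so the group is not maximal'' applies verbatim to $d=6,8,9$, where maximal groups exist (e.g.\ \autoref{cor:DP6}). Rationality of $X_{\k(C)}$ carries no equivariant information.

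The missing idea is an invariant section:
\begin{itemize}
\item By \autoref{pro:DP5} and \autoref{thm:almost product}, $G=\Autz(X)$ is an elliptic curve and $X\simeq G\times^{H}S_5$. Here $H=\Autz(X)_C\simeq\Z/5$ acts on the quintic Del Pezzo surface $S_5$ through a $5$-cycle, and $\Pic(S_5)^H$ has rank one.
\item A generator therefore has trace $0$ on $\Pic(S_5)\otimes\mathbb{Q}$, so its Lefschetz number is $2$ and it has a fixed point $s$. Then $G\cdot[e,s]\simeq G/H\simeq C$ is an $\Autz(X)$-invariant section.
\item The equivariant link extracting this section leads to a degree-$9$ fibration, hence by \autoref{prop:Mdp to bundle} to a $\p^2$-bundle $Z$.
\item Either the conjugate of $\Autz(X)$ is strictly contained in $\Autz(Z)$, or $\Autz(Z)$ is an elliptic curve with $\Autz(Z)_C\simeq\Z/5$. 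The latter does not occur in \autoref{thm:DP9 max}, so in both cases $\Autz(X)$ is not maximal.
\end{itemize}
This is \autoref{mainprop:MDP5} together with the existence of the invariant section.

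\textbf{Conic bundles with non-empty discriminant.} \cite{Fong23} only treats $\p^1$-bundles over ruled surfaces. Your claim that singular conic fibrations are never maximal is false: by \autoref{pro: igfrs}, if $g(C)=1$ and $K^2_{X_{\k(C)}}\leq4$, the elliptic curve $\Autz(X)$ is maximal in $\Bir(X)$. After reducing to a standard conic bundle (\autoref{prop:reduction to standard conic bundles}), suppose $X_{\k(B)}$ is an irrational conic:
\begin{itemize}
\item If $\Autz(X)$ acts trivially on $C$ (automatic when $g(C)\geq2$), \autoref{lem:to ruled surface} gives an equivariant map to a degree-$8$ Mori Del Pezzo fibration with a strictly larger group.
\item If the action is non-trivial, then $g(C)=1$ and $B$ is ruled. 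For $K^2\geq5$, $\Autz(X)$ is not maximal by \autoref{pro: igfrs} (via \autoref{lem:igf 9}).
\item For $K^2\leq4$ the group is maximal, so these cases must be discarded because $X$ is not birational to $C\times\p^2$ at all. Any such map would be over $C$, since general fibres of $X\to C$ are rational surfaces. But a standard conic bundle over $\p^1_{\k(C)}$ of relative Picard rank one with $K^2\leq4$ is not $\k(C)$-rational (Iskovskikh; cf.\ \cite[Theorem 2.6]{iskovskikh_1996}).
\end{itemize}
Only after this is $\pi$ a $\p^1$-bundle. You must still normalize it to an $\F_b$-bundle over a ruled surface (\autoref{lem:descent}, \autoref{lem:jumping fibres}) before \cite[Theorem A]{Fong23} applies.

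Two smaller points.
\begin{itemize}
\item For $g(C)\geq2$, \cite[Theorem B]{Fong20} alone does not force $X\simeq C\times\p^1\times\p^1$. The image of $G$ in $\Autz(B)$ need not be maximal or even non-trivial (think of vertical groups of $\F_b$-bundles), so you need \cite[Theorem A(II)]{Fong23}.
\item In your bookkeeping, the $\p^1$-bundle groups conjugate to $\Autz$ of a $\p^2$-bundle in \autoref{thm:main_superrigidity} are maximal, not non-maximal. They are absent from (I)(2) only because they are already represented in (I)(1). The genuinely non-maximal relatively maximal groups are removed by \cite[Corollary D]{Fong23}, and, for $b=1$ with $D$ trivial, by the contraction onto $C\times\p^2$, as in \autoref{prop:P1-bundles}.
\end{itemize}
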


\subsection{Plan of the article}

This article is structured as follows.

\medskip

In \autoref{s:preliminaries}, we recall generalities on Del Pezzo surfaces and Mori Del Pezzo fibrations. We also analyse the Sarkisov links starting from Mori Del Pezzo fibrations.
The \emph{Sarkisov Program} is an algorithm that decomposes every birational map between Mori fibre spaces into \emph{Sarkisov links}; it was established by Corti in dimension two and three, by Hacon and McKernan in higher dimensions and by Floris for the equivariant version \cite{corti1995factoring,HMcK,Flo20}. 
This program serves as our main tool to determine the maximality of the automorphism group of a Mori fibre space.

\medskip

Let $\pi\colon X\to C$ be a Mori Del Pezzo fibration, where $C$ is a smooth projective curve of genus $g(C)\geq 1$. The integer $d=K_{X_{\k(C)}}^2 \in \{1,\ldots,9\}$ is called the \emph{degree} of $\pi$. 

\medskip

Inspired by the ideas in \cite{BFT22}, we study the case $d=8$ in \autoref{s:MdP8}. If $C$ is an elliptic curve, this case provides the only examples of $\Autz(X)$ that are maximal connected algebraic subgroups of $\Bir(X)$, while acting trivially on $C$. Next, in \autoref{s:finite}, we focus on the cases where $d\leq 5$, which share the property that $\Autz(X)_C$ is finite (see \autoref{lem:auto of dp surface}). Among these, only the degree $d=5$ cases are birational to $C\times \p^2$ (see \autoref{prop:MdP5}). In this situation, however, $\Autz(X)$ is trivial (see \autoref{mainprop:MDP5}). We then consider the case $d=6$ in \autoref{s:MdP6}. 
The case $d=7$ does not occur, since $\rho(X/C)>1$ (see \autoref{lem:auto of dp surface} and \autoref{rmk: MdP}). 
Taken together, these analyses lead to the following statements:

\begin{propositionA}\label{thm:K<5}
Let $\pi\colon X\to C$ be a Mori Del Pezzo fibration above a curve $C$ of genus $g(C)\geq1$ such that $\Autz(X)$ is non-trivial and suppose that $d:=K_{X_{\k(C)}}^2\in\{1,2,3,4,6\}$. Then $g(C)=1$ and $\Autz(X)$ is an elliptic curve acting non-trivially on $C$ and it is a maximal connected algebraic subgroup of $\Bir(X)$. Moreover, the following hold: 
\begin{enumerate}
    \item $X\simeq\Autz(X)\times^{\Autz(X)_C}\pi^{-1}(0)$, where $\pi^{-1}(0)$ is a Del Pezzo surface of degree $d$.
    \item If $d\neq4$, then any $\Autz(X)$-equivariant birational map $\varphi\colon X\rat Y$ to another Mori fibre space $\pi'\colon Y\to B$ satisfies that $\pi'$ is a Mori Del Pezzo fibration of degree $d$.
    \item If $d=6$ and $\varphi\colon X\rat Y$ is an $\Autz(X)$-equivariant birational map to another Mori fibre space then $Y\simeq X$.
\end{enumerate}
\end{propositionA}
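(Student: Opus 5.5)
The plan is to treat the connected group $G:=\Autz(X)$ through the exact sequence
\[
1\to \Autz(X)_C\to G\to \Autz(C),
\]
which comes from Blanchard's lemma, since $\pi$ is $G$-equivariant. Here $\Autz(X)_C$ denotes the identity component of the kernel. For $d\in\{1,2,3,4,6\}$, \autoref{lem:auto of dp surface} says that the automorphism group of the generic fibre $X_{\k(C)}$ has a finite connected part. For $d\le 5$ the whole group is finite. For $d=6$, the Mori condition $\rho(X/C)=1$ forces the Galois action to be transitive on the hexagon of $(-1)$-curves, so the torus part cannot be defined over $\k(C)$ and is killed. Hence $\Autz(X)_C$ is trivial, and $G$ embeds into $\Autz(C)$. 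Because $G$ is non-trivial and $g(C)\ge 2$ implies $\Autz(C)=1$, we get $g(C)=1$ and $G\simeq \Autz(C)\simeq C$, an elliptic curve acting transitively on $C$.

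For part (1), I would fix a point $0\in C$ and let $H\subset G$ be the stabiliser of the fibre $F=\pi^{-1}(0)$. This stabiliser is the finite group $\Autz(X)_C$ in the scheme-theoretic sense, namely the kernel of $G\to C$. The action map $G\times F\to X$ factors through $G\times^{H}F$. It is a bijective morphism onto the normal variety $X$, since it is fibrewise an isomorphism over $C=G/H$, so it is an isomorphism. Smoothness of $F$ follows from homogeneity: all fibres are isomorphic to one another, and a general fibre is a smooth Del Pezzo surface of degree $d$.

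Maximality and parts (2) and (3) come from the equivariant Sarkisov program \cite{Flo20}. Let $\varphi\colon X\rat Y$ be $G$-equivariant and decompose it into $G$-equivariant links. Every link starting from $X$ blows up a $G$-invariant centre, or begins with a $G$-equivariant small modification. Since $G$ acts transitively on $C$, any $G$-orbit of a point of $X$ surjects onto $C$, so it is a curve that is a multisection. Using the earlier analysis of Sarkisov links from Mori Del Pezzo fibrations in \autoref{s:preliminaries}, I would check that for $d\in\{1,2,3\}$ and $d=6$ the only possible links are fibrewise, of type II over $C$ and landing on another Del Pezzo fibration of the same degree. For $d\le 3$ this is just the rigidity of low-degree Del Pezzo fibrations with $K^2$ small. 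This gives (2).

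For $d=6$, I would go further: the centre is an étale multisection of orbits. Its blow-up followed by the contraction of a fibrewise configuration gives a fibration of degree $6$ again, isomorphic to $X$ via the homogeneous structure from (1), because $F$ and its image are isomorphic degree-$6$ surfaces with the same $H$-action. This gives (3).

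Maximality then follows. If $G\subsetneq G'$ with $G'$ connected algebraic, regularise $G'$ on a Mori fibre space $Y$ and compose to obtain a $G$-equivariant map $X\rat Y$. By the above, $Y$ is again a degree-$d$ Del Pezzo fibration over a genus-$1$ curve (for $d=4$, I would use the general link list to reach the same conclusion). Applying the first paragraph to $Y$ gives $\dim \Autz(Y)\le 1$, so $G'=G$.

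The main obstacle is the link analysis: excluding links from $X$ to conic bundles or to fibrations of other degrees, especially for $d=4,6$, where degree-$4$ Del Pezzo fibrations admit links via blowing up sections. I would first try to show that such links require a $G$-invariant centre that is a section, or a fibre. A fibre is not $G$-invariant. A section is impossible, since a $G$-orbit of a point maps with finite degree $|H|$ onto $C$, and a section would require $H$ to fix a point of $F$, which I would check fails, or at least is not enough to produce a link.
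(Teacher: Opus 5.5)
\textbf{There is a genuine gap: the link analysis for $d=4$ and $d=6$.} For $d\in\{1,2,3\}$ your argument follows the paper (no type I links by \autoref{lem:links I}, and type II links preserve the degree). For $d=4,6$ you postpone the analysis, and the mechanism you suggest is false in both degrees. That mechanism was to show that $H=\ker\pi_*$ fixes no point of $F$, so that no $G$-invariant section exists.

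For $d=4$, take $F=\{\sum_{i=0}^4x_i^2=\sum_{i=0}^4a_ix_i^2=0\}\subset\p^4$ and let $H=G[2]$ act by the sign changes of $x_0$ and of $x_4$.
\begin{itemize}
\item $H$ has no invariants on $K_F^\perp$, so $G\times^HF\to G/H$ is a Mori Del Pezzo fibration of degree $4$.
\item Yet $H$ fixes the four points of $F$ with $x_0=x_4=0$, and these lie on no line.
\item Blowing up the corresponding $G$-invariant section gives a $G$-equivariant link of type I. Its target is a conic bundle over a ruled surface, with generic fibre over $\k(C)$ a cubic surface of Picard rank $2$.
\end{itemize}
This is why (2) excludes $d=4$. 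So your claim that for $d=4$ every target is again a degree-$4$ fibration is wrong. Maximality for $d=4$ needs the paper's route. Follow the possible links of types I--IV (\autoref{lem: DP4}) to see that every Mori fibre space $Y$ reached still maps to $C$ with generic fibre a Del Pezzo surface of degree $3$ or $4$. Then $\Autz(Y)_C$ is finite and $\Autz(Y)$ is an elliptic curve (\autoref{pro: DP4}).

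\textbf{For $d=6$ this step cannot be repaired.}
\begin{itemize}
\item Since $\rho(X/C)=1$, $H$ acts transitively on the hexagon. Being abelian, its image in $\mathrm{Sym}_3\times\Z/2$ is the rotation group $\Z/6$.
\item Elements of $H\cap\G_m^2$ commute with a lift of the order-$6$ rotation $r$, so they are fixed by $r$. Since $\det(1-r)=1$, $r$ has trivial fixed locus on $\G_m^2$. Hence $H=\langle h\rangle\simeq\Z/6$ with $h=t_0r$ for some $t_0\in\G_m^2$.
\item The map $t\mapsto t\,r(t)^{-1}$ is an automorphism of $\G_m^2$, so $h$ has exactly one fixed point in the open torus.
\end{itemize}
So $X$ always contains a $G$-invariant section. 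Blow it up, then contract the strict transforms of the three conics through that point (one fibre of each conic bundle structure of $F$). This is a $G$-equivariant type II link $\varphi\colon X\rat Y=G\times^H(\p^1\times\p^1)$ of degree $8$. It contradicts your claim that links from degree $6$ stay in degree $6$.

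The paper discards this target via \autoref{lem:dP8 Aut on C}, which you never invoke. However, that argument needs the fibres over the zeros of $\mu$ to be singular. Here the discriminant cover $G/\langle h^2\rangle\to G/H$ is \'etale and every fibre of $Y$ is smooth. Moreover, write $h(u,v)=(\alpha v,\beta u)$ on $\p^1\times\p^1$. The pairs $(A,\alpha^{-1}A\alpha)$, with $A$ in the torus of $\PGL_2(\k)$ centralising $\alpha\beta$ (of order $3$), commute with $H$. They give $\G_m\subset\Autz(Y)_C$, so $\Autz(X)\subsetneq\varphi^{-1}\Autz(Y)\varphi$. For $d=6$, maximality and (3) therefore actually fail.

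\textbf{Two smaller points.}
\begin{itemize}
\item $G$ does not embed in $\Autz(C)$ as your first paragraph says. $\ker\pi_*$ is finite but non-trivial, since otherwise $X\simeq C\times F$ and $\rho(X/C)>1$. So $\pi_*$ is only an isogeny, which still makes $G$ an elliptic curve.
\item Your torus argument for finiteness when $d=6$ is a valid shortcut compared with \autoref{lem:MdP 6 1} and \autoref{lem:MdP 6 2}, once stated correctly. Because Galois acts transitively on the hexagon, the $\k(C)$-torus $\Autz(X_{\k(C)})$ is anisotropic. The image in it of the identity component of $\Autz(X)_C$ is a split torus, hence trivial.
\end{itemize}
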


\begin{propositionA}\label{mainprop:MDP5}
Let $\pi\colon X\to C$ be a Mori Del Pezzo fibration of degree $5$ above a curve $C$ of genus $g(C)\geq 1$, such that $\Autz(X)$ is non-trivial. 
Then $g(C)=1$ and there is a short exact sequence 
\[
1\to \Z/5\Z\to\Autz(X)\to\Autz(C)\to 1.
\]
Moreover, $\Autz(X)$ is a maximal connected algebraic subgroup of $\Bir(X)$ if and only if $X$ does not contain any $\Autz(X)$-invariant section.
\end{propositionA}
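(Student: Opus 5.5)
The plan has three parts: first pin down the structure of $\Autz(X)$, then use the $\Autz(X)$-equivariant Sarkisov program to test maximality, and finally show that an invariant section is exactly what produces a non-maximal situation. Throughout, $G:=\Autz(X)$.

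\textbf{Structure of $\Autz(X)$.} The group $G$ acts on $C$ through $\pi$, which gives an exact sequence $1\to G_C\to G\to \Autz(C)$. Here $G_C$ is the connected-fibre-preserving part acting on the generic fibre $X_{\k(C)}$, a Del Pezzo surface of degree $5$ over $\k(C)$ of Picard rank one. By \autoref{lem:auto of dp surface}, $\Aut$ of a degree $5$ Del Pezzo surface is finite (it is $\mathfrak S_5$ geometrically), so $G_C$ is finite.

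If $g(C)\geq 2$, then $\Autz(C)$ is trivial, so $G$ is a finite connected group and hence trivial. Since $G$ is non-trivial by assumption, $g(C)=1$ and $G\to \Autz(C)=C$ is surjective. Then $G$ is a connected one-dimensional group with finite kernel over an elliptic curve, so $G$ is an elliptic curve, $G_C$ is a finite subgroup, and $X\simeq G\times^{G_C}F$ with $F=\pi^{-1}(0)$, as in \autoref{thm:K<5}(1).

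To identify $G_C$, I would argue via monodromy. Translation by $G$ identifies all fibres, and the monodromy of the ten $(-1)$-curves on $F$ is given by the action of $G_C\subset\Aut(F)\simeq\mathfrak S_5$. Since $\rho(X/C)=1$, this monodromy must act transitively on the five conic-bundle classes, or at least leave no orbit that could be contracted. The group $G_C$ is abelian, being contained in an elliptic curve, and contains no element of order $>6$. Among the abelian subgroups of $\mathfrak S_5$, only $\Z/5\Z$ acts on the exceptional configuration without an invariant orbit that would give $\rho(X/C)>1$. Orders $2,3,4,6$ each fix some conic class or a disjoint set of $(-1)$-curves, which makes the relative Picard rank bigger than one. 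This gives $G_C\simeq\Z/5\Z$ and the exact sequence.

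\textbf{Maximality.} If $X$ has a $G$-invariant section $s$, I would blow up $s(C)$ (or use a $G$-equivariant link starting from it) to produce a $G$-equivariant Sarkisov link to a Mori fibre space with larger automorphism group. The degree $5$ Del Pezzo surface blown up at a general point becomes degree $4$. The more useful move is projection from a point of a degree $5$ Del Pezzo surface, which is birational to $\p^2$ after contracting the $(-1)$-curves through it; relatively, this links $X$ to a $\p^2$-bundle or a degree $8$/$9$ fibration over $C$. That target carries a positive-dimensional fibrewise group, so $G$ is strictly contained in a bigger connected group and is not maximal.

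Conversely, if there is no invariant section, I would follow the pattern of \autoref{thm:K<5}. Any $G$-equivariant link from $X$ must start by blowing up a $G$-invariant curve, and such a curve is a union of $G$-orbits. Its image in each fibre is a $\Z/5\Z$-invariant finite set of points, which gives a multisection of degree $\geq 5$ (the fixed points of $\Z/5\Z$ on $F$ being exactly the sections). I would then check with the link analysis from \autoref{s:preliminaries} that blowing up multisections of degree $\geq5$ in a degree-$5$ fibration produces no link, since the anticanonical degree becomes non-positive. The only option left is a link of type III/IV, but with $\rho(X)=2$ and base $C$ irrational there is no other extremal contraction; the alternative fibration would have to be over a rational curve, which is impossible. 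Hence $X$ is $G$-superrigid, and $G$ is maximal.

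\textbf{Main obstacle.} I expect the hardest parts to be the link analysis for multisections of degree $\geq 5$ and the precise identification of the target in the section case. For the latter, my first attempt would be to compute the link fibrewise via projection from a point of $F$ and check that the target is a $\p^2$-bundle with a larger automorphism group.
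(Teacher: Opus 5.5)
Your determination of $\Autz(X)$ matches the paper's. $\Autz(X)_C$ is finite, so $g(C)=1$, $G$ is an elliptic curve and $X\simeq G\times^{H}F$; note this last step is \autoref{thm:almost product}, since \autoref{thm:K<5} does not cover $d=5$. Your observation that $\rho(X/C)=1$ forces $H$ to act transitively on the five conic classes of $F$ is a clean, self-contained substitute for the paper's use of the table in \cite{Boi23}, and it correctly gives $H\simeq\Z/5\Z$.

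Your proof of ``no invariant section $\Rightarrow$ maximal'' is also correct, and more direct than the paper's. Every $G$-orbit is an \'etale multisection of degree $|H\cdot f|\in\{1,5\}$. So without invariant sections, the centre of any equivariant extraction is $5{:}1$, the generic fibre is blown up in a point of degree $5$ and gets $K^2=0$, and \autoref{lem:links I} and \autoref{lem:type II} leave no link at all. Hence $X$ is even $G$-superrigid. The paper instead runs through Iskovskikh's list of targets of degree $5,8,9$ and notes that degree-$5$ targets again have an elliptic curve as $\Autz$.

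The gap is in the other direction. The link centred at the invariant section goes to a degree-$9$ fibration $Y\to C$; degree $8$ cannot occur, by \autoref{lem:dP8 Aut on C}. You then simply assert that $Y$ ``carries a positive-dimensional fibrewise group''. This is not automatic. $\Al_{3,1}$ and $\Al_{3,2}$ are $\p^2$-bundles over $C$ on which an elliptic curve acts transitively with finite fibrewise part $(\Z/3\Z)^2$, and their $\Autz$ is even maximal (\autoref{prop:A31 and A32}). So you need an argument that the conjugate of $G$ is not all of $\Autz(Y)$, or that $\Autz(Y)$ is not maximal.

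The paper does this by passing to a $\p^2$-bundle via \autoref{prop:Mdp to bundle} and invoking the classification \autoref{thm:mainDP}\autoref{mainDP:1.3}. The conjugate of $G$ has fibrewise part $\Z/5\Z$, which occurs for no maximal $\p^2$-bundle.

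Within your own approach there is a short fix:
\begin{itemize}
\item By the argument of \autoref{thm:almost product} applied to the conjugate of $G$, $Y\simeq G\times^{\Z/5\Z}\p^2$.
\item A projective representation of a cyclic group lifts to a linear one, which is diagonalisable, so $Y\simeq\p(\Ll_0\oplus\Ll_1\oplus\Ll_2)$ with $\Ll_i\in\Pic^0(C)$.
\item The diagonal $\G_m^2\subset\Autz(Y)_C$ then shows that the conjugate of $G$ is strictly contained in $\Autz(Y)$.
\end{itemize}
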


The case $d=9$ is treated in two steps. First, in \autoref{s:MdP9}, we generalize a result from \cite{BFT22}, and show that one can always reduce to the cases of $\p^2$-bundles (\autoref{prop:Mdp to bundle}). If $g(C)=1$ and $\Aut(X)$ is maximal, we prove that $\Autz(X)$ acts transitively on $C$ (\autoref{prop:trivial_action_on_C}). In particular, $X$ arises as the projectivization of a semi-homogeneous rank-$3$ vector bundle over $C$ (see \autoref{lem:pi-star surjective}). If $\pi$ is an indecomposable $\p^2$-bundle, then $X\simeq \Al_{3,d}$ with $d\in \{0,1,2\}$, which we study in \autoref{prop:A31 and A32} and \autoref{pro:E30}. 

\medskip

In \autoref{s:higher genus}, we focus on the case where $\pi\colon X\to C$ is a $\p^2$-bundle and $g(C)\geq 2$. Unlike the case where $g(C)=1$, it is out of reach to study the automorphism groups of all indecomposable projective bundles. However, because $g(C)\geq2$, $\Autz(C)$ is trivial, so $\Autz(X)$ acts trivially on $C$. Using the theory of vector bundles, and particularly the existence and uniqueness of the \emph{maximal destabilizing subbundle} and of the \emph{socle} (resp. for unstable and semistable bundles, see \autoref{lem:destabilizing+socle}), we prove that there are only very few maximal connected algebraic subgroups of $\Bir(C\times \p^2)$ (see \autoref{lem:semistable_high_genus}).

\medskip

Gathering the results on the Mori Del Pezzo fibrations of each degree, we prove \autoref{thm:mainDP} and \autoref{thm:main_superrigidity} in \autoref{s:proofAB}.

\medskip

Finally, in \autoref{s:to_proj_bundles}, we show that every maximal connected algebraic subgroup of $\Bir(C\times \p^2)$ is conjugate to $\Autz(X)$, where $\pi\colon X\to C$ is either a $\p^1$-bundle over a ruled surface studied in \cite{Fong23}, or a Mori Del Pezzo fibration over $C$. This allows us to prove \autoref{thm:main}. As by-product of our studies, we also obtain the following statements: 

\medskip

\begin{propositionA}\label{prop:iscf main}
Let $X\to S$ be a Mori fibre space above a surface $S$ with a morphism $\tau\colon S\to C$ to a curve $C$ of genus $g(C)\geq1$ and $\tau_*\Ol_S=\Ol_C$. Suppose that the generic fibre $X_{\k(S)}$ is irrational and that $\Autz(X)$ is non-trivial. 
Then $g(C)=1$, and $\Autz(X)$ is an elliptic curve, and $\tau$ is a ruled surface, and the following hold:
    \begin{enumerate}
    \item If $K_{X_{\k(C)}}^2\leq4$, then $\Autz(X)$ a maximal connected algebraic subgroup of $\Bir(X)$. 
    \item If $K_{X_{\k(C)}}^2\geq5$, then $K_{X_{\k(C)}}^2=5$ and $\Bir(X)$ is not a maximal connected algebraic subgroup of $\Bir(X)$.
    \end{enumerate}
\end{propositionA}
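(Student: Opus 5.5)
We first pin down $g(C)$ and $\Autz(X)$. The plan is to run the relative MMP for $X\to C$, equivariantly with respect to $G:=\Autz(X)$, and then feed the output into the results on Mori Del Pezzo fibrations stated above.

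\emph{Step 1.} Every connected algebraic group acting on $X$ preserves $\pi\colon X\to S$ and $\tau\colon S\to C$ (compare \autoref{lem:preserves fibration}). So $G$ acts on $C$, and we have an exact sequence $1\to G_C\to G\to \Autz(C)$. Since $X_{\k(S)}$ is an irrational conic, $\pi$ has no rational section over the generic point of $C$.

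\emph{Step 2: $G_C$ is finite.} Suppose $G_C$ is infinite. It acts on the generic fibre $X_{\k(C)}$, a conic bundle over the curve $S_{\k(C)}$ over the field $\k(C)$. A positive-dimensional connected group acting there would force either
\begin{itemize}
\item a $G$-invariant rational section, or
\item a rational generic fibre, via invariant rational curves given by orbit closures and Lang/Tsen-type arguments over $\k(S)$.
\end{itemize}
Both contradict irrationality of $X_{\k(S)}$. If $G_C$ is finite and $G$ is non-trivial, then $G\to\Autz(C)$ is non-trivial. This forces $g(C)=1$, and $G$ is an isogenous cover of $C$, hence an elliptic curve. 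The same reasoning shows $\tau$ is a ruled surface: the fibres of $\tau$ are permuted transitively by $G$, and the generic fibre of $\tau$ is a smooth conic over $\k(C)$, hence rational by Tsen.

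\emph{Step 3: pass to a Mori Del Pezzo fibration.} Run a $G$-equivariant MMP on $X$ over $C$. The relative Picard rank of $X/C$ is $2$, so this is the $2$-ray game over $C$. Its output is a $G$-Mori fibre space. Because $X_{\k(S)}$ is irrational, the output cannot be a conic bundle over a rational curve with a section that is rational over $\k(S)$. The natural endpoint is therefore a Mori Del Pezzo fibration $X'\to C$ whose generic fibre is birational to $X_{\k(C)}$, so $K^2_{X_{\k(C)}}$ here means the degree $d'$ of $X'$.

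Suppose $d'\ge 5$. Over $\k(C)$, with $C^1$ field $\k(C)$ (Tsen), Del Pezzo surfaces of degree $5$ are rational, and degree $\geq 6$ surfaces with a point are rational as well. We must rule out $d'\ge6$ with the conic structure remaining irrational over $\k(S)$. I would argue this via the classification of Sarkisov links from a degree $\ge6$ fibration, which always lead back to $\p^1$-bundles over ruled surfaces with a rational generic fibre. Hence $d'=5$. Then \autoref{mainprop:MDP5}, together with the existence of the contracted $G$-invariant curve (a section, since $\rho$ drops), shows $\Autz$ is not maximal.

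For $d'\le 4$, \autoref{thm:K<5} gives that $\Autz(X')$ is an elliptic curve that is maximal. Its item (2), applied for $d'\neq4$ or handled directly for $d'=4$, gives maximality of $G=\Autz(X)$, because $X\dashrightarrow X'$ is $G$-equivariant and $G$ already equals the maximal group conjugated back.

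The main obstacle is Step 3: controlling which Mori fibre space the equivariant $2$-ray game produces. In particular, we must show that the degree of $X'$ is at most $5$ and compatible with irrationality of $X_{\k(S)}$. I would attack this by explicit analysis of the $2$-ray game on $X/C$ using \autoref{lem:auto of dp surface}.
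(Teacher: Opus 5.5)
Your proposal has two genuine gaps, and they sit in the two steps that carry the argument.

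\textbf{Step 2 (finiteness of $G_C$).} The orbit-closure/Tsen argument does not work. Orbit closures of a group acting trivially on $C$ are rational curves lying over fibres of $\tau$, so they are multisections of $\pi$ over curves in $S$, not rational sections of $\pi$ over $S$. Hence nothing contradicts the irrationality of $X_{\k(S)}$.

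In fact $G_C$ can be a torus. \autoref{lem:generic fibre singular conic fibration}\autoref{gfscf:2} only says that $\Autz(S,\Delta)_C$ is a torus of dimension $\le2$, and a $\G_m$ really occurs. Over $C\times\p^1$, take the conic $x^2-ay^2-tz^2=0$ with $a\in\k(C)$ a non-square. Its generic fibre is irrational, and it is preserved by $(t,z)\mapsto(\mu^2t,\mu^{-1}z)$. A $\G_m$-equivariant MMP over $C\times\p^1$ then gives a Mori conic bundle with $\G_m\subset\Autz(X)_C$, for $C$ of any genus. So $g(C)=1$ and ``$G$ is an elliptic curve'' cannot be deduced from $G\neq1$ alone.

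The paper instead splits according to whether $G$ acts trivially on $C$. If it does, which covers $g(C)\ge2$ and $S$ not relatively minimal (\autoref{rmk:igf nontrivial action}), then \autoref{lem:to ruled surface} only yields non-maximality. Either $\Delta$ meets a general fibre of $\tau$ in at least $3$ points and $G$ is trivial, or it meets it in $2$ points. In the latter case $X_{\k(C)}$ is Del Pezzo of degree $6$, and its contraction to a degree-$8$ fibration strictly enlarges $G$.

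So the conclusions $g(C)=1$, $\tau$ ruled and $G$ elliptic are really obtained in \autoref{pro: igfrs}, under a non-trivial action on $C$; the example above shows the statement must be read with that hypothesis. Even there, finiteness of $G_C$ is not a priori. It comes from $\Delta$ meeting a general fibre of $\tau$ in $8-K^2_{X_{\k(C)}}\ge3$ points. The two-point case $K^2_{X_{\k(C)}}=6$ is excluded via the degree-$8$ fibration and \autoref{lem:dP8 Aut on C}. Note also that rationality of the fibres of $\tau$ is not among the hypotheses: the paper uses that $S$ is birationally ruled over $C$, as assumed in \autoref{lem:to ruled surface}.

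\textbf{Step 3 (the $2$-ray game).} You reinterpret $K^2_{X_{\k(C)}}$ as the degree of the output of a $2$-ray game over $C$. It is actually the anticanonical degree of the rank-$2$ conic bundle surface $X_{\k(C)}\to S_{\k(C)}$, and that game usually does not end in a Mori Del Pezzo fibration. By Iskovskikh's classification (\autoref{lem:scf 2 options}, \autoref{lem:igf 9}), the other extremal contraction is birational only when $X_{\k(C)}$ is Del Pezzo of degree $3$, $5$ or $6$. The target then has degree $4$, $9$ or $8$, respectively. For $K^2_{X_{\k(C)}}\le2$ or $K^2_{X_{\k(C)}}=4$ there is no birational map to any Mori Del Pezzo fibration at all.

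Consequently ``hence $d'=5$'' is wrong, and so is the appeal to \autoref{mainprop:MDP5} with an invariant section. When $K^2_{X_{\k(C)}}=5$, the extracted curve is $4:1$ over $C$ and the target is a $\p^2$-bundle. Non-maximality is then proved in \autoref{lem:igf 9} by running through the homogeneous $\p^2$-bundles.

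When $K^2_{X_{\k(C)}}\le2$ or $K^2_{X_{\k(C)}}=4$, your argument has no endpoint. The missing idea is that every $G$-equivariant map stays over $S$ as a composition of type II links between conic bundles. These links preserve $\Delta$ by \cite[Lemma 3.23]{BLZ21}. Since $\Delta$ meets a general fibre of $\tau$ in at least four points, $\Autz(S,\Delta)_C$ is finite, so every group in the conjugacy class is an elliptic curve. Only $K^2_{X_{\k(C)}}=3$ passes through a Mori Del Pezzo fibration, of degree $4$, where \autoref{pro: DP4} applies.
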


\begin{propositionA}\label{thm:Mfs K(S)-pos generic curve irrational}
Let $X\to S$ be a Mori fibre space above a surface $S$ of Kodaira dimension $\kappa(S)$, such that $X_{\k(S)}$ is an irrational curve of genus zero. 
If $\kappa(S)\geq0$, then $X$ is not birational to a Mori Del Pezzo fibre space. Moreover, if $\Autz(X)$ is non-trivial, then $\Autz(X)$ is an elliptic curve and a maximal connected algebraic subgroup of $\Bir(X)$.
\end{propositionA}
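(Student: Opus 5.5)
Let $f\colon X\to S$ be the Mori fibre space, with $X_{\k(S)}$ an irrational conic. The plan has three steps: show $\Autz(X)$ is an elliptic curve, show $X$ is not birational to a Mori Del Pezzo fibre space, and then deduce maximality by a Sarkisov program argument.

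\emph{Step 1: $\Autz(X)$ is an elliptic curve.} Since $\kappa(S)\geq 0$, the surface $S$ is not uniruled. By Blanchard's lemma, $\Autz(X)$ acts on $S$ and $f$ is equivariant. A connected algebraic group acting on a non-uniruled surface cannot contain $\G_a$ or $\G_m$, since their orbits would be rational curves. So the image of $\Autz(X)$ in $\Autz(S)$ is an abelian variety. The kernel $\Autz(X)_S$ acts on the generic fibre $X_{\k(S)}$, which is a conic without rational point. Its identity component is a connected linear group acting on this anisotropic conic. It is trivial: otherwise it would contain a torus or a unipotent subgroup, and the fixed points or invariant structure would produce a rational point of $X_{\k(S)}$ after the usual Borel-type argument of \autoref{lem:irrat generic fibre square}. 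Hence $\Autz(X)$ is finite over an abelian variety, so it is itself an abelian variety. Its orbits in $X$ have dimension at most $2$. If $\dim\Autz(X)\geq 2$, then $S$ would be an abelian surface (or dominated by one) with a transitive action. I expect to rule out the resulting irrationality of the conic bundle via the Brauer class: an $\Autz(X)$-invariant Brauer class on a homogeneous $S$ behaves as in \autoref{prop:iscf main}, and I would reduce to that proposition when $S$ maps to a curve. So $\Autz(X)$ is an elliptic curve $E$.

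\emph{Step 2: no birational map to a Mori Del Pezzo fibre space.} Suppose $X$ is birational to a Mori Del Pezzo fibration $Y\to C'$. Then $Y$ is covered by rational surfaces (the fibres). The maximal rationally connected quotient of $X$ is $S$, whereas that of $Y$ is $C'$, a curve. This is a birational invariant, and it forces $S$ to be birational to $C'\times\p^1$, hence uniruled. That contradicts $\kappa(S)\geq0$.

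\emph{Step 3: maximality.} Suppose $E\subsetneq G$ with $G$ a connected algebraic subgroup of $\Bir(X)$. By regularisation and the equivariant MMP and Sarkisov program \cite{Flo20}, after conjugation $G\subseteq \Autz(Y)$ for some Mori fibre space $Y$, and there is an $E$-equivariant Sarkisov decomposition $X\rat Y$. By Step 2, $Y$ is not a Del Pezzo fibration, and $Y$ cannot be Fano since $X$ is not rationally connected. So $Y\to S'$ is a conic bundle. The rationally connected quotient again gives $S'$ birational to $S$, so $\kappa(S')\geq0$, and the generic fibre of $Y\to S'$ is the same irrational conic up to birational equivalence. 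Step 1 applied to $Y$ then gives $\dim\Autz(Y)=1$, so $G=E$. The hard points are the linear-part argument and excluding $\dim\geq2$ in Step 1; I would handle these with \autoref{lem:irrat generic fibre square} and \autoref{prop:iscf main}.
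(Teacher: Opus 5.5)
Your Steps 2 and 3 follow the paper's route. For Step 2 the paper uses \autoref{lem:commutative diag surface-curve}: a general fibre of a Del Pezzo fibration is rational, so it maps to a point of the non-uniruled surface $S$. If you argue with MRC quotients instead, the contradiction is a dimension mismatch between the two quotients ($2$ versus at most $1$), not ``$S$ birational to $C'\times\p^1$''. In Step 1, the finiteness of $\Autz(X)_S$ is \autoref{prop:generic fibre not P1}; your appeal to \autoref{lem:irrat generic fibre square} there is misplaced.

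\textbf{The gap: $\dim\pi_*\Autz(X)=2$.} You give no argument for this case, and the proposed reduction to \autoref{prop:iscf main} cannot work. First, $S$ (for instance a simple abelian surface) need not map to a curve at all. Second, the proof of that proposition goes through \autoref{lem:to ruled surface}, which assumes $S$ is birational to a ruled surface, and this never holds when $\kappa(S)\ge0$. The paper closes this case differently. Then $S$ is an abelian surface on which $\pi_*\Autz(X)=\Autz(S)$ acts transitively. But the locus of singular fibres of $\pi$ is an invariant curve, which the paper says is non-empty because $X_{\k(S)}$ is irrational (\autoref{prop:reduction to standard conic bundles}, \autoref{lem:empty discriminant}).

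\textbf{Why neither argument closes it.} Your Brauer-class remark points at exactly where that step breaks. The implication ``empty discriminant $\Rightarrow$ $\p^1$-bundle'' needs the Brauer obstruction to vanish. That holds over birationally ruled bases by Tsen, but not when $\kappa(S)\ge0$. Here is a construction.
\begin{itemize}
\item Take an abelian surface $A'$ and $H\simeq(\Z/2\Z)^2\subset A'[2]$. Let $H$ act on $\p^1$ through the non-liftable projective representation generated by $\mathrm{diag}(1,-1)$ and $\left(\begin{smallmatrix}0&1\\1&0\end{smallmatrix}\right)$, and set $X:=A'\times^H\p^1\to S:=A'/H$.
\item $X$ is a smooth conic bundle with $\rho(X/S)=1$, hence a Mori fibre space over an abelian surface.
\item $A'$ acts faithfully on $X$ by translations on the first factor, so $\dim\Autz(X)\ge2$.
\item Descending along $A'\to S$ shows that $X\simeq\p(\El)$ for a vector bundle $\El$ if and only if some $\Ll\in\Pic(A')$ has $H\subseteq K(\Ll)$ with commutator pairing non-degenerate on $H$.
\item Suppose $\NS(A')=\Z\theta$ with $\theta$ principal, and $H$ is Lagrangian for $e_2^\theta$. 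Then every such pairing restricted to $H$ is trivial, so no such $\Ll$ exists, and $X_{\k(S)}$ is an irrational conic.
\end{itemize}
This $X$ satisfies all the hypotheses, yet $\Autz(X)$ is not an elliptic curve. So the case $\dim\Autz(X)=2$ cannot be excluded at the stated generality, by your route or the paper's. The statement needs an extra hypothesis, for example that $S$ is not an abelian surface, or that $\pi$ has a singular fibre.
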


We considered this paper too long to include a study of $\p^1$-bundles over surfaces $S$ with $\kappa(S)\geq0$.

\subsection*{Acknowledgements} The authors thank Fabio Bernasconi, Michel Brion, Daniele Faenzi, Andrea Fanelli, Enrica Floris, and Julia Schneider for helpful discussions related to this project.

\subsection{Notations and conventions}
Throughout this article, we use the following notations and conventions. 
\begin{itemize}
\item The base-field is denoted by $\k$ and is algebraically closed and of characteristic zero, unless stated otherwise. 
\item Unless otherwise stated, $C$ denotes a smooth projective curve.
\item For a projective variety $X$, we denote by $\Aut(X)$ the automorphism group of $X$, which is a group scheme \cite[Theorem 3.6]{Matsumura_Oort}. The connected component containing the identity, denoted $\Autz(X)$, is an algebraic group \cite[Theorem 3.7]{Matsumura_Oort}.
\item Let $\pi\colon X\to B$ be a morphism of projective varieties such that $\pi_*(\Ol_X) \simeq \Ol_B$. Then $\pi$ induces a morphism a algebraic groups $\pi_* \colon \Autz(X) \to \Autz(B)$ (see \autoref{lem:blanchard}). We denote by $\Autz(X)_C$ the kernel of $\pi$.
\item For varieties $X$ and $Y$, we say that $\Autz(X)$ is conjugate to $\Autz(Y)$ if there is a birational map $\varphi\colon X\rat Y$ such that $\varphi\Autz(X)\varphi^{-1}=\Autz(Y)$.
\end{itemize}

\section{Preliminaries}\label{s:preliminaries}

\subsection{Rationally connectedness and sections}

\begin{remark}\label{lem:not Fano}
Let $C$ be a smooth projective curve of genus $g(C)\geq1$ and let $X$ be birational to a Mori fibre space $\pi\colon Y\to C$. Then $X$ is not Fano. 
Indeed, any Fano variety is rationally connected \cite[Theorem 0.1]{KMM92}, and rational connectedness is invariant under birational maps. So, if $X$ is Fano, then $Y$ is rationally connected and hence $C$ is rationally connected, against $g(C)\geq1$.
\end{remark}

\begin{lemma}[{\cite[Corollary 1.3]{GHS02}}]\label{thm:rationally connected}
Let $\hat\pi\colon X\to B$ be a dominant morphism. If $B$ and the general fibre $f$ of $\hat\pi$ are rationally connected, then $X$ is rationally connected. 
\end{lemma}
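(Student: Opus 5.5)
This is the Graber--Harris--Starr theorem, cited as \cite[Corollary 1.3]{GHS02}. In the paper it is used as a black box, so the honest plan is to invoke the reference. If I had to reproduce the argument, I would follow GHS.

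The first step is to reduce to the case of a curve base. Take two general points $x_1,x_2\in X$. Since $B$ is rationally connected, there is a very free rational curve $h\colon\p^1\to B$ through $\hat\pi(x_1)$ and $\hat\pi(x_2)$, and a general such curve meets the locus over which $\hat\pi$ has general, rationally connected fibres. Replace $X$ by a resolution of the normalization of the base change $X\times_B\p^1\to\p^1$. Its general fibre is still a general fibre $f$ of $\hat\pi$, hence rationally connected. If this threefold-over-$\p^1$ family admits a section, then the section is a rational curve in $X$ mapping onto $h$. Joining $x_1$ and $x_2$ to that section by rational curves inside their own fibres then produces a chain of rational curves connecting $x_1$ to $x_2$. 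Since we are in characteristic zero and $X$ may be taken smooth, rational chain connectedness implies rational connectedness (Koll\'ar--Miyaoka--Mori), which gives the conclusion.

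The main obstacle is therefore the core GHS statement: a family over a curve whose general fibre is rationally connected admits a section. I would attack it by the GHS method. First take a multisection, built from a family of rational curves in the fibres through general points. Then attach many very free rational curves contained in fibres to obtain a comb. Next, smooth the comb to a curve whose normal bundle is sufficiently positive, so that its deformations can be controlled. Finally, degenerate the map of this curve onto the base $\p^1$ inside the Kontsevich space of stable maps until one obtains a limit containing a component mapping isomorphically to the base, which is the required section. The delicate part is the deformation theory: one has to show that the degree of the cover can be lowered by monodromy and branch-point arguments without losing the unobstructedness supplied by the very free teeth. In the context of this paper, none of this needs redoing; the lemma is imported directly from \cite{GHS02}.
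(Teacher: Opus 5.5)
Your proposal agrees with the paper, which gives no proof and simply imports the statement from \cite[Corollary 1.3]{GHS02}; your sketch is exactly how GHS deduce that corollary: join the images of two general points by a rational curve in $B$, apply the Graber--Harris--Starr section theorem to the base change over $\p^1$, then pass from rational chain connectedness to rational connectedness on a smooth projective model via Koll\'ar--Miyaoka--Mori. One harmless slip: the base-changed family has dimension $\dim X-\dim B+1$, so it is not a threefold in general.
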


\begin{lemma}[{\cite[Theorem 1.1]{GHS02}}]\label{thm:Tsen}
Let $X$ be a projective variety and $\pi\colon X\to B$ be a dominant morphism to a smooth projective curve $B$ such that the general fibre is rationally connected. Then $\pi$ has a rational section. 
\end{lemma}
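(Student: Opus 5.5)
Since this is \cite[Theorem 1.1]{GHS02}, we simply invoke it. For completeness, here is how we would reconstruct the argument.

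First we reduce. A rational section of $\pi$ is the same as a $\k(B)$-point of the generic fibre $X_{\k(B)}$. Both this and the hypothesis are birational in $X$, so we may replace $X$ by a resolution of singularities. We may therefore assume $X$ is smooth and projective and that $\pi$ is flat over $B$, with smooth, rationally connected general fibres. The goal is then to produce an irreducible curve $D\subset X$ with $\deg(\pi|_D)=1$.

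The strategy is that of Graber--Harris--Starr, and it has three steps.
\begin{enumerate}
\item \emph{A multisection.} Intersecting $X$ with general hyperplanes gives a smooth curve $D_0\subset X$ that is a multisection of some degree $n$ over $B$.
\item \emph{Combs.} Since the general fibres are rationally connected, they carry very free rational curves. We attach many such curves, lying in general fibres, to $D_0$. This gives a comb $D_0\cup\bigcup_i R_i$. For enough teeth, a suitable subcomb is unobstructed, meaning $H^1$ of the normal sheaf vanishes after a twist, so it deforms to a smooth curve $D_1$. This $D_1$ is still a degree-$n$ multisection, and it is flexible: deformations of $D_1$ can be chosen to pass through any prescribed finite set of general points of $X$.
\item \emph{Hurwitz space.} Consider the Hurwitz space $H$ of degree-$n$ covers of $B$ with fixed numerical invariants, and the natural map $M\to H$ from the component $M$ of the Hilbert scheme containing $D_1$. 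We show that $M\to H$ is dominant. Then we degenerate, inside $H$, to a cover whose source is reducible and has a component mapping isomorphically to $B$; for instance, we let branch points collide so that a sheet splits off. Properness of the relative Hilbert scheme, or of the Kontsevich space of stable maps, lifts this degeneration to a limit curve in $X$. That limit curve contains a component of degree $1$ over $B$, which is the required rational section.
\end{enumerate}

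We expect the main obstacle to be the dominance of $M\to H$ in step 3, together with the control of the limit curve. For dominance, the plan is a tangent-space computation: the normal bundle of the smoothed comb surjects onto the deformation space of the cover, because the first-order deformations of the branch points are realised by moving the teeth along very free curves in the fibres. For the limit, the difficulty is showing that the degenerate limit really contains a degree-one component. It must not instead be a degree-$n$ component with non-reduced structure or vertical pieces. The first thing we would try is to arrange the degeneration so that, in $H$, the limiting cover itself already has an isolated sheet; the limit in $X$ then maps onto that sheet with degree one.
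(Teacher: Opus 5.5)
Citing \cite[Theorem 1.1]{GHS02} is exactly what the paper does: it gives no argument beyond the reference. As a proof, your proposal is therefore complete and agrees with the paper.

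Your optional reconstruction, however, would not stand on its own, and its weak point is not where you locate it.

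\emph{The lifting step is harmless.} Work with stable maps. Composing with $\pi$ and stabilizing gives a morphism $\Phi\colon\overline{M}_g(X,\beta)\to\overline{M}_g(B,n)$. The closure of your component $M$ is proper, so if $\Phi$ dominates a component $\overline{H}$, it surjects onto it. Hence any point of $\overline{H}$ whose source has a component mapping isomorphically to $B$ lifts to a stable map to $X$ with a component that $\pi$ maps isomorphically to $B$, i.e.\ a section.

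\emph{The missing idea is the existence of such a point} in the closure of the particular component of $H$ that $M$ dominates. ``Let branch points collide so that a sheet splits off'' is not an argument for it.
\begin{itemize}
\item For $B\simeq\p^1$, existence follows from the classical irreducibility of Hurwitz spaces of simply branched covers (Clebsch--Hurwitz), together with the admissible-cover description of their boundary.
\item For $g(B)\geq1$, which is the case this paper actually needs, it fails in general. Simply branched covers that factor through a non-trivial connected \'etale cover $B'\to B$ form whole components of $H$, since the monodromy is locally constant. Every limit of such covers in $\overline{M}_g(B,n)$ still factors through $B'$: take the limit in the proper space of stable maps to $B'$. So no limit has a component of degree one over $B$.
\end{itemize}
Your combs do not help here. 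The teeth are contracted by $\pi$, so $D_1\to B$ is a small deformation of $D_0\to B$. It therefore lies in the same component of $H$, with the same branch number.

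To close the gap you have two options.
\begin{itemize}
\item First reduce to $B\simeq\p^1$, for instance by Weil restriction of scalars along a finite map $B\to\p^1$. The geometric generic fibre becomes a product of conjugates of that of $\pi$, hence is still rationally connected, and the $\k(\p^1)$-points of the restriction are the $\k(B)$-points of $X_{\k(B)}$.
\item Alternatively, invoke an irreducibility theorem for Hurwitz spaces of simply branched covers of a positive-genus curve with many branch points and full monodromy $\mathrm{Sym}_n$. You would then have to choose the initial multisection so that it meets those hypotheses.
\end{itemize}
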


\subsection{Repetition of Del Pezzo surfaces}

In this section, we recall classical facts on Del Pezzo surfaces.  Let $\kk$ be any field of characteristic zero and $\overline\kk$ its algebraic closure. (Later on, we will have $\kk=\k(C)$ for some curve $C$ over an algebraically closed field $\k$ of characteristic zero.)

Recall that a Del Pezzo surface is a smooth surface $S$ such that $-K_S$ is ample. We call $K_S^2$ its degree, and $1\leq K_S^2\leq9$. Then $S_{\overline\kk}\simeq\p^1_{\overline\kk}\times\p^1_{\overline\kk}$ or is the blow-up of $\p^2_{\overline\kk}$ in $0\leq r\leq 8$ points in general position (i.e. no infinitely near points, no three points on a line, no six points on a conic, no eight points on a singular cubic where one is the singular point of the cubic). In particular, $S_{\overline\kk}$ has only finitely many curves of negative self-intersection, and each such curve has self-intersection $-1$. Moreover, $\Aut(S)$ is an algebraic group, see for instance \cite[Lemma 2.10(1)]{RZ18}.

\begin{lemma}\label{lem:auto of dp surface}
Let $S$ be a Del Pezzo surface over $\kk$. 
\begin{enumerate}
\item\label{auto dp surface:1} If $K_S^2\leq5$, then $\Aut(S)$ is finite. 
\item\label{auto dp surface:2} If $K_S^2=6$, then $\Aut(S_{\bar\kk})\simeq \G_m^2\rtimes(\mathrm{Sym}_3\times\Z/2)$. 
\item\label{auto dp surface:3} If $K_S^2=7$, then there is a birational morphism $S\to \p^2$. 
\end{enumerate}
\end{lemma}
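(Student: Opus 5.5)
The plan is to prove the three items separately. In each case we first pass to $\overline\kk$ and then descend to $\kk$ where needed. The two tools are the classical description of $S_{\overline\kk}$ as $\p^1\times\p^1$ or as a blow-up of $\p^2$ in $r=9-K_S^2$ points in general position, together with the finiteness of the set of $(-1)$-curves.

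For \autoref{auto dp surface:1}, we use that $\Aut(S)$ is an algebraic group and that $\Aut(S)\subset\Aut(S_{\overline\kk})$, so it suffices to show $\Aut(S_{\overline\kk})$ is finite when $K_S^2\leq5$. The connected component $\Autz(S_{\overline\kk})$ acts on the finite set of $(-1)$-curves, so it fixes each of them. Hence it descends along the contraction $S_{\overline\kk}\to\p^2$ of $r\geq4$ disjoint $(-1)$-curves. Its image in $\PGL_3$ fixes $r\geq4$ points, no three of which are collinear. Any such element fixes a projective frame and is therefore the identity, so $\Autz(S_{\overline\kk})$ is trivial. Thus $\Aut(S_{\overline\kk})$ embeds into the finite automorphism group of the configuration of $(-1)$-curves: an automorphism preserving every $(-1)$-curve descends to $\p^2$ fixing four general points, hence is trivial. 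We conclude that $\Aut(S)$ is finite.

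For \autoref{auto dp surface:2}, $S_{\overline\kk}$ is the blow-up of $\p^2$ at three non-collinear points, which we take to be the coordinate points. Its six $(-1)$-curves form a hexagon. The torus $\G_m^2$ of diagonal matrices lifts to $S_{\overline\kk}$. By the argument above, the subgroup acting trivially on the hexagon is exactly this torus: it fixes the three coordinate points, hence consists of diagonal matrices. The automorphism group of the hexagon is the dihedral group of order $12$, which is $\mathrm{Sym}_3\times\Z/2$. Every symmetry is realized. $\mathrm{Sym}_3$ acts by permuting coordinates. The $\Z/2$ is realized by the standard quadratic involution $[x:y:z]\mapsto[yz:xz:xy]$, which lifts to an automorphism exchanging the exceptional curves with the strict transforms of the coordinate lines. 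The torus is normal and the finite group gives a section, so we obtain the semidirect product.

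For \autoref{auto dp surface:3}, over $\overline\kk$ the surface $S_{\overline\kk}$ is $\p^2$ blown up at two points $p_1,p_2$. Its $(-1)$-curves are $E_1$, $E_2$ and the strict transform $L$ of the line through $p_1,p_2$. In this chain $L$ meets both $E_1$ and $E_2$, which are disjoint. The Galois group acts on this configuration preserving intersections, so $L$ is fixed by every Galois element and is therefore defined over $\kk$. Contracting $L$ over $\kk$ gives a birational morphism to a Del Pezzo surface of degree $8$ whose geometric model is $\p^1\times\p^1$, the image of $L$ being a $\kk$-rational point. Equivalently, $E_1\cup E_2$ is Galois-stable and can be contracted to a degree-$9$ surface, which is a Severi–Brauer surface. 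To get $\p^2$ we need a $\kk$-point: the Galois-invariant point $L\cap(\text{anything})$ is not obviously available, but $L\cap E_1$ and $L\cap E_2$ form an orbit of size at most $2$, which gives a point of degree $\leq2$ on the Severi–Brauer surface. A Severi–Brauer surface with a point of degree prime to $3$ is trivial, so we get $\p^2$.

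The main obstacle is this descent step in \autoref{auto dp surface:3}: choosing the right curves to contract and proving that the Severi–Brauer surface splits, which we do via the index-divides-$3$ argument.
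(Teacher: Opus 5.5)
Your proof is correct. For (1) and (2) you follow the paper's route. In (1) the neutral component fixes every $(-1)$-curve and descends to $\PGL_3$ fixing $r\geq4$ points in general position. In (2) the kernel of the action on the hexagon is the diagonal torus, and the permutation matrices together with the lifted standard quadratic involution give a complement.

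In (1) you add that all of $\Aut(S_{\overline\kk})$ embeds into the permutations of the $(-1)$-curves; this is a harmless strengthening. The paper only kills $\Autz$ and then uses that $\Aut(S)$ is an algebraic group. It also uses $\Aut(S)_{\overline\kk}\simeq\Aut(S_{\overline\kk})$ from \cite{TZ24}, which you use implicitly when you pass to $\overline\kk$.

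The real difference is in (3). The paper simply cites \cite[Lemma 4.15]{BFSZ24}, whereas you give a self-contained descent argument:
\begin{enumerate}
\item the configuration $E_1$--$L$--$E_2$ forces $L$ to be Galois-invariant and $E_1\cup E_2$ to be Galois-stable;
\item so $E_1\cup E_2$ contracts over $\kk$ to a Severi--Brauer surface $P$;
\item the image of $E_1\cup E_2$ gives a closed point of degree $\leq2$ on $P$, which splits $P$ because the index divides $3$.
\end{enumerate}
This makes the contraction $S\to\p^2$ explicit, at the cost of invoking the index theory of Severi--Brauer varieties.

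You could avoid even that: the image of $L$ in $P$ is a line defined over $\kk$, so the hyperplane class descends and $P\simeq\p^2$ directly. The contraction of $L$ to a degree-$8$ surface is an unnecessary detour. It is an alternative route, not an equivalent one as you wrote, and it does not by itself produce the morphism to $\p^2$.
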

\begin{proof}
We have $\Aut(S)_{\overline\kk}\simeq\Aut(S_{\overline\kk})$ by \cite[Proposition 2.20]{TZ24}.
If $K_S^2\leq5$, then $S_{\overline\kk}$ is the blow-up $\eta\colon S\to\p^2$ of $r\geq4$ points $p_1,\dots,p_r$ in general position and it has only finitely many $(-1)$-curves. The group $\Autz(S_{\overline\kk})$ preserves each of them, in particular the exceptional divisors of $\eta$.
Then $\eta\Autz(S_{\overline\kk})\eta^{-1}\subset\Aut(\p^2_{\overline\kk})$ fixes $p_1,\dots,p_r$. Since these points are in general position and $r\geq4$, the group $\eta\Autz(S_{\overline\kk})\eta^{-1}$ is trivial. So, $\Autz(S)$ is trivial.
Suppose that $K_S^2=6$.  Then $S_{\overline\kk}$ is isomorphic to the blow-up $\eta\colon S_{\overline\kk}\to\p^2_{\overline\kk}$ of the points $[1:0:0],[0:1:0],[0:0:1]$. The six $(-1)$-curves on $S_{\overline\kk}$ are arranged in a hexagon on $S_{\overline\kk}$ and $\Aut(S_{\overline\kk})$ acts on it. 
This action induces a short exact sequence
\[
1\to G\to \Aut(S_{\overline\kk})\to\mathrm{Sym}_3\times\Z/2\to 1.
\]
and $\eta G\eta^{-1}\simeq\G_m^2$. Moreover, the sequence is split: the lift of the standard quadratic involution of $\p^2_{\overline\kk}$ generates $\Z/2$ and the automorphisms of $\p^2_{\overline\kk}$ with coefficients in $\{0,1\}$ permuting the standard coordinates lift to $\mathrm{Sym}_3$. 
For $K_S^2=7$, the claim is proven in \cite[Lemma 4.15]{BFSZ24}.
\end{proof}

\begin{proposition}[{\cite[Theorem 2.6]{iskovskikh_1996}}]\label{prop:dP<5 rational}
Let $S$ be a Del Pezzo surface over $\kk$ with $\rho(S)=1$. 
\begin{enumerate}
\item If $K_S^2\geq5$, then $S$ is $\kk$-rational if and only if $S(\kk)\neq\varnothing$. 
\item If $K_S^2\leq4$, then $S$ is not $\kk$-rational. 
\end{enumerate}
\end{proposition}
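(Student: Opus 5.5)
This is the classical result of Iskovskikh, and I would reprove it with the two-dimensional Sarkisov program over the non-closed field $\kk$: every birational map between Mori fibre spaces factors into Sarkisov links. Write $d=K_S^2$. The implication ``$\kk$-rational $\Rightarrow S(\kk)\neq\varnothing$'' holds in every degree by the Lang--Nishimura lemma. Indeed, $S$ is smooth and projective, and it is birational to $\p^2_\kk$, which has $\kk$-points. So the content of the proposition lies in two places: constructing the rational maps when $d\geq5$ and a point exists, and proving rigidity when $d\leq4$.

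\emph{Step 1: the degrees $d\geq5$.} Fix $p\in S(\kk)$ and split into cases by degree.
\begin{itemize}
\item If $d=9$, then $S$ is a Severi--Brauer surface with a rational point, hence $S\simeq\p^2_\kk$.
\item If $d=8$ and $\rho(S)=1$, then $S_{\overline\kk}\simeq\p^1\times\p^1$ with the two rulings exchanged by Galois. Here $p$ lies on exactly two conjugate lines. Blowing up $p$ and contracting the strict transforms of these two lines, which form one Galois orbit of disjoint $(-1)$-curves, gives a Del Pezzo surface of degree $9$ with a rational point, hence $\p^2_\kk$.
\item The degree $d=7$ does not occur with $\rho=1$, by \autoref{lem:auto of dp surface}\autoref{auto dp surface:3}.
\item For $d=6$ and $d=5$, I would first move $p$ off the $(-1)$-curves. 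If $p$ lies on a line $E$, then all Galois conjugates of $E$ pass through $p$. Since $\rho(S)=1$, $E$ is not defined over $\kk$, and two distinct lines through one point contradict the hexagon or Petersen configuration, except in the case of two meeting lines. In that case I would replace $p$ by a general point of $S(\kk)$, which is Zariski dense once $S$ is unirational; unirationality comes from the chord construction through $p$. For $p$ in general position, blowing up $p$ gives a Del Pezzo surface of degree $d-1$ of Picard rank $2$. The second extremal contraction of this surface gives the Sarkisov link: for $d=6$ it goes to a degree $8$ surface with a point, and for $d=5$ it goes to a conic bundle with a section, or to a degree $9$ or $8$ surface with a point. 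Each of these targets is rational by the cases above.
\end{itemize}

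\emph{Step 2: the degrees $d\leq4$.} Suppose $\chi\colon S\rat S'$ is a birational map to a Mori fibre space. I would apply the Noether--Fano inequality. Take $\mathcal H'$ to be a very ample complete linear system pulled back from $S'$, and let $\mathcal H=\chi^{-1}_*\mathcal H'\subset|-\mu K_S|$ be its transform. If $\chi$ is not an isomorphism, then $\mathcal H$ has a Galois orbit of base points $\{p_1,\dots,p_r\}$ with multiplicity $m>\mu$. Intersecting two general members of $\mathcal H$ gives $r m^2\leq \mu^2 d$, so $r<d\leq4$. Blowing up such an orbit gives a Del Pezzo surface, or a weak Del Pezzo surface in which the anticanonical model must be checked. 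This uses the general position of the orbit, which follows from $\rho(S)=1$. The possible links are then as follows:
\begin{itemize}
\item $r=1$: Geiser and Bertini involutions, for $d=2$ and $d=1$ respectively;
\item $r=2$: a link from $S$ back to a Del Pezzo surface of the same degree with $\rho=1$;
\item $r=3$ with $d=4$: a link of the same kind.
\end{itemize}
None of these links reaches $\p^2$ or a conic bundle. An untwisting induction on $\mu$ shows that $\chi$ is a composition of such links, so $S'$ is again a Del Pezzo surface of degree $d$ with $\rho=1$. In particular $S'\not\simeq\p^2_\kk$, so $S$ is not $\kk$-rational.

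\emph{Main obstacle.} I expect the hardest part to be the case analysis in Step 2. It has to exclude every possible link target, which requires the anticanonical model of the blown-up surface together with its second extremal contraction. The inequality $r<d$ must be made strict carefully. For $d=4$, $r=3$, the second contraction must be checked to be a contraction to another $\rho=1$ surface of degree $4$, and not a conic bundle. For that case I would compute the lattice of $(-1)$-curves on the blown-up degree $1$ surface directly.
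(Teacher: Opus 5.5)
The paper does not prove this statement; it quotes Iskovskikh's Theorem~2.6. Your strategy (Lang--Nishimura for one direction, explicit links for $d\geq5$, maximal singularities for $d\leq4$) is essentially Iskovskikh's own. The Lang--Nishimura direction and Step~1 are fine in substance. The genuine gap is in Step~2 for $d=4$, and it starts with your table of links for $r=1$.

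By your own inequality $r<d$, a surface of degree $1$ has no maximal singularity at all. For $d=2$, blowing up the point gives a degree-$1$ surface, and the link is the Bertini involution. For $d=3$, it gives a degree-$2$ surface, and the link is the Geiser involution. For $d=4$, blowing up a rational point $p$ gives a cubic surface $\tilde S$ with $\rho(\tilde S)=2$; the point is in general position, as any maximal singularity is. The second extremal ray of $\tilde S$ is spanned by $F=-K_{\tilde S}-E$, which satisfies $F^2=0$ and $-K_{\tilde S}\cdot F=2$. These are the residual conics in the planes through the line $E$. So the link is of type~I, onto a conic bundle $\tilde S\to\p^1_\kk$ with $K^2=3$. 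This is exactly the type~I link recorded in \autoref{lem:rho=1,K<4}(3) and \autoref{lem:links I}.

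Hence your claims that no link reaches a conic bundle, and that $S'$ is again a degree-$d$ Del Pezzo surface with $\rho=1$, are false for $d=4$. They fail in the only situation where something must be proved, namely $S(\kk)\neq\varnothing$. Your worry about $r=3$ was misplaced: for $r=2,3$, writing a class as a rational combination of $K$ and $\sum E_i$, the conditions $F^2=0$, $-K\cdot F=2$ force $b^2=1/2$ or $b^2=1/3$. So no such rational class exists, and those links do return to degree $4$.

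To close the gap you need the Noether--Fano analysis on the conic-bundle side as well. You must show that, starting from a conic bundle over $\p^1$ with $K^2=3$, the only links are elementary transformations to conic bundles with $K^2=3$ and links of type~III back to degree-$4$ Del Pezzo surfaces with $\rho=1$. Then one never leaves this class, and $\p^2_\kk$ is never reached. Your untwisting must also use the full Sarkisov degree rather than $\mu$ alone. Without this, Step~2 proves (2) only for $d\leq3$.

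Smaller points:
\begin{itemize}
\item General position of the base orbit comes from $m>\mu$, not from $\rho(S)=1$. A curve $C$ through $k$ of the points with $-K_S\cdot C\le k$ would give $\mu(-K_S\cdot C)=\mathcal H\cdot C\ge km>\mu k$, a contradiction. The same estimate together with Nakai--Moishezon shows the blow-up is an honest Del Pezzo surface, so the weak case never arises.
\item In Step~1, the exception of two meeting conjugate lines through $p$ cannot occur when $\rho(S)=1$. The class $E+E'$ would be Galois-invariant with $(E+E')^2=0$, hence not proportional to $-K_S$.
\item For $d=5$ the second contraction goes onto $\p^2_\kk$. It contracts the five disjoint strict transforms of the conics through $p$, which form a single Galois orbit because $\rho(S)=1$. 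So your unjustified hedge about a conic bundle with a section is unnecessary.
\end{itemize}
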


\begin{lemma}[{\cite[Theorem 2.6]{iskovskikh_1996}}]\label{lem:rho=1,K<4} 
Suppose that $S$ is a Del Pezzo surface over $\kk$, $G\subset\Aut(S)$ a finite subgroup with $\Pic(S)^G=1$, and $\varphi\colon S\dashrightarrow S'$ a $G$-equivariant birational map to a $G$-Mori fibre space $S'\to B$. Assume that $K_S^2\leqslant 4$. Then one has the following:
    \begin{enumerate}
    \item If $K_S^2=1$, then $\varphi$ is an isomorphism and $\rho(S')=1$.
    \item If $2\leq K_S^2\leq3$, then $\varphi$ is a composition of links of type II and $K_{S'}^2=K_S^2$.
    \item If $K_S^2=4$, then $\varphi=\varphi_n\circ\cdots\circ\varphi_1$, where $\varphi_i\colon S_i\rat S_{i+1}$ (where $S=S_1,S'=S_{n+1}$) is a link of type I with $K_{S_i}^2=4,K_{S_{i+1}}^2=3$, or $\varphi_i$ is a link of type III with $K_{S_i}^2=3,K_{S_{i+1}}^2=4$, or $\varphi_i$ is a link of type II between conic bundles and $K_{S_i}^2=K_{S_{i+1}}^2=3$, or $\varphi_i$ is a link of type II between Del Pezzo surface and 
    $K_{S_{i+1}}^2=K_{S_{i}}^2=4$.
    \end{enumerate}
\end{lemma}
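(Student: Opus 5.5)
This statement is quoted from Iskovskikh's classification of $G$-equivariant birational geometry of rational surfaces (\cite[Theorem 2.6]{iskovskikh_1996}). My plan is the $G$-equivariant Sarkisov program for surfaces over a perfect field. The main input is that $\varphi$ decomposes into a finite sequence of $G$-Sarkisov links $S=S_1\rat S_2\rat\cdots\rat S_{n+1}=S'$. Since $\rho(S)^G=1$, the first link starts from a $G$-Del Pezzo surface of Picard rank one. I will then classify, for each degree, which links can start from such a surface.

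\textbf{Links out of a rank-one $G$-Del Pezzo surface.} Any link out of $S$ begins by blowing up a $G$-orbit (a closed point of degree $r$ over $\kk$ with $G$-stable geometric points) in general position, say with $r<K_S^2$. Write $Z$ for the blow-up. The link then either contracts another extremal ray of $Z$, giving a type II link to a Del Pezzo surface, or $Z$ carries a conic bundle structure, giving a type I link. Iskovskikh's list in \cite{iskovskikh_1996} gives the possibilities.
\begin{itemize}
\item For $K_S^2=1$ there is no orbit with $r<1$. The only "link" is the Bertini involution, which is an automorphism. So $S$ is $G$-birationally rigid, $\varphi$ is an isomorphism, and $\rho(S')=1$.
\item For $K_S^2=2$ only $r=1$ occurs, yielding the Bertini involution. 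For $K_S^2=3$ we get $r=1$ (Geiser involution) and $r=2$ (Bertini-type link). All of these are type II self-links that preserve the degree. A type I link would require $Z$ to be a conic bundle, i.e.\ $K_Z^2=K_S^2-r\le0$ is impossible for a Del Pezzo surface of this kind, or that $S$ already contains a pencil, which contradicts $\rho^G=1$.
\item For $K_S^2=4$, blowing up a point of degree $1$ gives a cubic surface $Z$ with a conic bundle. This is the type I link $4\to3$. Blowing up a point of degree $2$ and contracting gives a type II link $4\to4$.
\end{itemize}

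\textbf{Induction along the decomposition.} I then proceed inductively. In cases $1$ through $3$, each intermediate surface is again a rank-one $G$-Del Pezzo surface of the same degree, so the same analysis applies at every step. In case $4$, the intermediate surfaces are of two kinds.
\begin{itemize}
\item Rank-one Del Pezzo surfaces of degree $4$.
\item Conic bundles of degree $3$ with $\rho^G=2$. For these, the only relevant links are type II elementary transformations between conic bundles, which preserve $K^2$ here because the relevant minimal ones keep degree $3$. There are also type III links back to degree-$4$ Del Pezzo surfaces, contracting a section. Type IV links would change the fibration, but the degree-$3$ constraint excludes them except as already listed.
\end{itemize}

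\textbf{Main obstacle.} The hard step is the exhaustive enumeration of links, especially ruling out other degrees via the Noether–Fano inequality and checking the general-position conditions. I would rely directly on the table in \cite{iskovskikh_1996} rather than reprove it.
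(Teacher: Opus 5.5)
Your route is the same as the paper's. The paper gives no argument of its own and simply invokes Iskovskikh's Theorem 2.6, i.e.\ the (equivariant) Sarkisov factorization combined with his table of links out of a rank-one Del Pezzo surface and out of a conic bundle with $K^2=3$, and that is exactly what you do.

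One side remark in your sketch is wrong, though harmless since you defer to the table. Type I links out of degree $2$ and $3$ are not excluded because $K_Z^2=K_S^2-r\le 0$; in fact $K_Z^2\geq 1$ in all three cases. They are excluded because a conic class $F=a(-K_S)-bE$ on $Z$ would require $a^2K_S^2=b^2r$ and $aK_S^2-br=2$, and this is impossible when $rK_S^2\in\{2,3,6\}$ is not a square. Equivalently, the Geiser or Bertini involution of $Z$ sends $E$ onto the other extremal ray, so that ray is again an orbit of $(-1)$-curves. You also omitted the Bertini self-link with $r=3$ on the quartic, which does not affect the conclusion.
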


\subsection{Generalities on Mori Del Pezzo fibrations}\label{s:generalities}

First, let us recall the following fundamental statement. 

\begin{lemma}[Blanchard's lemma {\cite[Proposition 4.2.1]{BSU13}}]\label{lem:blanchard}
Let $G$ be a connected algebraic group, $X$ a $G$-variety, and $f \colon X \to Y$ a
proper morphism such that $\Ol_Y=f_*\Ol_X$. Then there is a unique regular action of $G$ on $Y$ such that $f$ is
equivariant.
\end{lemma}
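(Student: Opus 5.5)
The plan is to build the action on $Y$ by descending the composite $f\circ a\colon G\times X\to Y$ along $\mathrm{id}_G\times f\colon G\times X\to G\times Y$, where $a\colon G\times X\to X$ is the action map. Since $f$ is proper with $f_*\Ol_X=\Ol_Y$, flat base change along $G\to\Slpec(\k)$ shows that $\mathrm{id}_G\times f$ is again proper and satisfies $(\mathrm{id}_G\times f)_*\Ol_{G\times X}=\Ol_{G\times Y}$. I will then use the standard factorisation criterion (e.g.\ Debarre, Lemma~1.15). Let $p\colon Z\to W$ be proper with $p_*\Ol_Z=\Ol_W$, and let $h\colon Z\to Y$ be a morphism that contracts every fibre of $p$ to a point. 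Then $h$ factors uniquely as $h=h'\circ p$. In our situation this reduces the construction of $b\colon G\times Y\to Y$ with $b\circ(\mathrm{id}_G\times f)=f\circ a$ to a fibrewise statement: for every $g\in G$ and $y\in Y$, the set $f(g\cdot f^{-1}(y))$ is a single point.

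The key step, and the only non-formal one, is this fibrewise contraction, which I will deduce from the rigidity lemma. By Zariski's main theorem (the hypothesis $f_*\Ol_X=\Ol_Y$ with $f$ proper), every fibre $F=f^{-1}(y)$ is connected, and it is proper. Consider the morphism $\phi\colon G\times F\to Y$ given by $(g,x)\mapsto f(g\cdot x)$. Its restriction to $\{e\}\times F$ is constant with value $y$. The rigidity lemma states that if $F$ is proper and connected, $G$ is a connected variety, and $\phi$ contracts one slice $\{e\}\times F$ to a point, then $\phi$ factors through the projection $G\times F\to G$, so it contracts every slice $\{g\}\times F$. Here the connectedness of $G$ is essential. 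The lemma is applied to the reduced fibre, which suffices because we only need a set-theoretic statement. This gives exactly the contraction needed above, and hence a morphism $b$.

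Finally I check that $b$ is an action and that it is unique. The identity $b(e,f(x))=f(x)$ holds, and so does $b(g,b(h,f(x)))=f(ghx)=b(gh,f(x))$. Since $f$ and $\mathrm{id}\times\mathrm{id}\times f$ are surjective and $Y$ is separated and reduced (it is a variety), these equalities of morphisms hold on all of $G\times Y$, respectively $G\times G\times Y$. By construction $f$ is $G$-equivariant. Uniqueness follows from the surjectivity of $\mathrm{id}_G\times f$: any action $b'$ making $f$ equivariant satisfies $b'\circ(\mathrm{id}\times f)=f\circ a=b\circ(\mathrm{id}\times f)$, hence $b'=b$. I expect the main obstacle to be the rigidity step, specifically justifying connectedness of the fibres and applying rigidity over a possibly non-complete $G$. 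The factorisation and the verification of the action axioms are formal.
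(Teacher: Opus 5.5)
Your proposal is correct and is essentially the standard proof of \cite[Proposition 4.2.1]{BSU13}, which the paper cites without reproducing: you descend $f\circ a$ along $\mathrm{id}_G\times f$, which still satisfies $(\mathrm{id}_G\times f)_*\Ol_{G\times X}=\Ol_{G\times Y}$ by flat base change, and the rigidity lemma applied to $G\times f^{-1}(y)$ supplies the fibrewise contraction. The obstacle you anticipate does not arise: rigidity needs completeness only of the contracted factor, here the fibre $F$, which is proper and connected by Zariski, while the parameter space $G$ need only be irreducible, and a connected algebraic group is.
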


\begin{definition}
A Mori Del Pezzo fibration is a Mori fibration $\pi\colon X\to C$ above a curve $C$ whose generic fibre $X_{\k(C)}$ is a surface. If $d:=K_{X_{\k(C)}}^2$, we say that $\pi$ is a Mori Del Pezzo fibration of degree $d$.  
\end{definition}

Let $\pi\colon X\to B$ be a surjective morphism between normal varieties. 
We denote by $N_1(X/B)$ the subspace of $N_1(X)$ generated by the curves contracted by $\pi$. 
Its dimension $\rho(X/B)$ is called the relative Picard rank of $\pi$.

\begin{remark}\label{rmk: MdP}
Let $\pi\colon X\to C$ be a Mori Del Pezzo fibration above a curve $C$. 
\begin{enumerate}
\item\label{MdP:-1} Since the curve $C$ is normal, it is smooth.
\item\label{MdP:0} $X_{\k(C)}$ is smooth by \cite[Lemma 15.1]{FS20} and is hence a Del Pezzo surface. It has a rational point by \autoref{thm:Tsen}.
\item\label{MdP:4} A general fibre of $\pi$ is a (smooth) Del Pezzo surface of degree $K_{X_{\k(C)}}^2$.
\item\label{MdP:1} Since $\rho(X/C)=1$, we have $\rho(X_{\k(C)})=1$. In particular, $K_{X_{\k(C)}}^2\neq 7$ by \autoref{lem:auto of dp surface}\autoref{auto dp surface:3}. 
\item\label{MdP:3} By \autoref{lem:blanchard}, the action of $\Autz(X)$ on $C$ induces a morphism $\pi_*\colon \Autz(X)\to \Autz(C)$ of algebraic groups and a short exact sequence
\begin{equation}\tag{ses}\label{eq:seq}
1\to \Autz(X)_C\to \Autz(X) \overset{\pi_*}{\to} \Autz(C).
\end{equation}
\item\label{MdP:2} There is an injective group homomorphism $\Autz(X)_C\hookrightarrow\Aut(X_{\k(C)})$, see for instance \cite[Remark 2.1.6]{BFT22}. 
\end{enumerate}
\end{remark}

\begin{proposition}\label{prop:MdP5}
Let $\pi\colon X\to C$ be a Mori Del Pezzo fibration above a curve $C$. The following hold:
\begin{enumerate}
\item\label{MdP5:1} If $K_{X_{\k(C)}}^2\geq5$, $X$ is birational to $C\times\p^2$ over $C$. 
\item\label{MdP5:2} If $g(C)\geq1$ and there exists a birational map $\varphi\colon X\rat C\times\p^2$, then there is an isomorphism $\alpha\colon C\to C$ such that $\alpha\circ\pi=pr_C\circ\varphi$ and $K_{X_{\k(C)}}^2\geq5$. 
\end{enumerate}
In particular, if $g(C)\geq 1$, then $X$ is birational to $C\times \p^2$ if and only if $K_{X_{\k(C)}}^2\geq5$.
\end{proposition}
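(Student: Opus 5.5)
For \autoref{MdP5:1}, I would work on the generic fibre $S:=X_{\k(C)}$ over $\kk=\k(C)$. By \autoref{rmk: MdP}\autoref{MdP:0} and \autoref{MdP:1}, $S$ is a Del Pezzo surface with $\rho(S)=1$ and $S(\kk)\neq\varnothing$. If $K_S^2\geq5$, \autoref{prop:dP<5 rational}(1) shows that $S$ is $\kk$-rational, so $S$ is $\kk$-birational to $\p^2_\kk$. Spreading this birational map out over $C$ gives a birational map $X\rat C\times\p^2$ over $C$, since $C\times\p^2$ has generic fibre $\p^2_\kk$.

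For \autoref{MdP5:2}, take a birational map $\varphi\colon X\rat C\times\p^2$ and consider the rational map $pr_C\circ\varphi\colon X\rat C$. Its fibres are rationally connected: they are birational to the fibres $\p^2$ of $pr_C$. I want to show it factors through $\pi$. Restrict to a general fibre $F$ of $\pi$, which is a smooth Del Pezzo surface by \autoref{rmk: MdP}\autoref{MdP:4}, hence rational. The induced rational map $F\rat C$ extends to a morphism, because $C$ is a smooth curve and $F$ is smooth. Since $F$ is rational and $g(C)\geq1$, every map from $\p^1$ to $C$ is constant, so this morphism is constant. Therefore $pr_C\circ\varphi$ is constant on the general fibres of $\pi$ and factors as $\alpha\circ\pi$ for a rational map $\alpha\colon C\rat C$, which is a morphism since $C$ is a smooth curve. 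Applying the same argument to $\varphi^{-1}$, whose general fibres $\p^2$ are again rational, gives $\beta$ with $\pi\circ\varphi^{-1}=\beta\circ pr_C$. Then $\alpha$ and $\beta$ are mutually inverse, so $\alpha$ is an isomorphism.

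It follows that $\varphi$ becomes a birational map over $C$ after twisting by $\alpha$. So $S$ is $\kk$-birational to $\p^2_\kk$, where we identify the function fields via $\alpha^*$; this is harmless, since the base change of $\p^2$ along an isomorphism of fields is still $\p^2$. Because $\rho(S)=1$, \autoref{prop:dP<5 rational}(2) forces $K_S^2\geq5$.

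The final equivalence then follows by combining the two parts. The only delicate step is the factorisation of $pr_C\circ\varphi$ through $\pi$. I would handle it cleanly by noting that $pr_C\circ\varphi$ is defined on a dense open set meeting the general fibre $F$, and that a rational map from a rational variety to a curve of positive genus is constant.
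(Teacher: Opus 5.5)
Your proposal is correct and follows the paper's proof essentially step for step. For (1) you use the rational point and Iskovskikh's criterion on the generic fibre. For (2) you show that $pr_C\circ\varphi$ is constant on the rational general fibres, factor it through $\pi$, and conclude via Iskovskikh with $\rho(X_{\k(C)})=1$. The only variation is that you obtain invertibility of $\alpha$ by running the argument for $\varphi^{-1}$ and getting an inverse $\beta$, whereas the paper appeals to connectedness of the fibres of $\pi$ and $pr_C$.

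One small correction: a rational map from a smooth surface to a smooth curve need not extend to a morphism in general (a pencil with base points is a counterexample). It extends here only because $C$ contains no rational curves. You do not need this step anyway, since a rational map from a rational surface to a curve of positive genus is already constant, as you note at the end.
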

\begin{proof}
\autoref{MdP5:1} By \autoref{rmk: MdP}\autoref{MdP:0}, $X_{\k(C)}$ has a rational point. Therefore, if $K_{X_{\k(C)}}^2\geq5$, then there is a birational map $X_{\k(C)}\rat \p^2_{\k(C)}$ by \autoref{prop:dP<5 rational}. It induces a birational map $X\rat C\times\p^2$ over $C$. 

\autoref{MdP5:2} Let $S\subset X$ be a general fibre of $\pi$. It is rational and hence its image $\varphi(S)$ is a rational surface. Since $g(C)\geq1$, it follows that $pr_C(\varphi(S))$ is a point.
Therefore, there exists a morphism $\alpha\colon C\to C$ such that $pr_C\circ\varphi=\alpha\circ\pi$. Since $\pi$ and $pr_C$ have connected fibres, $\alpha$ is an isomorphism.
Then $\varphi$ induces a birational map $X_{\k(C)}\rat\p^2_{\k(C)}$. Since $\pi$ is a Mori fibre space, we have $\rho(X/C)=\rho(X_{\k(C)})=1$, and now \autoref{prop:dP<5 rational} implies that $K_{X_{\k(C)}}^2\geq5$.
\end{proof}

\subsection{Sarkisov links starting from Mori Del Pezzo fibrations}

We recall the follow statement: 

\begin{theorem}[{\cite[Theorem 1.2]{Flo20}}]\label{thm:Enrica}
Let $G$ be a connected algebraic group acting on Mori fibre spaces $\pi\colon X\to B$ and $\pi'\colon X'\to B'$. Then any $G$-equivariant birational map $f\colon X\rat X'$ is the composition of $G$-equivariant Sarkisov links. 
Moreover, the relative version of the statement holds as well.
\end{theorem}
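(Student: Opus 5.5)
The statement just above is Floris's equivariant Sarkisov theorem, cited from \cite[Theorem 1.2]{Flo20}. I would not reprove it from scratch. The plan is to reduce it to the non-equivariant Sarkisov program of \cite{HMcK}, run with a $G$-invariant choice at every step. The key point is that $G$ is \emph{connected}, which forces $G$ to act trivially on Néron–Severi groups and cone structures.

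\textbf{Step 1: $G$ fixes all numerical data.} For any $G$-variety $Z$ with $G$ connected, the induced action of $G$ on $N^1(Z)$ and $N_1(Z)$ is trivial. Indeed, this action factors through a discrete group, since classes are locally constant in algebraic families. Hence every extremal ray, every contraction, and every flip or flop of $Z$ is automatically $G$-equivariant, by \autoref{lem:blanchard} applied to the contraction morphism. The same holds relative to a base on which $G$ acts.

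\textbf{Step 2: a $G$-equivariant resolution of $f$.} Take a $G$-equivariant resolution $W$ of the graph of $f$, using equivariant resolution of singularities in characteristic zero. Choose a sufficiently general ample divisor $H'$ on $X'$. Its strict transform $H$ on $X$ defines a movable linear system $\mathcal H$, whose class is $G$-invariant by Step~1. Only the class matters in the Sarkisov degree computation $(\mu,\lambda,e)$ of Corti and Hacon–McKernan.

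\textbf{Step 3: running the program.} Run the Sarkisov program of Hacon–McKernan via the two-ray games determined by the class of $\mathcal H$. Each link is obtained as follows:
\begin{itemize}
\item extract a divisor, or contract an extremal ray;
\item run the 2-ray game, which consists of steps of an MMP with scaling on a variety of Picard rank two over the base.
\end{itemize}
At each stage the extremal rays involved are determined numerically. By Step~1 they are $G$-stable, and the resulting contractions and flips are $G$-equivariant. The extracted divisor in the Hacon–McKernan approach comes from a $\mathbb{Q}$-factorial model. Working over $W$, one may take it to be a $G$-invariant valuation: $G$ is connected and permutes only finitely many candidate exceptional divisors, so it fixes each of them. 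It follows that every link is $G$-equivariant. Termination is inherited from the non-equivariant program. The relative version over a base is identical, with everything done over that base.

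\textbf{Main obstacle.} The delicate point is ensuring that the divisorial extraction in each link is $G$-equivariant. In the non-equivariant setting, this extraction is chosen using a maximal canonical centre of $(X,\tfrac1\mu\mathcal H)$. My first attempt is to observe that the set of valuations computing the canonical threshold is finite and $G$-stable, hence pointwise fixed by the connected group $G$. If that fails, I would instead use the Hacon–McKernan description of links through ample models of a $G$-invariant divisor class in a finite polytope. This makes equivariance formal, since every ample model is canonical and Blanchard's lemma applies to it.
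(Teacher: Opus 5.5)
The paper gives no argument of its own here; it simply quotes \cite[Theorem~1.2]{Flo20}, so deferring to Floris is exactly the paper's approach. Your fallback is also essentially Floris's proof: Hacon--McKernan's geography of ample models of adjoint divisors on a $G$-equivariant resolution, where connectedness of $G$ forces a trivial action on $N^1$. Each ample model is then reached by a $G$-equivariant MMP followed by a contraction to which Blanchard's lemma applies.

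Your primary route, however, should be dropped, for two reasons.

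First, \cite{HMcK} does not use the Sarkisov degree $(\mu,\lambda,e)$ or maximal canonical centres at all. Corti's termination argument for the degree-decreasing algorithm is specific to dimension $3$, and Hacon--McKernan use a different method precisely to bypass it. So in higher dimension there is no termination statement to ``inherit''.

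Second, in that route $G$ fixes only the class of $\mathcal H$, not $\mathcal H$ itself, and maximal centres depend on $\mathcal H$. For example, take $X'=C\times\mathbb{P}^2$ with $C$ elliptic, $G=\Autz(C)\times\PGL_3(\k)$ and $H'=pr_1^*\mathcal{O}_C(q)\otimes pr_2^*\mathcal{O}(1)$; then translations move $|H'|$. So the $G$-stability of the set of maximal centres needs an extra argument. One option is to show that, on a $G$-equivariant common resolution, the multiplicities $m_E(\mathcal H)$ depend only on the numerical class of $H'$, using the negativity lemma.
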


\begin{proposition}\label{lem:links I}
Let $\pi\colon X\to C$ a Mori Del Pezzo fibration above a curve of genus $g(C)\geq0$. 
Suppose there is a link of type I $\varphi\colon X\rat Y$ to a Mori fibre space $\pi'\colon Y\to B$ giving the following Sarkisov diagram:
\[
\begin{tikzcd}
Y'\ar[d,"div",swap]\ar[rr,dotted,"\psi"] && Y\ar[d,"\pi'"]\\
X\ar[dr,"\pi"]\ar[urr,"\varphi",dashed]&&B\ar[dl,"\theta"']\\
&C&
\end{tikzcd}
\]
Then $Y_{\k(C)}$ is a Del Pezzo surface with $\rho(Y_{\k(C)})=2$ and the divisorial contraction $div$ contracts a divisor onto a curve $\ell$ that is transversal to a general fibre of $\pi$. 
Moreover, one of the following holds:
\begin{enumerate}
\item $X_{\k(C)}\simeq\p^2_{\k(C)}$ and $ \deg(\pi|_{\ell})=1$ or $\deg (\pi|_{\ell})=4$;
\item $K_{X_{\k(C)}}^2=8$ and $\deg (\pi|_{\ell})=2$;
\item $K_{X_{\k(C)}}^2=4$ and $\deg (\pi|_{\ell})=1$.
\end{enumerate} 
\end{proposition}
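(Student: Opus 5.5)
A link of type I from $X/C$ consists of a divisorial contraction $div\colon Y'\to X$, a sequence of log flips $\psi$ over $C$, and a Mori fibre space $\pi'\colon Y\to B$ with $\rho(Y/C)=2$ and $\theta\colon B\to C$ of relative dimension one. The plan is to pass to the generic fibre over $C$, where everything becomes a two-dimensional Sarkisov link over $\kk=\k(C)$. Then we read off the possible cases from the classification of links of type I starting at a Del Pezzo surface of Picard rank one.

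\textbf{Step 1 (the divisor and the generic fibre).} Let $E\subset Y'$ be the exceptional divisor of $div$. We first claim that $div(E)$ dominates $C$. Otherwise $E$ would lie in finitely many fibres of $\pi$. Since $\rho(X/C)=1$, every fibre of $\pi$ is irreducible, so $E$ would be a whole fibre. That is impossible: a fibre is a Cartier divisor on $X$ and cannot be the exceptional divisor of a divisorial contraction. Hence $div(E)$ is either a curve $\ell$ transversal to the general fibre or a surface dominating $C$. The second option cannot occur, since the image of $E$ has codimension at least $2$.

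Restricting to the generic fibre, $div$ becomes a birational morphism $Y'_{\kk}\to X_{\kk}$ of surfaces contracting the irreducible $\kk$-curve $E_{\kk}$ onto the closed point $\ell_{\kk}$. This point has degree $\deg(\pi|_\ell)$. The flips of $\psi$ are centred on curves contracted to points in $B$, hence lying in fibres over $C$, so $\psi$ is an isomorphism on generic fibres. Thus $Y_{\kk}\simeq Y'_{\kk}$ is the blow-up of $X_{\kk}$ at a closed point of degree $\deg(\pi|_\ell)$, and $\rho(Y_{\kk})=\rho(Y/C)=2$. Moreover $-K_{Y}$ is $\pi'$-ample, and $B_{\kk}$ is a curve, so $Y_{\kk}\to B_{\kk}$ is a conic bundle. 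Combined with the two-ray game on $Y'/C$, this gives that $-K_{Y_{\kk}}$ is ample, so $Y_{\kk}$ is a Del Pezzo surface. Explicitly, the link restricted to the generic fibre is a link of type I between the rank-one Del Pezzo surface $X_{\kk}$ (see \autoref{rmk: MdP}) and a conic bundle.

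\textbf{Step 2 (classification over $\kk$).} We now invoke Iskovskikh's classification of links of type I from a Del Pezzo surface of Picard rank one to a conic bundle, the same source as \autoref{prop:dP<5 rational} and \autoref{lem:rho=1,K<4}. A closed point of degree $e$ blown up on a surface of degree $K^2$ gives degree $K^2-e$. The second extremal ray must be a conic bundle, so $Y_{\bar\kk}$ carries a pencil of conics forming a single Galois-stable class. Checking degrees $9,8,6,5,4,\dots$ leaves only the following:
\begin{itemize}
\item[] $\p^2_{\kk}$ blown up at a rational point ($e=1$), giving lines through the point;
\item[] $\p^2_{\kk}$ blown up at a degree-$4$ point, giving conics through four points;
\item[] a quadric of degree $8$ blown up at a degree-$2$ point;
\item[] a degree-$4$ surface blown up at a rational point, via the pencil of $-K-E$.
\end{itemize}
In degrees $6$ and $5$ the candidate centres produce either a further birational contraction or a non-ample anticanonical class. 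In degrees $\le 3$ no type I link exists by \autoref{lem:rho=1,K<4}.

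\textbf{Main obstacle.} The hardest part is this exhaustive case check in Step 2, i.e.\ showing that no other pair $(K_{X_\kk}^2,e)$ yields a conic bundle with $Y_\kk$ Del Pezzo. The case $d=9$, $e=4$ is subtle because general position of the four geometric points is needed. The plan is to cite Iskovskikh's table of links, \cite[Theorem 2.6]{iskovskikh_1996}, rather than redo the computation, and to translate $e$ into $\deg(\pi|_\ell)$ as in Step 1.
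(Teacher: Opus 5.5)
Your overall route is the paper's own: restrict to the generic fibre over $\k(C)$, observe that the link becomes a type I link from the Picard-rank-one Del Pezzo surface $X_{\k(C)}$ to a conic bundle, and read off the cases from \cite[Theorem 2.6(i)]{iskovskikh_1996}. However, the argument at the start of Step 1 for why $div(E)$ dominates $C$ does not work, and this is one of the conclusions of the statement.

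\textbf{The gap.} The argument conflates $Y'$ and $X$: $E$ is a divisor on $Y'$, not on $X$. Suppose the centre $div(E)$ were a curve or a point inside a fibre $X_c$. Then $E$ would simply be one component of the reducible fibre of $\pi\circ div$ over $c$, the other component being the strict transform of $X_c$. Irreducibility of the fibres of $\pi$ gives no contradiction here. Divisorial contractions with centre inside a fibre certainly exist, for instance blowing up a curve in a fibre of a $\p^2$-bundle.

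\textbf{The fix.} What excludes such centres is the Picard-rank count, and you already state all of its ingredients later in Step 1. The paper uses them in this order:
\begin{enumerate}
\item Since $\theta$ is surjective with connected fibres and $\rho(B/C)=1$, one gets $\dim B=2$.
\item Hence $Y_{\k(C)}\to B_{\k(C)}$ is a contraction onto a curve, so $\rho(Y_{\k(C)})\geq 2$. Together with $\rho(Y_{\k(C)})\le\rho(Y/C)=2$ this gives $\rho(Y_{\k(C)})=2$. Your equality $\rho(Y_{\k(C)})=\rho(Y/C)$ is not automatic and needs exactly this justification.
\item Since $\psi$ is an isomorphism on generic fibres, $\rho(Y'_{\k(C)})=2>1=\rho(X_{\k(C)})$.
\item So $div$ is not an isomorphism over the generic point of $C$. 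Its centre must therefore meet the generic fibre, i.e.\ it is a curve $\ell$ transversal to a general fibre of $\pi$.
\end{enumerate}
Once this is in place, the identification of $Y_{\k(C)}$ with the blow-up of the closed point $\ell_{\k(C)}$, of degree $\deg(\pi|_\ell)$, and the appeal to Iskovskikh go through as you describe.

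\textbf{Minor points.}
\begin{itemize}
\item In a type I link, $\psi$ is a pseudo-isomorphism over $C$, not over $B$, since $Y'$ need not map to $B$. The flipped curves lie over points of $C$, which is all you use.
\item ``Combined with the two-ray game'' should be made precise. $\overline{NE}(Y_{\k(C)})$ is spanned by $[E_{\k(C)}]$ and the class of a conic of $Y_{\k(C)}\to B_{\k(C)}$. Both are $K$-negative, so $-K_{Y_{\k(C)}}$ is ample by Kleiman's criterion. Alternatively, cite \cite[Lemma 3.10]{BFSZ24} as the paper does.
\end{itemize}
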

\begin{proof}
We have $\rho(Y/C)=\rho(Y'/C)=2$ and hence $\rho(Y_{\k(C)})\leq2$.
Since $\dim C=1$ and $\theta\colon B\to C$ is a surjective morphism of Picard rank $\rho(B/C)=1$ with connected fibres, we have $\dim B=2$. 
Since the fibres of $\pi'$ are connected as well, we have $\rho(Y_{\k(C)})>1$ and hence $\rho(Y_{\k(C)})=2$. 
The base-locus of $\psi$ is contained in the fibres of $\pi\circ div$, because $\varphi$ preserves the fibration above $C$. Thus $Y_{\k(C)}\simeq Y'_{\k(C)}$.
Since $1=\rho(X_{\k(C)})<\rho(Y'_{\k(C)})=2$, it follows that $\ell$ is transversal to a general fibre of $\pi$. In particular, $\varphi$ induces the blow-up $\hat\varphi\colon X_{\k(C)}\rat Y_{\k(C)}$ of the closed point $\ell_{\k(C)}$.
The morphism $\theta\colon B\to C$ is a klt-Mori fibre space \cite[Lemma 3.13]{BLZ21}. 
In particular, the general fibre $f$ of $\theta$ is isomorphic to $\p^1$. It follows that $B_{\k(C)}$ is a rational curve and $Y_{\k(C)}\to B_{\k(C)}$ is a Mori fibre space. Then $\hat\varphi$ is a Sarkisov link of type I and hence $Y_{\k(C)}$ is a Del Pezzo surface \cite[Lemma 3.10]{BFSZ24}. Moreover, \cite[Theorem 2.6(i)]{iskovskikh_1996} yields the possibilities for $K_{X_{\k(C)}}^2$ and $\deg(\pi|_{\ell})$.
\end{proof}

\begin{lemma}\label{lem:preserves fibration}
Let $\pi\colon X\to C$ a Mori Del Pezzo fibration above a curve $C$ of genus $g(C)\geq1$. Let $\psi\colon X\rat Y$ be a birational map  to a Mori Del Pezzo fibration $\hat\pi\colon Y\to B$ above a curve $B$. Then there exists an isomorphism $\psi'\colon C\to B$ such that $\hat\pi\circ\psi=\psi'\circ\pi$. 

In other words, up to isomorphisms of $X$ and $Y$, $\psi$ is a composition of Sarkisov links of type II.
\end{lemma}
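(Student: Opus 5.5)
The plan is to show that a general fibre of $\pi$ is sent by $\psi$ into a fibre of $\hat\pi$, copying the argument of \autoref{prop:MdP5}\autoref{MdP5:2}. Let $S\subset X$ be a general fibre of $\pi$. By \autoref{rmk: MdP}\autoref{MdP:4}, $S$ is a smooth Del Pezzo surface, hence rational. Because $S$ is general, $\psi$ is defined on a dense open subset of $S$. The closure of $\psi(S)$ is therefore a rationally connected subvariety of $Y$, and its image under $\hat\pi$ is a rationally connected subvariety of $B$.

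Next we identify $B$. The variety $Y$ is birational to $X$. General fibres of $\pi$ cover $X$, and their images are rational, so through a general point of $Y$ there passes a rational surface. Since $X$ is not rationally connected (\autoref{lem:not Fano}) while the fibres of $\hat\pi$ are rationally connected (\autoref{rmk: MdP}\autoref{MdP:4}), \autoref{thm:rationally connected} forces $B$ not to be rational. Hence $g(B)\geq1$. Then $\hat\pi(\overline{\psi(S)})$ is a rational subvariety of a curve of positive genus, so it is a point. This gives a rational map $\alpha\colon C\rat B$ with $\hat\pi\circ\psi=\alpha\circ\pi$. As $C$ is a smooth curve and $B$ is projective, $\alpha$ is a morphism, and it is dominant.

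Apply the same argument to $\psi^{-1}$ (here $g(B)\geq 1$ is used) to get a morphism $\beta\colon B\to C$ with $\pi\circ\psi^{-1}=\beta\circ\hat\pi$. Then $\beta\circ\alpha\circ\pi=\pi$ on a dense open set, so $\beta\circ\alpha=\mathrm{id}_C$, and similarly $\alpha\circ\beta=\mathrm{id}_B$. Thus $\psi'=\alpha$ is an isomorphism. Alternatively, since $\pi$ and $\hat\pi$ have connected fibres, $\alpha$ has degree one, as in \autoref{prop:MdP5}.

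For the reformulation, compose $\hat\pi$ with $\psi'^{-1}$. Then $\psi$ becomes a birational map over $C$ between Mori fibre spaces over $C$. By the relative Sarkisov program (\autoref{thm:Enrica}, relative version), $\psi$ factors into Sarkisov links over $C$. No intermediate Mori fibre space in this factorization can have base of dimension $2$ over $C$, because every intermediate Mori fibre space with base a curve is forced to have base $C$ by what we just proved. I expect this to be the main obstacle, since the relative program could a priori pass through conic bundles $Y_i\to B_i\to C$ via links of type I and III. The cleanest route I see is to take the factorization over the generic point: a birational map $X_{\k(C)}\rat Y_{\k(C)}$ between Del Pezzo surfaces of Picard rank one, decomposed in the Sarkisov program over $\k(C)$. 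If this still involves conic bundles, I would weaken the conclusion to ``links over $C$'' and interpret type II accordingly.
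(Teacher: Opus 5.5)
Your proof of the isomorphism $\psi'$ is correct and follows the paper's argument. \autoref{thm:rationally connected} rules out $g(B)=0$, and rational general fibres of $\pi$ are then sent into fibres of $\hat\pi$. The only difference is the last step. The paper takes a section $s$ of $\pi$ (from \autoref{thm:Tsen}) and observes that its image $\tilde s$ meets a general fibre of $\hat\pi$ once, so $C\simeq s\rat\tilde s\to B$ is birational. You instead run the same argument for $\psi^{-1}$ and get $\beta\circ\alpha=\mathrm{id}_C$ (or use connectedness of fibres). This is equally valid and avoids the section.

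The step that fails is in your last paragraph. You claim that no intermediate Mori fibre space in the relative factorization has a $2$-dimensional base, ``because every intermediate Mori fibre space with base a curve has base $C$.'' That is a non sequitur: the first part says nothing about conic bundles over surfaces lying over $C$. Such steps do occur. \autoref{lem:links I} and \autoref{lem:section blow-up} give type I links over $C$ from Mori Del Pezzo fibrations to conic bundles over ruled surfaces. In the equivariant setting where the paper applies the lemma, these links can be unavoidable. For instance, the $\Autz(X)$-equivariant map $X\rat X^\vee$ of \autoref{prop:E'=E_20} lies over $C$, yet the only $\Autz(X)$-equivariant links out of $X=\p(\El_{2,0}\oplus\Ml)$ are the type I links blowing up its two invariant sections.

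The paper's proof establishes only the existence of $\psi'$ and treats the second sentence as a rephrasing of ``$\psi$ is a birational map over $C$.'' Wherever it genuinely needs links of type II between Mori Del Pezzo fibrations (\autoref{thm:K<3}, \autoref{cor:MdP6}), it excludes types I, III and IV separately via \autoref{lem:links I} and \autoref{lem: links III and IV}. So your fallback reading (``links over $C$'') is the right one, and the offending sentence should be dropped.
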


\begin{proof}
First, if $B$ has genus $g(B)=0$, then $Y$ is rationally connected by \autoref{thm:rationally connected}, a contradiction to $X$ not being rationally connected. So, $B$ has genus $g(B)\geq1$. 
Let $F$ be a general fibre of $\pi$ and $\tilde F$ its image by the birational map $\psi\colon X\rat Y$. Since $F$ is a Del Pezzo surface, it is rationally connected, and so $\hat\pi(\tilde F)$ is rationally connected. Since $g(B)\geq1$, it follows that $\hat\pi(\tilde F)$ is a point. 
In particular, $\psi$ maps a general fibre $F$ of $\pi$ to a general fibre of $\hat\pi$ and $\psi|_F\colon F\rat \psi(F)$ is birational. 

Let $s\subset X$ be a general section of $\pi$ (it exists by \autoref{thm:Tsen}). 
Its image $\tilde s$ in $Y$ is a curve with $\tilde F\tilde s=1$. Then $\tilde s$ is a rational section of $\hat\pi$. Since $C,B$ are both smooth curves, the induced birational map $\psi':=\hat\pi\circ\psi|_C\colon C\rat \tilde s\to B$ is an isomorphism satisfying $\hat\pi\circ\psi=\psi'\circ\pi$. 
\end{proof}

\begin{proposition}\label{lem:type II}
Let $\pi\colon X\to C$ a Mori Del Pezzo fibration above a curve $C$ of $g(C)\geq0$ and $\varphi\colon X\rat Y$ a Sarkisov link of type II to a Mori Del Pezzo fibre space $\pi'\colon Y\to C$, giving the following Sarkisov diagram:
\[
\begin{tikzcd}
Z\ar[d,swap,"div"]  \ar[rr,dotted,"\chi"] && Z' \ar[d,"div'"] \\
X \ar[rr,"\varphi",dashed] \ar[dr,"\pi",swap] & & Y \ar[dl,"\pi'"] \\
& C &
\end{tikzcd}
\]
If the base-loci of $div$ and $div'$ is generically transversal to the fibres above $\pi$ and $\pi'$, respectively, then $\varphi$ induces a link of type II
\[
\begin{tikzcd}
&Z_{\k(C)}\stackrel{\hat\chi}\simeq Z'_{\k(C)}\ar[ld,"\widehat{div}"']\ar[rd,"\widehat{div'}"]&\\
X_{\k(C)}\ar[r]&\ast&Y_{\k(C)}\ar[l]
\end{tikzcd}
\]
between Del Pezzo surfaces, where $\widehat{div},\widehat{div'},\hat\chi$ are induced by $div,div',\chi$, respectively. In particular, $Z_{\k(C)}$ is a Del Pezzo surface.
\end{proposition}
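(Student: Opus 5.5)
The plan is to pass to the generic fibre, where the whole Sarkisov diagram restricts to a diagram of surfaces over $\kk=\k(C)$. By \autoref{rmk: MdP}\autoref{MdP:0}, $X_{\k(C)}$ and $Y_{\k(C)}$ are Del Pezzo surfaces. Both have Picard rank one by \autoref{rmk: MdP}\autoref{MdP:1}.

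First I restrict everything to the generic fibre over $C$. Since $\pi\circ div$ and $\pi'\circ div'$ both factor through $C$ and $\chi$ commutes with them, taking generic fibres gives birational morphisms $\widehat{div}\colon Z_{\k(C)}\to X_{\k(C)}$ and $\widehat{div'}\colon Z'_{\k(C)}\to Y_{\k(C)}$, together with a birational map $\hat\chi$. I then argue as follows.

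\begin{itemize}
\item The exceptional divisor $E$ of $div$ is contracted to a base locus $\Gamma$ that is generically transversal to the fibres. Hence $\Gamma$ is a curve dominating $C$, or possibly a surface-free locus meeting the generic fibre in finitely many closed points. In either case $E_{\k(C)}$ is a curve contracted to a closed point of $X_{\k(C)}$, and $\widehat{div}$ is the contraction of an irreducible curve.
\item $Z$ is $\Q$-factorial and terminal, so $Z_{\k(C)}$ is a regular surface. Then $\widehat{div}$ is a birational morphism of regular surfaces with irreducible exceptional curve, i.e. the blow-up of one closed point. In particular $\rho(Z_{\k(C)})=2$, and the same holds for $Z'$.
\item The map $\chi$ is a sequence of flops, flips and antiflips. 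These are isomorphisms in codimension one, and their flipping loci are curves. A curve not contracted to $C$ would be horizontal, and then $\chi$ would alter the generic fibre in codimension one of the surface, which is impossible for an isomorphism in codimension one.
\end{itemize}

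The claim that $\hat\chi$ is an isomorphism needs more care. The flipping curves are contracted by the relative contractions over $C$, because the $2$-ray game is run over $C$ (the base of both $\pi$ and $\pi'$). Hence they lie in fibres of $Z\to C$, and $\hat\chi$ is an isomorphism on generic fibres. This shows $Z_{\k(C)}\simeq Z'_{\k(C)}$.

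It remains to show the resulting diagram is a link of type II between Del Pezzo surfaces and that $Z_{\k(C)}$ is Del Pezzo. Both $\widehat{div}$ and $\widehat{div'}\circ\hat\chi$ are extremal contractions, and $\rho(Z_{\k(C)})=2$. So the two rays of $\overline{NE}(Z_{\k(C)})$ are spanned by the two exceptional curves $E_{\k(C)}$ and $E'_{\k(C)}$, and the diagram is a $2$-ray game over $\ast=\mathrm{Spec}\,\k(C)$ ending in rank-one Del Pezzo surfaces. This is exactly a link of type II.

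For the Del Pezzo property I would use the $K$-negativity of the Sarkisov link. For a link of type II, $-K_Z$ is relatively big over the base. Alternatively, $-K_Z$ is positive on both extremal rays: $-K_Z\cdot E_{\k(C)}>0$ because $E_{\k(C)}$ is contracted by a $K$-negative divisorial contraction, and similarly for $E'_{\k(C)}$. A divisor positive on both generators of a two-dimensional cone is ample by Kleiman's criterion, so $-K_{Z_{\k(C)}}$ is ample. If needed, I would cite \cite[Lemma 3.10]{BFSZ24}, as in the proof of \autoref{lem:links I}.

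The main obstacle is justifying that the flips in $\chi$ are vertical, i.e. contained in fibres over $C$. I would handle this by noting that the two-ray game lives over $C$ and that flipping loci have codimension at least two. A horizontal flipping curve would meet the generic fibre in a divisor of that surface, whose transformation would not be an isomorphism in codimension one.
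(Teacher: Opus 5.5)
Your proposal is correct and follows essentially the same route as the paper. The flipping locus of $\chi$ is vertical because the two-ray game runs over $C$, so $\hat\chi$ is an isomorphism. Then $\widehat{div}$ and $\widehat{div'}$ are blow-ups of single closed points, and the Del Pezzo property follows from \cite[Lemma 3.10]{BFSZ24}; your Kleiman argument with the two distinct negative curves $E_{\k(C)}$ and $\hat\chi^{-1}(E'_{\k(C)})$ is a valid self-contained substitute.

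One correction: a horizontal curve of $Z$ meets $Z_{\k(C)}$ in finitely many closed points, not in a divisor. So drop the ``codimension one'' justification. Rely instead on the contractions being over $C$, or on the fact that a birational map of normal projective surfaces contracting no curve in either direction is an isomorphism.
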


\begin{proof}
Since $\varphi$ preserves the fibration above $C$, the base-locus of $\chi$ is contained in the fibres above $C$. 
Then $\chi\circ div$ induces the blow-up $\widehat{div}\colon Z_{\k(C)}\to X_{\k(C)}$ of the closed point $\ell_{\k(C)}$. 
The same holds for $\varphi^{-1}$. In particular, $\varphi$ induces a link of type II as claimed. 
\cite[Lemma 3.10]{BFSZ24} shows that $Z_{\k(C)}$ is a Del Pezzo surface. 
\end{proof}

\begin{proposition}\label{lem: links III and IV}
Let $\pi\colon X\to C$ a Mori Del Pezzo fibration above a curve $C$ of genus $g(C)\geq1$. Then there are no links of type III and no link of type IV starting from $X$. 
\end{proposition}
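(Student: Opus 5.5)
We recall the shape of Sarkisov links. A link of type III starting from $\pi\colon X\to C$ has the form $X\rat X'\to Y$, where $Y$ is obtained by a divisorial contraction, and ends at a Mori fibre space $\pi'\colon Y\to B'$ with a morphism $B\to B'$ of relative Picard rank one. A link of type IV starting from $X$ yields a rank-two fibration $X\rat X'$ over some base $B''$ with two contractions $C=B\to B''$ and $B'\to B''$, ending at $Y\to B'$. In both cases the base $C$ must admit a contraction $C\to B''$ with $\rho(C/B'')=1$, or in type III $B'=B''$. Since $C$ is a curve, $\rho(C)=1$, so the only such contraction is $C\to\Slpec(\k)$. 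Hence $B''$, respectively $B'$, is a point.

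\emph{Type III.} The target $Y\to B'=\Slpec(\k)$ is a Mori fibre space over a point, so $Y$ is a $\mathbb{Q}$-factorial terminal Fano variety. But $Y$ is birational to $X$, and $X$ is birational to the Mori fibre space $\pi\colon X\to C$ with $g(C)\geq1$. This contradicts \autoref{lem:not Fano}: a Fano $Y$ would be rationally connected, forcing $C$ to be rationally connected.

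\emph{Type IV.} Here the rank-two fibration lies over $B''=\Slpec(\k)$, so $\rho(X)=2$ and $X$ is a Mori dream space over a point. The two-ray game produces the other side $Y\to B'$, which is either a Mori fibre space over a point (excluded exactly as in type III) or one over a curve $B'$, possibly after small modifications.

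In the second case $B'$ is a curve and $Y\to B'$ is a Mori Del Pezzo fibration birational to $X$. By \autoref{lem:preserves fibration}, the birational map $X\rat Y$ commutes with the fibrations via an isomorphism $C\to B'$. The fibration $X\to C$ is then mapped onto the fibration $Y\to B'$, so the two extremal contractions in the two-ray game coincide. This contradicts the definition of a type IV link, whose two fibrations must be distinct.

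Alternatively, a rank-two fibration over a point carries a log Fano structure in the Sarkisov formalism; in \cite{HMcK} the total space is of Fano type, hence rationally connected. That would force $C$ rational, contradicting $g(C)\geq1$. I expect this to be the cleanest way to handle type IV uniformly.

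The main obstacle is the type IV case, namely justifying that the total space of the rank-two fibration over a point is rationally connected, or that the two fibrations coincide. I would first try the Fano-type argument, citing that the middle varieties in a Sarkisov link over a point are of Fano type and hence rationally connected by \cite{KMM92}. This contradicts the non-rational-connectedness of $X$ from \autoref{lem:not Fano}, and the same argument disposes of type III.
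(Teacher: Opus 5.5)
Your proof works along the route you finally choose, namely Fano type of the total space, but this route differs from the paper's. Both arguments first show that the lower base of the link is a point. The paper then applies \cite[Lemma 3.13]{BLZ21} to the morphism $C\to\mathrm{pt}$ itself, which is a klt Mori fibre space for both type III and type IV. So $C$ is a klt log Fano curve, hence $C\simeq\p^1$. This is one uniform, elementary step and needs no rational connectedness theorem.

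You instead keep the Fano-type structure on the total space of the rank-two fibration over the point; this is the same structure that underlies \cite[Lemma 3.13]{BLZ21}. You conclude that $X$, and hence $C$, is rationally connected. That works, but \cite{KMM92} only covers smooth Fano manifolds. For a klt variety of Fano type you need the rational connectedness theorem of Zhang or Hacon--McKernan; the paper's \autoref{lem:not Fano} glosses over the same point. Your type III argument has the advantage of not using the rank-two formalism at all: the end of the link is a Mori fibre space over a point, hence a Fano variety birational to $X$, and \autoref{lem:not Fano} applies directly.

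The two-ray-game alternative you sketch for type IV is incomplete as written. Since $\rho(B'/B'')=1$, the other side $Y\to B'$ is never over a point. However, $B'$ may a priori be a surface with $\rho(B')=1$; you do not treat this case, and \autoref{lem:preserves fibration} does not cover it. To close that route, note that general fibres of $Y\to B'$ are rational curves, so they are contracted by $X\rat C$. This gives a dominant rational map $B'\rat C$, which extends to a morphism as in the proof of \autoref{lem:commutative diag surface-curve}. A fibre of that morphism would be a non-zero effective divisor with self-intersection $0$ on a $\mathbb{Q}$-factorial surface of Picard rank one, which is impossible. Your treatment of the case where $B'$ is a curve is fine.
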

\begin{proof}
A link $\psi\colon X\rat Y$ of type III and a link of type IV satisfy respectively commutative diagrams
\[
\begin{tikzcd}
X \ar[ddrr,"\psi",dashed,swap] \ar[dd,"\pi",swap]  \ar[rr,dotted] && Z \ar[dd,"div"] \\ \\
C \ar[dr,"\tau"] & & Y \ar[dl,"\hat\pi"] \\
& B &
\end{tikzcd}
\qquad
\begin{tikzcd}
X\ar[d,"\pi"]\ar[rr,dotted,"\psi"]&&Y\ar[d,"\pi'"']\\
C\ar[rd,"\theta_1"]&& B\ar[ld,"\theta_2"']\\
&D&
\end{tikzcd}
\]
where $div$ is a divisorial contraction, the dotted arrow is a pseudo-isomorphism and $\tau,\theta_1$ and $\theta_2$ are morphisms of relative Picard rank one. In particular, in the left-hand side diagram, $B$ is a point, and in the right-hand side diagram, $D$ is a point. By \cite[Lemma 3.13]{BLZ21}, $\tau$ and $\theta_1$ are klt Mori fibre spaces. In particular, the general fibre of $\tau$ and of $\theta_1$ is a rational curve. This contradicts $g(C)\geq1$.
\end{proof}

\section{Mori Del Pezzo fibrations of degree $8$}\label{s:MdP8}

\begin{remark}\label{rmk: dP8 generic fibre}
Let $X\to C$ be a Mori Del Pezzo fibration of degree $8$. 
The generic fibre $X_{\k(C)}$ is a smooth Del Pezzo surface of degree $8$ and Picard rank one (\autoref{rmk: MdP}\autoref{MdP:0}) and hence is isomorphic to a smooth quadric in $\p^3_{\k(C)}$ of Picard rank one.
By \autoref{rmk: MdP}\autoref{MdP:0}, it has a $\k(C)$-rational point.
By \cite[Lemma 4.4.1]{BFT22}, up to isomorphism of $\p^3_{\k(C)}$, there exists $\mu\in\k(C)$ such that 
\[
X_{\k(C)}\simeq Q:=(x_0^2-x_1x_2-\mu x_3^2=0)\subset\p^3_{\k(C)}.
\]
Since $Q$ is smooth, $\mu\neq0$, and since $\rho(Q)=1$, $\mu$ is not a square in $\k(C)$ \cite[Lemma 4.4.1(2)]{BFT22}.
\end{remark}

\begin{lemma}\label{lem:dP8 Aut on C}
Let $X\to C$ be a Mori Del Pezzo fibration of degree $8$ over a curve $C$ of genus $g(C)\geq1$. Then $\Autz(X)$ acts trivially on $C$. 
\end{lemma}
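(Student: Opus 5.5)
We need to show that $\pi_*\colon\Autz(X)\to\Autz(C)$ is trivial. If $g(C)\geq2$, then $\Autz(C)$ is trivial and there is nothing to prove. So the real case is $g(C)=1$, where $\Autz(C)\simeq C$ acts on itself by translations. My approach is to use the function $\mu\in\k(C)$ from \autoref{rmk: dP8 generic fibre}. It encodes the discriminant double cover of $C$ parametrising the two rulings of the fibres, and I will argue that a nontrivial connected action on $C$ is incompatible with this cover.

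\textbf{Step 1: the discriminant cover.} Let $\tilde C\to C$ be the normalisation of $C$ in $\k(C)(\sqrt\mu)$. This is a degree-$2$ cover, irreducible because $\mu$ is not a square (\autoref{rmk: dP8 generic fibre}). Geometrically, over a general point $c\in C$ the fibre $X_c$ is a smooth quadric $\p^1\times\p^1$, and the two points of $\tilde C$ above $c$ are its two rulings. More intrinsically, $\tilde C$ is the Stein factorisation of the relative Hilbert scheme of lines in the fibres, restricted to its main component. Since $\Autz(X)$ acts on $X$ over its action on $C$, it acts on this relative Hilbert scheme, hence on $\tilde C$, and the map $\tilde C\to C$ is equivariant. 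Because $\Autz(X)$ is connected, this gives a morphism $\Autz(X)\to\Autz(\tilde C)$ compatible with $\pi_*$.

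\textbf{Step 2: branch points are fixed.} The cover $\tilde C\to C$ is ramified exactly over the points where $\mu$ has odd valuation. Suppose first that there is at least one such point. The branch locus is a finite set preserved by the connected group $\Autz(X)$, so each branch point is fixed. But a nontrivial connected subgroup of $\Autz(C)\simeq C$ acts by translations and has no fixed points. Hence $\pi_*$ is trivial in this case.

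\textbf{Step 3 (main obstacle): the étale case.} The remaining case is when $\tilde C\to C$ is étale, i.e.\ $\operatorname{div}(\mu)=2D$ with $D$ a nontrivial $2$-torsion class. Then $\tilde C$ is an elliptic curve isogenous to $C$, and translations of $C$ do lift to $\tilde C$, so Step 2 gives nothing. My plan here has two parts.

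First, I use that $\rho(X/C)=1$. This should force the monodromy of the two rulings to be nontrivial and prevent $X$ from being, after type II links, a quotient $(S\times\tilde C)/(\Z/2)$ with $S=\p^1\times\p^1$. The reason is that such a quotient is a Mori fibre space only if the involution swaps the two factors of $S$.

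Second, I look at a model of this shape, $X\simeq(\p^1\times\p^1\times\tilde C)/(\Z/2)$ with $\Z/2$ acting by the swap on $S$ and the deck translation on $\tilde C$. Such a model does admit translations. So in this case I expect to have to use the singular fibres of $X$ over $C$. These form a finite $\Autz(X)$-invariant set, and it is nonempty unless $X$ is of this quotient type. If it is nonempty, each such point is fixed and we conclude as in Step 2. Honestly, I suspect the quotient model $(S\times\tilde C)/(\Z/2)$ is a genuine counterexample to the literal statement. If so, I would check whether the lemma should read ``$\Autz(X)_C\neq1$'', or whether the definition of Mori fibre space used here excludes this model; that is where I would push hardest.
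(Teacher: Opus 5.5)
Your Steps 1 and 2 are sound. They prove the statement whenever $g(C)\geq 2$, or whenever the discriminant cover $\tilde C\to C$ is ramified, i.e.\ $\mu$ has odd valuation at some point. The branch locus is determined by $X_{\k(C)}$, since it is the ramification of the field of definition of the rulings. Hence it is preserved by the connected group $\pi_*\Autz(X)\subseteq\Autz(C)$, which then fixes a point and must be trivial.

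This is also the only situation in which the paper's argument is valid. The paper takes the zeros of $\mu$ and asserts that the fibres over them are singular. But $\mu$ is only defined up to multiplication by squares in $\k(C)^*$, so its zero set carries no information beyond the odd-valuation locus.

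Step 3 cannot be closed, because your suspicion is correct: the statement is false in the \'etale case. Two pieces of Step 3 should be discarded. First, $\rho(X/C)=1$ forces the monodromy to exchange the two rulings, which is exactly what makes the quotient model a Mori fibre space, so it excludes nothing. Second, the claim that in the \'etale case only the product quotient has no singular fibres is unjustified: Weil restrictions of other $\p^1$-bundles over $\tilde C$ are smooth too. Once a counterexample is in hand, neither point matters.

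Here is the counterexample, checked. Let $t\in\tilde C$ have order $2$, set $C=\tilde C/\langle t\rangle$, and let $X=(\tilde C\times\p^1\times\p^1)/\langle\iota\rangle$ with $\iota(x,y,z)=(x+t,z,y)$.
\begin{itemize}
\item The action is free, so $X$ is smooth projective, $\pi\colon X\to C$ is smooth with fibres $\p^1\times\p^1$, and $-K_X$ restricts to $\Ol(2,2)$ on each fibre.
\item $\NS(\tilde C\times\p^1\times\p^1)_{\mathbb{Q}}$ is spanned by the fibre class $F$ and the pullbacks $H_1,H_2$ from the two $\p^1$ factors. Since $\iota^*$ fixes $F$ and swaps $H_1$ and $H_2$, we get $\rho(X)=2$ and $\rho(X/C)=1$.
\item So $\pi$ is a Mori Del Pezzo fibration of degree $8$. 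Its generic fibre is the Weil restriction of $\p^1$ along $\k(\tilde C)=\k(C)(\sqrt\mu)$, with $\mathrm{div}(\mu)$ even.
\item For instance, take $\mathrm{div}(\mu)=2(q-p)$ with $q-p$ the $2$-torsion class defining the cover. Then $\mu(q)=0$ while $X_q$ is smooth, which is exactly where the paper's proof breaks.
\item Translations of $\tilde C$ commute with $\iota$, descend to $X$, and induce every translation of $C$. Hence $\pi_*$ is surjective, not trivial.
\end{itemize}

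The correct statement needs the extra hypothesis $g(C)\geq2$, or that $\k(C)(\sqrt\mu)/\k(C)$ is ramified somewhere, and your Steps 1--2 prove exactly that. The failure also propagates in the paper. Adding the diagonal $\PGL_2(\k)$, every $\Autz(X)$-orbit has dimension at least two, so no equivariant Sarkisov link starts from $X$. Therefore $\Autz(X)$ is a maximal connected algebraic subgroup of $\Bir(C\times\p^2)$, coming from a degree-$8$ fibration and acting transitively on $C$. This case is missing from the paper's degree-$8$ classification.
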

\begin{proof} 
By Remark~\ref{rmk: dP8 generic fibre}, there exists $\mu\in\k(C)^*$ such that $X_{\k(C)}\simeq Q:=(x_0^2-x_1x_2-\mu x_3^2=0)$ and $\mu$ is not a square in $\k(C)^*$. Since $\k$ is algebraically closed, it follows that $\mu$ is non-constant.
Let $D\subset C$ be the finite non-empty set of points defined by $\mu=0$. 
Then the fibres of $\pi$ above $D$ are singular and hence are preserved by $\Autz(X)$. Since $g(C)\geq1$, it follows that $\Autz(X)$ acts trivially on $C$.
\end{proof}

\begin{remark}[{\cite[Lemmas 4.4.4(1) and 4.4.5]{BFT22}}]\label{rmk:Mfs8 map} 
Let $X\to C$ be a Mori Del Pezzo fibration of degree $8$ over a curve $C$ of genus $g(C)\geq0$. The following is shown in \cite[Lemma 4.4.5]{BFT22} and its proof, which works over any smooth projective curve (and not just $C=\p^1$). 

By Remark~\ref{rmk: dP8 generic fibre}, there exists $\mu\in\k(C)^*$ such that $X_{\k(C)}\simeq (x_0^2-x_1x_2-\mu x_3^2=0)$. 
Let $B$ be a smooth compactification of the curve $(y^2=\mu)\subset \A^1\times C$. The second projection $pr_C\colon \A^1\times C\to C$ induces a double cover $\theta\colon B\to C$. Recall that $\k(B)=\k(C)[y]$. 
Then $X_{\k(B)}\simeq (x_0^2-x_1x_2-y^2x_3^2)$ and
we have an isomorphism 
\[
\begin{array}{ccccc}
\chi\colon X_{\k(B)}  & \stackrel{\simeq}\to & \p^1_{\k(B)}\times\p^1_{\k(B)} & &\\
\quad [x_0:\dots:x_3] & \longmapsto & ([x_0+yx_3:x_2],[x_0-yx_3:x_2])&=&([x_1:x_0-yx_3],[x_1:x_0+yx_3]).
\end{array}
\]
Let $\sigma\in\Aut(\k(B))$ be the involution induced by the double cover $\theta$. 
It acts on $\p^1_{\k(B)}\times\p^1_{\k(B)}$ by exchanging the fibrations. 
For $A=(a_{ij})_{ij}\in\PGL_2(\k(B))$, we  write $A^{\sigma}=(\sigma(a_{ij}))_{ij}$. 
Then $\chi$ induces an isomorphism 
\[
\Autz(X_{\k(C)})=\Autz(X_{\k(B)})^{\sigma}\simeq \Autz(\p^1_{\k(B)}\times\p^1_{\k(B)})^{\sigma}\simeq\{(A,A^\sigma)\mid A\in \PGL_2(\k(B)) \}
\]
By \autoref{rmk: MdP}\autoref{MdP:2}, we have an injective group homomorphism
\begin{equation}\tag{iso}\label{eq:iso}
\Autz(X)_C\hookrightarrow\Autz(X_{\k(C)})\simeq\{(A,A^\sigma)\mid A\in \PGL_2(\k(B)) \}
\end{equation}
The morphism $\eta\colon X\times_CB\to B$ induces an isomorphism 
 \[
 (X\times_CB)_{\k(B)}\simeq X_{\k(B)}\stackrel{\chi}\simeq(\p^1_{\k(B)}\times\p^1_{\k(B)}).
 \]
Then $\eta$ and $\chi$ induce a birational map $\psi\colon X\times_CB\rat \p^1\times\p^1\times B$.
\end{remark}

\begin{lemma}\label{lem:Mfs8 aut}
Let $X\to C$ be a Mori Del Pezzo fibration of degree $8$ over a curve $C$ of genus $g(C)\geq0$. 
Let $\psi\colon X\times_CB\rat \p^1\times\p^1\times B$ be the birational map from Remark~\ref{rmk:Mfs8 map}. Then either $\psi$ induces an embedding of $\Autz(X)=\Autz(X)_C$ into the group
\[
\left\{\left(\left(\begin{matrix} a&P\\ 0&1\end{matrix}\right),\left(\begin{matrix} a&\sigma^*P\\ 0&1\end{matrix}\right)\right)\in\PGL_2(\k(B))\times\PGL_2(\k(B))\mid a\in\k^*, P\in  \k(B)\right\}
\]
 or $\psi$ induces an isomorphism 
 \[
 \Autz(X)=\Autz(X)_C\stackrel{\autoref{eq:iso}}\simeq\{(A,A^{\sigma})\in\PGL_2(\k(B))\times\PGL_2(\k(B))\mid A\in\PGL_2(\k)\}\simeq\PGL_2(\k)
 \] 
  and $\Autz(X)$ acts diagonally on a general fibre of $\pi$. 
\end{lemma}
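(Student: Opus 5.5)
We would follow the strategy of \cite[Lemma 4.4.5]{BFT22}. First, $\Autz(X)=\Autz(X)_C$ when $g(C)\geq1$ by \autoref{lem:dP8 Aut on C}; for $g(C)=0$ the same conclusion holds because the finitely many points of $C$ where $\mu$ has odd order give singular fibres, and a connected group fixing these points acts trivially on the base. (This step is not needed when $g(C)\geq1$; for $g(C)=0$ we would argue as in \cite{BFT22}.) By \autoref{eq:iso}, $G:=\Autz(X)$ then embeds into $\{(A,A^\sigma)\mid A\in\PGL_2(\k(B))\}$. Let $H\subset\PGL_2(\k(B))$ be the image of $G$ under the first projection. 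Since $G$ is a connected algebraic group, $H$ is the image of a connected algebraic group acting regularly on $X\times_C B$, hence birationally on $\p^1\times\p^1\times B$ over $B$ via $\psi$. It therefore acts on the generic fibre $\p^1_{\k(B)}$ of the first factor, and $H$ is a connected algebraic subgroup of $\PGL_2(\k(B))$ defined over $\k$, in the sense of Weil regularisation.

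The key step is to show that, after conjugating by some element of $\PGL_2(\k(B))$ (equivalently, changing $\psi$ by an automorphism of the form $(M,M^\sigma)$, which preserves the $\sigma$-structure), $H$ is contained either in $\PGL_2(\k)$ or in the affine group $\{z\mapsto az+P\}$ with $a\in\k^*$ and $P\in\k(B)$. We use the structure of connected algebraic groups acting on $\p^1\times B$ over $B$, i.e.\ on a ruled surface over $B$. Regularising the action gives a $G$-equivariant birational map to a ruled surface $S\to B$ on which $G$ acts regularly. By Blanchard's lemma (\autoref{lem:blanchard}) and the trivial action on $B$, $G$ lies in $\Autz(S)_B$. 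The connected groups $\Autz(S)_B$ of ruled surfaces are known: either $S\simeq B\times\p^1$ with group $\PGL_2(\k)$, or the group fixes a section, in which case it is contained in $\G_a^{n}\rtimes\G_m$ acting by $z\mapsto az+P$ with $P$ in a finite-dimensional space of rational functions. Translating this back gives the two cases for $H$.

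It remains to handle the $\sigma$-compatibility. If $H\subset\PGL_2(\k)$ after conjugation, then the elements $(A,A^\sigma)$ with $A$ constant have $A^\sigma=A$, so $G$ embeds in the diagonal $\PGL_2(\k)$. We then need equality. The image is a connected subgroup; if it is proper, it fixes a point of $\p^1$ and so falls into the affine case. Otherwise it is all of $\PGL_2(\k)$, which acts diagonally on a general fibre, since the fibres of $X\times_CB\to X$ identify $\p^1\times\p^1$ compatibly. In the affine case the pair is forced to be $(z\mapsto az+P,\ z\mapsto az+\sigma^*P)$. The main obstacle is showing that $a$ is constant and equal in both factors, i.e.\ that $a^\sigma=a$ and $a\in\k^*$. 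The plan is to argue that the $\G_m$-part is a torus acting fibrewise by a character, so its coefficient $a$ lies in $\k^*$; since $\sigma$ fixes $\k$, the second factor has the same $a$. Throughout, care is needed that the conjugating element commutes with the involution $\sigma$.
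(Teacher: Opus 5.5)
Your argument for $g(C)\geq1$ follows the paper's route. The paper obtains $\Autz(X)=\Autz(X)_C$ from \autoref{lem:dP8 Aut on C} and then imports the proof of Theorem~D of \cite{BFT22} via \autoref{rmk:Mfs8 map}. Your proposal spells out that imported argument, and it is sound:
\begin{itemize}
\item project to the first factor of $\{(A,A^\sigma)\}$;
\item regularise the action on $\p^1\times B$ to a ruled surface $S\to B$;
\item use the structure of $\Autz(S)_B$;
\item conjugate by a pair $(M,M^\sigma)$, which only changes the identification $X_{\k(C)}\simeq Q$.
\end{itemize}
The step you call the main obstacle is in fact automatic. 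In a trivialisation adapted to the invariant section, $\Autz(S)_B$ acts through automorphisms of the rank-$2$ bundle of the form $\left(\begin{smallmatrix} a & p_k\\ 0 & d\end{smallmatrix}\right)$ locally. Here $a$ and $d$ are endomorphisms of line bundles on the projective curve $B$, hence constants. So $A^\sigma$ automatically has the same coefficient $a/d\in\k^*$, and no torus or character argument is needed.

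The step that is wrong is your aside for $g(C)=0$. The points where $\mu$ has odd valuation can be exactly two, and a $\G_m$ fixes two points of $\p^1$. In that case $\Autz(X)=\Autz(X)_C$ genuinely fails. For example, let $X$ be the hypersurface $x_0^2-x_1x_2-u_0u_1x_3^2=0$ in the quotient of $(\A^2\setminus\{0\})\times(\A^4\setminus\{0\})$ by $(\lambda,\nu)\cdot(u,x)=(\lambda u_0,\lambda u_1,\nu x_0,\nu x_1,\nu x_2,\lambda^{-1}\nu x_3)$. Then:
\begin{itemize}
\item $X$ is smooth;
\item all its fibres over $\p^1$ are irreducible quadric surfaces;
\item $u_0/u_1$ is not a square, so the generic fibre has Picard rank one.
\end{itemize}
Hence $X$ is a Mori Del Pezzo fibration of degree $8$. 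Yet $(u_0,u_1)\mapsto(su_0,s^{-1}u_1)$ gives a one-dimensional torus in $\Autz(X)$ acting on $\p^1$ by $[u_0:u_1]\mapsto[s^2u_0:u_1]$, which is non-trivial. The paper's own proof has the same restriction, since \autoref{lem:dP8 Aut on C} assumes $g(C)\geq1$. So you should drop the $g(C)=0$ reduction rather than defer it to \cite{BFT22}. The lemma is only justified for $g(C)\geq1$, which is the case the main theorems use.
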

\begin{proof}
By Remark~\ref{rmk: dP8 generic fibre}, there exists $\mu\in\k(C)^*$ such that $X_{\k(C)}\simeq Q:=(x_0^2-x_1x_2-\mu x_3^2=0)$. 
\autoref{lem:dP8 Aut on C} implies that $\Autz(X)=\Autz(X)_C$. 
The remaining claim follows from \cite[Proof of Theorem D]{BFT22} and Remark~\ref{rmk:Mfs8 map}.
\end{proof}

\begin{lemma}\label{lem:Mfs8 restriction}
Let $X\to C$ be a Mori Del Pezzo fibration of degree $8$ over a curve $C$ of genus $g(C)\geq0$.
Then either $\Autz(X)=\Autz(X)_C\simeq\PGL_2(\k)$ acts diagonally on any general fibre, or there is a Mori Del Pezzo fibration $Y\to C$ of degree $9$ and an $\Autz(X)$-equivariant birational morphism $X\rat Y$ over $C$. 
\end{lemma}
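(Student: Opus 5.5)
We use the dichotomy of \autoref{lem:Mfs8 aut}. If $\psi$ induces the isomorphism $\Autz(X)=\Autz(X)_C\simeq\PGL_2(\k)$ acting diagonally on a general fibre, we are in the first alternative and there is nothing to prove. So assume we are in the second case: $\Autz(X)=\Autz(X)_C$ embeds into the group $H$ of pairs
\[
\left(\begin{pmatrix} a&P\\ 0&1\end{pmatrix},\begin{pmatrix} a&\sigma^*P\\ 0&1\end{pmatrix}\right),\qquad a\in\k^*,\ P\in\k(B).
\]
The aim is then to find an $\Autz(X)$-invariant $\k(C)$-rational point of the generic fibre $X_{\k(C)}$, and to blow it up and contract to reach a degree-$9$ fibration.

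\textbf{Step 1: the invariant point.} Every element of $H$ fixes the point $([1:0],[1:0])\in\p^1_{\k(B)}\times\p^1_{\k(B)}$. This point is $\sigma$-invariant, since $\sigma$ swaps the two factors and both coordinates coincide. Pulling back through $\chi$, we get the point of $Q$ where $x_0+yx_3=0$, $x_0-yx_3=0$ and $x_2=0$, i.e. $p=[0:1:0:0]$. This point lies on $Q$, is defined over $\k(C)$, and is fixed by $\Autz(X_{\k(C)})\cap H$. Its closure in $X$ is a curve $\ell$, a section of $\pi$. Because $\Autz(X)$ acts trivially on $C$ and fixes $p$ generically, $\ell$ is $\Autz(X)$-invariant.

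\textbf{Step 2: the generic-fibre link.} Blowing up $p$ on $Q$ gives a Del Pezzo surface of degree $7$. The tangent plane $x_2=0$ cuts $Q$ in the conic $x_0^2=\mu x_3^2$. Over $\k(C)$ this conic splits into two lines that are conjugate under $\sigma$, because $\mu$ is not a square. Their strict transforms form a Galois orbit of two disjoint $(-1)$-curves, and contracting them yields $\p^2_{\k(C)}$. This is the classical type II link from a quadric to $\p^2$, namely projection from $p$, and it is equivariant for the stabiliser of $p$.

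\textbf{Step 3: globalising (main obstacle).} We must realise this generic link by an $\Autz(X)$-equivariant Sarkisov link over $C$. First we run the equivariant MMP over $C$: take an $\Autz(X)$-equivariant resolution and blow up $\ell$, or a suitable equivariant model where $\ell$ is a smooth curve in the smooth locus, which may require first moving $\ell$ by type II links along special fibres. Then we run an $\Autz(X)$-equivariant relative MMP over $C$. The MMP is equivariant since $\Autz(X)$ is connected and preserves every extremal ray. Its output is a Mori fibre space $Y\to C$, by \autoref{lem:preserves fibration} and \autoref{lem: links III and IV}. Its generic fibre is the output of the generic MMP, namely $\p^2_{\k(C)}$, so $Y$ has degree $9$.

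The delicate point is controlling the behaviour over special fibres, in particular that the MMP terminates at a fibration over $C$ rather than over a surface. This follows because $\rho(Y_{\k(C)})=1$: the relative Picard rank over $C$ is computed on the generic fibre once all vertical divisors have been contracted, and the MMP does contract them. Alternatively, we could invoke \autoref{thm:Enrica} and the equivariant two-ray game over $C$ directly.

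Finally, the resulting map $X\rat Y$ is $\Autz(X)$-equivariant and commutes with the projections to $C$, which gives the second alternative.
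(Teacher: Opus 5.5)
Your Steps 1 and 2 are correct, and they match the paper's very short proof. In the second case of \autoref{lem:Mfs8 aut} the group fixes the $\sigma$-invariant point $([1:0],[1:0])$, i.e.\ $p=[0:1:0:0]\in Q$, and hence fixes the section $\ell$. Projection from $p$ then gives $\mathrm{Bl}_pQ\to\p^2_{\k(C)}$. The paper simply asserts that this induces the equivariant map.

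The gap is in your Step 3, which tries to justify that assertion. The claim that the output's generic fibre is ``the output of the generic MMP, namely $\p^2_{\k(C)}$'' does not hold for an arbitrary run of the MMP. The surface $\mathrm{Bl}_pQ$ has $\rho=2$ and \emph{two} $K$-negative extremal contractions:
\begin{itemize}
\item onto $\p^2_{\k(C)}$, contracting the conjugate pair of lines in $\{x_2=0\}$;
\item back onto $Q$, contracting the exceptional curve over $p$.
\end{itemize}
Correspondingly, on $\mathrm{Bl}_{\tilde\ell}\tilde X$ the exceptional divisor over $\tilde\ell$ spans a $K$-negative extremal ray over $C$, and its contraction is just $\tilde X$. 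A relative MMP over $C$ may contract it and end at $X$ or at another degree-$8$ fibration. Nothing in your argument excludes this, and the appeal to ``$\rho(Y_{\k(C)})=1$'' presupposes the conclusion.

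There are also smaller problems with the results you cite. \autoref{lem:preserves fibration} and \autoref{lem: links III and IV} need $g(C)\geq1$, whereas the statement allows $g(C)\geq0$, and they describe Sarkisov links, not outputs of a relative MMP. \autoref{thm:Enrica} factors a given equivariant map and cannot produce one.

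What is missing is a way to \emph{direct} the MMP; the standard tool is the equivariant two-ray game over $C$.
\begin{itemize}
\item Let $W\to X$ be a divisorial extraction of $\ell$ in the Mori category, e.g.\ the blow-up of $\ell$ if $\ell$ avoids $\mathrm{Sing}(X)$.
\item Then $\rho(W/C)=2=\rho(\mathrm{Bl}_pQ)$. So vertical divisors are $\equiv_C 0$, and $\overline{NE}(W/C)$ has exactly two rays, both fixed by the connected group $\Autz(X)$. One of them gives back $X$.
\item Play the game on the other ray. Flips are vertical, hence isomorphisms on the generic fibre. The successive nef chambers restrict into $\mathrm{Nef}(\mathrm{Bl}_pQ)$, starting from the wall that corresponds to $Q$ and moving away from it.
\item The last contraction cannot be an isomorphism on the generic fibre, since it cannot contract a vertical divisor. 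It cannot be a conic bundle either: the two rulings of $Q$ are Galois-conjugate, so $\mathrm{Bl}_pQ$ has none over $\k(C)$. Hence it restricts to $\mathrm{Bl}_pQ\to\p^2_{\k(C)}$.
\end{itemize}
This gives an $\Autz(X)$-equivariant link of type II over $C$ to a Mori Del Pezzo fibration of degree $9$, which is what the statement needs.
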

\begin{proof}
According to \autoref{lem:Mfs8 aut}, if $\Autz(X)$ is not isomorphic to $\PGL_2(\k)$, then it fixes a section. We blow up the corresponding point on $X_{\k(C)}$, and thusly obtain a birational map $X_{\k(C)}\rat \p^2_{\k(C)}$. It induces the desired birational map, which is $\Autz(X)=\Autz(X)_C$-equivariant by construction. 
\end{proof}

\begin{lemma}\label{lem:Mfs8 Aut=PGL2}
Let $\pi\colon X\to C$ be a Mori Del Pezzo fibration of degree $8$ over a curve $C$ of genus $g(C)\geq0$.
Suppose that $\Autz(X)=\Autz(X)_C\simeq\PGL_2(\k)$ acts diagonally on a general fibre of $\pi$ and let $\varphi\colon X\rat Y$ be an $\Autz(X)$-equivariant birational map to a Mori fibre space $\hat\pi\colon Y\to B$. 
Then the following hold:
\begin{enumerate}
\item There is an isomorphism $\iota\colon C\to B$ such that $\iota\circ\pi=\hat\pi\circ\varphi$ and $\varphi$ induces an isomorphism $X_{\k(C)}\simeq Y_{\k(C)}$, and the base-locus of $\varphi$ lies in the singular fibres of $\pi$.
\item $\Autz(X)$ is a maximal connected algebraic subgroup in $\Bir(X)$.
\end{enumerate}
\end{lemma}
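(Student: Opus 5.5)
By \autoref{thm:Enrica}, I would decompose $\varphi$ into $\Autz(X)$-equivariant Sarkisov links $X=X_0\rat X_1\rat\cdots\rat X_n=Y$. The plan is to show, by induction on the links, that each $X_i\to B_i$ is a Mori Del Pezzo fibration over $C$ with generic fibre isomorphic to $X_{\k(C)}$. I would also check that $G:=\Autz(X)\simeq\PGL_2(\k)$ keeps acting diagonally on the general fibre.

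\textbf{Case $g(C)\geq1$.} Here \autoref{lem: links III and IV} excludes links of type III and IV from a Mori Del Pezzo fibration.

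\textbf{Links of type I.} By \autoref{lem:links I}, a type I link would blow up a $G$-invariant curve $\ell$ with $\deg(\pi|_\ell)=2$, transversal to the general fibre $F\simeq\p^1\times\p^1$. Then $\ell\cap F$ consists of two points of $F$ forming a $G$-invariant set. But the diagonal $\PGL_2(\k)$-action on $\p^1\times\p^1$ has orbits the diagonal (a curve) and its complement (open), so it has no finite orbit. Hence no type I link occurs.

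\textbf{Links of type II.} For a type II link whose centre is transversal to the fibres, the centre again meets a general fibre in a finite $G$-invariant set, which is impossible. So the centre of every type II link lies in finitely many fibres. By \autoref{lem:preserves fibration}, together with the fact that the centre is vertical, the link is over $C$ and is an isomorphism on generic fibres.

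Inductively, each $X_i$ is a Mori Del Pezzo fibration over $C$ with $(X_i)_{\k(C)}\simeq X_{\k(C)}$, and $G$ still acts diagonally on its general fibre. This gives (1). Since $\Autz(X)$ preserves singular fibres, the base locus lies in fibres that are special; one can arrange it to lie in the singular fibres of $\pi$, or more precisely in non-general fibres.

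\textbf{Case $g(C)=0$.} Type III and IV links must be handled directly, since \autoref{lem: links III and IV} does not apply. I would argue via the generic fibre: any $G$-equivariant link either starts at a $G$-invariant centre or contracts. A type III or IV link would produce a Mori fibre space structure whose restriction to the general fibre forgets the quadric, and the $\PGL_2$-equivariant birational geometry of $\p^1\times\p^1$ with diagonal action should obstruct this. I expect this to be the main obstacle. My first attempt would be to reduce to the $\k(C)$-level and invoke \cite[Theorem D]{BFT22}, where exactly this case (with $C=\p^1$) is treated.

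\textbf{Proof of (2).} Let $H\supseteq\Autz(X)$ be a connected algebraic subgroup of $\Bir(X)$. By regularisation and an equivariant MMP, $H$ is conjugate via some $\varphi$ to a subgroup of $\Autz(Y)$ for a Mori fibre space $Y$. By (1), $Y\to C$ is a degree $8$ Mori Del Pezzo fibration. By \autoref{lem:Mfs8 aut}, $\Autz(Y)$ is either $\PGL_2(\k)$ or solvable. The solvable case cannot contain $\PGL_2(\k)$, so $\varphi H\varphi^{-1}\subseteq\Autz(Y)\simeq\PGL_2(\k)$. By dimension and connectedness, $H=\Autz(X)$.
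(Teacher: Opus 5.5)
For $g(C)\geq1$ your argument is sound, and it is in fact a more careful version of the paper's proof. The paper argues in one stroke. It asserts that the diagonal $\PGL_2(\k)$ acts transitively on a smooth fibre $\p^1\times\p^1$. From this it deduces that the base locus of $\varphi$ lies in singular fibres and that $\varphi$ induces $X_{\k(C)}\simeq Y_{\k(C)}$. It then gets (2) from \autoref{lem:Mfs8 aut} and a dimension count, and never says why $Y$ lives over $C$ at all. The transitivity claim is false, since the diagonal is a closed orbit. What is really needed is your observation that a general fibre has no \emph{finite} orbits. Together with \autoref{lem: links III and IV} and \autoref{lem:links I}, this excludes links of type I, III and IV and forces every type II centre to be vertical. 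To close the type II step, add that the exceptional divisor of $Z'\to X_{i+1}$ is then vertical too: indeed $Z'_{\k(C)}\simeq Z_{\k(C)}\simeq X_{\k(C)}$ has Picard rank $1$. Your proof of (2) (regularise, apply (1), then \autoref{lem:Mfs8 aut} and dimension) is the paper's.

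The two places where you stay vague cannot be completed, because the statement is false there.

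(a) ``Base locus in the singular fibres''. Take a smooth fibre $X_c$ on which $G$ acts diagonally; $G$ preserves its diagonal conic $\Delta_c$. After blowing up $\Delta_c$, the strict transform of $X_c$ is $\p^1\times\p^1$ with normal bundle $\Ol(-1,-1)$. Contracting it to an ordinary double point (the other $K$-negative ray over $C$) is a $G$-equivariant link of type II centred in a smooth fibre. For example, let $g(C)\geq2$ and let $\tilde C\to C$ be an \'etale double cover. Then $(\tilde C\times\p^1\times\p^1)/(\Z/2)$, with the deck involution acting together with the swap, satisfies the hypotheses, has no singular fibre at all, and still admits such links. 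Only verticality, which you do prove, is true.

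(b) The case $g(C)=0$. Let $Q_3=\{y_0^2-y_1y_2-y_3y_4=0\}\subset\p^4$ and $\Gamma=Q_3\cap\{y_3=y_4=0\}$. Then $\mathrm{Bl}_\Gamma Q_3\to\p^1$, $[y_3:y_4]$, is a degree-$8$ Mori Del Pezzo fibration with generic fibre $y_0^2-y_1y_2-ty_3^2$, on which $\mathrm{SO}_3\simeq\PGL_2$ acts diagonally. The link above, applied at the smooth fibre over $t=1$, gives $X$ with singular fibres over $0,1,\infty$, so $\Autz(X)=\Autz(X)_C\simeq\PGL_2$. Yet $X\rat Q_3$ is equivariant, its target lies over a point, and $\Autz(Q_3)\supsetneq\PGL_2$. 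Both (1) and (2) fail, so no appeal to \cite{BFT22} can supply this case.

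Finally, your step ``by \autoref{lem:Mfs8 aut}, $\Autz(Y)$ is $\PGL_2$ or solvable'' (also the paper's) relies on \autoref{lem:dP8 Aut on C}. That lemma's proof assumes the fibres over the zeros of $\mu$ are singular, which fails when $\operatorname{div}(\mu)$ is even. For $g(C)=1$, the threefold $W=(\tilde C\times\p^1\times\p^1)/(\Z/2)$ as above has $\Autz(W)\supseteq\tilde C\times\PGL_2$, acting transitively on $C$. The link at one $\Delta_c$ produces $X$ whose ordinary double point lies over $c$, so $\Autz(X)=\Autz(X)_C\simeq\PGL_2$ and $X$ satisfies the hypotheses. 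But $\Autz(X)$ is conjugate to a proper subgroup of $\Autz(W)$, hence not maximal. Your argument for (2) is correct when $g(C)\geq2$. It is also correct when $g(C)=1$ and $\mu$ has a zero of odd order, since every model then has a singular fibre there and every $\Autz(Y)$ acts trivially on $C$.
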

\begin{proof}
Let $X_c\subset X$ be a smooth fibre of $\pi$. We have $X_c\simeq\p^1_\k\times\p^1_\k$ and $\Autz(X)=\Autz(X)_C$, which acts diagonally on $X_c$. Let $(x,y),(w,z)\in X_c$ be any two points. Since $\PGL_2(\k)$ acts $2$-transitively on $\p^1$, there exists $A\in\PGL_2(\k)$ such that $Ax=w$ and $Ay=z$. Thus, $(x,y)$ and $(w,z)$ belong to the same $\Autz(X)$-orbit. It follows that the base-locus of $\varphi$ is contained in the singular fibres of $\pi$ and that it induces an isomorphism $\hat\varphi\colon X_{\k(C)}\to Y_{\k(C)}$. 
In particular, $\hat\varphi\Autz(X)_C\hat\varphi^{-1}$ acts again diagonally on $Y_{\k(C)}$. 
By \autoref{lem:Mfs8 aut}, this induces an injective morphism of connected algebraic groups $\PGL_2(\k)\simeq\Autz(X)\hookrightarrow\Autz(Y)\simeq\PGL_2(\k)$. It is an isomorphism by dimension.
\end{proof}

\section{When the vertical automorphism group is finite}\label{s:finite}

Recall from \autoref{rmk: MdP}\autoref{MdP:2} that there is an injective group homomorphism $\Autz(X)_C\hookrightarrow\Aut(X_{\k(C)})$. In particular, if $X\to C$ is a Mori Del Pezzo fibration of degree $\leq5$, then $\Autz(X)_C$ is finite by \autoref{rmk: MdP}\autoref{MdP:0} and \autoref{lem:auto of dp surface}\autoref{auto dp surface:1}.

\subsection{Description of the automorphism group}

\begin{lemma}\label{lem: autm acting trivially on C}
Let $\pi\colon X\to C$ a Mori Del Pezzo fibration above a curve $C$ such that $\Autz(X)_C$ is finite. If $\Autz(X)$ acts trivially on $C$, then $\Autz(X)$ is trivial. 
\end{lemma}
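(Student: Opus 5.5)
If $\Autz(X)$ acts trivially on $C$, then the sequence \autoref{eq:seq} from \autoref{rmk: MdP}\autoref{MdP:3} gives $\Autz(X)=\Autz(X)_C$. By hypothesis this group is finite. It is also connected, being the identity component of the group scheme $\Aut(X)$. A connected finite algebraic group over a field of characteristic zero is trivial, since it is reduced by Cartier's theorem and a connected reduced finite scheme is a single point. Hence $\Autz(X)$ is trivial.

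In order, the steps are as follows.
\begin{enumerate}
\item Use triviality of the action on $C$, i.e.\ $\pi_*(\Autz(X))=\{\mathrm{id}\}$, to identify $\Autz(X)$ with the kernel $\Autz(X)_C$.
\item Note that $\Autz(X)_C=\Autz(X)$ is then finite by hypothesis.
\item Conclude from connectedness that it is trivial.
\end{enumerate}

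There is no real obstacle. The only point needing care is the use of the notation: $\Autz(X)_C$ is the kernel of $\pi_*$ restricted to the connected group $\Autz(X)$, and that kernel might a priori fail to be connected. Under our hypothesis, however, the kernel equals all of $\Autz(X)$, which is connected, so no issue arises.

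If one instead only knows finiteness through the embedding $\Autz(X)_C\hookrightarrow\Aut(X_{\k(C)})$ of \autoref{rmk: MdP}\autoref{MdP:2}, the argument is unchanged: finiteness of the image forces finiteness of $\Autz(X)$, and connectedness again gives triviality.
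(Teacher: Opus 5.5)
Your proof is correct and matches the paper's argument: triviality of the action on $C$ gives $\Autz(X)=\Autz(X)_C$, which is finite by hypothesis and connected, hence trivial. The only difference is that you spell out the final step, namely that a connected finite group is trivial in characteristic zero by Cartier's theorem, which the paper leaves implicit.
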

\begin{proof}
By hypothesis, the group $\Autz(X)=\Autz(X)_C$ is finite, so $\Autz(X)$ is trivial. 
\end{proof}

\begin{lemma}\label{lem:dp<5-autom}
Let $C$ be a curve of genus $g(C)\geq1$ and $\pi\colon X\to C$ a morphism such that $\pi_*\Ol_X = \Ol_C$, $X_{\k(C)}$ is a Del Pezzo surface and $\Autz(X)_C$ is finite. 
If $\Autz(X)$ is non-trivial, then $g(C)=1$ and $\Autz(X)$ is an elliptic curve and there is an exact sequence
\begin{equation}\label{eq:seq-dp<6}
1\to \Autz(X)_C\to \Autz(X)\to \Autz(C)\to 1
\end{equation}
which is an unramified cover of the elliptic curve $\Autz(C)$. 
\end{lemma}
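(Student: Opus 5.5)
By \autoref{lem:blanchard}, since $\pi_*\Ol_X=\Ol_C$, the connected group $\Autz(X)$ acts on $C$. This gives a homomorphism of algebraic groups $\pi_*\colon\Autz(X)\to\Autz(C)$ with kernel $\Autz(X)_C$, as in \autoref{eq:seq}.

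The first step is to show that $\pi_*$ is nontrivial. If $\Autz(X)$ acted trivially on $C$, then $\Autz(X)=\Autz(X)_C$ would be finite and connected, hence trivial; this is the argument of \autoref{lem: autm acting trivially on C}. Since $\Autz(X)$ is non-trivial by hypothesis, the image $\pi_*(\Autz(X))$ is a nontrivial connected subgroup of $\Autz(C)$. This forces $g(C)=1$, because $\Autz(C)$ is trivial for $g(C)\geq2$. For $g(C)=1$, $\Autz(C)\simeq C$ is an elliptic curve, and its only nontrivial connected closed subgroup is itself. Hence $\pi_*$ is surjective, which gives the exact sequence \autoref{eq:seq-dp<6}.

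Next we identify $\Autz(X)$. It is a connected algebraic group, and the surjection $\pi_*$ onto the elliptic curve $\Autz(C)$ has finite kernel, so $\dim\Autz(X)=1$. A connected one-dimensional algebraic group is $\G_a$, $\G_m$, or an elliptic curve. Affine groups admit no nonconstant morphism to an abelian variety, so $\G_a$ and $\G_m$ are excluded and $\Autz(X)$ is an elliptic curve. Finally, $\pi_*$ is then a surjective homomorphism of elliptic curves with finite kernel, i.e.\ an isogeny. In characteristic zero an isogeny is separable, hence étale, so $\pi_*$ is an unramified cover of $\Autz(C)$ with Galois group $\Autz(X)_C$.

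I expect the only step needing care to be the exclusion of affine one-dimensional groups, and the standard fact that every morphism from a rational variety to an abelian variety is constant suffices there. One should also check that the finiteness of $\Autz(X)_C$ is really used as scheme-theoretic finiteness of the kernel. In characteristic zero group schemes are reduced, so this causes no issue.
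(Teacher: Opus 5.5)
Your proof is correct and follows the same route as the paper. If $\Autz(X)$ acted trivially on $C$, then $\Autz(X)=\Autz(X)_C$ would be finite and connected, hence trivial; so the action is non-trivial, which forces $g(C)=1$ and surjectivity onto $\Autz(C)$, and the finite kernel then makes $\Autz(X)$ an elliptic curve. You also make explicit what the paper leaves implicit: the exclusion of $\G_a$ and $\G_m$, and the fact that the resulting isogeny is étale in characteristic zero, which gives the unramified-cover claim.
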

\begin{proof}
If $\Autz(X)$ acts trivially on $C$, then $\Autz(X)$ is trivial by \autoref{lem: autm acting trivially on C}. So, $\Autz(X)$ acts non-trivially on $C$. Since $C$ is an elliptic curve, this means that the induces homomorphism $\Autz(X)\to\Autz(C)$ is surjective. 
Since $\Autz(X)_C$ is finite, it follows that $\Autz(X)$ is a finite extension of an elliptic curve. In particular, $\Autz(X)$ is an elliptic curve. 
\end{proof}

Recall the following definition: 
let $G$ be an algebraic group, $H\subset G$ a closed subgroup and $S$ an $H$-variety. 
We define $G\times^{H}S:=(G\times S)/H$, where $H$ acts on $G\times S$ by $h\cdot(g,s)=(gh^{-1},hs)$. In literature, the notation $G\times_{H}S$ is used as well. 
The group $G$ acts on $G\times S$ by left-multiplication on the first factor and trivially on $S$, which induces a $G$-action on $G\times^{H}S$. We denote by $[g,s]$ the class of $(g,s)\in G\times S$ in $G\times^H S$. 

Since $G\times^{H}S$ is a principal bundle \cite[Lemme 6.1]{Res04}, it is smooth.

\begin{theorem}\label{thm:almost product}
Let $C$ be a curve of genus $g(C)=1$ and $\pi\colon X\to C$ with $\pi_*\Ol_X = \Ol_C$ and $X_{\k(C)}$ a Del Pezzo surface. Suppose that $H:=\Autz(X)_C$ is finite and the action of $G:=\Autz(X)$ on $C$ is non-trivial. 
Let $\iota\colon G/H\to C$ be the isomorphism of elliptic curves induced by the exact sequence \eqref{eq:seq-dp<6} and set $S:=\pi^{-1}(0)$. 
Then $S\subset X$ is an $\Autz(X)_C$-invariant surface, 
\[
\psi\colon G\times^{H} S\to X,\quad [g,s]\mapsto gs
\] 
is a $G$-equivariant isomorphism and $\pi\circ\psi=\iota\circ pr_{G/H}$.
\end{theorem}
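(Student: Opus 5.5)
We argue by first checking that $\psi$ is well defined and $G$-equivariant, then showing that it is a bijective morphism onto the normal variety $X$. Since $\pi$ is $G$-equivariant by \autoref{lem:blanchard} and $H$ acts trivially on $C$, the fibre $S=\pi^{-1}(0)$ is $H$-stable, so $H$ acts on $S$. The morphism $G\times S\to X$, $(g,s)\mapsto gs$, is constant on $H$-orbits, since $(gh^{-1})(hs)=gs$. It therefore descends to a morphism $\psi$ on the quotient $G\times^H S$, which is a geometric quotient by the finite group $H$ acting freely on the factor $G$. Equivariance is clear because $G$ acts by left multiplication on the first factor. For the compatibility, $\pi(gs)=g\cdot\pi(s)=g\cdot 0$. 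Under the identification $\iota\colon G/H\to C$, $gH\mapsto g\cdot 0$ (the orbit map of $0$, which is an isomorphism by the exact sequence \eqref{eq:seq-dp<6} from \autoref{lem:dp<5-autom}), this gives $\pi\circ\psi=\iota\circ pr_{G/H}$.

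Next we check bijectivity. For surjectivity, take $x\in X$. Since $G$ acts transitively on $C$, there is $g\in G$ with $g\cdot 0=\pi(x)$, and then $g^{-1}x\in S$, so $x=\psi([g,g^{-1}x])$. For injectivity, suppose $gs=g's'$. Then $g\cdot0=g'\cdot0$, so $h:=g'^{-1}g$ lies in the stabiliser of $0$ in $G$. That stabiliser is $H$, because $\iota$ is an isomorphism, i.e.\ the stabiliser of $0$ coincides with the kernel of $G\to\Autz(C)$. It follows that $s'=hs$ and $g=g'h$, hence $[g,s]=[g'h,s]=[g',hs]=[g',s']$. So $\psi$ is a bijective morphism.

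It remains to upgrade bijectivity to an isomorphism, and this is the main step. We would use a fibre-product description. Consider the isogeny $q\colon G\to G/H\simeq C$. The base change $X\times_C G$ is isomorphic to $G\times S$ via $(g,s)\mapsto (gs,g)$, whose inverse is $(x,g)\mapsto(g,g^{-1}x)$; both maps are morphisms. Under this identification, the $H$-action $h\cdot(g,s)=(gh^{-1},hs)$ corresponds to $(x,g)\mapsto(x,gh^{-1})$, that is, to the deck action of the étale Galois cover $X\times_C G\to X$ obtained by pulling back the étale $H$-torsor $q$. Hence
\[
(G\times S)/H\simeq (X\times_C G)/H\simeq X\times_C (G/H)\simeq X,
\]
because quotients by a free finite group action commute with flat base change along the torsor. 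One then checks that this composite is $\psi$.

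Alternatively, one could use that $G\times^H S$ is smooth, hence normal, and that $X$ is normal. Then $\psi$ is a bijective birational morphism, birational since char $0$ makes the bijection generically an isomorphism, and Zariski's Main Theorem gives the isomorphism. The torsor argument has the advantage of avoiding any normality hypothesis on $X$, and it also shows that $S$ is smooth, being an étale-local factor of $X$.
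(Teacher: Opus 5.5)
Your proof is correct and follows the paper's route. The paper simply invokes \cite[Lemma 6.1]{Res04}: a $G$-variety with an equivariant morphism to $G/H$ is $G\times^H$ of the fibre over the base point. Your identification $X\times_C G\simeq G\times S$, under which the $H$-action becomes the deck action of the \'etale $H$-torsor $G\to G/H$, is exactly the proof of that lemma, and it also gives the $\dim S=2$ and reducedness of $S$ that the statement asserts. The one inaccurate point is your side remark: being an \'etale-local factor of $X$ shows $S$ is smooth only if $X$ is smooth. So smoothness of $S$, and hence of $G\times^H S$ in your Zariski Main Theorem alternative, needs a separate input, e.g.\ generic smoothness of $\pi$ together with the fact that $G$ permutes the fibres transitively.
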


\begin{proof}
By \autoref{lem:dp<5-autom}, $G$ is an elliptic curve, $H:=\Autz(X)_C$ is finite and there is an isomorphism $\iota\colon G/H\simeq \Autz(C)\simeq C$. 
So, we have a morphism $\phi:=\iota^{-1}\circ\pi\colon X\to  G/H$. 
Set $S:=\pi^{-1}(\iota(H/H))=\pi^{-1}(0)$. Since $G$ acts transivitely on $C$ by hypothesis and since the general fibre of $\pi$ is reduced, $S$ is a reduced subvariety of $X$.
By \cite[Lemma 6.1]{Res04}, the morphism $G\times S\to X$, $(g,s)\mapsto gs$ induces an isomorphism $\psi\colon G\times^{H}S\to X$, and $\psi$ is $G$-equivariant. 
Since $H$ is finite, we have $\dim S=2$. 
\end{proof}

\subsection{The case of Mori Del Pezzo fibrations of degree $1,2,3,5$}

We now determine the conjugacy classes of $\Autz(X)$ in $\Bir(X)$ when $X\to C$ is a Mori Del Pezzo fibration of degree $1,2,3$ or $5$.

\begin{theorem}\label{thm:K<3}
Let $\pi\colon X\to C$ be a Mori Del Pezzo fibration of degree $d\in\{1,2,3\}$ above a curve $C$ of genus $g(C)\geq1$. 
If $\Autz(X)$ acts non-trivially on $C$, then $g(C)=1$ and $\Autz(X)$ is an elliptic curve, it is a maximal connected algebraic subgroup of $\Bir(X)$ and any $\Autz(X)$-equivariant birational map $X\rat Y$
 to another Mori fibre space $\pi'\colon Y\to B$ satisfies that $\pi'$ is a Mori Del Pezzo fibration of degree $d$.
\end{theorem}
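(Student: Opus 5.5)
The first assertion is immediate from what precedes. For $d\leq 3$ the vertical group $\Autz(X)_C$ embeds into $\Aut(X_{\k(C)})$ by \autoref{rmk: MdP}\autoref{MdP:2}, and $\Aut(X_{\k(C)})$ is finite by \autoref{lem:auto of dp surface}\autoref{auto dp surface:1}. Since $\Autz(X)$ acts non-trivially on $C$, it is non-trivial, so \autoref{lem:dp<5-autom} gives $g(C)=1$ and shows that $\Autz(X)$ is an elliptic curve surjecting onto $\Autz(C)$ with finite kernel. In particular $\Autz(X)$ acts transitively on $C$. By \autoref{thm:almost product}, every fibre of $\pi$ is then isomorphic to $S=\pi^{-1}(0)$, a smooth Del Pezzo surface of degree $d$. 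I will use this homogeneity repeatedly.

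The core of the argument is a link analysis. Let $\varphi\colon X\rat Y$ be an $\Autz(X)$-equivariant birational map to a Mori fibre space $Y\to B$. By \autoref{thm:Enrica} it factors into $\Autz(X)$-equivariant Sarkisov links, and I would argue by induction that each intermediate Mori fibre space is again a Mori Del Pezzo fibration of degree $d$ over $C$ with $\Autz(X)$ acting through an elliptic curve transitive on $C$. For a single link starting from such an $X$:
\begin{itemize}
\item \autoref{lem: links III and IV} excludes links of types III and IV.
\item \autoref{lem:links I} excludes type I links, since its three cases require degree $9$, $8$ or $4$.
\end{itemize}
So every link is of type II and ends at a Mori fibre space over a base of dimension one. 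That base is a curve, and by \autoref{lem:preserves fibration} the link is over $C$.

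The center of the first divisorial contraction of a type II link is $\Autz(X)$-invariant. Because the group is transitive on $C$, this center cannot lie inside finitely many fibres, so it dominates $C$. If it were a surface, it would be a horizontal divisor, and its image in $X_{\k(C)}$ would be a curve blown up to a divisor, which is impossible. So the center is a curve $\ell$ transversal to the fibres, and \autoref{lem:type II} applies. It yields a link of type II between Del Pezzo surfaces $X_{\k(C)}\dashleftarrow Z_{\k(C)}\dashrightarrow Y_{\k(C)}$ with $\rho(X_{\k(C)})=1$, obtained by blowing up the closed point $\ell_{\k(C)}$. \autoref{lem:rho=1,K<4} with $G$ trivial, or directly \cite[Theorem 2.6]{iskovskikh_1996}, then forces $K_{Y_{\k(C)}}^2=K_{X_{\k(C)}}^2=d$; for $d=1$ no such link exists at all. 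The target $Y$ inherits an equivariant action of the same elliptic curve, which is transitive on $C$, so the induction continues. Hence every equivariant Mori fibre space reached is a Mori Del Pezzo fibration of degree $d$.

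Maximality is the remaining step, and I expect it to be the main obstacle. Suppose $\Autz(X)\subsetneq G'$ with $G'$ a connected algebraic subgroup of $\Bir(X)$. Regularize $G'$ and run a $G'$-equivariant MMP to obtain a $G'$-Mori fibre space $Y$. The resulting map $X\rat Y$ is $\Autz(X)$-equivariant, so by the previous step $Y\to C$ is a Mori Del Pezzo fibration of degree $d\leq 3$, and $G'\subset\Autz(Y)$. Applying the first paragraph to $Y$ shows that $\Autz(Y)$ is an elliptic curve, so $\dim G'\leq 1=\dim\Autz(X)$. Two connected one-dimensional groups with one contained in the other coincide, contradicting $\Autz(X)\subsetneq G'$. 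The delicate point is justifying the regularization and the equivariant MMP with $X\rat Y$ still $\Autz(X)$-equivariant. I would handle it via Weil regularization followed by an equivariant resolution, and then apply \autoref{thm:Enrica} to the $\Autz(X)$-equivariant map $X\rat Y$.
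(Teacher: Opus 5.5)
Your proposal is correct and follows the paper's proof essentially step by step: you decompose the map into equivariant Sarkisov links (\autoref{thm:Enrica}), exclude links of types I, III and IV via \autoref{lem:links I} and \autoref{lem: links III and IV}, get transversal base loci from the transitive action on $C$, reduce via \autoref{lem:type II} to a type II link of $\k(C)$-Del Pezzo surfaces whose degree is preserved by Iskovskikh's classification, and conclude maximality because $\Autz(X)$ and $\Autz(Y)$ are both elliptic curves. Your one addition is to spell out the regularization and equivariant MMP argument that turns this into maximality, which the paper leaves implicit; note that the transversality argument must be applied to both divisorial contractions of the link (it works verbatim on the $Y$ side), and that the surface case you discuss cannot occur, since the center of a divisorial contraction has codimension at least two.
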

\begin{proof}
Suppose that $\Autz(X)$ acts non-trivially on $C$. Then $g(C)=1$ and by \autoref{thm:almost product}, $\Autz(X)$ is an elliptic curve.
Let $\varphi\colon X\rat Y$ be an $\Autz(X)$-equivariant birational map to a Mori fibre space $\pi'\colon Y\to B$. Then, by \autoref{thm:Enrica}, $\varphi$ is a composition $\varphi_n\circ\cdots\circ\varphi_1$ of $\Autz(X)$-equivariant Sarkisov links and isomorphisms $\varphi_1,\dots,\varphi_n$.
By \autoref{lem:links I}, \autoref{lem:preserves fibration} and \autoref{lem: links III and IV}, $\varphi_1\colon X\rat X_1$ is a link of type II over $C$ to a Mori Del Pezzo fibration $\pi_1\colon X_1\to C$. Since $\Autz(X)$ acts transitively on $C$, the base-locus of $\varphi_1$ and $\varphi_1^{-1}$ are curves that are transversal to a general fibre of $\pi$ and $\pi_1$, respectively.
By \autoref{lem:type II}, $\varphi_1$ induces a link of type II $\hat\varphi_1\colon X_{\k(C)}\rat (X_1)_{\k(C)}$. 
\cite[Theorem 2.6(ii)]{iskovskikh_1996} implies that $K_{(X_1)_{\k(C)}}^2=d$.  
Continuing this way, it follows that $\pi'$ is a Mori Del Pezzo fibration of degree $d$ and there is an isomorphism $\alpha\colon C\to B$ such that $\alpha\circ\pi=\pi'\circ\varphi$. 
Moreover, $\Autz(X)$ and $\Autz(Y)$ are elliptic curves by \autoref{lem: autm acting trivially on C} and \autoref{lem:dp<5-autom}, so $\varphi\Autz(X)\varphi^{-1}=\Autz(X')$.
\end{proof}

Recall that if $S_5$ is a Del Pezzo surface of degree $5$ over an arbitrary perfect field, then there is an injective group homomorphism $\Aut(S_5)\hookrightarrow\mathrm{Sym}_5$.

\begin{proposition}\label{pro:DP5}
Let $C$ be a smooth curve of $g(C)\geq1$ and $\pi\colon X\to C$ be a Mori Del Pezzo fibration of degree $5$. Suppose that $\Autz(X)$ is non-trivial.
Then $g(C)=1$, $\Autz(X)_C\simeq\Z/5$ and $\Autz(X)$ is an elliptic curve.
 \end{proposition}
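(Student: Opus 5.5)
First I will reduce to the setting of \autoref{lem:dp<5-autom}. By \autoref{rmk: MdP}\autoref{MdP:0} and \autoref{lem:auto of dp surface}\autoref{auto dp surface:1}, the group $\Autz(X)_C$ embeds into $\Aut(X_{\k(C)})$, which is finite. Since $\Autz(X)$ is non-trivial, \autoref{lem:dp<5-autom} gives $g(C)=1$, shows that $\Autz(X)$ is an elliptic curve, and provides an exact sequence $1\to \Autz(X)_C\to\Autz(X)\to\Autz(C)\to1$. Hence $H:=\Autz(X)_C$ is a finite subgroup of an elliptic curve, so it is abelian and generated by at most two elements. By \autoref{thm:almost product}, we may write $X\simeq G\times^H S$ with $G=\Autz(X)$ and $S=\pi^{-1}(0)$. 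It remains to show $H\simeq\Z/5$.

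For the upper bound, I will use the embedding $H\hookrightarrow\Aut(X_{\k(C)})\hookrightarrow\mathrm{Sym}_5$. Every abelian subgroup of $\mathrm{Sym}_5$ is one of $1$, $\Z/2$, $\Z/3$, $\Z/4$, $(\Z/2)^2$, $\Z/5$, $\Z/6$. Next I will use $\rho(X_{\k(C)})=1$. Over $\overline{\k(C)}$, the ten $(-1)$-curves form the Petersen graph. $\Gal$ acts on them through $\mathrm{Sym}_5$, and Picard rank one forces the Galois image to be a subgroup of $\mathrm{Sym}_5$ with no invariant class besides $K$. By the classical list, this image is $\Z/5$, $D_5$, $F_{20}$, $A_5$ or $\mathrm{Sym}_5$. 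In particular it contains a $5$-cycle.

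The key point is that $X\simeq G\times^H S$ becomes trivial after the étale base change $G\to G/H\simeq C$. So the generic fibre $X_{\k(C)}$ is the twist of $S_{\k(C)}$ by the $H$-torsor $\k(G)/\k(C)$, with $H$ acting on $S$. Since $S$ is a Del Pezzo surface of degree 5 over $\k$, it is split, and the Galois action on the $(-1)$-curves factors through $\Gal(\k(G)/\k(C))=H$, acting via $H\to\Aut(S)\simeq\mathrm{Sym}_5$. This map is injective: an element acting trivially on $S$ acts trivially on $X$ by the construction of $G\times^HS$. Therefore the Galois image equals the image of $H$, which is abelian. The abelian groups on the Picard-rank-one list above reduce to $\Z/5$, so $H\simeq\Z/5$.

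The main obstacle is justifying the two facts used in this last step. The first is that Picard rank one of the twist holds exactly when $H$ acts on $\Pic(S)$ with invariants of rank one. The second is that the only abelian such subgroup of $\mathrm{Sym}_5$ is $\Z/5$. For the second fact I would check directly that a $5$-cycle fixes no non-trivial class of $\Pic(S)\otimes\mathbb{Q}$ orthogonal to $K$. The other abelian subgroups, generated by elements of order $2$, $3$, $4$ or $6$ (or by two commuting involutions), each fix a point of $\{1,\dots,5\}$ or preserve a partition. They therefore fix a conic-bundle class or an exceptional-curve configuration, which gives $\rho\geq2$. I would also double-check whether $S=\pi^{-1}(0)$ is smooth. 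This should follow because $X\to G/H$ is locally trivial in the étale topology and its general fibre is a smooth Del Pezzo surface of degree $5$, so all fibres are isomorphic.
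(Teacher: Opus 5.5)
Your proposal is correct and follows essentially the same route as the paper: reduce via \autoref{lem:dp<5-autom} and \autoref{thm:almost product} to $X\simeq G\times^H F$, note that $H$ is abelian and embeds in $\Aut(F)\simeq\mathrm{Sym}_5$, use $\rho(X/C)=1$ to get $\NS(F)^H=\Z$, conclude $H\simeq\Z/5$, and get injectivity of $H\to\Aut(F)$ from the contracted product. The differences are only in the level of detail: you justify $\NS(F)^H=\Z$ through the Galois-twist description of the generic fibre, and you replace the paper's appeal to the table in \cite[Figure 2]{Boi23} by the observation that rank-one invariants amount to transitivity on five points.
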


\begin{proof}
Let $G:=\Autz(X)$ and $H:=\Autz(X)_C$. Since $G$ is non-trivial, it acts transitively on $C$ by \autoref{lem: autm acting trivially on C}. In particular, $g(C)=1$.
By \autoref{rmk: MdP}\autoref{MdP:2}, there is an injective homomorphism of groups $H \hookrightarrow\Aut(X_{\k(C)})\subset\mathrm{Sym}_5$, so  $H$ is finite. 
It follows from \autoref{lem:dp<5-autom} that $G$ is an elliptic curve. 
By \autoref{thm:almost product}, we have $X\simeq G\times^H F$, where $F$ is any fibre of $\pi$ (since $G$ acts transitively on $C$).   
Moreover, $\pi\colon X\to G/H$ is the projection onto $G/H\simeq C$. 

The surface $F$ is a Del Pezzo surface of degree $5$ by hypothesis. 
We have a morphism of algebraic groups $H\to\Aut(F)\simeq\mathrm{Sym}_5$.
Let $H'$ be its image. Since $G$ is abelian, also $H$ and hence $H'$ are abelian. Thus $\vert H'\vert\leq6$. 
In \cite[Figure 2]{Boi23} we find the list of actions of any subgroup of $\rm{Sym}_5$ on the set of $(-1)$-curves on $F$, up to conjugacy in $\rm{Sym}_5$.
Since $\rho(X/C)=1$ and $\Autz(X)$ acts transitively on $C$ and since $X\simeq G\times^HF$, we have $\NS(F)^H=\Z$. From \cite[Figure 2]{Boi23} we deduce that $H'\simeq\Z/5$. Since $X\simeq G\times^HF$, we have $H'\simeq H$.
\end{proof}

\subsection{Mori Del Pezzo fibrations of degree $4$}
This case is a little more involved than the case of Mori Del Pezzo fibrations of degree $d\in\{1,2,3,5\}$.

\begin{lemma}\label{lem:base is ruled}
Let $C$ be a smooth projective curve of genus $g(C)=1$.
Let $\pi\colon X\to B$ be a Mori fibre space with $\dim X=3=\dim B+1$ and suppose there is a surjective morphism $\tau\colon B\to C$ such that $\tau_*\Ol_B = \Ol_C$. Suppose moreover that either a general fibre of $\tau$ is isomorphic to $\p^1$ or that the general fibre of $\tau\circ\pi$ is a smooth rational surface.
If $\Autz(X)$ acts non-trivially on $C$, then $B$ is a ruled surface over $C$. 
\end{lemma}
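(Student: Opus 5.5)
First I establish that $\tau\colon B\to C$ has general fibre $\p^1$. If that is assumed, there is nothing to do. Otherwise the general fibre $F$ of $\tau\circ\pi$ is a smooth rational surface. Then $\pi|_F\colon F\to \tau^{-1}(c)$ is a surjective morphism with connected fibres: it is a conic-bundle-type map, since $\dim X=\dim B+1$. Hence $\tau^{-1}(c)$ is a curve dominated by a rational surface, so it is a smooth rational curve. For the smoothness I use that $B$ is normal (it is the base of a Mori fibre space) and that a general fibre of $\tau$ is smooth by generic smoothness in characteristic zero. Connectedness comes from $\tau_*\Ol_B=\Ol_C$. Either way, the general fibre of $\tau$ is $\p^1$.

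Next I use the group action. By \autoref{lem:blanchard}, applied to $\pi$ (for which $\pi_*\Ol_X=\Ol_B$ holds because it is a Mori fibre space) and then to $\tau$, the group $G:=\Autz(X)$ acts on $B$ and on $C$, and both $\pi$ and $\tau$ are $G$-equivariant. The action on $C$ is non-trivial and $g(C)=1$, so $G\to\Autz(C)\simeq C$ is surjective and $G$ acts transitively on $C$. Consequently all fibres of $\tau$ are isomorphic to one another, so \emph{every} fibre of $\tau$ is isomorphic to $\p^1$, including scheme-theoretically: the fibres are all translates of a general one, which is reduced and isomorphic to $\p^1$. The set of points of $B$ where $B$ is singular is closed and $G$-invariant, so it is a union of whole fibres, or it is empty. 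Since $B$ is normal its singular locus is finite, so it is empty and $B$ is smooth.

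Finally, $\tau\colon B\to C$ is a flat morphism from a smooth surface to a smooth curve, flat because $B$ is integral and $\tau$ is dominant onto a smooth curve. All its fibres are $\p^1$, so $\tau$ is smooth, and by Tsen's theorem or \autoref{thm:Tsen} it has a section. Hence $\tau$ is a $\p^1$-bundle, i.e.\ $B$ is a ruled surface over $C$. The step I expect to need the most care is the homogeneity argument: one must check that the transitivity of $G$ on $C$ lifts to $B$ in the sense that $g\in G$ maps $\tau^{-1}(c)$ isomorphically onto $\tau^{-1}(\bar g c)$, where $\bar g$ is the image of $g$ in $\Autz(C)$. This follows directly from the equivariance of $\tau$, and it gives both the smoothness of $B$ and the fact that every fibre is isomorphic to $\p^1$.
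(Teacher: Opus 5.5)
Your proposal is correct and follows essentially the same route as the paper: Blanchard's lemma and the transitivity of $\Autz(X)$ on the elliptic curve $C$ make all fibres of $\tau$ isomorphic and rule out the finitely many singular points of $B$, after which Tsen gives the section. The differences are minor: you get finiteness of $\mathrm{Sing}(B)$ from normality where the paper uses klt plus \cite{GKKP11}, and you get smoothness of the general fibre from generic smoothness. The phrase ``a union of whole fibres'' is a slip, since a nonempty $G$-invariant closed subset of $B$ need not be one; it does surject onto $C$ and is therefore infinite, which is all you need, and your flatness argument gives smoothness of $B$ independently anyway.
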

\begin{proof}
By Blanchard's lemma (\autoref{lem:blanchard}), there is an $\Autz(X)$-action on $C$ making $\tau\circ\pi$ equivariant.
The surface $B$ is klt as base of a Mori fibre space \cite[Corollary 4.6]{Fuj99}. In particular, it has isolated singularities \cite[Proposition 9.3]{GKKP11}. The image by $\tau$ of these singularities is fixed by $\Autz(X)$. Since $\Autz(X)$ acts non-trivially on $C$, it acts transitively, so $B$ is smooth and moreover all fibres of $\tau$ are isomorphic. 

If a general fibre of $\tau$ is isomorphic to $\p^1$, then the transitivity of the $\Autz(X)$-action on $C$ implies that all fibres of $\tau$ are isomorphic to $\p^1$ and the claim follows from \cite[V.2 Definition]{MR0463157} and \autoref{thm:Tsen}.
Suppose that a general fibre $F$ of $\tau\circ\pi$ is a rational surface. Since $F$ is rational, the fibre $\pi(F)$ of $\tau$ is rational. 
Since $F$ is smooth and the general fibre of $\pi$ is isomorphic to $\p^1$, it follows that $\pi(F)$ is smooth. Thus $\pi(F)\simeq\p^1$. We conclude as above.
\end{proof}

\begin{lemma}\label{lem: DP4}
Let $C$ be an elliptic curve and let $\pi\colon X\to C$ be a Mori Del Pezzo fibration of degree $4$ and $\psi\colon X\rat Y$ an $\Autz(X)$-equivariant birational map to a Mori fibre space $\pi'\colon Y\to B$. Suppose that $\Autz(X)$ is non-trivial. 
Then one the following holds:
        \begin{enumerate}
        \item\label{proDP4:1} $B\simeq C$ and $\pi'$ is a Mori Del Pezzo fibration of degree $K_{Y_{\k(C)}}^2=4$,
        \item\label{proDP4:2} there is a ruled structure $\tau\colon B\to C$, the generic fibre $Y_{\k(C)}$ is a Del Pezzo surface of degree $3$ with $\rho(Y_{\k(C)})=2$.
        \end{enumerate}
\end{lemma}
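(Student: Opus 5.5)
We plan to follow the strategy of \autoref{thm:K<3}. Since $\Autz(X)$ is non-trivial and $\Autz(X)_C$ is finite (degree $4\leq5$), \autoref{lem:dp<5-autom} shows that $G:=\Autz(X)$ is an elliptic curve acting transitively on $C$. By \autoref{thm:Enrica}, $\psi$ decomposes as $\psi=\varphi_n\circ\cdots\circ\varphi_1$, where each $\varphi_i\colon X_{i-1}\rat X_i$ is a $G$-equivariant Sarkisov link. We then prove by induction on $i$ the following claim.

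\emph{Claim.} Each $X_i$ is either
\begin{enumerate}
\item[(a)] a Mori Del Pezzo fibration of degree $4$ over $C$, or
\item[(b)] a Mori fibre space $X_i\to B_i$ with a ruled structure $B_i\to C$, whose generic fibre over $\k(C)$ is a Del Pezzo surface of degree $3$ and Picard rank $2$.
\end{enumerate}
Moreover, in both cases $G$ acts transitively on $C$ compatibly with the structure maps.

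Throughout the induction we use that $G$ acts transitively on $C$. Hence the base locus of every link, being $G$-invariant, consists of curves transversal to the fibres over $C$, or of whole fibres; the latter is impossible for a $G$-invariant base locus of codimension at least $2$. Each link therefore induces a $\k(C)$-birational map of generic fibres. By \autoref{lem:preserves fibration}, the base $C$ (up to isomorphism) and the transitive action are preserved.

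\emph{Case (a).} Links of type III and IV do not exist by \autoref{lem: links III and IV}. A link of type I is governed by \autoref{lem:links I}: with $K^2=4$, it blows up a curve $\ell$ with $\deg(\pi|_\ell)=1$. This produces $Y$ over a surface $B\to C$ with $Y_{\k(C)}$ a Del Pezzo surface of Picard rank $2$ and degree $3$ (the blow-up of a rational point on a quartic Del Pezzo surface). By \autoref{lem:base is ruled}, $B$ is ruled over $C$, since the general fibre of $B\to C$ is rational (it is the image of a rational surface). This gives (b). A link of type II stays over $C$ by \autoref{lem:preserves fibration}. By \autoref{lem:type II} it induces a link of type II between Del Pezzo surfaces of Picard rank $1$ over $\k(C)$, which by \autoref{lem:rho=1,K<4}(3) goes from degree $4$ to degree $4$. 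This gives (a).

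\emph{Case (b).} Here $X_i\to B_i\to C$ with generic fibre over $\k(C)$ a cubic surface $S$ of Picard rank $2$ carrying a conic bundle structure. Any $G$-equivariant link from $X_i$ induces over $\k(C)$ a link starting from the conic bundle $S\to B_{i,\k(C)}$; we justify this by the same transversality argument as in \autoref{lem:links I} and \autoref{lem:type II}, using the relative version of \autoref{thm:Enrica} over $C$. By \autoref{lem:rho=1,K<4}(3), such links are either of type III back to a degree-$4$ Del Pezzo surface of Picard rank $1$, giving (a) via contracting $X_i$ over $C$, or of type II between conic bundles of degree $3$, staying in (b). Links of type IV over $\k(C)$ would require a second conic bundle structure; the classification excludes this. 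Globally, a type IV link would need a second fibration of $B_i$ onto $C$ or onto a rational curve, which is impossible since $g(C)=1$. Ruledness of the new base follows again from \autoref{lem:base is ruled}.

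\emph{Main obstacle.} The main difficulty is this step, namely making rigorous that every $G$-equivariant link from a threefold over a ruled surface in case (b) restricts to a Sarkisov link of the generic fibre over $\k(C)$ of the types listed in \autoref{lem:rho=1,K<4}. In particular, one must check that the bases keep mapping to $C$ with connected fibres. The tool we would use first is Blanchard's lemma (\autoref{lem:blanchard}), together with the fact that the general fibre of any variety in the chain is rational while $g(C)=1$, so that every rational curve and surface maps to points of $C$. The statement then follows from the claim applied to $X_n=Y$.
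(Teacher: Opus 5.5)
Your plan is essentially the paper's proof. Both decompose $\psi$ into $\Autz(X)$-equivariant links, use that $\Autz(X)$ is an elliptic curve acting transitively on $C$, and propagate the two states (1) and (2). Your state-(1) step uses the same lemmas as the paper: \autoref{lem: links III and IV}, \autoref{lem:links I}, \autoref{lem:base is ruled}, \autoref{lem:type II} and Iskovskikh's list.

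For the step you flag as the main obstacle, the paper does not restrict links from state (2) to $\k(C)$ wholesale. It sorts them by global type:
\begin{itemize}
\item A type I link is impossible, because $B_{i+1}\to B_i$ contracts a curve to an $\Autz(X)$-fixed point of $B_i$, contradicting transitivity.
\item For a type III link, the map $B_i\to B_{i+1}$ factors through the ruling $B_i\to C$; otherwise $B_{i+1}$ is rational and $X_{i+1}$ would be rationally connected. So $B_{i+1}\simeq C$, and \autoref{lem:links I} applied to $\varphi_{i+1}^{-1}$ gives degree $4$.
\item For a type IV link, the same argument gives $D\simeq C$. The generic fibre over $\k(C)$ is then unchanged, and \autoref{lem:base is ruled} gives (2).
\end{itemize}

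Two points in your sketch need this global argument.
\begin{itemize}
\item A type I link from $X_i$ does not restrict to a Sarkisov link over $\k(C)$ at all: the contracted curve of $B_{i+1}$ is vertical over $C$, so the generic fibres of $B_{i+1}$ and $B_i$ coincide. It therefore has to be excluded beforehand, for instance by the fixed-point argument, not by the two-dimensional classification.
\item A second fibration of $B_i$ onto a rational curve is not impossible as such; take $B_i=C\times\p^1\to\p^1$. What fails is that the resulting Mori fibre space would be rationally connected. Alternatively, in the relative program over $C$, $D$ simply lies over $C$.
\end{itemize}

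Your exclusion of type IV links over $C$ via Iskovskikh is correct, since a cubic surface of Picard rank $2$ carries only one conic bundle structure. The paper instead lets such links occur and observes that they stay in state (2).
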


\begin{proof}
By \autoref{thm:Enrica}, the $\Autz(X)$-equivariant birational map $\psi\colon X\rat Y$ is the composition $\psi=\varphi_n\circ\cdots\circ\varphi_1$ of Sarkisov links and isomorphisms $\varphi_{i+1}\colon X_i\rat X_{i+1}$, where $X=X_0$, $\pi=\pi_0$, $Y=X_n$, $\pi_n=\pi'$ and $\pi_i\colon X_i\to B_i$ is a Mori fibre space for $i=0,\dots,n$. We can assume that $\varphi_1$ is not an isomorphism and that $\varphi_{i+1}\varphi_i$ is not an isomorphism for all $1\leq i\leq n-1$.
    By \autoref{lem:dp<5-autom}, 
\begin{equation}\tag{$\diamondsuit$}\label{eq:transitive}
\text{$\Autz(X)$ is an elliptic curve acting transitively on $C$.}
\end{equation}

By \autoref{lem: links III and IV}, $\varphi_1$ is of type I or II. 
If $\varphi_i\circ\cdots\circ\varphi_1$ is a composition of links of type II and isomorphisms, then $\pi_{i}\colon X_{i}\to B_{i}$ is a Mori Del Pezzo fibration satisfying \autoref{proDP4:1} by \autoref{lem:type II} and \cite[Theorem 2.6(ii)($K_X^2=4$)]{iskovskikh_1996}. If $i=n$, we are finished. If $i\neq n$, we continue the argument with the composition $\varphi_n\circ\cdots\circ\varphi_{i+1}$. 
We can therefore assume that $\varphi_1$ is of type I. 
By \autoref{lem:links I}, a link $\varphi_1\colon X=X_0\rat X_1$ of type I is a birational map satisfying a commutative diagram
\[
\begin{tikzcd}
Y'\ar[d,"div_1",swap]\ar[rr,dotted] && X_1\ar[d,"\pi_1"]\\
X=X_0\ar[dr,"\pi=\pi_0"]\ar[urr,"\varphi_1",dashed]&&B_1\ar[dl,"\theta_1",swap]\\
&C&
\end{tikzcd}
\]
where the base-locus of $div_1^{-1}$ is an $\Autz(X)$-invariant section of $\pi_0$, $(X_1)_{\k(C)}\simeq Y'_{\k(C)}$ is a Del Pezzo surface of degree $3$, $\rho(X_1/C)=\rho((X_1)_{\k(C)})=2$ and $B_1$ is a surface. Then the general fibre of $\theta_1\circ\pi_1$ is a Del Pezzo surface, so 
\autoref{lem:base is ruled} implies that $\theta_1\colon B_1\to C$ is a ruled surface. It follows that $\pi_1\colon X_1\to B_1$ satisfies \autoref{proDP4:2}. 
If $\varphi_{i}\circ\cdots\circ\varphi_2$ is a (perhaps trivial) composition of links of type II and isomorphisms, then $\pi_{i}\colon X_{i}\to B_{i}$ satisfies \autoref{proDP4:2} by \autoref{lem:type II} and \cite[Theorem 2.6(ii)($K_X^2=3$)]{iskovskikh_1996}. If $i=n$, we are again finished. We can therefore suppose that $\varphi_{i+1}$ of type I, III or IV. 
Assume that $\varphi_{i+1}$ is of type I. 
\[
\begin{tikzcd}
Y'\ar[d,"div_1",swap]\ar[rr,dotted] && X_1\ar[d,"\pi_1"]\ar[r,dashed,"\varphi_2","II"'] &\cdots\ar[r,dashed,"\varphi_i","II"']& X_{i}\ar[r,dashed,"\varphi_{i+1}","I"']\ar[d,"\pi_i",swap] & X_{i+1}\ar[d,"\pi_{i+1}"]\\
X=X_0\ar[dr,"\pi=\pi_0"]\ar[urr,"\varphi_1",dashed,"I"']&&B_1\ar[dl,"\theta_1",swap]\ar[rr,"\simeq"]&&B_i&B_{i+1}\ar[l,"\theta_2",swap]\\
&C&&&&
\end{tikzcd}
\]
Since $\theta_1\colon B_1\to C$ is a ruled surface, $B_i$ is smooth. Since $B_{i+1}$ is a surface and $\rho(B_{i+1}/B_i)=1$, $\theta_2$ is the blow up of a point. Such a point is $\Autz(X)$-fixed, contradicting \autoref{eq:transitive}.

Assume that $\varphi_{i+1}$ is a link of type III. We will show that $\pi_{i+1}\colon X_{i+1}\to B_{i+1}$ satisfies \autoref{proDP4:1}. Then $\pi_{i+1}\colon X_{i+1}\to B_{i+1}$ is a Mori Del Pezzo fibration above the curve $B_{i+1}$ and by \autoref{lem:links I}, it is of degree $4$.
\[
\begin{tikzcd}
Y'\ar[d,"div_1",swap]\ar[rr,dotted] && X_1\ar[d,"\pi_1"]\ar[r,dashed,"\varphi_2","II"'] &\cdots\ar[r,dashed,"\varphi_i","II"']& X_{i}\ar[r,dashed,"\varphi_{i+1}","III"']\ar[d,"\pi_i",swap] & X_{i+1}\ar[d,"\pi_{i+1}"]\\
X=X_0\ar[dr,"\pi=\pi_0"]\ar[urr,"\varphi_1",dashed,"I"']&&B_1\ar[dl,"\theta_1",swap]\ar[rr,"\simeq"',"\alpha"]&&B_i\ar[r,"\theta_2"]&B_{i+1}\\
&C&&&&
\end{tikzcd}
\]
Suppose that $\theta_2\circ\alpha$ does not factor through $\theta_1$. 
Let $F$ be a general fibre of $\theta_1$. It is rational, because $\theta_1\circ\alpha^{-1}\colon B_i\to C$ is a ruled surface. Since $\theta_2\circ\alpha$ does not factor through $\theta_1$, the image $(\theta_2\circ\alpha)(F)$ is not a point, hence  $(\theta_2\circ\alpha)(F)=B_{i+1}$. So, $B_{i+1}$ is rational. Hence $X_{i+1}$ is rational, against our hypothesis that $g(C)=1$.
It follows that there is a morphism $\beta\colon C\to B_{i+1}$ such that $\theta_2\circ\alpha=\beta\circ\theta_1$. Since $\theta_2,\theta_1$ have connected fibres, $\beta$ is an isomorphism. \autoref{lem:links I} applied to $\varphi_{i+1}^{-1}$ implies that $\pi_{i+1}\colon X_{i+1}\to B_{i+1}$ satisfies \autoref{proDP4:1}.

Assume that $\varphi_{i+1}$ is a link of type IV. We will show that $\pi_{i+1}\colon X_{i+1}\to B_{i+1}$ satisfies \autoref{proDP4:2}. We have the following commutative diagram:
\[
\begin{tikzcd}
Y'\ar[d,"div_1",swap]\ar[rr,dotted] && X_1\ar[d,"\pi_1"]\ar[r,dashed,"\varphi_2","II"'] &\cdots\ar[r,dashed,"\varphi_i","II"']& X_{i}\ar[r,dotted,"\varphi_{i+1}","IV"']\ar[d,"\pi_i",swap] & X_{i+1}\ar[d,"\pi_{i+1}"]\\
X=X_0\ar[dr,"\pi=\pi_0"]\ar[urr,"\varphi_1",dashed,"I"']&&B_1\ar[dl,"\theta_1",swap]\ar[rr,"\alpha","\simeq"']&&B_i\ar[d,"\theta_2"']&B_{i+1}\ar[ld,"\theta_3"]\\
&C&&&D&
\end{tikzcd}
\]
\autoref{lem: links III and IV} implies that $B_{i+1}$ is a surface. 
Since $\theta_1\circ\alpha^{-1}\colon B_i\to C$ is a ruled surface, we have $\rho(B_i)=2$. Moreover, $\theta_2\colon B_i\to D$ is a klt Mori fibre space \cite[Lemma 3.13]{BLZ21}, so $D$ is a curve. 
Again, if $\theta_2\circ\alpha$ does not factor through $\theta_1$, we conclude that $X_{i+1}$ is rational, against the assumption $g(C)=1$. As above, it follows that there is an isomorphism $\beta\colon C\to D$ such that $\theta_2\circ\alpha=\beta\circ\theta_1$. In particular, $(X_{i+1})_{\k(C)}\simeq (X_{i})_{\k(C)}$ is a Del Pezzo surface of degree $3$. Then, by \autoref{lem:base is ruled}, $\theta_3$ is a ruled surface. It follows that $\pi_{i+1}\colon X_{i+1}\to B_{i+1}$ satisfies \autoref{proDP4:2}.
\end{proof}

\begin{proposition}\label{pro: DP4}
Let $\pi\colon X\to C$ be a Mori Del Pezzo fibration of degree $4$ above an elliptic curve $C$. Suppose that $\Autz(X)$ is nontrivial. Then $\Autz(X)$ is an elliptic curve and a maximal connected algebraic subgroup of $\Bir(X)$.
\end{proposition}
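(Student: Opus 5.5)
The plan is to combine \autoref{lem:dp<5-autom} with \autoref{lem: DP4}. We then check that along any $\Autz(X)$-equivariant birational map, the conjugate of $\Autz(X)$ is the full connected automorphism group of the target.

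First, $\Autz(X)_C$ is finite: by \autoref{rmk: MdP}\autoref{MdP:2} it injects into $\Aut(X_{\k(C)})$, which is finite by \autoref{lem:auto of dp surface}\autoref{auto dp surface:1}. Since $\Autz(X)$ is non-trivial, \autoref{lem:dp<5-autom} shows that $G:=\Autz(X)$ is an elliptic curve acting transitively on $C$.

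For maximality, let $G\subseteq H$ with $H\subset\Bir(X)$ a connected algebraic subgroup. By regularisation and equivariant MMP, there is an $H$-equivariant birational map $\psi\colon X\rat Y$ to a Mori fibre space $\pi'\colon Y\to B$ with $\psi H\psi^{-1}\subseteq\Autz(Y)$. This is the standard step used in \cite{BFT22}: Weil regularisation, an equivariant resolution, then an $H$-MMP. It suffices to show $\dim\Autz(Y)\le 1$, since then $\psi G\psi^{-1}=\psi H\psi^{-1}=\Autz(Y)$ by dimension and connectedness. The map $\psi$ is in particular $G$-equivariant, so \autoref{lem: DP4} applies and gives two cases.
\begin{enumerate}
\item[(1)] $B\simeq C$ and $Y\to C$ is a Mori Del Pezzo fibration of degree $4$. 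Then $\Autz(Y)_C$ is finite by the argument above, and $\Autz(Y)\neq 1$ since it contains $\psi G\psi^{-1}$. By \autoref{lem:dp<5-autom}, $\Autz(Y)$ is an elliptic curve.
\item[(2)] There is a ruled surface $\tau\colon B\to C$, and $Y_{\k(C)}$ is a Del Pezzo surface of degree $3$ with $\rho=2$. By Blanchard's lemma (\autoref{lem:blanchard}) applied to $\tau\circ\pi'$ (using $\pi'_*\Ol_Y=\Ol_B$ and $\tau_*\Ol_B=\Ol_C$), $\Autz(Y)$ acts on $C$. The kernel $\Autz(Y)_C$ injects into $\Aut(Y_{\k(C)})$ by the same argument as \autoref{rmk: MdP}\autoref{MdP:2}. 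This group is finite by \autoref{lem:auto of dp surface}\autoref{auto dp surface:1}, since degree $3\le5$ and $\rho=2$ is irrelevant there. Applying \autoref{lem:dp<5-autom} to $\tau\circ\pi'\colon Y\to C$, whose generic fibre is a Del Pezzo surface, shows that $\Autz(Y)$ is an elliptic curve.
\end{enumerate}
In both cases $\dim\Autz(Y)=1$, which yields maximality.

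The main obstacle is the regularisation step, i.e.\ producing $\psi$ with $\psi H\psi^{-1}\subseteq\Autz(Y)$ for a Mori fibre space $Y$. This is standard but must be cited properly, via Weil's regularisation together with \autoref{thm:Enrica} and the equivariant MMP. A secondary point is verifying the hypotheses of \autoref{lem:dp<5-autom} in case (2): the generic fibre over $C$ must be a Del Pezzo surface, not merely its minimal model. \autoref{lem: DP4} asserts exactly this.
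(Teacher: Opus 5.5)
Your proof is correct and follows the paper's argument. You apply \autoref{lem: DP4} to an equivariant map to a Mori fibre space, use the generic fibre to see that the vertical group is finite, and invoke \autoref{lem:dp<5-autom} on both sides to conclude that $\Autz(X)$ and $\Autz(Y)$ are both elliptic curves and hence conjugate. The only difference is that you spell out two steps the paper leaves implicit: the regularisation/equivariant-MMP reduction from maximality to equivariant maps to Mori fibre spaces (as in \autoref{lem:Weil}), and the application of \autoref{lem:dp<5-autom} to $\tau\circ\pi'$ when the base is a ruled surface.
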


\begin{proof}
Let $\varphi\colon X\rat Y$ be an $\Autz(X)$-equivariant birational map to a Mori fibre space $\pi'\colon Y\to B$. By \autoref{lem: DP4}, either $\pi'$ is a Mori Del Pezzo fibration of degree $4$ and $B\simeq C$, or $B$ is a surface with a ruled structure $B\to C$ and $Y_{\k(C)}$ is a Del Pezzo surface of degree three with $\rho(Y_{\k(C)})=2$. 
Recall from \autoref{rmk: MdP}\autoref{MdP:2} that there are injective homomorphism of groups $\Autz(X)_C\hookrightarrow\Aut(X_{\k(C)})$ and $\Autz(Y)_C\hookrightarrow\Aut(Y_{\k(C)})$. 
Since $\Autz(X_{\k(C)})$ and $\Autz(X_{\k(C)})$ are finite by \autoref{lem:auto of dp surface}, it follows that $\Autz(X)_C$ and $\Autz(Y)_C$ are both finite. 
Then \autoref{lem:dp<5-autom} implies that $\Autz(X)$ and $\Autz(Y)$ are elliptic curves. 
It follows that $\varphi\Autz(X)\varphi^{-1}=\Autz(Y)$. 
\end{proof}


\section{Mori Del Pezzo fibrations of degree $6$}\label{s:MdP6}

Let $\pi\colon X\to C$ be a Mori Del Pezzo fibration of degree $6$. The case when $\Autz(X)_C$ is finite, is treated in Sections~\ref{s:finite}.

\begin{lemma}[see {\cite[Proposition 4.2.3]{BFT22} and \cite[Proposition 3.5]{Ron25}}]\label{lem:MdP 6 1}
Let $\pi\colon X\to C$ be a Mori Del Pezzo fibration of degree $6$ above a curve $C$ of genus $g(C)\geq0$.
Then the neutral component of $\Autz(X)_C$ is a torus. 
\end{lemma}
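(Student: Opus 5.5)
The plan is to pass to the generic fibre and use the structure of automorphisms of a degree $6$ Del Pezzo surface. By \autoref{rmk: MdP}\autoref{MdP:2}, there is an injective group homomorphism $\Autz(X)_C\hookrightarrow\Aut(X_{\k(C)})$. Set $H:=(\Autz(X)_C)^\circ$; it is a connected linear algebraic group over $\k$, since it acts faithfully on the generic fibre. Denote by $S:=X_{\k(C)}$ the Del Pezzo surface of degree $6$ over $\kk=\k(C)$.

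The first step is to find the relevant torus on the generic fibre. By \autoref{lem:auto of dp surface}\autoref{auto dp surface:2}, $\Aut(S_{\bar\kk})\simeq\G_m^2\rtimes(\mathrm{Sym}_3\times\Z/2)$, and the identity component $T_{\bar\kk}\simeq\G_m^2$ is the kernel of the action on the hexagon of $(-1)$-curves. This kernel is defined over $\kk$, because the Galois action commutes with the action on the finite set of $(-1)$-curves. So $\Autz(S)=T$ is a $\kk$-form of $\G_m^2$, i.e. a two-dimensional $\kk$-torus, possibly non-split.

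The second step is to show that the image of $H$ lands in $T(\kk)$ and that $H$ is a torus. Since $H$ is connected, it acts trivially on the finite set of $(-1)$-curves of the geometric generic fibre. To see this, spread out: on a general fibre $X_c$, which is a degree $6$ Del Pezzo surface over $\k$, the connected group $H$ acts through $\Autz(X_c)\simeq\G_m^2$. Hence the composite $H\to\Aut(S)(\kk)$ lands in $T(\kk)$, and for a general $c$ the restriction map $H\to\Autz(X_c)\simeq\G_m^2$ is a homomorphism of algebraic groups over $\k$.

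The key claim is that this restriction map is injective for general $c$. If $h\in H$ acts trivially on a general fibre $X_c$, then, since the set of such $c$ is dense, $h$ acts trivially on $X$. Made precise: the kernel $K_c$ of $H\to\Aut(X_c)$ is a closed subgroup, and the $K_c$ for $c$ in a dense set intersect in the trivial group by faithfulness. By the Noetherian property, a finite intersection $K_{c_1}\cap\dots\cap K_{c_m}$ is already trivial, so $H$ embeds into $\prod_i\Autz(X_{c_i})\simeq\G_m^{2m}$. A connected closed subgroup of a torus is a torus, so $H$ is a torus.

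I expect the main obstacle to be making rigorous the passage between the generic-fibre group $T$ and the actions on closed fibres, and in particular checking that $H$ is linear and that its action preserves each $(-1)$-curve geometrically. If spreading out is awkward, the alternative is to argue directly that $H$ is connected and commutative and contains no unipotent elements. Commutativity and the absence of unipotents come from the embedding into $T(\kk)$, where every element is semisimple, since $T_{\bar\kk}$ is a torus. Combined with linearity of $H$ (it acts faithfully on a rational variety fibrewise), this forces $H$ to be a torus.
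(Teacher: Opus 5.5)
Your proof is correct, and its core (the ``key claim'') takes a different and cleaner route than the paper.

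The paper applies Chevalley's structure theorem to the neutral component $G$ of $\Autz(X)_C$, writing $1\to G_{aff}\to G\to A\to 1$. It excludes a copy of $\G_a$ in $G_{aff}$ because such a subgroup would act trivially on every general fibre $F$ through $G\to\Autz(F)\simeq\G_m^2$. It then kills the abelian quotient $A$ by a separate torsion count: it compares the $n$-torsion of $A(\k)$, of order $n^{2\dim A}$, with the $n$-torsion of $(K^*)^2/\iota(G_{aff}(\k))$, which comes from the abstract embedding of $G$ into the torus of the geometric generic fibre.

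You instead use faithfulness and Noetherianity to choose finitely many smooth fibres $X_{c_1},\dots,X_{c_m}$ such that $H\to\prod_i\Autz(X_{c_i})\simeq\G_m^{2m}$ has trivial kernel. In characteristic zero this kernel, having a single $\k$-point, is the trivial group scheme. So the map is a closed immersion, $H$ is a closed connected subgroup of a torus, and hence a torus. This gives affineness, the absence of unipotent elements and the absence of an abelian part all at once. Neither Chevalley's theorem nor the torsion count is needed. In fact the paper's $\G_a$-step is exactly your argument applied to one one-parameter subgroup and one general fibre.

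Two remarks on the rest of your write-up.

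First, your first step (the $\kk$-torus $T=\Autz(S)$ and the claim that $H$ lands in $T(\kk)$) is never used by the key claim and can be dropped. The same goes for the opening assertion that $H$ is linear ``since it acts faithfully on the generic fibre'': you do not justify it, and linearity is an output of your embedding rather than an input.

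Second, do not fall back on your last paragraph. An injective \emph{abstract} homomorphism $H(\k)\hookrightarrow T(\bar\kk)$ cannot rule out unipotent elements. For instance, $(\k,+)$ is a $\mathbb{Q}$-vector space and so embeds as an abstract group into $\bar\kk^{*}$. This is precisely why both you and the paper need \emph{algebraic} homomorphisms to the automorphism groups of closed fibres for that step.
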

\begin{proof}
Let $G$ be the connected component of $\Autz(X)_C$. If it is trivial, we are done. Suppose that $G$ is non-trivial. 
Chevalley's structure theorem \cite[Theorem 1.1.1]{BSU13} states that there is a short exact sequence of connected algebraic groups
\[
1\to G_{aff}\to G\to A\to 1
\]
where $G_{aff}$ is the largest connected affine normal algebraic subgroup of $G$ and $A$ is an abelian variety. 
Let $F$ be a general fibre of $\pi$.
There is a morphism of algebraic groups $\varphi_F\colon G\to\Autz(F)\simeq\G_{m,\k}^2$. 
Consider the restriction $\varphi_F|_{G_{aff}} \colon G_{aff}\to\G_m^2$. Notice that $G$ and hence $G_{aff}$ act faithfully on $X$. 
Since there is no non-trivial morphism of algebraic groups $\G_a\to \G_m$ and since $F$ is general, it follows that $G_{aff}$ contains no subgroup isomorphic to $\G_a$. It follows that $G_{aff}$ is a torus. 

If $G_{aff}$ is trivial, then $G=A$ is abelian. Then $\varphi_F$ is trivial for all fibres $F$ of $\pi$. Hence $G$ is trivial, against our assumption. 

So, $G_{aff}$ is non-trivial. Let us show that $A$ is trivial.
 Consider the injection of abstract groups $G\hookrightarrow\Autz(X_{\k(S)})\simeq\G_{m,\k(S)}^2$ (see \cite[Remark 2.1.6]{BFT22} for connected groups). Let $K$ be an algebraic closure of $\k(S)$. We obtain an injective group homomorphism  $\iota\colon G(\k)\to (K^*)^2$. It induces injective group homomorphism
 \[
 A(\k)\simeq G(\k)/G_{aff}(\k)\hookrightarrow (K^*)^2/\iota(G_{aff}(\k)).
 \]
The $n$-torsion group of $A(\k)$ is of order $n^{2d}$, where $d=\dim A\geq 0$, and the $n$-torsion group of $(K^*)^2/\iota(G_{aff}(\k))$ is of order $1$ or $n$. Therefore, $A$ is trivial and so $G=G_{aff}$. 
\end{proof}

\begin{lemma}\label{lem:T finite}
Let $L$ be a field of characteristic zero, $O\subset\p^2$ a rational curve of degree $\deg(O)\geq3$ and $q_1,q_2,q_3\in\p^2$ three non-collinear points.
Suppose $O$ has equal multiplicity in all three points $q_1,q_2,q_3$.
Then the subgroup $G$ of $\Aut(\p^2)$ preserving the set $O\cup\{q_1,q_2,q_3\}$ is finite. 
\end{lemma}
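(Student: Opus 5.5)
Since $G$ preserves the set $O\cup\{q_1,q_2,q_3\}$, it is a closed subgroup of $\Aut(\p^2)=\PGL_3(L)$ (over $\overline L$ if needed; finiteness may be checked after base change to the algebraic closure). It therefore suffices to show that the neutral component $G^\circ$ is trivial. Because $O$ is an irreducible curve of degree at least $3$, it is not a line, so $O\neq\{q_i\}$ and the decomposition of the invariant set into its one-dimensional part and its isolated points is canonical. Hence $G^\circ$ preserves $O$ and, being connected, fixes each of $q_1,q_2,q_3$ individually (the $q_i$ lying on $O$ or not; if some $q_i\in O$, it is still preserved since $G^\circ$ permutes the finitely many points of $O$ of a given multiplicity, and the multiplicity condition makes $\{q_1,q_2,q_3\}$ distinguishable from other points only in combination with the invariant set—so I would instead argue directly that $G$ permutes $\{q_1,q_2,q_3\}$ as the isolated points or as points of $O$ of prescribed multiplicity, and pass to a finite-index subgroup fixing each).

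The plan is as follows. First, choose coordinates with $q_1=[1:0:0]$, $q_2=[0:1:0]$, $q_3=[0:0:1]$, so the subgroup fixing all three points is the diagonal torus $T\simeq\G_m^2$. Then $G^\circ\subset T$, and it is enough to show that the stabilizer of $O$ in $T$ is finite. Second, suppose the contrary: then some one-dimensional subtorus $\lambda\colon\G_m\to T$, $t\mapsto\mathrm{diag}(t^a,t^b,t^c)$, preserves $O$. Since $O$ is irreducible and not contained in the coordinate triangle (degree $\geq3$ rules out lines), $O$ is the closure of a $\G_m$-orbit, that is, a curve of the form $x^{\alpha}y^{\beta}=\kappa z^{\alpha+\beta}$ after reordering, with $\alpha,\beta$ coprime nonnegative integers. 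Such a curve is indeed rational, with degree $\max(\alpha,\beta,\alpha+\beta)$.

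Third, I would compute the multiplicities of the monomial curve at the three coordinate points. For $x^\alpha y^\beta=\kappa z^{\alpha+\beta}$ with $\alpha,\beta\geq1$, the local equation at $q_3=[0:0:1]$ is $x^\alpha y^\beta=\kappa$, and $q_3\notin O$ since $\kappa\neq0$, giving multiplicity $0$. At $q_1=[1:0:0]$ the local equation is $y^\beta=\kappa z^{\alpha+\beta}$, giving multiplicity $\beta$, and at $q_2$ it is $\alpha$. Equal multiplicity forces $\alpha=\beta=0$, which is impossible. More generally, with signs allowed (the equation $x^\alpha z^{\gamma}=\kappa y^{\alpha+\gamma}$, and so on), the multiplicities at the three points are a triple of the form $\{0,\min,\ldots\}$, specifically $(\beta,\alpha,0)$ up to permutation. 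Equality then forces all of them to be $0$, or, in the degenerate cases, forces $O$ to be a line. This contradicts $\deg O\geq3$.

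Hence the stabilizer of $O$ in $T$ is finite, so $G^\circ$ is trivial and $G$ is finite. I expect the main obstacle to be the first step: justifying cleanly that a finite-index subgroup of $G$ fixes each $q_i$ when some $q_i$ lie on $O$. The resolution is that $G^\circ$ is connected and acts on the finite set $\mathrm{Sing}(O)\cup\{q_1,q_2,q_3\}\setminus O$ together with the stabilized structure; if needed, one can reduce by noting that $G^\circ$ preserves $O$ and fixes the finitely many singular points of $O$ and the isolated $q_i$. The case of a smooth $q_i\in O$ with multiplicity one at all three points requires noting that the multiplicity-one condition still gives the torus argument after first showing $G^\circ$ is solvable and fixes three non-collinear points.
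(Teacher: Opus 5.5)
Your torus-and-multiplicity argument is sound, but your first step is not, and the case you defer to your last paragraph cannot be handled the way you suggest. Suppose $G$ is literally the stabilizer of the union $O\cup\{q_1,q_2,q_3\}$ and the $q_i$ are smooth points of $O$ (common multiplicity $1$). Then the union is just $O$, and nothing forces $G^\circ$ to fix the $q_i$. Read this way, the statement is false: $O=\{y^2z=x^3\}$ is a rational cubic stable under the torus $[x:y:z]\mapsto[t^2x:t^3y:z]$, and the non-collinear smooth points $[1:1:1]$, $[1:-1:1]$, $[4:8:1]$ of $O$ satisfy every hypothesis.

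So no argument ``showing $G^\circ$ is solvable and fixes three non-collinear points'' can exist. Likewise, your remark that $G^\circ$ permutes the finitely many points of $O$ of a given multiplicity only holds for multiplicity $\geq2$.

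What is missing is the correct reading of $G$: it is the group preserving $O$ \emph{and} the set $\{q_1,q_2,q_3\}$. This is how the paper's proof uses it: it writes $G^0=\{\varphi\in H\mid\varphi(q_i)=q_i\}$, with $H$ the connected stabilizer of $O$. It is also all that \autoref{lem:MdP 6 2} needs, since there $T'$ fixes $q_1,q_2,q_3$ and preserves $O'$. With this reading your first step is one line: $G^\circ$ is connected and preserves the finite set $\{q_1,q_2,q_3\}$, so it fixes each $q_i$.

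From there your route is correct and genuinely different from the paper's. The paper embeds $H$ into the automorphism group of the normalization $\widetilde O_{\overline L}\simeq\p^1$. The restriction map is injective because a non-trivial element of $\Aut(\p^2)$ fixes pointwise at most a line. It then produces three distinct $G^0$-fixed points on $O$:
\begin{itemize}
\item the $q_i$ themselves if they lie on $O$;
\item otherwise, points of the sets $O\cap\overline{q_iq_j}$, which are pairwise disjoint because the sides of the triangle meet only at the $q_i$.
\end{itemize}
In the paper, equal multiplicity only serves to exclude the case where some but not all $q_i$ lie on $O$.

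You instead place the $q_i$ at the coordinate vertices, so $G^\circ$ lies in the diagonal torus. You then classify the curves of degree $\geq3$ stable under a one-dimensional subtorus as binomial curves $x^\alpha y^\beta=\kappa z^{\alpha+\beta}$ with $\alpha,\beta\geq1$ (up to permuting coordinates), whose multiplicities at the vertices are $(\beta,\alpha,0)$. To make this airtight, state two facts explicitly:
\begin{itemize}
\item $O$ is not pointwise fixed, since the fixed locus of a non-trivial subtorus is finite or a line plus a point;
\item a subtorus with two equal weights has only lines as orbit closures.
\end{itemize}
Your version is more computational, but it shows exactly which curves the multiplicity hypothesis rules out and does not use that $O$ is rational. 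The paper's version is shorter and coordinate-free.
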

\begin{proof}
First, let $H\subset\Aut(\p^2)$ be the connected component of the group of automorphisms of $\p^2$ preserving the curve $O$. 
Consider the exact sequence
\[
1\to A:=\{\varphi\in H\mid \varphi|_O=\rm{id}_O\}\to H\stackrel{res}\to\Autz(O).
\]
where $res\colon H\to\Autz(O)$, $h\mapsto h|_O$, is the restriction.
Since $O$ is rational and $\deg(O)\geq3$, it is singular. 
Any non-trivial element of $\Aut(\p^2)$ fixes pointwise only lines, hence $A$ is trivial, so that $H\simeq\Aut(O)$. 
Let $\eta\colon \widetilde O\to O$ be the normalization of $O$. There is an $H$-action on $\widetilde O$ that makes $\eta$ $H$-equivariant. Let $p$ be a singular point of $O$ and $\overline{L}$ the algebraic closure of $L$. Notice that $(\widetilde O)_{\overline L}\simeq\p^1_{\overline L}$. Since $\mathrm{char}(L)=0$, the fibre $(\eta^{-1}(p))_{\overline L}$ contains at least two points, and they are fixed by $H_{\overline L}$. Therefore, $H\simeq\Autz(O)$ is a torus.

Let $G^0$ be the connected component of $G$. 
Then 
\[
G^0=\{\varphi\in H\mid \varphi(q_i)=q_i,i=1,2,3\}\subset H\subset\Aut(\p^2),
\]
so $G^0$ is a subtorus of $H$.
If $q_1\in O$, then $G^0$ fixes three points on $O$ and is hence trivial. 
If $q_1\notin O$, then $G^0$ preserves the intersection of $O$ with the three lines passing through any two of $q_1,q_2,q_3$. There are three such intersection points, so again, it follows that $G^0$ is trivial. 
\end{proof}

\begin{lemma}\label{lem:MdP 6 2}
Let $\pi\colon X\to C$ be a Mori Del Pezzo fibration of degree $6$ above a curve $C$ of genus $g(C)\geq0$.
Then $\Autz(X)_C$ does not contain a $1$-dimensional torus. 
\end{lemma}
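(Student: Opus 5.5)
We argue by contradiction. Suppose $T\simeq\G_m\subset\Autz(X)_C$. We pass to the generic fibre $S:=X_{\k(C)}$, a Del Pezzo surface of degree $6$ with $\rho(S)=1$ (\autoref{rmk: MdP}\autoref{MdP:1}) and a $\k(C)$-rational point (\autoref{rmk: MdP}\autoref{MdP:0}). By \autoref{rmk: MdP}\autoref{MdP:2}, $T(\k)$ embeds into $\Aut(S)$. Moreover $T$ is connected, so it lands in the neutral component $\Autz(S)$, which is a $2$-dimensional $\k(C)$-torus by \autoref{lem:auto of dp surface}\autoref{auto dp surface:2}. Since $T$ is split over $\k\subset\k(C)$, we obtain a split rank-one subtorus $T_{\k(C)}\subset\Autz(S)$ defined over $\k(C)$. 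The aim is to show that $\rho(S)=1$ forbids this.

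The key steps are as follows. The Galois group $\Gal(\overline{\k(C)}/\k(C))$ acts on the hexagon of $(-1)$-curves. Because $\rho(S)=1$, the image of this action in $\mathrm{Sym}_3\times\Z/2$ must contain an element exchanging the two triples of disjoint $(-1)$-curves and also an element of order $3$ rotating each triple. Otherwise one of the contractions $S_{\overline{\k(C)}}\to\p^2$, or a conic bundle structure, would descend and give $\rho(S)\geq2$.

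This Galois action also acts on the character lattice $M\simeq\Z^2$ of $\Autz(S)$, through the natural action of $\mathrm{Sym}_3\times\Z/2$ on $\G_m^2$. There $\mathrm{Sym}_3$ acts by the reflection representation on $\{(a_1,a_2,a_3)\}/\text{diag}$, and $\Z/2$ acts by $-1$, coming from the standard quadratic involution. A split subtorus $\G_m\subset\Autz(S)$ defined over $\k(C)$ gives a Galois-fixed rank-one quotient of $M$, equivalently a nonzero Galois-invariant cocharacter in $N=\mathrm{Hom}(M,\Z)$. The element of order $3$ acts on $N\otimes\mathbb{Q}$ without nonzero fixed vectors, since the reflection representation of $\mathrm{Sym}_3$ restricted to $\Z/3$ has no invariants. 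Hence the fixed sublattice is $0$, which is the desired contradiction.

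The main obstacle is making the Galois-action bookkeeping rigorous. We must check that the Galois action on $\Autz(S_{\overline{\k(C)}})=\G_m^2$ really factors through the semidirect-product action via the hexagon symmetries, up to the twist by the torsor class. This holds because the torus is the kernel of the action on the hexagon, so it is the twisted form determined by the image in $\mathrm{Sym}_3\times\Z/2$. We must also justify that $\rho(S)=1$ forces an element of order $3$ in the image.

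For the second point, if the image lies in a subgroup without elements of order $3$, then it fixes one of the three pairs of opposite edges of the hexagon. Those two opposite $(-1)$-curves span a conic bundle class $-K_S-E-E'$, which is invariant and not proportional to $K_S$, contradicting $\rho(S)=1$. An alternative, if the character argument proves awkward, is \autoref{lem:T finite}. Take the $T$-invariant section through a general point, which gives a $T$-orbit closure as a rational curve in $S$. Project to $\p^2$ after base change killing only the $\Z/2$ part, and show that this orbit closure together with the three base points has finite stabilizer, which contradicts $T$ being infinite.
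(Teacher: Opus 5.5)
Your proposal is correct, but it takes a different route from the paper.

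\textbf{The paper's route.} The paper treats $T$ only as an abstract subgroup of $\Aut(X_{\k(C)})$ and argues geometrically, in two cases.
\begin{itemize}
\item If the rational point $p$ is $T$-fixed, it passes through the type II link to a quadric $Q$ with $\rho(Q)=1$, and uses that the stabiliser of the resulting degree-$3$ point is finite.
\item Otherwise it takes the closure of the $T$-orbit of the section through $p$. It then contracts three disjoint $(-1)$-curves over $\overline{\k(C)}$ and applies \autoref{lem:T finite} to the image curve. Lines and conics are ruled out using Galois-transitivity on the six $(-1)$-curves.
\end{itemize}

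\textbf{Your route.} You prove that $\rho(X_{\k(C)})=1$ forces the $\k(C)$-torus $\Autz(X_{\k(C)})$ to be anisotropic. The Galois image in $\mathrm{Sym}_3\times\Z/2$ cannot be a $2$-group: every $2$-subgroup stabilises a pair $E,E'$ of opposite $(-1)$-curves, and then $E+E'$ is an invariant class not proportional to $K$. An element of order $3$ acts on the cocharacter lattice as a rotation of order $3$, so no nonzero cocharacter is Galois-invariant. This argument is uniform: there is no case split on whether $p$ is $T$-fixed, no Sarkisov link, and no use of \autoref{lem:T finite}. It also proves more, namely that the torus contains no split $\G_m$ at all.

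\textbf{What your route costs.} Two steps should be stated explicitly.
\begin{itemize}
\item You need $T$ as a $\k(C)$-subgroup scheme, not just an abstract subgroup. Get this by base-changing the action $T\times X\to X$, which is a morphism over $C$, to the generic point. The resulting $\G_{m,\k(C)}\to\Aut(X_{\k(C)})$ has trivial kernel, since a kernel $\mu_n$ or $\G_m$ would have $\k$-points acting trivially on a dense open subset of $X$. Hence it is a closed immersion landing in $\Autz$.
\item You need to justify that the Galois action on the character lattice factors through the hexagon. This holds because $X_{\k(C)}$ is a twist of the split toric surface by a cocycle in $\G_m^2\rtimes(\mathrm{Sym}_3\times\Z/2)$, and conjugation by the abelian factor is trivial.
\end{itemize}

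\textbf{Minor points.} The class $-K-E-E'$ is twice a conic class, not a conic class. Your ``alternative'' via \autoref{lem:T finite} would also need the paper's separate treatment of a $T$-fixed rational point, but your main argument does not need it.
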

\begin{proof}
Suppose that $\Autz(X)_C$ contains a $1$-dimensional torus $T$.  
Recall from \autoref{rmk: MdP}\autoref{MdP:0} that $X_{\k(C)}$ has a rational point $p$. Let $L$ be an algebraic closure of $\k(C)$. Then $p_{L}$ is not on any of the six $(-1)$-curves of $X_{L}$.
By \autoref{rmk: MdP}\autoref{MdP:2}, there is an injective group homomorphism $T\subset\Autz(X)_C\hookrightarrow\Aut(X_{\k(C)})$. The image of $T$ in $\Aut(X_{\k(C)})$ is an abstract subgroup. 

Suppose that $p$ is $T$-invariant. 
The blow-up of $p$ induces a Sarkisov link of type II $\psi\colon X_{\k(C)}\rat Q$ to a Del Pezzo surface $Q$ of degree $8$ with $\rho(Q)=1$ \cite[Theorem 2.6]{iskovskikh_1996}, and $T':= \psi T\psi^{-1}\subset \Aut(Q)$.
It contracts an irreducible curve $B\subset X_{\k(C)}$ onto a point $p'\in Q$ of degree 3 in general position, which is $T'$-invariant (since $p$ and hence $B$ are $T$-invariant). 
We have $Q_L\simeq\p^1_L\times\p^1_L$ and $p'_L=\{p_1,p_2,p_3\}$, where the $p_1,p_2,p_3\in\p^1_L\times\p^1_L$ are $L$-rational points in general position.
Let $\Aut(\p^1_L\times\p^1_L,\{p_1,p_2,p_3\})$ be the subgroup of $\Aut(\p^1_L\times\p^1_L)$ preserving $\{p_1,p_2,p_3\}$ and $\Aut(Q,p')$ be the subgroup of $\Aut(Q)$ preserving $p'$. 
The group $\Aut(\p^1_L\times\p^1_L,\{p_1,p_2,p_3\})$ is finite, and $\Aut(Q,p')_L\simeq \Aut(\p^1_L\times\p^1_L,\{p_1,p_2,p_3\})$ (see for instance \cite[Proposition 2.20]{TZ24}). So, $\Aut(Q,p')$ is finite. Since $T'$ is an abstract subgroup of $\Aut(Q,p')$, it follows that $T'$ is finite. 
Since $T\simeq T'$ as abstract groups, this contradicts that $\dim T=1$. 

Therefore, $p$ is not $T$-invariant, i.e. the rational section $\ell\subset X$ corresponding to $p$ is not $T$-invariant. 
Then the closure of the $T$-orbit of $\ell$ in $X$ is an irreducible surface $S$, which corresponds to an irreducible curve $O$ in $X_{L}$. Notice that $S$ contains an open subset of $T\times\ell$ and hence $O$ is an $L$-rational curve. 
Furthermore, notice that $T\subset\Aut(X_L)$ is an abstract group (without any algebraic structure over $L$) and that $T\cdot O=O$.

Pick any three disjoint $(-1)$-curves of $X_L$ and let $\eta\colon X_L\to\p^2_L$ be their contraction onto points $q_1,q_2,q_3\in\p^2_L$. It sends the remaining three $(-1)$-curves onto the three lines $L_1,L_2,L_3$ through any two of the $q_1,q_2,q_3$. 
The abstract subgroup $T\subset\Aut(X_{\k(C)})$ preserves each $(-1)$-curve on $X_L$. Indeed, $T$ acts regularly on any general fibre $F$ of $\pi$ and hence preserves each $(-1)$-curve of $F$, so $T$ preserves each $(-1)$-curve of $X_L$. 
It follows that $T':=\eta \circ T\circ \eta^{-1}$ is contained in $\Aut(\p^2_L)$. 

Consider image $O':=\eta(O)$ of $O$ in $\p^2_L$, which satisfies $T'\cdot O'=O'$. 
If $\deg(O')\geq3$, then $T'$ is finite by \autoref{lem:T finite}.
Since $T\simeq T'$ as abstract groups, this contradicts that $\dim T=1$. It follows that $\deg(O')\leq2$. Since $O'$ is irreducible, it is smooth. 
It intersects each $L_i$, so $O$ intersects three of the $(-1)$-curves of $X_L$. Since $\rho(X_{\k(C)})=1$, the $(-1)$-curves of $X_L$ make up a $\Gal(L/\k(C))$-orbit. It follows that $O$ intersects each of the $(-1)$-curves of $X_L$. 
This contradicts $O'$ being a line or smooth conic. We have reached a contradiction and so $\Autz(X)_C$ does not contain a $1$-dimensional torus.
\end{proof}

\begin{proposition}\label{prop:Mdf 6 finite}
Let $\pi\colon X\to C$ be a Mori Del Pezzo fibration of degree $6$ above a curve $C$ of genus $g(C)\geq1$. If $\Autz(X)$ is non-trivial, then $g(C)=1$, $\Autz(X)_C$ is finite, $\Autz(X)$ is an elliptic curve acting transitively on $C$. 
\end{proposition}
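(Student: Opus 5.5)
The plan is to first show that $\Autz(X)_C$ is finite, and then to deduce the remaining assertions from \autoref{lem:dp<5-autom}.

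Let $G$ denote the neutral component of $\Autz(X)_C$. By \autoref{lem:MdP 6 1}, $G$ is a torus. If $G$ were non-trivial, it would contain a one-dimensional subtorus $\G_m\subset G\subset\Autz(X)_C$, which contradicts \autoref{lem:MdP 6 2}. Hence $G$ is trivial. Since $\Autz(X)_C$ is an algebraic group (the kernel of $\pi_*$ in \eqref{eq:seq}) with trivial neutral component, it is finite.

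Next, we apply \autoref{lem:dp<5-autom}. Its hypotheses hold:
\begin{itemize}
\item $\pi_*\Ol_X=\Ol_C$, because $\pi$ is a Mori fibre space;
\item $X_{\k(C)}$ is a Del Pezzo surface by \autoref{rmk: MdP}\autoref{MdP:0};
\item $\Autz(X)_C$ is finite by the previous paragraph.
\end{itemize}
Since $\Autz(X)$ is non-trivial by assumption, \autoref{lem:dp<5-autom} gives that $g(C)=1$, that $\Autz(X)$ is an elliptic curve, and that the exact sequence \eqref{eq:seq-dp<6} is surjective onto $\Autz(C)$.

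Finally, $\Autz(C)\simeq C$ acts on the elliptic curve $C$ by translations, so a surjection $\Autz(X)\to\Autz(C)$ means $\Autz(X)$ acts transitively on $C$. Alternatively, by \autoref{lem: autm acting trivially on C} the action on $C$ is non-trivial, and a non-trivial connected group action on an elliptic curve is transitive.

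No step here presents a real obstacle: all the substantive work was done in \autoref{lem:MdP 6 1} and \autoref{lem:MdP 6 2}. The one point to check is that a non-trivial torus contains a one-dimensional subtorus, which holds because a torus $\G_m^r$ with $r\geq1$ contains $\G_m$ as a factor.
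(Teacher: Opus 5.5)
Your proposal is correct and follows essentially the same route as the paper. The paper also gets finiteness of $\Autz(X)_C$ from \autoref{lem:MdP 6 1} and \autoref{lem:MdP 6 2}, then invokes \autoref{lem: autm acting trivially on C} and \autoref{lem:dp<5-autom} to conclude $g(C)=1$ and that $\Autz(X)$ is an elliptic curve surjecting onto $\Autz(C)$; you additionally spell out the subtorus and transitivity details that the paper leaves implicit.
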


\begin{proof}
By \autoref{lem:MdP 6 1}, $\Autz(X)_C$ contains a torus and by \autoref{lem:MdP 6 2} it is zero-dimensional, so $\Autz(X)_C$ is finite.
\autoref{lem: autm acting trivially on C} and \autoref{lem:dp<5-autom} imply that $g(C)=1$ and that $\Autz(X)$ is an elliptic curve.
\end{proof}

\begin{corollary}\label{cor:MdP6}
Let $\pi\colon X\to C$ be a Mori Del Pezzo fibration of degree $6$ above a curve $C$ of genus $g(C)\geq1$ such that $\Autz(X)$ is non-trivial. Let $\psi\colon X\rat Y$ be an $\Autz(X)$-equivariant birational map to a Mori fibration $\hat\pi\colon Y\to B$. 
Then $\hat\pi$ is a Mori Del Pezzo fibration and there is an isomorphism $\hat\psi\colon C\to B$ such that $\hat\pi\circ\psi=\hat\psi\circ\pi$. 
\end{corollary}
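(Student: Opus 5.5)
By \autoref{prop:Mdf 6 finite}, $g(C)=1$, $\Autz(X)_C$ is finite and $G:=\Autz(X)$ is an elliptic curve acting transitively on $C$. Via \autoref{thm:Enrica} we write $\psi$ as a composition $\varphi_n\circ\cdots\circ\varphi_1$ of $G$-equivariant Sarkisov links $\varphi_i\colon X_{i-1}\rat X_i$, with $X_0=X$, and argue by induction on $i$. The inductive claim is that each $\pi_i\colon X_i\to B_i$ is a Mori Del Pezzo fibration of degree $6$ over a curve isomorphic to $C$, compatibly with $\pi$. Once this holds for all $i$, \autoref{lem:preserves fibration} supplies the isomorphism $\hat\psi\colon C\to B$ with $\hat\pi\circ\psi=\hat\psi\circ\pi$.

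For the inductive step, let $\pi_{i-1}\colon X_{i-1}\to C$ be a Mori Del Pezzo fibration of degree $6$ on which $G$ acts (via conjugation), still transitively on $C$ since the identification with $C$ is equivariant. By \autoref{lem: links III and IV}, $\varphi_i$ is of type I or II.

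\emph{Type I is excluded.} By \autoref{lem:links I}, a type I link blows up a curve $\ell$ transversal to the general fibre, and requires $K^2_{X_{\k(C)}}\in\{9,8,4\}$. Since $6$ is not in this list, no link of type I starts from $X_{i-1}$.

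\emph{Type II.} By \autoref{lem:preserves fibration}, or directly from the definition when the target is a Mori Del Pezzo fibration, the link is over $C$. Since $G$ acts transitively on $C$, the centre of the blow-up $div$ is $G$-invariant and therefore cannot lie in finitely many fibres. Hence it is a curve transversal to the general fibre, and the same holds for $div'$. \autoref{lem:type II} then gives a Sarkisov link of type II $(X_{i-1})_{\k(C)}\rat (X_i)_{\k(C)}$ between Del Pezzo surfaces of Picard rank one. Iskovskikh's classification \cite[Theorem 2.6(ii)]{iskovskikh_1996} for degree $6$ allows links only to degree $6$ again (blowing up points of degree $2$ or $3$), or possibly a link to a different degree after blowing up a point of small degree. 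Here I would check the table: from $K^2=6$, a link of type II blowing up a point of degree $d'$ requires $d'<6$, and the outcomes are $K^2=6$ (for $d'=2$, giving a birational self-map, or $d'=3$) or $K^2=8,9$ via points of degree $2,3$.

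\textbf{Main obstacle.} Ruling out those outcomes of type II links whose target has a different degree is, I expect, the hardest step. If a link leads to degree $8$ or $9$, then the target $X_i$ has $\Autz(X_i)\supseteq G$ acting on $C$ nontrivially. For degree $8$, \autoref{lem:dP8 Aut on C} gives a trivial action on $C$, a contradiction. For degree $9$, I would argue instead with the blown-up centre on the degree-$9$ or degree-$8$ side. Its $G$-orbit structure forces the fibre over $0$ to carry an $\Autz(X)_C$-invariant point configuration, and then $X_i\simeq G\times^{H}F'$ with $\NS(F')^H=\Z$. Comparing with the description $X\simeq G\times^H F$ from \autoref{thm:almost product}, the conclusion of \autoref{cor:MdP6} would be that degree is preserved. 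If the table already excludes degree changes for rank-one degree-$6$ surfaces (I believe \cite[Theorem 2.6]{iskovskikh_1996} permits from degree $6$ only links to degree $6$), this step is immediate. I would first verify that in the table, and only if it fails fall back on the equivariance argument.
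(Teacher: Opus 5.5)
This is the paper's argument: you decompose $\psi$ into equivariant links via \autoref{thm:Enrica}, and exclude types III/IV and I by \autoref{lem: links III and IV} and \autoref{lem:links I}. Your induction on the remaining type II links passes to the generic fibre with \autoref{lem:type II} and uses \autoref{lem:dP8 Aut on C} to forbid a degree-$8$ intermediate. This actually spells out the iteration that the paper's short proof leaves implicit and only carries out in the proof of \autoref{lem: dP6 type IIb}.

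Do correct your reading of Iskovskikh's table. From a Del Pezzo surface of degree $6$ and Picard rank one, a type II link reaches degree $8$ exactly when a point of degree one (here an invariant section) is blown up. It reaches degree $6$ for points of degree $2,\dots,5$, and it never reaches degree $9$; this is \autoref{lem: dP6 type IIa}. So your degree-$8$ exclusion already closes the argument, and the degree-$9$ fallback is never needed, which matters because as sketched it would not be a proof.
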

\begin{proof}
By \cite[Theorem 1.2]{Flo20}, $\psi$ is a composition of $\Autz(X)$-equivariant Sarkisov links. 
By \autoref{prop:Mdf 6 finite}, $g(C)=1$ and $\Autz(X)$ acts transitively on $C$. 
By \autoref{lem: links III and IV} and \autoref{lem:links I} there are no $\Autz(X)$-equivariant links of type I, III and IV starting from $X$, respectively. 
Therefore, $\psi$ is a composition of links of type II. This yields the claim.
\end{proof}

\begin{lemma}\label{lem: dP6 type IIa}
Let $\pi\colon X\to C$ be a Mori Del Pezzo fibration of degree $6$ above a curve $C$ of genus $g(C)\geq1$ such that $\Autz(X)$ is non-trivial. Let $\psi\colon X\rat Y$ be an $\Autz(X)$-equivariant Sarkisov link to a Mori Del Pezzo fibration $\hat\pi\colon Y\to C$. 
Then $K_{Y_{\k(C)}}^2\in\{8,6\}$.  
\end{lemma}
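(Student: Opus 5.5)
By \autoref{prop:Mdf 6 finite}, $g(C)=1$ and $\Autz(X)$ is an elliptic curve acting transitively on $C$, with $\Autz(X)_C$ finite. The link $\psi$ is $\Autz(X)$-equivariant and ends at a Mori Del Pezzo fibration over $C$, so by \autoref{lem:preserves fibration} (and \autoref{lem: links III and IV}) it is a link of type II over $C$. I would first show that its centres are generically transversal to the fibres. The base loci of $div$ and $div'$ in the Sarkisov diagram are $\Autz(X)$-invariant, and $\Autz(X)$ acts transitively on $C$. Hence no component of these loci can be contained in finitely many fibres, so every component dominates $C$. Since $div$ is a divisorial contraction of a $G$-invariant irreducible divisor, its centre is an irreducible curve, which must therefore be transversal to a general fibre. \autoref{lem:type II} then applies: $\psi$ induces a Sarkisov link of type II $\hat\psi\colon X_{\k(C)}\rat Y_{\k(C)}$ between Del Pezzo surfaces of Picard rank one over $\kk=\k(C)$. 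This link is the blow-up of a closed point of $X_{\k(C)}$ of some degree $r$.

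Next, I would read off the possibilities from Iskovskikh's classification \cite[Theorem 2.6]{iskovskikh_1996} of type II links starting from a Del Pezzo surface of degree $6$ with $\rho=1$. Blowing up a point of degree $r$ keeps the surface Del Pezzo only if $r<6$. The rank-one condition on $X_{\k(C)}$ forces the six $(-1)$-curves to form a single Galois orbit, which rules out certain degrees. The remaining cases are the following.
\begin{itemize}
\item[(a)] $r=1$: a link to a degree $8$ quadric of Picard rank one. This is the same link that appears in the proof of \autoref{lem:MdP 6 2}.
\item[(b)] $r=2$: a link to a surface of degree $6$, which may be an isomorphic one or an involution-type link.
\item[(c)] $r=3$: a link to $\p^2$ (degree $9$), i.e.\ the contraction realising $X_{\k(C)}$ as a blow-up of $\p^2$ in a degree-$3$ point. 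If Iskovskikh also lists $r=4,5$, these would give degrees $5$ or $6$.
\end{itemize}
The task is therefore to exclude every target degree other than $8$ and $6$, and in particular degree $9$ and any degree $5$.

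The main obstacle is excluding the link to $\p^2$ (and any link to degree $5$). For degree $9$ I would argue as follows. Such a link contracts onto a closed point of degree $3$ of $\p^2_{\k(C)}$, so the $(-1)$-curves of $X_L$ would split into two Galois-invariant triples of disjoint curves. That splitting contradicts $\rho(X_{\k(C)})=1$, because $\rho=1$ forces the Galois group to exchange the two triples, so the hexagon is a single orbit and no Galois-invariant triple of disjoint $(-1)$-curves exists. Concretely, a birational morphism $X_{\k(C)}\to\p^2$ would already give $\rho(X_{\k(C)})=2$, while the type II link needs a blow-up first; I would check Iskovskikh's table to confirm that degree $6$ with $\rho=1$ has no link to $\p^2$. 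Any remaining case (target degree $5$, or degree $9$ if it survives) I would rule out using the group: the $\Autz(X)$-equivariance together with transitivity on $C$ would transport $\Autz(X)$ to $\Autz(Y)$, which must again be an elliptic curve by \autoref{lem:dp<5-autom}. I would then compare this with \autoref{mainprop:MDP5} or with the structure of $\Autz(Y)_C$ to reach a contradiction. After these exclusions only $K^2_{Y_{\k(C)}}\in\{8,6\}$ remains.
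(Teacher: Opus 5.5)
Your reduction is exactly the paper's proof. Transitivity of $\Autz(X)$ on $C$ (\autoref{prop:Mdf 6 finite}) forces the centres of the link to dominate $C$. \autoref{lem:type II} then turns $\psi$ into a type II link $X_{\k(C)}\leftarrow Z\to Y_{\k(C)}$ between Del Pezzo surfaces of Picard rank one, and the paper finishes by reading off \cite[Theorem 2.6(ii)]{iskovskikh_1996}.

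The gap is in your second half: you misremember that list, so the ``main obstacle'' you set up does not exist. For $K^2=6$ the list contains only two kinds of links. Blowing up a point of degree $1$ leads to a quadric of degree $8$, contracting a point of degree $3$. Blowing up a point of degree $r\in\{2,3,4,5\}$ leads back to a sextic Del Pezzo surface, contracting a point of degree $r$.

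In particular your case (c) is wrong. Take $r=3$ and let $A$ be the pull-back of $-K_{X_{\k(C)}}$. Then $\Pic(Z)=\Z A\oplus\Z E$, with $A^2=6$, $E^2=-3$ and $-K_Z=A-E$. The second extremal ray is spanned by $2A-3E$, which has $(2A-3E)^2=-3$ and $-K_Z\cdot(2A-3E)=3$. Its contraction therefore lands on a surface with $K^2=3+3=6$, not on $\p^2$. Likewise, for $r=1,2,4,5$ the second ray is spanned by $A-3E$, $A-2E$, $4A-5E$, $10A-11E$ respectively, giving targets of degree $8,6,6,6$. No target of degree $9$ or $5$ ever occurs, and once the table is read correctly the lemma is immediate.

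The arguments you sketch for excluding those phantom cases would not have worked anyway. The hexagon/Galois-orbit argument only rules out a birational \emph{morphism} $X_{\k(C)}\to\p^2$, which is already impossible because $\rho(X_{\k(C)})=1$. A type II link to $\p^2$ would instead be $X_{\k(C)}\leftarrow Z\to\p^2$ with $Z\to\p^2$ contracting a point of degree $r+3$, and your argument says nothing about that configuration.

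The group-theoretic fallback also gives no contradiction as stated. Nothing prevents a degree-$5$ target with $\Autz(Y)$ an elliptic curve and $\Autz(Y)_C\simeq\Z/5\Z$; this is exactly the shape given by \autoref{pro:DP5}. Similarly, $\Al_{3,1}$ is a degree-$9$ example whose $\Autz$ is an elliptic curve with finite $\Autz(Y)_C$. Knowing that $\psi\Autz(X)\psi^{-1}=\Autz(Y)$ is again an elliptic curve is therefore consistent with both targets, unless you have further information on $\Autz(X)_C$, which you do not supply. You should drop that part and cite Iskovskikh's list correctly.
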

\begin{proof}
By \autoref{prop:Mdf 6 finite}, we have $g(C)=1$ and $\Autz(X)$ acts transitively on $C$. Then the base-locus of $\psi$ is a curve that is transversal to the fibres of $\pi$. 
The claim now follows from \autoref{lem:links I}, \autoref{lem: links III and IV}, \autoref{lem:type II} and the list of possible links of type II starting from a Del Pezzo surface of degree $6$ over $\k(C)$ in \cite[(A.4) Theorem]{corti1995factoring} or \cite[Theorem 2.6 (ii) $K_X^2=6$]{iskovskikh_1996}.
\end{proof}

\begin{proposition}\label{lem: dP6 type IIb}
Let $\pi\colon X\to C$ be a Mori Del Pezzo fibration of degree $6$ above a curve $C$ of genus $g(C)\geq1$ such that $\Autz(X)$ is non-trivial. Let $\psi\colon X\rat Y$ be an $\Autz(X)$-equivariant birational map to a Mori Del Pezzo fibration $\hat\pi\colon Y\to B$. Then, there exist isomorphisms $\alpha\colon X\to Y$ and $\alpha'\colon C\to B$ such that $\pi'\circ\alpha=\alpha'\circ\pi$. 
\end{proposition}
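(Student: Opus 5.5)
By \autoref{cor:MdP6} and \autoref{thm:Enrica}, the map $\psi$ decomposes as $\psi=\varphi_n\circ\cdots\circ\varphi_1$. Each $\varphi_i$ is an $\Autz(X)$-equivariant Sarkisov link of type II over $C$ between Mori Del Pezzo fibrations $\pi_i\colon X_i\to C$. By \autoref{prop:Mdf 6 finite}, $g(C)=1$, $\Autz(X)$ is an elliptic curve acting transitively on $C$, and $\Autz(X)_C$ is finite. So the natural plan is to prove that any single such link starting from a degree $6$ fibration with this transitive elliptic action is impossible. Then $n=0$, and $\psi$ is an isomorphism compatible with the fibrations; the isomorphism $\alpha'$ on the bases comes from \autoref{lem:preserves fibration}.

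Consider a link $\varphi_1\colon X\rat X_1$. By transitivity, its base locus is an $\Autz(X)$-invariant curve $\ell$ transversal to the fibres, and by \autoref{lem:type II} it induces a link of type II of Del Pezzo surfaces over $\k(C)$ starting from $X_{\k(C)}$. By \autoref{lem: dP6 type IIa}, the target has degree $8$ or $6$. The corresponding centres in Iskovskikh's list are a point of degree $2$ (giving degree $8$) and a point of degree $3$ (giving degree $6$); I expect the degree $1$ case to lead to a non-Del Pezzo blow-up or not to occur. Now fix the fibre $F=\pi^{-1}(0)$, which is a Del Pezzo surface of degree $6$. By \autoref{thm:almost product}, $X\simeq G\times^H F$ with $G=\Autz(X)$ and $H=\Autz(X)_C$ finite. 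Since $\ell$ is $G$-invariant, it equals $G\cdot(\ell\cap F)$, and $\ell\cap F$ is an $H$-invariant set of $2$ or $3$ points in general position. So we may simply use the finite group $H$ acting on $F$.

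The key step is to exploit $\rho(X/C)=1$, which gives $\NS(F)^H=\Z$. By \autoref{lem:auto of dp surface}, $H$ embeds into $\G_m^2\rtimes(\mathrm{Sym}_3\times\Z/2)$, and its image in $\mathrm{Sym}_3\times\Z/2$ must act on the hexagon of $(-1)$-curves with no invariant class besides $K_F$. The group $G$ is abelian, hence so is $H$, which forces the image to contain an element of order $6$ (or at least a $3$-cycle combined with the involution). Such an $H$ has specific fixed-point behaviour. I would check that the orbit of a point of $F$ under $H$ cannot have length $2$ or $3$ with the points in general position: the point stabilisers would contain an element of order $3$ or $2$ lying outside the torus, and such elements fix only points on $(-1)$-curves or points lying in special position relative to the hexagon. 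This contradicts the requirement that the blown-up points avoid the $(-1)$-curves and are in general position. This case analysis is the main obstacle.

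Alternatively, if the link does exist, I would show that it returns a fibration isomorphic to $X$. For the degree $6$ to degree $6$ link, the link is symmetric, and $X_1\simeq G\times^H F_1$ with $F_1$ obtained from $F$ by an $H$-equivariant birational map; I would then identify $F_1\simeq F$ $H$-equivariantly. For the degree $8$ case, \autoref{lem:dP8 Aut on C} says $\Autz(X_1)$ acts trivially on $C$, contradicting the fact that $G$ acts transitively on $C$ through the link. This rules out the degree $8$ target cleanly. Combining the two cases, every link either fails to exist or yields an isomorphic fibration, and composing gives the required isomorphisms $\alpha$ and $\alpha'$.
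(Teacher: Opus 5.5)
Your main plan, showing that no $\Autz(X)$-equivariant link can start from $X$, cannot succeed, because such links exist. Take your observations one step further. Let $T\simeq\G_m^2$ be the identity component of $\Aut(F)$. Then $H\cap T$ commutes with the element of $H$ lifting the order-$6$ class. Set $\rho=\sigma\iota$, where $\sigma$ is the cyclic permutation of coordinates and $\iota$ is the standard quadratic involution. In the coordinates $u=x_1/x_0$, $v=x_2/x_0$ it reads $(u,v)\mapsto(v,v/u)$. It acts on $T$ by $(\alpha,\beta)\mapsto(\beta,\beta/\alpha)$, which has no non-trivial fixed points. Hence $H\simeq\Z/6$, generated by an element of $T\rho^{\pm1}$. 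Every such element is conjugate to $\rho$: the map $s\mapsto s\rho s^{-1}\rho^{-1}$ is a surjective endomorphism of $T$, and a transposition conjugates $\rho$ to $\rho^{-1}$.

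Now $\rho$ fixes $(1,1)$ and has the orbit $\{(\omega,\omega^2),(\omega^2,\omega)\}$, with $\omega$ a primitive cube root of unity. The stabiliser of that orbit is $\langle\rho^2\rangle$, of order $3$ and not contained in $T$. Both the fixed point and this orbit lie in the open torus and are in general position with respect to the coordinate points $q_1,q_2,q_3$. So your claim that points with such stabilisers are in special position is false. The length-$2$ orbit is the centre of an equivariant link of type II to a degree-$6$ fibration: $\rho$ exchanges the line through the two points and the conic through them and $q_1,q_2,q_3$. Your matching of centres is also off. A degree-$8$ target comes from a centre of degree $1$, i.e.\ an invariant section, while a degree-$2$ centre leads back to degree $6$.

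Your fallback is the paper's argument: \autoref{lem: dP6 type IIa}, exclusion of degree $8$ via \autoref{lem:dP8 Aut on C}, equality of elliptic curves at each degree-$6$ step, then \autoref{thm:almost product}. But the sentence ``identify $F_1\simeq F$ $H$-equivariantly'' carries all the content and is left unargued. The paper itself only notes that degree-$6$ Del Pezzo surfaces over $\k$ are isomorphic, which does not give $G\times^{H}F\simeq G\times^{H}F_1$. The conjugacy statement above would supply this step.

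The degree-$8$ exclusion, however, cannot be repaired. Let $h\in H$ act as $\rho$. The curve $\Sigma=G\cdot[e,(1,1)]$ is a $G$-invariant section of $X\to C$. Blow it up and contract the $H$-orbit of the three lines joining $(1,1)$ to $q_1,q_2,q_3$. This is a $G$-equivariant link from $X$ to $Y=G\times^{H}(\p^1\times\p^1)$, a Mori Del Pezzo fibration of degree $8$ on which $G$ still acts transitively on $C$. So \autoref{lem:dP8 Aut on C} fails for $g(C)=1$. Its proof assumes the fibres over the zeros of $\mu$ are singular, but for $Y$ the extension $\k(C)(\sqrt{\mu})$ is the étale double cover $G/\langle h^2\rangle\to C$ and every fibre is smooth. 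Moreover, $X=G\times^{H}F$ meets all the hypotheses, since $\NS(F)^{H}=\Z K_F$, so it is a counterexample to the statement itself. Neither your route nor the paper's can close this case.
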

\begin{proof}
By \autoref{prop:Mdf 6 finite}, we have $g(C)=1$ and $\Autz(X)$ acts transitively on $C$. Thus $\Autz(Y)$ acts transitively on $C$.
By \cite[Theorem 1.2]{Flo20}, $\psi$ is a composition of isomorphisms and $\Autz(X)$-equivariant Sarkisov links. 
By \autoref{cor:MdP6}, there is an isomorphism $\hat\psi\colon C\to B$ such that $\hat\pi\circ\psi=\hat\psi\circ\pi$. In particular, by the relative version of \cite[Theorem 1.2]{Flo20}, $\psi$ is a composition $\psi=\psi_n\circ\cdots\circ\psi_1$ isomorphisms and $\Autz(X)$-equivariant links of type II.
We can suppose that $\psi_1\colon X\rat X_1$ is not an isomorphism. 
\autoref{lem: dP6 type IIa} states that $K_{(X_1)_{\k(C)}}^2\in\{6,8\}$.
\autoref{lem:dP8 Aut on C} implies that if $K_{(X_1)_{\k(C)}}^2=8$, then $\Autz(Y)$ acts trivially on $C$, so we have $K_{(X_1)_{\k(C)}}^2=6$. 
Then the inclusion $\psi_1\Autz(X)\psi_1^{-1}\subseteq\Autz(X_1)$ is an equality because both sides are elliptic curves by \autoref{prop:Mdf 6 finite}.
Continuing inductively this way shows that $K_{Y_{\k(C)}}^2=6$, $\Autz(Y)_C\simeq\Autz(X)_C$ and $\Autz(Y)\simeq\Autz(X)$. 
Then \autoref{thm:almost product} implies that $X\simeq\Autz(X)\times^{\Autz(X)_C} S_c$ and $Y\simeq \Autz(X)\times^{\Autz(X)_C} S_c'$, where $S_c,S_c'$ are fibres of $\pi,\pi'$, respectively, above any point $c\in C$. By hypothesis, $S_c,S_c'$ are Del Pezzo surfaces over $\k$ of degree $6$, so $S_c\simeq S_c'$. This yields the claim. 
\end{proof}

\begin{corollary}\label{cor:DP6}
Let $\pi\colon X\to C$ be a Mori Del Pezzo fibration of degree $6$ above a curve $C$ of genus $g(C)\geq1$ such that $\Autz(X)$ is non-trivial. Then $X$ is birational to $C\times\p^2$ and $\Autz(X)$ is maximal among the connected algebraic subgroups of $\Bir(C\times\p^2)$.
\end{corollary}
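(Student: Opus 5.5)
Birationality to $C\times\p^2$ is immediate from \autoref{prop:MdP5}\autoref{MdP5:1}, since $K_{X_{\k(C)}}^2=6\geq5$. Fix a birational map $X\rat C\times\p^2$; this identifies $\Bir(X)$ with $\Bir(C\times\p^2)$, so it suffices to show that $\Autz(X)$ is a maximal connected algebraic subgroup of $\Bir(X)$.

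Let $G\subseteq\Bir(X)$ be a connected algebraic subgroup containing $\Autz(X)$. We want $G=\Autz(X)$. By regularization (Weil's theorem together with an equivariant resolution and an equivariant MMP, as in \cite{BFT22}), there is a $G$-equivariant birational map $\psi\colon X\rat Y$ with $G\subseteq\Autz(Y)$, where $\hat\pi\colon Y\to B$ is a Mori fibre space. The MMP is run $G$-equivariantly, and $G$ is connected, so the resulting $G$-action on $Y$ is regular.

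The map $\psi$ is in particular $\Autz(X)$-equivariant. By \autoref{cor:MdP6}, $\hat\pi$ is a Mori Del Pezzo fibration and there is an isomorphism $C\simeq B$ compatible with $\psi$. Then \autoref{lem: dP6 type IIb} shows that $Y$ is a Mori Del Pezzo fibration of degree $6$. Moreover, its proof shows that $\psi\Autz(X)\psi^{-1}=\Autz(Y)$. Concretely, we decompose $\psi$ into $\Autz(X)$-equivariant links of type II through degree-$6$ fibrations, and at each step both groups are elliptic curves by \autoref{prop:Mdf 6 finite}, so each inclusion is an equality.

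It follows that
\[
\psi G\psi^{-1}\subseteq\Autz(Y)=\psi\Autz(X)\psi^{-1},
\]
and hence $G\subseteq\Autz(X)$. This gives $G=\Autz(X)$.

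The main subtlety is the regularization step. One has to ensure that the equivariant MMP produces a Mori fibre space on which $G$ acts regularly and to which $\psi$ is equivariant, so that \autoref{cor:MdP6} applies. The MMP terminates in a Mori fibre space rather than a minimal model because $X$ is uniruled. The remaining inputs are just the cited results.
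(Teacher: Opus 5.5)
Your proof is correct and follows the paper's route: birationality comes from \autoref{prop:MdP5}, and maximality comes from \autoref{cor:MdP6} combined with \autoref{lem: dP6 type IIb}. The paper's proof is a single line; you additionally spell out the regularization step (as the paper does in \autoref{lem:Weil}) and the use of \autoref{cor:MdP6} to exclude targets over surfaces, both of which the paper leaves implicit.
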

\begin{proof}
\autoref{prop:MdP5} states that $X$ is birational to $C\times\p^2$ over $C$. The remaining claim follows from \autoref{lem: dP6 type IIb}. 
\end{proof}

\begin{proof}[Proof of \autoref{thm:K<5}]
Recall that if $d\leq4$, then $\Autz(X)_C$ is finite by \autoref{lem:auto of dp surface}. We show in \autoref{prop:Mdf 6 finite} that $\Autz(X)_C$ is finite if $d=6$.
The rest the claim for $d\in\{1,2,3\}$ is \autoref{thm:K<3} and \autoref{thm:almost product}. 
If $d=4$, the claim is \autoref{thm:almost product} and \autoref{pro: DP4}.
If $d=6$, the claim is the summary of \autoref{thm:almost product}, \autoref{prop:Mdf 6 finite} and \autoref{lem: dP6 type IIb}.
\end{proof}

\section{Mori Del Pezzo fibrations of degree $9$}\label{s:MdP9}

\subsection{From Mori Del Pezzo fibrations to $\p^2$-bundles}

Recall from \autoref{thm:Tsen} that the generic fibre of a  Mori Del Pezzo fibration $\pi\colon X\to C$ of degree $9$ over a curve $C$ is isomorphic to $\p^2_{\k(C)}$.

\begin{lemma}[see {\cite[Lemma 4.3.3]{BFT22}} for linear case]\label{lem:morph to finite}
Let $G$ be a connected algebraic group and $n\geq1$ an integer. Then any (abstract) group homomorphism $G\to \Z/n$ is trivial. 
\end{lemma}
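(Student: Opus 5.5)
The plan is to show that the (abstract) group $G(\k)$ is divisible. Any homomorphism from a divisible group to the finite group $\Z/n$ is trivial: if $\phi\colon G\to\Z/n$ and $g\in G$, write $g=h^n$ for some $h\in G$, so that $\phi(g)=n\phi(h)=0$. Everything therefore reduces to proving that every element of $G(\k)$ has an $n$-th root.

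To establish divisibility, I will use Chevalley's structure theorem (\cite[Theorem 1.1.1]{BSU13}, already invoked in the proof of \autoref{lem:MdP 6 1}). It gives a short exact sequence $1\to G_{aff}\to G\to A\to 1$ with $G_{aff}$ connected affine and $A$ an abelian variety. Let $\phi\colon G\to\Z/n$ be a homomorphism. Since $A$ is divisible, it suffices to treat the restriction $\phi|_{G_{aff}}$, which is trivial by \cite[Lemma 4.3.3]{BFT22}, the linear case. If one prefers not to quote it, the linear case also follows directly: over an algebraically closed field of characteristic zero, every element of a connected linear algebraic group lies in a connected commutative subgroup, namely the closure of the subgroup it generates when taken inside a Borel subgroup via the Jordan decomposition $g=g_sg_u$, with $g_s$ lying in a torus and $g_u$ in a one-parameter unipotent subgroup $\G_a$. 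Tori and $\G_a$ are divisible groups in characteristic zero, since $t\mapsto t^n$ is surjective on $\G_m(\k)$ and $\G_a$ is a $\mathbb{Q}$-vector space. Hence each of $g_s$ and $g_u$ has an $n$-th root in a commutative subgroup, and $\phi$ kills both.

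Next, $\phi$ factors through $G(\k)/G_{aff}(\k)\simeq A(\k)$, because $G(\k)\to A(\k)$ is surjective, $\k$ being algebraically closed. The group $A(\k)$ is divisible, since multiplication by $n$ is an isogeny of $A$ and hence surjective on $\k$-points. Therefore $\phi$ is trivial.

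The main obstacle is the divisibility of the affine part in the non-commutative case. I plan to handle it by citing \cite[Lemma 4.3.3]{BFT22} and only sketching the Jordan decomposition argument. Once that is in place, the extension to arbitrary connected $G$ via Chevalley's theorem is formal.
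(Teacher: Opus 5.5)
Your argument is the paper's own: Chevalley's sequence $1\to G_{aff}\to G\to A\to 1$, triviality of $\phi$ on $G_{aff}$ by \cite[Lemma 4.3.3]{BFT22}, and divisibility of $A(\k)$. One correction to the framing only: neither of your opening claims holds. $G(\k)$ itself need not be divisible, and an element need not lie in a connected commutative subgroup; for instance $\bigl(\begin{smallmatrix}-1&1\\0&-1\end{smallmatrix}\bigr)$ has no square root in $\mathrm{SL}_2(\k)$. Your proof never actually relies on either claim, since your linear-case sketch only uses $\phi(g)=\phi(g_s)+\phi(g_u)$ with $g_s$ in a torus and $g_u$ in a copy of $\G_a$.
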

\begin{proof}
Let $\nu\colon G\to \Z/n$ be an abstract homomorphism of groups.
By Chevalley theorem \cite{BSU13}, there is a short exact sequence
\[
1\to G_{aff}\to G\to A\to 1
\]
where $A$ is abelian and $G_{aff}$ is the largest connected affine normal algebraic subgroup of $
G$. 
The restriction $\nu|_{G_aff}$ is trivial by \cite[4.3.3]{BFT22}. So, $\nu$ factors through an abstract group homomorphism $\overline{\nu}\colon A\simeq G/G_{aff}\to \Z/n$.
For any $a\in A$ there is some $a'\in A$ such that $na'=a$. Then $\overline{\nu}(a)=\overline{\nu}(na')=n\overline{\nu}(a')=0$. 
\end{proof}

The following statement extends \cite[Proposition 4.3.5]{BFT22} over any smooth curve.

\begin{proposition}\label{prop:Mdp to bundle}
Let $C$ be a smooth projective curve and $\pi \colon X \to C$ a morphism whose generic fibre is isomorphic to $\mathbb{P}^2_{\k(C)}$. 
Then there exists a regular action of $\mathrm{Aut}^{\circ}(X)$ on a $\mathbb{P}^2$-bundle $\tau \colon Y \to C$ and an $\Autz(X)$-equivariant birational map $X\rat Y$ over $C$.
\end{proposition}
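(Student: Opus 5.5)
The plan is to follow the strategy of \cite[Proposition 4.3.5]{BFT22}. First I would produce a birational model that is already a $\p^2$-bundle, and then show that one can move equivariantly between $\p^2$-bundles so as to regularize the $\Autz(X)$-action.

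\textbf{Step 1: a $\p^2$-bundle model over $C$.} Set $G:=\Autz(X)$. By \autoref{lem:blanchard}, $G$ acts on $C$ and $\pi$ is equivariant. Since $X_{\k(C)}\simeq\p^2_{\k(C)}$, there is a birational map $X\rat C\times\p^2$ over $C$. Conjugating gives an abstract subgroup of $\Bir(C\times\p^2)$ preserving the projection to $C$. Its elements act on the generic fibre as elements of $\PGL_3(\k(C))$, twisted by the induced automorphism of $\k(C)$.

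\textbf{Step 2: regularize the action on a $\p^2$-bundle.} Consider all $\p^2$-bundles $Y\to C$ that are birational to $X$ over $C$. Equivalently, these are the rank-$3$ vector bundles, up to twisting by line bundles, that give lattices in $\k(C)^3$ up to scaling. They form the vertices of a product over the points $c\in C$ of Bruhat--Tits buildings of $\PGL_3(\widehat{\Ol}_{C,c})$. The action of $G$ on $X_{\k(C)}$ induces an action on this set of lattices.

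\begin{itemize}
\item Since $G$ is an algebraic group acting regularly on $X$, a fixed reference model $Y_0$ differs from $gY_0$ only over the points $c$ lying in a $G$-invariant finite set.
\item If $G$ acts non-trivially on $C$, that finite set is in fact empty, since $G$ would have to fix points of $C$ while acting transitively.
\item Take $Y_0$ to agree with $X$ over a dense open subset on which $\pi$ is smooth, which I can arrange because the generic fibre is $\p^2$. Then the lattices are moved only at finitely many points $c$.
\item At each such $c$, the orbit of the local lattice under the (connected) group $G$ is bounded, because $G$ is of finite type. A bounded group action on a Euclidean building has a fixed point in its geometric realization, namely the centre of the bounded orbit. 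That fixed point lies in some simplex. The simplices of the $\PGL_3$ building have at most $3$ vertices, and the permutation of these vertices gives a homomorphism $G\to\Z/3$ (or a subgroup of $\mathrm{Sym}_3$ preserving the type cyclically).
\item By \autoref{lem:morph to finite}, this homomorphism is trivial. Hence $G$ fixes a vertex, that is, a lattice at each bad point.
\end{itemize}

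Gluing these fixed local lattices to $Y_0$ away from the bad points gives a $\p^2$-bundle $\tau\colon Y\to C$ on which $G$ acts by automorphisms. The induced map $X\rat Y$ is then $G$-equivariant over $C$.

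\textbf{Step 3: regularity of the action.} It remains to show that the resulting action of $G$ on $Y$ is regular, not just an abstract group action. The birational map $X\rat Y$ is $G$-equivariant, and the action map $G\times Y\rat Y$ is a rational map. It is defined everywhere as a family of automorphisms, so it extends by Weil's regularization argument, or by the fact that $\Aut(Y)$ represents the automorphism functor and $G\times Y\to Y$ is given over $G$ by a family of bundle automorphisms.

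\textbf{Main obstacle.} I expect the main obstacle to be Step 2: the boundedness of the $G$-orbit in the building, and the passage from an abstract action to a fixed vertex. I would first try to prove boundedness by taking a $G$-stable coherent sheaf, namely the pushforward of $\Ol_X(-K_X)$ or of a $G$-linearized relatively ample sheaf. That sheaf gives a $G$-invariant finite-dimensional lattice family up to bounded twists.
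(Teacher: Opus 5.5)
Your proposal is correct and is essentially the paper's proof. The paper also dismisses the case where $\Autz(X)$ acts transitively on $C$ (then $\pi$ is already a $\p^2$-bundle), and otherwise runs the argument of \cite[Proposition 4.3.5]{BFT22}, with its key group-theoretic input, triviality of homomorphisms $G\to\Z/n$ (which you use to kill the type rotation on the fixed simplex), replaced by \autoref{lem:morph to finite} exactly as you do. Two points to tighten: your transitivity remark needs $g(C)\geq1$, since on $\p^1$ a connected group can act non-trivially with fixed points and the paper simply cites \cite{BFT22} there; and Step~3 needs a genuine argument, for example Zariski's main theorem applied to the closure of the graph of $(g,y)\mapsto(g,gy)$ over the smooth variety $G\times Y$, rather than Weil's regularization theorem, which only produces some regular model.
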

\begin{proof}
If $C=\p^1$, it is \cite[Proposition 4.3.5]{BFT22}. We follow and adapt its proof for a smooth projective base curve $C$ of positive genus. There exists an open subset $U$ on which $\pi_{\vert U}$ is a $\p^2$-bundle. If $g(C)=1$ and $\Autz(X)$ acts transitively on $C$, then $\pi$ is already a $\p^2$-bundle, and the proof is over. Hence, we can assume in the rest of the proof that $\pi_* \colon \Autz(X) \to \Autz(C)$ is trivial (which is automatic if $g(C)\geq 2$).

Under this assumption, mutatis mutandis, the proof of \cite[Proposition 4.3.5]{BFT22} works over $C$. A key result used in their proof is \cite[Lemma 4.3.3]{BFT22}, applied to $G:=\Autz(X)$ which is connected and linear. In our case, we apply \autoref{lem:morph to finite}.
\end{proof}

In the previous proposition, we reduce the case of Mori Del Pezzo fibrations of degree $9$ to $\p^2$-bundles. In the next sections, we will use the following lemma to show that there is no $\Autz(X)$-equivariant birational map from $X$ to a Mori Del Pezzo fibre space of lower degree, if $X$ is $\p^2$-bundle such that $\Autz(X)$ is maximal.

\begin{lemma}\label{lem:no_map_to_lower_degree}
    Let $C$ be an elliptic curve and $\pi\colon X\to C$ a Mori Del Pezzo fibration such that $\Autz(X)$ acts transitively on $C$ and $\dim\Autz(X)_C>0$. Then $K_{X_{\k(C)}}^2=9$ and there is no $\Autz(X)$-equivariant birational map from $X$ to a Mori Del Pezzo fibre space of degree $\leq8$. 
\end{lemma}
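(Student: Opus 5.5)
The plan is to split the statement into two parts. First we show that the hypotheses force degree $9$. Then we show that any equivariant map to another Mori Del Pezzo fibration stays in degree $9$.

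\textbf{Step 1: the degree is $9$.} Set $d:=K_{X_{\k(C)}}^2$. By \autoref{rmk: MdP}\autoref{MdP:1} we have $d\neq7$. By \autoref{rmk: MdP}\autoref{MdP:2} there is an injection $\Autz(X)_C\hookrightarrow\Aut(X_{\k(C)})$.
\begin{itemize}
\item If $d\leq5$, then \autoref{lem:auto of dp surface}\autoref{auto dp surface:1} shows that $\Autz(X)_C$ is finite, which contradicts $\dim\Autz(X)_C>0$.
\item If $d=6$, then \autoref{lem:MdP 6 1} and \autoref{lem:MdP 6 2} show that the neutral component of $\Autz(X)_C$ is a torus containing no $1$-dimensional subtorus. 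So it is trivial, and $\Autz(X)_C$ is finite, again a contradiction.
\item If $d=8$, then \autoref{lem:dP8 Aut on C} shows that $\Autz(X)$ acts trivially on $C$, which contradicts transitivity.
\end{itemize}
Hence $d=9$.

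\textbf{Step 2: no equivariant map to lower degree.} Let $\varphi\colon X\rat Y$ be an $\Autz(X)$-equivariant birational map to a Mori Del Pezzo fibration $\pi'\colon Y\to B$.
\begin{itemize}
\item By \autoref{lem:preserves fibration}, after identifying $B$ with $C$, $\varphi$ commutes with the fibrations.
\item By the relative version of \autoref{thm:Enrica}, $\varphi$ decomposes as $\varphi_n\circ\cdots\circ\varphi_1$, where each $\varphi_i\colon X_{i-1}\rat X_i$ is an $\Autz(X)$-equivariant link of type II over $C$ between Mori Del Pezzo fibrations. Here $\Autz(X)$ acts on each $X_i$ via conjugation.
\end{itemize}
I then argue by induction on $i$ that each $X_i$ has degree $9$ and satisfies the hypotheses of the lemma.
\begin{itemize}
\item \emph{Transitivity.} Since $\varphi$ commutes with the projections up to an isomorphism of $C$, the conjugated group $\varphi_{\le i}\Autz(X)\varphi_{\le i}^{-1}\subseteq\Autz(X_i)$ acts transitively on $C$.
\item \emph{Positive-dimensional vertical part.} Elements of $\Autz(X)_C$ stay vertical under conjugation, so they land in $\Autz(X_i)_C$. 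The map is injective, because $\varphi_{\le i}$ is birational and conjugation is faithful on $\Bir(X)$. Since $\Autz(X)_C$ is connected and positive-dimensional, its image lies in the neutral component, so $\dim\Autz(X_i)_C>0$.
\end{itemize}
Step 1 applied to $X_i$ then gives $K^2_{(X_i)_{\k(C)}}=9$. In particular $Y$ has degree $9$.

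The main obstacle is justifying that the conjugated subgroup is an algebraic subgroup of $\Autz(X_i)$, so that "positive-dimensional" is meaningful on $X_i$. This holds because the equivariant Sarkisov program (\autoref{thm:Enrica}) produces regular actions of $\Autz(X)$ on each $X_i$, making $\varphi_{\le i}$ equivariant. The restriction to the vertical subgroup is then a morphism of algebraic groups with finite kernel, since the action is faithful, and the dimension bound follows.

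An alternative for the case of an intermediate $X_i$ of degree $8$, which can arise from a type II link blowing up a transversal curve, is to apply \autoref{lem:dP8 Aut on C} directly: $\Autz(X_i)$ acting trivially on $C$ contradicts the transitive $\Autz(X)$-action. Degrees $\le 6$ are ruled out by finiteness of the vertical group exactly as in Step 1.
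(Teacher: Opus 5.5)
Your argument is correct and is essentially the paper's proof. The paper likewise uses \autoref{lem:preserves fibration} to identify the bases. It then observes that the conjugate of $\Autz(X)_C$ lies in the vertical group of the target, and that the target still carries a transitive action on $C$. Finally it excludes degrees $\leq5$, $6$, $7$ and $8$ for both source and target with the same results you cite. For degree $6$ the paper quotes \autoref{prop:Mdf 6 finite}, which is itself proved from \autoref{lem:MdP 6 1} and \autoref{lem:MdP 6 2}, exactly as in your Step 1.

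You should, however, delete the Sarkisov decomposition in Step 2, because its premise is false in this setting. It is not true that all links are of type II between Mori Del Pezzo fibrations. For instance, by \autoref{lem:O+L+M}, when $X=\p(\Ol_C\oplus\Ll\oplus\Ml)$ the only $\Autz(X)$-equivariant links are of type I, to $\F_1$-bundles. Similarly, the equivariant maps between $\p^2$-bundles in \autoref{prop:E'=E_20} pass through $\p^1$-bundles over ruled surfaces.

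This does not damage the proof, because your ``induction'' never uses the inductive hypothesis. The conjugation argument applied directly to $Y$ (your case $i=n$) is already complete, and it is literally the paper's argument. A minor point: $\Autz(X)_C$ need not be connected, but this is harmless. Its conjugate lies in $\Autz(Y)$ anyway, since $\varphi\Autz(X)\varphi^{-1}\subseteq\Autz(Y)$, so it lies in $\Autz(Y)_C$.
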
 

\begin{proof}
Let $d:=K_{X_{\k(C)}}^2$ be the degree of $\pi$.
Let $\pi'\colon X'\to D$ be a Mori Del Pezzo fibration of degree $e$ and $\varphi\colon X\rat X'$ an $\Autz(X)$-equivariant birational map. By \autoref{lem:preserves fibration}, there exists an isomorphism $\alpha\colon C\to D$ such that $\pi'\circ\varphi=\alpha\circ\pi$. Without loss of generality, we can assume that $\alpha$ is the identity map. Then $\varphi^{-1}\Autz(X)_C\varphi\subseteq\Autz(X')_C$ and hence $\dim\Autz(X')_C>0$. 
By \autoref{rmk: MdP}\autoref{MdP:2}, there is an injective group homomorphism $\Autz(X')_C\hookrightarrow\Autz(X_{\k(C)})$, so \autoref{lem:auto of dp surface}\autoref{auto dp surface:1} implies $d,e\geq6$. 
\autoref{prop:Mdf 6 finite} implies that $d,e\neq6$. 
\autoref{rmk: MdP}\eqref{MdP:1} implies that $d,e\neq7$. 
\autoref{lem:dP8 Aut on C} implies that $d,e\neq8$.
\end{proof}

\begin{lemma}\label{lem:section blow-up}
Let $C$ be a smooth projective curve of genus $g(C)\geq0$ and $\pi\colon X\to C$ a $\p^2$-bundle. Suppose there is an $\Autz(X)$-invariant section $\Sigma$ and let $div\colon \mathrm{Bl}_{\Sigma}(X)\to X$ be the blow-up of $X$ in $\Sigma$. Then $div^{-1}\colon X\rat \mathrm{Bl}_{\Sigma}(X)$ is an $\Autz(X)$-equivariant link of type I. 
\end{lemma}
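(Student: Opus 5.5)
We must show three things: the blow-up $Z:=\mathrm{Bl}_{\Sigma}(X)$ admits a Mori fibre space structure $Z\to B$ over a surface $B$ with $\rho(B/C)=1$; the inverse $div^{-1}$ fits into a Sarkisov diagram of type I; and everything is $\Autz(X)$-equivariant. The plan is to construct the target fibration directly from the linear projection away from $\Sigma$ in each fibre.

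\emph{Step 1: the projectivized quotient bundle.} Write $X=\p(\El)$ for a rank-$3$ vector bundle $\El$ on $C$. The section $\Sigma$ corresponds to a rank-$1$ quotient $\El\to\Ll$, or equivalently (depending on the convention) a line subbundle. Its kernel, respectively the quotient, is a rank-$2$ bundle $\Fl$. Set $B:=\p(\Fl)$, a ruled surface $\theta\colon B\to C$, so $\rho(B/C)=1$. Fibrewise, projection from the point $\Sigma_c\in\p^2$ gives a rational map $X\rat B$ over $C$. This map becomes a morphism $p\colon Z\to B$ after blowing up $\Sigma$. Concretely, $Z\simeq\p_B(\Ol_B\oplus\Ol_B(-1)\otimes\ldots)$ is a $\p^1$-bundle over $B$, since fibrewise $\mathrm{Bl}_{pt}\p^2=\F_1\to\p^1$.

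\emph{Step 2: the Mori fibre space and the diagram.} The morphism $p$ is a $\p^1$-bundle, so $\rho(Z/B)=1$ and $-K_Z$ is $p$-ample. Hence $p$ is a Mori fibre space. Moreover $\rho(Z/C)=2=\rho(X/C)+1$. Then $div\colon Z\to X$ is a divisorial contraction of the exceptional divisor onto $\Sigma$, and the trivial pseudo-isomorphism $Z=Z$ completes the data. This gives exactly the type I diagram of \autoref{lem:links I}, with $Y'=Y=Z$, $\pi'=p$ and $\theta\colon B\to C$. The remaining requirement is that $Z$ be a Mori dream space over $C$ whose relative cone is spanned by the two extremal rays given by the exceptional curves of $div$ and the fibres of $p$. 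This holds since $\rho(Z/C)=2$ and both contractions exist. Finally, $-K_Z$ is relatively big over $C$, which follows from the fibre being $\F_1$, a Del Pezzo surface.

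\emph{Step 3: equivariance.} Since $\Sigma$ is $\Autz(X)$-invariant, the action lifts to the blow-up $Z$, so $div$ is equivariant. Blanchard's lemma (\autoref{lem:blanchard}) applied to $p$, which satisfies $p_*\Ol_Z=\Ol_B$, then gives a compatible action on $B$. Alternatively, one can note directly that $\Autz(X)$ preserves the subbundle defining $\Sigma$ up to the $\G_m$-scaling.

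The main obstacle is the conceptual check in Step 2 that this two-ray configuration satisfies the definition of a Sarkisov link of type I, rather than some other contraction structure. Because $\rho(Z/C)=2$ and both extremal contractions are already exhibited, this reduces to a cone computation that should be routine.
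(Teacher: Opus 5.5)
Your proposal is correct and is essentially the paper's argument. You blow up the invariant section, use that fibrewise $\mathrm{Bl}_{pt}\p^2=\F_1\to\p^1$ to get a $\p^1$-bundle over a ruled surface $B\to C$ (hence a type I diagram), and deduce equivariance from the invariance of $\Sigma$. The only differences are that you identify $B=\p(\Fl)$ intrinsically and check the Mori fibre space conditions in more detail, whereas the paper simply writes the blow-up in local trivializations $U_k\times\F_1\to U_k\times\p^2$.

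One correction: your parenthetical $Z\simeq\p_B(\Ol_B\oplus\Ol_B(-1)\otimes\ldots)$ is false in general. In the subbundle convention, $Z=\p_B(\Nl)$ for an extension $0\to\theta^*\Ll\to\Nl\to\Ol_B(-1)\to0$, and this extension splits only when $\Ll$ is a direct summand of $\El$ (compare \autoref{lem:F_b-bundles}\autoref{F_b-bundles:3} and \autoref{lem:E_20+O}). Since you only use the $\p^1$-bundle structure of $Z\to B$, the argument is unaffected.
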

\begin{proof}
We can choose coordinates of $\p^2$ such that $\Sigma=C\times\{(0:0:1)\}$. 
Let $(U_k)_k$ be a trivializing open cover of $C$. Then the blow-up $U_k\times\F_1\to U_k\times\p^2$, $(u,[y_0:y_1;z_0:z_1])\mapsto(u,[y_0z_0:y_1:y_0z_1])$ is the blow-up of $\Sigma$. Indeed, these are trivializing open sets of the blow-up $div\colon\mathrm{Bl}_{\Sigma}(X)\to X$ of $\Sigma$.
Moreover, the morphism $U_k\times\F_1\to U_k\times\p^1$, $(u,[y_0:y_1;z_0:z_1])\mapsto(u,[z_0:z_1])$ give trivializing open sets of a morphism $\pi'\colon\mathrm{Bl}_{\Sigma}(X)\to B$ to a ruled surface $\tau\colon B\to C$ such that $\tau\circ\pi'=\pi\circ div$. This yields a directed  Sarkisov diagram corresponding to a link of type I. All morphisms are $\Autz(X)$-equivariant, since $\Sigma$ is $\Autz(X)$-invariant.
\end{proof}

\subsection{Generalities on $\P^2$-bundles over elliptic curves}

\begin{definition}
    A vector bundle $\El$ is \emph{indecomposable} if it is not isomorphic to a direct sum $\El'\oplus \El''$, where $\El'$ and $\El''$ are non-zero vector bundles. Otherwise, $\El$ is \emph{decomposable}.
\end{definition}

Recall that the category of vector bundles is a Krull-Schmidt category, see \cite{Ati56}, i.e. every vector bundle decomposes as a finite direct sum of indecomposable vector bundles, and this decomposition is unique up to isomorphism and permutation of summands. 

\begin{remark}\label{rmk:indecomposable}
    \begin{enumerate}
        \item Recall that two projective bundles $\P(\El)$ and $\P(\El')$ are isomorphic if and only if there exists a line bundle $\Ll$ such that $\El \otimes \Ll \simeq \El'$. 
        \item A vector bundle $\El$ is indecomposable if and only if $\Ll \otimes \El$ is indecomposable for any line bundle $\mathcal L$. 
        \item If a vector bundle $\El$ has rank $3$, then $\El$ is decomposable if and only if $\P(\El)$ admits a section and a disjoint $\P^1$-subbundle.
    \end{enumerate}
\end{remark}

In virtue of the above remark, we say that $\p(\El)$ is \emph{indecomposable} (resp. \emph{decomposable}) if $\El$ is.

\medskip

By Atiyah's classification \cite{Ati57}, it follows that for every $r\geq 2$, there exist exactly $r$ indecomposable $\P^{r-1}$-bundles over an elliptic curve. These are the projectivization of indecomposable vector bundles $\El_{r,d}$, of rank $r$ and degree $d\in \{0,\ldots, r-1\}$, which are defined by successive non-trivial extensions. For self-containedness, we recall Atiyah's result for $r=2,3$:

\begin{theorem}\cite[Theorems 5, 6, 7 and 11]{Ati57} \label{thm:Atiyah}
	Let $C$ be an elliptic curve with a neutral point $p_0\in C$. The following hold:

    \begin{enumerate}
        \item Up to isomorphism, there exist exactly two indecomposable $\P^1$-bundles over $C$, denoted by $\mathcal{A}_{2,0}$ and $\mathcal{A}_{2,1}$. They are the projectivization of indecomposable rank-$2$ vector bundles which respectively fit into the following short exact sequences:
	\[
	\begin{array}{cccccccccc}
		0 & \to & \mathcal{O}_C & \to & \mathcal{E}_{2,0}  & \to & \mathcal{O}_C &\to& 0, \\
		0 & \to & \mathcal{O}_C & \to & \mathcal{E}_{2,1}  & \to & \mathcal{O}_C(p_0) &\to& 0.
	\end{array}
	\]
        \item Up to isomorphism, there exist exactly three indecomposable $\P^2$-bundles over $C$, denoted by $\Al_{3,0},\Al_{3,1}$ and $\Al_{3,2}$. They are the projectivization of indecomposable rank-$3$ vector bundles, which respectively fit into the following short exact sequences:
	\[
	\begin{array}{cccccccccc}
		0 & \to & \mathcal{O}_C & \to & \mathcal{E}_{3,0}  & \to & \mathcal{E}_{2,0} &\to& 0, \\
		0 & \to & \mathcal{O}_C & \to & \mathcal{E}_{3,1}  & \to & \mathcal{E}_{2,1} &\to& 0, \\
		0 & \to & \mathcal{O}_C \oplus \mathcal{O}_C & \to & \mathcal{E}_{3,2}  & \to & \mathcal{O}_C(2p_0) &\to& 0.
	\end{array}
	\]
    Moreover, the projective bundles $\Al_{3,1}$ and $\Al_{3,2}$ are dual to each other.
    \end{enumerate}
    Moreover, if $r\in \{2,3\}$, then $\El_{r,0}$ is the unique indecomposable vector bundle of rank $r$ and of degree $0$ admitting non-zero global sections, and $\Gamma(C,\El_{r,0})=\k$.
\end{theorem}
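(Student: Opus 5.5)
The plan is to deduce the statement from Atiyah's general classification of indecomposable vector bundles on an elliptic curve \cite{Ati57}, which I use as a black box: for every rank $r$ and degree $d$ the set of isomorphism classes of indecomposable bundles of rank $r$ and degree $d$ is in natural bijection with $C$; tensoring with a line bundle of degree $1$ identifies the classes of degree $d$ with those of degree $d+r$; and there is a unique indecomposable bundle $F_r$ of rank $r$ and degree $0$ with $H^0(F_r)\neq0$. Moreover, $F_r$ is a successive non-split extension of $\Ol_C$ by itself, and every indecomposable bundle of degree $0$ is of the form $F_r\otimes\Ll$ with $\Ll\in\Pic^0(C)$. When $\gcd(r,d)=1$, the indecomposable bundle is determined by its determinant, and every determinant in $\Pic^d(C)$ occurs.

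\emph{Counting the projective bundles.} By \autoref{rmk:indecomposable}, $\P(\El)\simeq\P(\El')$ if and only if $\El'\simeq\El\otimes\Ll$ for some line bundle $\Ll$. Twisting multiplies the degree shift by $r$, so I may normalise $d\in\{0,\dots,r-1\}$. It then remains to show that, for fixed $d$, all indecomposable bundles of rank $r$ and degree $d$ differ by a twist by some $\Ll\in\Pic^0(C)$.
\begin{itemize}
\item For $d=0$ this is immediate, since every such bundle is $F_r\otimes\Ll$.
\item For $r=2$, $d=1$ and for $r=3$, $d\in\{1,2\}$, we have $\gcd(r,d)=1$. Twisting by $\Ll\in\Pic^0(C)$ changes the determinant by $\Ll^{\otimes r}$, and multiplication by $r$ on $\Pic^0(C)\simeq C$ is surjective, so every determinant of degree $d$ is reached from a fixed one.
\end{itemize}
Hence there are exactly $r$ indecomposable $\P^{r-1}$-bundles. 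Duality sends degree $d$ to degree $-d\equiv r-d \pmod r$ and preserves indecomposability, so $\Al_{3,1}^\vee\simeq\Al_{3,2}$.

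\emph{The extension descriptions.} For each case I exhibit a non-split extension and check that it is indecomposable of the right degree; uniqueness above then identifies it with $\El_{r,d}$.
\begin{itemize}
\item $\El_{2,0}$ is the unique non-split extension of $\Ol_C$ by $\Ol_C$, since $H^1(\Ol_C)=\k$.
\item $\El_{2,1}$ is the unique non-split extension of $\Ol_C(p_0)$ by $\Ol_C$, since $h^1(\Ol_C(-p_0))=1$. It is indecomposable: any splitting would be into line bundles of degrees summing to $1$, and this is incompatible with non-splitness via $\mathrm{Hom}$ computations.
\item $\El_{3,0}$ is $F_3$ by Atiyah's construction.
\item For $\El_{3,1}$ I take a non-split extension of $\El_{2,1}$ by $\Ol_C$, using $\mathrm{Ext}^1(\El_{2,1},\Ol_C)=H^1(\El_{2,1}^\vee)\neq0$ since $\deg\El_{2,1}^\vee<0$.
\item For $\El_{3,2}$ I take an extension of $\Ol_C(2p_0)$ by $\Ol_C^{\oplus2}$ whose class in $H^1(\Ol_C(-2p_0))^{\oplus2}\simeq\k^2\oplus\k^2$ has two linearly independent components. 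Independence guarantees that no quotient $\Ol_C$ splits off. Alternatively, one may take the dual of $\El_{3,1}\otimes\Ol_C(p_0)$ and read off its canonical filtration.
\end{itemize}

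\emph{Global sections.} For the last sentence, I compute $H^0(\El_{r,0})=\k$ by induction from
\[
0\to\Ol_C\to F_r\to F_{r-1}\to0.
\]
The connecting map $H^0(F_{r-1})=\k\to H^1(\Ol_C)=\k$ is nonzero exactly because the extension is non-split, so the section of $F_{r-1}$ does not lift. Conversely, for $\Ll\in\Pic^0(C)$ non-trivial, $F_r\otimes\Ll$ has a filtration with all quotients $\Ll$, and $H^0(\Ll)=0$ gives $H^0(F_r\otimes\Ll)=0$. This yields the uniqueness of $\El_{r,0}$ among indecomposable degree-$0$ bundles with sections.

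I expect the main obstacle to be the indecomposability checks for the explicit extensions, particularly $\El_{3,2}$, which is not a successive extension by line bundles in the same way as the others. I would handle it by the duality argument rather than directly, reducing it to the case $\El_{3,1}$.
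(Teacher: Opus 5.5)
The paper gives no argument of its own for this statement; it simply quotes \cite{Ati57}. Your reduction to Atiyah's general classification therefore follows the same route. The counting is fine: you normalise $d$ modulo $r$, use $\Pic^0$-twists, and use surjectivity of multiplication by $r$ on $\Pic^0(C)$ in the coprime cases. The duality $\Al_{3,1}^\vee\simeq\Al_{3,2}$ and the computation $\Gamma(C,\El_{r,0})=\k$ are also fine. In the last one, non-splitness alone does not force the connecting map to be non-zero. What you actually use is that pulling back along $s\colon\Ol_C\to F_{r-1}$ is Serre dual to $H^0(\Ol_C)\to H^0(F_{r-1})=\k s$, which is an isomorphism by induction.

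There is a genuine gap in the explicit extension descriptions: you need the constructed bundles to be indecomposable, and you really check this only for $\El_{2,1}$.
\begin{itemize}
\item For $\El_{3,1}$ you never verify that the non-split extension of $\El_{2,1}$ by $\Ol_C$ is indecomposable.
\item For $\El_{3,2}$, linear independence of $e_1,e_2$ only shows that no summand $\Ol_C$ of $\Ol_C^{\oplus 2}$ splits off. It does not rule out a splitting of another shape, such as a line bundle plus a rank-$2$ bundle of other degrees, or a sum of three line bundles.
\item Your fallback fails as written. $(\El_{3,1}\otimes\Ol_C(p_0))^\vee$ has degree $-4$, not $2$. The right bundle is $\El_{3,1}^\vee\otimes\Ol_C(p_0)$, but dualising the defining sequence of $\El_{3,1}$ only gives $0\to\El_{2,1}^\vee(p_0)\to E\to\Ol_C(p_0)\to0$.
\item To reach a sequence with sub $\Ol_C^{\oplus2}$ and quotient $\Ol_C(2p_0)$, you would still have to prove three things. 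First, $h^0(E)=2$. Second, the evaluation map $H^0(E)\otimes\Ol_C\to E$ is a subbundle with quotient $\det E$. Third, a twist by $\Pic^0(C)$ arranges $\det E\simeq\Ol_C(2p_0)$. That is, you would redo part of Atiyah's construction.
\end{itemize}

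The missing idea is to show the constructed extensions are stable, hence simple, hence indecomposable.

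For $\El_{3,2}$, of slope $2/3$, it suffices to show line subsheaves have degree $\leq0$ and saturated rank-$2$ subsheaves have degree $\leq1$.
\begin{itemize}
\item A line subsheaf $L$ mapping non-trivially to $\Ol_C(2p_0)$ is $\Ol_C(2p_0-D)$ with $D\geq0$. It lifts to $E$ only if $(e_1,e_2)$ dies in $H^1(\Ol_C(-2p_0+D))^{\oplus2}$. For $\deg D\leq 1$ the kernel of $H^1(\Ol_C(-2p_0))\to H^1(\Ol_C(-2p_0+D))$ has dimension at most $1$, contradicting independence. Line subsheaves lying in $\Ol_C^{\oplus2}$ have degree $\leq0$ anyway.
\item A saturated rank-$2$ subsheaf $G$ of degree $\geq2$ must meet $\Ol_C^{\oplus2}$ in a saturated line subbundle of degree $0$, which is $\Ol_C\cdot v$ for a constant $v$. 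Moreover $G/\Ol_C v\simeq\Ol_C(2p_0)$. This splits the pushed-out extension $0\to\Ol_C^{\oplus2}/\Ol_C v\to E/\Ol_C v\to\Ol_C(2p_0)\to0$, whose class is $\lambda e_1+\mu e_2$ with $(\lambda,\mu)\neq0$ vanishing on $v$. This again contradicts independence.
\end{itemize}

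For $\El_{3,1}$, of slope $1/3$, the same approach works. Since $\El_{2,1}$ is stable of slope $1/2$, every line subsheaf of $E$ has degree $\leq 0$. A rank-$2$ subbundle either contains $\Ol_C$, and then has degree $\leq0$, or injects into $\El_{2,1}$. In the latter case its degree is $\leq1$, with equality only if it splits the sequence. With these checks added, your argument is complete.
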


\begin{definition}\label{def:homogeneous}
    A vector bundle $\El$ over an elliptic curve $C$ is \emph{semi-homogeneous} if for every translation $t\in \Autz(C)$, there exists a line bundle $\Ll$ such that $t^*\El \simeq \El \otimes \Ll$. If moreover, $\Ll$ is trivial for every $t\in \Autz(C)$, then we say that $\El$ is \emph{homogeneous}.
\end{definition}

\begin{remark}\label{rmk:Atiyah}
\begin{enumerate}
    \item With \autoref{rmk:indecomposable}, Atiyah's theorems can be formulated as follows: for every indecomposable vector bundle $\El$ of rank $r\in \{2,3\}$ and degree $d\in \{0,\dots, r-1\}$, there exists a line bundle $\Ll$ of degree zero such that $\El \simeq \El_{r,d} \otimes \Ll$. As a direct consequence, the indecomposable vector bundles $\mathcal{E}_{r,d}$ are semi-homogeneous; or equivalently, the projective bundles $\Al_{r,d}$ are homogeneous, i.e., $t^*\Al_{r,d} \simeq \Al_{r,d}$ for every $t\in \Autz(C)$. 
    \item Let $r\in \{2,3\}$. 
    The line subbundle $\Ol_C\subset \El_{r,0}$ is unique since $\Gamma(C,\El_{r,0})=\k$.
    Notice that $\El_{r,0}$ is in fact homogeneous. 
    Indeed, for every $t\in \Autz(C)$, the vector bundle $t^*\El_{r,0}$ is indecomposable of rank $r$ and degree $0$, and it also admits $\Ol_C$ as a line subbundle. Hence, $\k\subset \Gamma(C,t^*\El_{r,0})$. Then it follows by uniqueness that $t^*\El_{r,0}\simeq \El_{r,0}$. 
\end{enumerate}
\end{remark}

\subsection{Repetition on ruled surfaces}

Let $b\in \mathbb{Z}$. We denote by $\F_b=\P(\Ol_{\P^1}\oplus \Ol_{\P^1}(b))$ the $b$-th Hirzebruch surface. The following definition is equivalent and is well-known to experts in toric geometry, see e.g. \cite{BFT23}.

\begin{definition}\label{def:Hirzebruch_surface}
    For every $b\in \mathbb{Z}$, the $b$-th Hirzebruch surface $\F_b$ is the quotient of $(\A^2\setminus \{0\})^2$ by the following action:
    \[
    \begin{array}{ccc}
    (\mathbb{G}_m)^2 \times (\A^2\setminus \{0\})^2 & \to & (\A^2\setminus \{0\})^2 \\
    (\mu,\rho), (y_0,y_1,z_0,z_1) & \longmapsto &  (\rho^{-b}\mu y_0, \mu y_1 , \rho z_0,\rho z_1).
    \end{array}
    \]
    The equivalence class of $(y_0,y_1,z_0,z_1)$ is denoted by $[y_0:y_1 \ ; z_0:z_1]$ and the $\P^1$-bundle structure morphism $\tau_b \colon \F_b\to \P^1$ is given by $ [y_0:y_1 \ ; z_0:z_1] \mapsto [z_0:z_1]$.
\end{definition}

\begin{remark}
    From the above definition, we notice that:
    \begin{itemize}
        \item If $b>0$, the unique section of self-intersection $(-b)$ is given by the equation $\{y_0 =0 \} \subset \F_b$.
        \item The ruled surfaces $\F_b$ and $\F_{-b}$ are isomorphic, via \[ [y_0:y_1 \ ; z_0:z_1] \mapsto [y_1:y_0 \ ; z_0:z_1].\]
    \end{itemize}
\end{remark}

The geometry of ruled surfaces and their automorphisms are well-known, see e.g. \cite{Fong20,Fong21,Maruyama}, and we recall some of their properties in the following:

\begin{theorem}\label{thm:dim_2}
    Let $C$ be a smooth projective curve. The following hold:
    \begin{enumerate}
        \item\label{dim_2:1} If $C$ is an elliptic curve, then the ruled surface $\Al_{2,0}$ has a unique minimal section of self-intersection zero and $\Autz(\Al_{2,0})$ is a maximal connected algebraic subgroup of $\Bir(\Al_{2,0})$ which fits into a short exact sequence:
        \[
        0 \to \mathbb{G}_a \to \Autz(\Al_{2,0}) \to \Autz(C) \to 0.
        \]
        \item\label{dim_2:2} If $C$ is an elliptic curve, then the ruled surface $\Al_{2,1}$ has infinitely many minimal sections of self-intersection one and $\Autz(\Al_{2,1})$ is a maximal connected algebraic subgroup of $\Bir(\Al_{2,1})$ which fits into a short exact sequence:
        \[
        0 \to (\mathbb{Z}/2\mathbb{Z})^2 \to \Autz(\Al_{2,1}) \to \Autz(C) \to 0.
        \]
        \item\label{dim_2:3} Let $\Ll \in \Pic^0(C)$ non-trivial and $S=\p(\Ol_C\oplus \Ll)$. Then $S$ has precisely two minimal sections, which are disjoint and of self-intersection zero, and $\Autz(S)$ is a maximal connected algebraic subgroup of $\Bir(S)$ which fits into a short exact sequence:
        \[
        0 \to \mathbb{G}_m \to \Autz(S) \to \Autz(C) \to 0.
        \]
        Moreover, the algebraic group $\Aut(S)$ fits into a short exact sequence:
        \[
        1 \to \mathbb{G}_m \rtimes \Omega \to \Aut(S) \to \Aut(C) ,
        \]
        where $\Omega = \mathbb{Z}/2\mathbb{Z}$ if and only if $\Ll$ is two-torsion, otherwise $\Omega$ is trivial.
    \end{enumerate}
\end{theorem}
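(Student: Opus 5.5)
Each of the three parts of \autoref{thm:dim_2} has two components: the structure of $\Autz(S)$, and the maximality of $\Autz(S)$ in $\Bir(S)$. I will prove the structural statements first. By Blanchard's lemma (\autoref{lem:blanchard}), $\tau_*\colon\Autz(S)\to\Autz(C)$ is a morphism of algebraic groups.

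Its kernel $\Autz(S)_C$ is identified with the connected automorphisms of $\p(\El)$ over $C$. Over $C$, these are the projectivised automorphisms $\Aut(\El)/\G_m$. For $\El=\El_{2,0}$, $\mathrm{End}(\El_{2,0})=\k[\epsilon]/(\epsilon^2)$ (using $\Gamma(C,\El_{2,0})=\k$ and the non-split extension), so the kernel is $\G_a$. For $\El=\Ol_C\oplus\Ll$ with $\Ll\in\Pic^0(C)$ non-trivial, $\mathrm{Hom}(\Ll,\Ol_C)=\mathrm{Hom}(\Ol_C,\Ll)=0$. Hence $\mathrm{End}(\El)=\k^2$ and the kernel is $\G_m$. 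For $\El_{2,1}$, which is stable, $\mathrm{End}=\k$. The kernel is then finite, namely the translations by $2$-torsion line bundles fixing $\El_{2,1}$, i.e.\ $\{\Ml\in\Pic^0\mid \El_{2,1}\otimes\Ml\simeq\El_{2,1}\}=\Pic^0[2]\simeq(\Z/2)^2$ by Atiyah.

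Surjectivity of $\tau_*$ onto $\Autz(C)$ comes from homogeneity: $t^*\p(\El)\simeq\p(\El)$ for all translations $t$ (\autoref{rmk:Atiyah} for $\Al_{2,d}$; $t^*\Ll\simeq\Ll$ for degree $0$). By a standard descent argument, this yields an action of an extension of $\Autz(C)$. The minimal sections are read off from line subbundles of maximal degree: the unique $\Ol_C\subset\El_{2,0}$; the two summands of $\Ol_C\oplus\Ll$, which are disjoint; and the infinitely many degree-$0$ subbundles of $\El_{2,1}$, namely $\Ll$ with $\Ll^{-1}(p_0)$ effective after twisting.

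For the description of $\Aut(S)$, an element either preserves or swaps the two minimal sections. A swap over an automorphism of $C$ is possible precisely when $\Ol_C\oplus\Ll\simeq(\Ll\oplus\Ol_C)$ up to twist compatible with the base map. Restricting to the kernel over $C$, this requires $\Ll\simeq\Ll^{-1}$, i.e.\ $\Ll$ is $2$-torsion, which gives $\Omega$.

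Maximality is the main obstacle. I would use the equivariant Sarkisov program in dimension two (\autoref{thm:Enrica}). Since $\Autz(S)$ acts transitively on $C$, it has no fixed point on $S$: an orbit mapping onto $C$ cannot be a point. The only $\Autz(S)$-invariant finite sets are therefore empty, so there are no equivariant blow-ups and no links of types I or II. Links of types III and IV would require a rational base, impossible since $g(C)=1$. Hence any $\Autz(S)$-equivariant birational map to a Mori fibre space is an isomorphism, giving maximality. The delicate point is showing there are no invariant points in the fibres, which follows because orbits surject onto $C$. Alternatively, one can cite \cite{Fong20,Maruyama} directly.
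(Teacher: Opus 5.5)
The paper gives no proof of this statement; it recalls it from \cite{Maruyama,Fong20,Fong21}. Your outline is the natural self-contained route. It uses Blanchard's lemma and \eqref{SES:Grothendieck} for the vertical part, homogeneity for the surjectivity of $\tau_*$ (this is \autoref{lem:pi-star surjective}), and maximal-degree line subbundles for the minimal sections. For maximality it uses the absence of $\Autz(S)$-fixed points to exclude links of type I and II, with types III and IV excluded as in \autoref{lem: links III and IV}. This parallels the paper's treatment of the rank-$3$ analogues, e.g.\ \autoref{lem:4:1_maximal}.

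Your maximality argument uses transitivity on $C$, so in part (3) you are tacitly taking $C$ elliptic. That is the intended reading: for $g(C)\geq2$, $\Autz(S)\simeq\G_m$ is not maximal by \cite[Theorem B]{Fong20}. Your description of the minimal sections of $\Al_{2,1}$ is garbled. The clean statement is that $h^0(\El_{2,1}\otimes\Ll^{-1})=1$ for every $\Ll\in\Pic^0(C)$ by Riemann--Roch, and stability forces the resulting map $\Ll\to\El_{2,1}$ to be saturated.

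\textbf{The gap: the computation of $\ker(\tau_*)$.} This kernel is $\Autz(S)\cap\Aut(S)_C$, which need not be connected. So it is not ``the connected automorphisms over $C$'', i.e.\ not $\Aut(\El)_C/\G_m$.

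In part (2), your own recipe gives the trivial group, since $\El_{2,1}$ is simple. You then switch to $\Omega\simeq(\Z/2\Z)^2$ without justification. \eqref{SES:Grothendieck} together with Atiyah only gives $\Aut(\Al_{2,1})_C\simeq(\Z/2\Z)^2$, hence $\ker(\tau_*)\subseteq(\Z/2\Z)^2$. The missing step is that all four automorphisms over $C$ lie in $\Autz(\Al_{2,1})$.

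One way to close it:
\begin{itemize}
\item $G:=\Autz(\Al_{2,1})$ is an extension of the elliptic curve $\Autz(C)$ by the finite group $H:=\ker(\tau_*)$, hence an elliptic curve.
\item For a fibre $F\simeq\p^1$ and $x\in F$, one has $G\cdot x\cap F=H\cdot x$.
\item If $H$ were cyclic, it would have at least two fixed points on $F$. Their $G$-orbits would be two disjoint sections, forcing $\El_{2,1}$ to be decomposable.
\item Hence $H=(\Z/2\Z)^2$.
\end{itemize}
Alternatively, use the presentation $\Al_{2,1}=(C\times\p^1)/(\Z/2\Z)^2$ from \autoref{lem:2-divisor}. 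There, translations of the first factor descend, and the translations by $C[2]$ give the four automorphisms over the base. You could also use \cite[Proposition 3.1]{Brion_homogeneous_projective_bundles}, as the paper does for $\Al_{3,1}$.

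The same conflation occurs in part (3) in the other direction. When $\Ll$ is $2$-torsion, you must check that the element of $\Aut(S)_C$ exchanging the two minimal sections is not in $\Autz(S)$. This holds because the connected group $\Autz(S)$ preserves each of the finitely many minimal sections.
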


\begin{remark}
    \begin{itemize}
    \item The algebraic group $G=\Autz(\Al_{2,1})$ is isomorphic to an elliptic curve. By \autoref{thm:almost product}, the ruled surface $\Al_{2,1}$ is isomorphic to the contracted product $G\times^H \p^1$, where $H=(\mathbb{Z}/2\mathbb{Z})^2$ and the $\p^1$-bundle structure morphism is identified to the projection on $G/H$. We will see in \autoref{ss:stable_proj} that this construction generalizes to $\Al_{3,1}$ and $\Al_{3,2}$.
    \item If $\Ll$ is two-torsion in \autoref{thm:dim_2} \autoref{dim_2:3}, the group $\Omega=\mathbb{Z}/2\mathbb{Z}$ comes from the two-torsion of $\Pic^0(C)$ and acts by permuting the two minimal sections of $S$. A similar phenomenon occurs for the $\p^2$-bundle $\p(\Ol_C\oplus \Ll \oplus \Ll^{\otimes 2})$, where $\Ll \in \Pic^0(C)$ is three-torsion, as we will see in \autoref{lem:sum_line_bundles}.
    \end{itemize}
\end{remark}

\subsection{Repetition on $\F_b$-bundles}

Blowing up a section of a $\P^2$-bundle over a curve gives rise to an $\F_1$-bundle. For self-containedness, we recall some results on $\F_b$-bundles (see \cite[Section 3]{BFT23} and \cite[Section 3]{Fong23} for details). 
In the following statement, we use the notation $U_{kl}:=U_k\cap U_l$ for open sets $U_k,U_l$.

\begin{lemma}[{\cite[Proposition 3.3.1]{BFT23}} and {\cite[Lemma 3.12]{Fong23}}]\label{lem:F_b-bundles}
    Let $b>0$ be an integer, $C$ a smooth projective curve and let $\tau\colon S\to C$ be a ruled surface with a section $\sigma$, and let $\pi \colon X\to S$ be a $\P^1$-bundle such that $\tau\pi$ is an $\F_b$-bundle. Then the following hold:
    \begin{enumerate}
    \item\label{F_b-bundles:1} There exist trivializations of $\tau \pi$ such that the transition maps are of the form
    \[
    \begin{array}{cccc}
        \phi_{kl}\colon & U_l \times \F_b & \dashrightarrow & U_k \times \F_b \\
        & (x,[y_0:y_1\ ; z_0:z_1]) & \longmapsto & (x,[y_0:\lambda_{kl}(x)y_1+ p_{kl}(x,z_0,z_1)y_0\ ; z_0:a_{kl}(x)z_0 + b_{kl}(x)z_1]),
    \end{array}
    \]
    where $\lambda_{kl}\in \Ol_C(U_{kl})^*$, $p_{kl}\in \Ol_C(U_{kl})[z_0,z_1]$ is homogeneous of degree $b$, and 
    \[
    \begin{pmatrix}
        1 & 0 \\ a_{kl} & b_{kl}
    \end{pmatrix} \in \GL_2(\Ol_C(U_{kl}))
    \]
    are the transition matrices of the $\P^1$-bundle $\tau$. Under this choice of coordinates, the section $\sigma$ is defined by the equation $\{z_0=0\}\subset S$. 

    \item\label{F_b-bundles:2} There exists a rank-$2$ vector bundle $\El$ such that $\P(\El)\simeq X$ and which fits into a short exact sequence
    \[
    0\to \Ol_S(b\sigma + \tau^*(D)) \to \El \to \Ol_S \to 0,
    \]
    where $D$ is a divisor on $C$ and the line subbundle $\Ol_S(b\sigma + \tau^*(D))$ corresponds to the section of $\pi$ spanned by the $(-b)$-sections along the fibres of $\tau\pi$. Moreover, the coefficients $\lambda_{kl}$ in \autoref{F_b-bundles:1} are the cocycles of the line bundle $\Ol_S(\tau^*(D))$.

    \item\label{F_b-bundles:3} We can choose $p_{kl}=0$ in \autoref{F_b-bundles:1} if and only if $\El$ is decomposable. In that case, we have $X\simeq \P(\Ol_S\oplus \Ol_S(b\sigma + \tau^*(D)))$.  
    \end{enumerate}
\end{lemma}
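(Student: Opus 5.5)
The plan is to build the $\F_b$-bundle structure directly from local trivializations and then read off the vector bundle data from the transition maps, following \cite[Proposition 3.3.1]{BFT23}. First, since $\tau\pi\colon X\to C$ is an $\F_b$-bundle, we choose an open cover $(U_k)$ of $C$ over which $\tau\pi$ is trivial, $X|_{U_k}\simeq U_k\times\F_b$. The transition maps are then $U_{kl}$-families of automorphisms of $\F_b$. For $b>0$, $\Aut(\F_b)$ preserves the fibration $\tau_b$ and the $(-b)$-section $\{y_0=0\}$. In the coordinates of \autoref{def:Hirzebruch_surface}, $\Aut(\F_b)$ is generated by $(y_0,y_1;z_0,z_1)\mapsto (y_0,\lambda y_1+p(z_0,z_1)y_0; A(z_0,z_1))$, where $A\in\GL_2$, $\lambda\in\G_m$ and $p$ is homogeneous of degree $b$, modulo the torus.

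The transitions must also respect the given $\p^1$-bundle $\pi\colon X\to S$ and the structure $\tau\colon S\to C$, so the $z$-part of $\phi_{kl}$ gives the transition matrices of $S$. Since $\tau$ has a section $\sigma$, we refine the cover so that $\sigma$ is $\{z_0=0\}$ locally. This forces the matrix to be lower triangular. Using the scaling of $z$ together with the compensating action on $y_0$ (via $\rho^{-b}$), we normalize it to $\begin{pmatrix}1&0\\a_{kl}&b_{kl}\end{pmatrix}$ and normalize the coefficient of $y_0$ to $1$. Item \autoref{F_b-bundles:1} then follows.

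For \autoref{F_b-bundles:2}, we view $X\to S$ locally as $\p$ of the rank-2 bundle with coordinates $(y_0,y_1)$. The transition matrices $\begin{pmatrix}1&0\\p_{kl}&\lambda_{kl}\end{pmatrix}$ (with the $\rho^{-b}$ twist) are triangular. Their triangularity gives a line subbundle corresponding to the section $\{y_0=0\}$, which is the union of the $(-b)$-curves, and quotient $\Ol_S$. The subbundle's cocycle is $\lambda_{kl}$ times the cocycle of $\Ol_S(b\sigma)$ coming from the $z$-weight $b$ via $z_0^b$ vanishing on $\sigma$. So the subbundle is $\Ol_S(b\sigma+\tau^*D)$, where $D$ is the divisor whose cocycle is $\lambda_{kl}$. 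Finally, $p_{kl}$ represents the extension class in $H^1(S,\Ol_S(b\sigma+\tau^*D))$ (after dualizing conventions). The extension splits if and only if this cocycle is a coboundary, that is, if and only if one can change trivializations to make $p_{kl}=0$. A splitting is equivalent to decomposability because any decomposition of a rank-2 bundle with this subbundle yields a complementary section disjoint from it; this gives \autoref{F_b-bundles:3}.

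The main obstacle is bookkeeping the sign and duality conventions for $\p(\El)$ and the $\rho^{-b}$ weight, so that the subbundle comes out as $\Ol_S(b\sigma+\tau^*D)$ and not its dual. In the decomposable case I would also check carefully that a splitting can be chosen compatibly with the normalized form of the transitions.
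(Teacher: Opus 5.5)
The paper gives no proof of this lemma; it only cites \cite[Proposition 3.3.1]{BFT23} and \cite[Lemma 3.12]{Fong23}. Your route is the one used in those references: write the transition maps of $\tau\pi$ in $\Aut(\F_b)$, normalize them with the $(\mu,\rho)$-torus, and read off the extension from the triangular shape. Parts (1) and (2) are fine at this level of detail, with two remarks.
\begin{itemize}
\item The charts $U_k\times\F_b$ are not trivializations of $\pi$. To get transition matrices of $\El$ you must pass to the cover $U_k\times\{z_i\neq0\}$ of $S$; this is where the factor $(z_1/z_0)^b$ producing $\Ol_S(b\sigma)$ appears.
\item After your normalization (the $(1,1)$-entry of the $z$-matrix and the coefficient of $y_0$ both equal to $1$), the cocycle conditions hold exactly, not only modulo the torus. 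This is what makes $(\lambda_{kl})$ a genuine cocycle of a line bundle on $C$.
\end{itemize}

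There is a gap in (3), in the direction ``$\El$ decomposable $\Rightarrow$ one can take $p_{kl}=0$''. Killing $(p_{kl})$ is equivalent to splitting the \emph{particular} extension $0\to\Nl\to\El\to\Ol_S\to0$, where $\Nl:=\Ol_S(b\sigma+\tau^*(D))$. Decomposability only gives $\El\simeq\Ml_1\oplus\Ml_2$ for some line bundles $\Ml_1,\Ml_2$. Your sentence about a decomposition ``with this subbundle'' presupposes that $\Nl$ is one of the summands, so it proves only the trivial direction.

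The missing step is where $b>0$ is used.
\begin{itemize}
\item On every fibre $f$ of $\tau$, the extension restricts to $0\to\Ol_{\P^1}(b)\to\El|_f\to\Ol_{\P^1}\to0$. This splits because $H^1(\P^1,\Ol_{\P^1}(b))=0$, so $\{\Ml_1|_f,\Ml_2|_f\}\simeq\{\Ol_{\P^1}(b),\Ol_{\P^1}\}$.
\item Since $\Ml_i\cdot f$ does not depend on $f$, you may assume $\Ml_2\cdot f=0$ for all fibres $f$.
\item The component $\Nl\to\Ml_2$ then restricts on each $f$ to a map $\Ol_{\P^1}(b)\to\Ol_{\P^1}$, which vanishes because $b>0$. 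Hence $\Nl\to\Ml_2$ is zero.
\item Since $\Nl\subset\El$ is a subbundle, $\Nl\to\Ml_1$ is fibrewise injective, hence an isomorphism. So $\Ml_2$ is a complement of $\Nl$ and the extension splits.
\end{itemize}
Geometrically: the disjoint sections $\P(\Ml_1)$ and $\P(\Ml_2)$ of $\pi$ cut each fibre $\F_b$ of $\tau\pi$ in two disjoint sections of the ruling. One of these must be the unique $(-b)$-curve, so one of $\P(\Ml_1),\P(\Ml_2)$ is $\P(\Nl)$.

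For the other direction you should also check that killing $p_{kl}$ keeps the form (1). A local splitting over $\tau^{-1}(U_k)$ differs from the coordinate one by $y_1\mapsto y_1+q_k(x,z_0,z_1)\,y_0$ with $q_k$ homogeneous of degree $b$. So a splitting is realized by a change of trivialization that preserves the form (1), and part (3) follows.
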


\begin{remark}\label{rmk:invariants}
    In \autoref{lem:F_b-bundles}, the linear class of the divisor $D$ is an invariant for the $\P^1$-bundle $\pi\colon X\to S$, and we say that $\pi$ has invariants $(S,b,D)$ (or equivalently, $(S,b,\mathcal{O}_C(D))$), see \cite[Remark 3.14, Definition 3.15]{Fong23}.
\end{remark}

The following lemma gives a necessary condition for the automorphism group of an $\F_b$-bundle to be a maximal connected algebraic subgroup of $\Bir(X)$ when $b>0$.

\begin{lemma}[{\cite[Lemma 3.7.1]{BFT23}} and {\cite[Proposition 4.5]{Fong23}}]\label{lem:F_b_trivial_action_on_C}
    Let $b>0$ be an integer and $C$ a smooth projective curve. Let $\pi \colon X\to S$ and $\tau\colon S\to C$ be $\P^1$-bundles such that $\tau\pi$ is an $\F_b$-bundle. 
    If $\Autz(X)$ has a fixed point on $C$, then $\Autz(X)$ is not a maximal connected algebraic subgroup of $\Bir(X)$.
\end{lemma}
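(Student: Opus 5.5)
The strategy is to produce, for such $X$, a connected algebraic group acting regularly on another variety $X'$. This group will strictly contain a conjugate of $\Autz(X)$ via an equivariant birational map, which shows that $\Autz(X)$ is not maximal.

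Let $p\in C$ be a point fixed by $\Autz(X)$. Its fibre $F_p=(\tau\pi)^{-1}(p)\simeq\F_b$ is then $\Autz(X)$-invariant. Inside $F_p$, the $(-b)$-curve $\sigma_p$ is invariant because it is the unique negative curve on $\F_b$ for $b>0$. The global $(-b)$-section $s\subset X$ from \autoref{lem:F_b-bundles}\autoref{F_b-bundles:2} is also invariant, and $\sigma_p=s\cap F_p$ lies on both. In the trivializations of \autoref{lem:F_b-bundles}\autoref{F_b-bundles:1}, choose the chart $U_k\ni p$ so that $p$ is the origin of a local parameter $x$.

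\textbf{Step 1.} Perform an elementary transformation along $F_p$. One option is to blow up $\sigma_p$ and contract the strict transform of $F_p$. The other is to modify the $\p^1$-bundle $\pi$ over the fibre $f_p=\tau^{-1}(p)$ of $S$, which changes the invariant divisor $D$ to $D\pm p$ (equivalently, changes $\lambda_{kl}$ by a factor of $x$). In coordinates, this is the birational map
\[
(x,[y_0:y_1;z_0:z_1])\mapsto (x,[y_0:x y_1;z_0:z_1])
\]
on $U_k\times\F_b$, extended by the identity on the other charts. It produces an $\F_b$-bundle $X'$ with invariants $(S,b,D+p)$, with new transition data $\lambda'_{kl}=x\lambda_{kl}$ and $p'_{kl}=p_{kl}$ (suitably rescaled). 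Because $p$ and $\sigma_p$ are $\Autz(X)$-invariant, this map is $\Autz(X)$-equivariant by Blanchard's lemma (\autoref{lem:blanchard}), so $\Autz(X)$ is conjugate into $\Autz(X')$.

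\textbf{Step 2.} Show that the inclusion is strict. Consider automorphisms of $X'$ of the form $y_1\mapsto y_1+q(x,z)y_0$, where $q$ is a homogeneous polynomial of degree $b$ in $z_0,z_1$ with coefficients in sections of a suitable line bundle. In $X$ these correspond to sections of $\Ol_S(b\sigma+\tau^*D)$ and in $X'$ to sections of $\Ol_S(b\sigma+\tau^*(D+p))$. Since $b>0$, Riemann–Roch on $C$ (via $\tau_*$) gives
\[
h^0(\Ol_S(b\sigma+\tau^*(D+p)))>h^0(\Ol_S(b\sigma+\tau^*D)),
\]
at least after replacing $p$ by a multiple $np$ if needed. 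The unipotent group of such fibrewise translations is therefore strictly larger for $X'$. Its elements that vanish at $p$ to order $\ge1$ correspond to those on $X$, and the remaining ones do not conjugate back to regular automorphisms of $X$. Hence $\Autz(X)\subsetneq\varphi^{-1}\Autz(X')\varphi$, and the latter is a connected algebraic subgroup of $\Bir(X)$.

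The main obstacle is the equivariance and strictness bookkeeping. Concretely, one must verify that the elementary transformation is defined by $\Autz(X)$-invariant centres, which needs care since $\Autz(X)$ may move $p_{kl}$-terms, and that $\Autz(X')$ genuinely gains dimension. If the direct dimension count is delicate, I would fall back on the arguments of \cite[Lemma 3.7.1]{BFT23} and \cite[Proposition 4.5]{Fong23}, which treat exactly this situation. Those arguments work with the explicit transition functions above, so over $C$ the proof goes through verbatim with the fixed point $p$ in place of $[0:1]\in\p^1$.
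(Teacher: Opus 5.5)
Your argument is correct in substance, but it follows a different route from the paper. The paper reproves nothing. For $g(C)=0$ it cites \cite[Lemma 3.7.1]{BFT23}. For $g(C)\geq1$ it observes that a fixed point on $C$ forces $(\tau\pi)_*\colon\Autz(X)\to\Autz(C)$ to be trivial, because $\Autz(C)$ is either trivial or acts freely by translations. That is exactly the hypothesis of \cite[Proposition 4.5]{Fong23}, which it then invokes.

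You instead give a direct, genus-independent argument that uses only the fixed point $p$:
\begin{itemize}
\item The $(-b)$-curve $\sigma_p$ of the invariant fibre is invariant because $b>0$.
\item The elementary transformation $y_1\mapsto xy_1$ centred at $\sigma_p$ is equivariant and multiplies $\lambda_{kl}$ by $x$.
\item The vector group of translations $y_1\mapsto y_1+qy_0$ is parametrized by $H^0(S,L)$ with $\lambda_{kl}q_l(z)=q_k(z_0,a_{kl}z_0+b_{kl}z_1)$. No $p_{kl}$ enters this condition, so your worry about moving $p_{kl}$-terms is moot. After the transformation the group becomes $H^0(S,L\otimes\tau^*\Ol_C(p))\supseteq H^0(S,L)$.
\end{itemize}
This buys a self-contained proof that exhibits an explicit larger group, and it needs no reduction to trivial action on $C$. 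The cost is bookkeeping that the paper outsources to the literature.

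Two points should be made precise.

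First, a single transformation need not enlarge the group; for instance, $\tau_*L\otimes\Ol_C(p)$ may have no sections. So you must genuinely iterate, using $\Phi\colon(x,[y_0:y_1;z_0:z_1])\mapsto(x,[y_0:x^ny_1;z_0:z_1])$.
\begin{itemize}
\item $\Phi$ is equivariant because at each stage the new fibre over $p$ is still invariant and has a unique $(-b)$-curve.
\item If $q\in H^0(S,L\otimes\tau^*\Ol_C(np))\setminus H^0(S,L)$, then $\Phi^{-1}t_q\Phi$ is $y_1\mapsto y_1+(q_k/x^n)y_0$ on $U_k$. This is not regular on $X$, so $t_q\notin\Phi\Autz(X)\Phi^{-1}$.
\item Such a $q$ exists for $n\gg0$. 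Indeed, $\tau_*L$ is locally free of rank $b+1\geq1$, so $h^0(\tau_*L\otimes\Ol_C(np))\to\infty$ by Riemann--Roch and Serre vanishing.
\end{itemize}
Note that $b>0$ plays no role in this count; it is used only for the invariance of $\sigma_p$.

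Second, two minor corrections. The two ``options'' in Step 1 are the same map, and the new coefficients are $p'_{kl}=xp_{kl}$. Once this is written out, the fallback to adapting \cite{BFT23} ``verbatim'' is unnecessary.
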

\begin{proof}
If $g(C)=0$, the statement is \cite[Lemma 3.7.1]{BFT23}. If $g(C)\geq1$, then $\Autz(X)$ acting with a fixed point on $C$ implies that it acts trivially, i.e. $(\tau\pi)_*\colon \Autz(X)\to \Autz(C)$ is trivial, and the claim is proven in \cite[Proposition 4.5]{Fong23}.
\end{proof}

The case of $\F_0$-bundles is particularly explicit, as they correspond to fibre products of ruled surfaces.

\begin{lemma}[{\cite[Lemmas 3.6 and 3.8]{Fong23}}]\label{lem:F_0-bundles}  
    Let $C$ be a smooth projective curve, let $\tau\colon S\to C$ and $\pi \colon X\to S$ be $\P^1$-bundles such that $\tau\pi$ is an $\F_0$-bundle. Then there exists a ruled surface $\tau'\colon S'\to C$ such that 
    \[
    X \simeq S\times_C S',\text{ and }
    \Autz(X) \simeq \Autz(S) \times_{\Autz(C)} \Autz(S').
    \]
    Conversely, every fibre product of ruled surfaces yields an $\F_0$-bundle over $C$.
\end{lemma}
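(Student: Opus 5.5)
The statement is cited from \cite[Lemmas 3.6 and 3.8]{Fong23}, so the plan is to recover it directly from the normal form of \autoref{lem:F_b-bundles}. In the case $b=0$ the section $\sigma$ plays no role, so I will argue with transition maps directly. Choose a trivializing open cover $(U_k)$ of $C$ for the $\F_0$-bundle $\tau\pi$. Each fibre is then $\F_0\simeq\p^1\times\p^1$, and the transition maps $\phi_{kl}$ lie in $\Aut(U_{kl}\times\p^1\times\p^1)$ over $U_{kl}$. Since $\Aut(\p^1\times\p^1)=(\PGL_2\times\PGL_2)\rtimes\Z/2$, each $\phi_{kl}$ is given over $U_{kl}$ by a pair of matrices, possibly composed with the swap of the two factors.

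The first step is to exclude the swap. The projection $X\to S\to C$ distinguishes one ruling on every fibre: it is the ruling whose lines are the fibres of $\pi$. The transition maps must respect $\pi$, so $\phi_{kl}(x,[y],[z])=(x,A_{kl}(x)[y],B_{kl}(x)[z])$, where $A_{kl},B_{kl}\in\PGL_2(\Ol_C(U_{kl}))$ and the $z$-coordinate is the one coming from $S$. The $B_{kl}$ are then the transition maps of $\tau$. Because composition is componentwise, the $A_{kl}$ satisfy the cocycle condition on their own. They therefore glue to a $\p^1$-bundle $\tau'\colon S'\to C$, and the map $(x,[y],[z])\mapsto((x,[z]),(x,[y]))$ glues to an isomorphism $X\simeq S\times_C S'$ over $S$. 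The converse is immediate: a fibre product of two ruled surfaces is locally $U\times\p^1\times\p^1$, and its projection to $S$ is a $\p^1$-bundle.

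For the automorphism groups, the plan is as follows. Blanchard's lemma (\autoref{lem:blanchard}) applied to $X\to S$ gives a homomorphism $\Autz(X)\to\Autz(S)$. To get the second projection, I use that $\Autz(X)$ is connected and so preserves each of the two rulings on the fibres of $\tau\pi$. Hence it also preserves the second projection $X\to S'$; Blanchard's lemma applies again, since $(pr_{S'})_*\Ol_X=\Ol_{S'}$. Both homomorphisms induce the same action on $C$, which gives a morphism $\Autz(X)\to\Autz(S)\times_{\Autz(C)}\Autz(S')$. It is injective because $X\hookrightarrow S\times S'$. It is surjective because any pair $(g,g')$ inducing the same automorphism of $C$ acts on the fibre product componentwise. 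The fibre product of the two groups is connected, or at least its neutral component is hit, so the image equals $\Autz$.

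The main obstacle I expect is the connectedness of $\Autz(S)\times_{\Autz(C)}\Autz(S')$. This group is an extension of the image in $\Autz(C)$ by $\Autz(S)_C\times\Autz(S')_C$, and the latter could be disconnected, for instance $(\Z/2)^2$ in the case of $\Al_{2,1}$. I would handle this by interpreting the isomorphism as an identification of the neutral components; alternatively, I would check connectedness case by case using \autoref{thm:dim_2}.
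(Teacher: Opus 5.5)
Your argument is correct and is the natural one; the paper gives no proof of its own, only the citation of \cite{Fong23}. For the structure statement you use charts in which the transition maps lie in $\PGL_2\times\PGL_2$, so that the first factor glues to $S'$. For the groups you use Blanchard's lemma together with the componentwise action. Two minor remarks:
\begin{itemize}
\item The charts must be chosen adapted to $\pi$. On a connected $U_k$ the fibres of $\pi$ cut out the same ruling on every fibre, so after possibly swapping the factors, $\pi$ becomes a coordinate projection.
\item Your step that $\Autz(X)$ preserves the rulings is superfluous, since \autoref{lem:blanchard} applies directly to $pr_{S'}$.
\end{itemize}

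The obstacle you flag has a definite answer, and your second option (checking connectedness case by case) cannot work. Take $C$ elliptic and $S=S'=\Al_{2,1}$. By \autoref{thm:dim_2}, $E:=\Autz(\Al_{2,1})$ is an elliptic curve, and $E\to\Autz(C)$ is an isogeny with kernel $H\simeq(\Z/2\Z)^2$. Hence
\[
\Autz(S)\times_{\Autz(C)}\Autz(S')=\{(a,b)\in E\times E\mid a-b\in H\}\simeq E\times H,
\]
which has four connected components. Your own argument shows that, for $X=\Al_{2,1}\times_C\Al_{2,1}$, the group $\Autz(X)$ is the diagonal $\{(a,a)\}\simeq E$.

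So, read as a scheme-theoretic fibre product, the isomorphism in the statement is false for this $X$, which appears in \autoref{thm:main}. What holds is exactly what your proof gives:
\[
\Autz(X)\simeq\bigl(\Autz(S)\times_{\Autz(C)}\Autz(S')\bigr)^{\circ}.
\]
The componentwise homomorphism from the fibre product to $\Aut(X)$ is injective, and its restriction to the neutral component is inverse to the Blanchard map.

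This is harmless for the paper. It only uses the lemma through the induced actions of $\Autz(X)$ on $S$ and $S'$, in \autoref{lem:O+L+M} and \autoref{ex:A(S,b,M)}. In both places the two maps $\Autz(S)\to\Autz(C)$ and $\Autz(S')\to\Autz(C)$ are surjective with connected kernels $\G_m$ or $\G_a$, so the fibre product is connected anyway.
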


\subsection{Automorphisms of $\P^2$-bundles over a curve}

Let $\El$ be a vector bundle above a smooth projective curve $C$ and $X=\p(\El)$. Let $\pi\colon X\to C$ and $\pi'\colon\El\to C$ be the structure morphisms. We define
\[
\Aut(X)_C=\{\alpha\in\Aut(X)\mid \pi\circ\alpha=\pi\},
\qquad
\Aut(\El)_C=\{\alpha\in\Aut(\El)\mid  \pi'\circ\alpha=\pi'\}
\]

\begin{remark}\label{rmk:seq}
   In general, not every automorphism of $X$ over $C$ is induced by an automorphism of $\El$. In fact, there may exist a line bundle $\Nl$ such that $\El\simeq\El\otimes\Nl$, which induces an automorphism of $X$. In this case, notice that $\Nl$ is automatically torsion.
   
   By \cite[\S 5]{Grothendieck_geoform_geoalg}, there exists a short exact sequence
	\begin{equation}\label{SES:Grothendieck} \tag{$\dagger$}
	0 \to \Aut(\El)_C/\mathbb{G}_m \to \Aut(X)_C \to \Omega \to 0, 
	\end{equation}
	where $\Omega$ is a subgroup of the $\mathrm{rank}(\El)$-torsion subgroup of $\Pic^0(C)$ consisting of line bundles $\Nl$ such that $\El \simeq \El \otimes \Nl$. When $C$ is an elliptic curve and $\mathrm{rank}(\El)=3$, then $\Nl^{\otimes 3}\simeq \Ol_C$, so we can identify $\Omega$ with a subgroup of $(\mathbb{Z}/3\mathbb{Z})^2$.
    \end{remark}

	\begin{lemma}\label{lem:sum_line_bundles}
		Let $X$ be a decomposable $\p^2$-bundle over a smooth projective curve $C$. Then $\Omega\neq 0$ if and only if 
        \[X\simeq \p( \Ol_C\oplus \Ll \oplus \Ll^{\otimes 2})\] 
        where $\Ll$ is a non-trivial $3$-torsion line bundle. In this case, $\Omega = \mathbb{Z}/3\mathbb{Z}$, and the short exact sequence \autoref{SES:Grothendieck} splits.
	\end{lemma}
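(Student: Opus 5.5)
Write $X\simeq\p(\El)$ with $\El$ decomposable of rank $3$. By Krull--Schmidt (\cite{Ati56}), either $\El\simeq\El_2\oplus\Ml$ with $\El_2$ indecomposable of rank $2$, or $\El\simeq\Ll_1\oplus\Ll_2\oplus\Ll_3$ with the $\Ll_i$ line bundles. Let $\Nl$ be a line bundle with $\El\otimes\Nl\simeq\El$, representing a class in $\Omega$. By uniqueness of the Krull--Schmidt decomposition, tensoring by $\Nl$ permutes the isomorphism classes of the indecomposable summands, and the permutation must respect ranks.

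First I treat the case $\El\simeq\El_2\oplus\Ml$. The unique rank-one summand is fixed, so $\Ml\otimes\Nl\simeq\Ml$. Taking determinants of line bundles gives $\Nl\simeq\Ol_C$. Hence $\Omega=0$ in this case.

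Next the split case. Normalise so that $\Ll_1=\Ol_C$. The permutation induced by $\otimes\Nl$ is a permutation $s\in\mathrm{Sym}_3$ with $\Ll_i\otimes\Nl\simeq\Ll_{s(i)}$.
\begin{itemize}
\item If $s$ fixes some index, then $\Nl$ is trivial, exactly as in the first case.
\item If $s$ is a transposition, it fixes the third index, so again $\Nl$ is trivial.
\item So a non-trivial $\Nl$ forces $s$ to be a $3$-cycle. Then $\Ll_2\simeq\Nl$ or $\Ll_2\simeq \Nl^{\otimes 2}$, and the summands are $\Ol_C,\Nl,\Nl^{\otimes2}$ up to order, with $\Nl^{\otimes3}\simeq\Ol_C$ and $\Nl$ non-trivial.
\end{itemize}
This gives $X\simeq\p(\Ol_C\oplus\Ll\oplus\Ll^{\otimes2})$ with $\Ll=\Nl$ a non-trivial $3$-torsion bundle. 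Conversely, for such $\Ll$, tensoring by $\Ll$ cyclically permutes the three summands, so $\Ll\in\Omega$. If $\Nl\in\Omega$, then $\Nl\in\{\Ol_C,\Ll,\Ll^{\otimes2}\}$, because $\Nl\simeq\Ll_{s(1)}\otimes\Ll_1^{-1}$ is one of the summands. So $\Omega=\{\Ol_C,\Ll,\Ll^{\otimes 2}\}\simeq\Z/3\Z$. (The three summands are pairwise non-isomorphic, so the $3$-cycle is well defined.)

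For the splitting of \autoref{SES:Grothendieck}, I write an explicit automorphism of order $3$ of $X$ over $C$ mapping to a generator of $\Omega$. Fix an isomorphism $u\colon\Ll^{\otimes3}\to\Ol_C$. Define $\phi\colon\El\to\El\otimes\Ll$ on summands by
\[
\Ol_C\to\Ll\otimes\Ll^{\otimes 2}\simeq \Ol_C\otimes \Ll^{\otimes 3},\quad\text{i.e. the component maps}\quad \Ol_C\to \Ol_C\otimes\Ll,\ \Ll\to\Ll\otimes\Ll,\ \Ll^{\otimes2}\to \Ol_C\otimes\Ll ,
\]
where the first two are the identity isomorphisms $\Ll_i\otimes\Ll=\Ll_{i+1}$ and the last uses $u$. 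Its projectivisation is an automorphism $\bar\phi$ of $X$ over $C$. The cube $\phi^3\colon\El\to\El\otimes\Ll^{\otimes3}\simeq\El$ is, on each summand, the same scalar multiple of the identity determined by $u$, since every summand passes once through $u$. Hence $\bar\phi^3=\mathrm{id}$, while $\bar\phi$ maps to $\Ll\in\Omega$. So $\langle\bar\phi\rangle\simeq\Z/3\Z$ is a section of $\Aut(X)_C\to\Omega$.

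The main subtle step is this splitting: one must check that $\phi^3$ is a scalar, not merely a diagonal automorphism. I handle it by choosing all identifications $\Ll^{\otimes i}\otimes\Ll^{\otimes j}\simeq\Ll^{\otimes(i+j)}$ canonically and using a single trivialisation $u$, so each summand acquires exactly one factor of $u$.
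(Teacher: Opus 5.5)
Your proof is correct and follows the paper's argument. Krull--Schmidt reduces to two cases: $\El_2\oplus\Ml$, where $\Ml\otimes\Nl\simeq\Ml$ kills $\Nl$; and three line bundles, where a non-trivial $\Nl$ must act as a $3$-cycle, forcing the summands $\Ol_C,\Nl,\Nl^{\otimes 2}$. The splitting then comes from the cyclic-shift isomorphism $\El\simeq\El\otimes\Ll$, whose cube is a scalar. Your single trivialisation $u$ of $\Ll^{\otimes 3}$ just makes explicit the identification $\Ll^{\otimes 3}\simeq\Ol_C$ that the paper uses tacitly when it writes $\Phi_{kl}\otimes\lambda_{kl}=\mathrm{diag}(\lambda_{kl},\lambda_{kl}^2,1)$ and the permutation matrix $A$.

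One slip to fix: the component maps you list ($\Ol_C\to\Ol_C\otimes\Ll$, $\Ll\to\Ll\otimes\Ll$, $\Ll^{\otimes 2}\to\Ol_C\otimes\Ll$) are all zero, since $\Ll$ is non-trivial of degree $0$. What you mean is the shift $\Ll^{\otimes i}\otimes\Ll\to\Ll^{\otimes(i+1)}$, given by the identity for $i=0,1$ and by $u$ for $i=2$. This is an isomorphism $\El\otimes\Ll\to\El$, whose inverse is your $\phi$.
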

	
	\begin{proof}
		Let $\Nl\in \Omega$ and let $\El$ be a rank-$3$ vector bundle such that $X\simeq \p(\El)$. Since $\El$ is decomposable, there exists a rank-$2$ vector bundle $\El'$ and a line bundle $\Ml$ such that $\El \simeq \El' \oplus \Ml$. 
        
        If $\El'$ is indecomposable, then $\El \simeq \El \otimes \Nl$ implies that $\Ml \simeq \Ml \otimes \Nl$, so $\Nl \simeq \Ol_C$ and $\Omega =0$. 
        
        If $\El'$ is decomposable, then after tensoring with a line bundle we can assume that $\El \simeq \Ol_C \oplus \Ll \oplus \Ml$ for some line bundle $\Ll$. Then $\El\simeq \El \otimes \Nl$ gives
        \[
        \Ol_C \oplus \Ll \oplus \Ml \simeq \El\simeq \El\otimes\Nl\simeq \Nl \oplus (\Ll \otimes \Nl) \oplus (\Ml \otimes \Nl).
        \]
        If $\Nl$ is non-trivial, Krull-Schmidt theorem yields that
        $\Ll \otimes \Nl$ or $\Ml \otimes \Nl$ is trivial. If $\Ml \otimes \Nl$ is trivial, then $\Ll \simeq \Nl$ and $\Ll \otimes \Nl \simeq \Ml$. Hence, $\Ml\simeq \Ll^{\otimes 2}$ has order $3$. If $\Ll \otimes \Nl$ is trivial, then $\Ml \simeq \Nl$ and $\Ml^{\otimes 2}\simeq \Ll $ has order $3$. In both cases, we have $\Omega=\{\Ol_C,\Ll,\Ml\}$, which is a subgroup of $\Pic^0(C)[3]$ isomorphic to $\mathbb{Z}/3\mathbb{Z}$. 

        It remains to see that the short exact sequence splits for $\El \simeq \Ol_C\oplus \Ll \oplus \Ll^{\otimes 2}$, where $\Ll$ is a non-trivial $3$-torsion line bundle and $\Omega = \{\Ol_C ,\Ll,\Ll^{\otimes 2}\}$.
        There exist trivializations of $\El$ and $\El \otimes \Ll$, such that their respective transition matrices are
        \[ \Phi_{kl}=
	       \begin{pmatrix}
		      1 & 0 & 0 \\
		      0 & \lambda_{kl} & 0 \\
		      0 & 0& \lambda_{kl}^2
	       \end{pmatrix}, ~
               \Phi_{kl} \otimes \lambda_{kl} = 
	       \begin{pmatrix}
		      \lambda_{kl} & 0 & 0 \\
		      0 & \lambda_{kl}^2 & 0 \\
		      0 & 0 & 1
	       \end{pmatrix},
	       \]
           where $\lambda_{kl}$ are the cocycles of $\Ll$. Let $A$ be the invertible matrix
           \[
          A=  \begin{pmatrix}
		      0 & 0 & 1 \\
		      1& 0 & 0 \\
		      0 & 1 & 0
	       \end{pmatrix}.
           \]
           Then $ \Phi_{kl}\cdot A = A\cdot  \Phi_{kl}\otimes \lambda_{kl}$, which means that $A$ induces an isomorphism $\El \simeq \El\otimes \Ll$, which in turn gives an element $f_A\in \Aut(X)_C$ that is mapped to $\Ll\in\Omega$ in \autoref{SES:Grothendieck}. Since $A$ has order $3$, the subgroup generated by $f_A$ gives a section of $\Omega$ in $\Aut(X)_C$.           
	\end{proof}

    \begin{remark}
    Let $\El=\Ol_C\oplus \Ll \oplus \Ll^{\otimes 2}$ and $X=\p( \El)$, where $\Ll$ is a non-trivial $3$-torsion line bundle as in \autoref{lem:sum_line_bundles}. Then \autoref{SES:Grothendieck} splits, i.e., $$\Aut(X)_C \simeq (\Aut(\El)_C/\mathbb{G}_m )\rtimes \mathbb{Z}/3\mathbb{Z}.$$ Then, in \autoref{lem:sum_line_bundles} there exist three disjoint sections of the $\p^2$-bundle $X\to C$ corresponding to be line subbundles $\Ol_C$, $\Ll$ and $\Ll^{\otimes 2}$ of $\El$, that are permuted by $\Aut(X)_C$. The stabilizer in $\Aut(X)_C$ of any such section is the subgroup $\Aut(\El)_C/\mathbb G_m$.
    \end{remark}

    The following remark introduces a key matrix computation that will be used to determine the group $\Aut(\El)_C$ in the decomposable rank-$3$ cases.
    
	\begin{remark}\label{key_computation}
    Let $\El$ be a decomposable rank-$3$ vector bundle of the form $\El\simeq \El'\oplus\Ml$, where $\Ml$ is a line bundle and $\El'$ is a the rank-$2$ vector bundle containing a trivial line subbundle $\Ol_C \subset \El'$ and we set $\Ll:=\El'/\Ol_C$. Later in \autoref{lem:hom_proj_bundles} and \autoref{lem:non-hom_proj_bundles}, all decomposable vector bundles are of this form.
    
    Then, we can choose a trivializing open cover $(U_k)_k$ of $C$ such that the transition matrices of $\El$ equal
	\[
	T_{kl} = 
	\begin{pmatrix}
		1 & \alpha_{kl} & 0 \\
		0 & \lambda_{kl} & 0 \\
		0 & 0& \mu_{kl}
	\end{pmatrix},
	\]
	where $\lambda_{kl}, \mu_{kl}\in \Ol_C(U_{kl})^*$ are respectively the cocycles of the line bundles $\Ll$ and $\Ml$, $\alpha_{kl}\in \Ol_C(U_{kl})$, and $U_{kl} = U_k \cap U_l$.   
	
	On $U_k$ and $U_l$ an element of $\Aut(\El)_C$ is, respectively, given by invertible matrices
	\[
	M_{k} = 
	\begin{pmatrix}
		a_k & b_k & c_k \\
		d_k & e_k & f_k \\
		g_k & h_k& i_k
	\end{pmatrix}, ~
	M_{l} = 
	\begin{pmatrix}
		a_l & b_l & c_l \\
		d_l & e_l & f_l \\
		g_l & h_l& i_l
	\end{pmatrix},
	\]	
    with coefficients in $\Ol_C(U_k)$ and $\Ol_C(U_l)$.
	Developing the equality $M_k T_{kl} =  T_{kl} M_l$, we obtain the following equality in $\GL_3(\Ol_C(U_{kl}))$:
    \[
	\begin{pmatrix}
		a_k & \alpha_{kl}a_k + \lambda_{kl}b_k & \mu_{kl}c_k \\
		d_k & \alpha_{kl}d_k+ \lambda_{kl}e_k & \mu_{kl}f_k \\
		g_k & \alpha_{kl}g_k+ \lambda_{kl}h_k& \mu_{kl}i_k 
	\end{pmatrix}
    = 
	\begin{pmatrix}
		a_l +\alpha_{kl}d_l & b_l+\alpha_{kl}e_l & c_l+\alpha_{kl}f_l \\
		\lambda_{kl} d_l & \lambda_{kl}e_l & \lambda_{kl}f_l \\
		\mu_{kl}g_l & \mu_{kl}h_l& \mu_{kl}i_l
	\end{pmatrix}.
    \]
    Identifying the coefficients, we obtain the following conditions:
	\[
	\left\{ 
	\begin{aligned}
		d_k & = \lambda_{kl} d_l, \\
		g_k & =  \mu_{kl} g_l, \\ 
		\mu_{kl} f_k & =  \lambda_{kl} f_l,\\
        \mu_{kl} i_k & = \mu_{kl} i_l.
	\end{aligned}
	\right.
	\]
	This yields global sections $d\in \Gamma(C,\Ll)$, $g\in \Gamma(C,\Ml)$, $f\in \Gamma(C,\Ml^{-1}\otimes \Ll)$, and $i\in \k^*$. 
    \end{remark}

For the rest of this section, we focus on the case where $C$ is an elliptic curve. We will treat the case $g(C)\geq2$ in \autoref{s:higher genus}.

\subsection{The homogeneous $\P^2$-bundles over an elliptic curve}

\begin{lemma}\label{lem:pi-star surjective}
    Let $C$ be an elliptic curve. Let $\El$ be a vector bundle on $C$ and denote by $\pi\colon \P(\El)\to C$ the projective bundle structure morphism. Then the morphism $\pi_*\colon \Autz(\P(\El))\to \Autz(C)$ is surjective if and only if $\El$ is semi-homogeneous.
\end{lemma}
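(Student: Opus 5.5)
We prove the two implications separately, both resting on the fact that an automorphism of $\P(\El)$ lying over a translation $t$ of $C$ is the same thing as an isomorphism $\P(\El)\simeq t^*\P(\El)$ over $C$.

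For the implication "$\pi_*$ surjective $\Rightarrow$ $\El$ semi-homogeneous", take $t\in\Autz(C)$ and choose $f\in\Autz(\P(\El))$ with $\pi_*(f)=t$, so that $\pi\circ f=t\circ\pi$. By the universal property of the fibre product, $f$ induces an isomorphism of $\P^{r-1}$-bundles $\P(\El)\simeq \P(t^*\El)=t^*\P(\El)$ over $C$ (up to replacing $t$ by $t^{-1}$ depending on conventions). By \autoref{rmk:indecomposable}(1), an isomorphism of projective bundles over $C$ comes from an isomorphism of vector bundles up to twist. Hence there is a line bundle $\Ll$ with $t^*\El\simeq\El\otimes\Ll$, and since $t$ was arbitrary, $\El$ is semi-homogeneous in the sense of \autoref{def:homogeneous}.

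For the converse, assume $\El$ is semi-homogeneous. For each $t\in\Autz(C)$ there is an isomorphism $t^*\El\simeq\El\otimes\Ll_t$, which yields an isomorphism $\P(\El)\simeq t^*\P(\El)$ over $C$. Composing with the projection $t^*\P(\El)\to\P(\El)$ gives an automorphism $f_t\in\Aut(\P(\El))$ with $\pi\circ f_t=t\circ\pi$. Thus the image of $\pi_*\colon\Aut(\P(\El))\to\Aut(C)$ contains all of $\Autz(C)$. The remaining, and main, step is to pass from $\Aut(\P(\El))$ to its neutral component.

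Let $A:=\pi_*^{-1}(\Autz(C))\subset\Aut(\P(\El))$. This is a closed subgroup scheme, and since $\Autz(C)$ is connected, $\Autz(\P(\El))\subset A$. The restriction $A\to\Autz(C)$ is surjective with kernel $\Aut(\P(\El))_C$, which is an algebraic group. Because $\Aut(\P(\El))_C$ is of finite type, its component group is finite, so $A$ has finitely many connected components, and $\Autz(\P(\El))=A^\circ$ is of finite index in $A$. The image $\pi_*(A^\circ)$ is therefore a closed connected subgroup of finite index in the elliptic curve $\Autz(C)$. Such a subgroup is all of $\Autz(C)$, which gives surjectivity of $\pi_*$ on $\Autz(\P(\El))$.

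The delicate points are that $\pi_*$ is a morphism of group schemes and that $A$ is of finite type. The first follows from Blanchard's lemma (\autoref{lem:blanchard}). For the second, I would use the exact sequence $1\to\Aut(\P(\El))_C\to A\to\Autz(C)$ together with \autoref{rmk:seq}. That remark shows $\Aut(\P(\El))_C$ is an extension of the finite group $\Omega$ by $\Aut(\El)_C/\G_m$, and the latter is an open subset of the finite-dimensional space $\Gamma(C,\mathcal{E}nd(\El))$.
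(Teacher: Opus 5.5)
Your route is the paper's. In the forward direction you read off $t^*\P(\El)\simeq\P(\El)$ from a lift of $t$ and untwist; you cite \autoref{rmk:indecomposable}(1) where the paper cites Grothendieck. In the converse you lift every translation to $\Aut(\P(\El))$ and pass to the neutral component by a finite-index argument. However, one step is not covered by the justification you give.

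\autoref{lem:blanchard} is a statement about a \emph{connected} group, so it only produces $\pi_*$ on $\Autz(\P(\El))$. It says nothing about the lifts $f_t$ you construct, which are not known to lie in $\Autz(\P(\El))$ --- that is exactly what has to be proved. So at the point where you use them, the following are not yet available as group-scheme data: the homomorphism $\pi_*\colon\Aut(\P(\El))\to\Aut(C)$, the subgroup $A=\pi_*^{-1}(\Autz(C))$, its closedness, and the sequence $1\to\Aut(\P(\El))_C\to A\to\Autz(C)$.

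The paper fills this in two steps, and you need both.
\begin{enumerate}
\item Every $f\in\Aut(\P(\El))$ preserves $\pi$. The fibres are covered by rational curves, and every rational curve in $\P(\El)$ is contracted by $\pi$ because $C$ has genus one. Hence $\pi\circ f=g\circ\pi$ for a unique $g\in\Aut(C)$. This gives an abstract homomorphism $\Pi_*\colon\Aut(\P(\El))\to\Aut(C)$, which agrees with $\pi_*$ on $\Autz(\P(\El))$ by the uniqueness in Blanchard's lemma.
\item $\Pi_*$ is a morphism. On the coset $f\Autz(\P(\El))$ it equals $t_{\Pi_*(f)}\circ\pi_*\circ t_f^{-1}$, where $t_h$ is left multiplication by $h$. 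These cosets are the connected components, since $\Aut(\P(\El))$ is smooth in characteristic zero.
\end{enumerate}

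With these two steps in place, $A/\Aut(\P(\El))_C$ embeds as a closed subgroup of $\Autz(C)$, and the rest of your argument goes through. Your finite-type argument for $A$ is then more explicit than the paper's: you use $\Aut(\P(\El))_C$ via \eqref{SES:Grothendieck} and $\dim\Gamma(C,\mathcal{E}nd(\El))<\infty$. The paper simply asserts that $\Aut(\P(\El))$ is an algebraic group, then uses that the image of $\pi_*$ is either trivial or all of $\Autz(C)$.
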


\begin{proof}
Let $X:=\P(\El)$.
    Assume that $\pi_*$ is surjective. Then for every translation $t\in \Autz(C)$, we have $t^*X\simeq X$. Hence, by the Corollary of Proposition 2 in \cite[\S 5]{Grothendieck_geoform_geoalg}, it follows that $\El$ is semi-homogeneous. Conversely, if $\El$ is semi-homogeneous, we have $t^*X\simeq X$ for any translation $t\in\Autz(C)$, so $t$ can be lifted to an automorphism of $X$. It remains to show that $t$ can be chosen in the connected component $\Autz(X)$.

    Since every fibre of $\pi$ is rational and $C$ is non-rational, every automorphism $f\in \Aut(X)$ preserves the fibration $\pi$, i.e. there exists $g\in \Aut(C)$ such that $\pi\circ f = g \circ\pi$ (see also \autoref{lem:preserves fibration}). Hence, there exists a group homomorphism $\Pi_* \colon \Aut(X)\to \Aut(C)$. 
    Recall that we denote by $\pi_*\colon \Autz(X)\to \Autz(C)$ the morphism of algebraic groups induced by Blanchard's lemma (\autoref{lem:blanchard}). By uniqueness, ${\Pi_*}_{\vert \Autz(X)} = \pi_*$.
    
   Furthermore, $\Aut(X)$ is an algebraic group. Since $\Pi_*$ is a group homomorphism, the following diagram commutes for every $f\in \Aut(X)$:
    \[
    \begin{tikzcd}
    \Autz(X) \ar[d,"\pi_*",swap]\ar[r,"t_f"] & f\Autz(X)\ar[d,"{\Pi_*}_{\vert f\Autz(X)}"]\\
    \Autz(C) \ar[r,"t_{\Pi_*(f)}",swap] & \Pi_*(f)\Autz(C), 
    \end{tikzcd}
    \]
    where $t_{\Pi_*(f)}$ and $ t_f$ denote the translations $h \to \Pi_*(f) \circ h$ and $g \to f \circ g$, respectively. Since $t_f$ and $t_{\Pi_*(f)}$ are isomorphisms of varieties, $\Pi_*$ is a morphism of algebraic groups.
    
    We have shown that any $t\in\Autz(C)$ can be lifted to $\Aut(X)$, so the image of $\Pi_*$ contains $\Autz(C)$. 
    If $\pi_*$ is trivial, it would imply that the image of $\Pi_*$ is finite, which is a contradiction; thus, $\pi_*$ is surjective. 
\end{proof}

By \autoref{thm:Atiyah} and \autoref{rmk:Atiyah}, the projective bundles $\Al_{r,d}$ are homogeneous, as the vector bundles $\El_{r,d}$ are semi-homogeneous. Notice also that $\El_{r,d}$ is not necessarily homogeneous: for example, $\El_{2,0}$ is homogeneous but $\El_{2,1}$ is not. In the next lemma, we give the list of projective bundles such that the morphism of algebraic groups \[\pi_*\colon \Autz(X)\to \Autz(C)\] induced by Blanchard's lemma is surjective (see \autoref{lem:blanchard}). 

\begin{lemma}\label{lem:hom_proj_bundles}
	Let $C$ be an elliptic curve and $\pi\colon X\to C$ a $\P^2$-bundle. Then $\pi_*$ is surjective if and only if $X$ is isomorphic as a $\P^2$-bundle to one of the following:
	\begin{enumerate}
		\item\label{homog:1} $\mathcal{A}_{3,i}$ where $i\in \{0,1,2\}$, i.e., $\pi$ is indecomposable,
		\item\label{homog:2} $\P( \mathcal{E}_{2,0}\oplus \Ml)$, where $\Ml\in \Pic^0(C)$,
		\item\label{homog:3} $\P(\Ol_C\oplus \Ll \oplus \Ml)$, where $\Ll,\Ml\in \Pic^0(C)$.
	\end{enumerate}
\end{lemma}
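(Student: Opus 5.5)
By \autoref{lem:pi-star surjective}, $\pi_*$ is surjective if and only if $X\simeq\P(\El)$ with $\El$ semi-homogeneous. So the plan is to classify semi-homogeneous rank-$3$ vector bundles on $C$ up to twisting by line bundles, using Atiyah's classification and Krull--Schmidt. We split according to the decomposition type of $\El$.

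\textbf{Indecomposable case.} If $\El$ is indecomposable, then by \autoref{rmk:Atiyah} we have $\El\simeq\El_{3,d}\otimes\Ll$, so $X\simeq\Al_{3,d}$. These bundles are homogeneous, which gives \autoref{homog:1} in both directions.

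\textbf{Decomposable case, reduction.} Otherwise, by Krull--Schmidt, $\El\simeq\El'\oplus\Ml$ with $\El'$ of rank $2$, which is either indecomposable or a sum of two line bundles. Twisting, we normalise one summand. For a translation $t$ the condition is $t^*\El\simeq\El\otimes\Nl_t$, and we compare indecomposable summands by Krull--Schmidt.

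\textbf{Case $\El'$ indecomposable.} Twist so that $\El'\simeq\El_{2,0}$ or $\El_{2,1}$. The rank-$2$ summand must satisfy $t^*\El'\simeq\El'\otimes\Nl_t$, and the line summand must satisfy $t^*\Ml\simeq\Ml\otimes\Nl_t$. Since $t^*$ fixes $\Pic^0$-classes and shifts a degree-$e$ line bundle by an element depending on $e\cdot t$, we get $\Nl_t\simeq t^*\Ml\otimes\Ml^{-1}$, with degree-$0$ class $\sim\deg(\Ml)\, t$. For $\El_{2,0}$, which is homogeneous, $\El_{2,0}\otimes\Nl_t\simeq\El_{2,0}$ forces $\Nl_t$ trivial, because the determinant gives $\Nl_t^{2}$ trivial, and $\El_{2,0}\otimes\Nl$ has sections only if $\Nl$ is trivial. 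So $\deg\Ml=0$, which gives \autoref{homog:2}. For $\El_{2,1}$, write $\det\El_{2,1}=\Ol_C(p_0)$; then $\det t^*\El_{2,1}$ differs from $\det\El_{2,1}$ by the class of $t$, while $\det(\El_{2,1}\otimes\Nl_t)$ differs by $\Nl_t^{2}$, whose class is $\sim 2\deg(\Ml)\,t$. The requirement $t\sim 2\deg(\Ml)\,t$ for all $t$, i.e.\ $(2\deg\Ml-1)t=0$ for all points $t$, is impossible since $2\deg\Ml-1\neq 0$. Hence $\El_{2,1}\oplus\Ml$ never occurs.

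\textbf{Case $\El$ a sum of line bundles.} Write $\El\simeq\Ol_C\oplus\Ll\oplus\Ml$. Matching summands, the multiset $\{\Nl_t,\ t^*\Ll\otimes\Ll^{-1}\otimes\ldots\}$ must be preserved. Concretely, $\{\Ol_C,t^*\Ll,t^*\Ml\}=\{\Nl_t,\Ll\otimes\Nl_t,\Ml\otimes\Nl_t\}$. As $t$ varies over the connected group $C$ and the possible matchings form a finite set, for a general (hence every) $t$ near $0$ the matching is the identity. This gives $\Nl_t$ trivial and $t^*\Ll\simeq\Ll$, i.e.\ $\deg\Ll=0$, and similarly $\deg\Ml=0$. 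This yields \autoref{homog:3}.

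\textbf{Converse and main obstacle.} Conversely, the bundles in \autoref{homog:2} and \autoref{homog:3} are homogeneous, since degree-$0$ line bundles and $\El_{2,0}$ are translation invariant. Surjectivity of $\pi_*$ then follows from \autoref{lem:pi-star surjective}. The main obstacle is making the ``matching is constant in $t$'' argument rigorous. I would avoid it by using degrees: the degrees of the summands are permuted, and comparing $\deg$ with $t$-dependence of the determinant (as in the $\El_{2,1}$ case) gives the necessary congruence. Alternatively, I would observe that the set of $t$ realising each matching is closed, so one matching holds on all of the irreducible curve $C$, and evaluate at $t=0$.
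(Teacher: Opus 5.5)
Your route is the paper's. You reduce to semi-homogeneity of $\El$ via \autoref{lem:pi-star surjective}, use Atiyah's classification for the indecomposable case, and use Krull--Schmidt plus determinants for the decomposable ones. The $\El_{2,0}\oplus\Ml$ case is handled correctly, and more explicitly than in the paper. Your $\El_{2,1}\oplus\Ml$ argument is the paper's determinant argument, slightly streamlined by using one twist $\Nl_t$ for both summands. It yields $(2\deg\Ml-1)t=0$ for all $t$, which is exactly the paper's torsion contradiction.

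The step that does not hold up as written is the case $\El\simeq\Ol_C\oplus\Ll\oplus\Ml$.

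\begin{itemize}
\item Finiteness of the set of matchings does not imply that the identity matching is the one realised for general $t$.
\item In your closed-set variant, irreducibility of $C$ does give one permutation of the summands valid for every $t$. But evaluating at $t=0$ does not force it to be the identity. For $\Ol_C\oplus\Ll\oplus\Ll^{\otimes 2}$ with $\Ll$ of order $3$, a $3$-cycle works at $t=0$ with $\Nl=\Ll$ (cf.\ \autoref{lem:sum_line_bundles}).
\item The determinant alone does not rescue this either. For $\Ol_C\oplus\Ol_C(p)\oplus\Ol_C(-p)$ it only yields $\Nl_t^{\otimes 3}\simeq\Ol_C$.
\end{itemize}

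The missing observation is the one behind the paper's remark that $\Ol_C$ is invariant under translations. Since $t^*\Ol_C\simeq\Ol_C$ must be isomorphic to one of $\Nl_t$, $\Ll\otimes\Nl_t$, $\Ml\otimes\Nl_t$, one has $\Nl_t\in\{\Ol_C,\Ll^{-1},\Ml^{-1}\}$ for every $t$. Consequently $t^*\Ll$ stays in a fixed finite set of line bundles as $t$ ranges over $\Autz(C)$. Now write $t_a$ for translation by $a$. Under $C\simeq\Pic^0(C)$, the map $a\mapsto t_a^*\Ll\otimes\Ll^{-1}$ is, up to sign, multiplication by $\deg\Ll$. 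That map has infinite image unless $\deg\Ll=0$, and likewise for $\Ml$.

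In your closed-set version, the same remark shows that $\Nl_t$ is independent of $t$, hence so is $t^*\Ll$, which again forces $\deg\Ll=0$. With this the case closes, and the rest of your proof stands.
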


\begin{proof}
	For every $i=1,2,3$, the projective bundle $\Al_{3,i}$ is homogeneous (see \autoref{rmk:Atiyah}). 
    Hence, the morphism $\pi_*\colon\Autz(X)\to\Autz(C)$ induced by Blanchard's lemma (\autoref{lem:blanchard}) is surjective by \autoref{lem:pi-star surjective}. For a decomposable projective bundle $X=\P(\El)$, two cases arise:
	
	\begin{enumerate}
		\item[(i)] Assume that $\El$ is isomorphic to the direct sum of an indecomposable rank-$2$ vector bundle and a line bundle. Tensoring $\El$ by a line bundle, we do not change the isomorphism class of $X$, so by \autoref{thm:Atiyah}, we may assume that $\El \simeq \El_{2,i} \oplus \Ml$, where $i\in \{0,1\}$ and $\Ml$ is a line bundle. As $\El_{2,0}$ itself is an homogeneous vector bundle, the vector bundle $\El_{2,0} \oplus \Ml$ is semi-homogeneous if and only if $\Ml$ has degree zero. 
		
		Next, we show that for any line bundle $\Ml$, the vector bundle $\El_{2,1}\oplus \Ml$ is not semi-homogeneous. Pulling back the short exact sequence that defines $\El_{2,1}$ in \autoref{thm:Atiyah} by a translation $t\in \Autz(C)$ of infinite order, we get:
		\[
		0  \to  \mathcal{O}_C  \to  t^*\mathcal{E}_{2,1}  \to  \mathcal{O}_C(t^*p_0) \to 0.
		\]
		Moreover, $t^*p_0 \sim q$, where $q$ is defined by the translation $t\colon x \mapsto x -q$. The vector bundle $\mathcal{E}_{2,1}$ is semi-homogeneous, so there exists a line bundle $\Ml'$ such that $t^*\mathcal{E}_{2,1} \simeq \mathcal{E}_{2,1} \otimes \Ml'$. Taking the determinant from both sides, we obtain $\Ml'^{\otimes 2} \simeq \Ol(q-p_0)$. 
        
        Assume by contradiction that $\El_{2,1}\oplus \Ml$ is semi-homogeneous. Then there exists a line bundle $\Ml''$ such that $t^*(\El_{2,1}\oplus \Ml) \simeq (\El_{2,1}\oplus \Ml) \otimes \Ml''$. Since pullback commutes with direct sums, we obtain that 
        \[
        (\mathcal{E}_{2,1} \otimes \Ml') \oplus t^*\Ml \simeq (\El_{2,1} \otimes \Ml'') \oplus (\Ml \otimes \Ml'').
        \]
        By Krull-Schmidt theorem \cite[Theorem 1]{Ati56}, we get by identification that 
        \begin{equation}\label{eq:isom_line-bundle}
    	\left\{ 
    	\begin{aligned}
    		 \mathcal{E}_{2,1} \otimes \Ml' & \simeq \El_{2,1} \otimes \Ml'', \\
    		  t^*\Ml & \simeq  \Ml \otimes \Ml''
    	\end{aligned}
    	\right.
	   \end{equation}
       If $\Ml$ has degree zero, then it is homogeneous, and it follows from the isomorphisms above that $\Ml''$ is trivial and $\Ml'$ is two-torsion, which is a contradiction since $t$ has infinite order. Therefore, $\Ml$ has degree $d\neq 0$; in particular, $\Ml$ is isomorphic to $\Ol_C(dz)$ for some $z\in C$. This implies that $t^*\Ml\simeq \Ml \otimes \Ol_C(d(q-p_0))$. So by the second isomorphism in \autoref{eq:isom_line-bundle}, we get $\Ml''\simeq \Ol_C(d(q-p_0))$. The first isomorphism in \autoref{eq:isom_line-bundle} implies that $\Ml'^{\otimes 2} \simeq \Ml''^{\otimes 2}$, hence $\Ml'^{\otimes 2} \simeq \Ol_C(2d(q-p_0))$. Since $\Ml'^{\otimes 2} = \Ol(q-p_0)$ and $d\neq0$, this implies that $q-p_0$ is a torsion divisor, i.e., $q\in C$ is a torsion point, which contradicts that $t$ has infinite order.
		
		\item[(ii)] Assume that $\El$ is isomorphic to the direct sum of three line bundles. Up to tensoring by a line bundle, we can assume that $\El$ is isomorphic to $\Ol_C \oplus \Ll \oplus \Ml$, where $\Ll, \Ml$ are line bundles. Since $\Ol_C$ is invariant by translations, it follows that $\El$ is semi-homogeneous if and only if $\Ll$ and $\Ml$ have degree zero.
	\end{enumerate}
    Combining the different cases above proves the lemma.
\end{proof}

\autoref{lem:hom_proj_bundles} provides the list of homogeneous projective bundles $\pi\colon X\to C$ above an elliptic curve $C$ together with a rank-$3$ vector bundle $\El$ such that $X\simeq \P(\El)$. In the next lemma, we give a representative $\El$ for each non-homogeneous projective bundle, i.e., those such that $\pi_*\colon \Autz(X)\to \Autz(C)$ is trivial.

\begin{lemma}\label{lem:non-hom_proj_bundles}
	Let $C$ be an elliptic curve and $\pi\colon X\to C$ a $\P^2$-bundle. Then $\pi_*$ is trivial if and only if $X$ is isomorphic as $\P^2$-bundle to one of the following:
	\begin{enumerate}
		\item\label{non-hom:1} $\P(\El_{2,0} \oplus \Ml)$ where $\Ml$ is a line bundle with non-zero degree,
		\item\label{non-hom:2} $\P(\El_{2,1} \oplus \Ml)$ where $\Ml$ is any line bundle,
		\item\label{non-hom:3} $\P(\Ol_C\oplus \Ol_C \oplus \Ml)$ where $\Ml$ is a line bundle with non-zero degree,
		\item\label{non-hom:4} $\P(\Ol_C\oplus \Ll \oplus \Ml)$ where $\Ll,\Ml$ are non-trivial and non-isomorphic line bundles, such that $0\leq \deg(\Ll) \leq \deg(\Ml)$ and $\deg(\Ml)>0$.
	\end{enumerate}
\end{lemma}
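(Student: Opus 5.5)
The plan is to reduce the statement to the complement of \autoref{lem:hom_proj_bundles}. Over an elliptic curve $C$, $\pi_*\colon\Autz(X)\to\Autz(C)$ is a morphism of connected algebraic groups whose target is an elliptic curve. Its image is therefore a connected closed subgroup, i.e. either trivial or all of $\Autz(C)$. So $\pi_*$ is trivial if and only if it is not surjective. By \autoref{lem:hom_proj_bundles}, this happens if and only if $X$ is \emph{not} isomorphic to one of the bundles listed there. It remains to enumerate all $\p^2$-bundles $X=\p(\El)$, up to isomorphism of $\p^2$-bundles, and remove those lists.

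For the enumeration I would use Krull--Schmidt (\cite{Ati56}) together with \autoref{thm:Atiyah}, \autoref{rmk:Atiyah} and \autoref{rmk:indecomposable}, and split into three cases.
\begin{enumerate}
\item If $\El$ is indecomposable, then $X\simeq\Al_{3,i}$, and $\pi_*$ is surjective.
\item If $\El\simeq\El'\oplus\Ml$ with $\El'$ indecomposable of rank $2$, then after twisting $\El'\simeq\El_{2,i}$ with $i\in\{0,1\}$. For $i=1$, every $\Ml$ gives a non-homogeneous bundle, since this case is excluded from \autoref{lem:hom_proj_bundles}; this is item \autoref{non-hom:2}. For $i=0$, the bundle is homogeneous exactly when $\deg\Ml=0$, so the non-homogeneous ones are those with $\deg\Ml\neq0$; this is item \autoref{non-hom:1}.
\item If $\El$ is a sum of three line bundles, twist so that $\El\simeq\Ol_C\oplus\Ll\oplus\Ml$. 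The summand of minimal degree can be made trivial, and we may reorder so that $0\le\deg\Ll\le\deg\Ml$. The bundle is homogeneous if and only if $\deg\Ll=\deg\Ml=0$, so the non-homogeneous case is $\deg\Ml>0$.
\end{enumerate}

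The third case then splits by whether $\Ll$ is trivial. If $\Ll\simeq\Ol_C$, we get $\p(\Ol_C\oplus\Ol_C\oplus\Ml)$ with $\deg\Ml\neq0$, which is item \autoref{non-hom:3}. If $\Ll$ is non-trivial, then $\Ml$ is automatically non-isomorphic to $\Ll$ or $\Ol_C$ unless $\Ll\simeq\Ml$. The case $\Ll\simeq\Ml$ with $\deg>0$ must still be placed. Dualizing and twisting gives $\p(\Ol_C\oplus\Ol_C\oplus\Ll^{-1})$, which falls under item \autoref{non-hom:3} with a line bundle of non-zero degree. Otherwise we obtain item \autoref{non-hom:4}.

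Conversely, each bundle in items \autoref{non-hom:1}--\autoref{non-hom:4} fails the degree conditions of \autoref{lem:hom_proj_bundles}, and by Krull--Schmidt it is not isomorphic to any bundle on that list. Hence $\pi_*$ is trivial for each of them.

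The main obstacle is the bookkeeping of normal forms. We must make sure that twisting by line bundles and passing to the dual (or whatever normalization the statement intends, e.g. for the case $\Ll\simeq\Ml$ and for ordering degrees) produces exactly the stated representatives. We must also justify carefully that the image of $\pi_*$ is either trivial or everything. That last point is immediate from the fact that $\dim\Autz(C)=1$ and $\Autz(C)$ is connected.
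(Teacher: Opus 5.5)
Your proposal is correct and follows essentially the paper's route. You reduce to the complement of \autoref{lem:hom_proj_bundles}; the paper gets ``trivial iff not surjective'' from \autoref{lem:pi-star surjective} together with $\Autz(C)$ being an elliptic curve, which is the same as your trivial-or-everything argument. You then normalize the decomposable cases by twists, including twisting by $\Ll^\vee$ when $\Ll\simeq\Ml$, exactly as the paper does.

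One small correction: drop ``dualizing'' and ``passing to the dual'' from your normalizations. Only twists by line bundles preserve the isomorphism class of the $\p^2$-bundle, and in general $\p(\El^\vee)\nsimeq\p(\El)$ (e.g.\ $\Al_{3,1}$ and $\Al_{3,2}$). Twisting $\Ol_C\oplus\Ll\oplus\Ll$ by $\Ll^\vee$ alone already gives $\Ol_C\oplus\Ol_C\oplus\Ll^\vee$.
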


\begin{proof}
Let $\El$ be a vector bundle such that $X\simeq\p(\El)$. Since $\Autz(C)$ is an elliptic curve, \autoref{lem:pi-star surjective} implies that $\pi_*$ is trivial if and only if $\El$ is not semi-homogeneous.
\autoref{thm:Atiyah} and \autoref{rmk:Atiyah} imply that $\El$ is decomposable. 
	If $\El$ decomposes as a sum of an indecomposable rank-$2$ vector bundle and a line bundle, the claim follows directly from \autoref{lem:hom_proj_bundles}. If $\El$ decomposes as a sum of three line bundles and tensoring by a line bundle, we can assume that $\El$ is isomorphic to $\Ol_C \oplus \Ll \oplus \Ml$. If $\Ll \simeq \Ml$, we tensor $\El$ by $\Ll^\vee$ and obtain the third case. Else, permuting $\Ll$ and $\Ml$ if necessary, we can also assume that $\deg(\Ll) \leq \deg(\Ml)$. Tensoring again by $\Ll^\vee$ if $\deg(\Ll)<0$, we can moreover assume that $\deg(\Ll) \geq 0$, and we obtain the last case. Again by \autoref{lem:hom_proj_bundles}, we have that $\deg(\Ml)\neq 0$ in the last two cases.
\end{proof}

We will see in \autoref{prop:trivial_action_on_C} that $\Autz(X)$ with $X$ from \autoref{lem:non-hom_proj_bundles} are not maximal connected algebraic subgroups of $\Bir(X)$.

    \subsection{When the automorphism group acts trivially on the base}

In this section, we show the following statement:

\begin{proposition}\label{prop:trivial_action_on_C}
    Let $C$ be an elliptic curve and $\pi\colon X\to C$ be a $\P^2$-bundle such that $\pi_*$ is trivial (or, equivalently, $X=\p(\El)$ and $\El$ is not semi-homogeneous by \autoref{lem:pi-star surjective}). Then $\Autz(X)$ is not a maximal connected algebraic subgroup of $\Bir(X)$.
\end{proposition}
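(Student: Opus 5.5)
By \autoref{lem:non-hom_proj_bundles}, $X\simeq\p(\El)$ falls into one of four families. In each family I will find an $\Autz(X)$-invariant subvariety that gives a link to a Mori fibre space whose connected automorphism group is strictly larger. The natural candidate is an $\Autz(X)$-invariant section $\Sigma$ of $\pi$, or an invariant $\p^1$-subbundle.

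First I will find $\Sigma$ in each case. Since $\Autz(X)=\Autz(X)_C$, every automorphism is fibrewise, and it is induced by elements of $\Aut(\El)_C/\G_m$. The finite part $\Omega$ of \autoref{SES:Grothendieck} does not affect the connected component, so this holds on $\Autz$. In each case I will choose a line subbundle of $\El$ that is canonical, namely the maximal destabilising one.
\begin{enumerate}
\item[(1)] For $\El_{2,0}\oplus\Ml$ with $\deg\Ml\neq0$: if $\deg\Ml>0$ take $\Ml$; if $\deg\Ml<0$ take $\Ol_C\subset\El_{2,0}$.
\item[(2)] For $\El_{2,1}\oplus\Ml$: take $\Ml$ or a canonical piece, according to slopes.
\item[(3)] For $\Ol_C\oplus\Ol_C\oplus\Ml$: take $\Ml$ if $\deg\Ml>0$; if $\deg\Ml<0$ the invariant object is the $\p^1$-subbundle $\p(\Ol_C^2)$.
\item[(4)] For $\Ol_C\oplus\Ll\oplus\Ml$: take $\Ml$.
\end{enumerate}
Invariance will be checked with the computation of \autoref{key_computation}. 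The entries $d,g,f$ are global sections of $\Ll,\Ml,\Ml^{-1}\otimes\Ll$, so by degree reasons the relevant off-diagonal entries vanish, and the matrices preserve the chosen subbundle.

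Given an invariant section $\Sigma$, \autoref{lem:section blow-up} gives an $\Autz(X)$-equivariant link of type I, $X\dashrightarrow Y:=\mathrm{Bl}_\Sigma X$. Here $Y$ is a $\p^1$-bundle over a ruled surface $S\to C$, and $Y\to C$ is an $\F_1$-bundle. Moreover $\Autz(Y)$ acts on $C$ with a fixed point, in fact trivially, since $\Autz(Y)$ is conjugate to a group containing $\Autz(X)$ and no connected group can move $C$ without moving $\Autz(X)$-fibres; more precisely, I will argue directly that $\Autz(Y)$ acts trivially on $C$. Then \autoref{lem:F_b_trivial_action_on_C} with $b=1$ shows that $\Autz(Y)$ is not maximal in $\Bir(Y)=\Bir(X)$. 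Since $\Autz(X)\subseteq\Autz(Y)$ via the link, $\Autz(X)$ is contained in a non-maximal group, and therefore is itself not maximal.

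For the case of an invariant $\p^1$-subbundle $P$, I will dualise. Passing to $\El^\vee$ turns $\p(\Ol^2)\subset\p(\Ol_C^2\oplus\Ml)$ into the section given by the quotient, so the point $\p(\Ml)$ is invariant and yields a section. Alternatively, the intersection structure gives the invariant section $\p(\Ml)$ directly, since $\Ml$ is always a direct summand and its projectivisation is a section. Because $\Autz(X)$ preserves both the sub and the quotient structure coming from the degree-distinct summands, that section is invariant.

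The main obstacle is checking invariance uniformly, especially in the indecomposable-plus-line case $\El_{2,1}\oplus\Ml$ and when degrees are equal, where $\Hom$ spaces between summands may be nonzero and the chosen section can move. If a single section fails in some subcase, I will instead use the maximal destabilising subbundle or the socle, which are unique and hence $\Aut(\El)_C$-stable; a rank-$2$ piece gives an invariant $\p^1$-subbundle, and its complementary quotient line gives, after dualising, an invariant section of $\p(\El^\vee)$. Applying the argument to the isomorphic $\Bir$ group then concludes.
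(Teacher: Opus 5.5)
Your overall scheme (find an $\Autz(X)$-invariant section, blow it up to get an $\F_1$-bundle, then apply \autoref{lem:F_b_trivial_action_on_C}) works in several cases. It cannot work in case (3) with $\deg\Ml<0$, and neither of your fallbacks repairs this.

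For $\El=\Ol_C\oplus\Ol_C\oplus\Ml$ with $\deg\Ml<0$ there is no $\Autz(X)$-invariant section at all:
\begin{itemize}
\item $\Aut(\El)_C$ contains $\GL_2(\k)$ acting on $\Ol_C^{\oplus 2}$. This acts transitively on every fibre of $\p(\Ol_C^{\oplus2})\simeq C\times\p^1$, so no section inside that surface is invariant.
\item $\Aut(\El)_C$ also contains the unipotent automorphisms $\begin{pmatrix} I_2 & v\\ 0 & 1\end{pmatrix}$ with $v\in\mathrm{Hom}(\Ml,\Ol_C^{\oplus2})=\Gamma(C,\Ml^{-1})^{\oplus2}\neq0$. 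These are the entries $c,f$ of \autoref{key_computation}, and they move every section not contained in $\p(\Ol_C^{\oplus2})$, in particular $\p(\Ml)$.
\end{itemize}
So your claim that $\p(\Ml)$ is invariant because $\Ml$ is a direct summand is false. Automorphisms preserve the subbundle $\Ol_C^{\oplus2}$ and the quotient $\Ml$, but not the splitting.

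Dualising does not help either. $\p(\El^\vee)$ does have an invariant section, but that only shows that $\Autz(\p(\El^\vee))$ is not maximal in $\Bir(\p(\El^\vee))$. Maximality is a property of the conjugacy class inside $\Bir$, not of the abstract group. You have no $\Autz(X)$-equivariant birational map $X\dashrightarrow\p(\El^\vee)$, and duality does not supply one. Already $\p^2$ and its dual admit no $\PGL_3(\k)$-equivariant birational map, since the stabiliser of a point fixes no line.

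The missing idea is a link that is not the blow-up of a section.
\begin{itemize}
\item Since $\Autz(X)$ acts trivially on $C$ and preserves $\p(\Ol_C^{\oplus2})$, for every $p\in C$ the line $\p(\Ol_C^{\oplus2})\cap\pi^{-1}(p)$ is invariant.
\item Blowing up this line and contracting the strict transform of $\pi^{-1}(p)$ gives an $\Autz(X)$-equivariant elementary transformation $\phi\colon X\dashrightarrow X'=\p(\Ol_C^{\oplus2}\oplus(\Ml\otimes\Ol_C(-p)))$. Its inverse has the single base point $(p,[0:0:1])$.
\item That point is fixed by $\phi\Autz(X)\phi^{-1}$ but not by $\Autz(X')$. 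Indeed, $f$ now ranges over $\Gamma(C,\Ml^{-1}\otimes\Ol_C(p))$, a base-point-free line bundle of degree $\geq2$, so $f$ can be chosen non-vanishing at $p$.
\end{itemize}
Hence $\phi\Autz(X)\phi^{-1}\subsetneq\Autz(X')$; this is the argument of \autoref{lem:dec_withdeg0term}. By contrast, when the invariant rank-$2$ piece is stable, dualising is unnecessary. This happens for $\El_{2,1}$ in case (2) with $\deg\Ml\leq0$: $\El_{2,1}$ is simple, so $\Autz(X)$ fixes $\p(\El_{2,1})$ pointwise, and any section of $\p(\El_{2,1})$ can be blown up. The paper instead uses elementary transformations over $p_0$ there.

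Some of your justifications elsewhere also need fixing.
\begin{itemize}
\item In case (1) with $\deg\Ml<0$, $\Ol_C$ is not the maximal destabilising subbundle; that is $\El_{2,0}$. Also, degrees do not kill $d\in\Gamma(C,\Ol_C)=\k$. The section is still invariant, because $d\neq0$ would make $\alpha_{kl}=(a_k-a_l)/d$ a coboundary, contradicting that $\El_{2,0}$ is non-split. With that argument, blowing up $C\times\{[1:0:0]\}$ is a valid shortcut compared with the paper's elementary transformation.
\item Your reasoning that $\Autz(\mathrm{Bl}_\Sigma X)$ acts trivially on $C$ is not coherent as written, but it is easy to repair. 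Either it acts trivially, and \autoref{lem:F_b_trivial_action_on_C} applies, or it does not, and then it already strictly contains the conjugate of $\Autz(X)$.
\end{itemize}
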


Before we prove anything, let us state a direct consequence of \autoref{prop:Mdp to bundle} and \autoref{prop:trivial_action_on_C}:
\begin{corollary}\label{cor:trivial_action_on_C}
Let $C$ be an elliptic curve and $\pi\colon X\to C$ a Mori Del Pezzo fibration of degree $9$. If $\pi_*$ is trivial, then $\Autz(X)$ is not a maximal connected algebraic subgroup of $\Bir(X)$. 
\end{corollary}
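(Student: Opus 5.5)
Since the corollary is stated as a direct consequence, the plan is to reduce to a $\p^2$-bundle and then invoke \autoref{prop:trivial_action_on_C}.

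First, the generic fibre $X_{\k(C)}$ of a Mori Del Pezzo fibration of degree $9$ is a Del Pezzo surface of degree $9$. It has a $\k(C)$-rational point by \autoref{rmk: MdP}\autoref{MdP:0}, so $X_{\k(C)}\simeq\p^2_{\k(C)}$. Hence \autoref{prop:Mdp to bundle} applies and gives a $\p^2$-bundle $\tau\colon Y\to C$ with a regular $\Autz(X)$-action, together with an $\Autz(X)$-equivariant birational map $\varphi\colon X\rat Y$ over $C$. Let $G:=\varphi\Autz(X)\varphi^{-1}$. Then $G$ is a connected algebraic subgroup of $\Autz(Y)$, and it acts on $C$ through $\pi_*$, hence trivially.

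Next, I would check that $\tau_*\colon\Autz(Y)\to\Autz(C)$ is trivial. I expect this to be the only real obstacle, since a priori $\Autz(Y)$ could be larger than $G$ and act transitively on $C$. I would argue by contradiction. If $\tau_*$ is surjective, then $\Autz(Y)$ is a connected algebraic group acting transitively on $C$ and containing $G$. In that case I would not try to show $\tau_*$ is trivial. Instead I would conclude non-maximality directly: $\varphi^{-1}\Autz(Y)\varphi$ is a connected algebraic subgroup of $\Bir(X)$ containing $\Autz(X)$. It is strictly larger, because its image in $\Autz(C)$ is non-trivial, while the image of $\Autz(X)$ is trivial. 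So $\Autz(X)$ is not maximal.

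In the remaining case, $\tau_*$ is trivial. Then \autoref{prop:trivial_action_on_C} shows that $\Autz(Y)$ is not maximal in $\Bir(Y)$. So there is a strictly larger connected algebraic subgroup $H\subset\Bir(Y)$ containing $\Autz(Y)\supseteq G$. Conjugating back, $\varphi^{-1}H\varphi$ is a connected algebraic subgroup of $\Bir(X)$ that strictly contains $\Autz(X)$: if it were equal to $\Autz(X)$, then $H$ would equal $G\subseteq\Autz(Y)$, contradicting strictness. In both cases $\Autz(X)$ is not a maximal connected algebraic subgroup of $\Bir(X)$, which proves the corollary.
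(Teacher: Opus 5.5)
Your proof is correct and follows the paper's intended route: the paper states this corollary as a direct consequence of \autoref{prop:Mdp to bundle} and \autoref{prop:trivial_action_on_C}. Your case distinction makes that combination rigorous. If $\Autz(Y)$ acts non-trivially on $C$, its conjugate already strictly contains $\Autz(X)$. If it acts trivially, \autoref{prop:trivial_action_on_C} gives a strictly larger group, which you then pull back to $X$.
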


    Let us start with the following observation:
    
    \begin{lemma}\label{lem:not_max}
    Let $\El$ be a rank-$3$ vector bundle over a smooth projective curve $C$ and $\pi\colon \p(\El)\to C$ the $\p^2$-bundle morphism. Suppose that $\pi_*$ is trivial.
		With the notations from \autoref{key_computation}, if $\Gamma(C,\Ml^{-1}\otimes \Ll) = \Gamma(C,\Ml^{-1}) =0$, then $\Autz(\p(\El))$ is not a maximal connected algebraic subgroup of $\Bir(\p(\El))$. 
	\end{lemma}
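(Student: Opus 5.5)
Under the vanishing $\Gamma(C,\Ml^{-1}\otimes\Ll)=\Gamma(C,\Ml^{-1})=0$, I will show that $\Autz(\p(\El))$ fixes a section and use this to enlarge the group equivariantly.

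\emph{Step 1: $\Autz(X)$ preserves the section $\Sigma$ given by $\Ml\subset\El$.} Since $\pi_*$ is trivial, $\Autz(X)=\Autz(X)_C$. By \autoref{SES:Grothendieck}, $\Autz(X)_C$ is the image of the connected group $\Aut(\El)_C^\circ/\G_m$, because $\Omega$ is finite. In the notation of \autoref{key_computation}, an element of $\Aut(\El)_C$ has global sections $f\in\Gamma(C,\Ml^{-1}\otimes\Ll)=0$ and $c\in\Gamma(C,\Ml^{-1})$. For $c$, the $(1,3)$ entry gives the relation $\mu_{kl}c_k=c_l+\alpha_{kl}f_l$; once $f=0$, this says $c$ is a section of $\Ml^{-1}$, so $c=0$. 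The third column of $M_k$ is therefore $(0,0,i)^T$. Hence $\Ml\subset\El$ is preserved, and the corresponding section $\Sigma\subset X$ is $\Autz(X)$-invariant.

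\emph{Step 2: blow up $\Sigma$.} By \autoref{lem:section blow-up}, $div^{-1}\colon X\rat \mathrm{Bl}_\Sigma(X)=:Y$ is an $\Autz(X)$-equivariant link of type I. The target is a $\p^1$-bundle $Y\to S$ over the ruled surface $S=\p(\El/\Ml)=\p(\El')$, and $\tau\pi'$ is an $\F_1$-bundle. So $\Autz(X)$ is conjugate to a subgroup of $\Autz(Y)$.

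\emph{Step 3: $\Autz(Y)$ is not maximal.} $\Autz(Y)$ acts trivially on $C$, since $\pi_*$ is trivial for $X$ and $\Autz(Y)$ is conjugate into $\Bir(X/C)$ over the same curve (by \autoref{lem:preserves fibration}-type arguments via Blanchard). With $b=1>0$, \autoref{lem:F_b_trivial_action_on_C} shows that $\Autz(Y)$ is not a maximal connected algebraic subgroup of $\Bir(Y)=\Bir(X)$. So $\Autz(X)$, being contained in the strictly non-maximal $\Autz(Y)$ after conjugation, is itself not maximal. If $\Autz(X)\subsetneq\Autz(Y)$ this is immediate; otherwise $\Autz(X)=\Autz(Y)$, which is non-maximal.

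\emph{Main obstacle.} The key computation is Step 1: showing that the third column is forced to vanish. Here the off-diagonal cocycle $\alpha_{kl}$ couples $c$ with $f$, so one must first use $f=0$ before $c$ becomes a genuine global section of $\Ml^{-1}$. One also has to handle the passage from $\Aut(\El)_C$ to $\Autz(X)$ through the finite group $\Omega$.
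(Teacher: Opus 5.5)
Your proposal is correct and follows the paper's own proof. The key computation forces $f=0$ and then $c=0$, so the section $C\times\{[0:0:1]\}$ given by $\Ml\subset\El$ is $\Autz(X)$-invariant. Blowing it up then yields an $\F_1$-bundle over $C$, to which \autoref{lem:F_b_trivial_action_on_C} applies.

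Your closing dichotomy ($\Autz(X)\subsetneq\Autz(Y)$ versus equality) is extra care that the paper leaves implicit. The cleaner justification that $\Autz(Y)$ acts trivially on $C$ is to apply Blanchard's lemma to the blow-down $\eta\colon Y\to X$. This gives $\eta^{-1}\Autz(X)\eta=\Autz(Y)$ directly.
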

	
	\begin{proof}
    Let $X=\p(\El)$.
		By assumptions, we have $f=0$ and then $c\in \Gamma(C,\Ml^{-1})$, so $c=0$. Therefore, the section of $\pi$ defined as 
        \[
        C\times \{[0:0:1]\} \subset X,
        \]
        and corresponding to the line subbundle $\Ml\subset \El$, is an $\Autz(X)$-invariant curve, whose blow-up yields a birational morphism $\eta\colon X' \to X$ such that $\eta^{-1} \Autz(X) \eta \subseteq \Autz(X')$. Moreover, $X'$ is equipped with the structure of an $\F_1$-bundle $X'\to C$, on which $\eta^{-1} \Autz(X) \eta$ acts by automorphisms inducing the identity on $C$. This implies that $\Autz(X)$ is not maximal by \autoref{lem:F_b_trivial_action_on_C}.
	\end{proof}

In the next two lemmas, we go through the cases of \autoref{lem:non-hom_proj_bundles}.

\begin{lemma}\label{lem:dec_withdeg0term}
 Let $C$ be an elliptic curve, and let $\pi\colon X\to C$ be one of the $\P^2$-bundles 
 \[
 \p(\El_{2,0}\oplus\Ml),\quad\p(\Ol_C\oplus\Ol_C\oplus\Ml)
 \]
 with $\Ml$ a line bundle with non-zero degree, 
 or 
 \[
\p(\Ol_C\oplus\Ll\oplus\Ml)
 \]
 with $\Ll,\Ml$ non-trivial line bundles, $\Ll\nsimeq \Ml$ and $0\le\deg(\Ll)\leq\deg(\Ml)$ and $\deg(\Ml)>0$
 (as in \autoref{lem:non-hom_proj_bundles} \autoref{non-hom:1},\autoref{non-hom:3} or \autoref{non-hom:4}). Then $\Autz(X)$ is not a maximal connected algebraic subgroup of $\Bir(X)$.
\end{lemma}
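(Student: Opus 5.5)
The plan is to reduce every case to \autoref{lem:not_max}. That lemma says it suffices to write $\El\simeq\El'\oplus\Ml$ as in \autoref{key_computation}, with $\El'$ a rank-$2$ bundle containing $\Ol_C$ as a line subbundle and $\Ll:=\El'/\Ol_C$, such that
\[
\Gamma(C,\Ml^{-1}\otimes\Ll)=\Gamma(C,\Ml^{-1})=0 .
\]
The triviality of $\pi_*$ holds in all three cases by \autoref{lem:non-hom_proj_bundles}, so the hypotheses of \autoref{lem:not_max} are met once the vanishings are checked. Then the section of $\pi$ given by the line subbundle $\Ml\subset\El$ is $\Autz(X)$-invariant. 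Blowing it up yields an $\F_1$-bundle on which the group acts trivially on $C$, and \autoref{lem:F_b_trivial_action_on_C} gives non-maximality.

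\emph{Case $\p(\El_{2,0}\oplus\Ml)$ with $\deg\Ml\neq0$.} Take $\El'=\El_{2,0}$, so that $\Ll=\Ol_C$, and the condition becomes $\Gamma(C,\Ml^{-1})=0$. If $\deg\Ml>0$ this holds since $\deg\Ml^{-1}<0$.

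If $\deg\Ml<0$, I replace $X$ by its dual $\p(\El^\vee)\simeq\p(\El_{2,0}^\vee\oplus\Ml^\vee)$, with $\El_{2,0}^\vee\simeq\El_{2,0}$ and $\deg\Ml^\vee>0$. This is legitimate if $\Autz(X)\simeq\Autz(\p(\El^\vee))$, but these are not birational in an obvious way. So instead I handle $\deg\Ml<0$ directly. Tensoring by $\Ml^{-1}$ gives $X\simeq\p((\El_{2,0}\otimes\Ml^{-1})\oplus\Ol_C)$. I then look for an invariant section coming from the destabilizing subbundle: $\El_{2,0}\otimes\Ml^{-1}$ is semistable of positive slope, so the maximal destabilizing subbundle is $\El_{2,0}\otimes\Ml^{-1}$, and the invariant object is the quotient line $\Ol_C$. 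This suggests using the \autoref{key_computation} setup with the roles arranged so that $\Ml$ is the summand of largest degree.

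Concretely, in \autoref{key_computation} the entries $c,f$ are sections of $\Ml^{-1}$ and $\Ml^{-1}\otimes\Ll$, and the entry $g\in\Gamma(C,\Ml)$. If $\deg\Ml<0$, then $g=0$ and $d\in\Gamma(C,\Ll)=\Gamma(C,\Ol_C)$ is constant. Hence the block $\El_{2,0}$ is preserved, giving an invariant $\p^1$-subbundle rather than a section. In that case I would instead use the invariant section $\p(\Ol_C)\subset\p(\El_{2,0})$. Its invariance follows because the automorphisms act on $\El_{2,0}$ through $\Aut(\El_{2,0})$, which preserves the unique trivial subbundle by \autoref{rmk:Atiyah}. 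I then blow that section up and again conclude with \autoref{lem:F_b_trivial_action_on_C}.

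\emph{Case $\p(\Ol_C\oplus\Ol_C\oplus\Ml)$.} If $\deg\Ml>0$, take $\El'=\Ol_C\oplus\Ol_C$, so $\alpha=0$ and $\Ll=\Ol_C$; both vanishings hold. If $\deg\Ml<0$, tensor by $\Ml^{-1}$ to get $\p(\Ml^{-1}\oplus\Ml^{-1}\oplus\Ol_C)$. Here the summand $\Ol_C$ has the smallest degree, and the unique minimal-degree quotient gives an invariant section. Equivalently, $\Hom(\Ml^{-1},\Ol_C)=0$ forces the trivial summand to be a preserved quotient, i.e. an invariant section.

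\emph{Case $\p(\Ol_C\oplus\Ll\oplus\Ml)$, $0\le\deg\Ll\le\deg\Ml$, $\deg\Ml>0$, $\Ll\not\simeq\Ml$.} Take $\El'=\Ol_C\oplus\Ll$. Then $\deg\Ml^{-1}<0$ gives $\Gamma(C,\Ml^{-1})=0$. Also $\Ml^{-1}\otimes\Ll$ either has negative degree, or has degree $0$ and is non-trivial since $\Ll\not\simeq\Ml$, so it has no sections.

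The main obstacle is the negative-degree subcases of the first two families. There I expect to argue directly that the summand of minimal degree, or the trivial subbundle of $\El_{2,0}$, is preserved by all automorphisms. This follows from the vanishing of the relevant Hom-spaces between summands of different degrees, via the same matrix computation as in \autoref{key_computation}.
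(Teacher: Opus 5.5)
Your handling of the cases with $\deg(\Ml)>0$, and of $\p(\Ol_C\oplus\Ll\oplus\Ml)$, through \autoref{lem:not_max} is the paper's argument.

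For $\p(\El_{2,0}\oplus\Ml)$ with $\deg(\Ml)<0$ your route differs from the paper's but works. First, $g=0$ together with $\lambda_{kl}=1$ forces $h=0$. You should state this, since $g=0$ alone does not show that $\El_{2,0}$ is preserved. Hence every automorphism of $\El$ preserves $\El_{2,0}$ and restricts to an automorphism of it. It therefore fixes the unique trivial line subbundle of $\El_{2,0}$. So $C\times\{[1:0:0]\}$ is an $\Autz(X)$-invariant section, and \autoref{lem:section blow-up} with \autoref{lem:F_b_trivial_action_on_C} finishes. This avoids the elementary transformation the paper uses for this subcase.

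The gap is $\p(\Ol_C\oplus\Ol_C\oplus\Ml)$ with $\deg(\Ml)<0$. In the convention you use everywhere else, sections of $\pi$ correspond to line \emph{subbundles} of $\El$, as in \autoref{lem:not_max}. A preserved \emph{quotient} $\El\to\Ml$ (or $\El\to\Ol_C$ after twisting) corresponds to the invariant $\p^1$-subbundle $\p(\Ol_C\oplus\Ol_C)\simeq C\times\p^1$, not to a section.

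In fact there is no invariant section at all. By \autoref{key_computation}, $\Aut(\El)_C$ consists of the matrices $\begin{pmatrix} A & v\\ 0 & i\end{pmatrix}$ with $A\in\GL_2(\k)$, $i\in\k^*$ and $v\in\Gamma(C,\Ml^{-1})^{\oplus 2}\neq0$.
\begin{itemize}
\item Over a point where some non-zero section of $\Ml^{-1}$ does not vanish, the unipotent part acts by all translations on the complement of $\p(\Ol_C\oplus\Ol_C)$ in the fibre.
\item $\GL_2(\k)$ preserves no line subbundle of $\Ol_C^{\oplus 2}$.
\end{itemize}
So neither \autoref{lem:not_max} nor any section blow-up is available. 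The vanishing $\Gamma(C,\Ml)=0$ only gives you the invariant lines $\p(\Ol_C\oplus\Ol_C)_p$ in the fibres.

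The missing step is to use these lines. Blow up $\p(\Ol_C\oplus\Ol_C)_p$ and contract the strict transform of $\pi^{-1}(p)$. This gives an $\Autz(X)$-equivariant map $\phi\colon X\rat X'\simeq\p(\Ol_C\oplus\Ol_C\oplus(\Ml\otimes\Ol_C(-p)))$ whose inverse has base locus the single point $(p,[0:0:1])$. The line bundle $\Ml^{-1}\otimes\Ol_C(p)$ has degree at least $2$, so it is globally generated. A section $f$ of it with $f(p)\neq0$ gives an element of $\Autz(X')$ that moves this point. Hence $\phi\Autz(X)\phi^{-1}\subsetneq\Autz(X')$. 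This is how the paper treats both negative-degree subcases.
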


\begin{proof}
    This follows directly from \autoref{lem:not_max} in the last case, and also in cases \autoref{non-hom:1} and \autoref{non-hom:3} under the additional assumption that $\deg(\Ml)>0$. We continue the proof with $\El=\El' \oplus \Ml$, where $\El' = \El_{2,0}$ or $\El'=\Ol_C\oplus \Ol_C$, and $\deg(\Ml) <0$. In both cases, $\El'$ admits  $\Ol_C$ as line subbundle and the quotient $\Ll:=\El'/\Ol_C\simeq\Ol_C$ is also trivial (see \autoref{thm:Atiyah} if $\El'=\El_{2,0}$).

    With the notation from \autoref{key_computation}, $g$ is a global section of $\Ml$; hence, $g=0$. The rational functions $\lambda_{kl}$ are the cocycles of $\Ll$, which is trivial, so we can assume that $\lambda_{kl}=1$ for all $k,l$. Then, $h$ is a global section of $\Ml$, so $h=0$ as well. This implies that for every $p\in C$, the projective line $X'_p=\{p,[u:v:0] \}$ corresponding to the fibre at $p$ of the $\P^1$-subbundle $\P(\El') \subset \P(\El)$, is $\Autz(X)$-invariant.
    
    Choose $p\in U_k$. Shrinking $U_l$ if necessary, we can assume that $p\notin U_l$ for every $l\neq k$. Let $\xi\in \k(C)^*$ such that $\mathrm{div}(\xi)_{|U_k}=p$ and define the birational map 
    \[
    \begin{array}{cccc}
        \phi_k \colon & U_k \times \P^2 & \dashrightarrow & U_k\times \P^2 \\
        & (x,[u:v:w]) & \longmapsto & (x,[\xi(x)u:\xi(x)v:w]).
    \end{array}
    \]
    On every other local trivialization, we choose $\phi_l\colon U_l\times \P^2 \to U_l \times \P^2$ to be the identity. This induces an $\Autz(X)$-equivariant birational map $\phi\colon X\dashrightarrow X'$, which is the blow-up of the $\Autz(X)$-invariant projective line $X'_p$ followed by the contraction of the strict transform of the fibre $\pi^{-1}(p)$. Then the resulting $\P^2$-bundle $\pi'\colon X'\to C$ has 
    \[
        \begin{pmatrix}
        1 & \alpha_{kl} & 0 \\ 0 & 1 & 0 \\ 0 & 0 & \xi^{-1}\mu_{kl}
        \end{pmatrix}
    \]
    as transition matrices. Thus, $X'\simeq \P(\El' \oplus (\Ml \otimes \Ol_C(-p)))$ and $\phi \Autz(X) \phi^{-1} \subseteq \Autz(X')$. 

    Moreover, the base locus of $\phi^{-1}$ is the point $(p,[0:0:1])\in X'$, which is not $\Autz(X')$-invariant. Indeed, in the notation of \autoref{key_computation} applied to $\El' \oplus (\Ml \otimes \Ol_C(-p))$, we have that
    $f$ is a global section of the line bundle $(\Ml \otimes \Ol_C(-p))^{-1}$, which has degree at least two and whose associated complete linear system has no base points. Choosing $f$ non-vanishing at $p$ yields an automorphism of $X'$ that does not fix the point $(p,[0:0:1])$ and this concludes the proof.
\end{proof}

\begin{lemma}\label{lem:decomposable_E21}
Let $C$ be an elliptic curve, and let $X=\P(\El_{2,1} \oplus \Ml)$, where $\Ml\in \Pic(C)$ is any line bundle (as in \autoref{lem:non-hom_proj_bundles} \autoref{non-hom:2}). 
Then $\Autz(X)$ is not a maximal connected algebraic subgroup of $\Bir(X)$.
\end{lemma}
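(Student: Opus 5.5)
Our plan is to produce an $\Autz(X)$-invariant curve whose blow-up (possibly followed by an elementary transformation) lets us apply \autoref{lem:F_b_trivial_action_on_C}, or \autoref{lem:not_max} directly, in the same spirit as \autoref{lem:dec_withdeg0term}. Write $\El=\El_{2,1}\oplus\Ml$. After tensoring by a line bundle and using that $\El_{2,1}\otimes\Nl\simeq\El_{2,1}$ for $\Nl\in\Pic^0(C)$ two-torsion only, we cannot normalise $\Ml$ to degree zero freely. Instead we split according to the sign of $2\deg(\Ml)-1$, i.e. according to the comparison of slopes $\mu(\El_{2,1})=1/2$ and $\deg\Ml$. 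We use the form of \autoref{key_computation} with $\El'=\El_{2,1}$, $\Ol_C\subset\El_{2,1}$ and $\Ll=\Ol_C(p_0)$. By \autoref{lem:non-hom_proj_bundles}, $\pi_*$ is trivial, so $\Autz(X)=\Autz(X)_C$.

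\emph{Case $\deg\Ml\geq1$.} Here $\Gamma(C,\Ml^{-1})=0$. If $\deg\Ml\geq 2$, then also $\Gamma(C,\Ml^{-1}\otimes\Ll)=0$, and \autoref{lem:not_max} gives the claim. If $\deg\Ml=1$, we argue instead with stability: $\El_{2,1}$ is stable of slope $1/2<1$, so $\mathrm{Hom}(\Ml,\El_{2,1})=0$. Hence $\Ml$ is the maximal destabilising subbundle of $\El$, and it is preserved by $\Aut(\El)_C$. Its section $\Sigma\subset X$ is therefore $\Autz(X)$-invariant. Blowing it up gives an $\F_1$-bundle, as in the proof of \autoref{lem:not_max}, and since the action on $C$ is trivial we conclude by \autoref{lem:F_b_trivial_action_on_C}.

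\emph{Case $\deg\Ml\leq0$.} Now $\mathrm{Hom}(\El_{2,1},\Ml)=0$ by stability, since $1/2>\deg\Ml$. Thus every automorphism of $\El$ preserves the subbundle $\El_{2,1}$, i.e. the $\p^1$-subbundle $\p(\El_{2,1})\subset X$ is $\Autz(X)$-invariant. We then perform the elementary transformation of \autoref{lem:dec_withdeg0term}: pick $p\in C$, blow up the invariant line $\p(\El_{2,1})_p$ and contract the strict transform of the fibre. This gives an $\Autz(X)$-equivariant birational map $\phi\colon X\dashrightarrow X'=\p(\El_{2,1}\oplus\Ml(-p))$. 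The base point of $\phi^{-1}$ is the point of the section of $X'$ corresponding to $\Ml(-p)$ over $p$. Moving it requires a section $f\in\Gamma(C,\mathcal{H}om(\Ml(-p),\El_{2,1}))=\Gamma(C,\El_{2,1}\otimes\Ml^{-1}(p))$ that is nonzero at $p$. This bundle is stable of slope $1/2-\deg\Ml+1\geq 3/2>1$, so by Atiyah it is generated by global sections (indecomposable bundles of slope $>1$ on an elliptic curve are globally generated). Such an $f$ therefore exists, and it induces an element of $\Autz(X')$ moving the base point. Hence $\phi\Autz(X)\phi^{-1}\subsetneq\Autz(X')$, and $\Autz(X)$ is not maximal.

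The main obstacle is the global-generation and stability input: we need that $\El_{2,1}\otimes\Nl$ is stable, that the Hom-vanishing statements hold, and that $\El_{2,1}\otimes\Ml^{-1}(p)$ has sections that do not all vanish at $p$. We would derive these directly from \autoref{thm:Atiyah}. We would also double-check, via the matrix description in \autoref{key_computation}, that the automorphisms of $\El'$ given by $f$ lie in the connected component $\Autz(X')$. This holds because they form a vector group $\Gamma(C,\cdot)\cong\G_a^n$ acting by unipotent matrices.
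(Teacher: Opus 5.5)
Your argument is correct, but it takes a different route from the paper's. The paper does not use stability for this lemma. It works at the distinguished point $p_0$. There the section $d\in\Gamma(C,\Ol_C(p_0))$ from \autoref{key_computation} vanishes, and so does $g\in\Gamma(C,\Ml)$ when $\Ml$ is non-trivial with $\deg\Ml\le 0$ or $\Ml\simeq\Ol_C(p_0)$. This makes the point $(p_0,[1:0:0])$ $\Autz(X)$-invariant. When $\Ml$ is trivial, the analogous vanishing of $d$ and $f$ makes the line $\{v=0\}$ over $p_0$ invariant instead. The elementary transformation centred there replaces $\El_{2,1}$ by $\El_{2,0}$ or $\Ol_C\oplus\Ol_C$, and $\Ml$ by $\Ml(-p_0)$ in the first case. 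The paper then concludes by reduction to \autoref{lem:dec_withdeg0term}, or to a homogeneous bundle on which $\pi_*$ is surjective.

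You instead use the slope stability of $\El_{2,1}$, which is available from \autoref{lem:stability_dim2} and \autoref{thm:dim_2}, to get an invariant subbundle directly. For $\deg\Ml\ge1$ this is $\Ml$ as maximal destabilizing subbundle; for $\deg\Ml\le 0$ it is $\El_{2,1}$, via $\mathrm{Hom}(\El_{2,1},\Ml)=0$. You then argue as in \autoref{lem:unstable C high genus}: an elementary transformation at an arbitrary point $p$, combined with global generation of the semistable bundle $\El_{2,1}\otimes\Ml^{-1}(p)$ of slope $>1$ (since $H^1(E(-p))\simeq H^0(E^\vee(p))^\vee=0$), shows the conjugate is strictly contained in $\Autz(X')$.

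What each route buys:
\begin{itemize}
\item The paper's route is more elementary and recycles earlier lemmas, at the cost of a three-way case split.
\item Yours is uniform and self-contained, and it handles $\Ml\simeq\Ol_C(p_0)$ cleanly. In that case \autoref{key_computation} alone only gives a constant $f$. It is exactly the non-splitting of $\El_{2,1}$, i.e. $\mathrm{Hom}(\Ml,\El_{2,1})=0$, that forces $f=0$ and makes the section of $\Ml$ invariant.
\end{itemize}
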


\begin{proof}
    First, note that by \autoref{thm:Atiyah}, the quotient of $\El_{2,1}$ by its line subbundle $\Ol_C$ is isomorphic to $\Ol_C(p_0)$, where $p_0$ denotes the neutral element of $C$. Using the notations from \autoref{key_computation}, we have $\Ll \simeq \Ol_C(p_0)$. By \autoref{lem:not_max}, if $\deg(\Ml)\geq 1$ and $\Ml \nsimeq \Ol_C(p_0)$, then $\Autz(X)$ is not maximal. We therefore continue the proof under the assumption that $\deg(\Ml) \leq 0$ or $\Ml = \Ol_C(p_0)$. Without loss of generality, we can assume that $p_0\in U_k$ for a fixed index $k$ and $p_0\notin U_l$ for every $l\neq k$.

    First, suppose that $\Ml$ is non-trivial. Then $d_k(p_0)=g_k(p_0)=0$. This implies that the point $\{(p_0,[1:0:0])\}$ is $\Autz(X)$-invariant. Let $\psi\colon X\dashrightarrow X'$ be the birational map defined locally as
    \[
    \begin{array}{cccc}
        \psi_k \colon & U_k \times \P^2 & \dashrightarrow & U_k \times \P^2 \\
        & (x,[u:v:w]) & \longmapsto & (x,[\zeta(x)u:v:w])
    \end{array}
    \]
    where $\zeta\in \k(C)^*$ satisfies $\mathrm{div}(\zeta)_{\vert U_k} = p_0$; and $\psi_l$ as the identity on $U_l \times \P^2$. Notice that $\psi$ is $\Autz(X)$-equivariant, as it is the blowup-up of the $\Autz(X)$-invariant point $\{(p_0,[1:0:0])\}$ followed by the contraction of the strict transform of the fibre $\pi^{-1}(p_0)$, where $\pi\colon X\to C$ denotes the $\p^2$-bundle structure morphism.
    Moreover, $\lambda_{kl}$ are the cocycles of the line bundle $\Ol_C(p_0)$, so for every $l\neq k$, we can in fact choose $\lambda_{kl} = \zeta$. Then the target $\P^2$-bundle $\pi'\colon X'\to C$ has transition matrices of the form
    \[
    \begin{pmatrix}
    1 & \alpha_{kl} & 0 \\ 0 & 1 & 0 \\ 0 & 0 & \zeta^{-1}\mu_{kl}
    \end{pmatrix},
    \]
    where $\alpha_{kl}\in \Ol_C(U_{kl})$.
    Therefore, $X'\simeq \P(\El'\oplus (\Ol_C(-p_0)\otimes \Ml))$, where $\El'\simeq\Ol_C \oplus \Ol_C$ or $\El'\simeq\El_{2,0}$ (see \autoref{thm:Atiyah}).  
    Our assumptions on $\Ml$ imply that $\deg(\Ol_C(-p_0)\otimes \Ml) < 0$ or that $\Ol_C(-p_0)\otimes \Ml$ is trivial. 
    The first case is listed in \autoref{lem:non-hom_proj_bundles} and is not semi-homogeneous, and \autoref{lem:dec_withdeg0term} implies that $\Autz(X)$ is not maximal.
    The second case is homogeneous, listed in \autoref{lem:hom_proj_bundles}, and \autoref{lem:not_max} implies that $\Autz(X)$ is not maximal.
    
    Suppose that $\Ml$ is trivial. Then $d,f\in \Gamma(C,\Ol_C(p_0))$ (see \autoref{key_computation}), hence $d_k(p_0)=f_k(p_0)=0$. This implies that the projective line $\{(p_0,[u:0:w])\mid u,w\in \k\}\subset\{p_0\}\times \p^2$ is $\Autz(X)$-invariant. We conclude similarly as in the proof of \autoref{lem:dec_withdeg0term}: we consider the $\Autz(X)$-equivariant birational map $\phi\colon X\dashrightarrow X'$, locally defined as
    \[
    \begin{array}{cccc}
        \phi_k \colon & U_k \times \P^2 & \dashrightarrow & U_k \times \P^2 \\
        & (x,[u:v:w]) & \longmapsto & (x,[\zeta(x) u:v:\zeta(x) w]),
    \end{array}
    \]
    where $\zeta\in\k^*(C)$ such that $\rm{div}(\zeta)|_{U_k}=p_0$, and $\phi_l$ as the identities on $U_l\times \P^2$ for every $l\neq k$. 
    Notice that as before, we can choose $\lambda_{kl}=\zeta$, so the target $\p^2$-bundle has transition matrices of the form
    \[
    \begin{pmatrix}
    1 & \alpha_{kl} & 0 \\ 0 & 1 & 0 \\ 0 & 0 & 1
    \end{pmatrix}.
    \]
    Then  $X'\simeq\p(\El'\oplus \Ol_C)$, where $\El'\simeq\Ol_C \oplus \Ol_C$ or $\El'\simeq\El_{2,0}$. Then $\pi'_*\colon \Autz(X')\to  \Autz(C)$ is surjective by \autoref{lem:hom_proj_bundles}. Since $\pi_*$ is trivial by \autoref{lem:non-hom_proj_bundles}, it follows that $\Autz(X)=\Autz(X)_C$ is not maximal. 
\end{proof}

\begin{proof}[Proof of \autoref{prop:trivial_action_on_C}]
If $\pi_*$ is trivial, then $X$ is isomorphic, as $\p^2$-bundle, to one of the bundles in \autoref{lem:non-hom_proj_bundles}.
The claim follows from \autoref{lem:dec_withdeg0term} and \autoref{lem:decomposable_E21}, treating the cases \autoref{lem:non-hom_proj_bundles} \autoref{non-hom:1}, \autoref{non-hom:3}, \autoref{non-hom:4} and \autoref{lem:non-hom_proj_bundles} \autoref{non-hom:2}, respectively.
\end{proof}

\subsection{Automorphisms of homogeneous decomposable projective bundles}\label{section:decomposable_hom_bundles}
In the last section, we showed in \autoref{prop:trivial_action_on_C} that the automorphism groups of non-homogeneous $\p^2$-bundles are not maximal connected algebraic subgroups in $\Bir(X)$.
In this section, we treat the cases when the $\p^2$-bundle is homogeneous and decomposable. By \autoref{lem:pi-star surjective}, these are precisely the cases from \autoref{lem:hom_proj_bundles} \autoref{homog:2}, \autoref{homog:3}, i.e. 
\[
X=\p(\Ol_C\oplus\Ll\oplus\Ml)\quad\text{or}\quad X=\p(\El_{2,0}\oplus\Ml), \quad\text{with }\Ll,\Ml\in\Pic^0(C).
\]
Notice that if $\Ll\simeq\Ml\simeq\Ol_C$, then $X\simeq C\times \P^2$ and $\Autz(X)\simeq \Autz(C)\times \Autz(\p^2)$ is a maximal connected algebraic subgroup of $\Bir(X)$.

\begin{lemma}\label{lem:O+L+M}
Let $C$ be an elliptic curve and  
\[
X=\p(\Ol_C\oplus\Ll\oplus\Ml),\quad\text{with }\Ll,\Ml\in\Pic^0(C)\setminus\{\Ol_C\}\ \text{and}\ \Ll\nsimeq\Ml
\]
and $\pi\colon X\to C$ the $\p^2$-bundle morphism.
    Then $\ker(\pi_*) =\mathbb{G}_m^2$ and the following hold:
    \begin{enumerate}
        \item\label{O+L+M:1} There exist exactly three $\Autz(X)$-invariant sections of $\pi$, corresponding to the line subbundles $\Ol_C$, $\Ll$, $\Ml$, and which are the $\Autz(X)$-orbits of minimal dimension. 
        The blowup of such an $\Autz(X)$-orbit gives rise to a Sarkisov link of type I, whose target is an $\F_1$-bundle over $C$. Moreover, there is no other $\Autz(X)$-equivariant Sarkisov link starting from $\pi$.
        \item\label{O+L+M:2} The group $\Autz(X)$ is conjugate to $\Autz\left(\P(\Ol_C \oplus \Ll) \times_C \P(\Ol_C \oplus \Ml)\right)$, and $\P(\Ol_C \oplus \Ll) \times_C \P(\Ol_C \oplus \Ml)\to C$ is an $\F_0$-bundle.
    \end{enumerate}
\end{lemma}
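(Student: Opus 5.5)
First I compute $\ker(\pi_*)=\Autz(X)_C$ via \autoref{key_computation}, with $\El'=\Ol_C\oplus\Ll$, so $\alpha_{kl}=0$. The diagonal entries $a,e,i$ are global sections of trivial bundles, hence constants. Every off-diagonal entry is a global section of a non-trivial degree-zero line bundle: $\Ll$, $\Ml$, $\Ll^{-1}$, $\Ml^{-1}$, $\Ll\otimes\Ml^{-1}$, $\Ml\otimes\Ll^{-1}$. So all off-diagonal entries vanish, and $\Aut(\El)_C=\G_m^3$, diagonal with constant entries. By \autoref{lem:sum_line_bundles}, $\Omega$ is non-trivial only if $\{\Ll,\Ml\}=\{\Nl,\Nl^{2}\}$ with $\Nl$ of order $3$; in any case $\Omega$ is finite. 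Hence $\ker(\pi_*)=\Aut(\El)_C/\G_m\simeq\G_m^2$. By \autoref{lem:hom_proj_bundles}, $\pi_*$ is surjective, so $\Autz(X)$ is an extension of $\Autz(C)$ by $\G_m^2$.

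\textbf{Part (1).} The torus $\G_m^2$ acts fibrewise as the diagonal torus of $\p^2$. Its fixed points in each fibre are exactly the three coordinate points, and these trace out the sections $\sigma_{\Ol_C},\sigma_\Ll,\sigma_\Ml$ given by the three line subbundles. Since $\Autz(X)$ is connected and normalizes $\G_m^2$, it permutes these finitely many sections, so it fixes each one. These sections are one-dimensional orbits, because $\Autz(X)$ acts transitively on $C$. Every other point has a non-trivial $\G_m^2$-orbit in its fibre, so its orbit has dimension at least $2$; thus the three sections are the orbits of minimal dimension. Blowing up any of them gives a link of type I to an $\F_1$-bundle, by \autoref{lem:section blow-up}.

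To see there are no other links, I use \autoref{lem: links III and IV} to exclude types III and IV. A link of type I or II has a base locus that is an $\Autz(X)$-invariant irreducible curve; such a curve must be a union of orbits, so by the dimension count it is one of the three sections. For a link of type II over $C$, \autoref{lem:type II} shows it restricts to a link of type II of $\p^2_{\k(C)}$ centred at a point of degree $\geq 2$. This is impossible, because the centre must be a section, which has degree $1$. The blow-up of a section therefore yields only the type I link, since its other extremal ray gives the $\p^1$-bundle structure over a ruled surface. I expect this exhaustiveness step to be the main obstacle, specifically making the "only extremal contractions" argument precise.

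\textbf{Part (2).} Blowing up $\sigma_{\Ol_C}$ gives an $\F_1$-bundle $\pi_1\colon X_1\to S$ with $S=\p(\Ll\oplus\Ml)$, since $S$ is the projectivized quotient. I then perform an elementary transformation along the $\Autz$-invariant section of $\pi_1$ spanned by the $(-1)$-curves, changing $b$ from $1$ to $0$; equivalently, I use \autoref{lem:F_b-bundles} and twist the extension. The result is an $\F_0$-bundle, which by \autoref{lem:F_0-bundles} is $S\times_C S'$. Computing with the diagonal transition matrices $\mathrm{diag}(1,\lambda,\mu)$, after tensoring $\Ll\oplus\Ml$ by $\Ll^{-1}$ I identify $S\simeq\p(\Ol_C\oplus\Ml\otimes\Ll^{-1})$. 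Choosing instead to blow up the appropriate section, I obtain $\p(\Ol_C\oplus\Ll)\times_C\p(\Ol_C\oplus\Ml)$. Concretely, the birational map is $[u:v:w]\mapsto([u:v],[u:w])$ in local coordinates, which is compatible with the cocycles $\mathrm{diag}(1,\lambda,\mu)$. This map is visibly $\G_m^2$-equivariant, and it is also equivariant for lifts of translations. Finally, equality of the groups follows because both are extensions of $\Autz(C)$ by $\G_m^2$, using \autoref{thm:dim_2}\autoref{dim_2:3} and \autoref{lem:F_0-bundles}, together with the injectivity and dimension comparison.
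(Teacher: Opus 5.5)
Your proposal is correct. The computation of $\ker(\pi_*)$ and Part~(1) follow the paper's argument. You are more explicit than the paper on two points:
\begin{enumerate}
\item why the connected group $\Autz(X)$, which normalizes the identity component $\G_m^2$ of the kernel, must fix each of the three $\G_m^2$-fixed sections;
\item why no type~II link exists: its restriction to the generic fibre would be a type~II link of $\p^2_{\k(C)}$ centred at a point of degree $\geq 2$, while the only possible centres are sections.
\end{enumerate}

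Part~(2) is where you genuinely differ. The paper first blows up $\sigma_\Ml$ to get the $\F_1$-bundle $X'\to\p(\Ol_C\oplus\Ll)$. It then performs an elementary transformation centred at the invariant curve $\{y_1=z_0=0\}$ over the minimal section $\{z_0=0\}$. At each step it proves equality of groups by checking that the exceptional loci of $\eta$ and of $\phi^{-1}$ are invariant under $\Autz$ of the respective targets. You write the composite down at once: your map $[u:v:w]\mapsto([u:v],[u:w])$ is exactly $\phi\circ\eta^{-1}$ up to swapping the factors. You then obtain equality by a dimension count: both $\Autz(X)$ and $\Autz(\p(\Ol_C\oplus\Ll)\times_C\p(\Ol_C\oplus\Ml))$ are connected and $3$-dimensional (\autoref{lem:F_0-bundles} and \autoref{thm:dim_2}\autoref{dim_2:3}), so an injective homomorphism between them is onto. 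Your route is shorter and avoids identifying invariant curves on the target. The paper's route also yields the intermediate equality $\eta^{-1}\Autz(X)\eta=\Autz(X')$ for the $\F_1$-bundle, which is reused in \autoref{lem:O+L+M_conjugacy}.

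To make your dimension count rigorous, you should say why the transported action is regular, so that the conjugate is a closed subgroup. Factor your map as the blow-up of the two disjoint invariant sections $\sigma_\Ll,\sigma_\Ml$, followed by the contraction of the strict transform of the invariant surface $\p(\Ll\oplus\Ml)\subset X$, and apply \autoref{lem:blanchard}. This also justifies your claim of equivariance under lifts of translations.

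Two small corrections.
\begin{enumerate}
\item The sentence about an elementary transformation ``along the section of $\pi_1$ spanned by the $(-1)$-curves'' is wrong as stated. That locus is a surface, and elementary transformations of $X_1\to S$ are centred at curves lying over a curve $\gamma\subset S$. To pass from $\F_1$ to $\F_0$, one takes $\gamma$ an invariant minimal section of $S$ and a centre in $\pi_1^{-1}(\gamma)$ \emph{disjoint} from the $(-1)$-section surface; a centre on that surface would produce an $\F_2$-bundle. Since your explicit formula supersedes this sentence, simply drop it.
\item ``$\Omega$ is finite, hence $\ker(\pi_*)=\G_m^2$'' only determines the identity component. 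When $\{\Ll,\Ml\}=\{\Nl,\Nl^{2}\}$ with $\Nl$ of order $3$, you must also rule out the order-$3$ element of $\Aut(X)_C$ that cyclically permutes the three sections. Your Part~(1) argument does this: every element of $\ker(\pi_*)\subset\Autz(X)$ fixes each section, hence lies in the stabilizer $\Aut(\El)_C/\G_m$. So just place that argument before concluding; the paper is equally terse at this point.
\end{enumerate}
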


\begin{proof}
    Since $\Ml,\Ll\in\Pic^0(C)\setminus\{\Ol_C\}$ are non-isomorphic, we have $\Gamma(C,\Ll) = \Gamma(C,\Ml) =\Gamma(C,\Ml^{-1}\otimes\Ll)=0$. 
    Let $\El:=\Ol_C\oplus\Ll\oplus\Ml$.
    Using the notation from \autoref{key_computation}, we have $d=g=f=\alpha_{kl}=0$ and hence $b=c=h=0$. We obtain that $\Aut(\El)_C$ is isomorphic to $\mathbb{G}_m^3$, consisting of diagonal matrices with coefficients in $\k$. By \autoref{lem:sum_line_bundles}, $\Aut(X)_C$ is either isomorphic to $\mathbb{G}_m^2$ or $\mathbb{G}_m^2 \rtimes \mathbb{Z}/3\mathbb{Z}$. Since $\ker(\pi_*)=\Autz(X)_C=\Aut(X)_C \cap \Autz(X)$, it follows in both cases that $\ker(\pi_*) \simeq \mathbb{G}_m^2$, and this group acts on $X$ via diagonal matrices in $\PGL_3(\k)$.
    
    \autoref{O+L+M:1} The homomorphism $\pi_*$ is surjective by \autoref{lem:hom_proj_bundles}, so the $\Autz(X)$-orbits have dimension at least one. Since $\mathbb{G}_m^2$ acts via diagonal matrices on the fibres, the $\Autz(X)$-invariant curves are precisely the sections $C\times \{[1:0:0]\}$, $C\times \{[0:1:0]\}$, and $C\times \{[0:0:1]\}$, corresponding to the three line subbundles given in the statement. The blowup of any of the three $\Autz(X)$-invariant section yields a birational morphism, locally of the form $U\times \F_1\to U\times \p^2$. This gives an $\Autz(X)$-equivariant Sarkisov link of type I, whose target is an $\F_1$-bundle. This determines all possible $\Autz(X)$-equivariant link of type I and II. Moreover, there is no link of type III and IV by \autoref{lem: links III and IV}.

    \autoref{O+L+M:2} Let $\eta\colon X'\to X$ be the blow-up of the section $C\times \{[0:0:1]\}$ corresponding to the line subbundle $\Ml$. Since this latter is $\Autz(X)$-invariant, we have $\eta^{-1} \Autz(X) \eta \subseteq \Autz(X')$. Locally, on every trivializing open subset $U$, the birational morphism $\eta$ is given by
    \[
    \begin{array}{ccc}
        U\times \F_1 & \to & U  \times \P^2 \\
        (x,[y_0:y_1\ ;z_0:z_1]) & \longmapsto & (x,[y_0z_0:y_0z_1:y_1]);
    \end{array}
    \]
    hence $X'$ has a structure of $\F_1$-bundle over $C$. Equivalently, $\eta$ is the contraction of the surface $\{y_0=0\}$ spanned by the $(-1)$-sections along the fibres of the $\F_1$-bundle, which is $\Autz(X')$-invariant. This implies that $\eta^{-1} \Autz(X) \eta = \Autz(X')$. Computing the transition maps of the $\F_1$-bundle $X'\to C$ (in the notation from \autoref{key_computation}), we obtain
    \[
    \begin{array}{ccc}
        U_l\times \F_1 & \dashrightarrow & U_k \times \F_1 \\
        (x,[y_0:y_1\ ;z_0:z_1]) & \longmapsto & (x,[y_0:\mu_{kl}(x) y_1\ ;z_0:\lambda_{kl}(x) z_1]).
    \end{array}
    \]
    This shows that $X'$ is equipped with a $\P^1$-bundle structure morphism $\pi'\colon X' \to \p(\Ol_C\oplus\Ll)$ with invariants $(\p(\Ol_C\oplus\Ll),1,\Ml)$, see \autoref{rmk:invariants} for the notation. Recall that by \autoref{thm:dim_2}, the ruled surface $\tau\colon \p(\Ol_C\oplus\Ll)\to C$ admits precisely two minimal sections $\sigma$ and $\sigma'$, which are disjoint and $\Autz(\p(\Ol_C\oplus\Ll))$-invariant, respectively defined by the equations $\{z_0=0\}$ and $\{z_1=0\}$. In fact, by \autoref{lem:F_b-bundles}\autoref{F_b-bundles:3}, the transition maps of $\tau\pi'\colon X'\to C$ shows that $\pi'$ is a decomposable $\p^1$-bundle and by we have 
    \[X'\simeq \P(\Ol_{\p(\Ol_C\oplus\Ll)} \oplus \Ol_{\p(\Ol_C\oplus\Ll)}(\sigma + \tau^*(D))),\]
    where $D$ is a divisor such that $\Ol_C(D)\simeq \Ml$.
    
    By Blanchard's lemma, the ruled surface $X'_{\sigma}=\pi'^{-1}(\sigma)$ is $\Autz(X')$-invariant. Setting $z_0=0$ in the transition maps of the above $\F_1$-bundle, we obtain the transition maps of
    the $\p^1$-bundle $X_{\sigma'}$, which are of the form:
    \[
    \begin{array}{ccc}
        U_l\times \P^1 & \dashrightarrow & U_k \times \P^1 \\
        (x,[y_0:y_1]) & \longmapsto & (x,[y_0:\lambda_{kl}^{-1}(x)\mu_{kl}(x) y_1]).
    \end{array}
    \]
    In particular, $X'_{\sigma} \simeq \P(\Ol_{\sigma}\oplus (\Ll^\vee \otimes \Ml))$. Since $\Ll^\vee \otimes \Ml$ is a non-trivial line bundle of degree zero, this implies that $X'_{\sigma}$ is also a ruled surface admitting precisely two disjoint minimal sections (see \autoref{thm:dim_2}), which are given by the equations $\{y_0=0\}$ and $\{y_1=0\}$. Therefore, the curve $C'\subset X'$ with equations $\{y_1=z_0=0\}$ is $\Autz(X')$-invariant. 
    The blow-up of $C'$ followed by the contraction of the strict transform of $X'_{\sigma}$ yields an $\Autz(X')$-equivariant birational map $\phi\colon X' \dashrightarrow X_0$, locally given by
    \[
    \begin{array}{ccc}
        U\times \F_1 & \dashrightarrow & U \times \F_0 \\
        (x,[y_0:y_1\ ;z_0:z_1]) & \longmapsto & (x,[z_0y_0:y_1\ ;z_0: z_1]),
    \end{array}
    \]
    where $U$ is any trivializing open subset. Computing the transition maps of the resulting $\F_0$-bundle, we obtain that $X_0$ is isomorphic to the fibre product $\p(\Ol_C\oplus\Ll)\times_C\p(\Ol_C\oplus\Ml)$. The base locus of $\phi^{-1}$ is the curve with equation $\{y_0=z_0=0\}\subset X_0$, which is $\Autz(X_0)$-invariant by \autoref{thm:dim_2} and \autoref{lem:F_0-bundles}. Thus, we have the equality $\phi\Autz(X') \phi^{-1}= \Autz(X_0)$.
\end{proof}

\begin{remark}
    In the beginning of the proof of \autoref{lem:O+L+M} \eqref{O+L+M:2}, we blowup the section $C\times \{[0:0:1]\}$ corresponding to the line subbundle $\Ml$. Instead, we could also blowup the sections $C\times \{[1:0:0]\}$ or $C\times \{[0:1:0]\}$, and this would have given another $\F_0$-bundle $X'\to C$ such that $\Autz(X)$ is conjugate to $\Autz(X')$, where $X'$ is determined in \cite[Proposition 8.13]{Fong23}.
\end{remark}

\begin{proposition}\label{lem:O+L+M_conjugacy}
    Let $C$ be an elliptic curve and 
\[
X=\p(\Ol_C\oplus\Ll\oplus\Ml) \quad\text{with }\Ll,\Ml\in\Pic^0(C)\setminus\{0\} \ \text{and}\ \Ll\nsimeq\Ml
\]
and $\pi\colon X\to C$ the $\p^2$-bundle structure morphism.
    Then the connected algebraic subgroup $\Autz(X)$ is maximal in $\Bir(X)$ if and only if for every $(n,m) \in \mathbb{Z}^2$ coprime, the line bundle $\Ll^{\otimes n} \otimes \Ml^{\otimes m}$ is not trivial. In that case, $\Autz(X)$ is conjugate to $\Autz(X')$ if and only if there exists 
    \[
    \begin{pmatrix}
    \alpha & \beta \\ \gamma & \delta
    \end{pmatrix}\in \mathrm{SL}_2(\Z) 
    \]
    such that $X'$ is one of the following Mori fibre space:
    \begin{enumerate}
        \item\label{O+L+M_conjugacy:1} There exist $b\in \mathbb{Z}$ and a $\p^1$-bundle structure $\pi'\colon X'\to S$ such that 
        \[
        X' \simeq \P\left(\Ol_S \oplus (\Ol_S(b \sigma) \otimes \tau^*(\Ll^{\otimes \alpha} \otimes \Ml^{\otimes \beta}))\right),
        \]
        where $S:=\P(\Ol_C \oplus (\Ll^{\otimes \gamma} \otimes \Ml^{\otimes \delta}))$, and $\tau\colon S\to C$ denotes the $\p^1$-bundle structure morphism, and $\sigma$ is the section of $\tau$ corresponding to the line subbundle $\Ll^{\otimes\gamma}\oplus\Ml^{\otimes\delta}$.
        
        \item\label{O+L+M_conjugacy:2} There exists a $\p^2$-bundle structure $\pi'\colon X'\to C$ such that 
        \[X'\simeq \P(\Ol_C \oplus (\Ll^{\otimes \alpha} \otimes \Ml^{\otimes \beta}) \oplus (\Ll^{\otimes \gamma} \otimes \Ml^{\otimes \delta})).\]
    \end{enumerate}
\end{proposition}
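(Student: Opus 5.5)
The plan is to reduce to the $\F_0$-bundle model and then use the classification of relatively maximal automorphism groups of $\p^1$-bundles over ruled surfaces from \cite{Fong23}. By \autoref{lem:O+L+M}\autoref{O+L+M:2}, $\Autz(X)$ is conjugate to $\Autz(X_0)$ with $X_0=\p(\Ol_C\oplus\Ll)\times_C\p(\Ol_C\oplus\Ml)$. Therefore $\Autz(X)$ is maximal in $\Bir(X)$ if and only if $\Autz(X_0)$ is. I would work on $X_0$ first. By \autoref{lem:F_0-bundles} and \autoref{thm:dim_2}, $\Autz(X_0)$ is an extension of $\Autz(C)$ by $\G_m^2$. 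Over $C$ it has four invariant sections, namely the intersections of the pairs of minimal sections, and four invariant ruled surfaces. The equivariant Sarkisov links starting from $X_0$ are then blow-ups of invariant sections (type II/I) or links changing the base $S$ by elementary transformations along invariant curves of $S$. In both cases all links are governed by toric combinatorics: locally the action is a $\G_m^2$-torus action on fibres, with transition functions encoded by the characters $\Ll,\Ml$.

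The key computation is to describe the link graph as an action of $\mathrm{SL}_2(\Z)$ (more precisely, of elementary transformations) on the lattice of characters. For each equivariant link, I would compute the transition matrices in the style of the proofs of \autoref{lem:O+L+M} and \autoref{lem:dec_withdeg0term}. A fibrewise toric birational map changes the pair of line bundles $(\Ll,\Ml)$ by a unimodular substitution, producing $\Ll^{\alpha}\Ml^{\beta}$ and $\Ll^{\gamma}\Ml^{\delta}$. On a decomposable $\F_b$-bundle $\P(\Ol_S\oplus\Ol_S(b\sigma)\otimes\tau^*\Nl)$, an elementary link shifts $b$ by one and changes the twist by a power of the other line bundle. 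Iterating gives exactly the list of targets \autoref{O+L+M_conjugacy:1} (for $b\in\Z$) and \autoref{O+L+M_conjugacy:2}. By \autoref{thm:Enrica}, every equivariant birational map factors through such links. Each target has automorphism group of the same dimension, $\Autz(C)$ extended by $\G_m^2$, provided that no relevant line bundle becomes trivial. So the conjugacy is an equality of connected groups, and \autoref{lem: links III and IV} excludes other links from the $\p^2$-bundles.

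For the maximality criterion, I would argue in both directions. Suppose $\Ll^n\Ml^m\simeq\Ol_C$ with $(n,m)$ coprime. Complete $(n,m)$ to a matrix in $\mathrm{SL}_2(\Z)$ with $(\alpha,\beta)=(n,m)$. The corresponding target then has a trivial summand: either $\P(\Ol_C\oplus\Ol_C\oplus\Nl)$, or an $\F_b$-bundle with trivial twist. Its automorphism group strictly contains the conjugate of $\Autz(X)$, namely it contains $\GL_2$ by \autoref{thm:mainDP}-type computations via \autoref{key_computation}. Hence $\Autz(X)$ is not maximal. Conversely, if no such relation holds, every target in the orbit has all line bundles non-trivial of degree zero. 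By \autoref{lem:O+L+M} (for $\p^2$-bundles) and \cite[Theorem A and C]{Fong23} (for $\p^1$-bundles over $S$), its $\Autz$ equals the conjugate of $\Autz(X)$. So no strictly larger connected group is reachable, and $\Autz(X)$ is maximal.

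The main obstacle is the bookkeeping in the middle step. One must verify that the set of reachable Mori fibre spaces is exactly the $\mathrm{SL}_2(\Z)$-orbit described, with no extra links arising from non-minimal invariant curves. This requires checking, via \autoref{lem:F_b-bundles} and \autoref{lem:F_b_trivial_action_on_C}, that only the torus-fixed curves are $\Autz$-invariant at each stage. I would handle it by tracking the fan of the fibrewise $\G_m^2$-action. The $\p^2$, $\F_b$ and $\F_0$ fibres correspond to complete fans, equivariant links correspond to adding or removing rays, and the twisting line bundles are given by evaluating the character map $\Z^2\to\Pic^0(C)$, $(n,m)\mapsto\Ll^n\Ml^m$, on primitive ray generators.
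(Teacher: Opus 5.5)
Your argument is correct in outline, but it takes a different route from the paper. Both start with the same reduction to $X_0=\P(\Ol_C\oplus\Ll)\times_C\P(\Ol_C\oplus\Ml)$ via \autoref{lem:O+L+M}. After that, the paper does not analyse the links itself.

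\textbf{The paper's route.} It imports two things directly from \cite[Proposition 8.13]{Fong23}: the maximality criterion, and the list of conjugate $\P^1$-bundles over ruled surfaces (case (1)). It then only computes that the $\F_1$-bundles ($b=\pm1$) contract onto the $\P^2$-bundles of case (2). Finally, it rules out all Mori Del Pezzo fibrations of degree $\le 8$ at once with \autoref{lem:no_map_to_lower_degree}, using that $\Autz(X)$ acts transitively on $C$ and $\dim\Autz(X)_C>0$.

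\textbf{Your route.} You re-derive \cite[Proposition 8.13]{Fong23} by a link-by-link toric analysis:
\begin{itemize}
\item centres must be torus-fixed sections;
\item the only links are elementary transformations ($b\mapsto b\pm1$), the type IV switch of $\F_0$-bundles, and the type III contraction of $\F_1$-bundles;
\item these generate $\mathrm{SL}_2(\Z)$ acting on the character map $(n,m)\mapsto\Ll^{\otimes n}\otimes\Ml^{\otimes m}$.
\end{itemize}

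\textbf{What each buys.} Your approach explains uniformly where $\mathrm{SL}_2(\Z)$ and the coprimality condition come from. On every member of the orbit, the twisting bundles whose sections could enlarge the automorphism group are $\Ll^{\otimes n}\otimes\Ml^{\otimes m}$ with $(n,m)$ a row of a unimodular matrix. It also treats the $\P^1$-bundle and $\P^2$-bundle members symmetrically. The cost is that the whole content of the statement sits in your ``bookkeeping'' step, which you leave unverified. You also still need \cite[Theorems A and C]{Fong23} to know the automorphism groups of the targets. The paper's route is much shorter precisely because that step is already packaged in \cite{Fong23}.

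\textbf{Two points to handle when you carry it out.}
\begin{itemize}
\item \autoref{lem: links III and IV} alone does not exclude type II links from the $\P^2$-bundles to Mori Del Pezzo fibrations of lower degree. What excludes them is your observation that the only invariant curves are the three torus-fixed sections, whose blow-ups give type I links; this is \autoref{lem:O+L+M}\autoref{O+L+M:1}. Alternatively, use the paper's \autoref{lem:no_map_to_lower_degree}.
\item For $\F_b$-bundles with $b\geq2$, you must check that no link contracts the surface swept out by the $(-b)$-curves. This holds because that ray is not $K$-negative ($K\cdot\sigma_{-b}=b-2\geq 0$), and the contraction would be singular along a curve, hence not terminal. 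So from these bundles only elementary transformations occur, as your list of targets requires.
\end{itemize}
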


\begin{proof}
Let $S_{\Ll}:=\P(\Ol_C \oplus \Ll)$ and $S_{\Ml}:=\P(\Ol_C \oplus \Ml)$, which are ruled surfaces over $C$.
By \autoref{lem:O+L+M}, $\Autz(X)$ is conjugate to $\Autz(S_{\Ll}\times_C S_{\Ml})$.
    By \cite[Proposition 8.13]{Fong23}, if there exists $(n,m)\in \mathbb{Z}^2$ coprime such that $\Ll^{\otimes n} \otimes \Ml^{\otimes m}$ is trivial, then $\Autz(S_{\Ll}\times_C S_{\Ml})$ is not maximal. In particular, $\Autz(X)$ is not maximal.
    
    Suppose that for any $(m,n)\in\Z^2$ coprime, the line bundle $\Ll^{\otimes n} \otimes \Ml^{\otimes m}$ is non-trivial. Let $X'\to S$ be a $\p^1$-bundle over a ruled surface $S$. 
    By \cite[Proposition 8.13]{Fong23}, $\Autz(X)$ is conjugate to $\Autz(X')$ if and only if $X'$ is as in \autoref{O+L+M_conjugacy:1}.

    For each $\P^1$-bundle $X'$ in case \autoref{O+L+M_conjugacy:1}, there exists an $\Autz(X')$-equivariant Sarkisov link whose target is a $\p^2$-bundle if and only if $b=\pm 1$. 
    Indeed, this link has to be of type I, and it induces a birational contraction $X'_{\k(S)}\to \p^2_{\k(S)}$, hence $X_{\k(S)}\simeq\F_{1,\k(S)}$.
    In that case, the link is given by the contraction $\eta$ of the surface spanned by the $(-1)$-sections along the fibres of the corresponding $\F_1$-bundle. By \autoref{lem:F_b-bundles}, its transition maps can be written as
    \[
    \begin{array}{ccc}
        U_l \times \F_1 & \dashrightarrow & U_k \times \F_1 \\
        (x,[y_0:y_1 \ ;z_0:z_1]) &\longmapsto & (x,[y_0:\lambda_{kl}(x)^\alpha \mu_{kl}(x)^\beta y_1 \ ;z_0:\lambda_{kl}(x)^\gamma \mu_{kl}(x)^\delta z_1]).
    \end{array}
    \]
    Moreover, on every trivializing open subset $U$, the birational morphism $\eta$ is given locally by
    \[
    \begin{array}{ccc}
        U\times \F_1 & \to & U  \times \P^2 \\
        (x,[y_0:y_1\ ;z_0:z_1]) & \longmapsto & (x,[y_0z_0:y_1:y_0z_1]).
    \end{array}
    \]
    Computing the transition maps of the resulting $\p^2$-bundle $X''\to C$, we get the $\p^2$-bundles of case \autoref{O+L+M_conjugacy:2}. By \autoref{lem:O+L+M}, the base locus of $\eta^{-1}$ is $\Autz(X'')$-invariant; hence, $\eta$ conjugates $\Autz(X')$ with $\Autz(X'')$. 
    Moreover, by \autoref{lem:hom_proj_bundles} and \autoref{lem:sum_line_bundles}, the group $\Autz(X)$ acts transitively on $C$ and $\mathbb{G}_m^2$ is a subgroup of $\Autz(X)_C$. Hence, by \autoref{lem:no_map_to_lower_degree}, there is no $\Autz(X)$-equivariant birational map from $\pi\colon X\to C$ to a Mori Del Pezzo fibration of degree $d<9$. 
\end{proof}

In \autoref{lem:O+L+M_conjugacy}, we assume that $\Ll$ and $\Ml$ are non-trivial and $\Ll \nsimeq \Ml$. If $\Ll$ and $\Ml$ are non-trivial and isomorphic, then $X\simeq\P(\Ol_C \oplus \Ol_C \oplus \Ll^\vee)$. We treat this case in \autoref{lem:degree_0_L_trivial} and \autoref{prop:E'=O+O}.

\begin{lemma}\label{lem:degree_0_L_trivial}
    Let $C$ be an elliptic curve, $\Ml\in \Pic^0(C)\setminus\{\Ol_C\}$ and $\El=\El' \oplus \Ml$, where $\El'=\Ol_C\oplus \Ol_C$ or $\El'=\El_{2,0}$. Let $X=\p(\El)$.
    Then the curve $C\times \{[0:0:1]\}$ corresponding to the line subbundle $\Ml\subset \El$ is $\Autz(X)$-invariant, and it is an orbit of minimal dimension. Moreover, the following hold:
    \begin{enumerate}
        \item If $\El'=\Ol_C\oplus \Ol_C$, then $C\times \{[0:0:1]\}$ is the only $\Autz(X)$-invariant curve. Moreover, $\Aut(X)_C \simeq \GL_2(\k)$.
        \item If $\El'=\El_{2,0}$, then there exists exactly one other $\Autz(X)$-orbit of dimension $1$, which is the curve $C\times \{[1:0:0]\}$, corresponding to the line subbundle $\Ol_C\subset \El_{2,0}$. Moreover, $\Autz(X)_C \simeq \G_m\rtimes \G_a$.
    \end{enumerate}
\end{lemma}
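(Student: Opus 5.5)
The plan is to run the matrix computation of \autoref{key_computation} in both cases, then read off the orbits. In both cases $\El'$ contains $\Ol_C$ with trivial quotient, so $\Ll:=\El'/\Ol_C\simeq\Ol_C$ and we may take $\lambda_{kl}=1$, $\mu_{kl}$ the cocycles of $\Ml$, and $\alpha_{kl}=0$ if $\El'=\Ol_C\oplus\Ol_C$, $\alpha_{kl}$ a non-trivial cocycle class in $H^1(C,\Ol_C)$ if $\El'=\El_{2,0}$. Since $\Ml$ is non-trivial of degree zero, $\Gamma(C,\Ml)=\Gamma(C,\Ml^{-1})=0$, hence $g=0$ and $f\in\Gamma(C,\Ml^{-1}\otimes\Ll)=\Gamma(C,\Ml^{-1})=0$. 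The remaining equations ($c_k\mu_{kl}=c_l+\alpha_{kl}f_l$, $\mu_{kl}h_l=\alpha_{kl}g_k+\lambda_{kl}h_k$) then give $c\in\Gamma(C,\Ml^{-1})=0$ and $h\in\Gamma(C,\Ml)=0$. So every element of $\Aut(\El)_C$ is block diagonal, with $i\in\k^*$ and a $2\times 2$ block in $\Aut(\El')_C$.

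For $\El'=\Ol_C\oplus\Ol_C$, this block is an arbitrary element of $\GL_2(\k)$, so $\Aut(\El)_C\simeq\GL_2(\k)\times\G_m$. Modding out by scalars gives $\GL_2(\k)$. By \autoref{lem:sum_line_bundles}, $\Omega=0$ here, since $\El$ is not of the form $\Ol_C\oplus\Ll\oplus\Ll^{\otimes 2}$ up to twist: two summands are isomorphic. Hence $\Aut(X)_C\simeq\GL_2(\k)$ by \autoref{SES:Grothendieck}.

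For $\El'=\El_{2,0}$, the block must commute with $\left(\begin{smallmatrix}1&\alpha_{kl}\\0&1\end{smallmatrix}\right)$. This forces $d=0$, and then $a=e$ constant, using that $d\in\Gamma(C,\Ol_C)$ and the class of $\alpha$ is non-zero. So the block is $\left(\begin{smallmatrix}a&b\\0&a\end{smallmatrix}\right)$ with $b\in\k$; this identifies $\End(\El_{2,0})$ with $\k[\epsilon]/\epsilon^2$, consistent with \autoref{thm:Atiyah}. Thus $\Aut(\El)_C/\G_m$ is the group of matrices $\mathrm{diag}$-block $(a,b;0,a)\oplus(i)$ modulo scalars, which is isomorphic to $\G_m\rtimes\G_a$. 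Again $\Omega=0$ by \autoref{lem:sum_line_bundles}, and the group is connected, so $\Autz(X)_C\simeq\G_m\rtimes\G_a$.

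Next come the orbits. By \autoref{lem:hom_proj_bundles}, $\pi_*$ is surjective, so $\Autz(X)$ acts transitively on $C$ and every orbit has dimension $\ge1$. An orbit of dimension one is therefore an $\Autz(X)$-invariant section, which corresponds to a point of the fibre $\p^2$ fixed by the fibre group $\Autz(X)_C$; invariance under lifts of translations is then automatic, since the orbit is a section.
\begin{itemize}
\item[(1)] $\GL_2(\k)$ acts on $[u:v:w]$ by $(u,v)\mapsto A(u,v)$ with $w$ fixed. In $\PGL_3$ its only fixed point is $[0:0:1]$, and no other point has a finite orbit.
\item[(2)] The group $\{(a,b)\}$ acts by $[u:v:w]\mapsto[au+bv:av:iw]$. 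Its fixed points are exactly $[1:0:0]$ and $[0:0:1]$.
\end{itemize}
Since $\Autz(X)_C$ is the same on every fibre, these give the sections $C\times\{[0:0:1]\}$ (the subbundle $\Ml$) and, in case (2), $C\times\{[1:0:0]\}$ (the subbundle $\Ol_C\subset\El_{2,0}$). The only $\Autz(X)_C$-fixed subsets are these points, so these are all the invariant curves, which gives the uniqueness statements.

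The main obstacle is the careful handling of the cocycle equations in case (2): one must use that $\alpha_{kl}$ represents a non-zero class in $H^1(C,\Ol_C)$ to force $d=0$ and $a=e$. One must also check that lifts of translations do not move these sections; this holds because the sections are globally defined, since $\Ml\subset\El$ and $\Ol_C\subset\El_{2,0}$ are canonical subbundles by \autoref{rmk:Atiyah}.
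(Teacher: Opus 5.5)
Your proposal is correct and follows essentially the paper's argument: the computation of \autoref{key_computation} forces $c=f=g=h=0$, so $\Aut(X)_C\simeq\Aut(\El')_C$ once $\Omega=0$, and the one-dimensional orbits are exactly the sections through the points fixed by $\Autz(X)_C$. The differences are only in the details. You derive $\Aut(\El_{2,0})_C$ directly from the non-vanishing of the extension class $[\alpha]\in H^1(C,\Ol_C)$ instead of citing Maruyama, and you get $\Omega=0$ from \autoref{lem:sum_line_bundles} in both cases instead of invoking Atiyah's Theorem~5 for $\El_{2,0}\oplus\Ml$.
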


\begin{proof}
Note that for the rank-$3$ vector bundles $\El_{2,0} \oplus \Ml$ and $\Ol_C\oplus \Ol_C \oplus \Ml$, there exist trivializations such that the transition matrices are of the form
\[
\begin{pmatrix}
1 & \alpha_{kl} & 0 \\ 0 & 1 & 0 \\ 0 & 0 & \mu_{kl}
\end{pmatrix},
\]
where $\alpha_{kl}\in \Ol_C(U_{kl})$, and where $\mu_{kl}\in \Ol_C(U_{kl})^*$ are the cocycles of the line bundle $\Ml$. 
Recall the short exact sequence from \autoref{rmk:seq}
\[
0\to\Aut(\El)_C/\G_m\to \Aut(X)_C\to\Omega\to0.
\]
In our case we have $\Omega=0$. Indeed, if $\El'=\Ol_C\oplus\Ol_C$, this is \autoref{lem:sum_line_bundles}, and if $\El'=\El_{2,0}$, then this is \cite[Theorem 5]{Ati57}.

Therefore, $\Aut(X)_C \simeq \Aut(\El)_C/\mathbb{G}_m$.
Using $\Gamma(C,\Ml)=0$ and $\Ll=\Ol_C$ in the notation of \autoref{key_computation}, we obtain that $g=h=f=c=0$, that is, every element of $\Aut(\El)_C$ is on the $U_k \times \A^3$ given by
\[
	M_{k} = 
	\begin{pmatrix}
		a_k & b_k & 0 \\
		d_k & e_k & 0 \\
		0 & 0 & i
	\end{pmatrix}, 
	\]	
    for some $i\in \k^*$. This implies that $\Aut(\El)_C \simeq \Aut(\El')_C \times \mathbb{G}_m$; thus, $\Aut(X)_C\simeq \Aut(\El')_C$. Notice that the section $C\times \{[0:0:1]\}$ is $\Aut(X)_C$-invariant and that it corresponds to the line subbundle $\Ml$. Since $\Ml$ is invariant by translations of $C$, the section $C\times \{[0:0:1]\}$ is $\Autz(X)$-invariant. For the rest of the proof, we distinguish two cases according to whether $\El' = \El_{2,0}$ or $\Ol_C\oplus \Ol_C$.

    Assume that $\El'=\Ol_C\oplus \Ol_C$. Then $\Aut(\El')_C = \GL_2(\k)$ and any $\Autz(X)$-orbit different than $C\times \{[0:0:1]\}$ has dimension at least $2$. Thus, the $\Autz(X)$-invariant curve $C\times \{[0:0:1]\}$ is the only $\Autz(X)$-orbit of dimension $1$.

    Assume that $\El' = \El_{2,0}$. Then by \cite[\S1]{Maruyama}, we have the isomorphism $\Aut(\El_{2,0})_C\simeq \mathbb{G}_m \rtimes \mathbb{G}_a$ given by the upper triangular matrices
    \[
    \begin{pmatrix}
    a & b \\ 0 & a
    \end{pmatrix},
    \quad\text{where } a\in \k^*,\ b\in \k.
    \]
    Thus, there exists precisely one other $\Autz(X)_C$-invariant section, that is the curve $C\times \{[1:0:0]\}$. Its corresponds the the line subbundle $\Ol_C \subset \El$ given by the inclusion $\Ol_C \subset \El_{2,0}$. Moreover, the vector bundle $\El_{2,0}\oplus \Ml$ is homogeneous, and the line subbundle $\Ol_C\subset \El_{2,0}$ is invariant by translations of $C$. Hence, the sections $C\times [1:0:0]$ and $C\times \{[0:0:1]\}$ are precisely the $\Autz(X)$-orbits of dimension $1$ and the other $\Autz(X)$-orbits have dimension at least $2$.
\end{proof}

\begin{proposition}\label{prop:E'=O+O}
     Let $C$ be an elliptic curve, $\Ml\in \Pic^0(C)\setminus\{\Ol_C\}$ and $\El=\Ol_C\oplus \Ol_C \oplus \Ml$ and $X=\p(\El)$.
 Then $\Autz(X)$ is a maximal connected algebraic subgroup of $\Bir(X)$, which fits into a short exact sequence
 \[
 1 \to \GL_2(\k) \to \Autz(X) \to \Autz(C) \to 1.
 \]
 Moreover, $\Autz(X)$ is conjugate to $\Autz(X')$ if and only if $X'$ is one of the following Mori fibre space:
 \begin{enumerate}
    \item $X=X'$ is a $\p^2$-bundle over $C$.
    
    \item\label{prop:E'=O+O.2} There exists a $\p^1$-bundle structure $\pi'\colon X' \to C\times \p^1$ such that
    $$X' \simeq \P(\Ol_{C\times \P^1} \oplus \Ol_{C\times \P^1}(\sigma) \otimes \tau^*(\Ml)),$$
    where $\sigma$ denotes the class of a constant section of $\tau\colon C\times \p^1\to C$.
\end{enumerate}
In Case \autoref{prop:E'=O+O.2}, the morphism $\tau\pi'$ is an $\F_1$-bundle over $C$ and the contraction of the surface spanned by the $(-1)$-curves along the fibres of $\tau\pi'$ yields an $\Autz(X')$-equivariant birational morphism $X'\rightarrow X$ that conjugates $\Autz(X)$ and $\Autz(X')$.
\end{proposition}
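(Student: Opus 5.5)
The plan is to run the $\Autz(X)$-equivariant Sarkisov program from $\pi$, using the orbit description in \autoref{lem:degree_0_L_trivial}. First, the short exact sequence. By \autoref{lem:hom_proj_bundles}, $\pi_*$ is surjective. By \autoref{lem:degree_0_L_trivial}, $\Aut(X)_C\simeq\GL_2(\k)$, which is connected, so $\Autz(X)_C=\Aut(X)_C\simeq\GL_2(\k)$. This gives
\[
1\to\GL_2(\k)\to\Autz(X)\to\Autz(C)\to1 .
\]

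Next, I classify the $\Autz(X)$-equivariant links starting from $X$. By \autoref{lem: links III and IV}, only links of type I and II can occur. Since $\Autz(X)$ acts transitively on $C$, the base locus of a link is a union of $\Autz(X)$-orbits that dominate $C$. By \autoref{lem:degree_0_L_trivial}, the only invariant curve is $\Sigma:=C\times\{[0:0:1]\}$, which corresponds to $\Ml$. Every other orbit has dimension at least $2$. Such an orbit is either a surface, which is the $\p^1$-subbundle $\p(\Ol_C\oplus\Ol_C)$ and is a divisor, hence not a centre, or it is open.

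A type II link over $C$ would, by \autoref{lem:type II}, blow up a closed point of $X_{\k(C)}\simeq\p^2_{\k(C)}$ of degree at least $2$. No such invariant point exists, since the only invariant closed point is $\Sigma_{\k(C)}$. Hence the only link is the type I link of \autoref{lem:section blow-up}, namely $X':=\mathrm{Bl}_\Sigma X\to S$. As in the proof of \autoref{lem:O+L+M}, I compute its transition maps with \autoref{lem:F_b-bundles}. Here $\lambda_{kl}=1$ because the quotient is $\Ol_C\oplus\Ol_C$, so $S=C\times\p^1$. The $\p^1$-bundle is decomposable with invariants $(C\times\p^1,1,\Ml)$, so
\[
X'\simeq\P(\Ol_{C\times\p^1}\oplus\Ol_{C\times\p^1}(\sigma)\otimes\tau^*\Ml).
\]
The exceptional divisor is the surface spanned by the $(-1)$-sections of the $\F_1$-bundle $\tau\pi'$. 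This surface is $\Autz(X')$-invariant, so $\Autz(X')$ descends to $X$ and $\eta^{-1}\Autz(X)\eta=\Autz(X')$.

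It remains to show there are no further links out of $X'$ beyond the inverse one, so that the equivariant Sarkisov graph is just $X\leftrightarrow X'$. I would argue via the classification in \cite{Fong23} of relatively maximal automorphism groups of $\p^1$-bundles over ruled surfaces. Concretely, I would show that the only proper $\Autz(X')$-invariant subvarieties dominating $C$ are the $(-1)$-section surface and its intersections. Any link of type I or II from $X'\to C\times\p^1$ then has to blow up an invariant curve or a point of the base $C\times\p^1$. But $\Autz(C\times\p^1)\supseteq\Autz(C)\times\PGL_2$ has no invariant curves, and any invariant curve in $X'$ lies in the exceptional surface, which is homogeneous under the $\GL_2$-action on its fibres. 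A type III or IV link is either the contraction back to $X$, or it switches to the other ruling $C\times\p^1\to\p^1$, which is excluded by irrationality as in \autoref{lem: DP4}. Combining this with \autoref{lem:no_map_to_lower_degree}, there is no map to a Del Pezzo fibration of degree at most $8$. Any equivariant $\p^2$-bundle target must then arise from $X'$ by contracting the unique $(-1)$-section surface, which returns $X$.

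Maximality follows from this. Any connected algebraic $G\supseteq\Autz(X)$ is regularized, by \cite{Flo20}, on some Mori fibre space reached equivariantly, hence on $X$ or $X'$, and both give the same group. The main obstacle is the exhaustive analysis of links from $X'$; there I would lean on \cite[Theorem A/C, Prop.~8.13]{Fong23} for $\F_1$-bundles with invariants $(C\times\p^1,1,\Ml)$.
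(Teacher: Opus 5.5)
Your proposal follows the paper's proof: the exact sequence from \autoref{lem:hom_proj_bundles} and \autoref{lem:degree_0_L_trivial}, the blow-up $\eta$ of the unique invariant curve $\Sigma$ with $X'\simeq\P(\Ol_{C\times\p^1}\oplus\Ol_{C\times\p^1}(\sigma)\otimes\tau^*(\Ml))$ and $\eta^{-1}\Autz(X)\eta=\Autz(X')$, and then \cite{Fong23} together with \autoref{lem:no_map_to_lower_degree} to see that $\eta$ is the only equivariant link out of $X'$. Here the paper uses Theorems A and C, case (vii), of \cite{Fong23}, not Proposition 8.13, which it invokes for the fibre products $\P(\Ol_C\oplus\Ll)\times_C\P(\Ol_C\oplus\Ml)$.

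There is one slip in your own sketch for $X'$. The strict transform of $\P(\Ol_C\oplus\Ol_C)\subset X$ is a second invariant surface, namely a section of $\pi'$ disjoint from the exceptional divisor. It is again a single orbit, so $X'$ still has no orbits of dimension $\le 1$. With this correction your orbit argument is self-contained without \cite{Fong23}:
\begin{itemize}
\item no type I or II link has an admissible invariant centre;
\item $C\times\p^1\to\p^1$ has elliptic fibres, so it cannot be the base contraction of a type III or IV link;
\item the two-ray game over $C$ just returns $\eta$.
\end{itemize}
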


\begin{proof}
    The short exact sequence follows from \autoref{lem:hom_proj_bundles} and \autoref{lem:degree_0_L_trivial}.
    Also, by \autoref{lem:degree_0_L_trivial}, $X$ has a unique $\Autz(X)$-orbit of dimension $\leq 1$, which is the section $\Sigma:=C\times\{[0:0:1]\}$ of $\pi\colon X\to C$ corresponding to $\Ml$.
    The blow-up $\eta\colon X'\to X$ of $\Sigma$ is locally given by
    \[
    \begin{array}{ccc}
        U \times \F_1 & \to & U \times \P^2 \\
        (x,[y_0:y_1\ ; z_0:z_1]) & \longmapsto & (x,[y_0z_0:y_0z_1:y_1]),
    \end{array}
    \]
    where $U$ is any trivializing open subset. The birational morphism $\eta$ contracts the surface spanned by the $(-1)$-sections along the fibres of the $\F_1$-bundle $X'\to C$. Since this surface is $\Autz(X')$-invariant, it follows that we have the equality $\eta^{-1} \Autz(X) \eta = \Autz(X')$. Moreover, computing the transition maps of the resulting $\F_1$-bundle, we obtain:
    \[
    \begin{array}{ccc}
        U_l \times \F_1 & \to & U_k \times \F_1 \\
        (x,[y_0:y_1\ ; z_0:z_1]) & \longmapsto & (x,[y_0:\mu_{kl}(x)y_1\ ;z_0:z_1]).
    \end{array}
    \]
    The projection onto $(x,[z_0:z_1])$ equips $X'$ with a structure of $\P^1$-bundle $\pi'\colon X'\to C\times \p^1$ and by \autoref{lem:F_b-bundles}, we have 
    \[
    X' \simeq \P(\Ol_{C\times \p^1} \oplus \Ol_{C\times \p^1}(\sigma) \otimes \tau^*(\Ml)),
    \]
    where $\tau\colon C\times 
    \p^1 \to C$ is the first projection and $\sigma$ is the class of a constant section.

    By \cite[Theorems A and C, (vii)]{Fong23} with $S=C\times \P^1$, then $\Autz(X')$ is relatively maximal and there is no $\Autz(X')$-equivariant Sarkisov link to another conic bundle over a surface. 
        Since $X$ is a decomposable $\p^2$-bundle, it follows that $\mathbb{G}_m\subset \Autz(X)_C$. Hence, there is also no $\Autz(X)$-equivariant link to a Mori Del Pezzo fibration of degree $<9$ by \autoref{lem:no_map_to_lower_degree}.  Therefore, the only $\Autz(X')$-equivariant starting from $X'$ is exactly the birational morphism $\eta$. 
        This shows that $\Autz(X')$ is conjugate to $\Autz(X)$ by $\eta$, and those are the only representatives of the conjugacy class. It follows that $\Autz(X)$ is a maximal connected algebraic subgroup of $\Bir(X)$.
\end{proof}

It remains to treat the case where $\El \simeq \El_{2,0} \oplus \Ml$, where $\Ml \in \Pic^0(C)$, in \autoref{lem:degree_0_L_trivial} (2). Let us first introduce a family of threefolds $\Al_{(S,b,\Ml^{\otimes n})}$, defined in \cite[Lemma 8.6]{Fong23}. Later, in \autoref{prop:E'=E_20}, we will see that $\Autz(\Al_{(S,b,\Ml^{\otimes n})})$ is maximal if $\Ml$ has infinite order. In this case, $\Autz(\Al_{(S,b,\Ml^{\otimes n})})$ is conjugate to $\Autz(\El_{2,0} \oplus \Ml)$.

\begin{example}\label{ex:A(S,b,M)}
    Let $C$ be an elliptic curve, let $\Ml\in \Pic^0(C)$ be non-trivial and let $S:=\P(\Ol_C \oplus \Ml)$. We define $$\Al_{(S,0,\Ol_C)}:=\Al_{2,0} \times_C S.$$ Recall from \autoref{thm:dim_2} that $\Al_{2,0}$ has a unique minimal section $\sigma_0$ of self-intersection zero, corresponding to the line subbundle $\Ol_C\subset \El_{2,0}$ and $S$ has exactly two minimal sections $\sigma$ and $\sigma'$ of self-intersection zero, corresponding respectively to the line subbundles $\Ol_C \subset \Ol_C \oplus \Ml$ and $\Ml\subset \Ol_C \oplus \Ml$.
    
    There exist trivializations of the $\F_0$-bundle $\Al_{(S,0,\Ol_C)}$ such that the transition maps are of the form
    \[
    \begin{array}{ccc}
        U_l \times \F_0 & \dashrightarrow & U_k\times \F_0 \\
        (x,[y_0:y_1\ ; z_0:z_1]) & \longmapsto & (x,[y_0:y_1+ \alpha_{kl}(x)y_0\ ; z_0: \mu_{kl}(x)z_1]),
    \end{array}
    \]
    where $\begin{pmatrix} 1 & 0 \\ \alpha_{kl} & 1 \end{pmatrix} \in \GL_2(\Ol_C(U_{kl}))$ are the transition matrices of $\Al_{2,0}$ and $\mu_{kl}\in \Ol_C(U_{kl})^*$ are the cocycles of the line bundle $\Ml$. 
    Under this choice of trivializations, we have 
    \[
    \sigma_0=\{y_0=0\} \subset \Al_{2,0},\quad\sigma=\{z_0=0\}\subset S,\quad \sigma'=\{z_1=0\}\subset S.
    \]
    \autoref{lem:F_0-bundles} and the uniqueness of the sections $\sigma_0,\sigma,\sigma'$ implies that there are precisely two $\Autz(\Al_{(S,0,\Ol_C)})$-invariant curves, given by the equations $l_{00}=\{y_0=z_0=0\}$ and $l_{01}=\{y_0=z_1=0\}$. 
    
    We define the $\F_1$-bundle $\Al_{(S,1,\Ol_C)}$ as follows. The blowup of $l_{00}$ followed by the contraction of the strict transform of the surface $\{z_0=0\}$ yields an $\Autz(\Al_{(S,0,\Ol_C)})$-equivariant birational map $\Al_{(S,0,\Ol_C)}\rat \Al_{(S,1,\Ol_C)}$ given on every chart $U\times\F_0$ by
    \[
    U\times\F_0\rat U\times\F_1,\quad (x,[y_0:y_1;z_0:z_1])\mapsto(x,[y_0:y_1z_0;z_0:z_1]).
    \]
    to the $\F_1$-bundle $\Al_{(S,1,\Ol_C)}$. 
    
    Similarly, the $\F_1$-bundle $\Al_{(S,1,\Ml)}$ as follows. The blowup of $l_{01}$ followed by the contraction of the strict transform of the surface $\{z_1=0\}$ yields an $\Autz(\Al_{(S,0,\Ol_C)})$-equivariant birational map $\Al_{(S,0,\Ol_C)}\rat \Al_{(S,1,\Ml)}$ given on every chart $U_l\times\F_0$ by
    \[
    U_l\times\F_0\rat U_l\times\F_1,\quad (x,[y_0:y_1;z_0:z_1])\mapsto(x,[y_0:y_1z_1;z_0:z_1]).
    \]
    to the $\F_1$-bundle $\Al_{(S,1,\Ml)}$.

    For every $b\geq 0$ and $n\in \{0,\ldots, b\}$, we construct the $\F_b$-bundle $\Al_{(S,b,\Ml^{\otimes n})}$ by induction as follows. 
    The threefold $X= \Al_{(S,b,\Ml^{\otimes n})}$ is equipped with a structure of $\P^1$-bundle $\pi\colon X \to S$. By Blanchard's lemma (\autoref{lem:blanchard}), the surfaces $\pi^{-1}(\sigma)$ and $\pi^{-1}(\sigma')$ are $\Autz(X)$-invariant. The intersection of those two surfaces with the surface spanned by the $(-b)$-sections along the fibres of the $\F_b$-bundle yields two $\Autz(X)$-invariant curves $l_{00}^X$ and $l_{01}^X$. The blow-up of $l_{00}^X$ (resp. $l_{01}^X$) followed by the contraction of the strict transform of the surface $\pi^{-1}(\sigma)$ (resp. $\pi^{-1}(\sigma')$) gives rise to the $\F_{b+1}$-bundle $\Al_{(S,b+1,\Ml^{\otimes n})}$ (resp. $\Al_{(S,b+1,\Ml^{\otimes n+1})}$).

    Using the transition maps of $\Al_{(S,0,\Ol_C)}$ and the above construction, we obtain that $\Al_{(S,b,\Ml^{\otimes n})}$ has transition maps of the form
    \[
    \begin{array}{ccc}
        U_l \times \F_b & \dashrightarrow & U_k\times \F_b \\
        (x,[y_0:y_1\ ; z_0:z_1]) & \longmapsto & (x,[y_0:\mu_{kl}(x)^ny_1+ \mu_{kl}(x)^n\alpha_{kl}(x) z_0^{b-n}z_1^n y_0\ ; z_0: \mu_{kl}(x)z_1]).
    \end{array}
    \]
    By \cite[Propositions 8.8 and 8.9]{Fong23}, the $\P^1$-bundles $\Al_{(S,b,\Ml^{\otimes n})} \to S$ constructed above are indecomposable, with invariants $(S,b,\Ml^{\otimes n})$, and their automorphism groups $\Autz(\Al_{(S,b,\Ml^{\otimes n})})$ are all conjugate if and only if $\Ml$ has infinite order.
\end{example}

\begin{proposition}\label{prop:E'=E_20}
    Let $C$ be an elliptic curve, $\Ml\in \Pic^0(C)\setminus\{\Ol_C\}$ and $\El=\El_{2,0} \oplus \Ml$ and $X=\p(\El)$.
    Then $\Autz(X)$ is a maximal connected algebraic subgroup of $\Bir(X)$ if and only if $\Ml$ has infinite order. In that case, $\Autz(X)$ fits into a short exact sequence
    \[
    1\to \G_m \rtimes \G_a \to \Autz(X) \to \Autz(C) \to 1,
    \]
    and $\Autz(X)$ is conjugate to $\Autz(X')$ if and only if $X'$ is one of the following Mori fibre space:
    \begin{enumerate}
        \item\label{E'=E_20:1} There exist $b\in \mathbb{Z}$ and a $\p^1$-bundle structure $\pi'\colon X'\to \Al_{2,0}$
        such that 
        \begin{align*}
        X'&\simeq \P(\Ol_{\Al_{2,0}} \oplus \Ol_{\Al_{2,0}}(b\sigma)\otimes \tau^*(\Ml)),
        \end{align*}
        where $\sigma$ denotes the class of the unique minimal section of the ruled surface $\Al_{2,0}$.
        \item\label{E'=E_20:2} There exist $b\geq 0$, $n\in \{0,\ldots,b\}$, and a $\p^1$-bundle structure $\pi'\colon X'\to \P(\Ol_C \oplus \Ml)$, such that
        \[
           X'\simeq\Al_{(\P(\Ol_C \oplus \Ml),b,\Ml^{\otimes n})} 
        \]
         constructed in \autoref{ex:A(S,b,M)}.
        \item\label{E'=E_20:3} There exists a structure of $\p^2$-bundle $\pi'\colon X'\to C$ such that $X'\simeq X$ or $$X'\simeq \P(\El_{2,0}\oplus \Ml^\vee) = :X^\vee.$$ 
    \end{enumerate}    
\end{proposition}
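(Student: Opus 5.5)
The plan mirrors the proof of \autoref{lem:O+L+M} and \autoref{lem:O+L+M_conjugacy}: replace $X$ by a $\p^1$-bundle over a ruled surface via an equivariant link of type I, and then import the classification of \cite{Fong23}.

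\textbf{Step 1: the short exact sequence.} By \autoref{lem:hom_proj_bundles}, $\pi_*$ is surjective. By \autoref{lem:degree_0_L_trivial}(2), $\Autz(X)_C\simeq\G_m\rtimes\G_a$. Together these give the stated sequence. \autoref{lem:degree_0_L_trivial}(2) also identifies the $\Autz(X)$-orbits of dimension $1$: they are exactly the sections $\Sigma_1=C\times\{[1:0:0]\}$ (from $\Ol_C\subset\El_{2,0}$) and $\Sigma_2=C\times\{[0:0:1]\}$ (from $\Ml$).

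\textbf{Step 2: the equivariant links from $X$.} By \autoref{lem: links III and IV} there are no links of type III or IV. By \autoref{lem:no_map_to_lower_degree}, which applies since $\G_m\subset\Autz(X)_C$ and the action on $C$ is transitive, there is no equivariant map to a Mori Del Pezzo fibration of degree $<9$. An equivariant link of type II over $C$ would have a base locus transversal to the fibres. So every link starts with the blow-up of an invariant curve, that is, of $\Sigma_1$ or $\Sigma_2$, and is of type I by \autoref{lem:section blow-up}.
\begin{itemize}
\item Blowing up $\Sigma_2$ and computing transition maps as in \autoref{prop:E'=O+O} gives the $\F_1$-bundle $\p(\Ol_{\Al_{2,0}}\oplus\Ol_{\Al_{2,0}}(\sigma)\otimes\tau^*\Ml)$ over $\Al_{2,0}$. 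This is case \autoref{E'=E_20:1} with $b=1$.
\item Blowing up $\Sigma_1$ gives an $\F_1$-bundle over $S=\p(\Ol_C\oplus\Ml)$. We identify it, via its transition maps, with $\Al_{(S,1,\Ol_C)}$ or $\Al_{(S,1,\Ml)}$ from \autoref{ex:A(S,b,M)}.
\end{itemize}
Dualising $\El$ exchanges the roles of the two sections. The contractions of $(-1)$-sections in the other families then produce $X^\vee=\p(\El_{2,0}\oplus\Ml^\vee)$. We check this by tensoring transition matrices: $\El_{2,0}\oplus\Ml^\vee\simeq(\El_{2,0}\otimes\Ml\oplus\Ol_C)\otimes\Ml^\vee$, and the $\p^2$-bundle obtained by contracting in $\Al_{(S,1,\Ml)}$ is of this form.

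\textbf{Step 3: import \cite{Fong23}.} Apply \cite[Theorems A and C, Propositions 8.8, 8.9]{Fong23} to the $\p^1$-bundles over $\Al_{2,0}$ and over $S$. If $\Ml$ has infinite order, the groups $\Autz(\Al_{(S,b,\Ml^{\otimes n})})$ are relatively maximal and pairwise conjugate. In the same way, the family $\p(\Ol\oplus\Ol(b\sigma)\otimes\tau^*\Ml)$ over $\Al_{2,0}$ is relatively maximal and closed under equivariant links, with all $b\in\Z$ occurring. The only further links out of these conic bundles are the type III contractions for $b=\pm1$ (or $n$ such that $\F_1$ appears) back to $\p^2$-bundles. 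These land on $X$ or $X^\vee$, which we verify by computing transition maps as in the proof of \autoref{lem:O+L+M_conjugacy}. Since the set of Mori fibre spaces reachable by equivariant links is then closed, and every automorphism group in it equals the conjugate of $\Autz(X)$ by dimension and invariance of the base loci, maximality and the list \autoref{E'=E_20:1}--\autoref{E'=E_20:3} follow from \autoref{thm:Enrica}.

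If $\Ml$ has finite order, \cite[Propositions 8.8, 8.9]{Fong23} give that $\Autz(\Al_{(S,b,\Ml^{\otimes n})})$ is not maximal, for instance because it is conjugate into a strictly larger group. Hence $\Autz(X)$, being conjugate to it, is not maximal either.

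The main obstacle is Step 3: matching precisely the invariants $(S,b,\Ml^{\otimes n})$ and the base $\Al_{2,0}$-families with the cases of \cite{Fong23}, and verifying that every link out of these conic bundles towards a $\p^2$-bundle lands on $X$ or $X^\vee$. I would handle this by explicit transition-map computations in the charts of \autoref{ex:A(S,b,M)}.
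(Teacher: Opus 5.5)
Your proposal follows essentially the same route as the paper: blow up the two invariant sections to reach the $\p^1$-bundles over $\Al_{2,0}$ and over $\p(\Ol_C\oplus\Ml)$, import the relative classification and conjugacy statements from \cite{Fong23} (including the finite-order case), rule out lower-degree targets with \autoref{lem:no_map_to_lower_degree}, and recover $X$ and $X^\vee$ by contracting the $(-1)$-section surfaces ($b=\pm1$ in the first family, $b=1$ in the second). Two small corrections, neither of which changes the final list. First, blowing up $\Sigma_1$ gives $\Al_{(\p(\Ol_C\oplus\Ml),1,\Ol_C)}$; the $\Ml$-twisted bundle comes from $X^\vee$. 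Second, your claim that the only further links are type III contractions omits the type IV link at $b=0$, given by the second ruling of $\Al_{2,0}\times_C\p(\Ol_C\oplus\Ml)$, which is how the paper passes between the two families; its target is already in your list, so the conclusion is unaffected.
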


\begin{proof}
The short exact sequence follows from \autoref{lem:hom_proj_bundles} and \autoref{lem:degree_0_L_trivial}. 
We will show that all $\Autz(X)$-equivariant birational maps from $ X$ to another Mori fibre space are summarized in the following commutative diagram:
{\footnotesize
\begin{equation}
\begin{tikzcd}[column sep=0.1em,row sep = 4em] 
 & \P(\Ol_{\Al_{2,0}} \oplus \Ol_{\Al_{2,0}}(b\sigma)\otimes \tau^*(\Ml)) \arrow[rr,dashed, "(ii)"]\arrow[ld,dashed,"(i)" swap] \arrow[rd] \arrow[d,dashed, "(iv)"] & & \Al_{2,0}\times_C \p(\Ol_C \oplus \Ml) \arrow[rr,dashed,"(ii)"] \arrow[rd] \arrow[ld]&& \Al_{(\P(\Ol_C \oplus \Ml),b,\Ml^{\otimes n})} \arrow[rd,dashed,"(iii)"] \arrow[ld] \arrow[d,dashed, ,"(iii)"] & \\
X \arrow[rrrd,"\pi" swap] & X^\vee \arrow[rrd] & \Al_{2,0} \arrow[rd] & & \p(\Ol_C \oplus \Ml) \arrow[ld] & X \arrow[lld] & X^\vee\arrow[llld] \\
&&& C &&&.
\end{tikzcd} 
\tag*{\llap{\quad(\(\clubsuit\))}}
\label{diagram}
\end{equation}
}
All morphisms with targets to $\Al_{2,0}$, $\p(\Ol_C\oplus \Ml)$, and $C$ are structure morphisms of projective bundles. All dashed arrows are $\Autz(X)$-equivariant birational maps. The numbers $(i)$, $(ii)$, $(iii)$, and $(iv)$ indicate where these maps are constructed in the proof below. 

Since $\El=\El_{2,0}\oplus\Ml$ is decomposable, we have $\mathbb{G}_m\subset \Autz(X)_C$. 
Hence, by \autoref{lem:no_map_to_lower_degree}, there is no $\Autz(X)$-equivariant birational map from $X$ to a Mori Del Pezzo fibration of degree $<9$. Now, let's start.

(i) First, recall that by \autoref{lem:degree_0_L_trivial}, the $\p^2$-bundle $\pi\colon X\to C$ admits precisely two $\Autz(X)$-invariant sections, which are $C\times \{[0:0:1]\}$ and $C\times \{[1:0:0]\}$. Let's leave the section $C\times \{[1:0:0]\}$ for later (see (iii), case (c)).

    We start as in the beginning of the proof of \autoref{prop:E'=O+O}, by blowing up the $\Autz(X)$-invariant section $\Sigma:=C\times\{[0:0:1]\}$.
    The blow-up $\eta\colon X'\to X$ of $\Sigma$ is locally given by
    \[
    \begin{array}{ccc}
        U \times \F_1 & \to & U \times \P^2 \\
        (x,[y_0:y_1\ ; z_0:z_1]) & \longmapsto & (x,[y_0z_0:y_0z_1:y_1]),
    \end{array}
    \]
    where $U$ is any trivializing open subset. The birational morphism $\eta$ contracts the surface spanned by the $(-1)$-sections along the fibres of the $\F_1$-bundle $X'\to C$. Since this surface is $\Autz(X')$-invariant, it follows that we have the equality $\eta^{-1} \Autz(X) \eta = \Autz(X')$. Moreover, computing the transition maps of the resulting $\F_1$-bundle, we obtain:
    \[
    \begin{array}{ccc}
        U_l \times \F_1 & \dashrightarrow & U_k \times \F_1 \\
        (x,[y_0:y_1\ ; z_0:z_1]) & \longmapsto & (x,[y_0:\mu_{kl}(x)y_1\ ;z_0+\alpha_{kl}(x)z_1:z_1]).
    \end{array}
    \]
    The projection onto $(x,[z_0:z_1])$ equips $X'$ with a structure of $\P^1$-bundle $\pi'\colon X'\to \Al_{2,0}$ and by \autoref{lem:F_b-bundles}, we have 
    \[
    X' \simeq \P(\Ol_{\Al_{2,0}} \oplus (\Ol_{\Al_{2,0}}(\sigma) \otimes \tau^*(\Ml))),
    \]
    where $\tau\colon \Al_{2,0} \to C$ is the structure morphism and $\sigma$ is the minimal section of $\Al_{2,0}$, which is unique (see \autoref{thm:dim_2}).
    
    For every $b\in \mathbb{Z}$, there exist $\Autz(X')$-equivariant birational maps 
    \[
    X'\rat\p(\Ol_{\Al_{2,0}}\oplus(\Ol_{\Al_{2,0}}(b\sigma)\otimes \tau^*(\Ml)))
    \]
    over $\Al_{2,0}$. Together with the birational map $\eta$, this gives us the $\Autz(X)$-equivariant birational map 
    \[
    \p(\Ol_{\Al_{2,0}}\oplus(\Ol_{\Al_{2,0}}(b\sigma)\otimes \tau^*(\Ml))) \overset{(i)}{\rat}  X
    \]
    in Diagram~(\(\clubsuit\)). This also gives \autoref{E'=E_20:1}.
    
    \smallskip
    
    (ii) At this step, the only way to obtain new $\Autz(X)$-equivariant birational maps is to consider the cases where $b\in \{-1,0\}$ in \autoref{E'=E_20:1}, that means, the $\p^1$-bundles 
    \begin{enumerate}
        \item[(a)] $\p(\Ol_{\Al_{2,0}}\oplus \tau^*(\Ml))$,
        \item[(b)] $\p(\Ol_{\Al_{2,0}}\oplus(\Ol_{\Al_{2,0}}(-\sigma)\otimes \tau^*(\Ml)))$.
    \end{enumerate}
     In case (b), we have an $\F_1$-bundle over $C$ and we can produce an $\Autz(X)$-equivariant Sarkisov link
     by contracting the invariant surface spanned by the $(-1)$-sections. Let's leave this case for later (see (iv)), and now we focus on case (a). In fact, $\p(\Ol_{\Al_{2,0}}\oplus \tau^*(\Ml))$ is isomorphic as $\p^1$-bundle to the fibre product $\Al_{2,0}\times_C \p(\Ol_C\oplus \Ml)$, which is equipped with another structure of $\p^1$-bundle, namely the projection onto $\p(\Ol_C\oplus \Ml)$.
    
    As explained in \autoref{ex:A(S,b,M)}, there exist $\Autz(\Al_{2,0}\times_C \P(\Ol_C \oplus \Ml))$-equivariant birational maps 
    \[
    \Al_{2,0}\times_C \P(\Ol_C \oplus \Ml) \dashrightarrow \Al_{(\P(\Ol_C \oplus \Ml),\ b,\ \Ml^{\otimes n})}
    \]
    over $\p(\Ol_C\oplus\Ml)$ for any $b\geq0$ and $0\leq n\leq b$. This gives us the top right equivariant birational map in Diagram~(\(\clubsuit\)) and \autoref{E'=E_20:2}.
    
    (iii) To obtain new $\Autz(X)$-equivariant birational maps, the only way is to consider the case $b=1$ in \autoref{E'=E_20:2}, i.e. the $\p^1$-bundles 
    \[
    \Al_{(\P(\Ol_C \oplus \Ml),\ 1,\ \Ml^{\otimes n})} 
    \]
    and contracting the invariant surface $S_1$ spanned by the $(-1)$-sections along the fibres. Since $n\in \{0,1\}$, we have two cases:
    \begin{enumerate}
        \item[(c)] Let $n=0$. By \autoref{ex:A(S,b,M)}, the $\F_1$-bundle $\Al_{(\p(\Ol_C\oplus \Ml),1,\Ol_C)}\to C$ has transition maps of the form
        \[
        \begin{array}{ccc}
        U_l \times \F_1 & \dashrightarrow & U_k\times \F_1 \\
        (x,[y_0:y_1\ ; z_0:z_1]) & \longmapsto & (x,[y_0:y_1+ \alpha_{kl}(x) z_0 y_0\ ; z_0: \mu_{kl}(x)z_1]).
        \end{array}
        \]
        The contraction 
        \[
        \Al_{(\p(\Ol_C\oplus \Ml),1,\Ol_C)}\to Y,\quad (x,[y_0:y_1\ ; z_0:z_1])\mapsto (x,[y_1 : y_0 z_0 : y_0 z_1])
        \]
        is an $\Autz(\Al_{(\p(\Ol_C\oplus \Ml,1),\Ol_C)})$-equivariant birational morphism onto a $\p^2$-bundle $Y$ over $C$. Computing the transition matrices of $Y$, we see that $Y\simeq X$, and the surface $S_1$ is contracted to the section $C\times \{[1:0:0]\}$ that we have left in the beginning of (i) of this proof.

        \item[(d)] Let $n=1$. By \autoref{ex:A(S,b,M)}, the $\F_1$-bundle $\Al_{(\p(\Ol_C\oplus \Ml),1,\Ml)}\to C$ has transition maps of the form
        \[
        \begin{array}{ccc}
        U_l \times \F_1 & \dashrightarrow & U_k\times \F_1 \\
        (x,[y_0:y_1\ ; z_0:z_1]) & \longmapsto & (x,[y_0:\mu_{kl}(x)y_1+ \mu_{kl}(x)\alpha_{kl}(x) z_1 y_0\ ; z_0: \mu_{kl}(x)z_1]).
        \end{array}
        \]
        The contraction 
        \[
         \Al_{(\p(\Ol_C\oplus \Ml),1,\Ml)}\to Z,\quad (x,[y_0:y_1\ ; z_0:z_1])\mapsto (x,[y_1 : y_0 z_1 : y_0 z_0])
        \]
        is an $\Autz(\Al_{(\p(\Ol_C\oplus \Ml),1,\Ml)})$-equivariant birational morphism onto a $\p^2$-bundle $Z$ over $C$, with transition matrices
        \[
        \begin{pmatrix}
        1 & \alpha_{kl} & 0 \\ 0 & 1 & 0 \\ 0&0&\mu_{kl}^{-1}
        \end{pmatrix}.
        \]
        This implies that the resulting $\P^2$-bundle is isomorphic to $Z\simeq \P(\El_{2,0}\oplus \Ml^\vee)$. Since $\El_{2,0}$ is self-dual, it follows that $Z\simeq X^\vee$ and this gives \autoref{E'=E_20:3}. 
        \end{enumerate}
    In cases (a) and (b) above, we have constructed the $\Autz(\Al_{(\P(\Ol_C \oplus \Ml),b,\Ml^{\otimes n})})$-equivariant birational maps
    \begin{align*}
    \Al_{(\P(\Ol_C \oplus \Ml),b,\Ml^{\otimes n})} & \rat  X, \\ 
    \Al_{(\P(\Ol_C \oplus \Ml),b,\Ml^{\otimes n})} & \rat  X^\vee, 
    \end{align*}
    in Diagram~(\(\clubsuit\)). 
         
   \smallskip
     
    (iv) By \autoref{lem:degree_0_L_trivial}, $X^\vee$ has precisely two $\Autz(X^\vee)$-invariant sections. In (iii), case (d), notice that the surface $S_1$ is contracted to the section $C\times \{[1:0:0]\}$. Then the other invariant section is $C\times \{[0:0:1]\}$.
    Its blow-up is an $\Autz(X^\vee)$-equivariant birational morphism $\theta\colon X'''\to X^{\vee}$, locally given by
    \[
    \begin{array}{ccc}
    (x,[y_0:y_1\ ;  z_0:z_1]) & \mapsto & (x,[y_0z_0:y_0z_1:y_1]).
    \end{array}
    \]
    Hence, $X'''$ carries the structure of an $\F_1$-bundle over $C$ with transition maps
    \[
        \begin{array}{ccc}
        U_l \times \F_1 & \dashrightarrow & U_k\times \F_1 \\
        (x,[y_0:y_1\ ; z_0:z_1]) & \longmapsto & (x,[y_0:\mu_{kl}^{-1}(x)y_1\ ; z_0 + \alpha_{kl}(x)z_1: z_1]).
        \end{array}
    \]
    The projection onto $(x,[z_0:z_1])$ endows $X'''$ with a structure of $\p^1$-bundle, which is isomorphic to 
    \[
    \p(\Ol_{\Al_{2,0}}\oplus(\Ol_{\Al_{2,0}}(\sigma)\otimes \tau^*(\Ml^\vee))),
    \]
    where $\sigma$ is the unique minimal section of $\Al_{2,0}$. Notice that this $\P^1$-bundle is also isomorphic to 
    \[
    \p(\Ol_{\Al_{2,0}}\oplus(\Ol_{\Al_{2,0}}(-\sigma)\otimes \tau^*(\Ml))),
    \]
    which was left earlier in the proof, in (ii) case (b).
    This gives the $\Autz(X)$-equivariant birational map $\p(\Ol_{\Al_{2,0}}\oplus(\Ol_{\Al_{2,0}}(b\sigma)\otimes \tau^*(\Ml))) \rat X^\vee$ in Diagram ~(\(\clubsuit\)).

    At each step, we studied all the $\Autz(X)$-equivariant Sarkisov links. This proves the Proposition.
\end{proof}

\begin{lemma}\label{lem:E_20+O}
    Let $C$ be an elliptic curve.
    Let $X=\P(\El_{2,0} \oplus \Ol_C)$. Then $\Autz(X)$ is not a maximal connected algebraic subgroup of $\Bir(X)$.
\end{lemma}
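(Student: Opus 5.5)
We want to show that $\Autz(X)$ for $X=\p(\El_{2,0}\oplus\Ol_C)$ is not maximal. The plan is to exhibit an $\Autz(X)$-equivariant birational map $X\rat X'$ with $\varphi\Autz(X)\varphi^{-1}\subsetneq\Autz(X')$, following the pattern of \autoref{prop:E'=O+O} and \autoref{prop:E'=E_20}.

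First compute the relevant invariant loci. In the notation of \autoref{key_computation}, take $\El'=\El_{2,0}$, $\Ll=\Ol_C$, $\Ml=\Ol_C$, with transition matrices
\[
\begin{pmatrix} 1&\alpha_{kl}&0\\0&1&0\\0&0&1\end{pmatrix}.
\]
Now $\Gamma(C,\Ml)=\k$, so the entries $g,h,f,c$ need not vanish. The sub-bundle $\Ol_C\oplus\Ol_C\subset\El$, spanned by the first and third coordinates, is the image of $\Gamma(C,\El)\otimes\Ol_C$, since $\Gamma(C,\El_{2,0})=\k$ by \autoref{thm:Atiyah}. It is therefore canonical, hence preserved by $\Aut(\El)_C$, and, because $\El$ is homogeneous, by translations as well. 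Consequently the $\p^1$-subbundle $\p(\Ol_C\oplus\Ol_C)\simeq C\times\p^1\subset X$ given by $\{v=0\}$ is $\Autz(X)$-invariant.

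Next perform an elementary transformation along this invariant trivial $\p^1$-subbundle. Over an arbitrary point $p\in C$, do not blow up a fibre; instead use the following global construction. Blow up the surface $\{v=0\}$, which is a Cartier divisor in $X$, so this does nothing. Better: use the invariant divisor to pass to the dual bundle. A cleaner route is to observe that $\El_{2,0}\oplus\Ol_C$ is the extension $0\to\Ol_C^{2}\to\El\to\Ol_C\to0$, and to note that $\El$ is a subsheaf of $\Ol_C^3$ after an elementary modification. Concretely, choose $p\in U_k$ and $\xi$ with $\operatorname{div}(\xi)|_{U_k}=p$. The map $(x,[u:v:w])\mapsto(x,[u:\xi v:w])$ on $U_k$, and the identity elsewhere, is $\Autz(X)_C$-equivariant, because $d$ and $g$ are constants and must then vanish at $p$. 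Checking this forces $d=g=0$, which the computation in \autoref{key_computation} gives, since $d\in\Gamma(\Ll)$ with the $d$ entry zero for the upper-triangular shape of $\Aut(\El_{2,0})_C$.

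This map does not handle translations, so it is only $\Autz(X)_C$-equivariant. The main obstacle is therefore achieving $\Autz(X)$-equivariance. Instead, I would blow up the invariant section $\Sigma=C\times\{[1:0:0]\}$ corresponding to $\Ol_C\subset\El_{2,0}$, which is invariant by the same argument as in \autoref{lem:degree_0_L_trivial}. This gives an $\F_1$-bundle $\p(\Ol_S\oplus\Ol_S(\sigma))$ over $S=\p(\Ol_C\oplus\Ol_C)=C\times\p^1$. Then apply elementary links over $S$, as in step (i) of \autoref{prop:E'=E_20}, to reach $\p(\Ol_S\oplus\Ol_S(b\sigma))$ with $b=0$, i.e.\ $C\times\p^1\times\p^1$, or to a bundle over $\Al_{2,0}$ such as $\Al_{2,0}\times_C\p^1=\Al_{2,0}\times\p^1$. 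Since $\Autz(\Al_{2,0}\times\p^1)\supset\Autz(\Al_{2,0})\times\PGL_2(\k)$ has fibrewise part of dimension $1+3=4>\dim(\G_m\rtimes\G_a)\cdot$, and the $\PGL_2$ factor moves the invariant curve used as base locus, the conjugate is a proper subgroup. The decisive step to verify is the identification of the intermediate bundle with the product $\Al_{2,0}\times\p^1$, via transition maps in which the twist by $\Ml$ becomes trivial because $\Ml=\Ol_C$.
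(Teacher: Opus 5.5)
\textbf{There is a genuine gap: everything after the blow-up of $\Sigma$ fails.}

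Your choice of the invariant section $\Sigma=C\times\{[1:0:0]\}$ and its blow-up is exactly the paper's first step. However, your justification of invariance ``as in \autoref{lem:degree_0_L_trivial}'' does not apply: that argument uses $\Gamma(C,\Ml)=0$, which fails for $\Ml=\Ol_C$. You need, as in the paper, that $H^0(C,\El_{2,0})$ is spanned by a section of $\Ol_C\subset\El_{2,0}$, which forces $d=g=0$. Equivalently, $\Sigma$ corresponds to the common kernel of all homomorphisms $\El\to\Ol_C$.

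The proof then breaks down in two places. First, the blow-up $X'$ is not $\p(\Ol_S\oplus\Ol_S(\sigma))$. With the chart $(x,[y_0:y_1;z_0:z_1])\mapsto(x,[y_1:y_0z_0:y_0z_1])$, its transition maps are $(x,[y_0:y_1+\alpha_{kl}(x)z_0y_0;z_0:z_1])$. Over the constant section $[z_0:z_1]=[1:0]$ of $S=C\times\p^1$ it therefore restricts to $\Al_{2,0}$, so $X'\to S$ is indecomposable. The decomposable bundle you wrote is instead the blow-up of $C\times\p^2$ along a constant section. Second, the links you then invoke do not exist. The only $\Autz(X)$-invariant curve in $X'$ is $\{y_0=z_0=0\}$: the exceptional divisor meets the preimage of the unique invariant section $\{z_0=0\}$ of $S$, which is the strict transform of the invariant subbundle $\{v=0\}$. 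The elementary transformation along this curve raises $b$ to $2$. It never leads to $b=0$, to $C\times\p^1\times\p^1$, or to $\Al_{2,0}\times\p^1$.

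In fact no such conjugation is possible, and your dimension count uses the wrong group. Since $\Gamma(C,\Ml)=\k$ here, $\Aut(X)_C$ is not $\G_m\rtimes\G_a$; that is the case $\Ml\not\simeq\Ol_C$ of \autoref{lem:degree_0_L_trivial}. It is the $4$-dimensional group of classes of matrices $\bigl(\begin{smallmatrix} a&b&c\\ 0&a&0\\ 0&e&f\end{smallmatrix}\bigr)$, whose unipotent radical (given by $a=f=1$) is $3$-dimensional. An equivariant birational map from $X$ to a Mori fibre space over $C$ or over a ruled surface is compatible with the maps to $C$. Hence $\Autz(X)_C$ would inject into the vertical part $\G_a\times\PGL_2(\k)$, resp.\ $\PGL_2(\k)^2$, of your targets. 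Unipotent subgroups of these groups have dimension at most $2$, so this is impossible. Note also that $\dim(\G_a\times\PGL_2(\k))=4$ equals $\dim\Autz(X)_C$, so your strict inequality fails even on its own terms.

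What is missing is a valid reason why $\Autz(X')$ is not maximal. The paper uses that $X'\to C\times\p^1$ is an indecomposable $\p^1$-bundle and quotes \cite[Proposition 5.9]{Fong23}. You can also repair your idea by going in the opposite direction. Apply the equivariant elementary transformation $[y_0:y_1;z_0:z_1]\mapsto[y_0:z_0y_1;z_0:z_1]$ along $\{y_0=z_0=0\}$. This gives the $\F_2$-bundle with transition maps $y_1\mapsto y_1+\alpha_{kl}z_0^2y_0$. It carries the fibrewise automorphisms $(y_0,y_1,z)\mapsto(y_0,c^2y_1+q(z)y_0,Az)$, where $A$ is lower triangular with $(Az)_0=cz_0$ and $q$ is an arbitrary binary quadratic form. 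These form a $5$-dimensional group, so the conjugate of $\Autz(X)$, whose vertical part has dimension $4$, is a proper subgroup.
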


\begin{proof}
    For the rank-$3$ vector bundle $\El_{2,0} \oplus \Ol_C$, there exist trivializations such that the transition matrices are of the form
    \[
        \begin{pmatrix}
        1 & \alpha_{kl} & 0 \\ 0 & 1 & 0 \\ 0 & 0 & 1
        \end{pmatrix},
    \]
    where $\alpha_{kl}\in \Ol_C(U_{kl})$. From \autoref{key_computation}, we get the equality
    \[
        \begin{pmatrix}
            1 & \alpha_{kl} \\ 0  & 1
        \end{pmatrix}
        \cdot
        \begin{pmatrix}
            e_k \\ d_k
        \end{pmatrix}
        =
        \begin{pmatrix}
            e_l \\ d_l
        \end{pmatrix} 
	\]
        The morphisms $(e_k,d_k)\colon U_k\to \A^2$ and $(e_l,d_l)\colon U_l\to \A^2$
    yield a global section $(e,d)$ of $\El_{2,0}$.
    Next, we show that $d=0$. Indeed, the short exact sequence defining $\El_{2,0}$ in \autoref{thm:Atiyah} yields a long exact sequence of cohomology
    \[
    0 \to \k \to H^0(C,\El_{2,0}) \to \k \to \k \to H^1(C,\El_{2,0}) \to \k \to 0
    \]
    Moreover, $H^0(C,\El_{2,0})$ has dimension $1$ by \cite[Lemma 15]{Ati57}. Hence, we have the isomorphism
    \begin{align*}
    \k = H^0(C,\Ol_C)&\longrightarrow  H^0(C,\El_{2,0})  \\
    s & \longmapsto  \begin{pmatrix}
        s \\ 0
        \end{pmatrix}
    \end{align*}
    This shows that $d=0$.
    Using again \autoref{key_computation}, we have the equality
    \[
        \begin{pmatrix}
            1 & \alpha_{kl} \\ 0  & 1
        \end{pmatrix}
        \cdot
        \begin{pmatrix}
            h_k \\ g_k
        \end{pmatrix}
        =
        \begin{pmatrix}
            h_l \\ g_l
        \end{pmatrix},
	\]
    and this yields a global section $(h,g)$. By the same argument as before, we have $g=0$. 
    
    Since $d=g=0$, the curve $C':=C\times \{[1:0:0]\}\subset X$ is $\Autz(X)_C$-invariant. 
    The vector bundle $\El_{2,0} \oplus \Ol_C$ is homogeneous, as $\El_{2,0}$ and $\Ol_C$ are both homogeneous (see \autoref{rmk:Atiyah}). Moreover, by \autoref{rmk:Atiyah}, the line subbundle $\Ol_C \subset \El_{2,0}$ is unique and hence invariant by translations. Therefore, $C'$ is $\Autz(X)$-invariant.

    The blowup of $C'$ yields a birational morphism $\eta\colon X'\to X$, locally given by
    \[
    \begin{array}{ccc}
    U \times \F_1 & \to & U \times \P^2 \\ 
    (x,[y_0 :y_1 \ ; z_0 : z_1]) &\to & (x,[y_1 : y_0z_0 : y_0z_1]).
    \end{array}
    \]
    where $U\subset C$ is any trivializing open subset. Thus, the transition maps of $X'$ are given by 
    \[
    \begin{array}{ccc}
    U_l \times \F_1 & \to & U_k \times \F_1 \\ 
    (x,[y_0 :y_1 \ ; z_0 : z_1]) &\to & (x,[y_0 :y_1 + \alpha_{kl}(x)z_0y_0 \ ; z_0 : z_1]).
    \end{array}
    \]
  The projection onto $(x,[z_0:z_1])$ equips $X'$ with a structure of $\P^1$-bundle over $C\times \p^1$. The restriction of $X'$ over the constant section $C\times [1:0]\subset C\times \p^1$ yields a $\p^1$-bundle with transition matrices
    \[
    \begin{pmatrix}
        1 & \alpha_{kl} \\ 0 & 1
    \end{pmatrix},
    \]
    hence it is isomorphic to $\Al_{2,0}$. A fortiori, $X'$ is an indecomposable $\p^1$-bundle over $C\times \p^1$. Indeed, if $X'$ was decomposable, it would have two disjoint sections, which would induce two disjoint sections in each $\Al_{2,0}$, which is impossible.
    
    By \cite[Proposition 5.9]{Fong23}, $\Autz(X')$ is not maximal. Since $C'$ is $\Autz(X)$-invariant, we have $\eta^{-1} \Autz(X) \eta \subseteq \Autz(X')$, and it follows that $\Autz(X)$ is also not maximal.
\end{proof}

\subsection{Automorphisms of indecomposable projective bundles}\label{section: indecomposable}

In this section, $C$ is again an elliptic curve, and we focus on the automorphism groups of the Atiyah $\p^2$-bundles $\Al_{3,i}$, where $i\in \{0,1,2\}$. Below, we treat the two cases $i=0$ and $i\in \{1,2\}$ separately.

\subsubsection{The stable projective bundles}\label{ss:stable_proj}

For $i\in \{1,2\}$, the vector bundles $\El_{3,i}$ are stable, see for instance \cite[Lemma 1]{Hein_Ploog}. In this section, we will not use the notion of the \emph{slope} of a vector bundle nor the definition of a \emph{stable bundle} (see \autoref{def:slope}). We will only use the property that stable bundles are \emph{simple}, i.e. $\Aut(\El_{3,i})_C=\G_m$ (see e.g. \cite[Corollary 1.2.8]{Huybrechts_Lehn}).

\medskip

We write $X=\p(\El_{3,i})$. From the short exact sequence \autoref{SES:Grothendieck}, we get that $\Aut(X)_C\simeq \Omega$. Recall that $\Omega$ is the subgroup of the $3$-torsion of $\Pic^0(C)$ consisting of line bundles $\mathcal{N}$ such that 
\[
\El_{3,i} \otimes \mathcal{N} \simeq \El_{3,i}.
\]
Then by \cite[Corollary(iii) to Theorem 7]{Ati57}, it follows that $\Omega \simeq (\mathbb{Z}/3\mathbb{Z})^2$. 

Our projective bundle $\pi\colon X\to C$ is irreducible, i.e., there exists an irreducible projective representation $G\to \PGL_3(\k)$ such that \[X \simeq \Autz(X)\times^G \p^2,\] 
and via this isomorphism, the $\P^2$-bundle structure morphism $\pi\colon X\to C$ is identified with the projection onto $\Autz(X)/\Autz(X)_C\simeq C$ (cf. \cite[Theorem 2.1, Proposition 3.5]{Brion_homogeneous_projective_bundles}).
By \cite[Proposition 3.1]{Brion_homogeneous_projective_bundles}, the finite group $\Autz(X)_C$ has order $9$. It follows that 
$\Autz(X)_C=\Aut(X)_C\simeq(\mathbb{Z}/3\mathbb{Z})^2$.

\begin{lemma}\label{lem:decomposable_preserves_constant_section}
    Let $E$ be an elliptic curve, $G\subset E$ a finite subgroup and
   $X=E\times^G \P^2$ and assume that $\pi \colon X \to C$ is decomposable. Then $G$ preserves a constant section in $E\times \P^2$.
\end{lemma}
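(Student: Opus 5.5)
The plan is to pull $X$ back along the quotient map and show that the pullback of a direct summand becomes a constant section. Write $C=E/G$ and let $q\colon E\to C$ be the quotient map; it is an étale Galois cover with group $G$. By construction of $X=E\times^G\P^2$, we have a $G$-equivariant isomorphism
\[
E\times_C X\simeq E\times\P^2
\]
over $E$. Here $G$ acts diagonally on the right-hand side: by translations on $E$ and through the given projective representation on $\P^2$. The quotient map $E\times\P^2\to X$ is exactly the base change of $q$. Under this identification, $G$-invariant sections of $E\times\P^2\to E$ correspond bijectively to sections of $\pi\colon X\to C$, by descent along $q$.

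Next I use decomposability. Write $X\simeq\P(\El)$ with $\El\simeq\El'\oplus\Ml$, where $\Ml$ is a line bundle. The line subbundle $\Ml\subset\El$ gives a section $s\colon C\to X$. Its pullback $\tilde s$ is a $G$-invariant section of $E\times\P^2\to E$, and it corresponds to the line subbundle $q^*\Ml\subset q^*\El$. Since $\P(q^*\El)\simeq E\times\P^2$, \autoref{rmk:indecomposable} gives a line bundle $\Nl$ on $E$ with $q^*\El\simeq\Nl^{\oplus3}$. Tensoring with $\Nl^{-1}$, we get
\[
\Ol_E^{\oplus3}\simeq(q^*\El'\otimes\Nl^{-1})\oplus(q^*\Ml\otimes\Nl^{-1}).
\]
By the Krull--Schmidt theorem \cite{Ati56}, every indecomposable summand of $\Ol_E^{\oplus3}$ is isomorphic to $\Ol_E$, so $q^*\Ml\otimes\Nl^{-1}\simeq\Ol_E$. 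Thus $\tilde s$ corresponds to a split embedding $\Ol_E\hookrightarrow\Ol_E^{\oplus3}$, that is, to a global section of $\Ol_E^{\oplus3}$. Since $\Gamma(E,\Ol_E)=\k$, this section is a constant nonzero vector $v\in\k^3$. Hence $\tilde s=E\times\{[v]\}$ is a constant section.

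Since $\tilde s$ is $G$-invariant and $G$ acts on $E\times\P^2$ diagonally, the set $E\times\{[v]\}$ is preserved by $G$. This means $[v]\in\P^2$ is fixed by the image of $G$ in $\PGL_3(\k)$, and so $G$ preserves a constant section.

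The step needing the most care is the identification of $q^*\El$ with a twist of the trivial bundle, and checking that the isomorphism $E\times_CX\simeq E\times\P^2$ is the trivialization induced from the contracted product. I would check this directly: the composite $E\times\P^2\to E\times^G\P^2=X$ together with the projection to $E$ gives a morphism $E\times\P^2\to E\times_CX$ over $E$. It is an isomorphism on every fibre, so it is an isomorphism of $\P^2$-bundles. The rest of the argument is formal.
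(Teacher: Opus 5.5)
Your argument is correct and is essentially the paper's proof: you pull $X$ back along the isogeny $E\to C$, twist the pulled-back decomposition $q^*\El'\oplus q^*\Ml$ to $\Ol_E^{\oplus 3}$, and use Krull--Schmidt to see that the pulled-back section is a $G$-invariant constant section. Your extra step, that the split summand $\Ol_E\hookrightarrow\Ol_E^{\oplus 3}$ is a constant vector because $\Gamma(E,\Ol_E)=\k$, spells out what the paper compresses into ``by Krull--Schmidt this decomposition is unique''; so does the remark that any $E$-automorphism of $E\times\P^2$ is a constant element of $\PGL_3(\k)$, which means \emph{constant} does not depend on the chosen trivialization.
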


\begin{proof}
    The projection $p_E\colon E\times\p^2\to E$ and the quotient $q\colon E\times\p^2\to X$ by the diagonal action of $G$ induce a cartesian square
    \[
    \begin{tikzcd}
        E\times \P^2 \ar[r,"q"]\ar[d,"p_E"] & X \ar[d,"\pi"] \\
        E\ar[r,"i"] & C,
    \end{tikzcd}
    \]
   where $i$ is the quotient by $G$, which is an isogeny of elliptic curves. 
    
    Since $\pi$ is decomposable, there exists a section $\sigma$ of $\pi$ and a $\P^1$-subbundle $S$ of $X$, such that $\sigma$ and $S$ are disjoint. Their pullbacks by $i$ yield respectively a line bundle $\Ll$ and a rank-$2$ vector bundle $\El$, such that $\P(\Ll \oplus \El) \simeq E\times \P^2$. In particular, there exists a line bundle $\Ml$ such that $\Ml \otimes (\Ll \oplus \El) \simeq \Ol_C \oplus \Ol_C \oplus \Ol_C$. By Krull-Schmidt theorem, this decomposition is unique; hence, the pullback section $i^*\sigma$ is a $G$-invariant constant section of $p_E\colon E\times \P^2 \to E$.
\end{proof}

The following example gives an explicit construction of $\Al_{3,1}$ and $\Al_{3,2}$, via a projective representation of $(\mathbb{Z}/3\mathbb{Z})^2$.

\begin{example}\label{ex:indec_V9}
    Let $G= (\mathbb{Z}/3\mathbb{Z})^2$, that we see as the $3$-torsion subgroup of an elliptic curve $E$. Consider the projective representation of $G$ in $\PGL_3$, by sending the generators to the matrices
    \[
	\begin{pmatrix}
		1 & 0 & 0 \\
		0 & j & 0 \\
		0 & 0 & j^2
	\end{pmatrix} \text{ and }
	\begin{pmatrix}
		0 & 1 & 0 \\
		0 & 0 & 1 \\
		1 & 0 & 0
	\end{pmatrix},
	\]	    
    where $j$ is a $3$-rd primitive root of unity. This induces a diagonal action of $G$ on $E\times \P^2$, which does not preserve any constant section. 
    By \autoref{lem:decomposable_preserves_constant_section}, the corresponding quotient $X = E\times^G \P^2$ is an indecomposable $\P^2$-bundle over the elliptic curve $C := E/G$.

    Alternatively, we can consider the dual representation of $G$, by sending the generators to the matrices
    \[
	\begin{pmatrix}
		1 & 0 & 0 \\
		0 & j^2 & 0 \\
		0 & 0 & j
	\end{pmatrix} \text{ and }
	\begin{pmatrix}
		0 & 1 & 0 \\
		0 & 0 & 1 \\
		1 & 0 & 0
	\end{pmatrix}.
	\]	    
    As above, the associated quotient $X':=E\times^G\p^2$ is an indecomposable $\P^2$-bundle over $C$. 
    Since the two above projective representations are dual to each other, but not self-dual, $X$ and $X'$ are isomorphic to $\Al_{3,1}$ and $\Al_{3,2}$, which are also dual to each other (see \cite[Theorems 5,6, and 7]{Ati57}).
\end{example}

In \autoref{prop:A31 and A32}, we prove that $\Autz(\Al_{3,1})$ and $\Autz(\Al_{3,2})$ are maximal. In fact, each of them is the unique representative of its conjugacy class. 

\begin{definition}\label{def:superrigid}
    Let $\pi\colon X\to C$ be a Mori fibre space. The pair $(\pi,\Autz(X))$ is \emph{superrigid} if every $\Autz(X)$-equivariant Sarkisov link starting from $\pi\colon X\to C$ is an isomorphism.
\end{definition}

\begin{lemma}\label{lem:4:1_maximal}
    Let $C$ be a smooth projective curve of genus $g(C)\geq 1$.
    Let $\pi\colon X\to C$ be a $\P^2$-bundle such that every $\Autz(X)$-orbit is a curve whose projection onto $C$ is $n$-to-$1$ for $n\geq9$. Then $\Autz(X)$ is a maximal connected algebraic subgroup of $\Bir(X)$ and the pair $(X,\pi)$ is superrigid.
\end{lemma}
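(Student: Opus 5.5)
The plan is to show that every $\Autz(X)$-equivariant Sarkisov link starting from $\pi$ is an isomorphism. Superrigidity then follows immediately. Maximality follows too: if $\Autz(X)\subseteq G$ with $G$ connected algebraic in $\Bir(X)$, regularise $G$ and run a $G$-equivariant MMP to get a $G$-Mori fibre space $Y$ with a $G$-equivariant, hence $\Autz(X)$-equivariant, birational map $X\rat Y$. By \autoref{thm:Enrica}, this map factors into $\Autz(X)$-equivariant links. Each of these is an isomorphism, so $X\simeq Y$ equivariantly, $G\subseteq\Autz(X)$, and the inclusion is an equality.

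To classify the links, first note that there are no links of type III or IV, by \autoref{lem: links III and IV}. Next I rule out type I and type II. Such a link starts with an $\Autz(X)$-equivariant divisorial extraction $div\colon Z\to X$; in type II it may also be preceded or followed by pseudo-isomorphisms over $C$, and the centre of $div$ is the relevant object either way. The centre $\Gamma$ of $div$ is an $\Autz(X)$-invariant irreducible subvariety of dimension $0$ or $1$. Since every orbit is a curve, $\Gamma$ is not a point, so it is a union of orbits and hence equal to a single orbit closure, i.e.\ an irreducible curve $\Gamma$ with $\deg(\pi|_\Gamma)=n\geq 9$. In particular $\Gamma$ is transversal to the general fibre.

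Passing to the generic fibre as in \autoref{lem:links I} and \autoref{lem:type II}, the link induces a Sarkisov link from $X_{\k(C)}\simeq\p^2_{\k(C)}$ that starts with the blow-up of a closed point of degree $n\geq9$. The generic fibre is $\p^2_{\k(C)}$ because $X$ is a $\p^2$-bundle, so it has rational points. By \autoref{lem:links I} and \autoref{lem:type II}, the blown-up surface $Z_{\k(C)}$ must be a Del Pezzo surface. But blowing up a point of degree $n\geq 9$ on $\p^2$ gives $K^2=9-n\leq 0$, which is impossible. For links of type I this is exactly the list in \autoref{lem:links I}, which only allows degrees $1$ or $4$ when the generic fibre is $\p^2$. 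For type II, Iskovskikh's list \cite[Theorem 2.6(ii)]{iskovskikh_1996} gives the same bound, since the blown-up point must have degree at most $8$.

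Finally, links of type II that do not preserve the fibration over $C$ cannot occur: by \autoref{lem:preserves fibration} the target is again fibred over $C$, and type II targets over a surface are excluded by the type I analysis. It remains to check that a link of type IV over the same base $C$ is just an isomorphism of the Mori fibre space; since $\rho(X/C)=1$ this is automatic.

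The main obstacle is justifying that the centre of the extraction equals an orbit closure and is transversal, rather than lying in a fibre. This is clear here: any invariant curve is a union of orbits, and all orbits dominate $C$ because $n\ge 9>0$.
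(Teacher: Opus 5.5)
Your proof is correct and is essentially the paper's argument. Types III and IV are excluded by \autoref{lem: links III and IV}; types I and II are excluded because the extracted centre must be an orbit, i.e.\ a curve mapping $n$-to-$1$ onto $C$ with $n\geq 9$, so the induced blow-up of $\p^2_{\k(C)}$ is not a Del Pezzo surface, contrary to \autoref{lem:links I} and \autoref{lem:type II}. You also spell out two points the paper leaves implicit: why the centre is an orbit, and how maximality follows from the absence of links via regularisation, equivariant MMP and \autoref{thm:Enrica}. Your last paragraph, on type II targets over a surface and type IV links over $C$, is superfluous and can be dropped, since type II links preserve the base and type IV links were already excluded.
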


\begin{proof}
    By \autoref{lem: links III and IV},
    there is no $\Autz(X)$-equivariant Sarkisov diagram of type III or IV starting from $X$.
   The remaining claim follows from \autoref{lem:links I} and \autoref{lem:type II} and the fact that the blow-up of $\p^2_{\k(C)}$ in $n\geq9$ points is not a Del Pezzo surface.
\end{proof}

\begin{proposition}\label{prop:A31 and A32}
    Let $X$ be the projective bundle $\Al_{3,1}$ or $\Al_{3,2}$. Then $\Autz(X)$ is a maximal connected algebraic subgroup of $\Bir(X)$ and the pair $(X,\pi)$ is superrigid. Moreover, $\Autz(X)$ is an elliptic curve sitting in a short exact sequence
    \[
    1 \to (\mathbb{Z}/3\mathbb{Z})^2 \to \Autz(X) \to \Autz(C) \to 1
    \]
    and $X$ is isomorphic to \[\Autz(X)\times^{(\mathbb{Z}/3\mathbb{Z})^2} \p^2,\] where $(\mathbb{Z}/3\mathbb{Z})^2$ is seen as the three-torsion subgroup of $\Autz(X)$ acting on $\p^2$ via the projective representations given in \autoref{ex:indec_V9}.
\end{proposition}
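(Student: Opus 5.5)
The plan is to first establish the structure of $\Autz(X)$ and then apply \autoref{lem:4:1_maximal}.

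\emph{Structure of $\Autz(X)$.} By \autoref{lem:hom_proj_bundles}, $\pi_*\colon\Autz(X)\to\Autz(C)$ is surjective. From the discussion preceding \autoref{lem:decomposable_preserves_constant_section}, $\Autz(X)_C=\Aut(X)_C\simeq(\Z/3\Z)^2$. This follows from simplicity of $\El_{3,i}$, the sequence \autoref{SES:Grothendieck}, Atiyah's computation $\Omega\simeq(\Z/3\Z)^2$, and Brion's result that $\Autz(X)_C$ has order $9$. This gives the short exact sequence in the statement. Since the kernel is finite, \autoref{lem:dp<5-autom} shows that $\Autz(X)$ is an elliptic curve $E$ isogenous to $C$. \autoref{thm:almost product} then gives $X\simeq E\times^{H}F$, with $H=(\Z/3\Z)^2$ the kernel, which is the $3$-torsion of $E$, and $F\simeq\p^2$ a fibre. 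The action of $H$ on $F$ is a projective representation $H\to\PGL_3(\k)$. It is faithful, because $H$ acts faithfully on $X$ and $E$ acts freely on $E/H$. By \autoref{lem:decomposable_preserves_constant_section}, since $X$ is indecomposable, $H$ fixes no point of $\p^2$. Up to conjugation, a faithful projective representation of $(\Z/3\Z)^2$ with no fixed point is the Heisenberg representation, or its dual, from \autoref{ex:indec_V9}. Matching $\Al_{3,1}$ and $\Al_{3,2}$ with these two uses duality, as in that example.

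\emph{Orbits.} Every $\Autz(X)$-orbit in $X\simeq E\times^H\p^2$ is the image of $E\times\{p\}$. It maps onto $C=E/H$ with degree $|H|/|\mathrm{Stab}_H(p)|$. So I need to check that, in the Heisenberg representation, no nontrivial element of $H$ fixes a point of $\p^2$ whose stabilizer would make the orbit degree less than $9$.

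This is the main obstacle. Each nontrivial element of $H$ does have fixed points: for example, $\mathrm{diag}(1,j,j^2)$ fixes the three coordinate points. Hence some orbits are $3$-to-$1$, and \autoref{lem:4:1_maximal} as stated, requiring $n\geq9$, does not apply directly.

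I would therefore argue via Sarkisov links directly. Links of type III and IV are excluded by \autoref{lem: links III and IV}. A type I or II link starts by blowing up an $\Autz(X)$-invariant curve transversal to the fibres. This curve is a union of orbits, hence has degree $3$ or $9$ over $C$ in total. On the generic fibre $\p^2_{\k(C)}$, it gives a closed point of degree $3$ or $9$. \autoref{lem:links I} rules out type I, since that would require degree $1$ or $4$. For type II, \autoref{lem:type II} and Iskovskikh's list allow degree $3$ points only when the resulting Del Pezzo surface of degree $6$ links to another $\p^2$. A degree $9$ point is excluded, as in \autoref{lem:4:1_maximal}.

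\emph{Remaining case: a degree $3$ orbit.} Its points lie on the fixed triple of an element $h\in H$. The other elements of $H$ permute this triple cyclically. On $\p^2_{\k(C)}$, this gives a degree $3$ point whose geometric points are the three coordinate points, which are collinear-free. The blow-up is then a sextic Del Pezzo surface, and the link is the standard quadratic map back to $\p^2$. I would compute this explicitly using the Cremona involution $[x:y:z]\mapsto[yz:xz:xy]$, which normalizes the Heisenberg group up to duality. This should show that the target is $E\times^H\p^2$ with the dual representation, that is, the dual bundle. The hope is that the target is again $\Al_{3,1}$ or $\Al_{3,2}$ with $\Autz$ conjugate, so that the link is "trivial" in the sense required for superrigidity, meaning it is an automorphism up to isomorphism. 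If it instead swaps $\Al_{3,1}$ and $\Al_{3,2}$, superrigidity would fail. In that case I would recheck whether the degree $3$ orbit is really $\Autz(X)$-invariant as a curve over $\k(C)$, or whether Galois monodromy forces degree $9$.

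Maximality then follows: every equivariant link lands on an elliptic-curve automorphism group of the same type, and equality of conjugates follows by dimension.
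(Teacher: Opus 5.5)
Your structural analysis is essentially right. There is one slip: \autoref{lem:decomposable_preserves_constant_section} says ``decomposable $\Rightarrow$ $H$ fixes a point'', which is the converse of what you use. The direction you need still holds: a common fixed point forces the commutator pairing of the lifts to be trivial, so the projective representation linearizes, diagonalizes, and $X$ splits as a sum of line bundles.

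More importantly, your orbit observation is correct, and it exposes a flaw in the paper's own proof. The paper asserts that every $\Autz(X)$-orbit is $9$-to-$1$ over $C$ and then invokes \autoref{lem:4:1_maximal}. In fact each of the four cyclic subgroups $\langle h\rangle\subset H$ has three non-collinear fixed points in $\p^2$, giving orbits $\Gamma\simeq E/\langle h\rangle$ that are $3$-to-$1$. Your link-by-link replacement is the right argument for maximality. Types III and IV are impossible, type I would need a centre of degree $1$ or $4$, and type II can only be centred at such a $\Gamma$. Once you know the target is again $\Al_{3,1}$ or $\Al_{3,2}$, whose $\Autz$ is an elliptic curve, each conjugation is an equality by dimension. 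Induction along the factorisation of \autoref{thm:Enrica} then gives maximality.

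The genuine gap is superrigidity, which you leave as a hope. It cannot be closed, because it is false. Your fallback does not apply: $\Gamma$ is an irreducible invariant curve of degree $3$ over $C$. So $\Gamma_{\k(C)}$ is a closed point of degree exactly $3$ with non-collinear geometric points, and the quadratic link exists.

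To finish your computation, take $\sigma[x:y:z]=[yz:xz:xy]$ and let $P$ be the cyclic permutation matrix. Then $\sigma\circ\mathrm{diag}(1,j,j^2)=\mathrm{diag}(1,j^2,j)\circ\sigma$ and $\sigma\circ P=P\circ\sigma$. So $\sigma$ intertwines the two representations of \autoref{ex:indec_V9}, and $\mathrm{id}_E\times\sigma$ descends to an $\Autz(X)$-equivariant link of type II, $\Al_{3,1}\rat\Al_{3,2}$.

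As an independent check, let $\xi$ have degree one on the fibres of $X$ and set $\xi':=2\xi-E_\Gamma$. Then $\xi'$ descends to the target and has degree one on its fibres. Using $g(C)=g(\Gamma)=1$ and $-K_X\equiv 3\xi-(\xi^3)f$, one finds $\xi'^3=5\xi^3-3\,\xi\cdot\Gamma\equiv-\xi^3 \pmod 3$. The class $\xi^3 \bmod 3$ does not depend on the choice of $\xi$, is nonzero here, and is preserved by isomorphisms, which respect the fibration over $C$. Hence the target is not isomorphic to $X$.

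So the statement can only be proved in a weaker form. $\Autz(X)$ is maximal, and $\Autz(\Al_{3,1})$ and $\Autz(\Al_{3,2})$ are conjugate to each other, but the pair $(X,\pi)$ is not superrigid.
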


\begin{proof}
    In \autoref{ex:indec_V9}, we have seen that $X\simeq G\times^{(\mathbb{Z}/3\mathbb{Z})^2} \p^2$, where $G=\Autz(X)$ is an elliptic curve. It follows that every $G$-orbit is a $9$-to-$1$ curve onto $C$. Hence, by \autoref{lem:4:1_maximal}, $G$ is a maximal connected algebraic subgroup of $\Bir(X)$. Moreover, $(\mathbb{Z}/3\mathbb{Z})^2$ acts by translations on the first factor $\Autz(X)\times^{(\mathbb{Z}/3\mathbb{Z})^2} \p^2$, without changing the image in the quotient $\Autz(X) /(\mathbb{Z}/3\mathbb{Z})^2 \simeq C$. Hence, $\Autz(X)_C \simeq (\mathbb{Z}/3\mathbb{Z})^2$. Since $\pi_*$ is surjective by \autoref{lem:hom_proj_bundles}, we obtain the stated short exact sequence.
\end{proof}

\subsubsection{The projectivization of the unipotent bundle}

It remains to study the automorphism group of the projective bundle $\P(\El_{3,0})$. The following remark is elementary and well-known for experts in vector bundles, see e.g. \cite[Section 4.2]{Brion_via_representation} for details.

\begin{remark} 
    Recall that a vector bundle $\Fl$ is \emph{unipotent} if there exists a filtration 
    \[
    0 = \Fl_0 \subset \Fl_1 \subset \dots \subset \Fl_r = \Fl,
    \]
    such that $\Fl_i/\Fl_{i-1} \simeq \Ol_C$ for every $i=1,\dots,r$. Using that $\El_{3,0}$ is self-dual (see \cite[Corollary 1]{Ati57}), and the short exact sequences in \autoref{thm:Atiyah}, we obtain a filtration
    \[
    0 \subset \Ol_C \subset \El_{2,0} \subset \El_{3,0},
    \]
    such that each successive quotient is trivial, i.e., $\El_{3,0}$ is unipotent. 
    Moreover, in the above filtration, $\Ol_C$ is the unique degree-zero line subbundle of $\El_{2,0}$, which in turn is itself is the unique unipotent rank-$2$ subbundle of $\El_{3,0}$.  
\end{remark}

\begin{proposition}\label{pro:E30}
    Let $X=\P(\El_{3,0})$. Then $\Autz(X)$ is not a maximal connected algebraic subgroup of $\Bir(X)$.
\end{proposition}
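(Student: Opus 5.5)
The plan is to mimic \autoref{lem:E_20+O}: find an $\Autz(X)$-invariant section of $\pi$, blow it up to get an $\F_1$-bundle, and then find a non-maximal structure on the result.

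\textbf{Step 1: transition matrices.} Using the unipotent filtration $0\subset\Ol_C\subset\El_{2,0}\subset\El_{3,0}$, I would choose trivializations of $\El_{3,0}$ with upper unitriangular transition matrices
\[
T_{kl}=\begin{pmatrix}1&\alpha_{kl}&\gamma_{kl}\\0&1&\beta_{kl}\\0&0&1\end{pmatrix}.
\]

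\textbf{Step 2: the vertical automorphisms.} Next I would compute $\Aut(\El_{3,0})_C$ as in \autoref{key_computation}. Comparing columns of $M_kT_{kl}=T_{kl}M_l$ gives the following.
\begin{itemize}
\item The first column of $M$ is a global section of $\El_{3,0}$. Since $\Gamma(C,\El_{3,0})=\k$ is spanned by the image of $\Ol_C$, we get $d=g=0$.
\item Iterating on the quotient $\El_{3,0}/\Ol_C\simeq\El_{2,0}$, using $\Gamma(C,\El_{2,0})=\k$, we get $h=0$.
\end{itemize}
So $\Aut(\El_{3,0})_C$ consists of upper triangular matrices, and I expect it to be $\G_m\times\G_a^2$, realized as polynomials in a nilpotent endomorphism. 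In particular the section $\Sigma:=C\times\{[1:0:0]\}$, which corresponds to the unique subbundle $\Ol_C\subset\El_{3,0}$, is $\Autz(X)_C$-invariant.

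\textbf{Step 3: invariance under translations.} The bundle $\El_{3,0}$ is homogeneous; this follows as in \autoref{rmk:Atiyah}, since $t^*\El_{3,0}$ is indecomposable of degree $0$ with a nonzero section. By uniqueness, $t^*$ preserves the line subbundle $\Ol_C$. Hence $\Sigma$ is $\Autz(X)$-invariant. Note also that $\pi_*$ is surjective by \autoref{lem:hom_proj_bundles}.

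\textbf{Step 4: blow up $\Sigma$.} The blow-up $\eta\colon X'\to X$ of $\Sigma$ is locally $(x,[y_0:y_1;z_0:z_1])\mapsto(x,[y_1:y_0z_0:y_0z_1])$, as in \autoref{lem:E_20+O}. Its transition maps should read
\[
(x,[y_0:y_1+(\alpha_{kl}z_0+\gamma_{kl}z_1)y_0;\,z_0+\beta_{kl}z_1:z_1]).
\]
Thus $X'$ is a $\p^1$-bundle over the ruled surface $S=\Al_{2,0}$ (transition matrix $\begin{pmatrix}1&\beta\\0&1\end{pmatrix}$), and $\tau\pi'$ is an $\F_1$-bundle. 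By \autoref{lem:F_b-bundles}, it has invariants $(\Al_{2,0},1,\Ol_C)$ and is indecomposable, because the restriction over the minimal section $\{z_1=0\}$ is $\Al_{2,0}$. Moreover $\eta^{-1}\Autz(X)\eta=\Autz(X')$.

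\textbf{Step 5: non-maximality of $\Autz(X')$ (main obstacle).} I would show that $\Autz(X')$ is not maximal by appealing to \cite{Fong23}. The most likely relevant statement is the analogue of \cite[Proposition 5.9]{Fong23} for indecomposable $\p^1$-bundles over $\Al_{2,0}$. Its expected content is that such a bundle is $\Autz$-equivariantly linked, for instance by an elementary transformation along the invariant curve lying over the minimal section $\sigma$, to a bundle with strictly larger automorphism group. The natural target is $\Al_{2,0}\times_C\Al_{2,0}$ or $\p(\El_{2,0}\oplus\Ol_C)$-type models.

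If no such citation is available, I would argue directly. Performing elementary transformations centered at the invariant curve $\{y_0=z_1=0\}$ should lower the invariant $b$ to $0$, producing an $\F_0$-bundle. By \autoref{lem:F_0-bundles}, this $\F_0$-bundle is $\Al_{2,0}\times_C S'$. Its automorphism group, $\Autz(\Al_{2,0})\times_{\Autz(C)}\Autz(S')$, I expect to have strictly larger dimension than $\Autz(X)$, whose dimension is $3+1=4$ by Step 2. That would give non-maximality.

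Checking that this final conjugation is strict is the step I expect to be hardest. The alternative route is to land on $\p(\El_{2,0}\oplus\Ol_C)$ and invoke \autoref{lem:E_20+O}.
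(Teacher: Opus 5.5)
Your Steps 1--4 are the paper's argument. The unique subbundle $\Ol_C\subset\El_{3,0}$ gives an $\Autz(X)$-invariant section. Its blow-up is an indecomposable $\F_1$-bundle $X'\to\Al_{2,0}$ with invariants $(\Al_{2,0},1,0)$. The paper then concludes immediately with \cite[Proposition 7.6]{Fong23}.

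The gap is Step 5, which is not yet an argument: the reference is only guessed, and the fallback fails as written.
\begin{itemize}
\item The curve $\{y_0=z_1=0\}$ lies on the surface $\{y_0=0\}$ of $(-1)$-sections. So the elementary transformation centred there produces an $\F_2$-bundle: it raises $b$, it does not lower it.
\item There is no other curve to use. $\Autz(X')$ acts on the base $\Al_{2,0}$, and on $\pi'^{-1}(\sigma)\simeq\Al_{2,0}$ (the strict transform of $\p(\El_{2,0})\subset X$), through all of $\Autz(\Al_{2,0})$: the vertical automorphisms $1+bN+cN^2$ restrict to $1+bN$ on $\El_{2,0}=\ker N^2$. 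The only invariant curve of $\Autz(\Al_{2,0})$ is the minimal section. Hence $\{y_0=z_1=0\}$ is the only $\Autz(X')$-invariant curve, and no equivariant elementary transformation from $X'$ lowers $b$.
\item Your dimension count is wrong. By your own Step 2, $\Autz(X)_C=\Aut(\El_{3,0})_C/\G_m\simeq\G_a^2$, so $\dim\Autz(X)=3$, not $4$.
\item $\Autz(\Al_{2,0}\times_C\Al_{2,0})$ is also $3$-dimensional, so that target would give nothing by a dimension count.
\end{itemize}

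What is missing is that going \emph{up} once already suffices.
\begin{itemize}
\item Since $\El_{3,0}\simeq\mathrm{Sym}^2\El_{2,0}$ (Atiyah), you may take $\alpha=2a$, $\beta=a$, $\gamma=a^2$ in Step 1, where $a=a_{kl}$ is the cocycle of $\El_{2,0}$.
\item The $\Autz(X')$-equivariant elementary transformation along $\{y_0=z_1=0\}$ is $[y_0:y_1;z_0:z_1]\mapsto[y_0:y_1z_1;z_0:z_1]$. It gives an $\F_2$-bundle $\pi_2\colon X_2\to\Al_{2,0}$ with $p_{kl}=2az_0z_1+a^2z_1^2=(z_0+az_1)^2-z_0^2$.
\item This $p_{kl}$ is a coboundary. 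In the chart coordinates $y_1':=y_1-z_0^2y_0$ the transitions become $[y_0:y_1';z_0+az_1:z_1]$, so $X_2\simeq\p(\Ol_{\Al_{2,0}}\oplus\Ol_{\Al_{2,0}}(2\sigma))$.
\item Hence $\Autz(X_2)$ contains the torus $y_1'\mapsto ty_1'$, which acts trivially on $C$. The conjugate of $\Autz(X)_C\simeq\G_a^2$ contains no torus.
\item So $\Autz(X)$ is conjugate to a proper subgroup of $\Autz(X_2)$, and is not maximal.
\end{itemize}
Your suggested target is also reachable, but only through this detour. On $X_2$ the conjugate of $\Autz(X)_C$ acts trivially on $\pi_2^{-1}(\sigma)\simeq C\times\p^1$, so $\Autz(X)$ acts there through an elliptic curve and fixes every constant section, in particular $\{y_1'=z_1=0\}$. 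The elementary transformation along it leads to $\p(\Ol_{\Al_{2,0}}\oplus\Ol_{\Al_{2,0}}(\sigma))$, whose $(-1)$-surface contracts to $\p(\El_{2,0}\oplus\Ol_C)$.
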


\begin{proof}
    By the previous remark, the line subbundle $\Ol_C \subset \El_{3,0}$ is unique. Hence, it corresponds to an $\Autz(X)$-invariant section of $\pi$. Since the vector bundle $\El_{3,0}$ is unipotent, we can choose the transition matrices of $\P(\El_{3,0})$ to be upper triangular with $1$'s on the diagonal. 
    
    Over $\mathbb{C}$, these were explicitly computed by Takahashi in \cite[Section 4, case (xv)]{Takahashi}. We now recall his result, which extend without additional assumptions over any algebraically closed field of characteristic zero. Denoting by $p_0$ the neutral element of the elliptic curve $C$, the vector bundle $\El_{3,0}$ is trivial over $U_k = C\setminus \{p_0\}$, and over any affine open neighborhood $U_l$ containing $p_0$, and the transition map is given by
    \[
    \begin{array}{ccc}
        U_l \times \A^3 & \dashrightarrow & U_k \times \A^3 \\
        (x,(u,v,w)) & \longmapsto & (x,(u +\alpha_{kl}(x)^{-1}v,v+ \alpha_{kl}(x)^{-1}w,w)),
    \end{array}
    \]
    where $\alpha_{kl}\in \Ol_C(U_{kl})$ is a rational function such that $\mathrm{div}(\alpha_{kl})_{\vert U_l} = p_0$. Moreover, it follows from Takahashi's results that 
   \[
   \begin{pmatrix}
    1 & \alpha_{kl}^{-1} \\ 0 & 1
   \end{pmatrix}
   \]
   are the transition matrices of the indecomposable vector bundle $\El_{2,0}$.

    Under this choice of trivializations, the $\Autz(X)$-invariant section corresponding to the line subbundle $\Ol_C\subset \El_{3,0}$ is given by the equation $\{v=w=0\}$. Its blow-up yields a birational morphism $\eta\colon X_1 \to X$ such that $\eta^{-1} \Autz(X) \eta \subseteq \Autz(X_1)$, where $X_1$ is an $\F_1$-bundle with transition maps
    \[
    \begin{array}{ccc}
        U_l \times \F_1 & \dashrightarrow & U_k \times \F_1 \\
        (x,[y_0:y_1 \ ;z_0:z_1]) & \longmapsto & (x,[y_0:y_1 + \alpha_{kl}(x)^{-1}z_0y_0 \ ;z_0+\alpha_{kl}(x)^{-1}z_1:z_1]).
    \end{array}
    \]
   The projection onto the coordinates $(x,[z_0:z_1])$ equips $X_1$ with a structure of $\p^1$-bundle over $\Al_{2,0}$. By \autoref{rmk:invariants}, the $\P^1$-bundle $X_1 \to \Al_{2,0}$ has invariants $(\Al_{2,0},1,0)$. Thus, by \cite[Proposition 7.6]{Fong23}, $\Autz(X_1)$ is not maximal. 
\end{proof}

\subsection{Maximal connected algebraic subgroups of $\Bir(C\times \p^2)$}            

\begin{theorem}\label{thm:DP9 max}
    Let $C$ be an elliptic curve and $\pi\colon X\to C$ be a $\p^2$-bundle. Then $\Autz(X)$ is maximal connected algebraic subgroup of $\Bir(X)$ if and only if one of the following holds:
    \begin{enumerate}
        \item\label{DP9 max:1} $X$ is isomorphic to one of the indecomposable $\p^2$-bundles $\Al_{3,1}$ or $\Al_{3,2}$.
        In both cases, there exists a short exact sequence
            \[
            1 \to (\Z/3\Z)^2 \to \Autz(X) \to \Autz(C) \to 1.
            \]
            
        \item\label{DP9 max:2} $X\simeq \p(\El_{2,0}\oplus \Ml)$, where $\Ml\in \Pic^0(C)$ has infinite order. In this case, there exists a short exact sequence
            \[
            1\to \G_m \rtimes \G_a \to \Autz(X) \to \Autz(C) \to 1. 
            \]

        \item\label{DP9 max:3} $X\simeq \p(\Ol_C \oplus \Ll \oplus \Ml)$, where $\Ll\in \Pic^0(C)$ or $\Ml \in \Pic^0(C)$ is trivial, or both non-trivial and non-isomorphic such that $\Ll^{\otimes n} \otimes \Ml^{\otimes m}$ is not trivial for every $(n,m) \in \mathbb{Z}^2$ coprime. In these cases, there exists a short exact sequence
             \[
             1 \to \Autz(X)_C \to \Autz(X) \to \Autz(C) \to 1,
             \]
             where 
             \[
             \Autz(X)_C \simeq
             \begin{cases}
                \PGL_3(\k) \text{ if $\Ll\simeq \Ml\simeq \Ol_C$ (equivalently, $X\simeq C\times \p^2$)},\\
                \GL_2(\k) \text{ if exactly one of $\Ll$ and $\Ml$ is trivial}, \\
                \G_m^2 \text{ if both $\Ll$ and $\Ml$ are non-trivial and non-isomorphic.}
             \end{cases}
             \]
    \end{enumerate}
\end{theorem}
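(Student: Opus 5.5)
The plan is to assemble the theorem from the case analysis already carried out in this section. First, by \autoref{prop:trivial_action_on_C}, if $\pi_*$ is trivial then $\Autz(X)$ is not maximal. So for the ``only if'' direction we may assume $\pi_*$ is surjective. Then \autoref{lem:hom_proj_bundles} says $X$ is one of three types:
\begin{itemize}
\item $\Al_{3,i}$ for some $i$;
\item $\p(\El_{2,0}\oplus\Ml)$ with $\Ml\in\Pic^0(C)$;
\item $\p(\Ol_C\oplus\Ll\oplus\Ml)$ with $\Ll,\Ml\in\Pic^0(C)$.
\end{itemize}
We go through these in turn, reading off both the maximality criterion and the structure of $\Autz(X)_C$.

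\textbf{Indecomposable bundles.} \autoref{pro:E30} excludes $\Al_{3,0}$. \autoref{prop:A31 and A32} shows that $\Al_{3,1}$ and $\Al_{3,2}$ are maximal, with kernel $(\Z/3\Z)^2$; this is case \autoref{DP9 max:1}.

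\textbf{The bundles $\p(\El_{2,0}\oplus\Ml)$.} If $\Ml$ is trivial, \autoref{lem:E_20+O} shows non-maximality. If $\Ml$ is non-trivial, \autoref{prop:E'=E_20} gives maximality exactly when $\Ml$ has infinite order, with kernel $\G_m\rtimes\G_a$. This is case \autoref{DP9 max:2}.

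\textbf{The bundles $\p(\Ol_C\oplus\Ll\oplus\Ml)$.} We split into sub-cases.
\begin{itemize}
\item If $\Ll\simeq\Ml\simeq\Ol_C$, then $X\simeq C\times\p^2$. Its $\Autz$ is $\Autz(C)\times\PGL_3(\k)$, which is maximal, as noted at the start of \autoref{section:decomposable_hom_bundles}.
\item If exactly one of $\Ll,\Ml$ is trivial, or if $\Ll\simeq\Ml$ is non-trivial, then after tensoring by $\Ll^\vee$ we are in the situation $\p(\Ol_C\oplus\Ol_C\oplus\Ml')$ with $\Ml'$ non-trivial of degree zero. \autoref{prop:E'=O+O} shows this is maximal with kernel $\GL_2(\k)$.
\item If $\Ll,\Ml$ are non-trivial and non-isomorphic, \autoref{lem:O+L+M_conjugacy} gives the coprimality criterion for maximality, and \autoref{lem:O+L+M} gives kernel $\G_m^2$.
\end{itemize}

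One point needs a small check: the coprime condition automatically fails when $\Ll\simeq\Ml$, since $(n,m)=(1,-1)$ gives a trivial bundle. So the phrase ``exactly one of $\Ll$ and $\Ml$ is trivial'' in the statement must be read up to the normalization above, and the non-trivial isomorphic case is covered by the $\GL_2$ sub-case.

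\textbf{Converse and exact sequences.} The ``if'' direction follows from the same propositions, each of which asserts maximality in its case. Surjectivity of $\pi_*$ in every listed case comes from \autoref{lem:hom_proj_bundles}, which gives the exact sequences. The kernels are identified by \autoref{lem:degree_0_L_trivial} and \autoref{lem:O+L+M}, together with $\Autz(X)_C=\PGL_3(\k)$ for the trivial bundle.

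\textbf{Main obstacle.} The only delicate step is the bookkeeping: making sure that every isomorphism class in \autoref{lem:hom_proj_bundles} falls into exactly one treated sub-case. This needs the normalization by tensoring with line bundles and the identification of $\p(\Ol_C\oplus\Ll\oplus\Ll)$ with $\p(\Ol_C\oplus\Ol_C\oplus\Ll^\vee)$.
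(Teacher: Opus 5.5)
Your proposal is correct and follows the paper's route. Like the paper, you reduce via \autoref{prop:trivial_action_on_C} to the list in \autoref{lem:hom_proj_bundles}, then treat the indecomposable cases with \autoref{prop:A31 and A32} and \autoref{pro:E30}, the bundles $\p(\El_{2,0}\oplus\Ml)$ with \autoref{prop:E'=E_20} and \autoref{lem:E_20+O}, and the split bundles with \autoref{prop:E'=O+O}, \autoref{lem:O+L+M} and \autoref{lem:O+L+M_conjugacy}. Your rewriting of the case $\Ll\simeq\Ml$ non-trivial as $\p(\Ol_C\oplus\Ol_C\oplus\Ll^\vee)$ is exactly what the paper means by \emph{we can assume that they are non-isomorphic}.
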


\begin{proof}
    Assume that $\Autz(X)$ is maximal. By \autoref{prop:trivial_action_on_C}, the morphism of algebraic groups $\pi_*\colon \Autz(X) \to \Autz(C)$ is surjective, so $X$ is one of the $\p^2$-bundles listed in \autoref{lem:hom_proj_bundles}. All cases were studied in \autoref{section:decomposable_hom_bundles} and \autoref{section: indecomposable}.
    
    If $\pi$ is indecomposable then $X=\Al_{3,i}$ for some $i\in \{0,1,2\}$. Then $\Autz(\Al_{3,i})$ is conjugate to a maximal connected algebraic subgroup of $\Bir(C\times \p^2)$ if and only if $i\in \{1,2\}$ by \autoref{prop:A31 and A32} and \autoref{pro:E30}.
    
    Else, $\pi$ is decomposable, and two cases arise:
    \begin{enumerate}
    \item Either $X = \p(\El_{2,0}\oplus \Ml)$ for some line bundle $\Ml$, and the result follows from \autoref{prop:E'=E_20} and \autoref{lem:E_20+O}.
    \item Or, $X = \p(\Ol_C \oplus \Ll \oplus \Ml)$, where $\Ll$ and $\Ml$ are line bundles. If $\Ll$ and $\Ml$ are both trivial, then $X=C\times \p^2$ and $\Autz(X)\simeq \Autz(C)\times \PGL_3(\k)$ is maximal. If $\Ll$ or $\Ml$ is trivial, but not both, then the result follows from \autoref{prop:E'=O+O}. If they are both non-trivial, we can assume that they are non-isomorphic, and the result follows from \autoref{lem:O+L+M} and \autoref{lem:O+L+M_conjugacy}.
    \end{enumerate}
\end{proof}

\section{$\p^2$-bundles over a smooth projective curve of higher genus}\label{s:higher genus}

Let $\pi\colon X\to C$ be a Mori Del Pezzo fibration of degree $9$ over a smooth projective curve $C$ of genus $g(C)\geq2$ such that $\Autz(X)$ is maximal. Then by \autoref{prop:Mdp to bundle}, we can assume that $\pi$ is a $\p^2$-bundle. Moreover, Blanchard's lemma (\autoref{lem:blanchard}) and $\Autz(C)$ being trivial imply that $\Autz(X)=\Autz(X)_C$.

\begin{definition}\label{def:slope}
    Let $\El$ be a vector bundle over a smooth projective curve $C$. The \emph{slope} of $\El$ is the quantity
    \[
    \mu(\El) = \deg(\El)/\rk(\El).
    \]
    We say that $\El$ is:
    \begin{enumerate}
        \item \emph{stable} if $\mu(\El)>\mu(\Fl)$ for every non-zero proper subbundle $\Fl\subset \El$.
        
        \item \emph{semistable} if $\mu(\El)\geq \mu(\Fl)$ for every non-zero proper subbundle $\Fl\subset \El$.

        \item \emph{polystable} if $\El$ is semistable and is a direct sum of stable bundles.
        
        \item \emph{strictly semistable} if $\El$ is semistable but not stable.
        \item \emph{unstable} if $\El$ is not semistable.
    \end{enumerate}
\end{definition}

In \autoref{lem:stability_dim2}, we will see that if $\rk(\El)=2$, then the stability of $\El$ can be understood through the geometry of the associated $\p^1$-bundle. 

\begin{remark}\label{rem:sections_linebundles}
    \begin{enumerate}
    \item Assume that $\rk(\El)=2$. Then $\pi\colon S=\p(\El)\to C$ is a $\p^1$-bundle over $C$. 
    Then by \cite[Lemma 5]{Maruyama}, there exists a one-to-one correspondance 
    \[
    \begin{array}{ccc}
    \{\text{sections of $\pi$}\} & \to & \{\text{line subbundles of $\El$}\} \\
    \sigma & \longmapsto & \Ll(\sigma)
    \end{array},
    \]
    which sends minimal sections to line subbundles with maximal degree.

    \item By \cite[Proposition 2.15]{Fong20},
    \[
    \sigma^2 = \deg(\El) - 2\deg(\Ll(\sigma)) = 2(\mu(\El) - \mu(\Ll(\sigma))),
    \]
    where $\Ll(\sigma)\subset \El$ is the line subbundle corresponding to the section $\sigma\subset S$.
    \end{enumerate}
\end{remark}

\begin{definition}
    Let $\pi\colon S\to C$ be a ruled surface over a smooth projective curve $C$. The quantity
    \[
    \seg(S) := \min\{\sigma^2, \text{ $\sigma$ section of $\pi$}\},
    \]
    is an invariant, sometimes called the \emph{Segre invariant}.
\end{definition}

\begin{lemma}\label{lem:stability_dim2}
    Let $\El$ be a rank-$2$ vector bundle over $C$ and $S=\p(\El)$. Then 
    \[\seg(S) 
    \begin{cases}
    $>0$ \text{ if and only if $\El$ is stable,} \\
    $=0$ \text{ if and only if $\El$ is strictly semistable,} \\
    $<0$ \text{ if and only if $\El$ is unstable.}
    \end{cases}\]
\end{lemma}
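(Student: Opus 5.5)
The plan is to compare the two sides of the equivalence through the formula in \autoref{rem:sections_linebundles}: for a section $\sigma$ of $\pi\colon S\to C$ we have $\sigma^2 = 2(\mu(\El)-\mu(\Ll(\sigma)))$. By \autoref{rem:sections_linebundles}(1), sections correspond bijectively to line subbundles of $\El$. Therefore
\[
\seg(S)=\min_{\sigma}\sigma^2 = 2\Bigl(\mu(\El)-\max\{\deg \Ll \mid \Ll\subset\El \text{ line subbundle}\}\Bigr).
\]
The maximum exists because degrees of line subbundles of a fixed vector bundle are bounded above, e.g.\ by $h^0$ considerations. Equivalently, by the same remark, minimal sections correspond to line subbundles of maximal degree.

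Next I reduce stability of a rank-$2$ bundle to a condition on line subbundles. A non-zero proper subbundle $\Fl\subset\El$ has rank $1$. Here "subbundle" is read as "saturated subsheaf", which is a line subbundle; a non-saturated subsheaf of rank one sits inside its saturation, which has larger or equal degree. It follows that $\El$ is stable if and only if $\mu(\El)>\deg\Ll$ for every line subbundle $\Ll$, i.e.\ if and only if $\mu(\El)>\max\deg\Ll$. Likewise $\El$ is semistable if and only if $\mu(\El)\geq\max\deg\Ll$.

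Combining the two steps: $\seg(S)>0$ if and only if $\mu(\El)>\max\deg\Ll$, which holds if and only if $\El$ is stable. Next, $\seg(S)\geq0$ if and only if $\El$ is semistable, so $\seg(S)=0$ exactly when $\El$ is semistable but not stable. Finally, $\seg(S)<0$ exactly when $\El$ is unstable. The trichotomy follows since the three conditions on $\seg(S)$ are mutually exclusive and exhaustive.

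The only delicate point is the subsheaf-versus-subbundle issue, i.e.\ justifying that it suffices to test stability on saturated line subbundles, which are exactly those arising from sections. I would handle it by the saturation argument above, noting also that $\seg(S)$ is independent of twisting $\El$ by a line bundle, as is stability.
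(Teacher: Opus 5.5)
Your proposal is correct and is essentially the paper's proof. Both arguments use the correspondence between sections and line subbundles together with $\sigma^2=2(\mu(\El)-\mu(\Ll(\sigma)))$ to write $\seg(S)=2\bigl(\mu(\El)-\max\deg\Ll\bigr)$, and then read off the trichotomy from the definition of (semi)stability, since a rank-$2$ bundle has only line subbundles as non-zero proper subbundles; your remarks on the existence of the maximum and on saturation just make explicit points the paper leaves implicit.
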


\begin{proof}
    By \autoref{rem:sections_linebundles}, we have
    \[
    \seg(S) = 2\min\{\mu(\El) - \mu(\Fl)\}, 
    \]
    where the minimum is taken over all line subbundles $\Fl\subset \El$. Equivalently,
    \[
    \frac{\seg(S)}{2} = \mu(\El) - \mu(\deg(\Fl)),
    \]
    where the maximum is taken over all line subbundles $\Fl\subset \El$. Then it follows from \autoref{def:slope} that $\seg(S)$ is positive if and only if $\El$ is stable, zero if and only if $\El$ is strictly semistable, and negative otherwise.
\end{proof}

\begin{lemma}\label{lem:stable C high genus}
Let $n\geq1$ and $\pi\colon X\to C$ a $\p^n$-bundle above a smooth projective curve. Let $\El$ be a vector bundle such that $X=\p(\El)$. 
If $\El$ is stable, then $\Autz(X)$ is trivial. 
\end{lemma}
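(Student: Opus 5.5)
The plan is to reduce to the vertical automorphism group and then use that stable bundles are simple. I will argue under the standing assumption of this section, $g(C)\geq2$. The statement cannot hold as written for $g(C)=1$: $\El_{3,1}$ is stable, yet $\Autz(\Al_{3,1})$ is an elliptic curve by \autoref{prop:A31 and A32}. For $g(C)=0$ there are no stable bundles of rank $\geq2$, so the statement is vacuous.

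\emph{Step 1: reduce to the vertical group.} Since $g(C)\geq2$, the group $\Autz(C)$ is trivial. By Blanchard's lemma (\autoref{lem:blanchard}), the homomorphism $\pi_*\colon\Autz(X)\to\Autz(C)$ is therefore trivial. Hence $\Autz(X)=\Autz(X)_C$, and $\Autz(X)$ is contained in the neutral component of $\Aut(X)_C$. It suffices to show that $\Aut(X)_C$ is finite.

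\emph{Step 2: describe $\Aut(X)_C$ by the exact sequence \autoref{SES:Grothendieck} of \autoref{rmk:seq}:}
\[
0\to \Aut(\El)_C/\G_m\to\Aut(X)_C\to\Omega\to0.
\]
Here $\Omega$ is a subgroup of the $(n+1)$-torsion of $\Pic^0(C)$, so it is finite. Since $\El$ is stable, it is simple, i.e.\ $\Aut(\El)_C=\G_m$ (\cite[Corollary 1.2.8]{Huybrechts_Lehn}). Concretely, any endomorphism of a stable bundle is either $0$ or an isomorphism. So $\mathrm{End}(\El)$ is a finite-dimensional division algebra over the algebraically closed field $\k$, hence equal to $\k$. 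The left-hand term of the sequence is therefore trivial, and $\Aut(X)_C\simeq\Omega$ is finite.

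\emph{Step 3: conclude.} A connected algebraic group contained in a finite group is trivial. So $\Autz(X)$ is trivial.

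The only substantive step is the simplicity of stable bundles, which is standard. The other point needing care is the reduction $\Autz(X)=\Autz(X)_C$. This is exactly where the genus hypothesis is used, and it is why the elliptic case must be excluded.
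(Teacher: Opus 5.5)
Your proof is correct and follows the paper's argument: simplicity of the stable bundle $\El$ kills $\Aut(\El)_C/\G_m$ in the sequence \autoref{SES:Grothendieck}, so $\Aut(X)_C\simeq\Omega$ is finite, and the inclusion $\Autz(X)\subseteq\Aut(X)_C$ then forces $\Autz(X)$ to be trivial. Your genus caveat is also right. The paper obtains $\Autz(X)\subseteq\Aut(X)_C$ from the section's standing assumption $g(C)\geq2$ via Blanchard's lemma, and without that assumption the statement indeed fails for $\Al_{3,1}$ and $\Al_{3,2}$ over an elliptic curve.
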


\begin{proof}
Since $\El$ is stable, we have $\Aut(\El)_C=\G_m$ (see \cite[Corollary 1.2.8]{Huybrechts_Lehn}). We also have a short exact sequence
\[
1\to\Aut(\El)_C/\G_m\to \Aut(X)_C\to \Omega\to 1
\]
where $\Omega$ is a finite group (see \autoref{SES:Grothendieck}). Hence $\Autz(X)_C \simeq \Omega$ is finite. Since $\Autz(X)\subseteq \Aut(X)_C$, it follows that $\Autz(X)$ is trivial. 
\end{proof}

If $\El$ is not stable, then $\Aut(\El)_C$ may strictly contain $\G_m$ and the situation is more complicated. However, $\El$ always contains a subbundle which is invariant by automorphisms:

\begin{lemma}\cite[Lemmas 1.3.5 and 1.5.5]{Huybrechts_Lehn}\label{lem:destabilizing+socle}
    Let $\El$ be a vector bundle over a smooth projective curve $C$. Then the following hold:
    \begin{enumerate}
        \item\label{lem:destabilizing} If $\El$ is unstable, then there exists a unique subbundle $\Fl\subset \El$ such that $\mu(\Fl)\geq\mu(\Gl)$ for every subbundle $\Gl \subset \El$, and $\Gl \subset \Fl$ if $\mu(\Fl)=\mu(\Gl)$. Moreover, $\Fl$ is semistable.

        \item\label{lem:socle} If $\El$ is semistable, then $\El$ contains a unique non-zero maximal polystable bundle $\Fl$ such that $\mu(\Fl)=\mu(\El)$.
    \end{enumerate}
    In both cases, $\Fl$ is invariant under automorphisms of $\El$.
\end{lemma}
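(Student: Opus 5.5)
This is a cited result (Huybrechts--Lehn); still, here is how I would prove it. I use the standard notion of subbundle for vector bundles on a smooth curve: a saturated subsheaf, which is locally free and has locally free quotient. Every subsheaf of $\El$ is contained in its saturation, which has the same rank and at least the same degree. So slopes may be compared over arbitrary subsheaves.

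For \autoref{lem:destabilizing}, I would first show that the set of slopes of nonzero subsheaves of $\El$ is bounded above. The quickest route is to fix an ample line bundle and twist until the relevant $\mathrm{Hom}$ space vanishes; alternatively, embed $\El$ into a direct sum of line bundles and argue by induction on the rank. Degrees are integers and ranks lie in $\{1,\dots,\rk\El\}$, so the slopes form a discrete set bounded above, and the maximum $\mu_{\max}$ is attained. Among the subsheaves of slope $\mu_{\max}$, take $\Fl$ of maximal rank; it is saturated, since saturating can only increase the degree. For uniqueness and for the containment $\Gl\subset\Fl$, the key step is the exact sequence
\[
0\to \Fl\cap\Gl\to \Fl\oplus\Gl\to \Fl+\Gl\to 0 .
\]
Here $\mu(\Fl\cap\Gl)\le\mu_{\max}$, which gives $\deg(\Fl+\Gl)\ge \mu_{\max}\,\rk(\Fl+\Gl)$. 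Hence $\Fl+\Gl$ also has slope $\mu_{\max}$, and maximality of the rank forces $\Gl\subset\Fl$. Semistability of $\Fl$ is immediate, since any subsheaf of $\Fl$ is a subsheaf of $\El$ and so has slope at most $\mu_{\max}=\mu(\Fl)$. If $\El$ is unstable, then $\mu_{\max}>\mu(\El)$, so $\Fl$ is a proper subbundle.

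For \autoref{lem:socle}, I would use that on the category of semistable bundles of fixed slope $\mu$ (an abelian category in which every object has finite length), a stable subbundle of slope $\mu$ and any other subbundle of slope $\mu$ either intersect trivially or the stable one is contained in the other. I would define $\Fl$ as the sum of all stable subbundles of $\El$ of slope $\mu(\El)$; this set is nonempty because a nonzero subbundle of slope $\mu(\El)$ of minimal rank is stable. A sum of stable subobjects of equal slope is a direct sum of some of them, by the standard semisimplicity argument: add summands one at a time and, using the dichotomy above, drop each one already contained in the running sum. Hence $\Fl$ is polystable, and it is saturated because its slope equals $\mu(\El)$, the maximum possible. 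It is the unique maximal one, since any polystable subbundle of slope $\mu(\El)$ is a sum of stable subbundles of that slope and is therefore contained in $\Fl$.

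Invariance in both cases follows from uniqueness. An automorphism $\phi$ of $\El$ maps subbundles to subbundles and preserves rank and degree, so $\phi(\Fl)$ satisfies the same characterizing property as $\Fl$, and therefore $\phi(\Fl)=\Fl$.

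I expect the main technical obstacle to be the boundedness of slopes in the first part; the rest consists of formal exact-sequence arguments.
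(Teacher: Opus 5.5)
Your proof is correct. The paper itself gives no argument and only cites Huybrechts--Lehn, so the relevant comparison is with their proofs.

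For part (2) you follow essentially their route. The socle is the sum of the stable subbundles of slope $\mu(\El)$. Semisimplicity in the abelian category of semistable bundles of slope $\mu(\El)$ shows that this sum is a direct sum of stable bundles and contains every polystable subbundle of that slope.

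For part (1) your route differs in how existence is obtained. Huybrechts--Lehn prove Lemma 1.3.5 for pure sheaves on an arbitrary projective scheme, with reduced Hilbert polynomials in place of slopes. They get existence from a Noetherian argument on the partial order ``$F_1\subset F_2$ and $p(F_1)\le p(F_2)$'', taking a maximal element of minimal multiplicity. This avoids any a priori bound on the invariants of subsheaves, which is not immediate in the Gieseker setting. You instead use two curve-specific facts: slopes of subsheaves lie in $\frac{1}{r!}\mathbb{Z}$ with $r=\rk\El$, and they are bounded above. You then take maximal slope, then maximal rank, and let the sequence $0\to\Fl\cap\Gl\to\Fl\oplus\Gl\to\Fl+\Gl\to0$ do the rest. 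This is shorter and more transparent, at the price of being specific to slope stability.

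The boundedness you single out as the main obstacle is in fact harmless here. Fix $\Ol_C(1)$ of positive degree and $m\gg0$. Dualizing a surjection $\Ol_C(-m)^{\oplus N}\to\El^\vee$ gives $\El\hookrightarrow\Ol_C(m)^{\oplus N}$. For a rank-$s$ subsheaf $\Fl$, the inclusion $\det\Fl\hookrightarrow\Lambda^s\El\subset\Ol_C(sm)^{\oplus\binom{N}{s}}$ then yields $\deg\Fl\le sm\deg\Ol_C(1)$.

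One small point of exposition concerns the containment step. Maximality of the rank only gives $\rk(\Fl+\Gl)=\rk\Fl$. To conclude $\Fl+\Gl=\Fl$ you also need either that $\Fl$ is saturated or that $\Fl+\Gl$ and $\Fl$ have the same degree. Both follow from what you established just before, so the argument is complete.
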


\begin{definition}
    In \autoref{lem:destabilizing+socle} \autoref{lem:destabilizing}, $\Fl$ is called the \emph{maximal destabilizing subbundle} of $\El$. In \autoref{lem:destabilizing+socle} \autoref{lem:socle}, $\Fl$ is called the \emph{socle} of $\El$.
\end{definition}

\begin{remark}\label{rmk:destablizing}
Let $\pi\colon X\to C$ be a $\p^2$-bundle above a smooth projective curve. Let $\El$ be a vector bundle of rank-$3$ such that $X=\p(\El)$. 
If $\El$ is unstable, then there exists a unique maximal destabilizing subbundle $\Fl\subseteq\El$ by \autoref{lem:destabilizing+socle}. If $\rk(\Fl)=1$, then $\p(\Fl)$ is a section of $\pi$ and if $\rk(\Fl)=2$, then $\pi|_{\p(\Fl)}\colon\p(\Fl)\to C$ is a $\p^1$-subbundle of $\pi$. 

Moreover, since $\Fl$ is unique, $\p(\Fl)$ is preserved by $\Autz(X)$. Furthermore, by definition of destabilizing subbundle, $\Fl$ is semistable.
\end{remark}

\begin{lemma}\label{lem:unstable C high genus}
Let $\pi\colon X\to S$ be a $\p^2$-bundle above smooth projective curve of genus $g(C)\geq2$. Let $\El$ be a vector bundle of rank $3$ such that $X=\p(\El)$. Suppose that $\El$ is unstable. 
Then $\Autz(X)$ is not a maximal connected algebraic subgroup of $\Bir(X)$. 
\end{lemma}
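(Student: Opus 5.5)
Since $g(C)\geq2$, we have $\Autz(C)$ trivial, so by Blanchard's lemma (\autoref{lem:blanchard}) $\Autz(X)=\Autz(X)_C$ acts trivially on $C$. By \autoref{rmk:destablizing}, the maximal destabilizing subbundle $\Fl\subset\El$ is unique, hence $\Aut(\El)_C$-stable, so $\p(\Fl)\subset X$ is $\Autz(X)$-invariant. I would split according to $\rk(\Fl)\in\{1,2\}$. In each case the goal is an $\Autz(X)$-equivariant birational map to an $\F_b$-bundle $X'\to C$ with $b>0$, on which the conjugated group still acts trivially on $C$. Then \autoref{lem:F_b_trivial_action_on_C} shows that $\Autz(X')$, and hence $\Autz(X)$, is not maximal. (If $\Autz(X)$ is conjugated into a strictly larger connected group, the conclusion is immediate. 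Otherwise the conjugate equals $\Autz(X')$, which is not maximal.)

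\emph{Case $\rk(\Fl)=1$.} Here $\Sigma=\p(\Fl)$ is an $\Autz(X)$-invariant section of $\pi$. I blow it up as in \autoref{lem:section blow-up} or the proof of \autoref{lem:not_max}. The blow-up $\eta\colon X'\to X$ is locally $U\times\F_1\to U\times\p^2$, so $X'$ is an $\F_1$-bundle over $C$. Moreover $\eta^{-1}\Autz(X)\eta\subseteq\Autz(X')$, and this group acts trivially on $C$. Applying \autoref{lem:F_b_trivial_action_on_C} with $b=1$ finishes this case. This is exactly the argument of \autoref{lem:not_max}.

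\emph{Case $\rk(\Fl)=2$.} Now $\p(\Fl)$ is an invariant $\p^1$-subbundle, and $\El/\Fl=:\Nl$ is a line bundle. I would dualize: $\Nl^\vee\subset\El^\vee$ is a line subbundle, and it is the maximal destabilizing subbundle of $\El^\vee$. To see this, note that $\mu(\Nl^\vee)=-\mu(\Nl)>-\mu(\El)$ because $\mu(\Fl)>\mu(\El)>\mu(\Nl)$. Uniqueness, and hence invariance, follows because automorphisms of $\El$ preserving $\Fl$ induce automorphisms of $\El^\vee$ preserving $\Nl^\vee$. However, $\p(\El^\vee)$ is only the dual bundle, which is not birational to $X$ in an equivariant way. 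So instead I would work directly on $X$. I choose an $\Autz(X)$-invariant section inside $\p(\Fl)$ if one exists; for example, the maximal destabilizing line subbundle of $\Fl$ when $\Fl$ is unstable. In that situation the section case above applies.

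If $\Fl$ is semistable of rank $2$, which is guaranteed by \autoref{lem:destabilizing+socle}, I would use the elementary transformation along the invariant surface $\p(\Fl)$ over a point $p\in C$. This means blowing up the invariant line $\p(\Fl_p)$ in the fibre $\pi^{-1}(p)$ and contracting the strict transform of $\pi^{-1}(p)$, as in the proofs of \autoref{lem:dec_withdeg0term} and \autoref{lem:decomposable_E21}. The result is an equivariant birational map to $\p(\El')$ with $\El'=\ker(\El\to\Nl_p)$. Equivalently, $\El'$ sits in an extension of $\Nl(-p)$ by $\Fl$, so the degree of the quotient drops by one. Since $\Autz(X)$ fixes every point of $C$, I can iterate this at several points. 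After enough steps, the vector bundle extension $0\to\Fl\to\El'\to\Nl(-D)\to0$ splits once $H^1(C,\Fl\otimes\Nl^\vee(D))=0$, which holds for $\deg D\gg0$ by semistability of $\Fl$ and Serre duality. Then $\El'\simeq\Fl\oplus\Nl(-D)$. The summand $\Nl(-D)$ has very negative slope, so $\p(\Nl(-D))$ is an invariant section: it is the image of the only invariant complement, namely the kernel of the destabilizing quotient. Dually, this means $\Gamma(\Nl(-D)^{-1}\otimes\Fl)$ is large, but the automorphisms still preserve $\Fl$. At this point I would instead blow up the subbundle $\p(\Fl)$-side section if it is available, or apply \autoref{lem:not_max} with the roles arranged so that the vanishing hypotheses hold.

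The main obstacle is this rank-$2$ semistable case. I need to produce an invariant curve that is a section, or else reduce to the $\rk(\Fl)=1$ case after equivariant elementary transformations, while checking that the group still acts trivially on $C$. I expect this to work by choosing the elementary transformations so that the quotient line bundle becomes the maximal destabilizing line subbundle of the dual, combined with the observation that a line subbundle that is a direct summand with extreme slope is automorphism-invariant.
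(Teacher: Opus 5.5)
Your rank-one case is exactly the paper's argument. The gap is the case $\rk(\Fl)=2$. You do not finish it, and the one concrete claim you make there is false: if $\El'\simeq\Fl\oplus\Nl(-D)$, the section $\p(\Nl(-D))$ is \emph{not} invariant. Write $\iota\colon\Fl\hookrightarrow\El'$ and $\mathrm{pr}\colon\El'\to\Nl(-D)$. Every $B\in\Gamma(C,\Fl\otimes\Nl^\vee(D))$ gives $u_B=\mathrm{id}+\iota\circ B\circ\mathrm{pr}$ with $(u_B-\mathrm{id})^2=0$. So $t\mapsto u_{tB}$ lands in $\Autz(X')$, and for $B\neq0$ it moves every section not contained in $\p(\Fl)$. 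This is exactly the ``large $\Gamma(\Nl(-D)^{-1}\otimes\Fl)$'' you noticed.

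More seriously, the strategy ``always find an invariant section and reduce to an $\F_b$-bundle'' cannot work in this case. Take $\El\simeq\Ol_C\oplus\Ol_C\oplus\Nl$ with $\deg\Nl\leq -g$. Then $\GL_2(\k)$ acts transitively on each line $\p(\Fl_p)$, and the maps $u_B$ with $0\neq B\in\Gamma(C,\Nl^\vee)^{\oplus 2}$ move all other sections. So $X$ has no $\Autz(X)$-invariant section at all. The paper uses invariant sections only in two situations, and you establish neither:
\begin{itemize}
\item $\Fl$ stable: then $\Autz(\p(\Fl))$ is trivial by \autoref{lem:stable C high genus}, so every section of $\p(\Fl)$ is fixed.
\item $\Fl$ strictly semistable with $\p(\Fl)\not\simeq C\times\p^1$: then the finitely many minimal sections are invariant.
\end{itemize}

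The missing idea, which the paper uses for $\p(\Fl)\simeq C\times\p^1$, is to conclude by a \emph{strict inclusion}; the non-invariance above is the conclusion, not an obstacle. Perform the elementary transformation $n$ times at the same point $p$. The inverse $\psi^{-1}\colon X'\rat X$ then has a single indeterminacy point $q\in\pi'^{-1}(p)\setminus\p(\Fl_p)$. If $\psi\Autz(X)\psi^{-1}=\Autz(X')$, then $q$ would have to be $\Autz(X')$-fixed. But $u_B$ moves $q$ as soon as $B(p)\neq0$, and such a $B\in\Gamma(C,\Fl\otimes\Nl^\vee(np))$ exists for $n\gg0$ by global generation. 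Hence $\psi\Autz(X)\psi^{-1}\subsetneq\Autz(X')$, and $\Autz(X)$ is not maximal.

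Completed this way, your elementary-transformation route would handle every rank-two $\Fl$ at once. It needs neither the splitting of the extension nor semistability of $\Fl$. It only uses that $\Autz(X)$ acts trivially on $C$, so the lines $\p(\Fl_p)$ stay invariant under the conjugated group. This would avoid the paper's three-way case split. As written, however, the proposal does not prove the rank-two case.
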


\begin{proof}
Let $\Fl$ be the maximal destabilizing subbundle of $\El$. Then $\rk(\Fl) \in \{1,2\}$ and we prove that in each case, $\Autz(X)$ is not maximal.

If $\rk(\Fl)=1$, the extraction of the $\Autz(X)$-invariant section $\p(\Fl)$ is an $\Autz(X)$-equvariant link of type I by \autoref{lem:section blow-up}. Then \autoref{lem:F_b_trivial_action_on_C} implies that $\Autz(X)$ is not a maximal connected algebraic subgroup of $\Bir(X)$.

Suppose that $\rk(\Fl)=2$ and let $S:=\p(\Fl)$. Then $\theta:=\pi|_{S}\colon S\to C$ is a $\p^1$-subbundle of $\pi$ and a ruled surface. 
By \autoref{rmk:destablizing}, $\Fl$ is semistable.
If $\Fl$ is stable, $\Autz(S)$ is trivial by \autoref{lem:stable C high genus}. 
Then $\Autz(X)$ fixes any section $s$ of $\theta$. The curve $s$ is also a section of $\pi$ and its blow-up is an $\Autz(X)$-equivariant link of type I by \autoref{lem:section blow-up}. Then \autoref{lem:F_b_trivial_action_on_C} implies that $\Autz(X)$ is not maximal.
If $\Fl$ is strictly semistable, we look at the two cases $S\nsimeq C\times\p^1$ and $S\simeq C\times\p^1$.

Suppose that $S\nsimeq C\times\p^1$. Since $\Fl$ is strictly semistable, the minimal self-intersection $\sigma(S)$ of any section of $\theta$ satisfies $\sigma(S)\leq0$ by \autoref{lem:stability_dim2}. 
Since $S\nsimeq C\times\p^1$, there are only finitely many sections of $\theta$ of self-intersection $\sigma(S)$ \cite[Proposition 2.18]{Fong20}. In particular, they are all preserved by $\Autz(X)$. The blow-up of such a section yields an $\Autz(X)$-equivariant link of type I by \autoref{lem:section blow-up}. Then
\autoref{lem:F_b_trivial_action_on_C} implies that $\Autz(X)$ is not maximal. 

Suppose that $S\simeq C\times \p^1$. Then we can assume that $\Fl\simeq\Ol_C\oplus\Ol_C$.
There exists a line bundle $\Ml$ and a short exact sequence
\[
0\to \Fl \to \El \to \Ml \to 0.
\]
The filtration $0\subset \Fl \subset \El$ is the Harder-Narasimhan filtration of $\El$ (see \cite[Definition 1.3.2 and Theorem 1.3.4]{Huybrechts_Lehn}). In particular, 
$\deg(\Ml)<0=\mu(\Fl)$. There exist trivializations of $\El$ such that the transition matrices are
\[
\begin{pmatrix}
1&0&\star \\ 0&1&\star \\ 0&0&\mu_{kl}
\end{pmatrix},
\]
where $\mu_{kl}\in \Ol_C(U_{kl})^*$ are the cocycles of $\Ml$. 
Then for any $p\in C$, the projective line 
\[
f_p=\pi^{-1}(p) \cap S = (p,[x:y:0])
\]
is $\Autz(X)$-invariant. Let $\phi\colon X\dashrightarrow X'$ be the birational map, locally given by
\[
\begin{array}{ccc}
U_l \times \p^2 & \dashrightarrow & U_l\times \p^2\\
(x,[u:v:w]) & \longmapsto & (x,[\xi(x)u:\xi(x)v:w]),
\end{array}
\]
where $U_l$ is a trivializing open subset containing $p$, and $\xi\in \k(C)^*$ satisfying $\mathrm{div}(\xi)_{\vert U_l}=p$, and $\phi$ is locally the identity on any chart not containing $p$. Then notice that this birational map is the blow-up of $f_p$ followed by the contraction of the strict transform of $\pi^{-1}(p)$ onto the point $(p,[0:0:1])$. In particular, $\phi$ is $\Autz(X)$-equivariant. 

The base locus of $\phi^{-1}$ is the point $(p,[0:0:1])$ and the obtained $\p^2$-bundle $\pi'\colon X'\to C$ has transition matrices of the form
\[
\begin{pmatrix}
1&0&\star \\ 0&1&\star \\ 0&0& \xi \mu_{kl}
\end{pmatrix}.
\]
This implies that $X' \simeq \p(\El')$, where $\El'$ is a rank-$3$ vector bundle which fits into a short exact sequence
\[
0\to \Fl \to \El' \to \Ol_C(-p) \otimes \Ml \to 0.
\]
Since $\phi$ is $\Autz(X)$-equivariant, we have that $\phi \Autz(X)\phi^{-1} \subseteq \Autz(X')$, and this is an equality if and only if $(p,[0:0:1])\in X'$ is $\Autz(X')$-invariant. We claim that $\El'$ is again unstable with $\Fl$ as a maximal destabilizing subbundle. Indeed, $\El'$ is unstable as we have 
$$\mu(\El')=\deg(\El')/3 = (\deg(\Ml)-1)/3 = (\deg(\El)-1)/3=\mu(\El)-\frac{1}{3}<\mu(\Fl)=0. $$ 
 Assume $\Fl'\subset \El'$ is the maximal destabilizing subbundle of $\El'$ and $\mu(\Fl')>\mu(\Fl)$. In the following commutative diagram
\[
\begin{tikzcd}
0 \ar[r] & \Fl \ar[r] & \El \ar[r] & \Ml \ar[r] & 0 \\
&& \Fl' \ar[u,hook]&&,
\end{tikzcd}
\]
the induced morphism $\Fl'\to \Ml$ is zero, as $\mu(\Fl')> \mu(\Ml)$ by \cite[Proposition 1.2.7]{Huybrechts_Lehn}. Therefore, the inclusion $\Fl' \to \El$ factors through an injective morphism $\Fl' \to \Fl$. By loc. cit., as $\mu(\Fl')>\mu(\Fl)$, this morphism is also zero, which is a contradiction. Thus, $\Fl$ is the maximal destabilizing subbundle of $\El$.

Iterating the construction, we obtain an $\Autz(X)$-equivariant birational map $\psi\colon X\dashrightarrow X''$, such that $\psi^{-1}$ has base locus $(p,[0:0:1])\in X''$. Moreover, $X'' \simeq \p(\El'')$ where $\El''$ is a rank-$3$ vector bundle which sits into a short exact sequence
\[
0\to \Fl \to \El'' \to \Ml'' \to 0,
\]
and we can assume $\deg(\Ml'')<<0$ is arbitrary small. By \autoref{key_computation}, $f\in \Gamma(C,\Ml''^{-1})$ can be chosen non-zero by Riemann-Roch theorem. Thus, the point $(p,[0:0:1])\in X''$ is not fixed by $\Autz(X'')$. This shows that $\phi\Autz(X)\phi^{-1}\subsetneq\Autz(X'')$.
\end{proof}

\begin{remark}
    In \autoref{lem:unstable C high genus}, we have used that the maximal destabilizing subbundle $\Fl$ is a non-zero proper subbundle of the unstable bundle $\El$. In particular, $\rk(\Fl)\in \{1,2\}$. In \autoref{lem:semistable_high_genus}, we will proceed similarly as in the proof \autoref{lem:unstable C high genus}, with the assumption that $\El$ is semistable and $\Fl\subset \El$ is its socle. However, $\Fl$ is not necessarily a proper subbundle of $\El$. This fails precisely when $\El$ is itself a polystable bundle and in this case, $\Fl = \El$. Hence, in the proof of \autoref{lem:semistable_high_genus}, we need to consider the cases where $\Fl\subset \El$ is the socle and $\rk(\Fl)\in \{1,2,3\}$.
\end{remark}

\begin{proposition}\label{lem:semistable_high_genus}
Let $C$ be a smooth projective curve of genus $g(C)\geq2$ and let $\pi\colon X\to C$ be a $\p^2$-bundle. If $\Autz(X)$ is a maximal connected algebraic subgroup of $\Bir(X)$, then $X$ is isomorphic to one of the following:
\begin{enumerate}
    \item\label{semistable_high_genus:1} $X\simeq C\times \p^2$,
    \item\label{semistable_high_genus:2} $X\simeq \p(\El \oplus \Ml)$, where $\El$ is a rank-$2$ stable bundle which fits into a short exact sequence
    \[
    0\to \Ol_C \to \El \to \Ll \to 0,
    \]
    where $\Ll$ and $\Ml$ are line bundles such that $\deg(\Ll)=2\deg(\Ml)>0$.
\end{enumerate}
\end{proposition}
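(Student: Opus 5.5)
Write $X=\p(\El)$ with $\El$ of rank $3$. Since $g(C)\geq2$, $\Autz(C)$ is trivial, so $\Autz(X)=\Autz(X)_C$ and every invariant section or $\p^1$-subbundle is fixed fibrewise. I would argue by contrapositive, using three tools: the extraction of an $\Autz(X)$-invariant section is a link of type I to an $\F_1$-bundle (\autoref{lem:section blow-up}); then \autoref{lem:F_b_trivial_action_on_C} shows non-maximality. By \autoref{lem:stable C high genus}, $\El$ is not stable (otherwise $\Autz(X)$ is trivial, hence not maximal). By \autoref{lem:unstable C high genus}, $\El$ is not unstable. So $\El$ is strictly semistable, and by \autoref{lem:destabilizing+socle} its socle $\Fl$ is an $\Autz(X)$-invariant polystable subbundle with $\mu(\Fl)=\mu(\El)$. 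I would split into cases according to $\rk(\Fl)\in\{1,2,3\}$.

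\textbf{Rank $1$.} If $\rk(\Fl)=1$, then $\p(\Fl)$ is an invariant section, and the two tools above give non-maximality.

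\textbf{Rank $2$.} Then $S=\p(\Fl)$ is an invariant $\p^1$-subbundle, and $\Fl$ is either stable or $\Fl=\Ll_1\oplus\Ll_2$ with $\deg\Ll_1=\deg\Ll_2$.
\begin{itemize}
\item If $\Fl$ is stable, $\Autz(S)$ is trivial, so $\Autz(X)$ fixes every section of $S\to C$ and we conclude as before.
\item If $\Ll_1\not\simeq\Ll_2$, the two minimal sections of $S$ are invariant and we conclude as before.
\item If $\Ll_1\simeq\Ll_2$, normalize to $\Fl=\Ol_C^2$. The quotient $\El/\Fl$ has degree $0$, and $\Fl$ is the whole socle, so the extension must be non-split. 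I would then imitate the elementary transformation in the proof of \autoref{lem:unstable C high genus}: blow up the invariant line $f_p=\pi^{-1}(p)\cap S$ and contract the fibre. This lowers the degree of the quotient line bundle and makes the bundle unstable, while $\Autz(X)$ is conjugated into the automorphism group of a bundle that \autoref{lem:unstable C high genus} shows is non-maximal. Alternatively, I would show directly that the conjugate is strictly contained in $\Autz(X')$.
\end{itemize}

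\textbf{Rank $3$.} Here $\El$ is polystable and not stable. Its decomposition type is either $\Ll_1\oplus\Ll_2\oplus\Ll_3$ with equal degrees, or $\El_2\oplus\Ll$ with $\El_2$ stable of rank $2$ and $\mu(\El_2)=\deg\Ll$.
\begin{itemize}
\item Three line bundles: if some $\Ll_i$ appears with multiplicity one, its section is invariant, because automorphisms preserve isotypic components (Krull--Schmidt together with $\mathrm{Hom}$ vanishing between non-isomorphic stable bundles of equal slope). This gives non-maximality. The remaining possibility is $\Ll_1\simeq\Ll_2\simeq\Ll_3$, i.e.\ $X\simeq C\times\p^2$, which is case \autoref{semistable_high_genus:1}.
\item $\El_2\oplus\Ll$: the section $\p(\Ll)$ is invariant and gives non-maximality.
\end{itemize}

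The claimed case \autoref{semistable_high_genus:2} has $\deg\Ll=2\deg\Ml>0$. This gives $\mu(\El)=\mu(\Ml)=\deg\Ll/2$, so it would land exactly in the $\El_2\oplus\Ll$ situation, and the plan as written kills it. Since the statement lists it as a survivor, the invariant-section argument must fail there. My guess is that the blow-up of $\p(\Ml)$ is a Sarkisov link to a bundle whose automorphism group can be strictly larger, so \autoref{lem:F_b_trivial_action_on_C} is applied too bluntly at that point. I would therefore re-examine that one subcase: compute $\Aut(\El\oplus\Ml)_C$ via \autoref{key_computation}, using $\Gamma(\El^\vee\otimes\Ml)$ and $\Gamma(\Ml^{-1}\otimes\El)$, to isolate precisely when the invariant-section argument fails. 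I would accept \autoref{semistable_high_genus:2} as the leftover list of survivors rather than prove maximality, since the statement only gives necessary conditions. The main obstacle I expect is the semistable non-polystable rank-$2$-socle case, together with pinning down this exceptional decomposable case.
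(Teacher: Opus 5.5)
Your argument is correct, and for the most part it is the paper's own proof. It uses the same reduction to the strictly semistable case via \autoref{lem:stable C high genus} and \autoref{lem:unstable C high genus}, and the same split by $\rk(\Fl)$ for the socle $\Fl$. Invariant sections are handled by \autoref{lem:section blow-up} and \autoref{lem:F_b_trivial_action_on_C}, and the elementary transformation at $f_p$ makes the bundle unstable when $\Fl\simeq\Ol_C\oplus\Ol_C$.

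The one genuine difference is the polystable case $\El\simeq\El_2\oplus\Ml$ with $\El_2$ stable of rank $2$ and $\mu(\El_2)=\deg\Ml$ (your $\El_2\oplus\Ll$). There your route is stronger than the paper's. The paper stops at this case because \autoref{key_computation} only gives $f\in\Gamma(C,\Ml^{-1}\otimes\Ll)$ and $g\in\Gamma(C,\Ml)$, which need not vanish when $\deg\Ml>0$. It therefore lists the case as a possible survivor, and \autoref{rem:stable+line_bundle} says it is unknown whether it occurs.

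Your Hom-vanishing argument closes this case:
\begin{itemize}
\item A nonzero map $\Ml\to\El_2$ saturates to a line subbundle of degree at least $\deg\Ml=\mu(\El_2)$, contradicting stability of $\El_2$.
\item A nonzero map $\El_2\to\Ml$ has as kernel a saturated line subbundle of degree at least $\deg\El_2-\deg\Ml=\mu(\El_2)$, again contradicting stability.
\item Since stable bundles are simple, $\Aut(\El_2\oplus\Ml)_C\simeq\G_m^2$.
\item By \autoref{SES:Grothendieck}, with $\Omega$ finite, $\Autz(X)=\Autz(X)_C$ is contained in the $\G_m$ acting fibrewise by $\mathrm{diag}(1,1,t)$.
\item This $\G_m$ fixes the section $\p(\Ml)$, so non-maximality follows exactly as in your three-line-bundle case.
\end{itemize}

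So the doubt you raise at the end is unfounded, and the failure mechanism you suggest does not exist. If $\eta^{-1}\Autz(X)\eta\subseteq\Autz(X')$ and $\Autz(X')$ is not maximal, then $\Autz(X)$ is not maximal either, however large $\Autz(X')$ may be. The remaining hypothesis of \autoref{lem:F_b_trivial_action_on_C}, a fixed point on $C$, is automatic because $\Autz(C)$ is trivial.

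The proposition only states necessary conditions, so showing that case \autoref{semistable_high_genus:2} never gives a maximal group does not contradict it. Your proof yields the sharper conclusion that only $X\simeq C\times\p^2$ survives. This implies the statement and answers the question left open in \autoref{rem:stable+line_bundle}.

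You should commit to this rather than fall back on accepting \autoref{semistable_high_genus:2} as a leftover. That fallback would also require the normalization the paper performs, which you did not do. Twist by $\Nl^{\vee}$ for a line subbundle $\Nl\subset\El_2$ to obtain $0\to\Ol_C\to\El_2\to\Ll\to0$. Then $\deg\Ll>0$ by stability, and $\deg\Ll=2\deg\Ml$ by equality of slopes. You only checked the converse direction.
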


\begin{proof}
    Let $\El$ be a rank-$3$ vector bundle over $C$ such that $\p(\El)\simeq X$. 
    If $\El$ is unstable, then $\Autz(X)$ is not maximal by \autoref{lem:unstable C high genus}. From now on, we assume that $\El$ is semistable and we denote by $\Fl$ the socle of $\El$. Recall from \autoref{lem:destabilizing+socle} that $\Fl$ is polystable, $\mu(\Fl)=\mu(\El)$, and $\p(\Fl)\subset X$ is an $\Autz(X)$-invariant projective subbundle. We continue the proof by analyzing the different possibilities for the rank of $\Fl$.

    \begin{enumerate}
        \item\label{case.rk1} If $\rk(\Fl) = 1$, then $\Fl$ corresponds to an $\Autz(X)$-invariant section of $\pi\colon X\to C$. Its blowup yields an birational morphism $\eta\colon X'\to X$, where $X'$ is an $\F_1$-bundle over $C$ (\autoref{lem:section blow-up}). 
        By \autoref{lem:F_b_trivial_action_on_C}, $\Autz(X)$ is not maximal.
        \item If $\rk(\Fl)=2 $, then $S=\p(\Fl)$ is a $\p^1$-subbundle of $\p(\El)$. If $\Fl$ is stable, then $\Autz(S)$ is trivial by \autoref{lem:stable C high genus}, so $\Autz(X)$ fixes any section of $\p(\Fl)$. Then we conclude as in case \autoref{case.rk1}. Else $\Fl$ is not stable, and the minimal self-intersection $\sigma(S)$ of any section of $S$ is at most $0$ by \autoref{lem:stability_dim2}. 
        Then two cases arise. Either $S$ is not isomorphic to $C\times \p^1$, then $S$ admits an $\Autz(S)$-invariant section, and we conclude as in case \autoref{case.rk1}. Or, $S\simeq C\times \p^1$, and we can assume that $\Fl \simeq \Ol_C \oplus \Ol_C$ and that $\pi\colon X\to C$ has transition matrices of the form 
        \[
        \begin{pmatrix}
        1&0&\star \\ 0&1&\star \\ 0&0& \mu_{kl}
        \end{pmatrix},
        \]
        where $\mu_{kl}\in \Ol_C(U_{kl})^*$ are the cocyles of the quotient line bundle $\Ml =\El/\Fl$. In particular, we have 
        \[
        \mu(\El) = \frac{2}{3}\mu(\Fl) + \frac{\deg(\Ml)}{3}.
        \]
        Since $\mu(\El) = \mu(\Fl)$ by definition of the socle, it implies that $\deg(\Ml)=\mu(\Fl )=0$.
        
        For any $p\in C$, the projective line
        \[
        f_p = \pi^{-1}(p) \cap S = \{(p,[x:y:0])\}
        \]
        is $\Autz(X)$-invariant. As in the proof of \autoref{lem:unstable C high genus}, the blowup of $f_p$ followed by the contraction of the strict transform of $\pi^{-1}(p)$ induces an $\Autz(X)$-equivariant birational map $X\dashrightarrow X'$, where $\pi'\colon X'\to C$ is a $\p^2$-bundle with transition matrices 
        \[
        \begin{pmatrix}
        1&0&\star \\ 0&1&\star \\ 0&0& \xi \mu_{kl}
        \end{pmatrix},
        \]
        where $\xi \in \k(C)^*$ satisfies $\mathrm{div}(\xi)_{\vert U_l} = p$. Therefore, $X'\simeq \p(\El')$, where $\El'$ is a rank-$3$ vector bundle which fits into a short exact sequence
        \[
        0 \to \Ol_C \oplus \Ol_C \to \El' \to \Ol_C(-p) \otimes \Ml \to 0.
        \]
        Notice that $\mu(\El')=-1/3 < \mu(\Ol_C \oplus \Ol_C) =0$. Thus, $\El'$ is unstable, so $\Autz(X)$ is not maximal by Lemma \ref{lem:unstable C high genus}.

        \item It remains to consider the case where $\rk(\Fl)=3$, i.e., $\El=\Fl$ is polystable. If $\El$ is stable, we conclude from \autoref{lem:stable C high genus} that $\Autz(X)$ is trivial, and hence not maximal. Else, $\El$ is the direct sum of two proper subbundles: either $\El$ is the sum of a rank-$2$ stable bundle and a line bundle, or the sum of three line bundles.
        If $\El \simeq \El'' \oplus \Ml$, where $\El''$ is stable and $\Ml$ is a line bundle, then we can assume that $\El''$ fits into a short exact sequence
        \[
        0 \to \Ol_C \to \El'' \to \Ll \to 0,
        \]
        where $\Ll$ is a line bundle. Since $0=\mu(\Ol_C)<\mu(\El'') = \deg(\Ll)/2$, it implies that $\deg(\Ll)>0$. Moreover, as $\El$ is polystable, it follows that
        \[
        \mu(\El) =\mu(\El'') = \deg(\Ml),
        \]
        so $\deg(\Ll) =     2\deg(\Ml)>0$. This gives \autoref{semistable_high_genus:2} and we now exclude the remaining case.

        If $\El$ is the sum of three line bundles, then we can assume $\El = \Ol_C \oplus \Ll \oplus \Ml$, where $\Ll,\Ml \in \Pic^0(C)$. Indeed, by definition of polystable, they have the same degree. Then tensoring by the dual of one of them, we can assume that they are as stated. If $\Ll \simeq \Ml \simeq \Ol_C$, then $X = C\times \p^2$ and $\Autz(X)=\PGL_2(\k)^2$ is maximal. Up to tensoring by a line bundle, we can assume that the other possibilities for $\Ll$ and $\Ml$ are the following:
        \begin{enumerate}
            \item $\Ll\simeq \Ol_C$ and $\Ml$ is non-trivial,
            \item $\Ll$ and $\Ml$ are not isomorphic and non-trivial. 
        \end{enumerate}
        In both cases, the section $C\times \{[0:0:1]\}$ corresponding to the line subbundle $\Ml$ is $\Autz(X)$-invariant by \autoref{key_computation}. By \autoref{lem:section blow-up} and \autoref{lem:F_b_trivial_action_on_C}, $\Autz(X)$ is not maximal.
    \end{enumerate}
\end{proof}

\begin{remark}\label{rem:stable+line_bundle}
    If $X=C\times\p^2$, then the orbits of $\Autz(X) \simeq \PGL_3(\k)$ are the fibres of $\pi\colon X\to C$. In particular, $\Autz(X)$ is a maximal connected algebraic subgroup of $\Bir(X)$. 
    However, we don't know whether Case  of \autoref{lem:semistable_high_genus}\autoref{semistable_high_genus:2} actually occurs. If $X\simeq \p(\El \oplus \Ml)$ as stated, the stable subbundles $\El$ and $\Ml$ may not be $\Autz(X)$-invariant. In this case, \autoref{key_computation} does not provide any information whether $\El$ and $\Ml$ are $\Autz(X)$-equivariant or not.
\end{remark}

\section{Proof of \autoref{thm:mainDP} and \autoref{thm:main_superrigidity}} \label{s:proofAB}

\begin{proof}[Proof of \autoref{thm:mainDP}]
If $d\in\{1,2,3,4,6\}$, the claim is from \autoref{thm:K<5}. If $d=8$, then by \autoref{lem:Mfs8 restriction}, either $\Autz(X)\simeq\PGL_2$ acts diagonally on a general fibre of $\pi$ or there is an $\Autz(X)$-equivariant birational map $X\rat Y$ to a Mori Del Pezzo fibration $Y\to B$. In the first case, $\Autz(X)$ is  maximal by \autoref{lem:Mfs8 Aut=PGL2}. 
If $d=9$ and $g(C)=1$, the claim follows from \autoref{thm:DP9 max}. If $d=9$ and $g(C)\geq2$, the claim follows from \autoref{lem:semistable_high_genus}.
\end{proof}

\begin{proof}[Proof of \autoref{thm:main_superrigidity}]
Assume first that $g(C)\geq 2$. If $X\simeq C\times \p^2$, then $\Autz(X) \simeq \PGL_3(\k)$ acts with orbits of dimension two, that are the fibers of the trivial $\p^2$-bundle $C\times \p^2 \to C$. Hence $(X,\pi)$ is superrigid. 

For the other cases, we have $g(C)=1$. If $X\simeq \Al_{3,1}$ or $\Al_{3,2}$, then the superrigidity of $(X,\pi)$ follows from \autoref{prop:A31 and A32}. If $X=C\times \p^2$, then $\Autz(X)\simeq \Autz(C) \times \PGL_3(\k)$ acts with a single orbit; thus, $(X,\pi)$ is also superrigid. In the other cases, the statements on the conjugacy classes follow from \autoref{lem:O+L+M_conjugacy}, \autoref{prop:E'=O+O}, and \autoref{prop:E'=E_20}.
\end{proof}

\begin{proof}[Proof of \autoref{mainprop:MDP5}]
Recall from \autoref{pro:DP5} that there is a short exact sequence
\[
1\to\Z/5\to\Autz(X)\to\Autz(C)\to 1.
\]
Let $\varphi\colon X\rat Y$ be an $\Autz(X)$-equivariant Sarkisov link to a Mori fibre space $\pi'\colon Y\to B$. 
By \autoref{lem:links I}, \autoref{lem: links III and IV}, $\varphi$ is a link of type II and in particular, $\pi'$ is a Mori Del Pezzo fibration. 
By \autoref{lem:type II}, $\varphi$ induces a link of type II $X_{\k(C)}\rat Y_{\k(C)}$. By \cite[Theorem 2.6(ii)($K_X^2=5$)]{iskovskikh_1996}, $Y_{\k(C)}$ is of degree $d\in\{5,8,9\}$. 
The case $d=8$ is impossible by \autoref{lem:dP8 Aut on C}, because $\Autz(X)$ acts non-trivially on $C$.

We have $d=9$ if and only if $\varphi$ extracts an $\Autz(X)$-equivariant curve \cite[Theorem 2.6(ii)($K_X^2=5$)]{iskovskikh_1996}. If this is the case, then by \autoref{prop:Mdp to bundle}, there is an $\Autz(Y)$-equivariant birational map $\psi\colon Y\rat Z$ over $C$ to a $\p^2$-bundle $\pi''\colon Z\to C$. Then $(\psi\varphi)\Autz(X)_C(\psi\varphi)^{-1}\simeq\Z/5$. 
From \autoref{thm:mainDP} we obtain that $\Autz(X)$ is not maximal.  

If $d=5$, then $\Autz(Y)$ is also an elliptic curve and hence $\varphi\Autz(X)\varphi^{-1}=\Autz(Y)$. 

Let $\varphi\colon X\rat Y$ be an $\Autz(X)$-equivariant birational map to a Mori fibre space $\pi'\colon Y\to B$. By \autoref{thm:Enrica}, it is a composition $\varphi=\varphi_n\circ\cdots\circ\varphi_1$ of equivariant Sarkisov links $\varphi_1,\dots,\varphi_n$. We have shown above that $\Autz(X)$ is maximal if and only if $\varphi_1,\dots,\varphi_n$ are of type II and the target is a Mori Del Pezzo fibration of degree $5$. We've also shown that this is the case if and only if $X$ does not contain any $\Autz(X)$-invariant section.  
\end{proof}

\section{From Mori fibre spaces over a surface to projective bundles}\label{s:to_proj_bundles}

The aim of this section is to reduce the case of Mori fibre spaces of dimension $3$ over a surface to projective bundles.
Let $\pi\colon X\to S$ be a Mori fibre space with $\dim X=3$ and $\dim S=2$. In \cite{BFT22} it is called Mori conic bundle. 
The generic fibre of $\pi$ is a smooth curve of genus zero (since $-K_X$ is relatively ample). 

In this chapter, we look at the case when $S$ is an irrational surface.

\subsection{Reduction to standard conic bundles}

\begin{definition}
A surjective morphism $\pi\colon X\to S$ is called {\em standard conic bundle} if
\begin{enumerate}
\item the varieties $X,S$ are smooth projective and $\dim X=\dim S+1$;
\item  The morphism $\pi$ is induced by the inclusion of $X$ (given by an equation of degree $2$) in a $\mathbb{P}^2$-bundle over $S$. The discriminant divisor $\Delta \subset S$ is reduced, and all its components are smooth and intersect in normal crossings (i.e., $\Delta$ is an SNC divisor).
For each $p \in S$, the rank of the $3 \times 3$ matrix corresponding to the quadric equation is $3,2,1$, respectively when $p\notin\Delta$, $p\in\Delta\setminus\mathrm{sing}(\Delta), p\in\mathrm{sing}(\Delta)$.
\item The relative Picard rank is $\rho(X/S) = 1$.
\end{enumerate}
\end{definition}

In view of the above definition, we will identify the divisor $\Delta$ and the union of curves supporting it. 

The following statement applies to Mori fibre threefolds over surfaces.

\begin{proposition}[{\cite[Theorem 3.1.4]{BFT22}}]\label{prop:reduction to standard conic bundles}
Let $S$ be a surface, $X$ a normal variety and $\pi\colon X\to S$ a surjective morphism whose generic fibre is a smooth curve of genus zero and such that $-K_X$ is relatively ample and all fibres are union of curves. 
Then there is an $\Autz(X)$-equivariant commutative diagram
\[
\begin{tikzcd}
\hat X\ar[r,dashed,"\psi"]\ar[d,"\hat\pi"] & X\ar[d,"\pi"]\\
\hat S\ar[r,"\eta"] & S
\end{tikzcd}
\]
where $\psi$ is a birational map, $\eta$ is a birational morphism and $\hat\pi\colon\hat X\to\hat S$ is a standard conic bundle.
 Moreover, if $S$ is smooth, we can take $\eta$ to be an isomorphism.
\end{proposition}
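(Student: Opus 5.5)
This is an equivariant version of the classical Sarkisov reduction to standard conic bundles. The plan is to follow the argument of \cite[Theorem 3.1.4]{BFT22}, which never uses rationality of $S$; only equivariance has to be tracked at each step. Put $G:=\Autz(X)$. By Blanchard's lemma (\autoref{lem:blanchard}) $G$ acts on $S$ and $\pi$ is $G$-equivariant: the fibres of $\pi$ are connected curves, so $\pi_*\Ol_X=\Ol_S$.

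\textbf{Step 1: make the base smooth.} Take the $G$-equivariant minimal resolution of singularities $\eta_0\colon S_0\to S$, which exists in characteristic zero. If $S$ is already smooth, take $\eta_0=\mathrm{id}$; this gives the last assertion, provided none of the later steps modifies the base.

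\textbf{Step 2: make the total space smooth and embed in a $\p^2$-bundle.} First resolve $X\times_S S_0$ equivariantly to get $X_0\to S_0$. Then run a $G$-equivariant relative MMP of $X_0$ over $S_0$; since $G$ is connected, it acts trivially on $N^1$, so every step is automatically equivariant. Because the generic fibre is a conic, the outcome is a Mori fibre space over $S_0$ with $\rho=1$, and not a divisorial contraction onto something of lower dimension. The anticanonical bundle then gives the embedding $X\hookrightarrow\p(\pi_*\omega_X^{-1})$, with $X$ cut out by a relative quadric, at least over the locus where $X$ is smooth and $\pi$ is flat.

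\textbf{Step 3: make the discriminant SNC.} Blow up $S_0$ equivariantly at the points where the discriminant curve $\Delta$ fails to be snc, pull the conic bundle back, and repeat Steps 2 and 3. For this I would use Sarkisov's standard-form theorem, namely the elementary transformations along components of $\Delta$ that make the rank condition hold and keep $\rho(\hat X/\hat S)=1$. These operations are canonical: they are centred at the singular points of $\Delta$ and at components of $\Delta$ over which the fibre is not of the required type. Every such locus is a union of $G$-orbits, so it is $G$-invariant, and the whole construction is $G$-equivariant. Equivariance of the MMP steps again comes from connectedness of $G$.

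\textbf{Main obstacle.} The delicate part is Step 3 together with the last assertion. When $S$ is smooth, one has to show that the standardisation can be done by birational modifications of $X$ over a fixed $S$, with no blow-ups of the base. I would argue as in \cite{BFT22}: after Step 2 the only defects are singular points of $X$ and non-reduced or non-normal-crossing behaviour of $\Delta$. Over a smooth base, Sarkisov's local analysis removes these by elementary links over $S$ itself. My first attempt would be to check that every such local modification is canonical, hence $G$-invariant, and to verify that termination does not depend on whether $S$ is rational.
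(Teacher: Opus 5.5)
The paper gives no argument of its own for this proposition; it quotes \cite[Theorem 3.1.4]{BFT22}. For the first assertion, your outline is the equivariant Sarkisov argument behind that citation: equivariant resolution of the base, then a relative MMP that is automatically equivariant, then blow-ups of the base to make $\Delta$ SNC. The MMP is equivariant because the connected group $G$ acts trivially on $N^1$ and fixes each of the finitely many components and special points of $\Delta$. Two small inaccuracies: the relative MMP over $S_0$ ends with a Mori fibre space over some birational model $S_1\to S_0$, not necessarily over $S_0$ itself. Also, Blanchard's lemma needs $\pi_*\Ol_X=\Ol_S$, which uses normality of $S$, not just connectedness of the fibres.

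The genuine gap is the step you flag as the main obstacle, and your proposed route cannot close it. Any birational map over $S$ induces an isomorphism of generic fibres. So elementary links over $S$ never change the class $\alpha\in\mathrm{Br}(\k(S))$ of the conic $X_{\k(S)}$. Let $\hat X\to S$ be a conic bundle with $\hat X$ smooth and generic fibre $X_{\k(S)}$. Over the generic point of any curve not contained in its discriminant, the fibre is a smooth conic, so $\alpha$ is unramified there. Hence every curve along which $\alpha$ ramifies is a component of the discriminant.

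Now take $X=\{a_0u^2+a_1v^2+a_2w^2=0\}\subset\p^2\times\p^2$, where $a_0$ defines a cuspidal cubic $\Gamma$ and $a_1,a_2$ are general cubic forms. All hypotheses hold with $S=\p^2$ smooth. $X$ is an irreducible hypersurface whose only singular point lies over the cusp, so it is normal. $-K_X=\Ol(0,1)|_X$ is relatively ample, and every fibre is a conic. The residue of $\alpha$ along $\Gamma$ is the class of $-a_2/a_1$ restricted to $\Gamma$. This function has simple zeros on the normalisation $\p^1$ of $\Gamma$, so it is not a square. Thus $\Gamma$ would be a singular component of the discriminant of any standard conic bundle over $\p^2$ birational to $X$ over $\p^2$, which is impossible.

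So blow-ups of the base as in your Step~3 are unavoidable in general, and the ``moreover'' clause cannot be proved in the stated generality. It needs an extra hypothesis, for instance that the generic fibre is $\p^1_{\k(S)}$. In that case the standard model over $\hat S$ is a $\p^1$-bundle, and \autoref{lem:descent} pushes it down equivariantly to a $\p^1$-bundle over $S$ itself.
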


In the next statement, \cite[Lemma 3.1.5]{BFT22} assumes $S$ to be rational, but this is not necessary.

\begin{lemma}[{\cite[Lemma 3.1.5]{BFT22}}]\label{lem:empty discriminant}
Let $S$ be a smooth projective surface, let $\pi \colon X \to S$ be a standard conic bundle, let $\Delta \subset S$ be its discriminant curve and let $K := \k(S)$. Then the following are equivalent:
\begin{enumerate}
    \item $X$ is a $\mathbb{P}^1$-bundle over $S$;
    \item the generic fibre $X_K$ is isomorphic to $\mathbb{P}^1_K$;
    \item $\pi$ has a rational section; and
    \item $\Delta=\emptyset$. 
\end{enumerate}
Moreover, if $\Delta$ is non-empty and reducible, then each rational irreducible component $C$ of $\Delta$ intersects $\overline{\Delta \setminus C}$ in at least two distinct points. If $\Delta$ is non-empty and irreducible, then $g(\Delta) \geq 1$.
\end{lemma}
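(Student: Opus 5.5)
The plan is to go around the cycle (1)$\Rightarrow$(2)$\Rightarrow$(3)$\Rightarrow$(4)$\Rightarrow$(1), using the standard structure of conic bundles over the generic point and over codimension-one points of $S$. For (1)$\Rightarrow$(2): a $\p^1$-bundle is Zariski-locally trivial over $S$ (it is the projectivization of a rank-$2$ bundle, since the Brauer obstruction vanishes on smooth surfaces via Tsen-type arguments, or simply because we may restrict to an open set where it is trivial after choosing a rational section). So $X_K\simeq\p^1_K$. For (2)$\Rightarrow$(3): a $K$-rational point of $\p^1_K$ spreads out to a rational section of $\pi$. For (3)$\Rightarrow$(4): let $s$ be a rational section and suppose $\Delta\neq\emptyset$. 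Take a general point $p$ of a component $D$ of $\Delta$. The fibre over $p$ is a pair of lines meeting in a point. Because $\rho(X/S)=1$, the two lines are exchanged by monodromy along $D$, i.e. $\pi^{-1}(D)$ is irreducible over $D$ but splits over a nontrivial double cover $\tilde D\to D$. The closure of $s$ meets $\pi^{-1}(p)$ for general $p\in D$, and I would argue that it can be chosen to meet a single line with multiplicity one at a smooth point of $X$ over a general $p$. This is done by localizing at the generic point of $D$: over the DVR $\Ol_{S,D}$, the conic has a rational point of the generic fibre. Its reduction lands in the special fibre in a smooth point, by Hensel/regularity of $X$ along the generic point of $\pi^{-1}(D)$. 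A smooth point of the reduced special fibre lies on exactly one line, defined over the residue field $\k(D)$. That line is then $\k(D)$-rational, contradicting that the two lines are conjugate over $\k(\tilde D)$. Hence $\Delta=\emptyset$.

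For (4)$\Rightarrow$(1): if $\Delta=\emptyset$, every fibre is a smooth conic, so $\pi$ is a smooth morphism with fibres $\p^1$, i.e. a Brauer–Severi scheme of relative dimension one over the smooth surface $S$. Its class lies in $\mathrm{Br}(S)\subset\mathrm{Br}(K)$, and the generic fibre is a conic over $K=\k(S)$ with $\k$ algebraically closed. I expect the main obstacle here: unlike the rational case, I cannot just invoke $\mathrm{Br}(S)=0$. Instead I would use the purity/injectivity $\mathrm{Br}(S)\hookrightarrow\mathrm{Br}(K)$, together with the fact that the class of $X_K$ is unramified along every divisor. I would then use the relative Picard rank: when $\Delta=\emptyset$ and $\rho(X/S)=1$ the argument in \cite[Lemma 3.1.5]{BFT22} shows that $-K_X$ restricted to fibres has degree $2$. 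It then remains to find a divisor of relative degree one. This is exactly the non-trivial input, and I would try to get it from de Jong's period–index theorem over function fields of surfaces and the triviality of unramified Brauer classes of conics over $\k(S)$, or reduce via Tsen by restricting to a general pencil of curves. If that fails, I would note that in all applications only the equivalences of (2),(3),(4) and the "moreover" statements are used.

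For the "moreover" part, take a component $C$ of $\Delta$ and let $\tilde C\to C$ be the double cover parametrizing the lines in the fibres. The standard conic bundle hypothesis says this cover is étale exactly away from $\mathrm{sing}(\Delta)$, where the fibre is a double line, and it is nontrivial by $\rho(X/S)=1$ as above. A nontrivial connected double cover of $\p^1$ must branch in at least two points, and branching only occurs at points of $C\cap\overline{\Delta\setminus C}$. So a rational $C$ meets the rest of $\Delta$ in at least two distinct points. If $\Delta$ is irreducible, there are no singular points (SNC with one smooth component), so the cover is étale and nontrivial, forcing $g(\Delta)\geq1$.
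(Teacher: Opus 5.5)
\paragraph{The gap: (4)$\Rightarrow$(1) is false in this generality.} This step cannot be closed, because without a hypothesis on $S$ the implication (and with it (4)$\Rightarrow$(2) and (4)$\Rightarrow$(3)) is false. The paper's remark that the rationality assumption of \cite[Lemma 3.1.5]{BFT22} is unnecessary is wrong for this direction.

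If $\Delta=\emptyset$, then $\pi$ is smooth with all fibres $\p^1$, i.e.\ an étale-locally trivial Brauer--Severi scheme. It is of the form $\p(\El)$ if and only if its class in $\mathrm{Br}(S)[2]$ vanishes. Purity only says that the class of $X_K$ lies in $\mathrm{Br}(S)\subset\mathrm{Br}(K)$, and $\mathrm{Br}(S)[2]$ need not be zero.

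Here is an explicit counterexample.
\begin{itemize}
\item[(a)] Let $E_1',E_2'$ be non-isogenous elliptic curves with non-zero $2$-torsion points $t_1,t_2$.
\item[(b)] Let $G=(\Z/2\Z)^2$ act on $E_1'\times E_2'\times\p^1$ by $g_1\colon(a,b,x)\mapsto(a+t_1,b,-x)$ and $g_2\colon(a,b,x)\mapsto(a,b+t_2,1/x)$. This is the projective representation used for $\Al_{2,1}$ in \autoref{lem:2-divisor}.
\item[(c)] Put $X=(E_1'\times E_2'\times\p^1)/G\to A:=E_1\times E_2$, where $E_i=E_i'/\langle t_i\rangle$.
\end{itemize}
Via the $G$-equivariant Veronese embedding $\p^1\subset\p^2$, $X$ is a smooth conic in a $\p^2$-bundle over $A$ with $\Delta=\emptyset$ and $\rho(X/A)=1$. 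So it is a standard conic bundle.

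The lifts of $g_1,g_2$ to $\mathrm{SL}_2$ anticommute. Hence the Brauer class of $X$ is the image of $\chi_1\cup\chi_2\in H^2(A,\mu_2)$, where $\chi_i\in H^1(E_i,\mu_2)$ classifies $E_i'\to E_i$. This class lies in the Künneth component $H^1\otimes H^1$. By contrast, $\NS(A)/2$ is spanned by the classes of $E_1\times\{pt\}$ and $\{pt\}\times E_2$. So the class is non-zero in $\mathrm{Br}(A)[2]=H^2(A,\mu_2)/(\NS(A)/2)$. Consequently $X_K$ is a non-split conic, $\pi$ has no rational section, and $X$ is not a projectivised rank-$2$ bundle. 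An Enriques surface, or any $S$ with $\mathrm{Br}(S)[2]\neq0$, gives similar examples.

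\paragraph{Why your proposed tools and fallback cannot work.}
\begin{itemize}
\item[(i)] De Jong's period--index theorem points the wrong way: it says the index is $2$, i.e.\ there is \emph{no} divisor of relative degree one.
\item[(ii)] Tsen's theorem applies neither to $K=\k(S)$, which is $C_2$ rather than $C_1$, nor to the generic member of a pencil, which lives over $\k(t)$.
\item[(iii)] The parenthetical claim in your (1)$\Rightarrow$(2), that ``the Brauer obstruction vanishes on smooth surfaces'', is false for the same reason, and the alternative offered there is circular. If, as in the paper, $\p^1$-bundles are Zariski-locally trivial, (1)$\Rightarrow$(2) is immediate anyway.
\item[(iv)] Your fallback rescues nothing. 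The equivalence of (2), (3), (4) contains (4)$\Rightarrow$(2), which is exactly what fails. Moreover, the proof of \autoref{thm:Mfs K(S)-pos generic curve irrational} uses precisely this direction for $\kappa(S)\geq0$. In the example above, $\kappa(A)=0$, $X_{\k(A)}$ is irrational, and $\Autz(X)$ contains the abelian surface $E_1'\times E_2'$.
\end{itemize}

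\paragraph{What is missing, and what is correct.} The missing hypothesis is $\mathrm{Br}(S)[2]=0$. It holds when $S$ is rational, as in \cite{BFT22}. More generally it holds when $S$ is birational to $C\times\p^1$, since $\mathrm{Br}$ is a birational invariant of smooth projective varieties and $\mathrm{Br}(C\times\p^1)\simeq\mathrm{Br}(C)=0$. This covers the uses of the lemma for surfaces mapping to a ruled surface over $C$. Under this hypothesis (4)$\Rightarrow$(1) takes one line: the obstruction class of the $\PGL_2$-torsor in $H^2(S,\G_m)$ is $2$-torsion, hence zero, so $X\simeq\p(\El)$.

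The rest of your proposal is correct and needs no assumption on $S$.
\begin{itemize}
\item[(i)] (2)$\Rightarrow$(3) is fine.
\item[(ii)] Your argument for (3)$\Rightarrow$(4) works. A section of the regular scheme $X\times_S\mathrm{Spec}\,\Ol_{S,D}$ meets the special fibre in its smooth locus, so it singles out a line defined over $\k(D)$, contradicting the monodromy forced by $\rho(X/S)=1$. A quicker route: the two lines $\ell_1,\ell_2$ over a general point of $D$ are algebraically equivalent, so the closure $\bar s$ of the section would satisfy $2\,\bar s\cdot\ell_1=\bar s\cdot f=1$.
\item[(iii)] The double-cover and Riemann--Hurwitz argument for the ``moreover'' part is fine.
\end{itemize}
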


\begin{lemma}\label{lem:components of Delta irrational 0}
Let $\pi\colon X\to S$ be a standard conic bundle and let $\Delta$ be its discriminant. 
Suppose that there is a birational morphism $\eta\colon S\to S'$ to a ruled surface $\tau\colon S'\to C$ above a curve $C$ of genus $g(C)\geq1$. 
Then the following hold:
\begin{enumerate}
\item Every irreducible component of $\Delta$ not contained in a fibre of $\theta\circ\eta$ is irrational. 
\item Every rational irreducible component of $\Delta$ is a smooth fibre of $\theta\circ\eta$ and intersects an irrational irreducible component.
\end{enumerate}
In particular, if $\Delta$ is non-zero, then it has an irrational irreducible component.
\end{lemma}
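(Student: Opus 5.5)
Write $\theta := \tau$ (the statement uses $\theta\circ\eta$ for the composite $S\to S'\to C$). Set $\phi := \theta\circ\eta\colon S\to C$; it is a morphism with connected fibres and $\phi_*\Ol_S=\Ol_C$. The first step is elementary. Let $D\subset\Delta$ be an irreducible component not contained in a fibre of $\phi$. Then $\phi|_D\colon D\to C$ is a non-constant morphism onto $C$, so $g(D)\geq g(C)\geq 1$. This holds even for the normalization of $D$, since a non-constant map from a rational curve to a curve of positive genus is impossible. Hence $D$ is irrational, and this proves (1).

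For (2), let $D$ be a rational irreducible component of $\Delta$. By (1), $D$ lies in a fibre $F=\phi^{-1}(c)$. The fibres of $\phi$ are obtained from fibres of the ruled surface $S'$ by successive blow-ups, so every component of a fibre is a smooth rational curve. I first want to show that $F$ is irreducible, so that $D=F$ is a smooth fibre.

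I would argue by contradiction, using the structure of the conic bundle over $F$. Suppose $F$ is reducible. Then $F$ contains a $(-1)$-curve $E$ and is a tree of smooth rational curves. Consider the restriction of the conic bundle over the components of $F$ that lie in $\Delta$. The key tool is \autoref{lem:empty discriminant} and its proof, which says that a rational component of $\Delta$ must meet the rest of $\Delta$ in at least two points, since the double cover of that component induced by the two lines of the degenerate conics is non-trivial. I would like to combine this with a relative minimality property: after contracting $(-1)$-curves in $F$ that are not in $\Delta$ or meet $\Delta$ mildly, the standard conic bundle can be chosen over a surface where the fibres of $\phi$ meeting $\Delta$ only in rational components are irreducible. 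This step is the main obstacle. If the statement is meant to hold without such a relative-minimality assumption, I would have to exclude reducible fibres directly. The plan is to use that the components of $F$ lying in $\Delta$ form a subtree whose leaves must each meet $\Delta$ in at least two points (by the ``moreover'' part of \autoref{lem:empty discriminant}), and that in a standard conic bundle every point of a rational component which is a node of $\Delta$ carries an odd intersection structure forcing the extra intersection to come from a horizontal component. If this fails, I would fall back on proving only the weaker claim that $D$ lies in a fibre and meets an irrational component.

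Next comes the intersection claim. Take $D$ rational. By \autoref{lem:empty discriminant}, $D$ meets $\overline{\Delta\setminus D}$ in at least two distinct points. Since $D$ is a whole fibre, it is disjoint from all other vertical curves lying in other fibres. So any component of $\Delta$ meeting $D$ is not vertical, and by (1) it is irrational. It remains to rule out the case $\Delta=D$ irreducible; this is excluded because an irreducible $\Delta$ has $g(\Delta)\geq1$ by \autoref{lem:empty discriminant}.

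Finally, the last sentence follows. If $\Delta\neq0$ and all its components were rational, then each would have to meet an irrational component by (2), which is a contradiction.
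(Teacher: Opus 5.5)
Your argument for (1) is correct and matches the paper's: a finite map $D\to C$ forces $g(D)\geq g(C)\geq1$.

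\textbf{The gap.} It is the step you flag yourself: you never prove that the fibre $F\supseteq D$ is irreducible. Everything after it depends on that: the claim that every component of $\Delta$ meeting $D$ is horizontal, and your derivation of the last sentence from (2). Neither proposed route can close it. No relative minimality of $S$ over $S'$ is available in the hypotheses, and the leaf argument only controls the leaves of the vertical part of $\Delta$, not $D$ itself.

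\textbf{The claim is in fact false.} On $S'=C\times\p^1$ take $p_1\neq p_2$ in $C$ and the conic bundle $x_0^2-a\,x_1^2-b\,x_2^2=0$ in $\p(\Ol\oplus L_1\oplus L_2)$. Here $a$ cuts out two constant sections $\sigma_0+\sigma_\infty$ and $b$ cuts out $f_{p_1}+f_{p_2}$; take $L_1=pr_2^*\Ol_{\p^1}(-1)$ and $L_2^{\otimes -2}\simeq pr_1^*\Ol_C(p_1+p_2)$. This bundle is standard, with $\Delta'=\sigma_0+\sigma_\infty+f_{p_1}+f_{p_2}$, and every double cover is branched in two points.

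Now blow up a point $P\in f_{p_1}\setminus(\sigma_0\cup\sigma_\infty)$, with exceptional curve $E$. Modulo the ideal of $E$, the pulled-back form equals $\ell_+\ell_-$ with $\ell_\pm=x_0\pm\sqrt{a(P)}\,x_1$. Pass to $\ker(V\to\Ol_E,\ v\mapsto\ell_+(v))$ and divide the form by an equation of $E$. This lowers the order of the determinant along $E$ from $1$ to $0$ and changes nothing away from $E$. One checks that at $E\cap\tilde f_{p_1}$ the new form has rank $2$ and determinant vanishing simply along $\tilde f_{p_1}$. The result is a standard conic bundle over $S$ with $\Delta=\tilde\sigma_0+\tilde\sigma_\infty+\tilde f_{p_1}+f_{p_2}$. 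The rational component $\tilde f_{p_1}$ is not a fibre of $\tau\circ\eta$, since the fibre over $p_1$ is $\tilde f_{p_1}+E$.

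The second half of (2) fails too. Blow up instead the node $\sigma_0\cap f_{p_1}$, then the node $\tilde f_{p_1}\cap E_1$; blowing up a node puts the exceptional curve into $\Delta$. This gives a chain $\tilde f_{p_1},E_2,E_1$ in $\Delta$ whose middle curve $E_2$ meets no horizontal component.

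\textbf{The paper's proof.} It has the same hole. It asserts that if $D$ is not a whole fibre, then $D$ meets two further components of its fibre lying in $\Delta$, and iterates to an infinite chain. This ignores the possibility that $D$ meets horizontal components, which is exactly what happens in the example.

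\textbf{What survives.} All that is used later (in \autoref{lem:generic fibre singular conic fibration} and \autoref{rmk:igf nontrivial action}) is (1), hence that rational components are vertical, together with the last sentence. Prove the last sentence directly with your leaf idea rather than through (2):
\begin{itemize}
\item Suppose all components of $\Delta$ were rational; then they are all vertical.
\item Take a fibre $F$ containing one of them, and let $D$ be a leaf or isolated vertex of the subforest of the dual tree of $F$ formed by components of $\Delta$.
\item Curves in other fibres miss $D$, and two components of $F$ meet in at most one point. So $D$ meets $\overline{\Delta\setminus D}$ in at most one point.
\item This contradicts \autoref{lem:empty discriminant}, which also excludes $\Delta=D$, since an irreducible $\Delta$ has positive genus.
\end{itemize}
The same count shows that every leaf of that forest meets a horizontal, hence irrational, component. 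That is the correct substitute for (2).
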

\begin{proof}
We suppose that $\Delta$ is non-zero, otherwise the statement holds trivially.
Let $D$ be an irreducible component of $\Delta$. First, suppose that $D$ is not contained in any fibre of $\tau\circ\eta$.
Then $(\tau\circ\eta)|_D\colon D\to C$ is a finite cover. Since $g(C)\geq1$, the Riemann-Hurwitz formula \cite[\href{https://stacks.math.columbia.edu/tag/0C1B}{Tag 0C1B}]{SP24} implies that $g(D)\geq1$.
It follows that, $D$ is rational if and only if it is contained in a fibre $f$ of $\tau\circ\eta$. 

Suppose that $D$ is rational. Then, by \autoref{lem:empty discriminant}, $\Delta$ is reducible and $D$ intersects $\overline{\Delta\setminus D}$ in exactly two points. If $D$ is a smooth fibre, it intersects an irreducible component of $\Delta$ not contained in the fibres of $\tau\circ\eta$. 
If $D$ is not a smooth fibre, then $f$ has two other irreducible components, each of which intersect $D$ and each of which are components of $\Delta$. We iterate the argument for these two components and so on. This leads to an infinite chain of intersecting irreducible components of $f$ that are also components of $\Delta$. This is impossible.
\end{proof}

\begin{lemma}\label{lem:generic fibre singular conic fibration}
Let $\pi\colon X\to S$ be a standard conic bundle. 
Suppose that there is a birational morphism $\eta\colon S\to S'$ to a ruled surface $\tau\colon S'\to C$ above a curve $C$ of genus $g(C)\geq1$. Let $\Delta$ be the discriminant of $\pi$ (which could be empty), let $I\Delta$ be the union of irrational irreducible components of $\Delta$ and let $\Autz(S,\Delta)$ be the subgroup of $\Autz(S)$ that preserves $\Delta$. 
Then the following hold:
\begin{enumerate}
\item\label{gfscf:0} If $\Delta\neq\varnothing$, then $I\Delta$ is non-zero and intersects a general fibre of $\tau\circ\eta$ in at least two distinct points. 
\item\label{gfscf:1} The morphism $\hat\pi\colon X_{\k(C)}\to S_{\k(C)}\simeq S'_{\k(C)}$ induced by the fibration $\tau\circ\eta\circ\pi$ is a standard conic bundle above the rational curve $S_{\k(C)}$ with discriminant $(I\Delta)_{\k(C)}$, which is empty if and only if $\Delta=\emptyset$.
\item\label{gfscf:3} The surface $X_{\k(C)}$ is a Del Pezzo surface or a Hirzebruch surface.
\item\label{gfscf:2} If $\Delta\neq\emptyset$, then the group $\Autz(S,\Delta)_C$ is a torus of dimension$\leq2$. 
\end{enumerate}
\end{lemma}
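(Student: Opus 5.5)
We prove the four items in the order \autoref{gfscf:0}, \autoref{gfscf:1}, \autoref{gfscf:3}, \autoref{gfscf:2}, since each feeds the next.

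For \autoref{gfscf:0}, assume $\Delta\neq\varnothing$. By \autoref{lem:components of Delta irrational 0}, $I\Delta\neq 0$, and every irrational component $D$ dominates $C$, hence meets a general fibre $f$ of $\tau\circ\eta$. It remains to get at least two distinct points on $f$. The idea is to restrict $\pi$ to $f\simeq\p^1$. For general $f$, the preimage $\pi^{-1}(f)\to f$ is a conic bundle over $\p^1$ whose singular fibres are exactly the points of $\Delta\cap f=I\Delta\cap f$; general $f$ avoids the rational components, which are fibres. Since $X$ is smooth and $f$ is general, $\pi^{-1}(f)$ is a smooth surface. A conic bundle over $\p^1$ with exactly one singular fibre cannot exist with relatively minimal structure coming from a standard conic bundle. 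The cleaner route is to work over $\k(C)$: the generic fibre $(I\Delta)_{\k(C)}$ is the discriminant of a standard conic bundle over the curve $S_{\k(C)}$, which is $\p^1$ over a field. Using the classical fact that a conic bundle over $\p^1_K$ with $\rho=2$ has discriminant of even degree (see \cite{iskovskikh_1996}), and that this degree is nonzero because $\Delta\neq\varnothing$, the degree is at least $2$. This yields at least two geometric points on a general $f$, which are distinct since $\Delta$ is reduced and $f$ is general (generic smoothness in characteristic zero). So I would actually prove \autoref{gfscf:1} first and deduce \autoref{gfscf:0}.

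For \autoref{gfscf:1}, we base change to the generic point of $C$. Since $\eta$ only contracts curves in fibres, $S_{\k(C)}\simeq S'_{\k(C)}$ is a smooth conic over $\k(C)$ with a rational point by \autoref{thm:Tsen}, hence isomorphic to $\p^1_{\k(C)}$. The conditions defining a standard conic bundle (embedding in a $\p^2$-bundle, rank conditions, reduced SNC discriminant) are preserved under flat base change to the generic fibre, and the components of $\Delta$ contained in fibres disappear, leaving $(I\Delta)_{\k(C)}$. For $\rho(X_{\k(C)}/S_{\k(C)})=1$, note that $\rho$ can only drop under restriction to the generic fibre since vertical divisors over $C$ are removed, and it is at least $1$. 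Finally, $(I\Delta)_{\k(C)}=\varnothing$ if and only if $I\Delta=\varnothing$, which by \autoref{lem:components of Delta irrational 0} happens if and only if $\Delta=\varnothing$.

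For \autoref{gfscf:3}, apply the rank-$2$ surface theory over $K=\k(C)$. If the discriminant is empty, then $X_K$ is a $\p^1$-bundle over $\p^1_K$, and by \autoref{lem:empty discriminant} it is a Hirzebruch surface over $K$. Otherwise $X_K\to\p^1_K$ is a relatively minimal conic bundle. Its degree is $8-\deg\Delta_K$, and the standard classification \cite{iskovskikh_1996} shows it is Del Pezzo when $K^2\geq 5$. I expect this to be the main obstacle: for $K^2\leq4$ one must argue via \autoref{lem:empty discriminant}-type genus constraints, or else accept ``Del Pezzo or Hirzebruch'' only after showing $-K$ is ample. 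I would try to show that no $(-n)$-curve with $n\geq2$ exists on $X_{\bar K}$ using the $\rho=2$ Galois-orbit argument.

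For \autoref{gfscf:2}, $\Autz(S,\Delta)_C$ acts on $S_{\k(C)}\simeq\p^1$ fixing the finite set $(I\Delta)_{\k(C)}$ pointwise up to connectedness, and this set has at least $2$ geometric points by \autoref{gfscf:0}. Embed the group into $\Aut(\p^1_{\k(C)})$ as in \autoref{rmk: MdP}\autoref{MdP:2}. An element fixing two points lies in a torus $\G_{m}$ over $\bar K$, so no $\G_a$ occurs, and as in \autoref{lem:MdP 6 1} the affine part is a torus and the abelian part vanishes. The dimension bound $\leq 2$ then comes from Chevalley's structure theorem together with the torsion-counting argument from the proof of \autoref{lem:MdP 6 1}.
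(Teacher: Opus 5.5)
Your reduction to the generic fibre over $K=\k(C)$ (item (2), and the first half of item (1)) matches the paper's proof. However, the step giving ``at least two points'' in item (1) rests on a false claim: a relatively minimal conic bundle over $\p^1_K$ need not have discriminant of even degree. For instance, over $K=\k(C)$ the conic $z^2=at\,x^2+b(t-1)\,y^2$ over $K(t)$, with $a,b,ab$ non-squares in $K$, is ramified exactly at $t=0,1,\infty$. Its standard model therefore has three degenerate geometric fibres and $K^2=5$; the paper itself treats the cases $K^2_{X_{\k(C)}}\in\{3,5\}$ in \autoref{lem:generic fibre DP surface} and \autoref{lem:igf 9}. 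What you actually need is only that a \emph{single} degenerate geometric fibre is impossible when $\rho(X_K/S_K)=1$. Such a fibre would lie over a $K$-point, and its two components are distinguished by how they meet the other negative curves of $X_{\overline{K}}$, hence are Galois-stable. Equivalently, by Faddeev reciprocity a Brauer class on $\p^1_K$ cannot ramify at just one rational point. This is Iskovskikh's $K^2\neq7$ \cite[Theorem 3]{Isko_1979}, and it is also the implicit input behind the paper's ``It follows that \dots''. With that substitution, plus your generic-smoothness argument for distinctness, item (1) is fine.

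The real gap is item (3), which you leave open. The strategy you suggest (excluding $(-n)$-curves, $n\ge2$, by a Galois-orbit argument) cannot work, because the assertion fails when the discriminant is large. Contracting one component in each of the $r=\deg(I\Delta)_K$ degenerate fibres of $X_{\overline{K}}$ gives a Hirzebruch surface, so $K^2_{X_K}=8-r$, and $r$ is unbounded. Take a standard model, over a surface birational to $C\times\p^1$, of the conic $x^2-ay^2=f(t)z^2$, with $a\in K$ a non-square and $f\in K[t]$ of even degree $r$ with distinct roots in $K$. Then $(I\Delta)_K$ is the set of roots of $f$, and for $r\ge8$ we get $K^2_{X_K}\le0$, so $X_K$ is neither Del Pezzo nor Hirzebruch. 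Already for $r=4$ the smooth model $x^2-ay^2=f(t)z^2$ has $\rho(X_K/\p^1_K)=1$ yet contains the two disjoint conjugate sections $\{z=0,\ x=\pm\sqrt{a}\,y\}$, each of self-intersection $-2$. The paper's one-line argument (``$\hat\pi$ is a rational $2$-dimensional Mori fibre space'') does not close this either, since a Mori conic bundle over a curve need not have $-K$ ample. What can be proved is the following: $X_K$ is Hirzebruch if and only if $\Delta=\varnothing$; otherwise it is a geometrically rational relatively minimal conic bundle with $K^2_{X_K}\neq7,8$, which is Del Pezzo when $K^2_{X_K}\in\{3,5,6\}$ \cite[Theorem 5]{Isko_1979}.

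For item (4) you take a different route from the paper, which simply invokes Maruyama's description of automorphism groups of ruled surfaces. You instead embed the group abstractly into the pointwise stabiliser $\overline{K}^{*}$ of two geometric points of $\p^1_{\overline{K}}$ and count torsion as in \autoref{lem:MdP 6 1}, which even gives dimension $\le1$. This works, provided $\G_a$ is excluded on general closed fibres $f\simeq\p^1$, where restriction is a morphism of algebraic groups into the stabiliser of $I\Delta\cap f$. The abstract embedding alone does not exclude $\G_a$, since $(\k,+)$ embeds as an abstract group into the divisible group $\overline{K}^{*}$. The abelian part can also be killed at once: $\Autz(S)$ acts faithfully on $S'$ by \autoref{lem:blanchard}, and $\Autz(S')_C$ is linear.
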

\begin{proof}
\autoref{gfscf:0} and \autoref{gfscf:1}: 
Since $X$ and $S$ are smooth and $\mathrm{char}(\k)=0$, also $X_{\k(C)}$ and $S_{\k(C)}$ are smooth.
The surface $S_{\k(C)}$ is rational, because $\tau$ is a ruled surface.
Suppose that $\Delta=\emptyset$. Then $I\Delta=\emptyset$ and $\pi$ is a conic bundle by \autoref{lem:empty discriminant} and then $\hat\pi\colon X_{\k(C)}\to S_{\k(C)}$ is a conic bundle with $\dim X_{\k(C)}=2$ and hence has empty discriminant. Suppose that $\Delta\neq\emptyset$.
By \autoref{lem:components of Delta irrational 0},  $\Delta$ contains at least one irrational irreducible component. 
By \autoref{lem:components of Delta irrational 0}, $I\Delta$ is non-zero and generically transversal to the fibration $\tau\circ\eta$.
Let $F$ be the general fibre of $\tau\circ\eta\circ\pi$. It is a smooth surface and $\eta\circ\pi|_F\colon F\to \eta(\pi(F))$ has generic fibre a plane conic and it has singular fibres, which are exactly above the points $F\cap I\Delta$. It follows that the generic fibre of $\hat\pi$ is a plane conic and that $(I\Delta)_{\k(C)}$ is the discriminant of $\hat\pi$. Since $\dim X_{\k(C)}=2$ and $(I\Delta)_{\k(C)}\neq\emptyset$, it follows that the generic fibre of $\hat\pi$ is irrational.
Finally, we have $\rho(X_{\k(C)}/S_{\k(C)})=1$, because $\rho(X/S)=1$. It follows that $I\Delta$ intersects a general fibre of $\tau\circ\eta$ in at least two distinct points. 

\autoref{gfscf:3} By \autoref{gfscf:1}, $\hat\pi\colon X_{\k(C)}\to S_{\k(C)}$ is a rational $2$-dimensional Mori fibre space. This yields the claim.

\autoref{gfscf:2}
By \autoref{gfscf:0}, $I\Delta$ is non-zero and by \autoref{lem:components of Delta irrational 0} it is generically transversal to the fibres of $\tau\circ\eta$. 
Therefore,  
$\Aut(S,\Delta)_C$ preserves at least two distinct points on every general fibre of $\tau\circ\eta$. 
It follows from \cite{Maruyama} that $\Autz(S,\Delta)_C$ is a torus of dimension$\leq2$. 
\end{proof}

The following two lemmas describe birational maps from a standard conic fibration above a surface to another Mori fibre space and from a Mori Del Pezzo fibration to a Mori fibre space above a surface.
We denote by $\kappa(S)$ the Kodaira dimension of $S$. 

\begin{lemma}\label{lem:irrat generic fibre square}
Let $\pi\colon X\to S$ be a standard conic bundle above a surface $S$ and let $\varphi\colon X\rat Y$ be a birational map to a Mori fibre space $\pi'\colon Y\to B$ above a surface $B$. 
\begin{enumerate}
\item\label{square:2} 
If $\kappa(S)\geq0$, then there is a birational map $\theta\colon S\rat B$ such that $\theta\circ\pi=\pi'\circ\varphi$.
If moreover $\varphi$ is $\Autz(X)$-equivariant, then so is $\theta$. 
\item\label{square:1}
Suppose that $S$ is a ruled surface $\tau\colon S\to C$ above a curve of genus $g(C)\geq1$.   
Then there exists a morphism $\alpha\colon B\to C$ and birational map $\eta\colon S\rat B$ such that $\eta\circ\pi=\pi'\circ\varphi$ and $\alpha\circ\eta=\tau$. If moreover $\varphi$ is $\Autz(X)$-equivariant and $\Autz(X)$ acts non-trivially on $C$, then $\eta$ an isomorphism. 
\end{enumerate}
\end{lemma}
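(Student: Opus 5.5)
For \autoref{square:2}, the plan is to show that $\varphi$ sends a general fibre of $\pi$ onto a fibre of $\pi'$. A general fibre $\ell$ of $\pi$ is a smooth rational curve. The surface $S$ is smooth (the base of a standard conic bundle) and has $\kappa(S)\geq0$, so it is not uniruled. Consider the composite rational map $\pi'\circ\varphi\colon X\rat B$. If it did not contract the general fibre $\ell$, then $B$ would be covered by the rational curves $\pi'(\varphi(\ell))$. Since $B$ is dominated via $Y$, this would make $B$ uniruled.

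The step I expect to be the delicate one is reversing the roles, because it needs a bound on $\kappa(B)$. Since $\varphi$ is birational, $X$ is birational to $Y$. The surface $B$ is the base of a conic bundle with total space $Y$ birational to $X$, and $X$ admits the MRC-type fibration $X\rat S$ with $S$ non-uniruled. By Graber--Harris--Starr, \autoref{thm:rationally connected}, the base of the MRC fibration of $X$ is birational to $S$. The same argument applied to $Y\to B$ shows that the MRC quotient of $Y$ is $B$ if $B$ is not uniruled. Suppose instead $B$ were uniruled. Then $Y$ would contain rational curves through a general point in a two-dimensional family, not lying only in the conic fibres. Their images in $X$ would project to rational curves covering $S$, contradicting $\kappa(S)\geq0$.

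Hence both $S$ and $B$ are the MRC quotients, i.e.\ the fibres of $\pi$ and $\pi'$ are exactly the maximal rationally connected loci through general points. Therefore $\varphi$ maps general fibres to general fibres, and this induces a birational map $\theta\colon S\rat B$ with $\theta\circ\pi=\pi'\circ\varphi$. If $\varphi$ is $\Autz(X)$-equivariant, the actions on $S$ and $B$ come from \autoref{lem:blanchard}, and $\theta$ is equivariant by construction, since it is determined by $\varphi$ on fibres.

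For \autoref{square:1}, I first obtain $\alpha$. The general fibre of $\tau\circ\pi$ is a rational surface, since it is a conic bundle over $\p^1$ and hence rationally connected by \autoref{thm:rationally connected}. Its image in $Y$ is therefore rationally connected. Because $g(C)\geq1$, every rational curve in $B$ is contracted by $B\to$ (its Albanese/MRC quotient). Arguing as in \autoref{lem:preserves fibration}, the rationally connected fibres of $Y\to C'$ must coincide with those of $X\to C$, where $C'$ is the MRC quotient of $Y$ (a curve), and $C'\simeq C$ as in \autoref{lem:preserves fibration}. This gives a morphism $\alpha\colon B\to C$ with connected fibres, because $B$ is normal and the map $B\rat C$ to a curve of positive genus extends.

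To get $\eta$, I compare generic fibres over $\k(C)$. Both $X_{\k(C)}\to S_{\k(C)}$ and $Y_{\k(C)}\to B_{\k(C)}$ are conic bundles of relative Picard rank $1$ with irrational generic fibre. By \autoref{lem:generic fibre singular conic fibration}, the discriminant of $\pi$ is non-empty, which I need for the rigidity; if it were empty, the generic fibre over $\k(S)$ would be $\p^1$. I expect the main obstacle here to be exactly this rigidity on the generic surface: showing that a birational map of rational surfaces over $\k(C)$ between such conic bundles preserves the conic bundle structure. This is because the generic conic over $\k(S)$ is irrational, so by the Iskovskikh theory in \cite{iskovskikh_1996} there is no link changing the fibration. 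That yields $\eta$ with $\alpha\circ\eta=\tau$.

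In the equivariant case, $\eta$ is $\Autz(X)$-equivariant. Its indeterminacy points and contracted curves would be $\Autz(X)$-invariant finite sets or unions of fibres of $\tau$. Since $\Autz(X)$ acts non-trivially, hence transitively, on the elliptic curve $C$, they would have to dominate $C$. But $\eta$ is a birational map over $C$ between smooth ruled surfaces, so its base points lie in finitely many fibres; hence there are none, and $\eta$ is an isomorphism. Here I use \autoref{lem:base is ruled} to know that $B$ is a smooth ruled surface.
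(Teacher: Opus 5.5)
Part (1) of your proposal is correct, but it runs in the opposite direction from the paper. The paper pushes a general fibre of $\pi'$ into $X$ via $\varphi^{-1}$ and then into $S$, where it must be contracted because $S$ is not covered by rational curves. That needs no information about $B$. Your direction forces the MRC detour to show $B$ is not uniruled, which you do correctly. Your construction of $\alpha$ in (2) is also fine, with one correction: $B\rat C$ extends because $B$ is klt, so the exceptional curves of a resolution are rational. Normality alone is not enough, since a cone over an elliptic curve is normal and its projection to the curve does not extend.

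The genuine gap is the existence of $\eta$ with $\eta\circ\pi=\pi'\circ\varphi$. There are two problems.

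First, \autoref{lem:generic fibre singular conic fibration} does not give $\Delta\neq\varnothing$; it only describes what happens when $\Delta\neq\varnothing$. Nothing in the hypotheses forces it, and without it the conclusion is false. For $g(C)=1$, take $X=Y=C\times\p^1\times\p^1$ and $S=B=C\times\p^1$. Let $\pi$ and $\pi'$ forget the third and the second factor respectively, and let $\varphi=\mathrm{id}$. This map is $\Autz(X)$-equivariant and $\Autz(X)$ is transitive on $C$, yet $\pi'$ does not factor through $\pi$.

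Second, even with $\Delta\neq\varnothing$, irrationality of the generic conic over $\k(S)$ does not give the rigidity you invoke. \cite[Theorem 2.6]{iskovskikh_1996} shows that every link from a conic bundle over $\p^1_{\k(C)}$ is fibrewise only when $K^2\leq 0$. In positive degree there can be links of type III (degrees $3,5,6$) and of type IV (degrees $4,2,1$). A type IV link is just the identity exchanging two conic bundle structures. For example, on a degree-$4$ Del Pezzo surface with $\rho=2$ and conic class $F$, the class $-K-F$ is a second conic class defined over the ground field. Equivariance alone does not exclude this, because a connected group preserves both extremal rays. You would need extra input, such as a restriction on $K_{X_{\k(C)}}^2$ or an analysis of which of these links can occur, and your proposal provides none.

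The paper's proof does not supply this step either. It only matches fibres of $\tau\circ\pi$ with fibres of $\alpha\circ\pi'$ and then asserts that $\eta$ exists. So (2) as stated needs a stronger hypothesis before any proof can succeed. Your final step, that equivariance plus transitivity on $C$ forces $\eta$ to be an isomorphism, agrees with the paper and is fine once $\eta$ exists.
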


\begin{proof}
\autoref{square:2} Let $D$ be a general fibre of $\pi'$. It is rational, its image $D'\subset X$ by $\varphi^{-1}$ is rational. Since $S$ is not covered by rational curves, $\pi(D')$ is a point. This yields the claim.

\autoref{square:1}
Let $f$ be a general fibre of $\pi'$. It is a smooth rational curve and we call $f'\subset X$ its image by $\varphi^{-1}$. Since $f'$ is rational, $\tau(\pi(f'))$ is a point. Thus, there is a morphism $\alpha\colon B\to C$ such that $\alpha\circ\pi'\circ\varphi=\tau\circ\pi$. 
Then $\alpha$ has connected fibres. 
Let us show that the general fibre of $\alpha$ is rational. 
Since $B$ is a surface and the general fibre of $\pi'$ is a rational curve, the morphism $\pi'$ induces a Mori fibre space $\hat\pi'\colon Y_{\k(C)}\to B_{\k(C)}$. 
 By \autoref{lem:generic fibre singular conic fibration}\autoref{gfscf:3}, $X_{\k(C)}$ is rational.
The map $\hat\varphi\colon X_{\k(C)} \dashrightarrow Y_{\k(C)}$ induced by $\varphi$ is birational. So, $Y_{\k(C)}$ is rational and hence the curve $B_{\k(C)}$ is rational. It follows that the general fibre of $\alpha$ is rational. 
We now construct $\eta$. 
Let $h\subset B$ be a general fibre of $\alpha$ and $F\subset Y$ the fibre by $\pi'$ above $h$. Since $h$ is rational, $F$ is rational. Then $(\varphi^{-1})(F)\subset X$ is a rational surface. Then its image $\pi((\varphi^{-1})(F))=:h'\subset S$ in $S$ is a rational curve. Since $\tau\colon S\to C$ is a ruled surface and $g(C)\geq1$, $h'$ is contained in a fibre of $\tau$.
Since $h$ is general, $h'$ is a general fibre of $\tau$. This defines a birational map $\eta\colon S\rat B$ such that $\eta\circ\pi=\pi'\circ\varphi$ and such that $\alpha\circ\eta=\tau$. 

Suppose that $\varphi$ is $\Autz(X)$-equivariant and that $\Autz(X)$ acts non-trivially on $C$. Then every fibre of $\alpha$ is isomorphic to $\p^1$. Thus $\alpha$ is a ruled surface.
The base-locus of $\alpha$ is $\Autz(X)$-invariant and hence empty. 
It follows that $\eta$ is an isomorphism.  
\end{proof}

\begin{lemma}\label{lem:commutative diag surface-curve}
Let $\pi\colon X\to C$ be a Mori Del Pezzo fibration above a curve of genus $g(C)\geq1$ and $\varphi\colon X\rat X'$ a birational map to a Mori fibration $\pi'\colon X'\to S$ above a surface $S$. Then there is a morphism $\tau\colon S\to C$ with $\tau_*\Ol_S=\Ol_C$ such that $\tau\circ\pi'\circ\varphi=\pi$.  
\end{lemma}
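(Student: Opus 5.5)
The plan is to show that the composite rational map $\pi\circ\varphi^{-1}\colon X'\rat C$ is constant on the fibres of $\pi'$ and then descends to a morphism on $S$. This is the same rational-curve argument as in \autoref{lem:preserves fibration} and \autoref{lem:irrat generic fibre square}\autoref{square:1}.

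\textbf{Step 1: the map is constant on fibres of $\pi'$.} A general fibre $f$ of $\pi'$ is a smooth rational curve, since the generic fibre of a Mori fibration over a surface is a conic. Its strict transform $\varphi^{-1}(f)\subset X$ is again a rational curve. Because $g(C)\geq1$, every map from a rational curve to $C$ is constant. Hence $\pi(\varphi^{-1}(f))$ is a point, so $\pi\circ\varphi^{-1}$ is constant on general fibres of $\pi'$.

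\textbf{Step 2: descend to a rational map $S\rat C$.} Since $\pi'_*\Ol_{X'}=\Ol_S$, we have $\k(S)$ algebraically closed in $\k(X')$. The rational function field of $C$ pulled back via $\pi\circ\varphi^{-1}$ consists of functions constant on general fibres of $\pi'$. These are therefore elements of $\k(X')$ lying in (the algebraic closure of) $\k(S)$, hence in $\k(S)$. This gives a dominant rational map $\tau\colon S\rat C$ with $\tau\circ\pi'=\pi\circ\varphi^{-1}$ as rational maps.

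\textbf{Step 3: extend $\tau$ to a morphism.} This is the step I expect to be the main obstacle. The surface $S$ is only klt, hence normal with rational (isolated) singularities \cite[Corollary 4.6]{Fuj99}, \cite[Proposition 9.3]{GKKP11}. I would take a resolution $\widetilde S\to S$ of the singularities and of the indeterminacy of $\tau$.
\begin{enumerate}
\item The exceptional curves over indeterminacy points and over klt singularities are all rational.
\item Since $g(C)\geq1$, they are contracted by the induced morphism $\widetilde S\to C$.
\end{enumerate}
By the rigidity lemma, or Zariski's main theorem on the normal surface $S$, the morphism $\widetilde S\to C$ factors through $S$. So $\tau$ is a morphism, and the identity $\tau\circ\pi'\circ\varphi=\pi$ then holds as rational maps.

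\textbf{Step 4: show $\tau_*\Ol_S=\Ol_C$.} Consider the Stein factorization $S\to C'\to C$. Composing with $\pi'\circ\varphi$ gives a rational map $X\rat C'$ through which $\pi$ factors birationally. Since $\pi$ has connected fibres, meaning $\k(C)$ is algebraically closed in $\k(X)$, the finite map $C'\to C$ has degree one. As $C$ is smooth, $C'\to C$ is an isomorphism, and therefore $\tau_*\Ol_S=\Ol_C$.
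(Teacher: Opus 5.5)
Your proof is correct, but it is organised differently from the paper's. The paper first shows that $\kappa(S)=-\infty$ (via \autoref{lem:irrat generic fibre square}\autoref{square:2}) and that $S$ is not rationally connected (via \autoref{thm:rationally connected}). Hence $S$ is birational to a ruled surface $\theta\colon S'\to D$ over some irrational curve $D$. The paper then resolves $S\rat S'$ and uses that the curves contracted onto the klt surface $S$ are rational, which lets $\theta$ descend to a morphism $\tau\colon S\to D$ with $\tau_*\Ol_S=\Ol_D$. Only at the end does it use the rationality of the Del Pezzo fibres of $\pi$, to get a morphism $\alpha\colon C\to D$ with connected fibres, hence an isomorphism.

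You go the other way round. You use the rationality of the conic fibres of $\pi'$ to produce the map $S\rat C$ directly, so the target curve is $C$ from the start. This means you never need the Kodaira-dimension and rational-connectedness input, nor the step identifying $D$ with $C$. The cost is the descent in your Step 2, where the claim that a function constant on general fibres of $\pi'$ is algebraic over $\k(S)$ deserves one line of justification: the image of $(\pi',h)$ is a surface generically finite over $S$. There is also the Stein factorisation in Step 4, which uses that $\k(C)$ is algebraically closed in $\k(X)$.

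The core extension argument is the same in both proofs: a rational map from a surface with klt (hence rational) singularities to a curve of positive genus is a morphism, because all exceptional curves of a resolution are rational.
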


\begin{proof}
We have $\kappa(S)=-\infty$ by \autoref{lem:irrat generic fibre square}\autoref{square:2}.
Since $X$ is not rationally connected, $S$ is not rationally connected by \autoref{thm:rationally connected}. So, there is a birational map $\eta\colon S\rat S'$ to a ruled surface $\theta\colon S'\to D$ for some irrational curve $D$. 
Let $(p,q)\colon Z\to S\times S'$ be a minimal resolution of $\eta$ such that $Z$ is smooth. Then $q$ is a sequence of blow-up of points. Since $S$ is klt \cite[Corollary 4.6]{Fuj99}, its singularities are isolated quotient singularities \cite[Proposition 9.3]{GKKP11}. Thus $p$ contracts only rational curves. Since $D$ is irrational, the curve contracted by $p$ are contained in the fibres of $\theta\circ q$. Therefore, there is a morphism $\tau\colon S\to D$ such that $\tau\circ p=\theta\circ q$. In particular, $\tau_*\Ol_S=\Ol_D$. 

Let $F$ be a fibre of $\pi$. It is a Del Pezzo surface and hence rational, so $\tau(\pi'(\varphi(F)))$ is a point. Therefore, there is a morphism $\alpha\colon C\to D$ such that $\tau\circ\pi'\circ\varphi=\alpha\circ\pi$. In particular, $\alpha$ has connected fibres and is hence an isomorphism.
\end{proof}

Finally, we limit the options for birational maps from a standard conic bundle to a Mori Del Pezzo fibration when the vertical automorphism group is big.

\begin{lemma}\label{lem:no_map_to_lower_degree.2}
    Let $\pi\colon X\to S$ be a Mori fibre space over a surface $S$ with a morphism $\tau\colon S\to C$ to a curve $C$ of genus $g(C)\geq1$ such that $\tau_*\Ol_S=\Ol_C$. 
    Assume that $\Autz(X)$ acts transitively onto $C$ and $\dim\Autz(X)_C>0$. Then there is no $\Autz(X)$-equivariant birational map from $X$ to Mori Del Pezzo fibre space of degree $d\leq 8$. 
\end{lemma}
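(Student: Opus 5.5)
The plan is to mimic the proof of \autoref{lem:no_map_to_lower_degree}, replacing the Del Pezzo fibration $X\to C$ by the composite fibration $\tau\circ\pi\colon X\to C$.

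Suppose $\varphi\colon X\rat X'$ is an $\Autz(X)$-equivariant birational map to a Mori Del Pezzo fibration $\pi'\colon X'\to D$ of degree $e\le 8$. As in \autoref{lem:preserves fibration}, $X$ is not rationally connected: its base $S$ maps onto $C$ with $g(C)\ge1$. Hence $g(D)\ge1$. A general fibre of $\pi'$ is a Del Pezzo surface, so it is rationally connected. Its image in $X$ is therefore rationally connected, and its image in $C$ under $\tau\circ\pi$ is a point. This gives a morphism $\alpha\colon D\to C$ with connected fibres, hence an isomorphism, satisfying $\tau\circ\pi\circ\varphi^{-1}=\alpha\circ\pi'$. (Equivalently, apply \autoref{lem:commutative diag surface-curve} to $\varphi^{-1}$.) We take $\alpha=\mathrm{id}$. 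Then $\varphi$ is a map over $C$, so conjugation by $\varphi$ sends $\Autz(X)_C$ into $\Autz(X')_C$, and $\dim\Autz(X')_C>0$. Also $\varphi\Autz(X)\varphi^{-1}\subseteq\Autz(X')$ acts transitively on $C$, since $\Autz(X)$ does.

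Next we run through the degrees $e$, following \autoref{lem:no_map_to_lower_degree}. By \autoref{rmk: MdP}\autoref{MdP:2}, $\Autz(X')_C$ embeds into $\Aut(X'_{\k(C)})$. \autoref{lem:auto of dp surface}\autoref{auto dp surface:1} then forces $e\ge 6$. The case $e=6$ is excluded by \autoref{prop:Mdf 6 finite}, which says that $\Autz(X')_C$ is finite when $g(C)\ge1$ and $\Autz(X')$ is nontrivial. The case $e=7$ is excluded by \autoref{rmk: MdP}\autoref{MdP:1}. The case $e=8$ is excluded by \autoref{lem:dP8 Aut on C}, which forces $\Autz(X')$ to act trivially on $C$, contradicting transitivity.

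The only step needing care is the first one: we must obtain the isomorphism $D\simeq C$ compatibly with $\tau\circ\pi$. Without it we cannot transport the positive-dimensional vertical group $\Autz(X)_C$ to $\Autz(X')_C$. The rational-connectedness argument handles this exactly as in \autoref{lem:preserves fibration}. We also use that $\tau$ has connected fibres, i.e. $\tau_*\Ol_S=\Ol_C$, to know that $\alpha$ is an isomorphism rather than merely a finite map. Once this is in place, the degree analysis is a direct citation of the earlier results.
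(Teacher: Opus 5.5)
Your proof is correct and follows essentially the same route as the paper. Both identify the base of the Del Pezzo fibration with $C$ compatibly with $\tau\circ\pi$, transport the positive-dimensional group $\Autz(X)_C$ into $\Autz(X')_C$, and then exclude degrees $\le 5$, $6$, $7$, $8$ using the same four citations. The only difference is in the base identification: the paper cites \autoref{lem:irrat generic fibre square}, while you argue it directly from the rational connectedness of the general fibres of $\pi'$ together with the connected fibres of $\tau\circ\pi$, which is arguably the more apt justification.
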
 

\begin{proof}
Let $\varphi\colon X\rat Y$ be a birational map to a Mori Del Pezzo fibration $\pi'\colon Y\to B$.  
By \autoref{lem:irrat generic fibre square}, there is an isomorphism $\alpha\colon C\to B$ such that $\pi'\circ\varphi=\alpha\circ\tau\circ\pi$. 
We can assume that $\alpha$ is the identity map. 
Then $\varphi\Autz(X)_C\varphi^{-1}\subseteq\Autz(Y)_C$ and therefore $\dim\Autz(Y)_C>0$. 
By \autoref{rmk: MdP}\autoref{MdP:2}, there is an injective group homomorphism $\Autz(Y)_C\hookrightarrow\Autz(Y_{\k(C)})$. \autoref{lem:auto of dp surface}\autoref{auto dp surface:1} implies that $d:=K_{Y_{\k(C)}}^2\geq6$. 
\autoref{prop:Mdf 6 finite} implies that $d\neq6$. 
\autoref{rmk: MdP}\eqref{MdP:1} implies that $d\neq7$. 
\autoref{lem:dP8 Aut on C} implies that $d\neq8$.
\end{proof}

\subsection{When the generic fibre is irrational}

\begin{proposition}[{\cite[Proposition 3.2.3]{BFT22}}]\label{prop:generic fibre not P1}
Let $\pi \colon X \to S$ be a morphism of algebraic varieties whose generic fibre is a smooth conic in $\mathbb{P}^2_{k(S)}$, not isomorphic to $\mathbb{P}^1_{k(S)}$. 
If $G$ is an algebraic subgroup of $\mathrm{Aut}^{\circ}(X)_S$, then $G$ is a finite group isomorphic to $(\mathbb{Z}/2\mathbb{Z})^r$ for some $r \in \{0,1,2\}$.
\end{proposition}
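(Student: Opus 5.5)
We work over the generic point of $S$. Set $K:=\k(S)$ and $Q:=X_K$, the generic fibre, which is a smooth conic in $\p^2_K$ with $Q\not\simeq\p^1_K$; equivalently $Q(K)=\varnothing$. As in \autoref{rmk: MdP}\autoref{MdP:2}, a group acting on $X$ by automorphisms over $S$ acts faithfully on the generic fibre. This gives an injective group homomorphism $G\hookrightarrow\Aut(Q)$, and I would use it to move the problem entirely into $\Aut(Q)$.

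\emph{Step 1: $\Aut(Q)$ is a twisted form of $\PGL_2$.} Since $Q\subset\p^2_K$ is anticanonically embedded, $\Aut(Q)\subset\PGL_3(K)$ is the stabiliser of the quadratic form. It is the group $\mathrm{SO}(q)$ of the ternary form $q$ defining $Q$, which equals $\mathrm{PGL}_1(A)=A^*/K^*$ for the quaternion algebra $A$ associated to $Q$. Because $Q(K)=\varnothing$, the algebra $A$ is a division algebra.

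\emph{Step 2: $G$ is finite.} $G$ is a connected-or-not algebraic group over $\k$, and its neutral component $G^\circ$ embeds into $A^*/K^*$. I would first rule out a $\G_a$. If $\G_a\subset G$, then $\G_a(\k)$ gives nontrivial unipotent elements of $A^*/K^*$. But a division quaternion algebra contains no nontrivial unipotents: if $u\neq 1$ and $(u-1)^2=0$, then $u-1$ is a nonzero nilpotent, which is impossible in a division algebra, and one has to handle this carefully up to scalars. Next I would rule out a $\G_m$. If $\G_m\subset G$, its $\k$-points give infinitely many commuting elements. Lifting two of them to $A^*$, they generate a commutative subfield $L\subset A$ of degree $2$ over $K$. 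A torus acting on $Q$ over $K$ with $\G_m(\k)$ acting through $L^*/K^*$ would then have fixed points on $Q_{\bar K}$ permuted compatibly, and the fixed locus of $L^*/K^*$ is a degree-$2$ point. The cleaner route is to invoke the argument of \cite[Proposition 3.2.3]{BFT22}: the action of $G^\circ$ extends to an algebraic action on $Q$, an algebraic $\k$-subgroup acting on $Q$ gives a $K$-subgroup of the anisotropic group $\mathrm{SO}(q)$, and anisotropic groups contain no split tori or unipotents. Hence $G^\circ$ is trivial and $G$ is finite.

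\emph{Step 3: the finite subgroups of $A^*/K^*$ containing $\k$-structure.} Let $g\in G$ have order $n$. Lifting $g$ to $a\in A^*$, we get $a^n=c\in K^*$. Since $\k\subset K$ is algebraically closed and $g$ is defined over $\k$-coefficients, I would argue that $g$ fixes a point of $Q_{\bar K}$; its two fixed points are either both $K$-rational, which is excluded, or are swapped by Galois. Then the eigenvalue ratio $\zeta$ of $a$ is a root of unity in $\k$, and Galois conjugation, which exchanges the two eigenvalues, sends $\zeta\mapsto\zeta^{-1}$. Because $\zeta\in\k\subset K$ is Galois-fixed, this forces $\zeta=\zeta^{-1}$, i.e.\ $g^2=1$. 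So $G$ has exponent $2$, and a finite subgroup of $\PGL_2(\bar K)$ of exponent $2$ is $(\Z/2)^r$ with $r\le2$.

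The main obstacle is making Step 2 rigorous, namely transferring a $\k$-algebraic group action into a $K$-subgroup of the anisotropic form. I would follow \cite{BFT22} exactly for this.
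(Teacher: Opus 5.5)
The paper does not prove this statement; it quotes it verbatim from \cite[Proposition 3.2.3]{BFT22}, so there is no in-paper argument to compare with. Your overall architecture is sound: embed $G$ in $\Aut(Q)\simeq A^*/K^*$ with $A$ a division quaternion algebra, kill $G^\circ$, show that nontrivial torsion elements are involutions via eigenvalues and Galois, and finish with the fact that elementary abelian $2$-subgroups of $\PGL_2(\bar K)$ have rank at most $2$. Your final sketch in Step 2 (anisotropy of $\mathrm{SO}(q)$ excludes split tori and unipotent subgroups) is also right in substance. As written, however, Step 2 has three problems. Deferring the key step to \cite{BFT22} amounts to citing the statement itself. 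The first two attempts are left incomplete. And the anisotropy argument does not cover the case where $G^\circ$ is an abelian variety.

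All of this is easy to repair, and Borel--Tits can be avoided.

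\begin{itemize}
\item \emph{Do Step 3 first.} It uses only the abstract embedding and $\mu_n(\bar K)=\mu_n(\k)\subset K$, so it shows that every torsion element of $\Aut(Q)$ has order at most $2$.
\item \emph{Reduce to $\G_a$.} A nontrivial connected $G^\circ$ contains $\G_m$ or $\G_a$, or is an abelian variety. Both $\G_m$ and abelian varieties contain elements of order $3$, which the previous point excludes. So only $\G_a$ remains.
\item \emph{The transfer is plain base change.} Since $G$ acts over $S$, restricting the action morphism to the generic fibre gives a $K$-action $G_K\times_K Q\to Q$. This yields a homomorphism of $K$-groups $\G_{a,K}\to\mathrm{SO}(q)$ whose image consists of unipotent elements, and these are nontrivial by faithfulness.
\item \emph{Exclude nontrivial unipotents.} A nontrivial unipotent element has a unique fixed point on $Q_{\bar K}$. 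That point is Galois-stable, hence lies in $Q(K)=\varnothing$, a contradiction.
\end{itemize}

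In Step 3 itself, drop ``$g$ is defined over $\k$-coefficients''. Elements of $G$ act by $K$-automorphisms, and this is not what the argument uses. Instead, say why Galois swaps the eigenvalues. Since $g\neq1$, the lift $a$ is not in $K$, so $K[a]\subset A$ is a quadratic field. Hence the reduced characteristic polynomial of $a$ is irreducible and separable, and its roots $\lambda_1\neq\lambda_2$ are exchanged by some $\sigma\in\Gal(\bar K/K)$. The ratio $\zeta=\lambda_1/\lambda_2$ lies in $\mu_n\subset\k\subset K$, so it is $\sigma$-fixed, which forces $\zeta=\zeta^{-1}$. Since $\zeta\neq1$, this gives $\zeta=-1$. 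Then $a^2$ is scalar over $\bar K$, hence central in $A$, and $g^2=1$.
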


\begin{proof}[Proof of \autoref{thm:Mfs K(S)-pos generic curve irrational}]
By \autoref{lem:commutative diag surface-curve}, $X$ is not birational to a Mori Del Pezzo fibration. We now show that $\Autz(X)$ is trivial or an elliptic curve, and in the latter case that it is a maximal connected algebraic subgroup of $\Bir(X)$. 
By \autoref{lem:blanchard} (Blanchard's lemma), the morphism $\pi$ induces an exact sequence
\[
1\to\Autz(X)_S\to\Autz(X)\stackrel{\pi_*}\to\Autz(S)
\]
Since $X_{\k(S)}$ is irrational, \autoref{prop:generic fibre not P1} implies that, $\Autz(X)_S$ is isomorphic to $(\Z/2)^r$ for $r\leq2$.
By Chevalley's theorem, there is a short exact sequences
\[
1 \to L \to \Autz(S) \to A \to 1,
\]
where $L$ is an affine algebraic group and $A$ is an abelian variety. By a result of Rosenlicht (see \cite[Proposition 2.5.1]{BFT22}), and since $\kappa(S)\geq0$, the linear group $L$ is trivial. Therefore, $\Autz(S)$ is an abelian variety and hence $\Autz(X)$ is an abelian variety.
Moreover, for every point $x\in S$, only finitely many elements of $\Autz(S)$ fix $x$ \cite[Proposition 2.2.1]{BSU13}. Therefore, $\dim\Autz(X)=\dim\Autz(S)$ is the dimension of any orbit of $\Autz(S)$ on $S$, which is of dimension at most two. However, since $X_{\k(S)}$ is irrational, $\pi$ has singular fibres along a curve in $S$ by \autoref{prop:reduction to standard conic bundles}. It is preserved by $\pi_*\Autz(X)$. 
Recall that $\Autz(S)$ is an abelian surface if and only if $S$ is an abelian surface, see for instance \cite[Proposition 3.24]{Fong20}.
So, if $\pi_*\Autz(X)$ has dimension two, then $\pi_*\Autz(X)=\Autz(S)\simeq S$ acts on $S$ without invariant curves, which is impossible. So, $\Autz(X)$ is trivial or an elliptic curve.

Let $\varphi\colon X\rat Y$ be an $\Autz(X)$-equivariant birational map to another Mori fibre space $Y\to B$. 
By \autoref{lem:commutative diag surface-curve}, $B$ is a surface and there is an $\pi_*\Autz(X)$-equivariant birational map $\theta\colon S\rat B$ such that $\theta\circ\pi=\pi'\circ\varphi$. 
In particular, the generic fibre $Y_{\k(B)}$ is a smooth irrational curve of genus zero. Hence $\Autz(X)$ and $\Autz(Y)$ are elliptic curves and hence $\varphi\Autz(X)\varphi^{-1}=\Autz(Y)$.
\end{proof}

\begin{lemma}\label{lem:generic fibre DP surface}
Let $\pi\colon X\to S$ be a standard conic bundle whose generic fibre is irrational. 
Suppose that there a birational morphism $\eta\colon S\to S'$ to a ruled surface $\theta\colon S'\to C$ above a curve $C$ of genus $g(C)\geq1$. 
Then the following holds:
\begin{enumerate}
\item $K_{X_{\k(C)}}^2\neq 7,8$;
\item  if $K_{X_{\k(C)}}^2\in\{3,5,6\}$, then $X_{\k(C)}$ is Del Pezzo;
\end{enumerate}
\end{lemma}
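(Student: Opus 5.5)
By \autoref{lem:generic fibre singular conic fibration}\autoref{gfscf:1} and \autoref{gfscf:3}, the generic fibre $X_{\k(C)}$ is a smooth surface over $\kk:=\k(C)$. The morphism $\hat\pi\colon X_{\kk}\to S_{\kk}$ is a standard conic bundle over the rational curve $S_{\kk}$, with $\rho(X_{\kk}/S_{\kk})=1$ and non-empty discriminant $(I\Delta)_{\kk}$. The discriminant is non-empty because the generic fibre of $\pi$ is irrational, so $\Delta\neq\emptyset$ by \autoref{lem:empty discriminant}. So $X_{\kk}$ is a minimal conic bundle over a conic with at least one degenerate fibre. The degenerate fibres over $\bar\kk$ come from the points of $(I\Delta)_{\kk}$. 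By \autoref{lem:generic fibre singular conic fibration}\autoref{gfscf:0}, the number $r$ of geometric singular fibres is at least $2$. Each singular fibre lowers $K^2$ by one, so $K_{X_{\kk}}^2=8-r\leq 6$. This immediately gives $K_{X_{\kk}}^2\neq 7,8$ and proves (1). The point to check is that $r$ counts geometric degenerate fibres of $X_{\bar\kk}\to\p^1_{\bar\kk}$. This holds because $X_{\bar\kk}$ is the blow-up of a Hirzebruch surface in $r$ points, one per singular fibre, and hence $K^2=8-r$.

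For (2) I would use Iskovskikh's classification of minimal rational conic bundles over a perfect field with $\rho(X_{\kk}/S_{\kk})=1$; this is the same reference \cite[Theorem 2.6]{iskovskikh_1996}, or Iskovskikh's theorem on minimal conic bundles. The classical facts are as follows. A minimal standard conic bundle surface $X\to B$ with $r$ degenerate fibres satisfies $r\neq 1$, and if $r\geq 4$ then $-K_X$ is not ample when $\rho(X)=2$. For $K^2\in\{5,6\}$, i.e.\ $r\in\{2,3\}$, I would show directly that $-K_{X_{\bar\kk}}$ is ample. Over $\bar\kk$, $X_{\bar\kk}$ is a blow-up of $\F_n$ in $r\le 3$ points on distinct fibres. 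Minimality of the conic bundle over $\kk$, meaning that the Galois group swaps the two components of each singular fibre, forces the points to be in general position. This rules out curves of self-intersection $\leq -2$: a negative section of self-intersection $\leq -2$ would be Galois-invariant and could be contracted, or it would produce a rational section, contradicting the irrationality of the generic fibre of $\pi$ via \autoref{lem:empty discriminant}. Then Nakai--Moishezon on the finitely many negative curves gives ampleness.

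The case $K^2=3$, i.e.\ $r=5$, is the main obstacle, since conic bundles with $r=5$ need not be Del Pezzo in general. Here I would invoke Iskovskikh's result that a minimal conic bundle surface with $K^2=3$ (and likewise $K^2=5$) has a second conic-bundle structure or is Del Pezzo. More precisely, \cite[Theorem 2.6]{iskovskikh_1996} lists that for $K^2\in\{3,5,6\}$ such minimal conic bundles are Del Pezzo surfaces of Picard rank $2$. I would check this through the intersection-theoretic criterion: a negative section $s$ with $s^2\le -2$ on $X_{\bar\kk}$ must be Galois-stable. This is because the Galois orbit of $s$ consists of disjoint sections with equal negative self-intersection, while two distinct such sections would satisfy $(s+s')^2$ incompatible with $K^2=8-r$ and adjunction. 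A Galois-stable section would give a $\kk$-rational section of $\hat\pi$, and hence a rational section of $\pi$, contradicting \autoref{lem:empty discriminant}. Hence every negative curve is a $(-1)$-curve and $-K_{X_{\kk}}$ is ample.
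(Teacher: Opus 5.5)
Your part (1) is correct, but it follows a different route from the paper. The paper excludes $K^2=7$ and $K^2=8$ by quoting \cite[Theorem 3]{Isko_1979}: a unique degenerate fibre over a rational point, respectively no degenerate fibre. You instead use $K_{X_{\kk}}^2=8-r$, with $r$ the number of geometric degenerate fibres, and get $r\ge 2$ from \autoref{lem:generic fibre singular conic fibration}\autoref{gfscf:0}. That is cleaner. The underlying input is the same: $r\ge2$ in that lemma comes from $\rho(X_{\kk}/S_{\kk})=1$.

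The paper's proof of (2) is only a citation of \cite[Theorem 5]{Isko_1979}. If you just cite that theorem you are done; \cite[Theorem 2.6]{iskovskikh_1996} is the classification of elementary links, not this statement. The direct verification you sketch, however, has genuine gaps.

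(i) Your claim that two distinct conjugate sections of self-intersection $\le -2$ are numerically incompatible is false for $r=5$. On $\F_2$, take the negative section $s$ and a disjoint section $\bar s'\sim s+2f$. Blow up four points of $\bar s'$ and one point off $s\cup\bar s'$, all in distinct fibres. This gives two disjoint $(-2)$-sections with $K\cdot(s+s')=0$ and $(s+s')^2=-4$, which is consistent with $K^2=3$ and the Hodge index theorem.

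In general, if $s\ne s'$ are sections with $s^2=s'^2=-m$, they meet different components in exactly $2(s\cdot s')+2m\ge 2m$ degenerate fibres. This forces uniqueness of $s$, hence Galois invariance, hence a rational section of $\pi$, only when $2m>r$. So it settles $r\le 3$, and $r=5$ with $m\ge3$, but not $r=5$, $m=2$. In that case you need relative minimality:
\begin{itemize}
\item A Galois orbit of $(-2)$-sections has at most two members, since two $4$-element subsets of a $5$-element set differ in at most two places.
\item If the orbit has two members, the unique degenerate fibre in which both meet the same component is Galois-stable, together with that component. This contradicts $\rho(X_{\kk}/S_{\kk})=1$.
\end{itemize}

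(ii) Excluding sections with $s^2\le-2$ does not by itself give ampleness. Nakai--Moishezon needs $-K\cdot D>0$ for every negative curve, including multisections, and appealing to ``the finitely many negative curves'' does not supply this. Here is one way to close it.
\begin{itemize}
\item For $r\ge1$ some section is always negative, so once all sections satisfy $s^2\ge-1$, a section of minimal self-intersection has $s^2=-1$.
\item Contract the components not meeting $s$, then $s$ itself. This exhibits $X_{\bar\kk}$ as the blow-up of $\p^2$ at $q_0,q_1,\dots,q_r$, with $q_1,\dots,q_r$ on distinct lines through $q_0$.
\item For $r\le5$ the only failures of general position are three collinear $q_i$ with $i\ge1$, or all six points on a conic. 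Both give sections of self-intersection $\le-2$, since a conic through $q_0$ meets each line through $q_0$ once more.
\end{itemize}

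(iii) Drop the ``classical fact'' that $-K_X$ is not ample when $r\ge4$ and $\rho(X)=2$. It is false: cubic surfaces with $\rho=2$ and a conic bundle structure are exactly your case $K^2=3$, $r=5$, so it contradicts the very statement you are proving.
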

\begin{proof}
By \autoref{lem:generic fibre singular conic fibration}, $\pi$ induces a standard conic bundle $\hat\pi\colon X_{\k(C)}\to S_{\k(C)}$ with $\rho(X_{\k(C)})=2$ and non-zero discriminant. 
If $K_{X_{\k(C)}}^2=7$, then $\hat\pi$ has a unique singular fibre above a rational point \cite[Theorem 3(iii)]{Isko_1979}. Therefore, $\rho(X_{\k(C)}/S_{\k(C)})\geq3$, a contradiction. 
If $K_{X_{\k(C)}}^2=8$, then $\hat\pi$ has no singular fibres \cite[Theorem 3(ii)\&(iii)]{Isko_1979}, against the fact that $\hat\pi$ has non-zero discriminant. 
The rest is \cite[Theorem 5]{Isko_1979}. 
\end{proof}

Let $\pi\colon X\to S$ be a standard conic bundle above a surface $S$ with a birational morphism $\eta\colon S\to S'$ to a ruled surface $\tau\colon S'\to C$ above an irrational curve. Let $\Delta$ be the discriminant of $\pi$ and $\Autz(S,\Delta)$ the group of automorphims in $\Autz(S)$ preserving $\Delta$.
Blanchard's lemma (\autoref{prop:generic fibre not P1}) and the morphisms $\pi$ and $\tau\circ\eta$ induce the commutative diagram of exact sequences
\begin{equation}\tag{sesrs}\label{eq:sesrs}
\begin{tikzcd}
&& 1\ar[d] && 1\ar[d]\\
&& \Autz(X)_C\ar[d]\ar[rr] && \Autz(S,\Delta)_C\ar[d]\\
1\ar[r] & (\Z/2)^r\simeq \Autz(X)_S\ar[r]  & \Autz(X)\ar[rr]\ar[dr] && \Autz(S,\Delta)\ar[ld]\\
&&&\Autz(C)&
\end{tikzcd}
\end{equation}
for some $i=0,1,2$.

\begin{lemma}\label{lem:to ruled surface}
Let $\pi\colon X\to S$ be a standard conic bundle whose generic fibre is irrational. Suppose there is a birational morphism $\eta\colon S\to S'$ to a ruled surface $\tau\colon S'\to C$ above a curve $C$ of genus $g(C)\geq1$. 
Suppose that $\Autz(X)$ acts trivially on $C$. 
Then $\Autz(X)$ is not a maximal connected algebraic subgroup of $\Bir(X)$. 
\end{lemma}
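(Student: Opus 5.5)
The plan is to exploit the diagram \eqref{eq:sesrs} and show that, when $\Autz(X)$ acts trivially on $C$, the whole group is essentially a torus of small dimension. We then produce an equivariant birational map to a Mori fibre space whose connected automorphism group strictly contains the conjugate of $\Autz(X)$.

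\textbf{Step 1: structure of the group.} Since $\Autz(X)$ acts trivially on $C$, we have $\Autz(X)=\Autz(X)_C$. The generic fibre of $\pi$ is irrational, so by \autoref{lem:empty discriminant} the discriminant $\Delta$ is non-empty. Then \autoref{lem:generic fibre singular conic fibration}\autoref{gfscf:2} shows that $\Autz(S,\Delta)_C$ is a torus of dimension at most $2$. The kernel $\Autz(X)_S\simeq(\Z/2)^r$ is finite by \autoref{prop:generic fibre not P1}. Hence $\Autz(X)$ is a finite extension of a torus, and being connected it is itself a torus $T$ of dimension at most $2$. It injects into $\Aut(X_{\k(C)})$ by the analogue of \autoref{rmk: MdP}\autoref{MdP:2}.

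\textbf{Step 2: pass to a Mori Del Pezzo fibration.} By \autoref{lem:generic fibre singular conic fibration}\autoref{gfscf:3}, $X_{\k(C)}$ is a Del Pezzo or Hirzebruch surface carrying a $T$-action, and $\rho(X_{\k(C)})=2$. If $T$ is trivial we are done: $\Autz(X)$ is then trivial and thus not maximal, since $X$ is birational to a Mori Del Pezzo fibration with non-trivial $\Autz$, or at least $\{1\}$ sits inside any non-trivial connected group. Otherwise, we run a $T$-equivariant MMP on the $\k(C)$-surface $X_{\k(C)}$ over $\k(C)$. Concretely, we contract the conic bundle structure from the other extremal ray, or take a $T$-equivariant link of type III/II, to reach either a Del Pezzo surface of Picard rank one or another conic bundle.

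We spread this out to a $T$-equivariant birational map $X\rat Y$ over $C$. Here $Y\to C$ is either a Mori Del Pezzo fibration or a conic bundle over a surface, and it still has $T\subset\Autz(Y)_C$ with $T$ of positive dimension. In the Mori Del Pezzo case, \autoref{lem:auto of dp surface}, \autoref{prop:Mdf 6 finite} and \autoref{rmk: MdP}\autoref{MdP:1} force degree $8$ or $9$. We then conclude as follows:
\begin{itemize}
\item In degree $8$, \autoref{lem:Mfs8 aut} shows $\Autz(Y)$ is either $\PGL_2(\k)\supsetneq T$, or embeds into the triangular group, in which case it fixes a section. In the latter case \autoref{lem:Mfs8 restriction} gives an equivariant map to degree $9$.
\item In degree $9$, \autoref{prop:Mdp to bundle} reduces to a $\p^2$-bundle on which $T$ acts trivially on $C$. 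Then \autoref{prop:trivial_action_on_C} (for $g=1$), \autoref{lem:semistable_high_genus} and \autoref{lem:unstable C high genus} (for $g\geq 2$) show the automorphism group is not maximal, or is $\PGL_3$, which strictly contains $T$.
\end{itemize}
In every case $T$ is conjugate to a proper subgroup of a larger connected algebraic group, so it is not maximal.

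\textbf{Main obstacle.} The delicate point is Step 2: guaranteeing that the $T$-equivariant MMP on $X_{\k(C)}$ actually leaves the irrational-generic-fibre conic bundle world. If $T$ preserves the conic bundle structure and every link stays a conic bundle with irrational generic fibre, Step 1 still applies there. In that situation I would instead use that $T$ is a torus acting on the rational surface $X_{\k(C)}$ with $\rho=2$. A non-trivial torus on a conic bundle with singular fibres forces the singular fibres' components to be $T$-invariant, and one can contract a $T$-invariant family of $(-1)$-components, namely a Galois orbit. This drops $K^2$ towards a Del Pezzo surface of degree $\geq 8$, using \autoref{lem:generic fibre DP surface} to exclude degrees $7$ and $8$ along the way. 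Making this contraction globally $T$-equivariant on $X$ over $C$, and verifying it ends at a Mori fibre space, is where I expect the real work to be.
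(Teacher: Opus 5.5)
Your Step 1 is fine, but Step 2 has a genuine gap, which you partly flag yourself. You never determine what $X_{\k(C)}$ is, so the ``$T$-equivariant MMP'' could a priori keep producing conic bundles with irrational generic fibre. The remedy you sketch also does not work. In a standard conic bundle $X_{\k(C)}\to S_{\k(C)}$ with $\rho(X_{\k(C)}/S_{\k(C)})=1$, the two components of each singular fibre are Galois-conjugate and meet. So a Galois orbit of such components cannot be contracted to a smooth surface, and contracting $(-1)$-curves raises $K^2$ rather than lowering it.

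The missing idea is to push Step 1 one step further, by counting fixed points on the fibres of $\tau\circ\eta$:
\begin{enumerate}
\item Suppose $T\neq1$. Its image in $\Aut(S)$ acts non-trivially on a general fibre $f\simeq\p^1$ of $\tau\circ\eta$; otherwise $T\subseteq\Autz(X)_S$, which is finite.
\item Being connected, $T$ fixes each of the $a\geq2$ points of $I\Delta\cap f$ from \autoref{lem:generic fibre singular conic fibration}\autoref{gfscf:0}.
\item A non-trivial torus in $\PGL_2(\k)$ fixes exactly two points of $\p^1$. Hence $a\geq3$ forces $T=1$, and otherwise $a=2$.
\item If $a=2$, the discriminant of $X_{\k(C)}\to S_{\k(C)}$ has geometric degree $2$. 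So $K_{X_{\k(C)}}^2=6$, and $X_{\k(C)}$ is a Del Pezzo surface by \autoref{lem:generic fibre DP surface}.
\item Since $\rho(X_{\k(C)})=2$ and one extremal ray gives the conic bundle, the other ray is a birational contraction onto a Del Pezzo surface of degree $8$ and Picard rank one.
\item This contraction is intrinsic, hence $\Aut(X_{\k(C)})$-equivariant. It therefore spreads out to an $\Autz(X)$-equivariant birational map $X\rat Y$ over $C$ to a Mori Del Pezzo fibration of degree $8$.
\end{enumerate}
No equivariant MMP or case analysis of links is needed; this is exactly the paper's route.

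From $Y$ on, your degree-$8$/degree-$9$ analysis is the right kind of argument, and it is more explicit than the paper's one-line conclusion. For $g(C)=1$ it goes through. After \autoref{lem:Mfs8 restriction} and \autoref{prop:Mdp to bundle}, either the $\p^2$-bundle's connected automorphism group acts non-trivially on $C$, so it strictly contains the conjugate of $T$, or \autoref{prop:trivial_action_on_C} applies.

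For $g(C)\geq2$, however, you misquote \autoref{lem:semistable_high_genus}. It does not say ``not maximal, or $C\times\p^2$''; it leaves open the case $\p(\El\oplus\Ml)$ with $\El$ stable. There $\Autz\simeq\G_m$, since there are no non-zero maps between the stable summands $\El$ and $\Ml$ of equal slope. A one-dimensional $T$ could then be conjugate onto this whole group, and neither of your conclusions follows. That case needs its own argument. The paper's closing appeal to \autoref{lem:dP8 Aut on C} does not visibly supply one either.
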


\begin{proof}
We denote by $\Delta$ the discriminant of $\pi$.
Consider the commutative diagram of short exact sequences \autoref{eq:sesrs}. 
By \autoref{lem:generic fibre singular conic fibration}\autoref{gfscf:2}, $\Autz(S,\Delta)=\Aut(S,\Delta)_C$ is a torus. 
By \autoref{lem:generic fibre singular conic fibration}\autoref{gfscf:0}, $\Aut(S,\Delta)$ preserves $a\geq2$ points on a general fibre of $\tau\circ\eta$. 
If $a\geq3$, then $\Autz(S,\Delta)$ is trivial and hence $\Autz(X)$ is trivial by \autoref{eq:sesrs}. Then it is not maximal. Suppose that $a=2$. 
Then $X_{\k(C)}$ is a Del Pezzo surface of degree $6$ by \autoref{lem:generic fibre DP surface}. 
The morphism $\hat\pi\colon X_{\k(C)}\to S_{\k(C)}$ induced by $\pi$ is a standard conic bundle by \autoref{lem:generic fibre singular conic fibration}, so $\rho(X_{\k(C)})=2$. 
So, there is a birational contraction $\varphi\colon X_{\k(C)}\to\mathcal Y$ to a Del Pezzo surface of degree $8$. 
It induces is a Mori Del Pezzo fibration $\pi'\colon Y\to C$ with $Y_{\k(C)}\simeq\mathcal Y$ and 
an $\Autz(X)_C$-equivariant birational map $\varphi'\colon X\rat Y$ such that $\pi'\circ\varphi'=\pi$. 
Recall from \autoref{rmk: MdP}\autoref{MdP:2} that there is an injective group homomorphism $\Autz(X)=\Autz(X)_C\hookrightarrow\Aut(X_{\k(C)})$. The birational contraction $\varphi$ is $\Aut(X_{\k(C)})$-equivariant, (because it contracts the unique extremal ray corresponding to a birational map). Therefore, $\varphi'$ is $\Autz(X)_C$-equivariant. 
The group $\Autz(X)=\Autz(X)_C$ preserves the sections corresponding to the base-points of $\varphi^{-1}$. 
\autoref{lem:dP8 Aut on C} implies that $\Autz(X)\subsetneq\Autz(Y)$.
\end{proof}

\begin{remark}\label{rmk:igf nontrivial action}
\autoref{lem:to ruled surface} applies in particular when one of the following holds:
\begin{enumerate}
\item $g(C)\geq2$;
\item $g(C)=1$ and $\eta$ is not an isomorphism;
\item $g(C)=1$ and the discriminant $\Delta$ of $\pi$ has a rational irreducible component (\autoref{lem:components of Delta irrational 0});
\item $g(C)=1$ and $\Delta$ has an irreducible component $D$ such that $(\tau\circ\eta)|_D\colon D\to C$ is ramified;
\item $g(C)=1$ and $\Delta$ has two irreducible components that intersect.
\end{enumerate}
\end{remark}

\begin{lemma}\label{lem:scf 2 options}
Let $\pi\colon X\to S$ be a standard conic bundle whose generic fibre is irrational. Suppose that $S$ is a ruled surface $\tau\colon S\to C$ above a curve of genus $g(C)=1$. 
The following hold:
    \begin{enumerate}
    \item\label{scf 2 options:2} If $X_{\k(C)}$ is Del Pezzo of degree $K_{X_{\k(C)}}^2=3,5,6$, then there is a birational map $X\rat Y$ to a Mori Del Pezzo fibre space $\pi'\colon Y\to C$. 
    \item\label{scf 2 options:1} Else, every birational map $X\rat Y$ to a Mori fibre space $\pi'\colon Y\to B$ satisfies that $B$ is a surface.
    \end{enumerate}
\end{lemma}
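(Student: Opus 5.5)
Both parts rest on the same two facts. First, $X_{\k(C)}$ is a rational surface with $\rho(X_{\k(C)})=2$ carrying the standard conic bundle $\hat\pi\colon X_{\k(C)}\to S_{\k(C)}$ (\autoref{lem:generic fibre singular conic fibration}). Second, any birational map $X\rat Y$ to a Mori fibre space over a surface or a curve respects the fibration over $C$ (\autoref{lem:irrat generic fibre square}\autoref{square:1} and \autoref{lem:commutative diag surface-curve}). Consequently every such birational map induces a birational map of $\k(C)$-surfaces $X_{\k(C)}\rat Y_{\k(C)}$. Here $Y_{\k(C)}$ is either a rank-one Del Pezzo surface, when $Y\to C$ is a Mori Del Pezzo fibration, or a conic bundle over a curve, when $Y$ lies over a surface $B\to C$. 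The statement therefore reduces to Iskovskikh's classification of rational $\k(C)$-surfaces. By \autoref{lem:generic fibre DP surface}, $d:=K_{X_{\k(C)}}^2\neq 7,8$, and for $d\in\{3,5,6\}$ the surface $X_{\k(C)}$ is Del Pezzo.

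For \autoref{scf 2 options:2}, take $d\in\{3,5,6\}$. Since $X_{\k(C)}$ is Del Pezzo with $\rho=2$, the two-ray game on it gives a second extremal contraction besides $\hat\pi$. By \cite[Theorem 2.6]{iskovskikh_1996}, or equivalently \cite[Theorem 5]{Isko_1979}, this contraction is birational onto a rank-one Del Pezzo surface $\mathcal Y$; the mechanism is the same as in the proof of \autoref{lem:to ruled surface}, where $d=6$ leads to degree $8$. Concretely, $d=3$ contracts a Galois orbit of $(-1)$-curves to reach degree $4$, $d=5$ reaches degree $9$ or $8$, and $d=6$ reaches degree $8$. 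The contracted curves are defined over $\k(C)$, so they spread out to a family of divisors in $X$ over a dense open subset of $C$. Take a model $Y\to C$ with generic fibre $\mathcal Y$ and, if needed, run a relative MMP over $C$. Since $\rho(\mathcal Y)=1$, this yields a Mori Del Pezzo fibration $\pi'\colon Y\to C$ together with a birational map $X\rat Y$ over $C$. No equivariance is required here.

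For \autoref{scf 2 options:1}, argue by contradiction. Suppose $\varphi\colon X\rat Y$ is a birational map to a Mori fibre space $\pi'\colon Y\to B$ with $\dim B\neq 2$. Then $B$ is not a point, by \autoref{lem:not Fano}, so $B$ is a curve and $\pi'$ is a Mori Del Pezzo fibration. By \autoref{lem:commutative diag surface-curve} applied to $\varphi^{-1}$, the map covers an isomorphism $B\simeq C$, so $\varphi$ induces a birational map $X_{\k(C)}\rat Y_{\k(C)}$ onto a Del Pezzo surface with $\rho=1$. Now use the birational rigidity of standard conic bundles over $\k(C)$ (\cite[Theorem 5]{Isko_1979}, \cite[Theorem 2.6]{iskovskikh_1996}): a relatively minimal conic bundle with $K^2\le 0$ has no birational map to a rank-one Del Pezzo surface, and for $K^2\in\{1,2,4\}$ the conic bundle is either not Del Pezzo or its second contraction is again a conic bundle. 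Combined with $d\neq 7,8$ from \autoref{lem:generic fibre DP surface}, the only possible exceptions are $d\in\{3,5,6\}$ with $X_{\k(C)}$ Del Pezzo, which is exactly the hypothesis of \autoref{scf 2 options:2}. This contradicts the case assumption, so $B$ is a surface.

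The main obstacle is the precise citation in the second part. One needs Iskovskikh's theorem that, over the non-closed field $\k(C)$, a standard conic bundle $X_{\k(C)}$ is birational to a rank-one Del Pezzo surface only when $d\in\{3,5,6\}$ and $X_{\k(C)}$ is Del Pezzo. This includes the case $d=4$, where the surface may be Del Pezzo but both extremal rays are conic bundles. I would settle it by checking the list of type I/II/III links from a conic bundle in \cite[Theorem 2.6]{iskovskikh_1996}.
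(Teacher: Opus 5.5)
Your proposal is correct and follows the paper's route: reduce to the generic fibre $X_{\k(C)}$ and apply Iskovskikh's classification. In part (1) you contract to a rank-one Del Pezzo surface and spread it out over $C$; in part (2) you exclude such a target once $d\notin\{3,5,6,7,8\}$. Your part (2) argument is in fact what justifies the paper's shorter appeal to \cite[Theorem 4]{Isko_1979}, which concerns birational morphisms rather than birational maps: from a conic bundle with $K^2\le 2$ or $K^2=4$ only links of type II or IV occur, and these preserve $K^2$. The one slip is that for $d=5$ the contraction reaches degree $9$ only, which does not affect the argument.
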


\begin{proof}
\autoref{scf 2 options:2} 
If $X_{\k(C)}$ is Del Pezzo of degree $3,5,6$, then there is a birational morphism $X_{\k(C)}\to\mathcal Y$ to a Del Pezzo surface of degree $K_{\mathcal Y}^2<K_{X_{\k(C)}}^2$ \cite[Theorem 4]{Isko_1979}. It induces a birational map $X\rat Y$ to a Mori Del Pezzo fibre space $Y\to C$ with $Y_{\k(C)}\simeq \mathcal Y$.

\autoref{scf 2 options:1} 
Suppose there is a birational map $\varphi\colon X\rat Y$ to a Mori Del Pezzo fibre space $\pi'\colon Y\to B$. By \autoref{lem:irrat generic fibre square}, there is an isomorphism $\alpha\colon C\to B$ such that $\pi'\circ\varphi=\alpha\circ\pi$. In particular, there is a birational map $\hat\varphi\colon X_{\k(C)}\rat Y_{\k(C)}$. \cite[Theorem 2.6]{Isko_1979} implies that $X_{\k(C)}$ is a Del Pezzo surface. Let $d=K_{X_{\k(C)}}^2$ be its degree. 
By assumption, we have $d\neq3,5,6$. By \autoref{lem:generic fibre DP surface} we have $d\neq7,8$. 
Then any birational morphism $X_{\k(C)}\to \mathcal Y$ to a Del Pezzo surface $\mathcal Y$ is an isomorphism \cite[Theorem 4]{Isko_1979}. This holds in particular for $\mathcal Y=Y_{\k(C)}$ and contradicts $\rho(Y_{\k(C)})=1$.
\end{proof}

\begin{lemma}\label{lem:igf 9}
Let $\pi\colon X\to S$ be a standard conic bundle whose generic fibre is irrational. Suppose that $S$ is a ruled surface $\tau\colon S\to C$ and that $\Autz(X)$ acts non-trivially on $C$. 
Let $\varphi\colon X\rat Y$ be an $\Autz(X)$-equivariant birational map to a Mori Del Pezzo fibration $\pi'\colon Y\to B$. 
Then $K_{Y_{\k(C)}}^2\in\{4,9\}$. 

Moreover, if $K_{Y_{\k(C)}}^2=9$, then $X_{\k(C)}$ is Del Pezzo of degree $5$ and $\Autz(X)$ is an elliptic curve and is not a maximal connected algebraic subgroup of $\Bir(X)$.
\end{lemma}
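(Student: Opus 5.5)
The plan is to first pin down the structure of $X_{\k(C)}$ and of $\Autz(X)$, and then follow $\varphi$ through its decomposition into equivariant Sarkisov links.

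\emph{Structure.} Since $\Autz(X)$ acts non-trivially on $C$, we have $g(C)=1$ by \autoref{rmk:igf nontrivial action}. The group $\Autz(X)_S$ is finite by \autoref{prop:generic fibre not P1}. By \autoref{lem:generic fibre singular conic fibration}\autoref{gfscf:2}, $\Autz(S,\Delta)_C$ is a torus of dimension $\leq 2$, and it preserves at least two points on a general fibre of $\tau$. By \autoref{lem:irrat generic fibre square}\autoref{square:1}, $B\simeq C$, so we may regard $\varphi$ as a map over $C$. It then induces a birational map $\hat\varphi\colon X_{\k(C)}\rat Y_{\k(C)}$, where $X_{\k(C)}\to S_{\k(C)}$ is a standard conic bundle with $\rho=2$ and non-empty discriminant, by \autoref{lem:generic fibre singular conic fibration}\autoref{gfscf:1}. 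By \cite[Theorem 2.6]{Isko_1979}, the source $X_{\k(C)}$ must then be a Del Pezzo surface. Combined with \autoref{lem:generic fibre DP surface}, the only degrees admitting a contraction to a Mori Del Pezzo surface are $3,5,6$ (see \autoref{lem:scf 2 options}). In those cases $Y_{\k(C)}$ is obtained, up to links of type II, from the contraction of a conjugate pair of $(-1)$-curves:
\begin{itemize}
\item degree $3\to 4$;
\item degree $5\to 6\to$ (type II) $\{6,8,9\}$, or degree $5\to 9$ directly via the quadric/plane links;
\item degree $6\to 8$ (or $\to 9$).
\end{itemize}

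\emph{Ruling out degrees.} I would then use the transitive action on $C$:
\begin{itemize}
\item $\deg Y_{\k(C)}=8$ is impossible, since by \autoref{lem:dP8 Aut on C} a Mori Del Pezzo fibration of degree $8$ has $\Autz$ acting trivially on $C$, while $\varphi\Autz(X)\varphi^{-1}\subseteq\Autz(Y)$ acts non-trivially.
\item $\deg=6$ would force $\Autz(Y)$ to be an elliptic curve with finite vertical part (\autoref{prop:Mdf 6 finite}). Then $Y\simeq \Autz(Y)\times^{H}F$ by \autoref{thm:almost product}. Composing the links through the generic fibre, the degree-$6$ surface would have to come from $X_{\k(C)}$ of degree $5$ by contracting one curve, but a degree $5$ surface with $\rho=2$ conic bundle contracts to degree $6$ only if the contracted curve is a rational point. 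Iskovskikh's list of links from degree $6$ then leads only to $8$ or $6$ again, and at the end we must land on some Mori Del Pezzo fibration, which forces $6$ to continue. I would argue that any chain starting from degree $6$ with transitive action stays at $6$ or goes to $8$ (\autoref{lem: dP6 type IIa}, \autoref{lem: dP6 type IIb}), so landing on $6$ is only excluded if the first contraction cannot produce degree $6$ equivariantly. This is the delicate bookkeeping step, and I would handle it by checking Iskovskikh's tables case by case.
\item Degrees $1,2,3,5$ for $Y$ are excluded by rigidity: \autoref{thm:K<3} and \autoref{mainprop:MDP5} show that equivariant maps from such $Y$ land only in the same degree, hence never in a conic bundle. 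Applying this to $\varphi^{-1}$ gives a contradiction, and this leaves $\{4,9\}$. The degree-$4$ case is consistent with \autoref{lem: DP4}\autoref{proDP4:2}.
\end{itemize}

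\emph{The case $K^2=9$.} Here $Y$ has a $\p^2_{\k(C)}$ generic fibre. By \cite[Theorem 2.6]{isko}, a link from a conic bundle to $\p^2$ arises only from a degree $5$ Del Pezzo surface contracting a degree-$4$ point, so $X_{\k(C)}$ is Del Pezzo of degree $5$. Its automorphism group is finite (\autoref{lem:auto of dp surface}), so by \autoref{lem:dp<5-autom} $\Autz(X)$ is an elliptic curve. For non-maximality I would use \autoref{prop:Mdp to bundle} to replace $Y$ by a $\p^2$-bundle $Z$ on which $\Autz(X)$ acts with finite vertical part of order $5$. Checking against the classification \autoref{thm:DP9 max}, no maximal case has vertical group containing $\Z/5$ with finite $\Autz(Z)_C$ other than $(\Z/3)^2$. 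Hence $\Autz(Z)$ is strictly larger than the conjugate of $\Autz(X)$, or is contained in a strictly larger connected group, and therefore $\Autz(X)$ is not maximal.

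I expect the main obstacle to be the degree-$6$ exclusion in the second step.
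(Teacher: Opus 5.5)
Your skeleton is the paper's: pass to the generic fibre over $\k(C)$, combine Iskovskikh's classification with equivariance, and in degree $9$ pass to a $\p^2$-bundle via \autoref{prop:Mdp to bundle} and compare with \autoref{thm:DP9 max}. However, the degree exclusions, which are the heart of the lemma, are not established.

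For $d=6$ you explicitly leave the step open, although it is short. $\Autz(Y)$ is an elliptic curve by \autoref{prop:Mdf 6 finite}, and it contains $\varphi\Autz(X)\varphi^{-1}$, which is positive-dimensional because it acts non-trivially on $C$. Hence the two groups are equal, so $\varphi^{-1}$ is $\Autz(Y)$-equivariant and ends on a Mori fibre space over a surface, contradicting \autoref{cor:MdP6}.

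For $d=5$ your argument is wrong. \autoref{mainprop:MDP5} does not say that equivariant maps from a degree-$5$ fibration stay in degree $5$. If $Y$ has an $\Autz(Y)$-invariant section, there is an equivariant link to a degree-$9$ fibration, and from degree $9$ there are type I links to conic bundles (\autoref{lem:links I}). So rigidity of $Y$ gives nothing here. What works is a comparison of vertical groups:
\begin{itemize}
\item By \autoref{pro:DP5} and the same equality of elliptic curves, $\varphi\Autz(X)_C\varphi^{-1}=\Autz(Y)_C\simeq\Z/5\Z$.
\item Once you know $K^2_{X_{\k(C)}}=5$, the surface $X_{\k(C)}$ is the blow-up of $\p^2_{\k(C)}$ in a point of degree $4$. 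Degree $6$ is excluded below, and degree $3$ only reaches degree $4$.
\item $\Autz(X)_C$ acts faithfully on $X_{\k(C)}$ and preserves both extremal contractions. It therefore embeds into the permutation group of the four geometric points, since four points in general position have trivial pointwise stabiliser in $\PGL_3$.
\item $\mathrm{Sym}_4$ contains no $\Z/5\Z$, which gives the contradiction.
\end{itemize}
Your list of contractions is also inaccurate. The degree-$3$ conic bundle contracts one curve (to degree $4$), the degree-$5$ one contracts a point of degree $4$ (to $\p^2$), and the degree-$6$ one contracts a point of degree $2$ (to a quadric). There is no contraction from degree $5$ to degree $6$.

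In the case $K^2_{Y_{\k(C)}}=9$, it is false that only the degree-$5$ conic bundle leads to $\p^2_{\k(C)}$. A degree-$6$ conic bundle contracts onto a quadric, which links to $\p^2_{\k(C)}$ by blowing up a rational point; this is a route you list yourself. You must exclude it, as the paper does. In an $\Autz(X)$-equivariant Sarkisov decomposition over $C$, the first link leaving the conic bundles would be a type III contraction onto a degree-$8$ fibration, which \autoref{lem:dP8 Aut on C} forbids. The case $K^2=8$ gives a Hirzebruch surface, which is incompatible with the non-empty discriminant.

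The vertical group is also not of order $5$; by the above it embeds into $\mathrm{Sym}_4$. With this correction your comparison with \autoref{thm:DP9 max} does go through. If $\Autz(Z)$ is maximal, either its vertical part is positive-dimensional, so the inclusion of the elliptic curve is strict by dimension, or $Z\simeq\Al_{3,1},\Al_{3,2}$. In the latter case equality would force $(\Z/3\Z)^2\subset\mathrm{Sym}_4$, which is impossible. As written, though, your argument rests on a false fact. The paper argues differently: all one-dimensional orbits in $\Al_{3,1},\Al_{3,2}$ are $9{:}1$ and these pairs are superrigid (\autoref{prop:A31 and A32}), whereas undoing the map requires an invariant $4{:}1$ curve.

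Your derivation that $\Autz(X)$ is an elliptic curve via \autoref{lem:dp<5-autom} is fine, and simpler than the paper's route through $\Autz(S,\Delta)_C$.
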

\begin{proof}
By \autoref{lem:generic fibre singular conic fibration}, $\pi$ induces a standard conic bundle $\hat\pi\colon X_{\k(C)}\to S_{\k(C)}$. By \autoref{lem:irrat generic fibre square}, $X_{\k(C)}$ is a Del Pezzo surface and it is not a Hirzebruch surface and the map $\varphi$ induces a birational map $X_{\k(C)}\rat Y_{\k(C)}$. 
Since $\rho(Y_{\k(C)})=1$ and $X_{\k(C)}\to S_{\k(C)}$ is a Mori fibre space by \autoref{lem:generic fibre singular conic fibration}, \autoref{lem:rho=1,K<4} (with $G=\{1\}$) implies that $K_{Y_{\k(C)}}^2\geq4$.  
Let $d=K_{Y_{\k(B)}}^2$ be the degree of $\pi'$. 
Since $\Autz(X)$ acts non-trivially on $C$ and since $\varphi$ is $\Autz(X)$-equivariant, we have $d\neq5,6,8$ by \autoref{lem:dP8 Aut on C}, \autoref{pro:DP5} and \autoref{prop:Mdf 6 finite}. So, $d=4$ or $d=9$.

Suppose that $d=9$. 
By \autoref{lem:generic fibre singular conic fibration}, $\pi$ induces a standard conic bundle $\hat\pi\colon X_{\k(C)}\to S_{\k(C)}$. The map $\varphi$ induces a birational map $\hat\varphi\colon X_{\k(C)}\rat Y_{\k(C)}$. 
Since $d=K_{Y_{\k(C)}}^2=9$, the surface $X_{\k(C)}$ is Del Pezzo of degree $K_{X_{\k(C)}}^2\in\{5,6,8\}$ \cite[Theorem 2.6]{iskovskikh_1996}. 
We want to show that $K_{X_{\k(C)}}^2=5$.
If $K_{X_{\k(C)}}^2=8$, then $X_{\k(C)}$ is a Hirzebruch surface, which contradicts $\hat\pi$ having non-zero discriminant by  \autoref{lem:generic fibre singular conic fibration}. 
If $K_{X_{\k(C)}}^2=6$, then there are birational maps $\hat\psi_1\colon X_{\k(C)}\rat\mathcal Z$ and $\hat\psi_2\colon \mathcal Z\rat Y_{\k(C)}$, where $\mathcal Z$ is a Del Pezzo surface $\mathcal Z$ of degree $8$, such that $\hat\varphi=\hat\psi_2\circ \hat\psi_1$ \cite[Theorem 2.6(iii)]{iskovskikh_1996}. 
Then $\hat\psi_1$ corresponds to a birational map $\psi_1\colon X\rat Z$ to a Mori Del Pezzo fibre space $Z\to C$ of degree $8$ with $Z_{\k(C)}\simeq\mathcal Z$. By \autoref{lem:dP8 Aut on C}, $\Autz(X)$ acts trivially on $C$, in contradiction with our hypothesis. 
We have shown that $K_{X_{\k(C)}}^2=5$.

Let $\varphi=\varphi_n\circ\cdots\circ\varphi_1$ be a decomposition of $\varphi$ into $\Autz(X)$-equivariant Sarkisov links $\varphi_i\colon X_i\rat X_{i+1}$ over $C$ \cite{Flo20}, and denote by $\pi_i\colon X_i\to B_i$ the involved Mori fibre spaces. 
Let $j$ be the smallest index so that $\varphi_j$ is not a link between Mori fibre spaces whose generic fibre is a curve. 
By \autoref{lem:irrat generic fibre square}, there is an isomorphism $\eta_j\colon S\to B_j$ such that $\pi_j\circ\varphi_j\circ\cdots\circ\varphi_1=\eta\circ\pi$. 
\[
\begin{tikzcd}
X\ar[d,"\pi"]\ar[rr,dashed,"\varphi_{j-1}\circ\cdots\circ\varphi_1"] && X_j\ar[d,"\pi_j"]\ar[r,dashed,"\varphi_j"] & X_{j+1}\ar[d,"\pi_{j+1}"]\\
S\ar[drr,"\tau"]\ar[rr,"\eta_j\ \simeq"] && B_j\ar[d] & B_{j+1}\ar[ld]\\
&&C&
\end{tikzcd}
\]
Then $\varphi_j$ is a link of type III. 
\autoref{lem:commutative diag surface-curve} applied to $\varphi_j\circ\cdots\circ\varphi_1$ implies that $K_{(X_{j+1})_{\k(C)}}^2=9$. 
By \autoref{lem:generic fibre singular conic fibration}, $\hat\pi_j\colon (X_j)_{\k(B_j)}\simeq X_{\k(B_j)}\to (B_j)_{\k(C)}$ is a standard conic bundle with non-zero discriminant.  
\cite[Theorem 2.6(i)]{iskovskikh_1996} implies that $K_{(X_j)_{\k(C)}}^2=5$. 
Let $D$ be the base-locus of $\varphi_j^{-1}$.
\autoref{lem:links I} implies that  and that $\pi_{j+1}(D)\subset S$ is a curve such that $\tau|_{\pi_{j+1}(D)}$ is a $4:1$-cover. 
Let $\Delta$ be the discriminant of $\pi$.
Thus $\Autz(S,\Delta)_C$ fixes four points on each fibre of $\tau\colon S\to C$. \autoref{lem:generic fibre singular conic fibration} implies that it is trivial. 
Then \autoref{eq:sesrs} and the assumption  that $\Autz(X)$ acts non-trivially on $C$ imply that $\Autz(S,\Delta)$ is an elliptic curve. Then \autoref{eq:sesrs} and \autoref{prop:generic fibre not P1} imply that $\Autz(X_j)$ is an elliptic curve. It follows that $(\varphi_j\circ\cdots\circ\varphi_1)\Autz(X)(\varphi_j\circ\cdots\circ\varphi_1)^{-1}=\Autz(X_j)$. In particular, it suffices to show that $\Autz(X_j)$ is not maximal.  

By \autoref{thm:Tsen}, $X_{\k(C)}$ has a rational point, so $(X_{j+1})_{\k(C)}\simeq\p^2_{\k(C)}$. 
By \autoref{prop:Mdp to bundle}, there is an $\Autz(X_{j+1})$-equivariant birational map $\psi\colon X_{j+1}\rat Z$ over $C$ to a $\p^2$-bundle $\pi''\colon Z\to C$. Then $(\psi\varphi_j)\Autz(X_j)(\psi\varphi_j)^{-1}\subseteq\Autz(Z)$. We now show that either this inclusion is strict or that $\Autz(Z)$ is not maximal. This will imply our claim.

If $\pi''$ is semi-homogeneous, then $\Autz(Z)$ is not maximal by \autoref{prop:trivial_action_on_C}. 
If $\pi''$ is semi-homogeneous, then by \autoref{lem:hom_proj_bundles}, we have  
\begin{enumerate}
\item $Z\simeq\Al_{3,i}$, $i=0,1,2$
\item $Z\simeq\p(\El_{2,0}\oplus\Ml)$ or $Z\simeq \p(\Ol_C\oplus\Ol_C\oplus\Ml)$, 
\item $Z\simeq \p(\Ol_C\oplus\Ll\oplus\Ml)$, where $\Ll,\Ml\in\Pic(C)^0$ not isomorphic
\end{enumerate}

(1) If $Z\simeq \Al_{3,0}$, then $\Autz(Z)$ is not maximal by \autoref{pro:E30}. 
If $Z\simeq\Al_{3,1}$ or $Z\simeq \Al_{3,2}$, then \autoref{prop:A31 and A32} implies that all $\Autz(Z)$-orbits of dimension one are curves in $Z$ such that $\pi''$ induces $9:1$-covers of $C$. Therefore, $(\psi\varphi_j)^{-1}$ is not $\Autz(Z)$-equivariant. 

(2) If $Z\simeq \p(\El_{2,0}\oplus\Ml)$ or $Z\simeq\p(\Ol_C\oplus\Ol_C\oplus\Ml)$ for $\Ml\in\Pic^0(Z)\setminus\{\Ol_C\}$, then $(\psi\varphi_j)^{-1}$ is not $\Autz(Z)$-equivariant by \autoref{lem:degree_0_L_trivial}. If $Z\simeq\p(\El_{2,0}\oplus\Ol_C)$, then $\Autz(Z)$ is not a maximal connected algebraic subgroup of $\Bir(Z)$ by \autoref{lem:E_20+O}.

(3) If $Z\simeq \p(\Ol_C\oplus\Ll\oplus\Ml)$ for $\Ll\nsimeq\Ml$, then $\mathbb{G}_m\subset\Autz(Z)$ by \autoref{lem:O+L+M}, so $(\psi\varphi_j)\Autz(X_j)(\psi\varphi_j)^{-1}\subsetneq\Autz(Z)$.
\end{proof}

\begin{proposition}\label{pro: igfrs}
Let $\pi\colon X\to S$ be a standard conic bundle whose generic fibre is irrational. Suppose that $S$ is a ruled surface $\tau\colon S\to C$ above a curve of genus $g(C)=1$. Suppose that $\Autz(X)$ acts non-trivially on $C$.
Then $\Autz(X)$ is an elliptic curve and the following hold:
\begin{enumerate}
\item\label{igfrs:1} If $K_{X_{\k(C)}}^2\leq4$, then $\Autz(X)$ is a maximal connected algebraic subgroup of $\Bir(X)$.
\item\label{igfrs:3} If $K_{X_{\k(C)}}^2\geq5$, then $\Autz(X)$ is not a maximal connected algebraic subgroup of $\Bir(X)$.  
\end{enumerate}
\end{proposition}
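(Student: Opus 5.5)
We first show that $\Autz(X)$ is an elliptic curve, working with the diagram \eqref{eq:sesrs}. Since $X_{\k(C)}$ is irrational, the discriminant $\Delta$ is non-empty by \autoref{lem:empty discriminant}. Then \autoref{lem:generic fibre singular conic fibration}\autoref{gfscf:2} gives that $\Autz(S,\Delta)_C$ is a torus of dimension at most two. It fixes at least two distinct points on a general fibre of $\tau$, namely the points of $I\Delta$.

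We argue that this torus is finite. Since $\Autz(X)$ acts transitively on $C$, every component of $\Delta$ is a multisection that is \'etale over $C$; otherwise we are in the cases of \autoref{rmk:igf nontrivial action}. If it fixes exactly two points per fibre, then, as in the proof of \autoref{lem:to ruled surface}, $X_{\k(C)}$ is a Del Pezzo surface of degree $6$. If at least three, $\Autz(S,\Delta)_C$ is finite, and then $\Autz(X)_C$ is finite, because $\Autz(X)_S\simeq(\Z/2)^r$ by \autoref{prop:generic fibre not P1}. In the degree-$6$ case, contracting $X_{\k(C)}$ to a degree-$8$ surface would produce an $\Autz(X)_C$-equivariant map to a Mori Del Pezzo fibration of degree $8$. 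By \autoref{lem:no_map_to_lower_degree.2}, or directly by \autoref{lem:dP8 Aut on C} applied to the connected part, this forces $\dim\Autz(X)_C=0$ once the action on $C$ is transitive. With $\Autz(X)_C$ finite, the argument of \autoref{lem:dp<5-autom} shows that $\Autz(X)$ is a finite extension of $\Autz(C)$, hence an elliptic curve.

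For \autoref{igfrs:1}, take an $\Autz(X)$-equivariant birational map $\varphi\colon X\rat Y$ to a Mori fibre space $Y\to B$. There are two cases.
\begin{itemize}
\item If $B$ is a surface, \autoref{lem:irrat generic fibre square}\autoref{square:1} gives an isomorphism $S\simeq B$ over $C$. The generic fibre of $Y\to B$ is then irrational, so the first paragraph applied to $Y$ shows that $\Autz(Y)$ is an elliptic curve.
\item If $B$ is a curve, \autoref{lem:igf 9} gives $K^2_{Y_{\k(C)}}\in\{4,9\}$. Degree $9$ is excluded, since it forces $K^2_{X_{\k(C)}}=5$. In degree $4$, $\Autz(Y)$ is an elliptic curve by \autoref{pro: DP4}.
\end{itemize}
In both cases $\varphi\Autz(X)\varphi^{-1}=\Autz(Y)$, since both groups are elliptic curves. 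Hence $\Autz(X)$ is maximal.

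For \autoref{igfrs:3}, \autoref{lem:generic fibre DP surface} excludes degrees $7$ and $8$, and the cases $\geq 9$ do not occur for standard conic bundles with non-empty discriminant. So the degree lies in $\{5,6\}$, and $X_{\k(C)}$ is a Del Pezzo surface.
\begin{itemize}
\item If the degree is $5$, we follow the strategy in the proof of \autoref{lem:igf 9}. An equivariant link of type III, contracting the $4{:}1$ multisection, leads to a $\p^2$-bundle via \autoref{prop:Mdp to bundle}. There $\Autz$ either grows strictly, or the resulting group is non-maximal by the case analysis through \autoref{lem:hom_proj_bundles} and \autoref{thm:DP9 max}.
\item If the degree is $6$, the contraction to a degree-$8$ surface, made equivariant, contradicts transitivity on $C$ by \autoref{lem:dP8 Aut on C}. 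So this case cannot occur with non-trivial action on $C$.
\end{itemize}

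The main obstacle is the degree-$5$ case. We need that the equivariant type I/III link to the degree-$9$ fibration actually exists. This requires an $\Autz(X)$-invariant $4{:}1$ curve, which is supplied by the Iskovskikh classification of links from conic bundles of degree $5$, namely \cite[Theorem 2.6(i)]{iskovskikh_1996}. We also need that the enlargement in the $\p^2$-bundle step is strict.
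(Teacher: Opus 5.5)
Your route is essentially the paper's. You use the same inputs: \eqref{eq:sesrs} with \autoref{prop:generic fibre not P1}, \autoref{lem:igf 9}, \autoref{pro: DP4} and \autoref{lem:dP8 Aut on C}. You only reorganise them. You obtain ellipticity of $\Autz(X)$ once, from the number $8-K_{X_{\k(C)}}^2$ of points of $I\Delta$ on a general fibre of $\tau$. You then treat all $K_{X_{\k(C)}}^2\le 4$ together by splitting on $\dim B$, using \autoref{lem:igf 9} for curve targets; the paper instead uses \autoref{lem:scf 2 options} and a separate $K^2=3$ case. Two steps, however, do not go through as written.

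\emph{The degree-$6$ subcase of your first paragraph.} An $\Autz(X)_C$-equivariant map to a degree-$8$ fibration gives nothing. \autoref{lem:no_map_to_lower_degree.2} requires an $\Autz(X)$-equivariant map, and \autoref{lem:dP8 Aut on C} applied to a group that already acts trivially on $C$ is vacuous.

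What you actually need is that the contraction of $X_{\k(C)}$ onto the degree-$8$ (resp.\ degree-$9$) surface is induced by an $\Autz(X)$-equivariant birational map. You need this here, in part (2), and for the existence of the equivariant link in degree $5$; Iskovskikh's classification over $\k(C)$ does not provide it.

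It does hold, for the following reason:
\begin{itemize}
\item By Blanchard's lemma, $\Autz(X)$ maps fibres of $\tau\circ\pi$ onto fibres, compatibly with the fibrewise conic bundle structure.
\item Hence it preserves the divisor swept out by the fibrewise $(-1)$-curves that are sections of the conic bundle.
\item In degrees $5$ and $6$, these are exactly the curves being contracted.
\end{itemize}
With this, \autoref{lem:dP8 Aut on C} rules out degree $6$ outright. The paper uses the same equivariance implicitly when it feeds \autoref{lem:scf 2 options} into \autoref{lem:igf 9}.

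\emph{``The first paragraph applied to $Y$''.} This presupposes that $Y\to B$ is a standard conic bundle, which is not automatic for a Mori fibre space over $B\simeq S$. You have two ways to repair it.
\begin{itemize}
\item Pass to an $\Autz(Y)$-equivariant standard model over $S$ via \autoref{prop:reduction to standard conic bundles}. Then $\Autz(Y)$ embeds in an elliptic curve and contains $\varphi\Autz(X)\varphi^{-1}$, so it equals it.
\item Argue as the paper does. $\Autz(Y)_S$ is finite by \autoref{prop:generic fibre not P1}, and $\Autz(Y)$ preserves $\Delta$, which meets a general fibre of $\tau$ in at least four points.
\end{itemize}

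A minor point: non-emptiness of $\Delta$ comes from the irrationality of $X_{\k(S)}$, not of $X_{\k(C)}$.
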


\begin{proof}
By \autoref{lem:generic fibre singular conic fibration}, $\pi$ induces a standard conic bundle $\hat\pi\colon X_{\k(C)}\to S_{\k(C)}$ with non-zero discriminant. 

\autoref{igfrs:3} 
By \autoref{lem:generic fibre DP surface}, we have $K_{X_{\k(C)}}^2\neq7,8$.
If $K_{X_{\k(C)}}^2=5,6$, then $X_{\k(C)}$ is a Del Pezzo surface \cite[Theorem 5]{Isko_1979}. 
\autoref{lem:scf 2 options}\autoref{scf 2 options:2} and \autoref{lem:igf 9} imply that $K_{X_{\k(C)}}^2=5$ and that $\Autz(X)$ is an elliptic curve and not a maximal connected algebraic subgroup of $\Bir(X)$. 

\autoref{igfrs:1} If $K_{X_{\k(C)}}^2=3$, then $X_{\k(C)}$ is a Del Pezzo surface \cite[Theorem 5]{Isko_1979}. Then the claim is from \autoref{pro: DP4}. 

Suppose that $K_{X_{\k(C)}}^2\leq2$ or $K_{X_{\k(C)}}^2=4$. By \autoref{lem:scf 2 options}, every $\Autz(X)$-equivariant birational  map $\varphi\colon X\rat Y$ to a Mori fibre space $\pi\colon Y\to B$ satisfies that $B$ is a surface. By \autoref{lem:irrat generic fibre square}, there is an isomorphism $\eta\colon S\to B$. We replace the Mori fibre space $\pi'\colon Y\to B$ by the Mori fibre space $\pi'':=\alpha^{-1}\circ\pi'\colon Y\to S$. 

We claim that $\Autz(Y)$ preserves the discriminant $\Delta$ of $\pi\colon X\to S$. 
Indeed, by the relative version of \cite{Flo20}, the map $\varphi$ is a composition $\varphi=\varphi_n\circ\cdots\circ\varphi_1$ of Sarkisov links $\varphi_i\colon X_{i-1}\rat X_i$ between Mori fibre spaces $\pi_i\colon X_i\to B_i$, where $\pi_0=\pi$ and $\pi_n=\pi'$, equipped with a morphism $B_i\to S$. Since $\dim S=2\geq\dim B_i$, we have $B_i=S$ and each $\varphi_i$ is a link of type II. 
\[
\begin{tikzcd}
X=X_0\ar[r,dashed,"\varphi_1"]\ar[drr,"\pi_0",swap] & X_1\ar[dr,"\pi_1"]\ar[r,dashed,"\varphi_2"] &\dots&X_{n-1}\ar[r,dashed,"\varphi_n"]\ar[dl,"\pi_{n-1}",swap] & X_n=Y\ar[dll,"\pi_n"]\\
&&S&&
\end{tikzcd}
\]
Let $\Gamma_i\subset X_i$ be the base-locus of $\varphi_i$ and $T_i\subset X_i$ the surface contracted by $\varphi_i$. 
By \cite[Lemma 3.23]{BLZ21}, $\pi_i(\Gamma_i)$ is outside the discriminant curve of $\pi_i$ and that $\pi_i(T_i)=\pi_i(\Gamma_i)$. It follows that there is an open $U\subset S$ such $U\cap \Delta$ is contained in the discriminant of $\pi''\colon Y\to S$. 
It follows that $\Autz(Y)$ preserves $\Delta$. 
Since $K_{X_{\k(C)}}^2=4$ or $K_{X_{\k(C)}}^2\leq2$, \autoref{lem:generic fibre singular conic fibration} implies that $\Delta$ intersects each general fibre of $\tau\colon S\to C$ in at least four points. Therefore, $\Autz(S,\Delta)_C$ is finite.  \autoref{eq:sesrs} implies that $\Autz(X)$ and $\Autz(Y)$ are elliptic curves. 
We have now shown that every $\Autz(X)$-equivariant birational map $X\rat Y$ to another Mori fibre space $Y\to B$ satisfies that $\Autz(Y)$ is an elliptic curve. It follows that $\Autz(X)$ is a maximal connected algebraic subgroup of $\Bir(X)$.
\end{proof}

\begin{proof}[Proof of \autoref{prop:iscf main}]
By \autoref{lem:to ruled surface} and \autoref{rmk:igf nontrivial action}, $\tau\colon S\to C$ is a ruled surface. We have $g(C)=1$ by \autoref{rmk:igf nontrivial action}. 
The rest of the claim is \autoref{pro: igfrs}.
\end{proof}

\subsection{When the generic fibre is rational}

Let $\pi\colon X\to S$ be a standard conic bundle above an irrational surface and with rational generic fibre $X_{\k(S)}$. 
By \autoref{lem:empty discriminant}, $\pi$ is a $\p^1$-bundle over $S$. 
With the following lemma, we can assume $S$ to be a minimal surface. 

\begin{lemma}[{Descent lemma \cite[Lemma 2.3.2]{BFT23}}]\label{lem:descent}
Let $\eta\colon S'\to S$ be a birational morphism between two smooth projective surfaces.
Let $U'\subset S'$ and $U\subset S$ be maximal open sets such that $\eta$ induces an isomorphism $U'\stackrel{\simeq}\to U$ and $\Omega=S'\setminus U'$ is finite and let $\pi'\colon X'\to S'$ be a $\p^1$-bundle. 

Then there exists a $\p^1$-bundle $\pi\colon X\to S$ and a birational map $\psi\colon X'\rat X$ such that $\pi\psi=\eta\pi'$ and such that $\psi$ induces an isomorphism $\pi'^{-1}(U'^{-1})\stackrel{\simeq}\to\pi^{-1}(U)$. Moreover, $\psi$ is unique up to composition with isomorphisms of $\p^1$-bundles at the target and $\psi$ is $\Autz(X')$-equivariant.
\end{lemma}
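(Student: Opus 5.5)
The plan is to build the $\p^1$-bundle $X$ over $S$ by gluing: over $U$ we transport $\pi'$ via $\eta$, and over a neighbourhood of each point of $\eta(\Omega)$ we use a trivial bundle. Since $\eta$ is a composition of blow-ups of points, I would argue by induction on the number of blow-ups, so it suffices to treat the case where $\eta$ is the blow-up of a single point $p\in S$ with exceptional curve $E\subset S'$.

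\textbf{Extension over $p$.} Let $V\subset S$ be a small affine neighbourhood of $p$ with $V\setminus\{p\}\simeq\eta^{-1}(V)\setminus E$. I would show that the restriction $X'|_{\eta^{-1}(V)\setminus E}$ extends to a $\p^1$-bundle over $V$. There are two ways to see this. The first uses the Brauer group: a $\p^1$-bundle over a smooth surface comes from a class in $H^1(\PGL_2)$. Its Brauer class lies in $\mathrm{Br}(S')=\mathrm{Br}(S)$, and purity lets us extend over codimension two. The second is more concrete: write $X'=\p(\El')$ for a rank-$2$ vector bundle $\El'$ (possible since $\mathrm{Br}$ of a smooth projective rational-over-the-base surface vanishes near $E$). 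Take the reflexive hull $\El$ of $(\eta_*\El')|_{V\setminus\{p\}}$. A reflexive sheaf on a smooth surface is locally free, so $\El$ is a vector bundle on $V$, and $\p(\El)$ gives the extension. Gluing over $U$ and $V$ yields $\pi\colon X\to S$ and a birational map $\psi$ with $\pi\psi=\eta\pi'$, which is an isomorphism over $U$ by construction.

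\textbf{Uniqueness.} Given two such $X_1,X_2$, the map $\psi_2\psi_1^{-1}$ is an isomorphism of $\p^1$-bundles over $U$, and $S\setminus U$ is finite. An isomorphism of $\p^1$-bundles corresponds to a section of a $\PGL_2$-bundle, namely $\mathrm{Isom}_S(X_1,X_2)$, which is an affine scheme over $S$. By Hartogs, a section defined outside codimension two extends. Hence $\psi_2\psi_1^{-1}$ extends to an isomorphism over $S$.

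\textbf{Equivariance.} This follows from uniqueness. By Blanchard's lemma (\autoref{lem:blanchard}), $\Autz(X')$ acts on $S'$, and since $S'\to S$ is a birational morphism with $\eta_*\Ol_{S'}=\Ol_S$, it also acts on $S$ compatibly. Connectedness implies the finite set $\Omega$ is fixed pointwise, so $U$ is invariant. For $g\in\Autz(X')$, the map $\psi g\psi^{-1}$ is an automorphism of $X$ over $U$ covering the induced automorphism of $S$. Applying the Hartogs extension to the twisted bundle (pull back $X$ by the automorphism of $S$), it extends to an automorphism of $X$. Regularity of the resulting action map $\Autz(X')\times X\to X$ is then checked on the normal variety $\Autz(X')\times X$: it is a rational map that is defined outside codimension two.

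The main obstacle is the extension step over the points of $\eta(\Omega)$, in particular justifying that the vector bundle, or Brauer, description is available locally. I would handle it via the reflexive-hull argument, working locally around each point so that $X'$ is projectivised from a vector bundle on a neighbourhood.
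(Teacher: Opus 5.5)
The paper gives no proof of this lemma; it quotes it from \cite[Lemma 2.3.2]{BFT23}. So your argument has to stand on its own, and most of it does.

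\begin{itemize}
\item \emph{Existence.} $\El:=(\eta_*\El')^{\vee\vee}$ is reflexive, hence locally free on the smooth surface $S$, and agrees with $\El'$ over $U$. This works on all of $S$ at once, so the induction on blow-ups and the local gluing are unnecessary.
\item \emph{Uniqueness.} Hartogs applied to sections of the $\PGL_2$-torsor $\mathrm{Isom}_S(X_1,X_2)$, which is affine over $S$, is correct.
\item \emph{Equivariance.} Deducing it from uniqueness is the right idea.
\end{itemize}

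The genuine gap is the last sentence of your equivariance step. A rational map from a normal variety to a projective variety that is defined outside codimension two need not extend. Hartogs only works for affine targets. In fact $\psi^{-1}\colon X\dashrightarrow X'$ is itself a counterexample when $\eta$ is not an isomorphism: it is defined outside the codimension-two set $\pi^{-1}(S\setminus U)$, but it is not a morphism. As written, you only get an abstract homomorphism $\Autz(X')\to\Aut(X)$, not a regular action.

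The repair is to run your own Isom argument in a family.
\begin{itemize}
\item Let $G=\Autz(X')$ and let $a\colon G\times S\to S$ be the regular action given by \autoref{lem:blanchard} applied to $\eta\pi'$; note that $(\eta\pi')_*\Ol_{X'}=\Ol_S$.
\item $S\setminus U$ is the set of points with positive-dimensional $\eta$-fibre, so it is $G$-stable and $U$ is $G$-stable. Your finite set $\Omega$ should be $S\setminus U$; the set $S'\setminus U'$ is a union of curves.
\item The map $(g,x)\mapsto\psi g\psi^{-1}(x)$ is an isomorphism over $G\times U$ between the $\p^1$-bundles $\mathrm{pr}_S^*X$ and $a^*X$ over the smooth variety $G\times S$.
\item $G\times(S\setminus U)$ has codimension two, so the corresponding section of the $\PGL_2$-torsor $\mathrm{Isom}_{G\times S}(\mathrm{pr}_S^*X,a^*X)$ extends by Hartogs.
\item This gives a morphism $G\times X\to X$. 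The action axioms hold because they hold on a dense open set. This family version also replaces your element-by-element extension.
\end{itemize}

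A smaller point concerns your reason for $X'\simeq\p(\El')$. The lemma is about arbitrary smooth projective surfaces, whose Brauer group need not vanish, and nothing here is ``rational over the base''. The Brauer group is not needed at all. Since $\p^1$-bundles here are Zariski-locally trivial, take a trivializing open set and a constant section over it; its closure $D$ in the smooth threefold $X'$ is Cartier of degree one on fibres. Then $X'\simeq\p(\pi'_*\Ol_{X'}(D))$, as in \cite[Exercise II.7.10]{MR0463157}. Your Brauer-group alternative is not a proof as sketched anyway: extending the Brauer class does not by itself extend the $\p^1$-bundle.
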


Let $\pi\colon X\to S$ be a standard conic fibration and suppose that $S$ carries a structure of a ruled surface $\tau\colon S\to C$ over a smooth curve $C$. 
Since $\pi\circ\tau$ has empty discriminant (see \autoref{lem:empty discriminant}), $X_{\k(C)}$ is a smooth surface with a conic bundle structure. In particular, $X_{\k(C)}\simeq\F_{b,\k(C)}$ for some $b\geq0$. It follows that there is an open subset $U\subset C$ such that every fibre $(\tau\circ\pi)^{-1}(p)$, $p\in U$, is isomorphic to $\F_b$.

The next lemma shows that up to equivariant birational maps, we can assume that the integer $b$ does not depend on the point $(\tau\circ\pi)(F)$.

\begin{proposition}[{\cite[Proposition 3.3]{Fong23}, \cite[Proposition 3.2.2]{BFT23}}]\label{lem:jumping fibres}
Let $\pi\colon X\to S$ be a $\p^1$-bundle above a ruled surface $\tau\colon S\to C$ above a curve $C$ of genus $g(C)\geq0$. 
Then there exists $b\geq0$ and a $\F_b$-bundle $\tilde\pi\colon \tilde X\to C$ and an $\Autz(X)$-equivariant map $\varphi\colon X\rat\tilde X$ such that $\tilde\pi\circ\varphi=\tau\circ\pi$.
\end{proposition}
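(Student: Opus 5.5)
The plan is to follow the strategy of \cite[Proposition 3.2.2]{BFT23}, which treats the case $C=\p^1$, and check that nothing in it uses rationality of $C$. The composition $\tau\circ\pi\colon X\to C$ has generic fibre $X_{\k(C)}\simeq\F_{b,\k(C)}$ for some $b\geq0$, as noted just before the statement. Hence there is a dense open $U\subset C$ over which every fibre of $\tau\circ\pi$ is $\F_b$. The complement $C\setminus U$ is finite and $\Autz(X)$-invariant, because the isomorphism type of a fibre is preserved by $\Autz(X)$ via Blanchard's lemma (\autoref{lem:blanchard}). In particular, if $g(C)\geq1$ and $\Autz(X)$ acts non-trivially on $C$, the action is transitive, so $U=C$ and we are done with $\varphi=\mathrm{id}$. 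We may therefore assume that the finitely many jumping fibres $F_p=(\tau\circ\pi)^{-1}(p)$, $p\notin U$, each lie over an $\Autz(X)$-fixed point $p$, and we modify $X$ one such fibre at a time.

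Fix such a $p$ and write $F_p\simeq\F_{b'}$ with $b'\neq b$; this holds because $\pi$ restricted over the fibre $\tau^{-1}(p)\simeq\p^1$ is a $\p^1$-bundle. By upper semicontinuity one has $b'\geq b$, and $b'\equiv b \pmod 2$. When $b'>0$, the $(-b')$-section $\sigma_p\subset F_p$ is unique, hence $\Autz(X)$-invariant. We perform the elementary transformation of the $\p^1$-bundle $\pi$ along $\sigma_p$: blow up $\sigma_p$ and contract the strict transform of $\pi^{-1}(\tau^{-1}(p))=F_p$ onto a curve. This stays a $\p^1$-bundle over $S$ and is $\Autz(X)$-equivariant. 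In local coordinates it is $[y_0:y_1;z_0:z_1]\mapsto[y_0:\xi y_1;z_0:z_1]$, with $\mathrm{div}(\xi)=p$, composed with a change of coordinates adapted to $\sigma_p$. The new fibre over $p$ is $\F_{b'-2}$ or $\F_{b'}$ with modified gluing, and fibres over $U$ are unchanged. An alternative is to use the description of $\p^1$-bundles over $S$ as $\p(\El)$ with $\El$ a rank-$2$ bundle: twisting by $\tau^*\Ol_C(\pm p)$ on a subsheaf reduces the splitting type $\Ol(a)\oplus\Ol(a+b')$ on $\tau^{-1}(p)$ toward $\Ol(a')\oplus\Ol(a'+b)$. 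In either formulation, the plan is to iterate until the fibre over $p$ becomes $\F_b$, using that the invariant $b'-b$ strictly decreases along a suitable sequence of elementary transformations.

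The main obstacle is to guarantee strict decrease of $b'$ at each step rather than oscillation. The plan is to choose the centre correctly, namely the invariant $(-b')$-curve, and to compute the effect on the splitting type of $\El|_{\tau^{-1}(p)}$ by an elementary modification $0\to\El'\to\El\to\Ol_{\tau^{-1}(p)}(a)\to0$. Here $\Ol_{\tau^{-1}(p)}(a)$ is the quotient corresponding to the minimal section, which is canonical and hence invariant, so the resulting map is equivariant. A standard computation, as in \cite[Lemma 3.2.1]{BFT23} or \cite[Section 3]{Fong23}, shows that the new splitting type is $\Ol(a)\oplus\Ol(a+b'-1)$ up to an extension that generically splits as $b'-2$, mirroring the two-dimensional argument of Maruyama. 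I expect to need the elementary transformation centred at a general invariant point when $b'=b+1$ is impossible by parity, and to handle the case $b=0<b'$ separately. Finally, finiteness of $C\setminus U$ ends the induction, and the composite $\varphi$ is $\Autz(X)$-equivariant over $C$ by construction.
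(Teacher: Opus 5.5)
The paper gives no proof of this statement; it only cites \cite[Proposition 3.2.2]{BFT23} and \cite[Proposition 3.3]{Fong23}. Your set-up is the natural one. The jumping fibres lie over finitely many $\Autz(X)$-fixed points, and $b'>b$ with $b'\equiv b\pmod 2$, so $b'\geq 2$. Hence the $(-b')$-curve $\sigma_p\subset F_p$ is unique and invariant, and the elementary transformation along $\sigma_p$ is an $\Autz(X)$-equivariant birational map of $\p^1$-bundles over $S$ that is an isomorphism away from $F_p$.

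The gap is the step you yourself call the main obstacle. Nothing in the proposal shows that iterating this transformation ends with $\F_b$ over $p$, and $b'$ genuinely need not drop at a given step. The new fibre over $p$ is the exceptional divisor $\p(N_{\sigma_p/X})$, where $0\to\Ol(-b')\to N_{\sigma_p/X}\to N_{F_p/X}|_{\sigma_p}\simeq\Ol\to 0$. This extension is determined by the first infinitesimal neighbourhood of $F_p$ in $X$ and may split, in which case the new fibre is again $\F_{b'}$. Here is an example. On $S=C\times\p^1$, take $0\to pr_2^*\Ol(-1)\to\El\to pr_2^*\Ol(1)\otimes pr_1^*\Ol_C(-2p)\to 0$ with class $s_{2p}\otimes e$. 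Here $s_{2p}\in H^0(C,\Ol_C(2p))$ has divisor $2p$ and $0\neq e\in H^1(\p^1,\Ol(-2))$. The general fibre is then $\F_0$ and $F_p\simeq\F_2$. The first elementary transformation along the $(-2)$-curve only divides the class by a local parameter at $p$, so it returns $\F_2$; only the second one gives $\F_0$.

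The remedies you suggest do not address this.
\begin{itemize}
\item There is no elementary transformation of the $\p^1$-bundle over $S$ centred at a point.
\item Parity already excludes $b'=b+1$.
\item The case $b=0$ needs no special treatment, since $b'\geq 2$.
\item The modified bundle restricts to $f=\tau^{-1}(p)$ as an extension $0\to\Ol(a')\to\El'|_f\to\Ol(a'+b')\to 0$, of the same degree as $\El|_f\simeq\Ol(a')\oplus\Ol(a'+b')$, not as $\Ol(a)\oplus\Ol(a+b'-1)$.
\end{itemize}

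What is missing is a quantity that strictly decreases. Write $X=\p(\El)$, let $\sigma_p$ be given by the quotient $\El|_f\to\Ol_f(a')$, set $\El'=\ker(\El\to\Ol_f(a'))$, and let the generic type be $\Ol(a)\oplus\Ol(a+b)$. Let $L\subset\El$ be the saturation of a subbundle $\Ol(a+b)$ of the generic fibre of $\El$ over $\k(C)$; this is an auxiliary choice and need not be invariant. Then the following hold.
\begin{itemize}
\item $\El/L\simeq\mathcal{I}_Z\otimes M$ with $Z$ finite, and $c_2(\El)=L\cdot M+\ell(Z)$.
\item Every jumping fibre meets $Z$; otherwise $\El|_f$ would be an extension of $\Ol(a)$ by $\Ol(a+b)$, which splits.
\item Since $a+b>a'$, $L|_f$ lies in the kernel of $\El|_f\to\Ol_f(a')$, so $L$ remains saturated in $\El'$.
\item Using $\det\El'=\det\El\otimes\Ol_S(-f)$ and $c_2(\El')=c_2(\El)-(a'+b')$, one gets $\ell(Z')=\ell(Z)-\tfrac{b'-b}{2}<\ell(Z)$.
\end{itemize}
This gives termination. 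You would still need to check, by cohomology and base change, that once all fibres over $C$ are $\F_b$ the morphism $\tau\circ\pi$ is locally trivial, i.e.\ an $\F_b$-bundle.
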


In \autoref{prop:P1-bundles}, we determine which relatively maximal automorphism groups appearing in \cite[Theorem A]{Fong23} is a maximal connected algebraic subgroup. Before that, let us recall that the ruled surface $\tau\colon \Al_{2,1} \to C$ can be obtained as a quotient $(C\times \p^1)/(\Z/2\Z)^2$, and recall the notion of non-trivial 2-divisor introduced in \cite{Fong23}.

\begin{lemma}[{\cite[Lemma 6.2, Definition 6.7]{Fong23}}]\label{lem:2-divisor}
    Let $C$ be an elliptic curve and $m_2\colon C\to C$ be the multiplication by two. Then the following hold:
    \begin{enumerate}
    \item There exists a cartesian square
    \[
    \begin{tikzcd}
    C\times \p^1 \ar[r,"q"] \ar[d,"\tau_1"] & \Al_{2,1} \ar[d,"\tau"] \\
    C \ar[r,"m_2"] & C,
    \end{tikzcd}
    \]
    where $\tau_1$ is the projection onto $C$ and the morphism $q$ is the quotient of $C\times \p^1$ by $(\Z/2\Z)^2$, acting by translations on $C$ as its two-torsion subgroup, and by $x\mapsto \pm x^{\pm 1}$ on $\p^1$.
    \item Let $\sigma\subset \Al_{2,1}$ be a minimal section. Then there exists $D_0\in \Pic(C)$ of degree two such that
    \[
    q^*\sigma \sim \sigma_0 + \tau_1^*(D_0),
    \]
    where $\sigma_0$ is a constant section of $C\times \p^1$.
    \end{enumerate}
    Moreover, we say that a divisor $D\in \Pic(C)$ is a \emph{non-trivial 2-divisor} if $m_2^*(D) - 2\deg(D)D_0$ is not trivial and has degree zero. 
\end{lemma}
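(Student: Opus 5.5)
The plan is to deduce both items from the homogeneous description of $\Al_{2,1}$ recalled after \autoref{thm:dim_2}, and then to read off the divisor class $D_0$ by intersection numbers. The final sentence of the lemma is a definition, so nothing needs to be proved there.

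For (1), let $G:=\Autz(\Al_{2,1})$, which is an elliptic curve, and let $H:=\Autz(\Al_{2,1})_C\simeq(\Z/2\Z)^2$ (\autoref{thm:dim_2}\autoref{dim_2:2}). By \autoref{thm:almost product}, $\Al_{2,1}\simeq G\times^H\p^1$, where $\tau$ is identified with the projection $G\to G/H\simeq C$.
\begin{itemize}
\item The isogeny $G\to G/H$ has kernel $(\Z/2\Z)^2$, which is the full $2$-torsion of $G$. Hence, after identifying $G$ with $C$ and using that $m_2$ induces $C/C[2]\simeq C$, this isogeny becomes $m_2$, and $H$ acts on $G\simeq C$ by translations by the $2$-torsion points.
\item The quotient map $q\colon G\times\p^1\to G\times^H\p^1$ is a principal $H$-bundle by \cite[Lemme 6.1]{Res04}. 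Therefore the square formed by $q$, $\tau_1$, $\tau$ and $m_2$ is cartesian: both vertical maps are $\p^1$-bundles, and $q$ is étale of degree $4=\deg m_2$ and restricts to isomorphisms on fibres.
\item $H$ acts on $\p^1$ through a homomorphism $(\Z/2\Z)^2\to\PGL_2(\k)$. This homomorphism is injective, since otherwise some non-trivial element of $H$ would act trivially on $\p^1$ and the quotient would contain a smaller-isogeny bundle, contradicting $\Autz(\Al_{2,1})_C\simeq(\Z/2\Z)^2$. It has no fixed point on $\p^1$, since otherwise $\Al_{2,1}$ would carry an $\Autz$-invariant section, contradicting indecomposability by the argument of \autoref{lem:decomposable_preserves_constant_section}. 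Every such Klein four subgroup of $\PGL_2(\k)$ is conjugate to $\{x\mapsto \pm x^{\pm1}\}$, and after conjugating we obtain the stated action.
\end{itemize}
As a consistency check, the pullback $m_2^*\El_{2,1}$ has degree $4$ and is isomorphic to $\Ll\oplus\Ll$ with $\deg\Ll=2$, so its projectivisation is trivial, in agreement with $C\times\p^1$.

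For (2), let $\sigma$ be a minimal section of $\Al_{2,1}$; then $\sigma^2=1$ by \autoref{thm:dim_2}. Since the square is cartesian, $q^*\sigma$ is a section of $\tau_1$, so $q^*\sigma\cdot f=1$ for a fibre $f$ of $\tau_1$. Since $q$ is étale of degree $4$, we get $(q^*\sigma)^2=4\sigma^2=4$. We have $\Pic(C\times\p^1)=\Z\sigma_0\oplus\tau_1^*\Pic(C)$, so we can write $q^*\sigma\sim a\sigma_0+\tau_1^*D_0$. Intersecting with $f$ gives $a=1$. Then $(q^*\sigma)^2=\sigma_0^2+2\deg D_0=2\deg D_0=4$, which gives $\deg D_0=2$.

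The main obstacle is in (1): identifying the quotient isogeny with $m_2$ together with the precise $(\Z/2\Z)^2$-action on $\p^1$, since this relies on the classification of Klein four subgroups of $\PGL_2(\k)$ and on the absence of fixed points coming from indecomposability. The computation in (2) is routine once (1) is established.
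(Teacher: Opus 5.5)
Your argument is correct. The paper gives no proof of this lemma: it imports it from \cite[Lemma 6.2]{Fong23}. The only in-paper hint is the remark after \autoref{thm:dim_2}, which obtains $\Al_{2,1}\simeq G\times^H\p^1$ from \autoref{thm:almost product}, and that is exactly your starting point. Your proposal therefore gives a self-contained derivation from results stated in the paper:
\begin{itemize}
\item identify $H$ with $G[2]$, so that $G\to G/H$ becomes $m_2$;
\item obtain the cartesian square from the principal $H$-bundle $G\times\p^1\to G\times^H\p^1$;
\item pin down the action on $\p^1$ using that Klein four subgroups of $\PGL_2(\k)$ are unique up to conjugacy;
\item find $D_0$ numerically from $(q^*\sigma)^2=4\sigma^2=4$.
\end{itemize}
Part (2) is exactly right, and your consistency check $m_2^*\El_{2,1}\simeq\Ll\oplus\Ll$ is also correct, though not needed.

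Two justifications in (1) should be tightened.

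\emph{Injectivity of $H\to\PGL_2(\k)$.} The clean reason is faithfulness. Since $G$ is abelian, $h\in H$ acts on $G\times^H\p^1$ by $[g,s]\mapsto[hg,s]=[g,hs]$. So an $h\neq1$ acting trivially on $\p^1$ would be a non-trivial element of $\Autz(\Al_{2,1})$ acting trivially on $\Al_{2,1}$.

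\emph{Absence of fixed points.} Your appeal to \autoref{lem:decomposable_preserves_constant_section} runs in the wrong direction. That lemma shows that decomposability implies an invariant constant section, whereas you need the converse, and an invariant section alone does not contradict indecomposability. The step is redundant anyway: once $H\to\PGL_2(\k)$ is injective, a fixed point would embed $H$ into a point stabiliser $\G_a\rtimes\G_m$, whose finite subgroups are cyclic in characteristic zero. If you prefer a geometric argument, an $H$-fixed point $s$ would give a section $\sigma'$ with $q^*\sigma'=G\times\{s\}$, hence $4\sigma'^2=0$. But every section of $\Al_{2,1}$ has odd self-intersection $\deg\El_{2,1}-2\deg\Ll(\sigma')$ by \autoref{rem:sections_linebundles}.
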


\begin{proposition}\label{prop:P1-bundles}
Let $\pi\colon X\to S$ be a $\p^1$-bundle above a ruled surface $\tau\colon S\to C$ above a curve $C$ of genus $g(C)\geq1$. Suppose that every fibre of $\tau\circ\pi$ is isomorphic to $\F_b$. 
Then $\Autz(X)$ is a maximal connected algebraic subgroup of $\Bir(X)$ if and only if one of the following holds:
    \begin{enumerate}[(I)]
    \item\label{P1-bundles:I} If $g(C)=1$, then:
    \begin{enumerate}[(i)]
        \item\label{P1-bundles:i} $X\simeq S'\times \p^1$, where $S'$ is one of the following ruled surfaces:
        \begin{enumerate}[(a)]
            \item $C\times \p^1$,
            \item $\Al_{2,0}$,
            \item $\Al_{2,1}$,
            \item $\p(\Ol_C\oplus \Ll)$, where $\Ll\in \Pic^0(C)$ is non-trivial.
        \end{enumerate}

        \item\label{P1-bundles:ii} $X \simeq \Al_{2,1} \times_C \Al_{2,1}$.

        \item\label{P1-bundles:iii} There exists an $\Autz(X)$-equivariant birational map 
        \[
        \varphi\colon X\dashrightarrow \p(\Ol_C\oplus \Ll) \times_C \Al_{2,1},
        \]
        where $\Ll\in \Pic^0(C)$ is not two-torsion. Via this birational map, the groups $\Autz(X)$ and $\Autz(\p(\Ol_C\oplus \Ll) \times_C \Al_{2,1})$ are conjugate.

        \item\label{P1-bundles:iv} There exists an $\Autz(X)$-equivariant birational map 
        \[
        X\dashrightarrow \p(\Ol_C\oplus \Ll) \times_C \Al_{2,0},
        \]
        where $\Ll\in \Pic^0(C)$ has infinite order. Via this birational map, the groups $\Autz(X)$ and $\Autz(\p(\Ol_C\oplus \Ll) \times_C \Al_{2,0})$ are conjugate.

        \item\label{P1-bundles:v} There exists an $\Autz(X)$-equivariant birational map 
        \[
        X\dashrightarrow \p(\Ol_C\oplus \Ll) \times_C \p(\Ol_C\oplus \Ml) ,
        \]
        where $\Ll,\Ml\in \Pic^0(C)$ and for every $(n,m)\in \mathbb{Z}^2$ coprime, the line bundle $\Ll^{\otimes n} \otimes \Ml^{\otimes m}$ is not trivial. Via this  birational map, the groups $\Autz(X)$ and $\Autz(\p(\Ol_C\oplus \Ll) \times_C \p(\Ol_C\oplus \Ml))$ are conjugate.

        \item\label{P1-bundles:vi} $X\simeq \p(\Ol_{C\times \p^1}\oplus \Ol_{C\times \p^1}(b\sigma +\tau^*(D)))$ is a decomposable $\p^1$-bundle over $S=C\times \p^1$, where $\sigma$ a constant section in $C\times \p^1$, $b>0$, and $D\in \Pic^0(C)$. Moreover, $D$ is non-trivial if $b=1$.
        
        \item\label{P1-bundles:vii} There exists an $\Autz(X)$-equivariant birational map 
        \[
        X\dashrightarrow \p(\Ol_{\Al_{2,1}} \oplus \Ol_{\Al_{2,1}}(2\sigma +\tau^*(D))),
        \]
        where $D$ is a non-trivial $2$-divisor on $C$ (see \autoref{lem:2-divisor}), $\sigma$ is a minimal section of $\Al_{2,1}$. Via this birational map, $\Autz(X)$ and $\Autz(\p(\Ol_{\Al_{2,1}} \oplus \Ol_{\Al_{2,1}}(2\sigma +\tau^*(D))))$ are conjugate.
    \end{enumerate}
Moreover, in the above Cases \ref{P1-bundles:i}, \ref{P1-bundles:ii}, \ref{P1-bundles:iii}, \ref{P1-bundles:vi} provided that $b\neq 1$, and \ref{P1-bundles:vii}, there is no $\Autz(X)$-equivariant birational map to a Mori Del Pezzo fibre space. In Cases \ref{P1-bundles:iv}, \ref{P1-bundles:v}, \ref{P1-bundles:vi} with $b=1$, there exist $\Autz(X)$-equivariant birational maps to $\p^2$-bundles. 
    
    \item\label{P1-bundles:II} If $g(C) \geq 2$, then $X \simeq C\times \p^1 \times \p^1$ and $\pi$ is the trivial $\p^1$-bundle over $S=C\times \p^1$.
    \end{enumerate}
\end{proposition}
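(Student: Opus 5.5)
The proof will rest on the classification of relatively maximal automorphism groups of $\p^1$-bundles over ruled surfaces in \cite[Theorems A and C]{Fong23}. What has to be added is control of the equivariant Sarkisov links that leave the class of conic bundles over surfaces, that is, links to Mori Del Pezzo fibrations. Maximality in $\Bir(X)$ implies maximality in $\Bir(X/S)$. Hence if $\Autz(X)$ is maximal, $X$ appears in the list of \cite[Theorem A]{Fong23}, possibly after conjugating by an equivariant birational map over $C$ (\autoref{lem:jumping fibres}). Conversely, for a relatively maximal group \cite[Theorem C]{Fong23} describes every equivariant link to another conic bundle over a surface. By \autoref{lem:irrat generic fibre square}, any equivariant map to a Mori fibre space over a surface commutes with the projections to $C$, so it is covered by that description. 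It remains to analyse maps to Mori Del Pezzo fibrations over $C$; by \autoref{lem:commutative diag surface-curve} these also live over $C$.

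First I treat $g(C)\geq2$. Then $\Autz(C)$ is trivial, so $\Autz(X)$ acts trivially on $C$. If $b>0$, \autoref{lem:F_b_trivial_action_on_C} rules the case out. If $b=0$, \autoref{lem:F_0-bundles} gives $X\simeq S\times_C S'$ and $\Autz(X)\simeq\Autz(S)\times\Autz(S')$. By \cite[Theorem B]{Fong20}, each factor surface needs maximal $\Autz$, which forces $S\simeq S'\simeq C\times\p^1$. This gives (II). That $C\times\p^1\times\p^1$ is indeed maximal follows from \cite[Theorem A/C]{Fong23} together with \autoref{lem:no_map_to_lower_degree.2}, since $\PGL_2^2$ acts on $C$ trivially. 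A Mori Del Pezzo target would have to be a $\p^2$-bundle, and $\PGL_2(\k)^2$ does not embed in $\PGL_3(\k)$ preserving the needed structure. I would check this last point via the orbit structure: the orbits are the fibres $\p^1\times\p^1$, so no equivariant link of type III can occur.

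For $g(C)=1$, \autoref{lem:F_b_trivial_action_on_C} lets me assume $\Autz(X)$ acts transitively on $C$ when $b>0$. I then run through the list of \cite[Theorem A]{Fong23} case by case. In cases (i)--(iii) and (vii), and in (vi) with $b\neq1$, I show there is no equivariant link to a Mori Del Pezzo fibration. Such a link must be of type III contracting a divisor to a section, as in \autoref{lem:links I} read backwards. It therefore requires an $\F_1$-fibration or an $\F_0$-bundle in which an $\Autz(X)$-invariant ruled subsurface can be contracted to an invariant curve, giving a $\p^2$-bundle. I check against \autoref{thm:dim_2} that the relevant invariant minimal sections do not exist, or that $b\neq1$. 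In cases (iv), (v) and (vi) with $b=1$, the links to $\p^2$-bundles are exactly those constructed in \autoref{lem:O+L+M}, \autoref{lem:O+L+M_conjugacy}, \autoref{prop:E'=O+O} and \autoref{prop:E'=E_20}. Maximality then follows by combining those results with Fong's relative maximality and \autoref{lem:no_map_to_lower_degree.2}, which excludes degree $\leq8$. The remaining entries of Fong's list are non-maximal, because they are conjugate into one of the groups above or, via a $\p^2$-bundle, into a strictly larger group. For instance, the $b=1$, $D$ trivial case of (vi) blows down to $\p(\Ol_C\oplus\Ol_C\oplus\Ol_C)=C\times\p^2$. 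Likewise, $\Ll$ of finite order in (iv) reduces to \autoref{lem:E_20+O}, the torsion case of \autoref{prop:E'=E_20}.

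The main obstacle is ensuring completeness of the list of exits to Mori Del Pezzo fibrations. One has to show that an equivariant link from a conic bundle to a degree-$9$ fibration only arises by contracting an $\F_1$-type invariant divisor, and to match these precisely with Fong's cases. I would first try to read this off from \autoref{lem:links I} applied to the inverse link: it gives $\deg=1$ on the generic fibre, so the contracted divisor is a $\p^1$-subbundle over an invariant section of $S$. After that I would compare invariants case by case.
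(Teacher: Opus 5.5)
Your outline is the paper's. You start from the relatively maximal groups of \cite[Theorem A]{Fong23}, control equivariant maps to Mori Del Pezzo fibrations, and settle (iv), (v) and (vi) with $b=1$ through \autoref{prop:E'=E_20}, \autoref{lem:O+L+M_conjugacy} and \autoref{prop:E'=O+O}, including the blow-down to $C\times\p^2$ when $b=1$ and $D$ is trivial.

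You diverge on the ``no exit'' cases. The paper does not analyse links there. It quotes \cite[Corollary D]{Fong23} (maximality in Fong's cases A(i),(iii); no equivariant map to a $\p^2$-bundle in A(iv),(viii) and A(vii) with $b\neq1$) and \cite[Proposition B]{Fong23} ($\dim\Autz(X)_C>0$), then applies \autoref{lem:no_map_to_lower_degree.2} for degrees $\leq8$.

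Your direct route is sound in principle. Every conic bundle in the equivariant class has a Hirzebruch surface $\F_{b,\k(C)}$ as generic fibre over $\k(C)$. So the first link leaving conic bundles is of type III, and it must contract the divisor swept out by the $(-1)$-curves of a member with generic fibre $\F_{1,\k(C)}$ onto a section of a degree-$9$ fibration. This handles all degrees at once. However, it proves nothing until you go through the whole class described by \cite[Theorem C]{Fong23} and check that no member has $\F_1$ fibres. The proposal leaves that check undone.

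Two descriptions in that part are also off. The contracted divisor is a section of the conic bundle, not the preimage of a section of $S$; that preimage is only contracted together with all of $X$ by the conic-bundle ray. And there is no ``$\F_0$-bundle'' variant, because the second extremal ray of an $\F_0$-bundle over $C$ is another conic bundle (type IV).

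The step that fails as written is in (II). \autoref{lem:no_map_to_lower_degree.2} assumes that $\Autz(X)$ acts transitively on $C$, which is impossible for $g(C)\geq2$. Its exclusion of degree $8$ rests on \autoref{lem:dP8 Aut on C}, which gives no contradiction when the action on $C$ is trivial. Indeed, degree-$8$ fibrations with $\Autz\simeq\PGL_2(\k)$ over curves of genus $\geq2$ are listed in \autoref{thm:mainDP}, case (2)(a). So your claim that a Mori Del Pezzo target ``would have to be a $\p^2$-bundle'' is unsupported.

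Use the paper's argument instead:
\begin{itemize}
\item The $\PGL_2(\k)^2$-orbits on $C\times\p^1\times\p^1$ are the fibres over $C$, so there are no invariant curves or points, hence no links of type I or II.
\item The two-ray game over $C$ only exchanges the two projections (type IV).
\item Hence no Mori Del Pezzo fibration is ever reached.
\end{itemize}
The orbit structure rules out types I and II, not type III as you wrote. In the same case, quote \cite[Theorem A]{Fong23} directly for $X\simeq C\times\p^1\times\p^1$. Arguing factor-wise via \cite[Theorem B]{Fong20} would need a separate lifting argument from a factor to the fibre product.

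Finally, \autoref{lem:E_20+O} only covers $\Ml\simeq\Ol_C$. The non-trivial torsion case excluded in (iv) is not handled by that lemma and has to be taken from \cite{Fong23}.
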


\begin{proof}
  Assume that $\Autz(X)$ is a maximal connected algebraic subgroup of $\Bir(X)$. Then $\Autz(X)$ is conjugate to one of the relatively maximal automorphism groups listed in \cite[Theorem A]{Fong23}.
  For each case of loc. cit, it remains to study the equivariant birational map to Mori Del Pezzo fibre space over $C$. In Case \ref{P1-bundles:II}, when $g(C)\geq 2$, it follows from loc. cit. that $X\simeq C\times \p^1 \times \p^1$ is the only $\p^1$-bundle over a ruled surface $S$ such that $\Autz(X)$ is maximal. Moreover, there are no $\Autz(X)$-invariant curves or points in $X$, so there are no $\Autz(X)$-equivariant birational maps to Mori Del Pezzo fibre spaces. This gives Case \ref{P1-bundles:II} of our Proposition. 

  From now on, assume that $g(C)=1$, and we examine the automorphism groups that appear in \cite[Theorem A(I)]{Fong23}. 

 \cite[Theorem A(i)\&(iii)]{Fong23}: by \cite[Corollary D]{Fong23}, the group $\Autz(X)$ is a maximal connected algebraic subgroup of $\Bir(X)$. This gives Cases \ref{P1-bundles:i} and \ref{P1-bundles:iii}. 

 \cite[Theorem A(ii)]{Fong23}: $\Autz(X)$ is not a maximal by \cite[Corollary D]{Fong23}. 

  \cite[Therorem A(iv)\&(viii)]{Fong23} and \cite[Theorem A(vii)]{Fong23} when $b\neq1$: by \cite[Corollary D]{Fong23}, there is no $\Autz(X)$-equivariant birational map to a $\p^2$-bundle. 
  Moreover, $\Autz(X)_C$ has positive dimension by \cite[Proposition B (iv),(vii),(viii)]{Fong23}.
  Hence, by \autoref{lem:no_map_to_lower_degree.2}, there is also no $\Autz(X)$-equivariant birational map to a Mori Del Pezzo fibre space of degree $\leq 8$. This shows Cases \ref{P1-bundles:iii}, \ref{P1-bundles:vi} when $b\neq 1$, and \ref{P1-bundles:vii}.

  \cite[Theorem A(vii)]{Fong23} when $b=1$: then 
  \[
  X\simeq \p(\Ol_{C\times \p^1}\oplus \Ol_{C\times \p^1}(\sigma +\tau^*(D))).
  \]
  Then, by \autoref{lem:F_b-bundles}, there exist trivializations of the $\F_1$-bundle $\tau\circ \pi$ of the form
  \[
  \begin{array}{ccc}
    U_l \times \F_1 & \dashrightarrow & U_k \times \F_1 \\
    (x,[y_0:y_1;z_0:z_1]) & \longmapsto & (x,[y_0:\lambda_{kl}(x)y_1;z_0:z_1]),
  \end{array}
  \] 
  where $\lambda_{kl}$ are the cocycles of the line bundle $\Ol_C(D)$. 
  The contraction $\F_1\to \p^2$, $[y_0:y_1;z_0:z_1] \mapsto [y_0z_0:y_0z_1:y_1]$ induces an $\Autz(X)$-equivariant birational morphism over $C$ from $X$ to the $\p^2$-bundle $X'\simeq \p(\Ol_C \oplus \Ol_C \oplus \Ol_C(D))$, and two cases arise:
  \begin{itemize}
    \item Either $D$ is trivial, then $X' \simeq C\times \p^2$. Then $\Autz(X')\simeq \Autz(C)\times \PGL_3(\k)$ acts on $X'$ with a single orbit, which implies that $\Autz(X)$ is not maximal.
    \item Or, $D$ is not trivial, then $\Autz(X)$ is maximal by \autoref{prop:E'=O+O} and the groups $\Autz(X)$ and $\Autz(X')$ are conjugate via the above contraction.
  \end{itemize}
   This concludes Case \ref{P1-bundles:vi}. 

    \cite[Theorem A(v)]{Fong23}: notice that $\p(\Ol_C\oplus \Ll) \times_C \Al_{2,0}$ and $\p(\Ol_{\Al_{2,0}} \oplus \tau^*(\Ll))$ are isomorphic as $\p^1$-bundle over $\Al_{2,0}$, where $\tau\colon \Al_{2,0}\to C$ is the structure morphism. Hence, by \autoref{thm:DP9 max}\autoref{DP9 max:2} and \autoref{thm:main_superrigidity} \ref{mainDP:1.3ii}, we get Case \ref{P1-bundles:iv}. 

    \cite[Theorem A(vi)]{Fong23}: 
    we proceed similarly as in case \cite[Theorem A(v)]{Fong23} and we use \autoref{thm:main_superrigidity} \ref{mainDP:1.3iii}. This gives Case \ref{P1-bundles:v}.
\end{proof}


\subsection{Proof of \autoref{thm:main}}\label{s:proof_main_thm}

\begin{lemma}\label{lem:Weil}
Let $G$ be a maximal connected algebraic subgroup of $\Bir(C\times\p^2)$. Then there exists a Mori fibre space $\pi\colon X\to B$ with a morphism $\tau\colon B\to C$ such that $\tau_*\Ol_B=\Ol_C$ and $X$ is endowed with a $G$-action and there is a $G$-equivariant birational map $\varphi\colon C\times\p^2\rat X$ and an isomorphism $\alpha\colon C\to C$ such that $\tau\circ\pi\circ\varphi=\alpha\circ pr_C$, where $pr_C$ is the projection onto $C$. 
\end{lemma}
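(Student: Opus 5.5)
The plan is to regularize the action first, then run an equivariant MMP, and finally use the irrationality of $C$ to produce the fibration over $C$.

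\textbf{Step 1 (regularization).} By Weil's regularization theorem, there is a smooth projective variety $Y$ with a regular faithful $G$-action and a $G$-equivariant birational map $C\times\p^2\rat Y$. Concretely, one first gets a normal quasi-projective model with a regular action, then takes an equivariant completion (Sumihiro) and an equivariant resolution of singularities. Since $G$ is connected, it acts trivially on $\NS(Y)$, so every extremal ray of $\overline{NE}(Y)$ is $G$-invariant.

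\textbf{Step 2 (equivariant MMP).} The variety $Y$ is uniruled, so $K_Y$ is not pseudo-effective. Run a $K_Y$-MMP on $Y$. Every divisorial contraction and every flip in it is $G$-equivariant, by Blanchard's lemma (\autoref{lem:blanchard}) applied to the contraction morphisms. The program terminates with a $G$-equivariant Mori fibre space $\pi\colon X\to B$. Composing the maps gives a $G$-equivariant birational map $\varphi\colon C\times\p^2\rat X$.

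\textbf{Step 3 (the base and the fibration over $C$).} By \autoref{lem:not Fano}, $X$ is not Fano, so $B$ is not a point, and $\dim B\in\{1,2\}$.
\begin{itemize}
\item If $\dim B=1$, then $\pi$ is a Mori Del Pezzo fibration and $X$ is birational to $C\times\p^2$. \autoref{prop:MdP5}\autoref{MdP5:2} gives $B\simeq C$ together with an isomorphism $\alpha$ with $\pi\circ\varphi=\alpha\circ pr_C$. We then take $\tau=\mathrm{id}$.
\item If $\dim B=2$, we argue as in \autoref{lem:commutative diag surface-curve}. Since $X$ is not rationally connected, $B$ is not rationally connected by \autoref{thm:rationally connected}. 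The base $B$ is klt, hence has only isolated rational (quotient) singularities, and it is birational to a ruled surface over an irrational curve $D$. The resolution argument of that lemma then yields a morphism $\tau\colon B\to D$ with $\tau_*\Ol_B=\Ol_D$.
\end{itemize}

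\textbf{Step 4 (identifying $D$ with $C$).} A general fibre $\{c\}\times\p^2$ of $pr_C$ is rational, so its image in $D$ is a point, since $g(D)\geq1$. This gives a morphism $\alpha\colon C\to D$ with $\tau\circ\pi\circ\varphi=\alpha\circ pr_C$. Conversely, fibres of $\tau\circ\pi$ are covered by rational curves, and their images in $C$ are points. Hence $\alpha$ has connected fibres and is birational, so it is an isomorphism. After identifying $D$ with $C$, this gives the statement.

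The main obstacle is Step 2: one must make sure the MMP is genuinely $G$-equivariant, including the flips. I would handle this by invoking that $G$ connected fixes the classes of all extremal rays and using the uniqueness of contractions and flips, as is done in \cite{Flo20}.
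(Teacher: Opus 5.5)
Your proposal is correct and follows essentially the paper's own route: Weil regularization, an equivariant projective completion and resolution, a $G$-equivariant MMP (which the paper justifies by citing Floris), and then \autoref{lem:preserves fibration} when $B$ is a curve and \autoref{lem:commutative diag surface-curve} when $B$ is a surface. Two small citation points. First, $G$ need not be linear (it may contain $\Autz(C)$ for elliptic $C$), so the projective equivariant completion should come from Brion's theorem rather than Sumihiro's completion theorem, which is for linear groups. Second, in the curve case, \autoref{lem:preserves fibration} applied to $pr_C$ fits more directly than \autoref{prop:MdP5}\autoref{MdP5:2}, whose statement already takes the product over the base of $\pi$ itself.
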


\begin{proof}
Let $G$ be a maximal connected algebraic subgroup of $\Bir(C\times\p^2)$. By \cite[Theorem]{Weil_groups}, there is a $G$-variety $X_1$ and a $G$-equivariant birational map $\varphi_1\colon C\times\p^2\rat X_1$ such that $\varphi_1G\varphi_1^{-1}\subset\Autz(X_1)$. We can assume that $X_1$ is normal, because the $G$-action lifts to the normalisation of $X_1$. 
By \cite[Lemma 8]{Sumihiro75}, there exists a $G$-stable quasi-projective open subset $U\subset X_1$. 
By \cite[Theorem 2]{Brion-models}, there is a projective $G$-equivariant completion $X_2$ of $U$. 
After taking a $G$-equivariant desingularization \cite[Proposition 3.9.1]{Koll-sing}, we can assume that $X_2$ is smooth. 
Since $X_2$ is projective, the group $\Autz(X_2)$ is an algebraic group and hence $\varphi G\varphi^{-1}=\Autz(X_2)$.
We now run the Minimal Model Program (MMP) from $X_2$. By \cite[Remark 2.3]{Flo20}, the MMP is $G$-equivariant. 
Since the first projection $C\times\p^2\to C$ is a Mori fibre space, the MMP starting from $X_2$ terminates in a Mori fibre space $\pi\colon X\to B$. We have obtained a $G$-variety $X$ with a Mori fibration $\pi\colon X\to B$ and a $G$-equivariant birational map $\varphi\colon C\times\p^2\rat X$. 
By \autoref{lem:not Fano}, we have $\dim(B)\geq1$.

Suppose that $B$ is a curve. It is irrational, because $X$ is not rationally connected. Then the claim follows from \autoref{lem:preserves fibration}. 
If $B$ is a surface, we apply \autoref{lem:commutative diag surface-curve}.
\end{proof}

\begin{remark}
In \autoref{lem:Weil}, if $\dim B=1$, then $\tau$ is an isomorphism and $\tau\circ\pi\colon X\to C$ is a Mori Del Pezzo fibration. 
Suppose that $\dim B=2$. By \cite[Corollary 4.6]{Fuj99}, $B$ is klt and hence has isolated singularities \cite[Proposition 9.3]{GKKP11}. Let $\hat\eta\colon\hat B\to B$ be a desingularization. Then $(\tau\circ\hat\eta)_*\Ol_{\hat B}=\Ol_C$. 
Running an MMP over $C$ from $\hat B$ yields a birational morphism $\eta\colon \hat B\to S$ to a ruled surface $\tau'\colon S\to C$ such that $\tau=\tau'\circ\eta$. 
\end{remark}

\begin{proof}[Proof of \autoref{thm:main}]
By \autoref{lem:Weil}, there exists a Mori fibre space $\pi\colon X\to B$ with a morphism $\tau\colon B\to C$ such that $\tau_*\Ol_{B}=\Ol_C$ and $X$ is endowed with a $G$-action and there is a $G$-equivariant birational map $\varphi\colon C\times\p^2\rat X$ and an isomorphism $\alpha\colon C\to C$ such that $\tau\circ\pi\circ\varphi=\alpha\circ pr_C$, where $pr_C$ is the projection onto $C$. Since $\Autz(X)$ is a connected algebraic group, we have $\varphi G\varphi^{-1}=\Autz(X)$. Since $G$ is a maximal connected algebraic subgroup of $\Bir(C\times\p^2)$, which contains at least one non-trivial algebraic group, namely $\Autz(C)\times\PGL_3(\k)$, we have $\Autz(X)\neq\{1\}$.   

Suppose that $B$ is a curve. Then $\pi$ is a Mori Del Pezzo fibration and we can assume that $B=C$. By \autoref{prop:MdP5}, we have $K_{X_{\k(C)}}^2\geq5$. 
By \autoref{rmk: MdP}\autoref{MdP:1}, we have $K_{X_{\k(C)}}^2\neq7$ and by \autoref{pro:DP5} we have $K_{X_{\k(C)}}^2\neq5$.
If $K_{X_{\k(C)}}^2=6$, \autoref{cor:DP6} states that $\Autz(X)$ is a maximal connected algebraic subgroup of $\Bir(X)\simeq\Bir(C\times\p^2)$ and by \autoref{prop:Mdf 6 finite} we have $g(C)=1$, $\Autz(X)_C$ is finite and $X\simeq \Autz(X)\times^{\Autz(X)_C} S_6$, where $S_6$ is a Del Pezzo surface of degree $6$.
If $K_{X_{\k(C)}}^2=8$, then $\Autz(X)\simeq\PGL_2(\k)$ acts diagonally on a general fibre by \autoref{lem:Mfs8 restriction}. 
If $K_{X_{\k(C)}}^2=9$, then by \autoref{prop:Mdp to bundle} we can assume that $\pi$ is a $\p^2$-bundle. 
If $g(C)=1$, then \autoref{prop:trivial_action_on_C} implies that $\Autz(X)$ acts non-trivially on $C$ and the claim is \autoref{thm:DP9 max}.
If $g(C)=2$, then the claim is \autoref{lem:semistable_high_genus}.

Suppose that $B$ is a surface. By \autoref{prop:reduction to standard conic bundles}, we can assume that $\pi$ is a standard conic bundle (in particular, $X,B$ are smooth). In particular, $X_{\k(B)}$ is isomorphic to a plane curve of genus zero. 
If $X_{\k(B)}$ is irrational, then by \autoref{lem:to ruled surface} and \autoref{rmk:igf nontrivial action}, the morphism $\tau\colon B\to C$ is a ruled surface and $g(C)=1$. By \autoref{pro: igfrs}, $\Autz(X)$ is not maximal, contradicting our assumption that it is maximal. 
Therefore, $X_{\k(B)}$ is rational. In particular, $\pi$ is a $\p^1$-bundle above $B$ by \autoref{prop:reduction to standard conic bundles}. 
By \autoref{lem:descent}, we can assume that $\tau\colon B\to C$ is a ruled surface. 
By \autoref{lem:jumping fibres}, we can assume that $\tau\circ\pi$ is an $\F_b$-bundle for some $b\geq0$. 
Then the claim follows from \autoref{prop:P1-bundles}.
\end{proof}

\bibliographystyle{abbrv}
\bibliography{biblio}
\end{document}